\documentclass[12pt,twoside]{amsart}

\usepackage{amssymb}
\usepackage{graphicx}
\usepackage[pdfauthor={David Cook II, Uwe Nagel},
            pdftitle={Enumerations of lozenge tilings, lattice paths, and perfect matchings and the weak Lefschetz property},
            pdfsubject={Commutative algebra and enumerative combinatorics},
            pdfkeywords={Monomial ideals, weak Lefschetz property, stable syzygy bundles, determinants, permanents,
                lozenge tilings, non-intersecting lattice paths, perfect matchings},
            pdfproducer={LaTeX with hyperref},
            pdfcreator={latex->dvips->ps2pdf},
            pdfpagemode=UseOutlines,
            bookmarksopen=false,
            letterpaper,
            bookmarksnumbered=true,
            pdfborder={0 0 0},
            colorlinks=false]{hyperref}
\usepackage[margin=1in]{geometry}

\makeindex
\newcommand{\emdx}[1]{\emph{#1}\index{#1}}

\newcommand{\dntri}{\bigtriangledown}
\newcommand{\uptri}{\triangle}
\newcommand{\HF}{\mathcal{H}}
\newcommand{\SO}{\mathcal{O}}

\newcommand{\PP}{\mathbb{P}}
\newcommand{\ZZ}{\mathbb{Z}}
\newcommand{\fa}{\mathfrak{a}}
\newcommand{\mo}{\mathfrak{o}}
\newcommand{\PS}{\mathfrak{S}}
\DeclareMathOperator{\CEKZ}{CEKZ}
\DeclareMathOperator{\charf}{char}

\DeclareMathOperator{\Mac}{Mac}
\DeclareMathOperator{\per}{perm}
\DeclareMathOperator{\rank}{rank}
\DeclareMathOperator{\reg}{reg}
\DeclareMathOperator{\sgn}{sgn}
\DeclareMathOperator{\msgn}{msgn}
\DeclareMathOperator{\lpsgn}{lpsgn}
\DeclareMathOperator{\soc}{soc}
\DeclareMathOperator{\syz}{syz}
\DeclareMathOperator{\Hom}{Hom}

\newcommand{\st}{\; \mid \;}
\def\urltilda{\kern -.15em\lower .7ex\hbox{\~{}}\kern .04em}
\newcommand{\flfr}[2]{\left\lfloor\frac{#1}{#2}\right\rfloor}
\newcommand{\clfr}[2]{\left\lceil\frac{#1}{#2}\right\rceil}


\numberwithin{figure}{section}
\numberwithin{equation}{section}

\newtheorem{theorem}{Theorem}[section]
\newtheorem{lemma}[theorem]{Lemma}
\newtheorem{proposition}[theorem]{Proposition}
\newtheorem{corollary}[theorem]{Corollary}
\newtheorem{conjecture}[theorem]{Conjecture}

\newtheorem{assumption}[theorem]{Assumption}
\theoremstyle{definition}
\newtheorem{definition}[theorem]{Definition}
\newtheorem{remark}[theorem]{Remark}
\newtheorem{example}[theorem]{Example}
\newtheorem{question}[theorem]{Question}
\newtheorem*{acknowledgement}{Acknowledgement}

\begin{document}

\title[Enumerations and the weak Lefschetz property]{Enumerations of lozenge tilings, lattice paths, and perfect matchings and the weak Lefschetz property}
\author[D.\ Cook II]{David Cook II${}^{\star}$}
\address{Department of Mathematics, University of Notre Dame, Notre Dame, IN 46556, USA}
\email{\href{mailto:dcook8@nd.edu}{dcook8@nd.edu}}
\author[U.\ Nagel]{Uwe Nagel}
\address{Department of Mathematics, University of Kentucky, 715 Patterson Office Tower, Lexington, KY 40506-0027, USA}
\email{\href{mailto:uwe.nagel@uky.edu}{uwe.nagel@uky.edu}}
\thanks{
    Part of the work for this paper was done while the authors were partially supported by the National Security Agency
    under Grant Number H98230-09-1-0032.
    The second author was also partially supported by the National Security Agency under Grant Number H98230-12-1-0247
    and by the Simons Foundation under grant \#208869.\\
    \indent ${}^{\star}$ Corresponding author.}
\keywords{Monomial ideals, weak Lefschetz property, stable syzygy bundles, determinants, lozenge tilings, non-intersecting lattice paths, perfect matchings, generic splitting type}
\subjclass[2010]{05A15, 05B45, 05E40, 13E10}

\begin{abstract}
    MacMahon enumerated the plane partitions in an $a \times b \times c$ box. These are in bijection  to lozenge tilings of a hexagon, to certain perfect matchings,
    and to families of non-intersecting lattice paths. In this work we consider more general regions, called triangular regions, and establish signed versions of
    the latter three bijections. Indeed, we use perfect matchings and families of non-intersecting lattice paths to define two signs of a lozenge tiling. A
    combinatorial argument involving a new method, called resolution of a puncture, then shows that the signs are in fact equivalent. This provides in
    particular two different determinantal enumerations of these families. These results are then applied to study the weak Lefschetz property of Artinian
    quotients by monomial ideals of a three-dimensional polynomial ring. We establish sufficient conditions guaranteeing the weak Lefschetz property as well
    as the semistability of the syzygy bundle of the ideal, classify the type two algebras with the weak Lefschetz property, and study monomial almost
    complete intersections in depth. Furthermore, we develop a general method that often associates to an algebra that fails the weak Lefschetz property a
    toric surface that satisfies a Laplace equation.  We also present examples of toric varieties that satisfy arbitrarily many Laplace equations. Our
    combinatorial methods allow us to address the dependence on the characteristic of the base field for many of our results.
\end{abstract}

\vspace*{-1\baselineskip}
\maketitle

\vspace*{-2\baselineskip}
\setcounter{tocdepth}{1}
\tableofcontents

\section{Introduction} \label{sec:intro}

A plane partition is a rectangular array of nonnegative integers such that the entries in each row and each column
are weakly decreasing. It is in an $a \times b \times c$ box if the array has $a$ rows, $b$ columns, and all entries
are at most $c$.  MacMahon \cite{MacMahon} showed that the number of plane partitions in an $a \times b \times c$ box is
\[
    \frac{\HF(a) \HF(b) \HF(c) \HF(a+b+c)}{\HF(a+b) \HF(a+c) \HF(b+c)},
\]
where $a$, $b$, and $c$ are nonnegative integers and $\HF(n) := \prod_{i=0}^{n-1}i!$ is the \emph{hyperfactorial} of
$n$. It is well-known that this result can be interpreted as counting the number of lozenge tilings of a hexagon with
side lengths $a, b, c$. Think of a plane partition as a stack of unit cubes, where the number of stacked cubes in
position $(i, j)$ is given by the corresponding entry in the array, as illustrated in Figure~\ref{fig:pp-tile-intro}.

\begin{figure}[!ht]
    \begin{minipage}[b]{0.48\linewidth}
        \[
            \vspace{1em}
            \begin{array}{cccccc}
               3 & 3 & 2 & 2 & 2 & 1 \\
               3 & 2 & 2 & 1 & 0 & 0
            \end{array}
        \]
    \end{minipage}
    \begin{minipage}[b]{0.48\linewidth}
        \centering
        \includegraphics[scale=1]{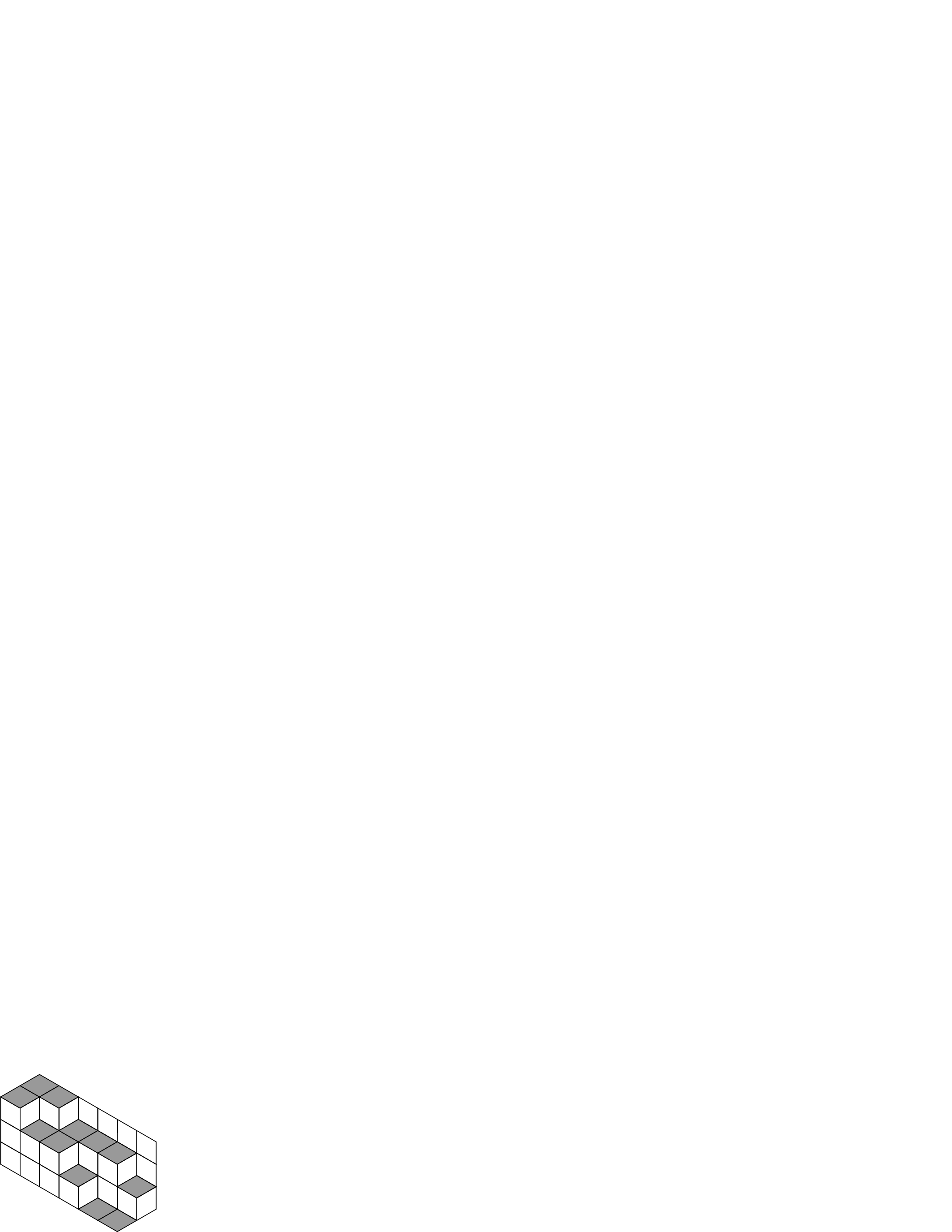}
    \end{minipage}
    \caption{A $2 \times 6 \times 3$ plane partition and the corresponding stack of cubes.  The grey
        lozenges are the tops of the boxes.}
    \label{fig:pp-tile-intro}
\end{figure}

The projection of the stack to the plane with normal vector $(1,1,1)$ gives a lozenge tiling of a hexagon with side
lengths $a, b, c$ (see Figure~\ref{fig:hex-tile-intro}). Here the hexagon is considered as a union of equilateral
triangles of side length one, and a lozenge is obtained by gluing together two such triangles along a shared edge.

\begin{figure}[!ht]
    \begin{minipage}[b]{0.48\linewidth}
        \centering
        \includegraphics[scale=0.5]{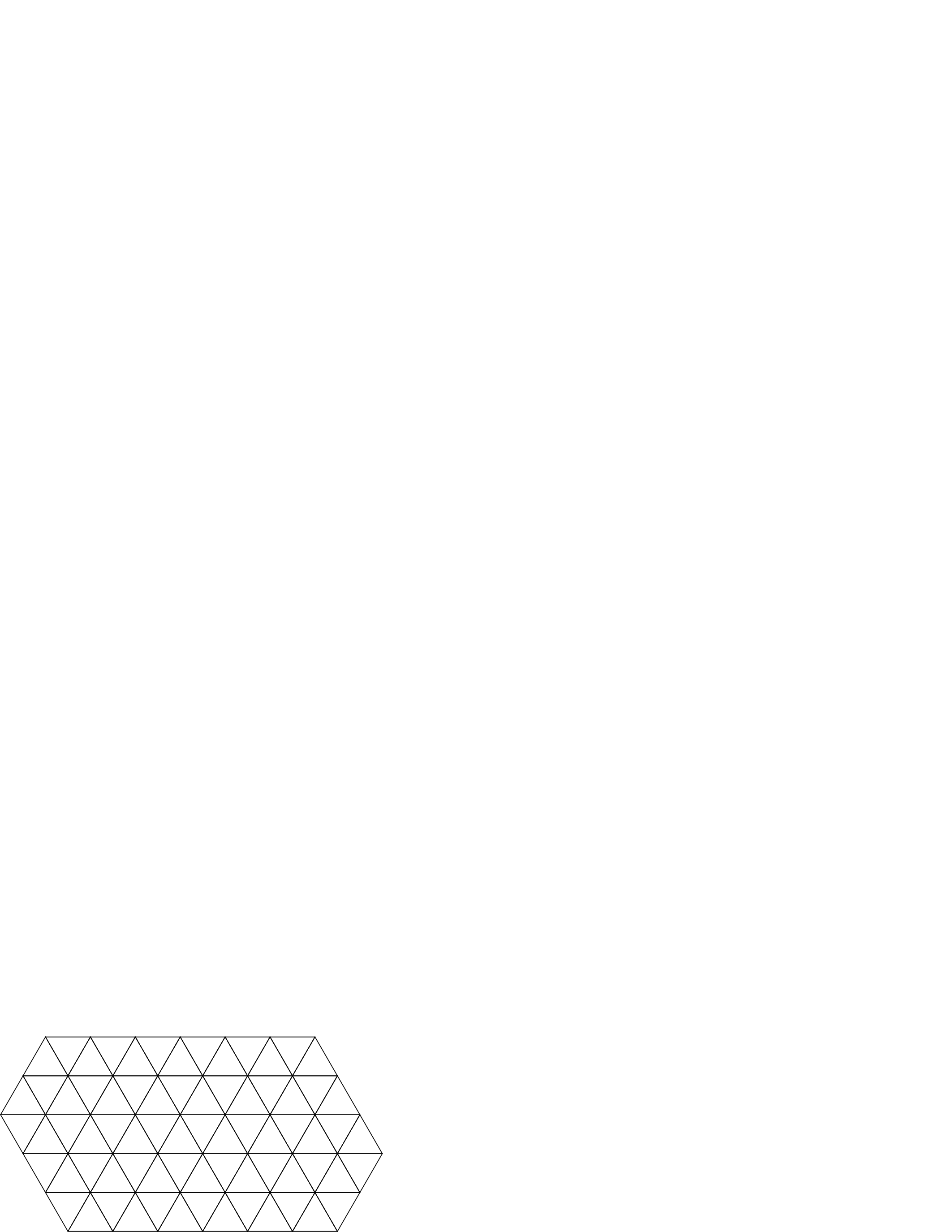}
    \end{minipage}
    \begin{minipage}[b]{0.48\linewidth}
        \centering
        \includegraphics[scale=1]{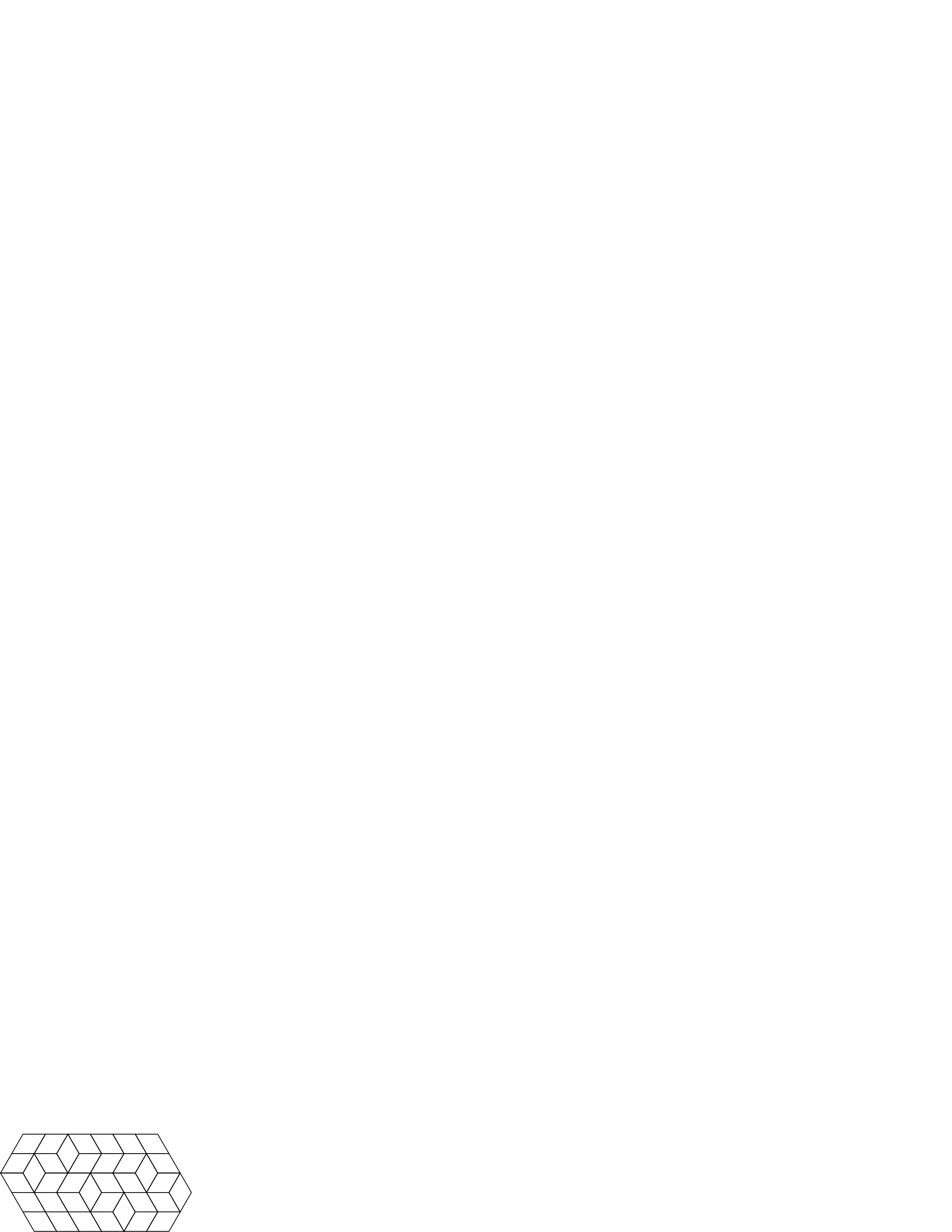}
    \end{minipage}
    \caption{A $2 \times 6 \times 3$ hexagon and the lozenge tiling associated to the plane partition
        in Figure~\ref{fig:pp-tile-intro}.}
    \label{fig:hex-tile-intro}
\end{figure}

In this work we view the above hexagon as a subregion of a triangular region $\mathcal{T}_d$, which is an equilateral
triangle of side length $d$ subdivided by equilateral triangles of side length one.  See Figure~\ref{fig:triregion-R-intro}
for an illustration.

\begin{figure}[!ht]
    \begin{minipage}[b]{0.32\linewidth}
        \centering
        \includegraphics[scale=0.7]{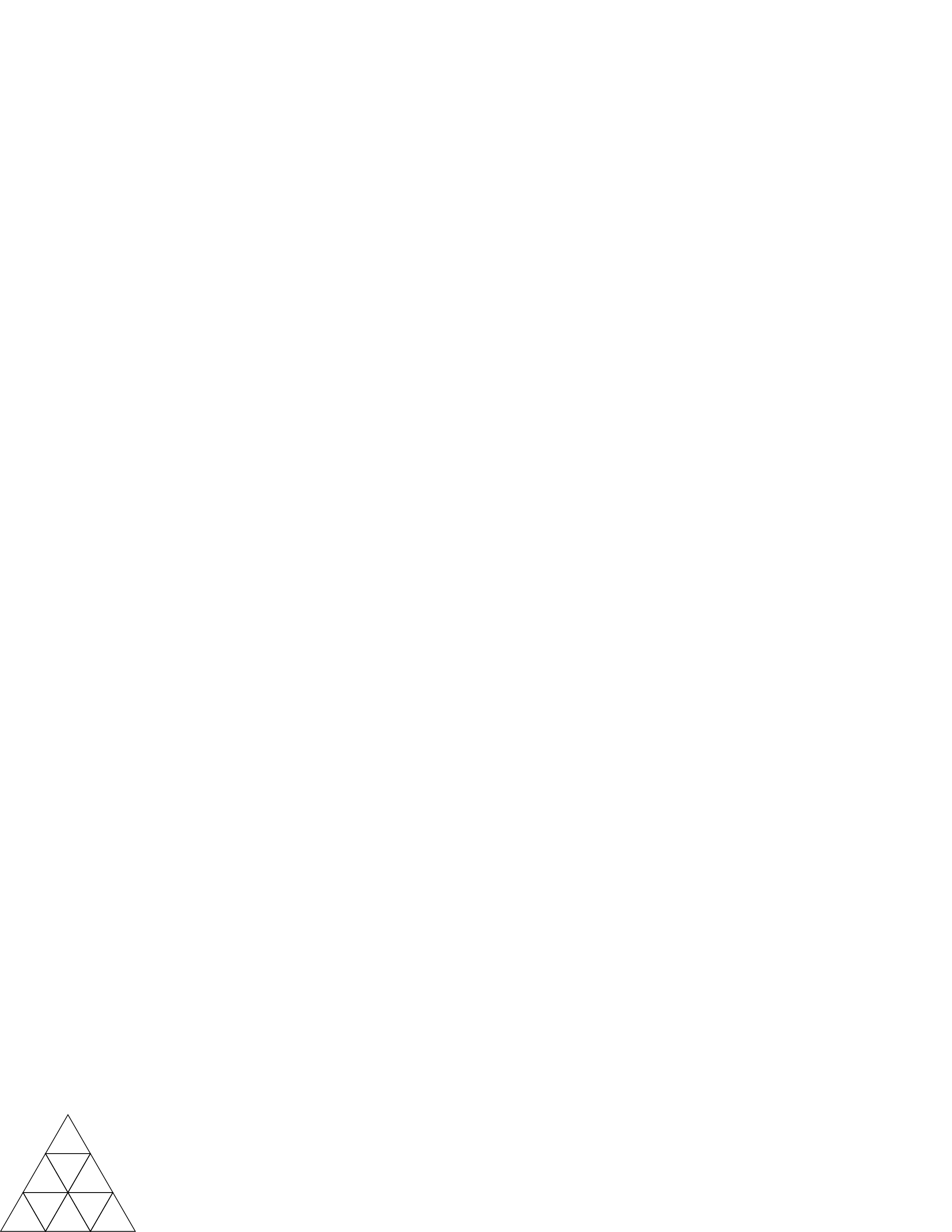}\\
    \end{minipage}
    \begin{minipage}[b]{0.32\linewidth}
        \centering
        \includegraphics[scale=0.7]{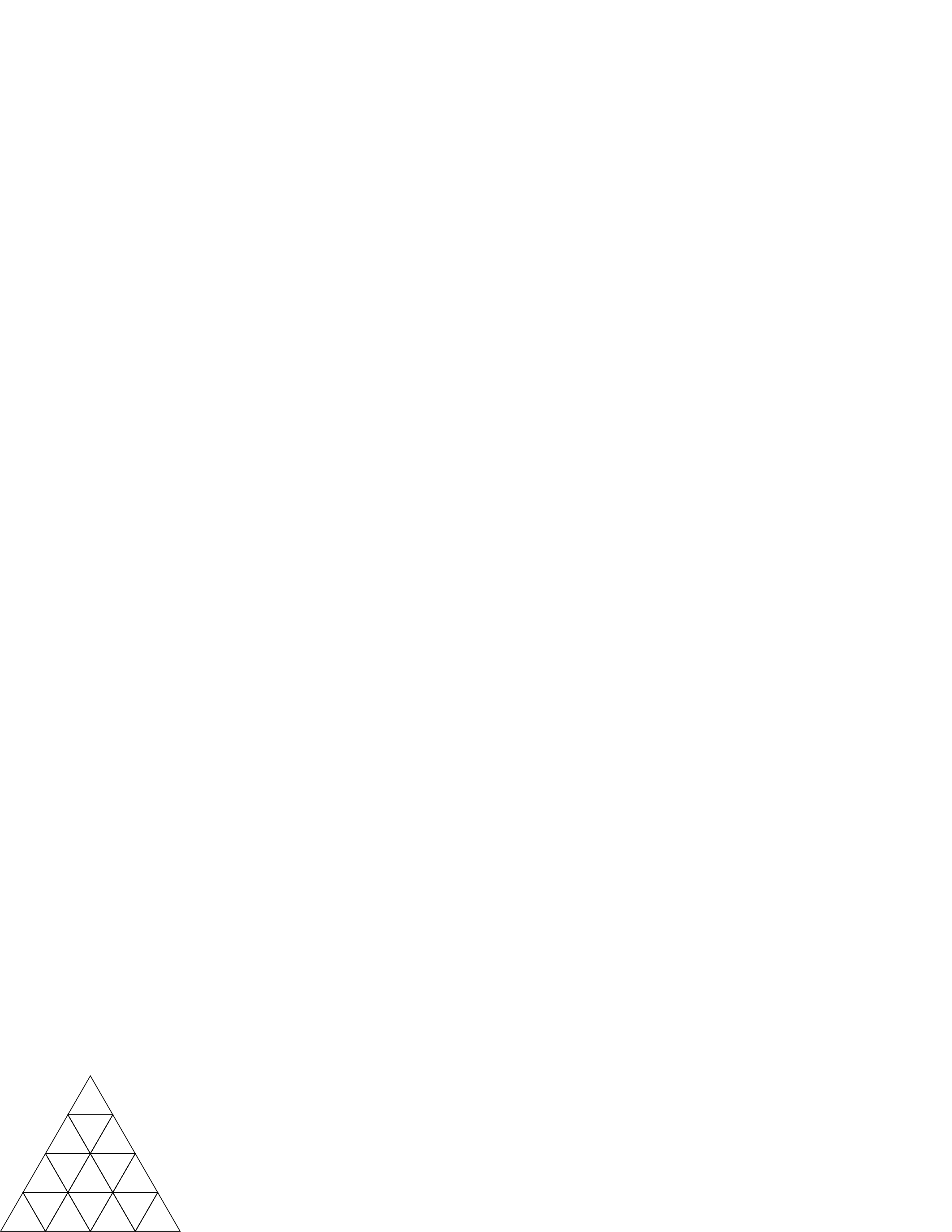}\\
    \end{minipage}
    \caption{The triangular regions $\mathcal{T}_3$ and $\mathcal{T}_4$.}
    \label{fig:triregion-R-intro}
\end{figure}

The hexagon with side lengths $a, b, c$ is obtained by removing triangles of side lengths $a, b$, and $c$ at the vertices
of $\mathcal{T}_d$, where $d = a + b + c$. We refer to the removed upward-pointing triangles as \emph{punctures}.
More generally, we consider subregions $T \subset \mathcal{T}$ that arise from $\mathcal{T}$ by removing upward-pointing
triangles, each of them being a union of unit triangles. The punctures, that is, the removed upward-pointing triangles may
overlap (see Figure \ref{fig:triregion-intro}). We call the resulting subregions of $\mathcal{T}$ \emph{triangular subregions}.
Such a region is said to be \emph{balanced} if it contains as many upward-pointing unit triangles as down-pointing pointing
unit triangles. For example, hexagonal subregions are balanced. Lozenge tilings of triangular subregions have been studied
in several areas. For example, they are used in statistical mechanics for modeling bonds in dimers (see, e.g., \cite{Ke})
or in statistical mechanics when studying phase transitions (see, e.g., \cite{Ci-2005}).

\begin{figure}[!ht]
    \begin{minipage}[b]{0.48\linewidth}
        \centering
        \includegraphics[scale=1]{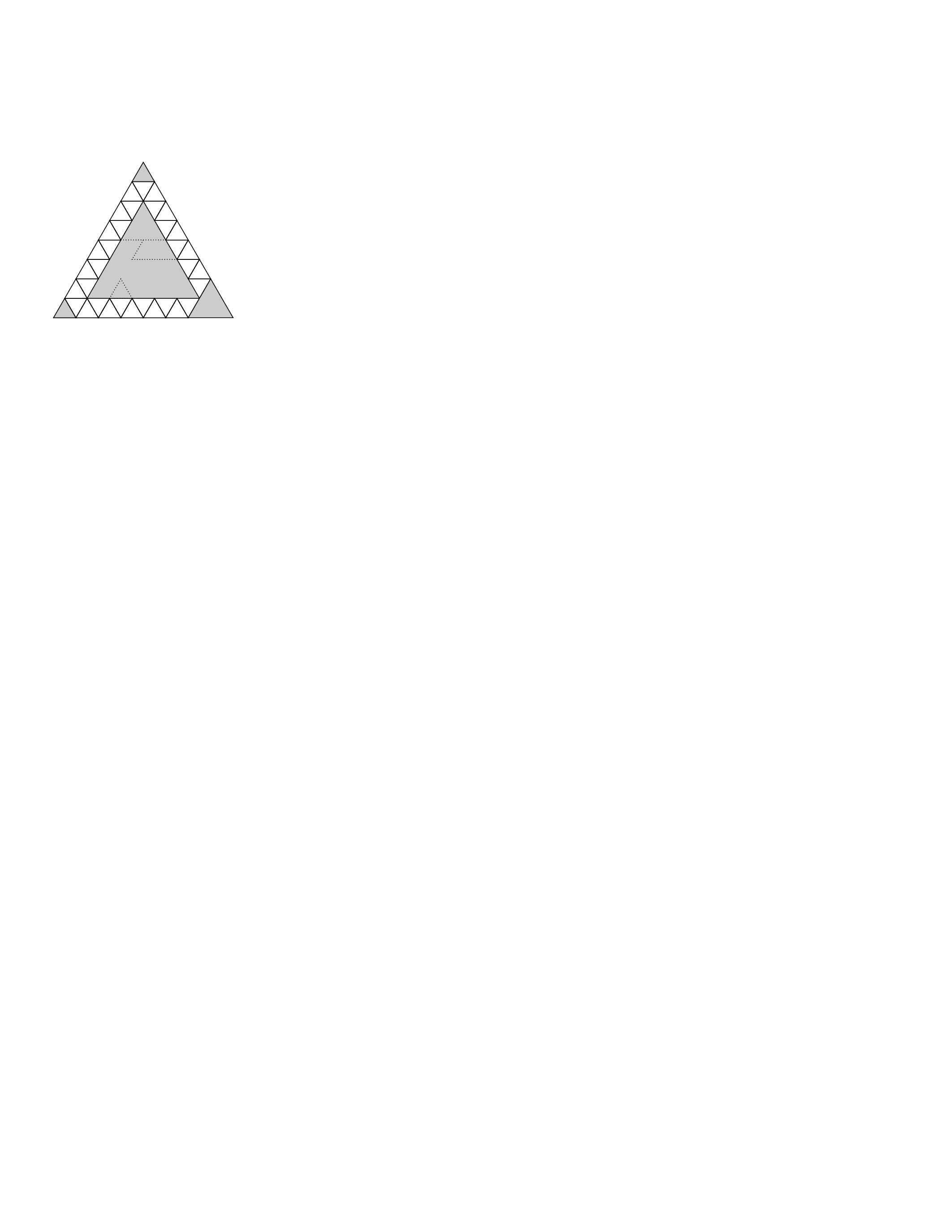}
    \end{minipage}
    \begin{minipage}[b]{0.48\linewidth}
        \centering
        \includegraphics[scale=1]{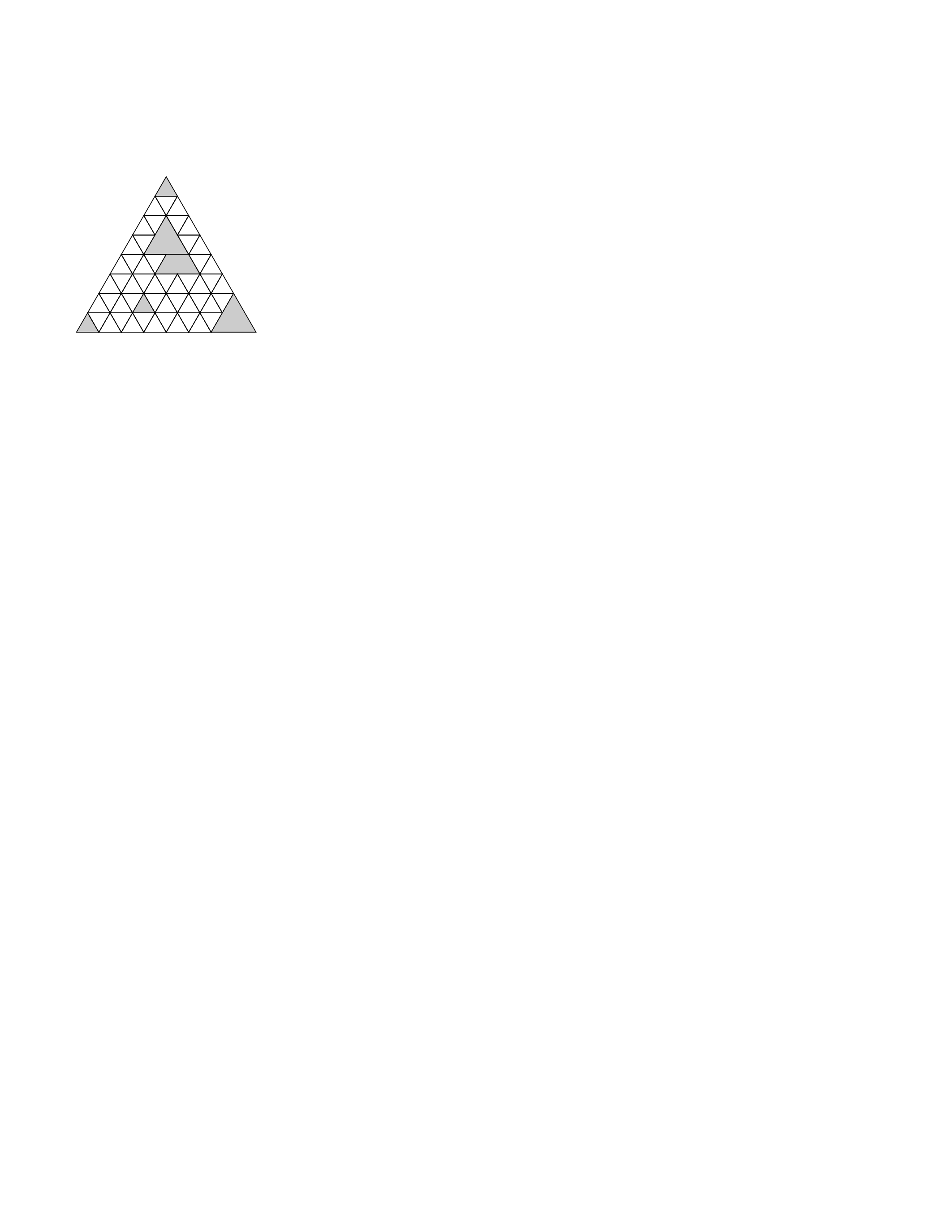}
    \end{minipage}
    \caption{Two triangular subregions.}
    \label{fig:triregion-intro}
\end{figure}

If a triangular subregion $T$ is a hexagon with side lengths $a, b, c$, then the plane partitions in an $a \times b \times c$
box are not only in bijection to lozenge tilings of $T$, but also to perfect matchings determined by $T$ as well as to families
of non-intersecting lattice paths in $T$ (see, e.g., \cite{Pr}). Moreover, all these objects are enumerated
by a determinant of an integer matrix.  For more general balanced triangular subregions,  the latter three bijections remain true,
whereas the bijection to plane partitions is lost.

\begin{figure}[!ht]
   \begin{minipage}[b]{0.42\linewidth}
        \centering
        \includegraphics[scale=1]{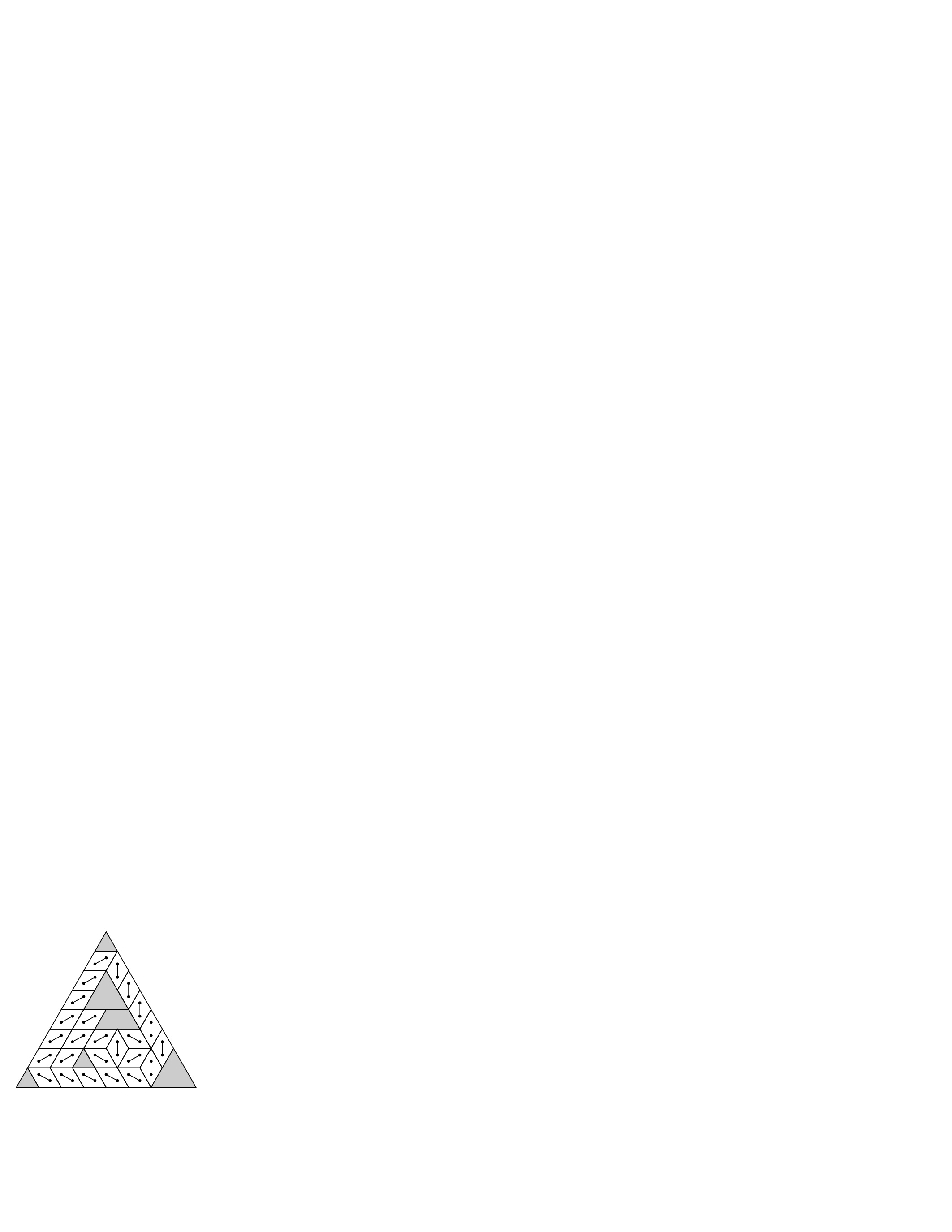}\\
        \emph{A perfect matching.}
    \end{minipage}
    \begin{minipage}[b]{0.48\linewidth}
        \centering
        \includegraphics[scale=1]{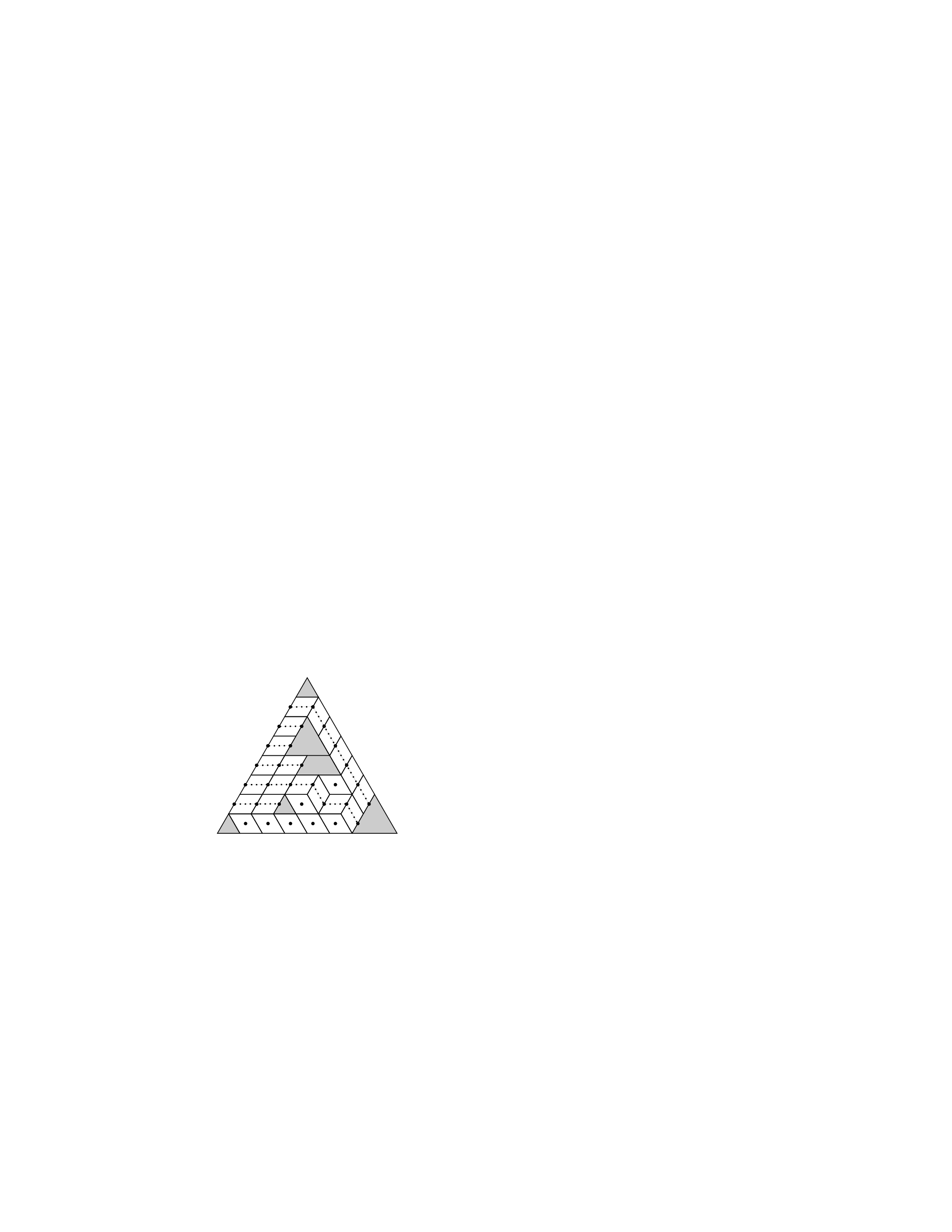}\\
        \emph{A family of non-intersecting lattice paths.}
    \end{minipage}
    \caption{Bijections to lozenge tilings.}
    \label{fig:bijections}
\end{figure}

Here we establish a signed version of these bijections. Introducing suitable signs, one of our main results says that,
for each balanced triangular subregion $T$, there is a bijection between the signed perfect matchings and the signed
families of non-intersecting lattice paths. This is achieved via the links to lozenge tilings.

Indeed, using the theory pioneered by Gessel and Viennot \cite{GV-85}, Lindstr\"om \cite{Li},  Stembridge \cite{Stembridge}, and Krattenthaler \cite{Kr-95}, the sets of signed families of non-intersecting lattice paths in $T$
can be enumerated by the determinant of a matrix $N(T)$ whose entries are binomial coefficients, once a suitable sign is
assigned to each such family. We define this sign as the \emph{lattice path sign} of the corresponding lozenge tiling of the region $T$.

The perfect matchings determined by $T$ can be enumerated by the permanent of a zero-one matrix $Z(T)$ that is the
bi-adjacency matrix of a bipartite graph. This suggests to introduce the sign of a perfect matching such that the
signed perfect matchings are enumerated by the determinant of $Z(T)$. We call this sign the \emph{perfect matching sign}
of the lozenge tiling that corresponds to the perfect matching. Typically, the matrix $N(T)$ is much smaller than the
matrix $Z(T)$. However, the entries of $N(T)$ can be much bigger than one. Nevertheless, a delicate combinatorial
argument shows that the perfect matching sign and the lattice path sign are equivalent, and thus (see Theorem \ref{thm:detZN})
\[
    |\det Z(T)| = |\det N(T)|.
\]
The proof also reveals instances where the absolute value of $\det Z(T)$ is equal to the permanent of $Z(T)$
(see Proposition~\ref{prop:same-sign}). This includes hexagonal regions, for which the result is well-known.

The above results allow us to obtain explicit enumerations in many new instances. They also suggest several intriguing conjectures.
\smallskip

Another starting point and motivation for our investigations has been the problem of deciding the presence of
the Lefschetz properties. A standard graded Artinian algebra $A$ over a field $K$ is said to have the \emph{weak Lefschetz property}
if there is a linear form $\ell \in A$ such that the multiplication map $\times \ell : [A]_i \rightarrow [A]_{i+1}$ has maximal rank
for all $i$ (i.e., it is  injective or surjective).  The algebra $A$ has the  \emph{strong Lefschetz property} if
$\times \ell^d : [A]_i \rightarrow [A]_{i+d}$ has maximal rank for all $i$ and $d$. The names are motivated by the conclusion of
the Hard Lefschetz Theorem on the cohomology ring of a compact K\"ahler manifold. Many algebras are expected to have the Lefschetz
properties. However, establishing this fact is often very challenging.

The Lefschetz properties play a crucial role in the proof of the so-called $g$-Theorem. It characterises the face vectors of simplicial
polytopes, confirming a conjecture of McMullen. The sufficiency of McMullen's condition was shown by Billera and Lee \cite{BL2} by
constructing suitable polytopes. Stanley \cite{St-faces}  established the necessity of the conditions by using the Hard Lefschetz
Theorem to show that the Stanley-Reisner ring of a simplicial polytope modulo a general linear system of parameters has the strong
Lefschetz property. It has been a longstanding conjecture whether McMullen's conditions also characterise the face vectors of all
triangulations of a sphere. This conjecture would follow if one can show that the Stanley-Reisner ring of such a triangulation
modulo a general linear system of parameters has the weak Lefschetz property.  The algebraic $g$-Conjecture posits that this algebra
even has the strong Lefschetz property. If true, this would imply strong restrictions on the face vectors of  all orientable
$K$-homology manifolds  (see \cite{NS1} and \cite{NS2}). Although there has been a flurry of papers studying the Lefschetz properties
in the last decade (see, e,g,  \cite{BMMNZ2, BK, BK-p, CGJL, GIV, HSS, HMMNWW, KRV, KV, LZ, MMO, MMN-2012}), we currently seem far
from being able to decide the above conjectures. Indeed, the need for new methods has led us to consider lozenge tilings, perfect
matchings, and families of non-intersecting lattice paths. We use this approach to establish new results about the presence or the
absence of the weak Lefschetz property of quotients of a polynomial ring $R = K[x, y, z]$ by a monomial ideal $I$ that contains
powers of each of the variables $x$, $y$, and $z$.

In the case where the ideal $I$ has only three generators, the powers of the variables, the algebra $R/I$ has the  Lefschetz
properties if the base field has characteristic zero (see \cite{Stanley-1980, ikeda, Wa, BTK}). In this case the algebra $R/I$
has Cohen-Macaulay type one. We extend this result in several directions.

First, one of the main results in \cite{BMMNZ} says that the monomial algebras $R/I$ of type two that are also level have the
weak Lefschetz property if $K$ has characteristic zero. Examples show that this may fail if one drops the level assumption or
if $K$ has positive characteristic. However, the intricate proof in the level case in \cite{BMMNZ} did not give any insight when
such failures occur. We resolve this by completely classifying all type two algebras that have the weak Lefschetz property if
the characteristic is zero or large enough (Theorem~\ref{thm:type-two} and Proposition~\ref{prop:char-0-to-p}).

Second, we consider the case where the ideal $I$ is an almost complete intersection, that is, $I$ is minimally generated by four
monomials. We decide the presence of the weak Lefschetz property in a broad range of cases, adding, for example, new evidence to a
conjecture in \cite{MMN-2011}. In particular, we show that the weak Lefschetz property may fail in at most one degree, that is, the
multiplication by a general linear form $[R/I]_{j-1} \to [R/I]_j$ has maximal rank for all but at most one integer $j$
(see Theorem~\ref{thm:amaci-wlp}).

Furthermore, we establish the weak Lefschetz property for various other infinite classes of algebras $R/I$, where the ideal $I$
can have arbitrarily many generators.

If an algebra that is expected to have the weak Lefschetz property actually fails to have it, this is often of interest too.
A projective variety is said to satisfy a Laplace equation of order $s$ if its $s$-th osculating space at a general (smooth) point
has smaller dimension than expected. Togliatti \cite{To} started investigating such varieties and obtained the first classification
results. Very recently, Mezzetti, Mir\'o Roig, and Ottaviani \cite{MMO} showed that the existence of Laplace equations is
closely related to the failure of the weak Lefschetz property. Using this, we prove that \emph{every}  Artinian monomial
ideal $I \subset R$ such that $R/I$ fails injectivity in degree $d-1$ as predicted by the weak Lefschetz property, that is,
the multiplication map $\times \ell: [R/I]_{d-1} \to [R/I]_d$ is not injective and $0 < \dim_K [R/I]_{d-1}  \le \dim_K [R/I]_{d}$,
gives rise to a toric surface satisfying a Laplace equation of order $d-1$ (see Theorem~\ref{thm:laplace-eq}).  Furthermore, we use
our approach via lozenge tilings to construct toric surfaces that satisfy any desired number of independent Laplace equations of
order $d-1$ whenever $d$ is sufficiently large (Corollary~\ref{cor:many-laplance-eq}).
\smallskip

The key to relating results on lozenge tilings to the study of the Lefschetz properties is to label the unit triangles in a
triangular region by monomials. This allows us to translate properties of a monomial ideal $I$ into properties of its
associated triangular subregions $T_d (I) \subset {\mathcal T}_d$. This is described in Section~\ref{sec:dictionary}.

In Section~\ref{sec:tiling} we establish sufficient and necessary conditions for a balanced triangular subregion to be
tileable (see, e.g., Theorem~\ref{thm:tileable}). Our arguments also give an algorithm for constructing a tiling of a
triangular subregion if any such tiling exists.

It turns out that the tileability of a triangular subregion $T_d (I)$ is related to the semistability of the syzygy bundle of
the ideal $I$. This is established in Section~\ref{sec:syz} (see Theorem~\ref{thm:tileable-semistable}).

Key results of our approach are developed in Section~\ref{sec:signed}. First, in Subsection~\ref{sub:pm}  we recall that
every non-empty subregion $T$ of ${\mathcal T}_d$ corresponds to a bipartite graph. We use this bijection to define the
bi-adjacency matrix $Z(T)$ and to introduce the perfect matching sign of a lozenge tiling. Second, we consider families of
non-intersecting lattice path in $T$ and introduce the lattice path matrix $N(T)$ as well as the lattice path sign of a
lozenge tiling in Subsection~\ref{sub:nilp}. In order to compare the perfect matching and the lattice path sign of lozenge
tilings we introduce a new combinatorial construction that we call \emph{resolution of a puncture} (see Subsection~\ref{sub:signs}).
Roughly speaking, it replaces a triangular subregion with a fixed lozenge tiling by a larger triangular subregion with a
compatible lozenge tiling and one puncture less. Carefully analyzing the change of sign under resolutions of punctures and
using induction on the number of punctures of a given region, we establish that, for each balanced triangular subregion,
the two defined signs of a lozenge tiling are in fact equivalent, and thus, $|\det N(T)| = |\det Z(T)|$. This results
allows us to move freely between signed perfect matchings and families of non-intersecting lattice paths.

In Section~\ref{sec:det}  we use this interplay and MacMahon's enumeration of plane partitions to establish various
explicit enumerations. We also give sufficient conditions that guarantee that all lozenge tilings of a triangular
subregion have the same sign (see Proposition~\ref{prop:same-sign}). In this case, the permanent of $Z(T)$, which gives
the total number of perfect matchings determined by $T$, is equal to $|\det Z(T)|$.

The special case of a mirror symmetric region is considered in Section~\ref{sec:mirror}. Using a result by Ciucu \cite{Ci-2005},
we provide some explicit enumerations of signed perfect matchings (see Theorems~\ref{thm:ciucu-11} and \ref{thm:ciucu-corrected}).
We also offer a conjecture (Conjecture~\ref{con:zero-mirror}) on the regularity of the bi-adjacency matrix of a mirror symmetric
region and provide evidence for it.

In the remainder of this work we apply the results on lozenge tilings to study the Lefschetz properties. In Section~\ref{sec:wlp}
we first present some general tools for establishing the weak Lefschetz property. Then we show that, for an Artinian monomial
ideal $I \subset R$, the rank of the multiplication $ \times \ell : [R/I]_{d-2} \rightarrow [R/I]_{d-1}$ by a general linear
form $\ell$ is governed by the rank of the bi-adjacency matrix $Z(T)$ (see Proposition~\ref{prop:interp-Z}) and the rank of
the lattice path matrix $N(T)$ (see Proposition~\ref{prop:interp-N}) of the region $T = T_d (I)$, respectively. Since these
are integer matrices, it follows that in the case where these matrices have maximal rank,  the prime numbers dividing  all
their maximal minors are the positive characteristics of the base field $K$ for which $R/I$ fails to have the weak Lefschetz
property. We first draw consequences to monomial complete intersections and then establish some sufficient conditions on a
monomial ideal $I$ such that $R/I$ has the weak Lefschetz property and the syzygy bundle of $I$ is semistable in characteristic
zero (see Theorem~\ref{thm:wlp-to-semistab}).

Algebras of type two are investigated in Section~\ref{sec:type-two}. Theorem~\ref{thm:type-two} gives the mentioned
classification of such algebras with the weak Lefschetz property in characteristic zero, extending the earlier result
for level algebras in \cite{BMMNZ}.

In Section~\ref{sec:amaci} we consider an  Artinian monomial ideal $I \subset R$ with four minimal generators. Our
results on the weak Lefschetz property of $R/I$ are summarised in Theorem~\ref{thm:amaci-wlp}. In particular, they
provide further evidence for a conjecture in \cite{MMN-2011}, which concerns the case where $R/I$ is also level. Furthermore,
we determine the generic splitting type of the syzygy bundle of $I$ in all cases but one (see Propositions~\ref{pro:st-nss}
and \ref{prop:splitt-type-semist}). In the remaining case we show that determining the generic splitting type is equivalent
to deciding whether $R/I$ has the weak Lefschetz property (see Theorem~\ref{thm:equiv}).

The results on varieties satisfying Laplace equations are established in Section~\ref{sec:failure}. They are based on
Proposition~\ref{cor:unit-reduc}. It says that in order to decide whether the bi-adjacency matrix $Z(T)$ of a triangular
region has maximal rank, it is enough to decide the same problem for a modification $\hat{T}$ of $T$ whose punctures all
have side length one. In Subsection~\ref{sub:large-p} we also give examples of balanced triangular subregions $T$ such that
its bi-adjacency matrix $Z(T)$ is regular and $\det Z(T)$ has remarkably large prime divisors. In fact, assuming a
number-theoretic conjecture by Bouniakowsky, we exhibit triangular subregions $T_d \subset {\mathcal T}_d$ such that
$\det Z(T_d) \neq 0$ has prime divisors of the order $d^2$.

We conclude by discussing some open problems that are motivated by this work in Section~\ref{sec:open-problems}.

\section{Ideals and triangular regions: a dictionary} \label{sec:dictionary}

In this section, we introduce a correspondence between monomial ideals and triangular regions. We define a few helpful
terms for triangular regions which allow us to interpret properties of monomial ideals as properties of triangular regions.

~\subsection{Monomial ideals in three variables}~\par

Let $R = K[x,y,z]$ be the standard graded polynomial ring over the field $K$, i.e., $\deg{x} = \deg{y} = \deg{z} = 1$.
Unless specified otherwise, $K$ is always an arbitrary field.

Let $I$ be a monomial ideal of $R$. As $R/I$ is standard graded, we can decompose it into finite vector spaces called
the \emdx{homogeneous components (of $R/I$) of degree $d$}, denoted $[R/I]_d$. For $d \in \ZZ$, the monomials of $R$ of degree $d$
that are \emph{not} in $I$ form a $K$-basis of $[R/I]_d$.

~\subsection{The triangular region in degree \texorpdfstring{$d$}{d}}\label{sub:trideg}~\par

Let $d \geq 1$ be an integer. Consider an equilateral triangle of side length $d$ that is composed of $\binom{d}{2}$
downward-pointing ($\dntri$) and $\binom{d+1}{2}$ upward-pointing ($\uptri$) equilateral unit triangles. We label the
downward- and upward-pointing unit triangles by the monomials in $[R]_{d-2}$ and $[R]_{d-1}$, respectively, as
follows: place $x^{d-1}$ at the top, $y^{d-1}$ at the bottom-left, and $z^{d-1}$ at the bottom-right, and continue
labeling such that, for each pair of an upward- and a downward-pointing triangle that share an edge, the label of the
upward-pointing triangle is obtained from the label of the downward-pointing triangle by multiplying with a variable.
The resulting labeled triangular region is the \emph{triangular region (of $R$) in degree $d$}%
\index{triangular region!of $R$ in degree $d$}
\index{0@\textbf{Symbol list}!Td@$\mathcal{T}_d$}
and is denoted $\mathcal{T}_d$. See Figure~\ref{fig:triregion-R} for an illustration.

\begin{figure}[!ht]
    \begin{minipage}[b]{0.32\linewidth}
        \centering
        \includegraphics[scale=1]{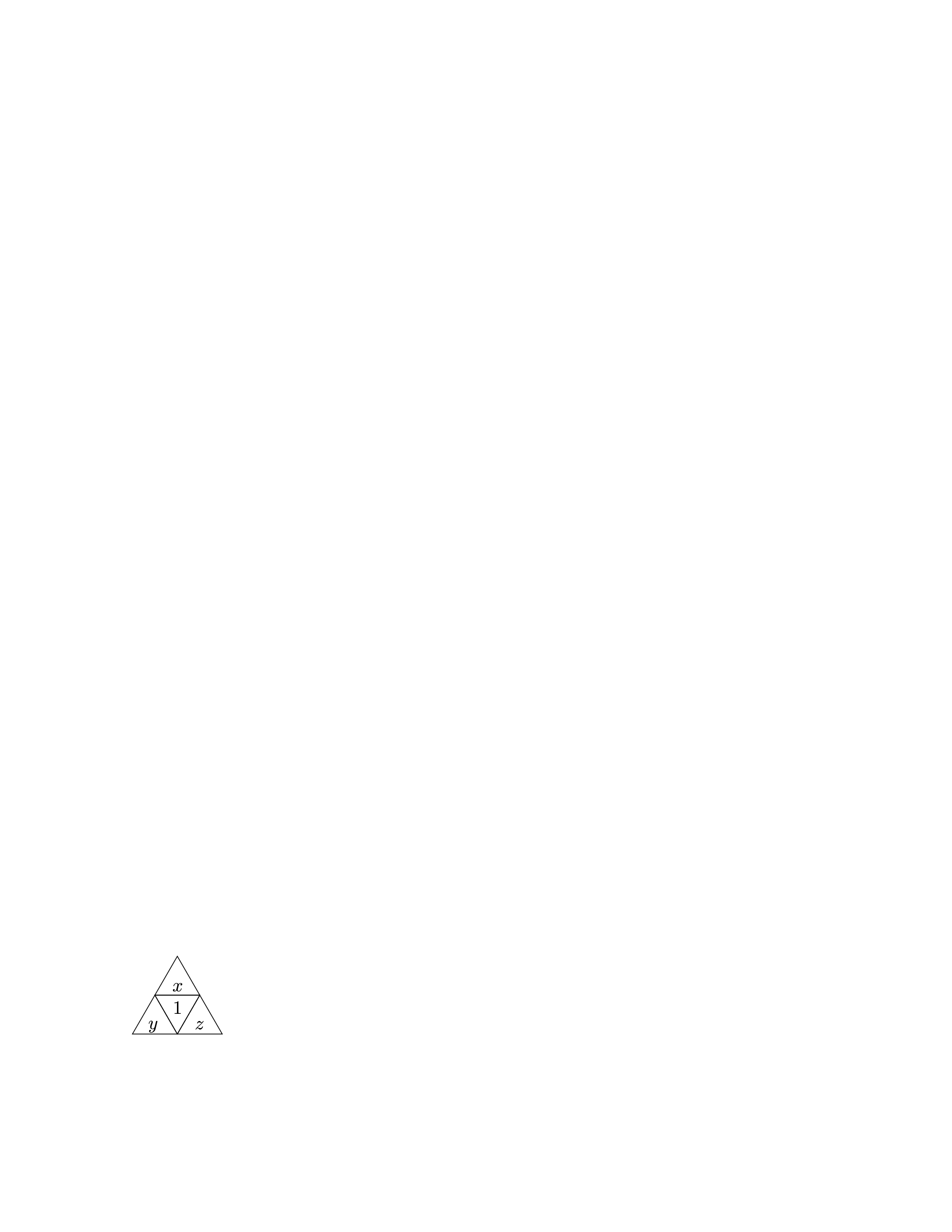}\\
        \emph{(i) $\mathcal{T}_2$}
    \end{minipage}
    \begin{minipage}[b]{0.32\linewidth}
        \centering
        \includegraphics[scale=1]{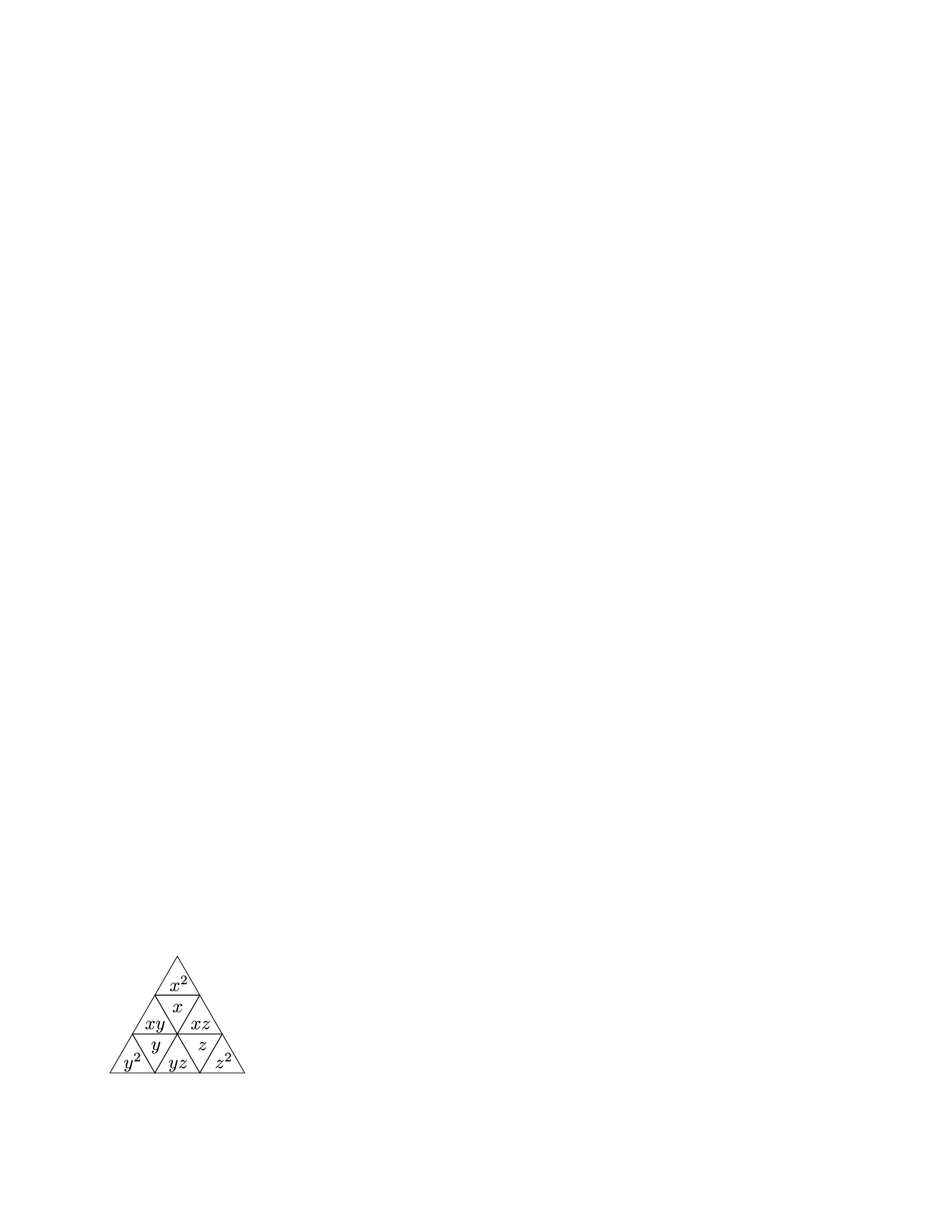}\\
        \emph{(ii) $\mathcal{T}_3$}
    \end{minipage}
    \begin{minipage}[b]{0.32\linewidth}
        \centering
        \includegraphics[scale=1]{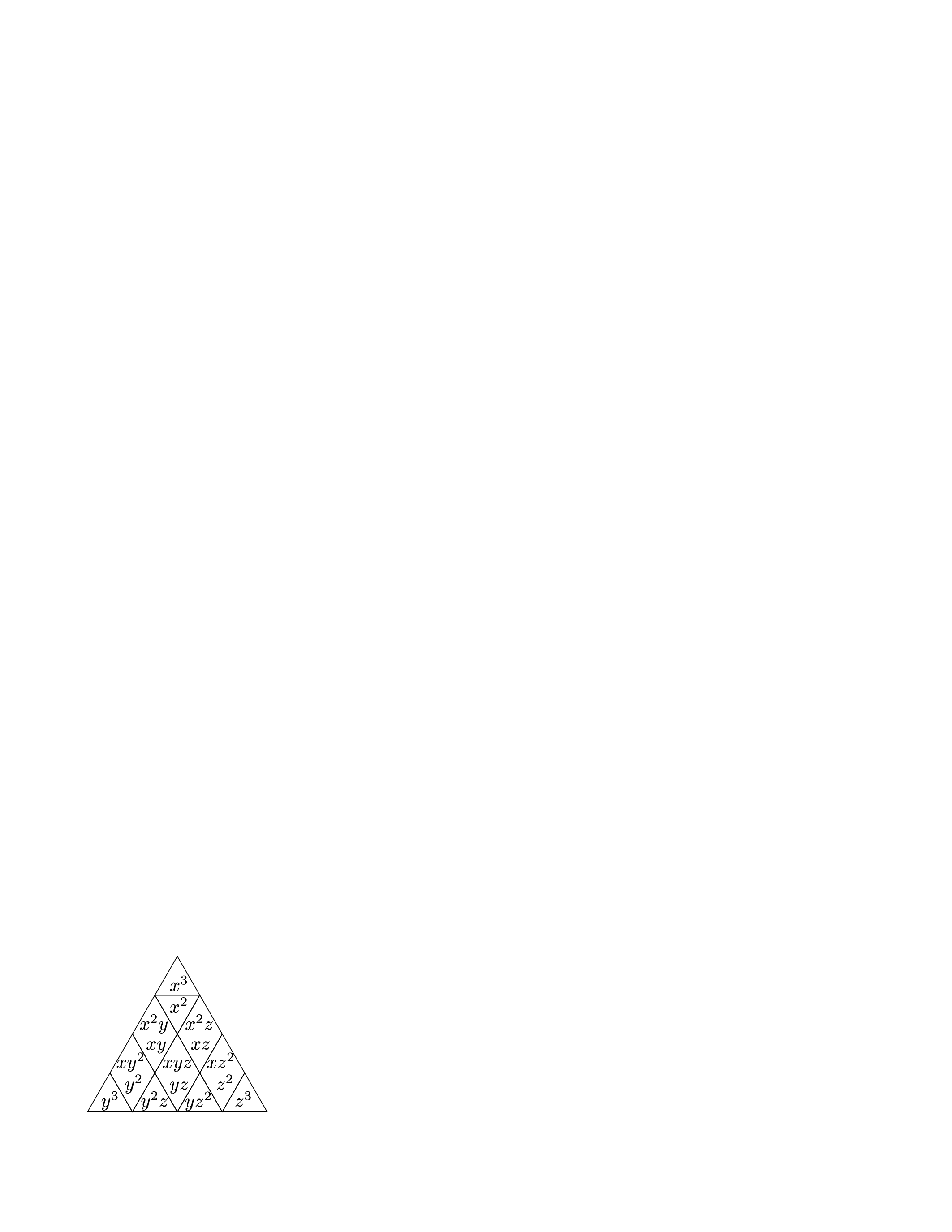}\\
        \emph{(iii) $\mathcal{T}_4$}
    \end{minipage}
    \caption{Some triangular regions $\mathcal{T}_d$.}
    \label{fig:triregion-R}
\end{figure}

Throughout this manuscript we order the monomials of $R$ with the \emdx{graded reverse-lexicographic order}, that is,
$x^a y^b z^c > x^p y^q z^r$ if either $a+b+c > p+q+r$ or $a+b+c = p+q+r$ and the \emph{last} non-zero entry in
$(a-p, b-q, c-r)$ is \emph{negative}. For example, in degree $3$,
\[
    x^3 > x^2y > xy^2 > y^3 > x^2z > xyz > y^2z > xz^2 > yz^2 > z^3.
\]
Thus in $\mathcal{T}_4$, see Figure~\ref{fig:triregion-R}(iii), the upward-pointing triangles are ordered starting at
the top and moving down-left in lines parallel to the upper-left edge.

We generalise this construction to quotients by monomial ideals. Let $I$ be a monomial ideal of $R$. The
\emph{triangular region (of $R/I$) in degree $d$},%
\index{triangular region!of $R/I$ in degree $d$}
\index{0@\textbf{Symbol list}!TdI@$T_d(I)$}
denoted by $T_d(I)$, is the part of $\mathcal{T}_d$ that is obtained
after removing the triangles labeled by monomials in $I$. Note that the labels of the downward- and
upward-pointing triangles in $T_d(I)$ form $K$-bases of $[R/I]_{d-2}$ and $[R/I]_{d-1}$, respectively. It is sometimes
more convenient to illustrate such regions with the removed triangles darkly shaded instead of being removed; both
illustration methods will be used throughout this manuscript. See Figure~\ref{fig:triregion-RmodI} for an example.

\begin{figure}[!ht]
    \includegraphics[scale=1]{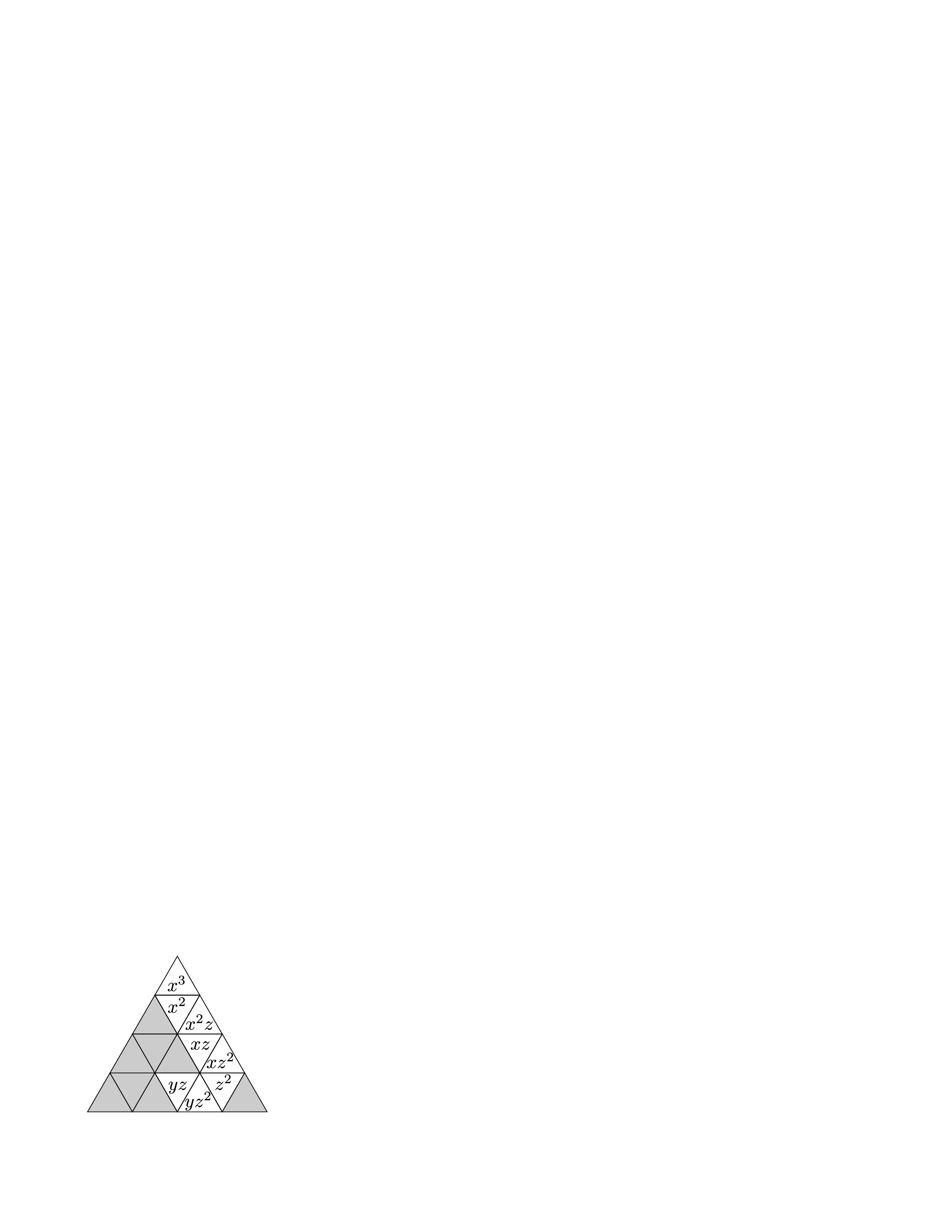}
    \caption{The triangular region $T_4(xy, y^2, z^3)$.}
    \label{fig:triregion-RmodI}
\end{figure}

Notice that the regions missing from $\mathcal{T}_d$ in $T_d(I)$ can be viewed as a union of (possibly overlapping)
upward-pointing triangles of various side lengths that include the upward- and downward-pointing triangles inside them.
Each of these upward-pointing triangles corresponds to a minimal generator of $I$ that has, necessarily, degree at most
$d-1$. We can alternatively construct $T_d(I)$ from $\mathcal{T}_d$ by removing, for each minimal generator $x^a y^b
z^c$ of $I$ of degree at most $d-1$, the \emph{puncture associated to $x^a y^b z^c$}%
\index{puncture!associated to a monomial}
which is an upward-pointing
equilateral triangle of side length $d-(a+b+c)$ located $a$ triangles from the bottom, $b$ triangles from the
upper-right edge, and $c$ triangles from the upper-left edge. See Figure~\ref{fig:triregion-punctures} for an example.
We call $d-(a+b+c)$ the \emph{side length of the puncture associated to $x^a y^b z^c$},%
\index{puncture!side length}
regardless of possible overlaps with other punctures in $T_d (I)$.

\begin{figure}[!ht]
    \begin{minipage}[b]{0.48\linewidth}
        \centering
        \includegraphics[scale=1]{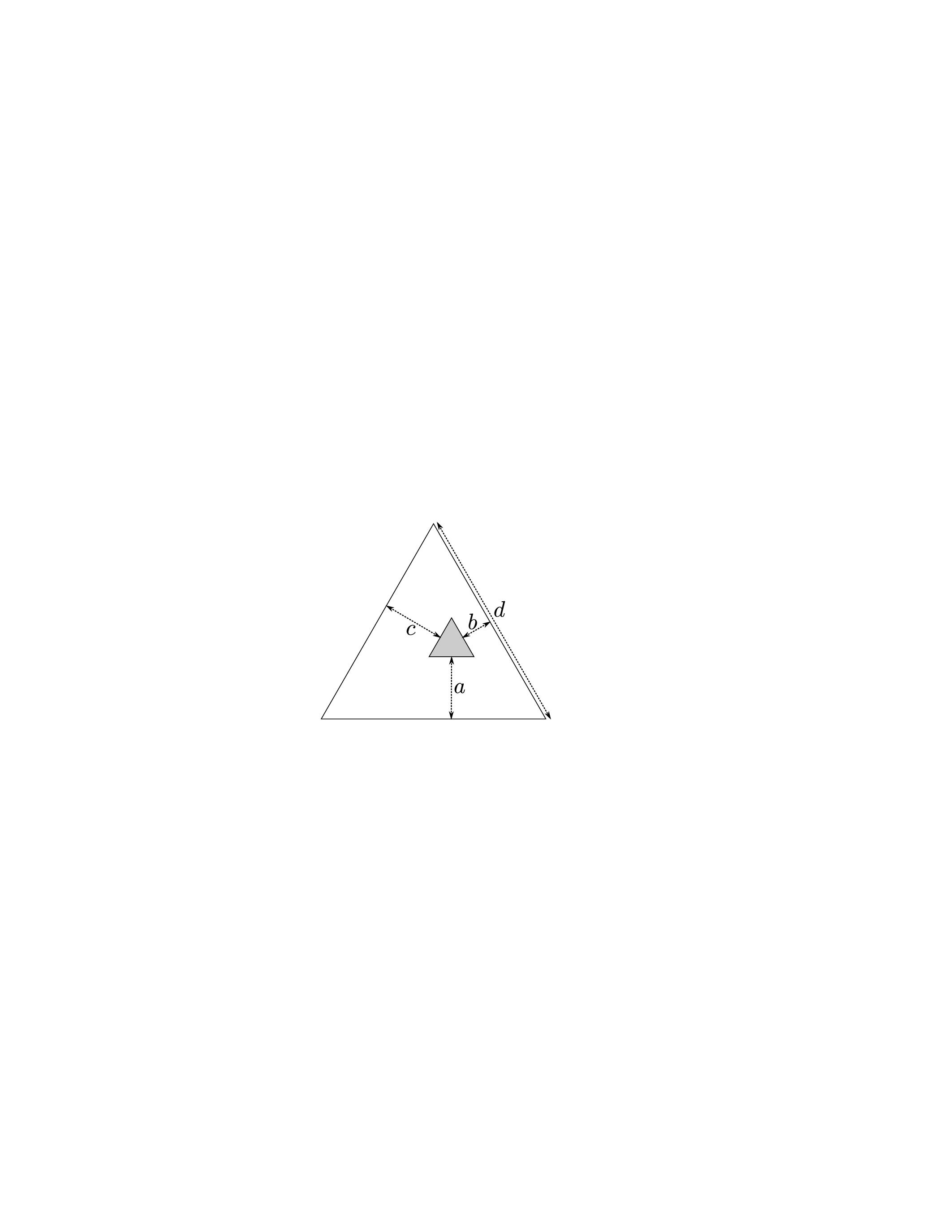}\\
        \emph{(i) $T_{d}(x^a y^b z^c)$}
    \end{minipage}
    \begin{minipage}[b]{0.48\linewidth}
        \centering
        \includegraphics[scale=1]{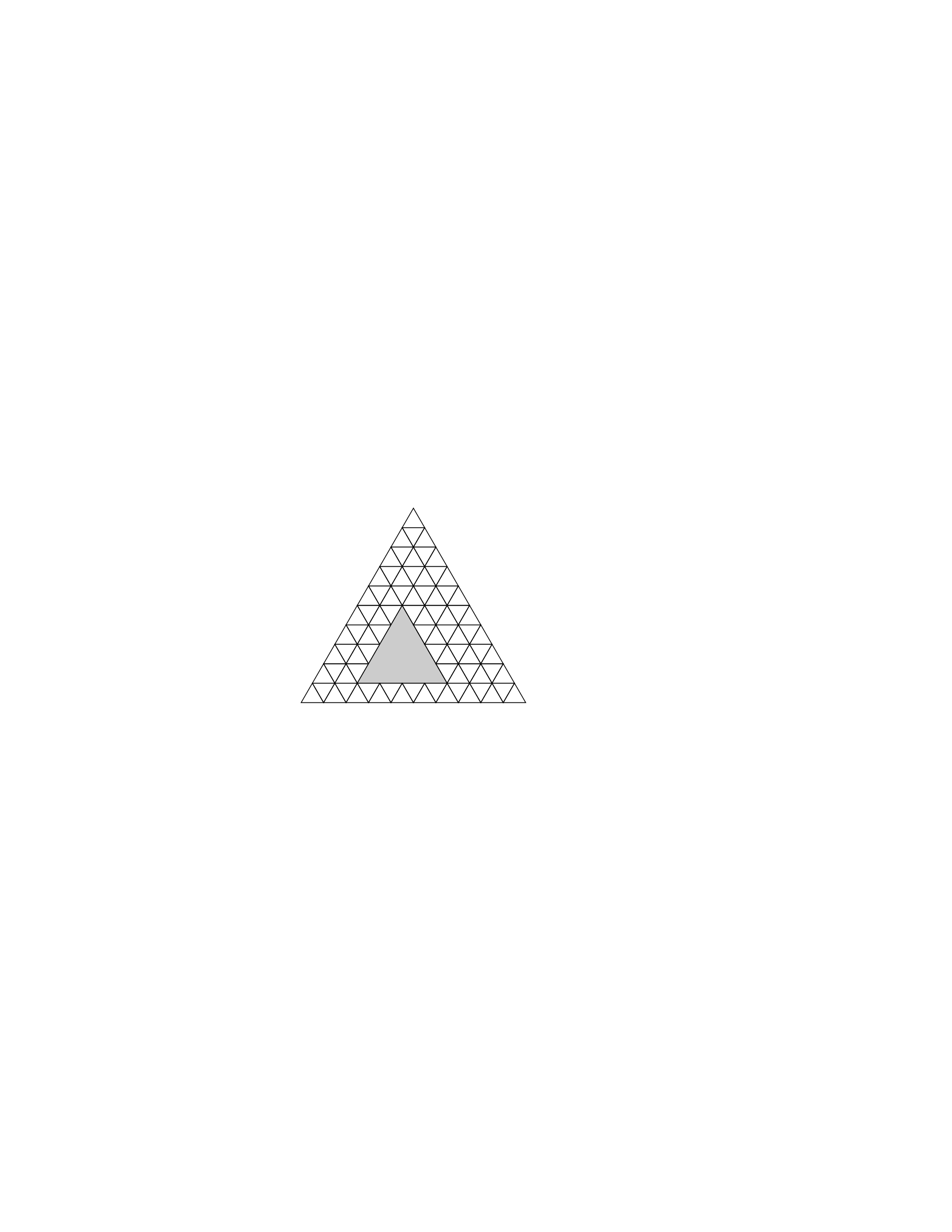}\\
        \emph{(ii) $T_{10}(xy^3z^2)$}
    \end{minipage}
    \caption{$T_d(I)$ as constructed by removing punctures.}
    \label{fig:triregion-punctures}
\end{figure}

We say that two punctures \emph{overlap}%
\index{puncture!overlapping}
if they share at least an edge. Two punctures are said to be \emph{touching}%
\index{puncture!touching}
if they share precisely a vertex.

~\subsection{The Hilbert function and \texorpdfstring{$T_d(I)$}{T\_d(I)}}~\par

Let $I$ be a monomial ideal of $R$. Recall that each component $[R/I]_d$ is a finite dimensional vector space. The
\emdx{Hilbert function} of $R/I$ is the function $h_{R/I}: \ZZ \to \ZZ$, where $h(d) := h_{R/I}(d) := \dim_K [R/I]_{d}$.%
\index{0@\textbf{Symbol list}!hRI@$h_{R/I}$}
By construction, $T := T_d(I)$ has $h(d-2)$ downward-pointing triangles and $h(d-1)$ upward-pointing triangles. Notice
also that $h(d)$ is the number of vertices in $T_d (I)$. Later it will become important to distinguish whether $h(d-2)$
and $h(d-1)$ are equal. We say $T$ is \emph{balanced}%
\index{triangular region!balanced}
if $h(d-2) = h(d-1)$, and otherwise we say $T$ is \emph{unbalanced}. Moreover, for $T$ unbalanced, if $h(d-2) < h(d-1)$,
then we say $T$ is \emph{$\dntri$-heavy}, and otherwise we say $T$ is \emph{$\uptri$-heavy}.
\index{triangular region!$\dntri$- or $\uptri$-heavy}

~\subsection{Socle elements}\label{sub:socle}~\par

Let $I$ be a monomial ideal of $R$. The quotient ring $R/I$ or simply $I$ is called \emph{Artinian} if $R/I$ is a finite
$K$-vector space. In the language of triangular regions, this translates as $R/I$ is Artinian if and only if $T_d(I)$
has a puncture in each corner of $\mathcal{T}_d$ for some $d$.

The \emdx{socle} of $R/I$ is the annihilator of $\mathfrak{m} = (x,y,z)$, the homogeneous maximal ideal of $R$, that is,%
\index{0@\textbf{Symbol list}!socRI@$\soc{R/I}$}
$\soc{R/I} = \{ f \in R/I \st fx = fy = fz = 0 \}$. As $I$ is a monomial ideal, $\soc{R/I}$ can be generated by
monomials. The monomials $m \in \soc{R/I}$ of degree $m-2$ are precisely those that are the center of ``triads'' in $T_d(I)$.%
\index{puncture!triad} \index{triad|see{puncture}}
See Figure~\ref{fig:socle-element} for an illustration of such a triad.

\begin{figure}[!ht]
    \includegraphics[scale=1.75]{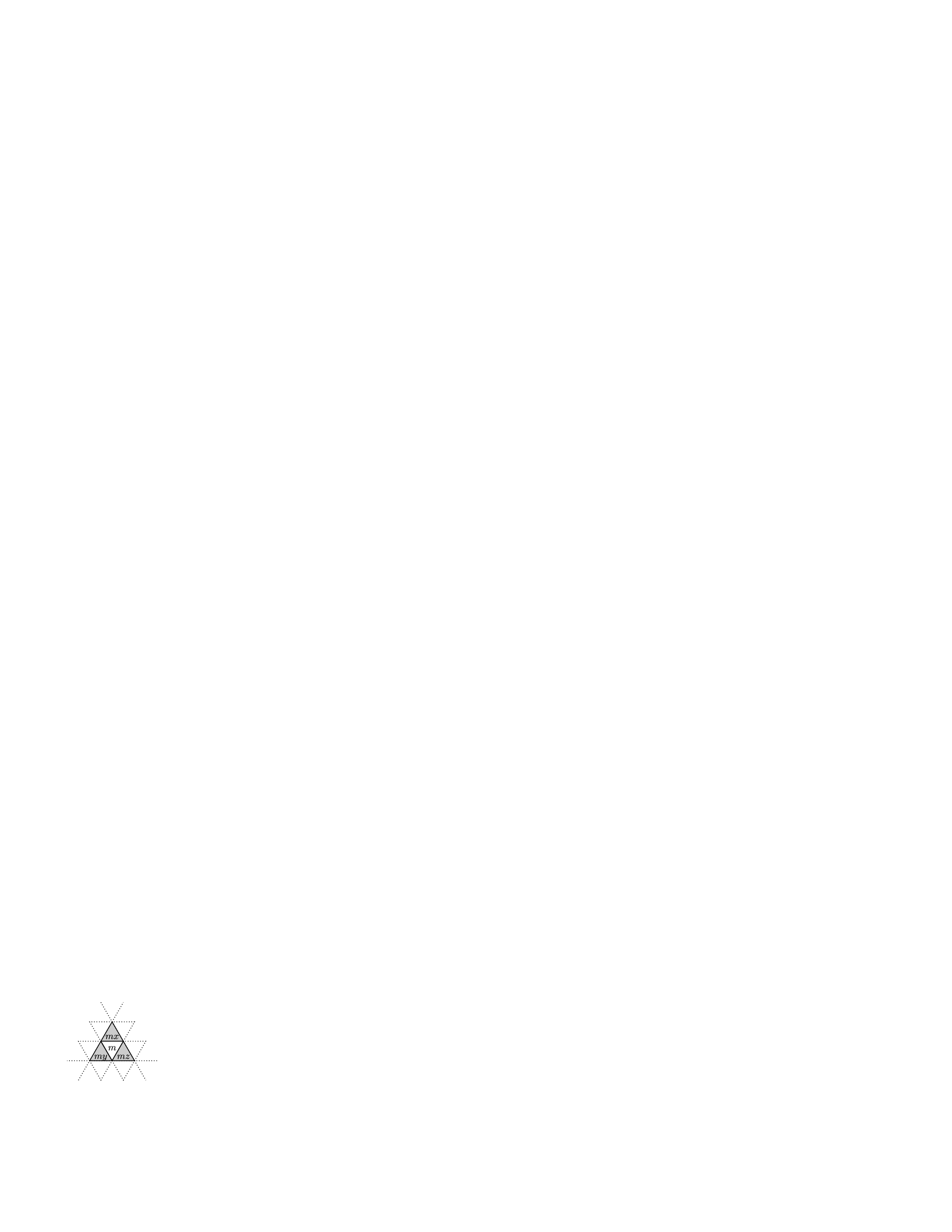}
    \caption{A triad around the socle element $m$.}
    \label{fig:socle-element}
\end{figure}

It is often important to determine the minimal degree, or bounds thereon, of socle elements of $R/I$. If the punctures
of $T_d(I)$ corresponding to the minimal generators of $I$ do not overlap, then the minimal degree of a socle element is
$d-2$ provided $T_d(I)$ contains a triad, otherwise the minimal degree of a socle element of $R/I$ is at least $d-1$. On
the other hand, if $T_d(I)$ has overlapping punctures, then the degrees of socle generators cannot be immediately
estimated.

If $R/I$ is Artinian, then the least degree $j$ such that $[R/I]_j \neq 0$ is called the \emph{socle degree}%
\index{socle!degree}
or Castelnuovo-Mumford regularity of $R/I$. The \emph{type}%
\index{socle!type}
of $R/I$ is the $K$-dimension of $\soc{R/I}$. Notice that $[R/I]_e \subset [\soc{R/I}]_e$ if $e$ is the socle
degree of $R/I$. Further, $R/I$ is said to be \emdx{level} if its socle is concentrated in one degree, i.e., in
its socle degree.

~\subsection{Greatest common divisors}~\par\label{subsec:gcd}

Let $I$ be a monomial ideal of $R$ minimally generated by the monomials $f_1, \ldots, f_n$. Without loss of generality,
assume $f_1, \ldots, f_m$ have degrees bounded above by $d-1$. Set $g = \gcd\{f_1, \ldots, f_m\}$. In
$\mathcal{T}_d$, the puncture associated to $g$ is exactly the smallest upward-pointing triangle that contains the
punctures associated to $f_1, \ldots, f_m$. See Figure~\ref{fig:triregion-gcd} for an example.

\begin{figure}[!ht]
    \begin{minipage}[b]{0.48\linewidth}
        \centering
        \includegraphics[scale=1]{figs/triregion-gcd-1}\\
        \emph{(i) $T_{8}(x^7, y^7, z^6, \underbrace{x y^4 z^2, x^3 y z^2, x^4 y z})$}
    \end{minipage}
    \begin{minipage}[b]{0.48\linewidth}
        \centering
        \includegraphics[scale=1]{figs/triregion-gcd-2}\\
        \emph{(ii) $T_{8}(x^7, y^7, z^6, xyz)$}
    \end{minipage}
    \caption{The greatest common divisor is associated with the minimal containing puncture.}
    \label{fig:triregion-gcd}
\end{figure}

The monomial ideal $J = (I, g)$ is minimally generated by $g$ and $f_{m+1}, \ldots, f_n$. Its triangular region $T_d(J)$
is obtained from $T_d(I)$ by replacing the punctures associated to $f_1, \ldots, f_m$ by their smallest containing
puncture in $T_d(I)$. This replacing operation can be reversed. A given puncture can be broken into smaller punctures
whose smallest containing puncture is the primal puncture. This corresponds to replacing a minimal generator in a
monomial ideal by several multiples whose greatest common divisor is the generator.

Observe that different monomial ideals can determine the same monomial region of $\mathcal{T}_d$. Consider, for example,
$I_1 = (x^5, y^5, z^5, xyz^2, xy^2z, x^2yz)$ and $I_2 = (x^5, y^5, z^5, xyz)$. Then $T_6 (I_1) = T_6 (I_2)$. However,
given a triangular region $T = T_d (I)$, there is a unique largest ideal $J$ that is generated by monomials whose
degrees are bounded above by $d-1$ and that satisfies $T = T_d (J)$. We call $J(T) := J$ the \emph{monomial ideal of the
triangular region $T$}.%
\index{triangular region!monomial ideal of}
\index{0@\textbf{Symbol list}!JT@$J(T)$}
In the example, $I_2 = J(T_6 (I_1))$.

Recall that each monomial of degree less than $d$ determines a puncture in $\mathcal{T}_d$. Thus, the punctures of a
monomial ideal $I \subset R$ in $\mathcal{T}_d$ correspond to the minimal generators of $I$ of degree less than $d$.
However, the punctures of the triangular region $T = T_d(I)$ correspond to the minimal generators of $J(T)$. In the
above example, $I_1$ determines six punctures in $T = T_6(I_1)$, but the region $T$ has four punctures.

\section{Tilings with lozenges} \label{sec:tiling}

In this section, we consider the question of tileability of a triangular region. Here we use monomial ideals merely as a
bookkeeping tool in order to describe the considered regions. If possible we want to tile such a region by lozenges. A
\emdx{lozenge} is a union of two unit equilateral triangles glued together along a shared edge, i.e., a rhombus with
unit side lengths and angles of $60^{\circ}$ and $120^{\circ}$. Lozenges are also called calissons and diamonds in the
literature.

Fix a positive integer $d$ and consider the triangular region $\mathcal{T}_d$ as a union of unit triangles. Thus a \emph{subregion}%
\index{triangular region!subregion}
$T \subset \mathcal{T}_d$ is a subset of such triangles. We retain their labels. As above, we say that
a subregion $T$ is \emph{$\dntri$-heavy}, \emph{$\uptri$-heavy}, or \emph{balanced} if there are more downward pointing
than upward pointing triangles or less, or if their numbers are the same, respectively. A subregion is \emph{tileable}%
\index{triangular region!tileable}
if either it is empty or there exists a tiling of the region by lozenges such that every triangle is part of exactly one
lozenge. See Figure~\ref{fig:triregion-tiling}.
\begin{figure}[!ht]
    \includegraphics[scale=1]{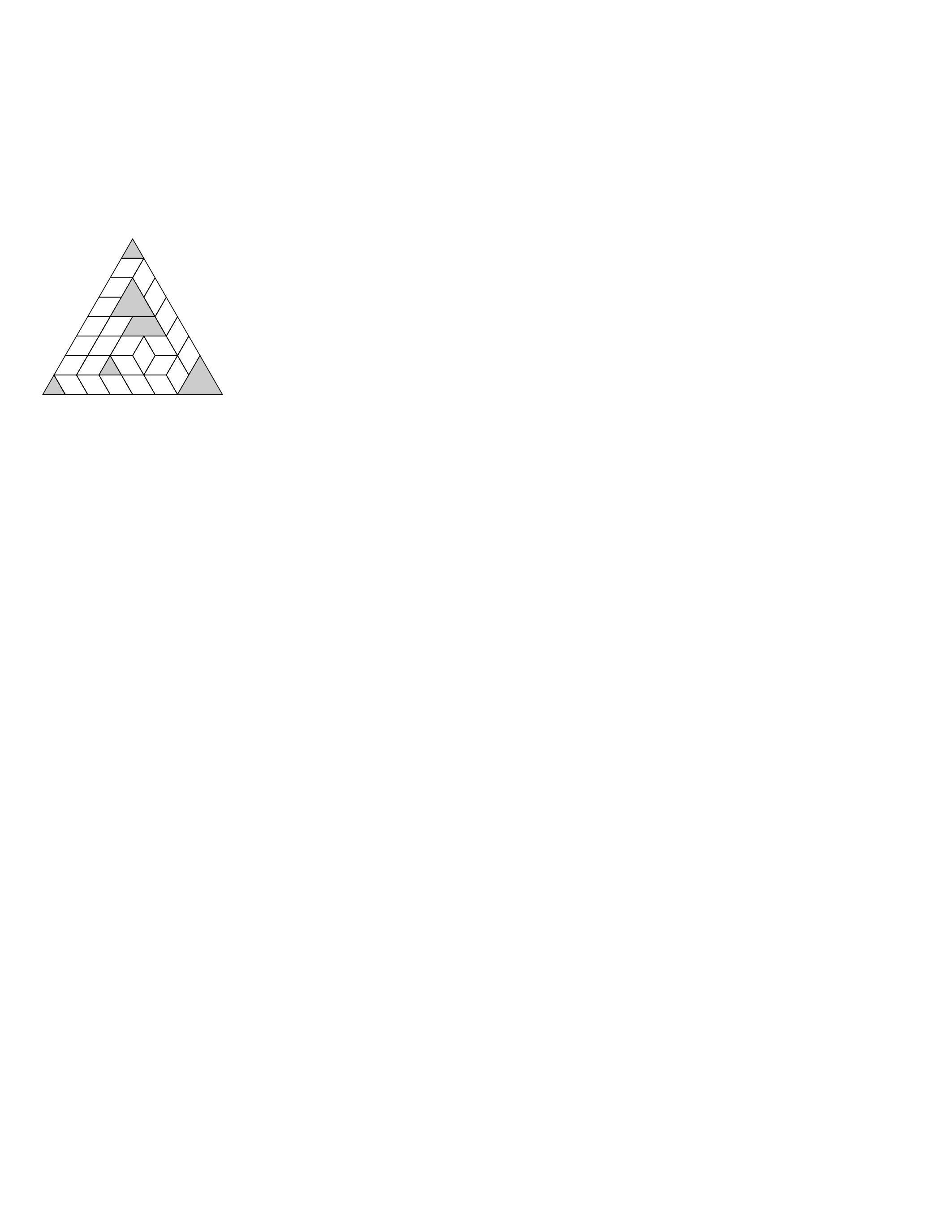}
    \caption{One of $13$ tilings of $T_{8}(x^7, y^7, z^6, x y^4 z^2, x^3 y z^2, x^4 y z)$ (see Figure~\ref{fig:triregion-gcd}(i)).}
    \label{fig:triregion-tiling}
\end{figure}
Since a lozenge in $\mathcal{T}_d$ is the union of a downward-pointing and an upward-pointing triangle, and every
triangle is part of exactly one lozenge, a tileable subregion is necessarily balanced.

Let $T \subset \mathcal{T}_d$ be any subregion. Given a monomial $x^a y^b z^c$ with degree less than $d$, the
\emph{monomial subregion}%
\index{triangular region!subregion associated to a monomial}
of $T$ associated to $x^a y^b z^c$ is the part of $T$ contained in the triangle $a$ units from
the bottom edge, $b$ units from the upper-right edge, and $c$ units from the upper-left edge. In other words, this
monomial subregion consists of the triangles that are in $T$ and the puncture associated to the monomial $x^a y^b z^c$.
See Figure~\ref{fig:triregion-subregion} for an example.

\begin{figure}[!ht]
    \includegraphics[scale=1]{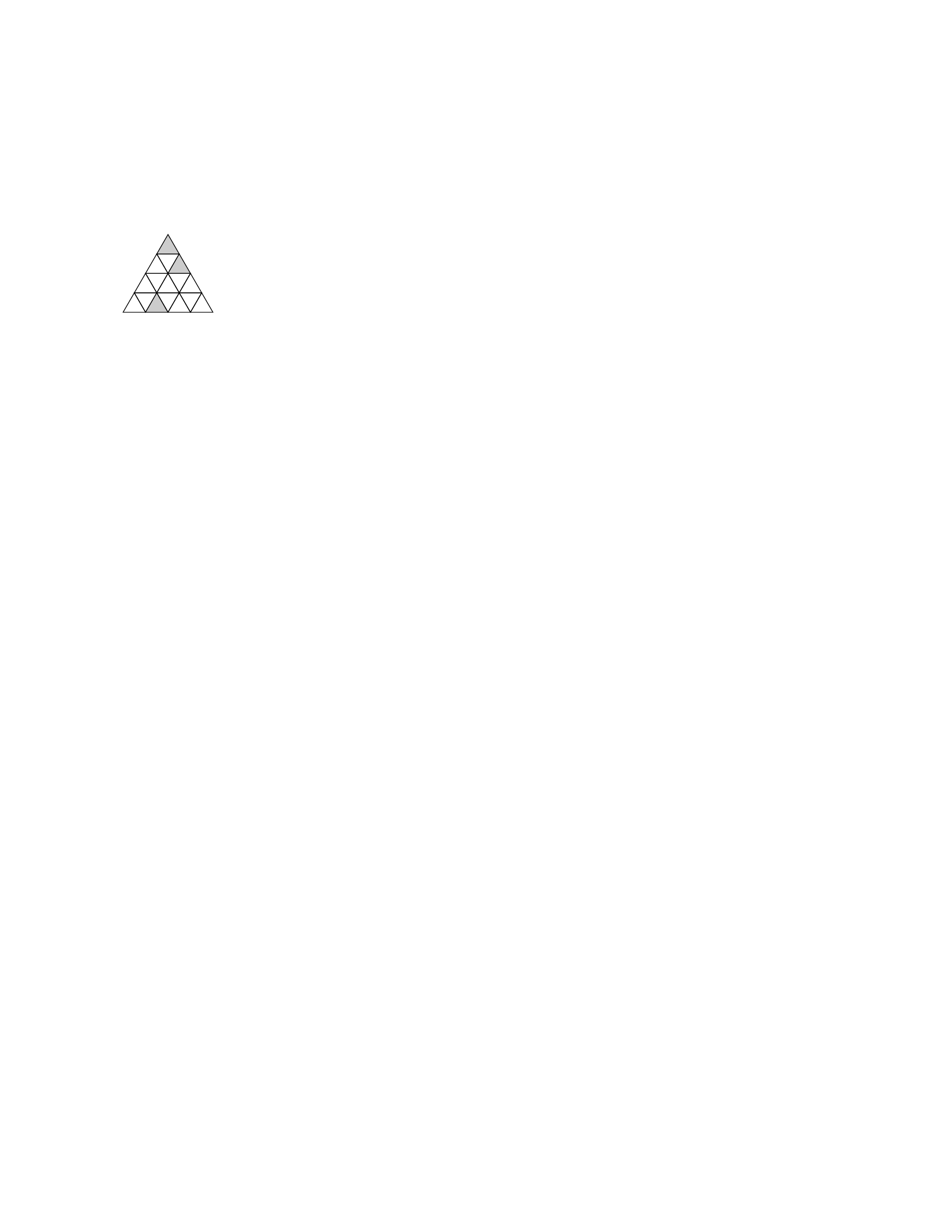}
    \caption{The monomial subregion of $T_{8}(x^7, y^7, z^6, x y^4 z^2, x^3 y z^2, x^4 y z)$
        (see Figure~\ref{fig:triregion-gcd}(i)) associated to $x y^2 z$.}
    \label{fig:triregion-subregion}
\end{figure}

Replacing a tileable monomial subregion by a puncture of the same size does not alter tileability.

\begin{lemma} \label{lem:replace-tileable}
    Let $T \subset \mathcal{T}_d$ be any subregion.  If the monomial subregion $U$ of $T$ associated to $x^a y^b z^c$ is tileable,
    then $T$ is tileable if and only if $T \setminus U$ is tileable.

    Moreover, each tiling of $T$ is obtained by combining a tiling of $T \setminus U$ and a tiling of $U$.
\end{lemma}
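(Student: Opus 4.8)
The plan is to argue that any lozenge in a tiling of $T$ must lie entirely inside $U$ or entirely inside $T \setminus U$, which immediately splits the problem. The key geometric observation is that $U$ is, by definition, a \emph{monomial subregion}: it is the intersection of $T$ with a single upward-pointing triangle $\Delta$ (the puncture associated to $x^a y^b z^c$, together with the unit triangles of $T$ contained in it). A lozenge is a union of one upward- and one downward-pointing unit triangle sharing an edge. First I would check that if a downward-pointing unit triangle lies in $\Delta$, then so does every upward-pointing unit triangle adjacent to it along an edge, and conversely that the only way an upward-pointing unit triangle in $\Delta$ can be edge-adjacent to a unit triangle outside $\Delta$ is across the boundary of $\Delta$, but that boundary edge is shared only with a downward-pointing triangle outside $\Delta$ along the \emph{sides} of $\Delta$ — and crucially the top edge of $\Delta$ points the ``wrong way'' so no lozenge straddles it. In short: the boundary of $\Delta$ never bisects a lozenge, because along each of the three sides of $\Delta$ the unit triangles immediately inside are upward-pointing while those immediately outside are downward-pointing, and a lozenge crossing such an edge would pair two triangles of the same orientation on the inside (or outside) — contradiction. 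This is the step I expect to be the only real content; everything else is bookkeeping.

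Granting that no lozenge straddles $\partial \Delta$, here is how the two directions go. Suppose $T$ is tileable, say by a tiling $\mathcal{L}$. Every lozenge of $\mathcal{L}$ lies either entirely in $U$ or entirely in $T \setminus U$, so $\mathcal{L}$ restricts to a tiling $\mathcal{L}|_U$ of $U$ and a tiling $\mathcal{L}|_{T \setminus U}$ of $T \setminus U$; in particular $T \setminus U$ is tileable. Conversely, suppose $T \setminus U$ is tileable by $\mathcal{L}'$; since $U$ is tileable by hypothesis, fix a tiling $\mathcal{L}''$ of $U$. Because $U$ and $T \setminus U$ are disjoint and partition the unit triangles of $T$, the union $\mathcal{L}' \cup \mathcal{L}''$ is a set of lozenges covering every unit triangle of $T$ exactly once, i.e., a tiling of $T$. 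This also proves the ``moreover'': the restriction map $\mathcal{L} \mapsto (\mathcal{L}|_{T\setminus U}, \mathcal{L}|_U)$ and the union map are mutually inverse bijections between tilings of $T$ and pairs (tiling of $T \setminus U$, tiling of $U$), so every tiling of $T$ is obtained by combining a tiling of $T \setminus U$ with a tiling of $U$.

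One subtlety worth flagging: $U$ may contain fewer unit triangles than the full puncture $\Delta$ (some triangles of $\Delta$ need not be in $T$), but this does not affect the argument — what matters is only that $U = T \cap \Delta$ and that $\partial \Delta$ does not bisect any lozenge, and the latter depends only on the orientation pattern of unit triangles along the sides of the fixed triangle $\Delta$, not on which triangles of $T$ happen to populate it. I would also remark that the empty-region conventions are handled automatically: if $U = \varnothing$ the statement is trivial, and if $T \setminus U = \varnothing$ then $T = U$ and tileability of $T$ is exactly the hypothesis. Thus the proof reduces entirely to the geometric lemma that the boundary of an upward-pointing triangular puncture is never crossed by a lozenge, and I would present that as the first and essentially only step.
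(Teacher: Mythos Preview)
Your central geometric claim---that ``the boundary of $\Delta$ never bisects a lozenge''---is false as stated. Along each side of the upward-pointing triangle $\Delta$, the unit triangles immediately inside are indeed upward-pointing and those immediately outside are downward-pointing, but that is precisely the configuration of a lozenge: one up, one down, sharing an edge. So lozenges \emph{can} straddle $\partial\Delta$; nothing purely local forbids it. Your own sentence acknowledges this (``that boundary edge is shared only with a downward-pointing triangle outside $\Delta$'') and then draws the opposite conclusion.

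What is true, and what you correctly note first, is the one-sided statement: every downward-pointing unit triangle in $\Delta$ has all three of its edge-neighbours also in $\Delta$. The paper's proof uses exactly this, and then closes the gap with a counting argument you are missing: since $U$ is tileable it is balanced, so if any lozenge of a tiling $\tau$ of $T$ crossed $\partial\Delta$ it would consume an upward triangle of $U$ while leaving all downward triangles of $U$ to be matched internally---impossible, since there would then be strictly fewer upward than downward triangles of $U$ available. That balance step is the actual content; it is where the hypothesis ``$U$ is tileable'' enters. Once you replace your incorrect local claim with this counting argument, the rest of your write-up (restriction and union of tilings, the bijection giving the ``moreover'') goes through and matches the paper.
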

\begin{proof}
    Suppose $T$ is tileable, and let $\tau$ be a tiling of $T$.  If a tile in $\tau$ contains a downward-pointing triangle of $U$, then the
    upward-pointing triangle of this tile also is in $U$. Hence, if any lozenge in $\tau$ contains exactly one triangle of $U$, then
    it must be an upward-pointing triangle. Since $U$ is balanced, this would leave $U$ with a downward-pointing triangle that is
    not part of any tile, a contradiction. It follows that $\tau$ induces a tiling of $U$, and thus $T \setminus U$ is tileable.

    Conversely, if $T \setminus U$ is tileable, then a tiling of $T \setminus U$ and a tiling of $U$ combine to a tiling of $T$.
\end{proof}

Let $U \subset \mathcal{T}_d$ be a monomial subregion, and let $T, T' \subset \mathcal{T}_d$ be any subregions such that
$T \setminus U = T' \setminus U$. If $T \cap U$ and $T' \cap U$ are both tileable, then $T$ is tileable if and only if
$T'$ is, by Lemma \ref{lem:replace-tileable}. In other words, replacing a tileable monomial subregion of a triangular
region by a tileable monomial subregion of the same size does not affect tileability.

Using the above observation to reduce to the simplest case, we find a tileability criterion of triangular regions
associated to monomial ideals.

\begin{theorem} \label{thm:tileable}
    Let $T = T_d(I)$ be a balanced triangular region, where $I \subset R$ is any monomial ideal.  Then $T$ is tileable if and only if
    $T$ has no $\dntri$-heavy monomial subregions.
\end{theorem}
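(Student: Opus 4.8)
The plan is to prove both implications. One direction is immediate: if $T$ has a $\dntri$-heavy monomial subregion $U$, then by Lemma~\ref{lem:replace-tileable}-style reasoning (or directly), any tiling of $T$ would have to induce a tiling of $U$ — because every tile meeting a downward-pointing triangle of $U$ lies entirely inside $U$ — and a $\dntri$-heavy region cannot be tiled since each lozenge covers exactly one downward- and one upward-pointing triangle. Hence $T$ is not tileable. So the content is the converse: if $T$ has no $\dntri$-heavy monomial subregion, then $T$ is tileable.

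For the converse I would argue by induction, reducing $T$ to the ``simplest case'' flagged in the paragraph before the theorem, namely a region whose punctures are pairwise non-overlapping (a disjoint union of single triangular punctures), using the gcd/replacement operations of Section~\ref{subsec:gcd}. The key reduction step: if two punctures of $T$ overlap, replace them by their smallest containing puncture $g$; this enlarges the removed region but, crucially, the newly removed monomial subregion (the containing puncture minus the two overlapping ones) should itself be tileable precisely because $T$ has no $\dntri$-heavy monomial subregion — so by the observation following Lemma~\ref{lem:replace-tileable}, tileability is unaffected. Iterating, we reach a region $T'$ with pairwise non-overlapping punctures, still balanced, still with no $\dntri$-heavy monomial subregion. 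For such a region one tiles the complement of the punctures directly: $\mathcal{T}_d$ itself is a single upward-pointing triangle, which is $\uptri$-heavy (it has one more upward triangle than downward), and removing a collection of disjoint upward-pointing triangular punctures — each of which is itself $\uptri$-heavy — from $\mathcal{T}_d$ leaves a region that, being balanced, must decompose into horizontal ``trapezoidal'' strips each of which is tileable (a trapezoid with equal numbers of up/down triangles tiles trivially by vertical lozenges). Here the arithmetic $\binom{d+1}{2} - \binom{d}{2} = d$ and the analogous counts for the punctures give the balancing bookkeeping.

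The main obstacle I expect is the reduction step, specifically verifying that the monomial subregion created when merging overlapping punctures is tileable: this needs the hypothesis that $T$ has no $\dntri$-heavy monomial subregion to be propagated correctly, and one must check that the merged region $T'$ still inherits this property (a monomial subregion of $T'$ pulls back to a monomial subregion of $T$ of the same combinatorial type, up to the tileable replacement). A secondary subtlety is giving a clean constructive tiling of the non-overlapping case — partitioning $\mathcal{T}_d$ minus disjoint punctures into tileable pieces — where one wants an explicit sweep (e.g. from the top edge downward, or column by column) rather than an existence argument, since the paper advertises that ``our arguments also give an algorithm for constructing a tiling.'' The parity/counting checks in that sweep are routine but need the balanced hypothesis at each strip.
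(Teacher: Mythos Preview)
Your forward direction is fine and matches the paper. The reduction to non-overlapping punctures is also the right instinct, and the paper does something equivalent via Lemma~\ref{lem:replace-tileable}. But there is a genuine gap in your final constructive step. The claim that a balanced region with pairwise non-overlapping punctures ``decomposes into horizontal trapezoidal strips each of which is tileable'' is not correct: a single horizontal row of $\mathcal{T}_d$ has exactly one more upward-pointing triangle than downward-pointing triangle, so horizontal strips are never balanced on their own, and any tiling must have lozenges crossing between consecutive rows. Deciding \emph{where} to place those crossing lozenges is precisely the hard part, and it is where the hypothesis (no $\dntri$-heavy monomial subregion) has to be invoked again --- not just once at the reduction stage.

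The paper closes this gap by induction on $d$. After reducing to a region with no overlapping or touching punctures, it splits $T$ into the bottom row $L$ and the upper region $U$ (the monomial subregion for $x$). The bottom row must meet at least one puncture, else $U$ would be $\dntri$-heavy. One then places a single crossing lozenge immediately to the right of each bottom-row puncture except the rightmost one; this leaves a uniquely tileable modified bottom row $L'$ and a balanced modified upper region $U'$. The crux is checking that $U'$ still has no $\dntri$-heavy monomial subregion, so that the induction hypothesis applies to it. This check is the longest part of the proof: a hypothetical $\dntri$-heavy $V' \subset U'$ must sit along the bottom of $U'$ and involve some of the removed crossing triangles; one extends $V'$ down one row to a subregion $V \subset T$, uses that $V$ is not $\dntri$-heavy (hypothesis on $T$) and not a non-trivial tileable subregion (by the reduction), and derives a contradiction from the parity of $V \cap L'$. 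Your proposal has no analogue of this step, and without it the argument does not close. The secondary issue you flag --- that the merged region $T'$ must inherit the no-$\dntri$-heavy property --- is real but more routine; the row-peeling verification is where the work lies.
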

\begin{proof}
    Suppose $T$ contains a $\dntri$-heavy monomial subregion $U$.  That is, $U$ has more downward-pointing triangles than upward-pointing
    triangles.  Since the only triangles of $T \setminus U$ that share an edge with  $U$ are downward-pointing triangles, it is impossible to cover every
    downward-pointing triangle of $U$ with a lozenge.  Thus, $T$ is non-tileable.

    Conversely,  suppose $T$ has no $\dntri$-heavy monomial subregions.  In order to show that $T$ is tileable, we may also assume
    that $T$ has no non-trivial tileable monomial subregions by Lemma~\ref{lem:replace-tileable}.

    Consider any pair of touching or overlapping punctures in $\mathcal{T}_d$. The smallest monomial subregion $U$ containing both punctures
    is tileable. (In fact, such a monomial region is uniquely tileable by lozenges.)
    If further triangles stemming from other punctures of $T$ have been removed from $U$, then the resulting region
    $T \cap U$ becomes $\dntri$-heavy or empty. Thus, our assumptions imply that $T$ has no overlapping and no touching
    punctures.

    Now we proceed by induction on $d$. If $d \leq 2$, then $T$ is empty or consists of one lozenge.  Thus, it is tileable.
    Let $d \geq 3$, and let $U$ be the monomial subregion of $T$ associated to $x$, i.e., $U$ consists of  the upper $d-1$
    rows of $T$.  Let $L$ be the bottom row of $T$.  If $L$ does not contain part of a puncture of $T$, then $L$ is
    $\uptri$-heavy forcing $U$ to be a $\dntri$-heavy monomial subregion, contradicting an assumption on $T$.  Hence, $L$
    must contain part of at least one puncture of $T$.  See Figure~\ref{fig:thm-tileable}(i).

    \begin{figure}[!ht]
        \begin{minipage}[b]{0.48\linewidth}
            \centering
            \includegraphics[scale=1]{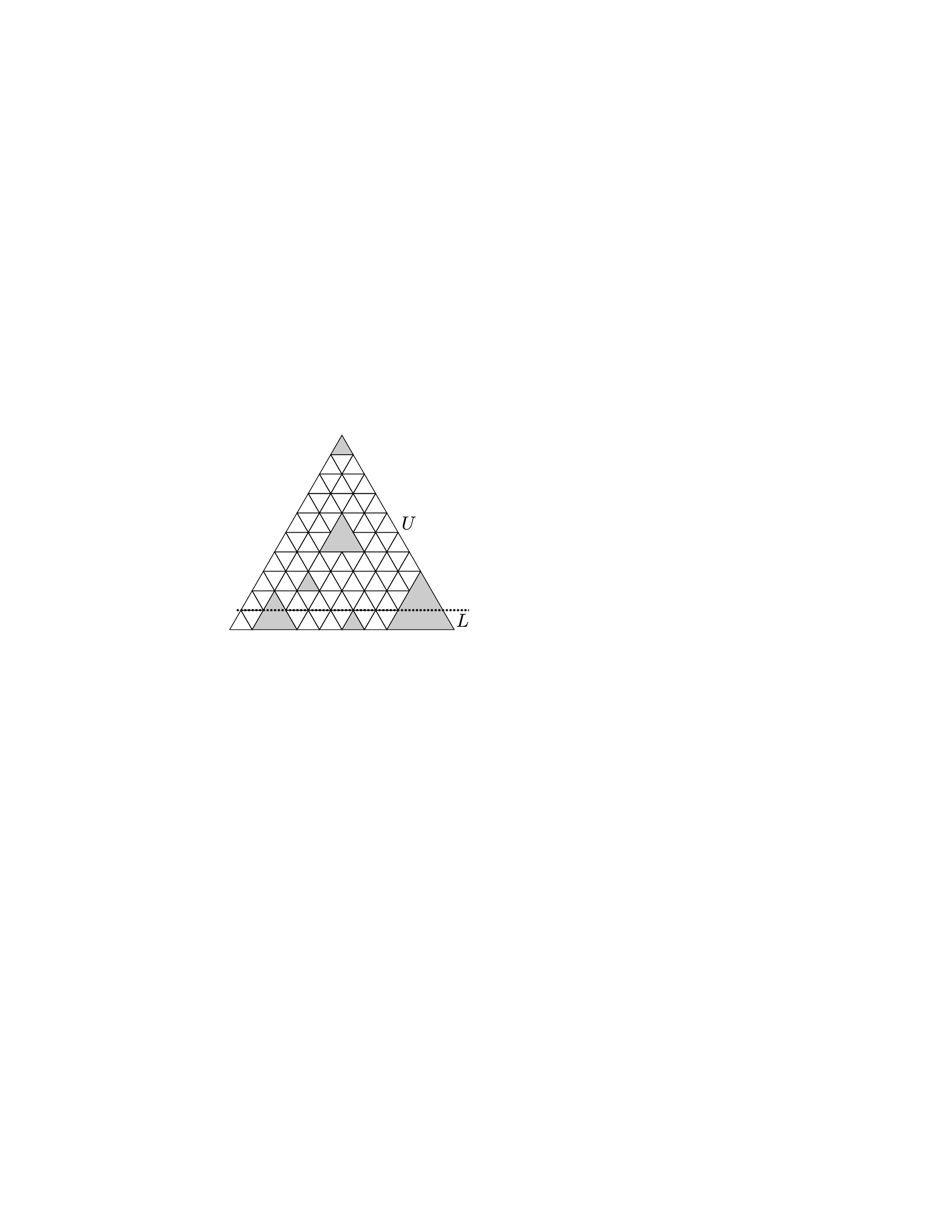}\\
            \emph{(i) The region $T$ split in to $U$ and $L$.}
        \end{minipage}
        \begin{minipage}[b]{0.48\linewidth}
            \centering
            \includegraphics[scale=1]{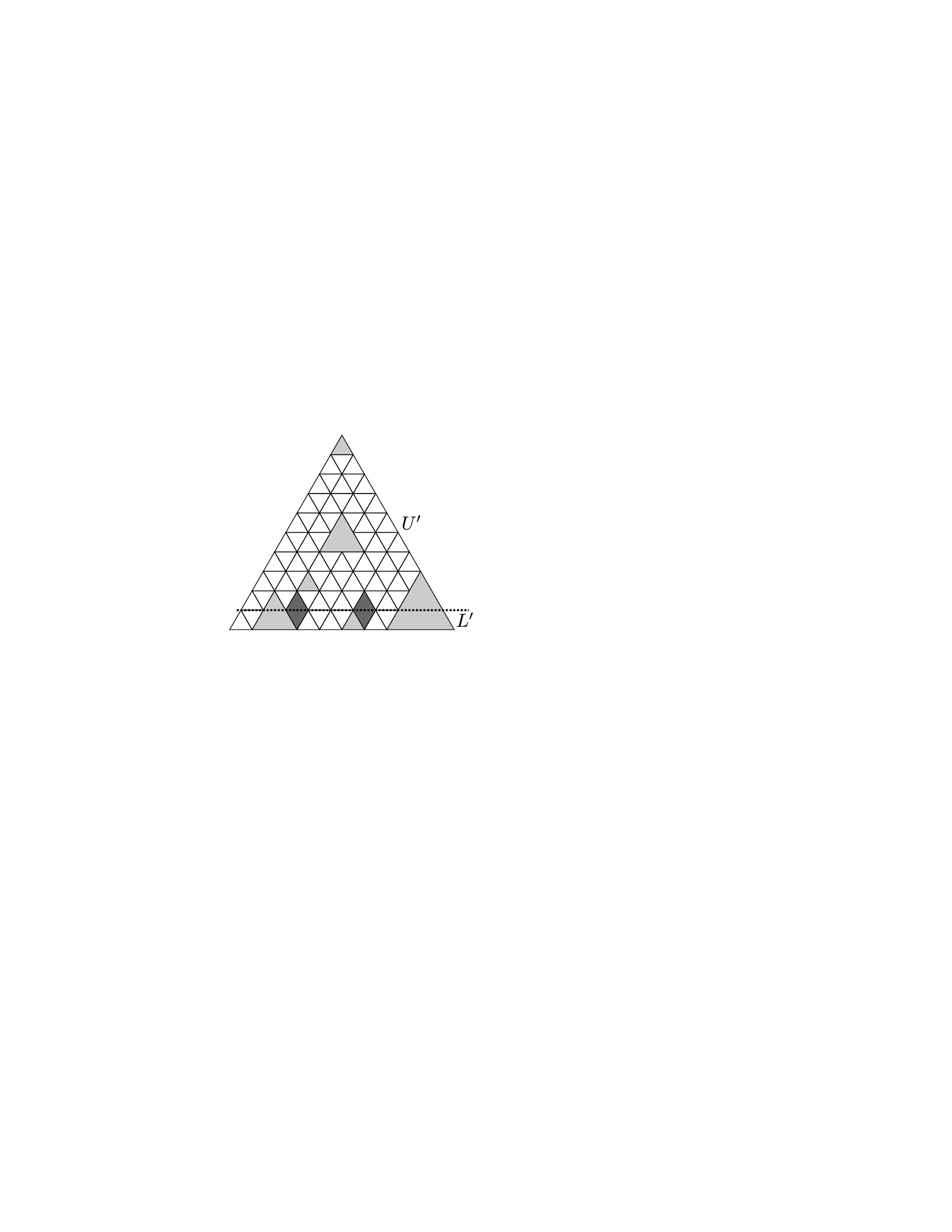}\\
            \emph{(ii) Creating $U'$ and $L'$.}
        \end{minipage}
        \caption{Illustrations for the proof of Theorem~\ref{thm:tileable}.}
        \label{fig:thm-tileable}
    \end{figure}

    Place an up-down lozenge in $T$ just to the right of each puncture along the bottom row \emph{except} the farthest right puncture.
    Notice that putting in all these tiles is  possible since punctures are non-overlapping and non-touching.  Let $U' \subset U$
    and $L' \subset L$ be the subregions that are obtained by removing the relevant upward-pointing and  downward-pointing triangles
    of the added lozenges from $U$ and $L$, respectively.  See Figure~\ref{fig:thm-tileable}(ii). Notice, $L'$ is uniquely tileable.

    As $T$ and $L'$ are balanced,  so is $U'$.   Assume $U'$ contains a monomial subregion $V'$ that is $\dntri$-heavy.
    Then $V' \neq U'$, and hence $V'$ fits into a triangle of side length $d-2$. Furthermore, the assumption on $T$ implies
    that $V'$ is not a monomial subregion of $U$. In particular, $V'$ must be located at the bottom of $U'$. Let $\tilde{V}$
    be the smallest monomial subregion of $U$ that contains $V'$. It is obtained from $V'$ by adding suitable upward-pointing
    triangles that are parts of the added lozenges.  Expand $\tilde{V}$ down one row to a monomial subregion $V$ of $T$.
    Thus, $V$ fits into a triangle of side length $d-1$ and is not $\dntri$-heavy. If $V$ is balanced, then, by induction,
    $V$ is tileable.  However, we assumed $T$ contains no such non-trivial regions.  Hence, $V$ is $\uptri$-heavy.
    Observe now that the region $V \cap L'$ is either balanced or has exactly one more upward-pointing triangle than
    downward-pointing triangles.  Since $V'$ is obtained from $V$ by removing $V \cap L$ and some of the added lozenges,
    it follows that $V'$ cannot be $\dntri$-heavy, a contradiction.

    Therefore, we have shown that each monomial subregion of $U'$ is not $\dntri$-heavy. By induction on $d$, we conclude
    that $U'$ is tileable. Using the lozenges already placed, along with the tiling of $L'$, we obtain a tiling of $T$.
\end{proof}

\begin{remark} \label{rem:complexity}
    The preceding proof yields a recursive construction of a canonical tiling of the triangular region.  In fact, the tiling can
    be seen as minimal, in the sense of Subsection~\ref{sub:nilp}.  Moreover, the theorem yields an exponential (in the number of
    punctures) algorithm to determine the tileability of a region.

    Thurston~\cite{Th} gave a linear (in the number of triangles) algorithm to determine the tileability of a \emph{simply-connected region},%
    \index{triangular region!simply-connected}
    i.e., a region with a polygonal boundary.  Thurston's algorithm also yields a minimal canonical tiling.
\end{remark}

Let $I$ be a monomial ideal of $R$ whose punctures in $\mathcal{T}_d$ (corresponding to the minimal generators of $I$ having degree less than $d$)
have side lengths that sum to $m$. Then we define the \emdx{over-puncturing coefficient} of $I$ in degree $d$ to be $\mo_d (I) = m - d$.
If $\mo_d (I) < 0$, $\mo_d (I) = 0$, or $\mo_d (I) > 0$, then we call $I$ \emph{under-punctured}, \emph{perfectly-punctured}, or
\emph{over-punctured} in degree $d$, respectively.
\index{over-puncturing coefficient!over-punctured}\index{over-puncturing coefficient!under-punctured}\index{over-puncturing coefficient!perfectly-punctured}
\index{0@\textbf{Symbol list}!oI@{$\mo_d (I)$, $\mo_d (T)$}}

Let now $T = T_d(I)$ be a triangular region with punctures whose side lengths sum to $m$.  Then we define similarly the \emph{over-puncturing coefficient}%
of $T$ to be $\mo_T = m - d$.  If $\mo_T < 0$, $\mo_T = 0$, or $\mo_T > 0$, then we call $T$ \emph{under-punctured}, \emph{perfectly-punctured},
or \emph{over-punctured}, respectively. Note that $\mo_T = \mo_d (J(T)) \leq \mo_d (I)$, and equality is true if and only if the ideals $I$ and
$J(T)$ are the same in all degrees less than $d$.

Perfectly-punctured regions admit a numerical tileability criterion.

\begin{corollary} \label{cor:pp-tileable}
    Let $T = T_d(I)$ be a triangular region.  Then any two of the following conditions imply the third:
    \begin{enumerate}
        \item $T$ is perfectly-punctured;
        \item $T$ has no over-punctured monomial subregions; and
        \item $T$ is tileable.
    \end{enumerate}
\end{corollary}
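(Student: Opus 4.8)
Throughout, for a monomial subregion $U$ of $T$ write $\delta(U):=\#\{\uptri\text{'s in }U\}-\#\{\dntri\text{'s in }U\}$; thus $U$ is $\dntri$-heavy iff $\delta(U)<0$, and $T$ --- which is the monomial subregion associated to the constant monomial $1$ --- is balanced iff $\delta(T)=0$. A monomial subregion $U$ is itself a triangular region in an equilateral triangle of side length $s_U$, so its over-puncturing coefficient $\mo_U=m_U-s_U$ is defined, $m_U$ being the sum of the side lengths of the punctures of $U$. The plan is to compare $\delta(U)$ with $\mo_U$ for all monomial subregions $U$ and then quote Theorem~\ref{thm:tileable}: $T$ is tileable iff it is balanced and has no $\dntri$-heavy monomial subregion.

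The key input is an inequality and its equality case. Multiplication by $x$ carries the label of any $\dntri$ contained in a union $X$ of punctures to the label of a $\uptri$ contained in $X$ (if the $\dntri$ lies in a puncture $P$, so does the $x$-multiple of its label), and it is injective; hence such an $X$ is never $\dntri$-heavy, i.e.\ $\delta(X)\ge 0$. Since the intersection of two upward triangles is an upward triangle, this gives $\delta(A\cup B)\le\delta(A)+\delta(B)$ for unions of punctures, whence $\delta\bigl(\bigcup_iP_i\bigr)\le\sum_i t_i$ by induction, where $t_i$ is the side length of $P_i$. Writing a monomial subregion as $U=\Delta\cap T$ with $\Delta$ an upward triangle of side $s_U$, we have $\Delta\setminus U=\bigcup_i(\Delta\cap P_i)$ where the $P_i$ are the punctures of $T$, so
\[
    \delta(U)=s_U-\delta(\Delta\setminus U)\ \ge\ s_U-m_U=-\mo_U.
\]
In particular every $\dntri$-heavy monomial subregion is over-punctured. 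Moreover, if the punctures of $U$ are pairwise disjoint as sets of unit triangles then $\Delta\setminus U$ is a disjoint union of upward triangles and the displayed inequality is an equality, $\delta(U)=-\mo_U$. Finally, if two punctures $P,Q$ of $T$ share a unit triangle, the smallest monomial subregion $V$ of $T$ containing both --- the one associated to the $\gcd$ of the two generators --- is over-punctured: from the description of this smallest containing upward triangle one gets $s_V=t_P+t_Q-s_{P\cap Q}$, while $P$ and $Q$ are still distinct punctures of $V$, so $m_V\ge t_P+t_Q$ and $\mo_V=m_V-s_V\ge s_{P\cap Q}\ge 1$. Contrapositively, if $T$ has no over-punctured monomial subregion then its punctures are pairwise disjoint, and this disjointness passes to the restrictions $\Delta\cap P_i$, so every monomial subregion $U$ of $T$ satisfies $\delta(U)=-\mo_U$.

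Now the three implications are immediate. If (i) and (ii) hold, then the punctures of $T$ are pairwise disjoint, so $\delta(T)=-\mo_T=0$ and $T$ is balanced; no monomial subregion is $\dntri$-heavy (such a one would be over-punctured), so $T$ is tileable by Theorem~\ref{thm:tileable}, giving (iii). If (ii) and (iii) hold, then $T$ is balanced, so $0=\delta(T)\ge-\mo_T$, i.e.\ $\mo_T\ge 0$, while $T$ itself is a monomial subregion and is not over-punctured, so $\mo_T\le 0$; thus $\mo_T=0$, which is (i). If (i) and (iii) hold, then $T$ is balanced, so $\delta(T)=0=-\mo_T$ and the punctures of $T$ are pairwise disjoint; hence $\delta(U)=-\mo_U$ for every monomial subregion $U$, and a tileable region has no $\dntri$-heavy monomial subregion, so $\mo_U=-\delta(U)\le 0$ for all $U$, which is (ii).

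I expect the delicate point to be the equality case: verifying that the punctures of $U=\Delta\cap T$ are exactly the nonempty restrictions $\Delta\cap P_i$, that pairwise disjointness of the $P_i$ descends to them, and that $s_V=t_P+t_Q-s_{P\cap Q}$ holds for the smallest upward triangle through two punctures $P,Q$ --- the last being a brief computation with the exponents of the generators. The injectivity argument and the subsequent deductions from Theorem~\ref{thm:tileable} are routine.
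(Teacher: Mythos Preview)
Your proof is correct and follows essentially the same route as the paper's: both hinge on Theorem~\ref{thm:tileable} together with the comparison between $\delta(U)$ and $\mo_U$ for monomial subregions, and on the equivalence (for balanced $T$) between perfect puncturing and pairwise disjointness of punctures. The paper's write-up simply asserts the implication ``$\dntri$-heavy $\Rightarrow$ over-punctured'' and the biconditional ``no over-punctured subregion $\Leftrightarrow$ no overlapping punctures'' without justification, whereas you supply the underlying inequality $\delta(U)\ge -\mo_U$ (via the $x$-multiplication injection) and the minimal-covering-region computation that makes these steps transparent; your three direct implications are a mild reorganisation of the paper's reduction-to-balanced followed by a case split on (iii). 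The ``delicate point'' you flag is genuine but resolves cleanly: if the $P_i$ are pairwise disjoint then the $\Delta\cap P_i$ are pairwise disjoint upward sub-triangles whose union is $\Delta\setminus U$, and a short count ($\sum a_i = s_U$ from the $\delta$-balance, $\sum a_i^2 = s_U^2$ from the cardinality, forcing all but one $a_i$ to vanish in any containing upward triangle) shows no larger upward triangle can sit inside their union, so they are precisely the punctures of $U$.
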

\begin{proof}
    If $T$ is unbalanced, then $T$ is not tileable.  Moreover, if $T$ is perfectly-punctured, then at least two punctures must
    overlap, thus creating an over-punctured monomial subregion.   Hence, we may assume $T$ is balanced for the remainder of the argument.

    Suppose $T$ is tileable. Then $T$ has no $\dntri$-heavy monomial subregions, by Theorem \ref{thm:tileable}.  Thus every monomial
    subregion of $T$ is not over-punctured if and only if no punctures of $T$ overlap, i.e., $T$ is perfectly-punctured.

    If $T$ is non-tileable, then $T$ has a $\dntri$-heavy monomial subregion.   Since every $\dntri$-heavy monomial subregion is
    also over-punctured, it follows that $T$ has an over-punctured monomial subregion.
\end{proof}

\section{Stability of syzygy bundles} \label{sec:syz}

Let $I$ be an Artinian ideal of $S = K[x_1, \ldots, x_n]$ that is minimally generated by forms $f_1, \ldots, f_m$.
The \emdx{syzygy module} of $I$ is the graded module $\syz{I}$ that fits into the exact sequence
\[
    0 \rightarrow \syz{I} \rightarrow \bigoplus_{i=1}^{m}S(-\deg f_i) \rightarrow I \rightarrow 0.
\]
Its sheafification $\widetilde\syz{I}$ is a vector bundle on $\PP^{n-1}$, called the \emdx{syzygy bundle} of $I$. It has rank $m-1$.
\index{0@\textbf{Symbol list}!syzI@$\syz{I}$}
\index{0@\textbf{Symbol list}!syzItilde@$\widetilde\syz{I}$}

Semistability is an important property of a vector bundle.  Let $E$ be a vector bundle on projective space.  The \emph{slope}%
\index{vector bundle!slope}
of $E$ is defined as $\mu(E) := \frac{c_1(E)}{rk(E)}$.  Furthermore, $E$ is said to be \emph{semistable}%
\index{vector bundle!semistable}
if the inequality $\mu(F) \leq \mu(E)$ holds for every coherent subsheaf $F \subset E$.  If the inequality is always strict, then
$E$ is said to be \emph{stable}.%
\index{vector bundle!stable}

Brenner established a beautiful characterisation of the semistability of syzygy bundles to monomial ideals.
Since we only consider monomial ideals in this work, the following may be taken as the definition of (semi)stability herein.

\begin{theorem}{\cite[Proposition~2.2 \& Corollary~6.4]{Br}} \label{thm:stable-syz}
    Let $I$ be an Artinian ideal in $K[x_1, \ldots, x_n]$ that is minimally generated by monomials $g_1, \ldots, g_m$, where $K$ is a field of
    characteristic zero. Then $I$ has a semistable syzygy bundle if and only if, for every proper subset $J$ of $\{1, \ldots, m\}$ with at
    least two elements, the inequality
    \[
        \frac{d_J - \displaystyle\sum_{j \in J} \deg g_j}{|J|-1} \leq \frac{-\displaystyle\sum_{i=1}^{m} \deg g_i}{m-1}
    \]
    holds, where $d_J$ is the degree of the greatest common divisor of the $g_j$ with $j \in J$.  Further, $I$ has a stable
    syzygy bundle if and only if the above inequality is always strict.
\end{theorem}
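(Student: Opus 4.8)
The plan is to follow Brenner's strategy, exploiting the torus action. Write $S = K[x_1, \dots, x_n]$ and let the torus $T = (K^*)^n$ act on $S$ in the standard way, inducing an action on $\PP^{n-1}$. Since the $g_i$ are monomials, the defining sequence $0 \to \syz{I} \to \bigoplus_i S(-\deg g_i) \to I \to 0$ is $T$-equivariant, and hence so is the syzygy bundle $E := \widetilde{\syz{I}}$. First I would record the slope of $E$: as $I$ is Artinian, $\tilde I = \SO_{\PP^{n-1}}$, so comparing first Chern classes in the (sheafified) sequence gives $c_1(E) = -\sum_{i=1}^m \deg g_i$, while $\rank E = m-1$; thus $\mu(E) = \frac{-\sum_i \deg g_i}{m-1}$, which is precisely the right-hand side of the asserted inequality.

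Next comes the key reduction. Because $T$ is connected, the maximal destabilising subsheaf of $E$ (the first step of its Harder--Narasimhan filtration) is unique, hence $T$-invariant, hence $T$-equivariant; therefore $E$ is semistable if and only if every $T$-equivariant saturated subsheaf $F \subsetneq E$ satisfies $\mu(F) \le \mu(E)$. So I would classify these subsheaves. Using the $\ZZ^n$-weight decomposition of $\bigoplus_i \SO(-\deg g_i)$, a $T$-equivariant saturated subsheaf $F$ of $E \subseteq \bigoplus_i \SO(-\deg g_i)$ is, directionwise, supported on a subset of the coordinate summands: there is $J \subseteq \{1, \dots, m\}$ with $F \subseteq \bigoplus_{j \in J} \SO(-\deg g_j)$, and then $F$ equals (the sheafification of) the module of syzygies among the subfamily $\{g_j\}_{j \in J}$, extended by zero. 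This subsheaf $\widetilde{\syz{(g_j : j\in J)}}$ is indeed saturated in $E$, since the quotient $\syz{I}/\syz{(g_j : j\in J)}$ embeds into the free module $\bigoplus_{i \notin J} S(-\deg g_i)$ and is thus torsion-free. Its rank is $|J|-1$ (the $g_j$, $j \in J$, span a rank-one submodule of $S$), so it is nonzero only when $|J| \ge 2$, and the full set $J = \{1,\dots,m\}$ yields $\mu(F) = \mu(E)$ and may be ignored; what remains are exactly the proper subsets $J$ with $|J| \ge 2$.

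It then remains to compute $\mu\big(\widetilde{\syz{(g_j : j\in J)}}\big)$. Let $g_J = \gcd(g_j : j\in J)$ and $d_J = \deg g_J$; dividing out $g_J$ identifies $\syz{(g_j : j\in J)}$ with $\syz{(g_j/g_J : j\in J)}(-d_J)$. Since $\gcd(g_j/g_J : j\in J) = 1$ and these are monomials, the ideal $(g_j/g_J : j\in J)$ lies in no height-one prime, so its sheafification agrees with $\SO$ outside codimension two and contributes $0$ to $c_1$; from the defining sequence of $\syz{(g_j/g_J : j\in J)}$ one gets $c_1 = -\sum_{j\in J}\deg(g_j/g_J) = |J|\,d_J - \sum_{j\in J}\deg g_j$, and twisting by $-d_J$ lowers $c_1$ by $(\rank)\,d_J = (|J|-1)\,d_J$, giving $c_1\big(\widetilde{\syz{(g_j : j\in J)}}\big) = d_J - \sum_{j\in J}\deg g_j$. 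Hence $\mu(F) = \frac{d_J - \sum_{j\in J}\deg g_j}{|J|-1}$, the left-hand side, and combining with the reduction above proves the semistability criterion. The stability statement follows from the same reduction: for a strictly semistable equivariant $E$ one still produces a $T$-equivariant subsheaf of slope exactly $\mu(E)$ with rank strictly between $0$ and $\rank E$ — the subsheaves of $E$ of slope $\mu(E)$ being closed under sums and under intersections inside $E$ — so equality $\mu(F) = \mu(E)$ is forced for some proper $J$ with $|J|\ge 2$, whence the strict inequalities characterise stability.

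I expect the main obstacle to be the equivariance reduction of the second and third steps: making rigorous that every saturated $T$-equivariant subsheaf of $E$ is one of the sub-syzygy sheaves $\widetilde{\syz{(g_j : j\in J)}}$. This is where the combinatorial content enters and where one must argue carefully with the weight decomposition and with saturation/torsion; likewise, in the stability half, extracting an honest equivariant subsheaf realising equality of slopes in the strictly semistable case requires a little care. By contrast, the Chern-class bookkeeping of the last step is routine once the Artinian hypothesis has been used to kill the $c_1$ of the relevant ideal sheaves.
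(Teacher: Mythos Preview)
The paper does not prove this statement at all: it is quoted from Brenner \cite[Proposition~2.2 \& Corollary~6.4]{Br} and, as the authors remark, is effectively taken as the \emph{definition} of (semi)stability for the purposes of the paper. So there is no ``paper's own proof'' to compare against.

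That said, your outline is precisely Brenner's strategy: exploit the torus action to reduce to $T$-equivariant subsheaves, identify these with sub-syzygy sheaves $\widetilde{\syz{(g_j:j\in J)}}$, and compute their slopes via the gcd. Your slope bookkeeping is correct, and you have correctly flagged the genuine content, namely the classification step. One caution there: your sentence ``a $T$-equivariant saturated subsheaf $F$ of $E$ is, directionwise, supported on a subset of the coordinate summands'' skips a step. What one actually uses is that the $g_i$, being distinct monomials, carry pairwise distinct $\ZZ^n$-weights, so a $T$-equivariant coherent subsheaf of $\bigoplus_i \SO(-\deg g_i)$ decomposes as $\bigoplus_i F_i$ with $F_i \subseteq \SO(-\deg g_i)$; one then has to argue that, after saturating inside $E$, the resulting sheaf is exactly the full syzygy sheaf of the subfamily (equivalently, work on the level of $\ZZ^n$-graded $S$-modules, where the picture is cleaner). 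Brenner carries this out at the module level, and that is the cleanest way to make your step~2 rigorous. For the stability half, your appeal to a $T$-equivariant witness of equal slope in the strictly semistable case is fine, since the socle of the Harder--Narasimhan filtration (the maximal subsheaf of slope $\mu(E)$) is again unique, hence $T$-stable.
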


Notice that the right-hand side in the above inequalities is the slope of the syzygy bundle of $I$.

We use the above criterion to rephrase (semi)stability in the case of a monomial ideal in $K[x,y,z]$ in terms of the
over-puncturing coefficients of ideals.  To do this, we first reinterpret the slope. Throughout this section we continue
to assume that $K$ is a field of characteristic zero.

\begin{lemma} \label{lem:T-slope}
    Let $I$ be an Artinian ideal in $R = K[x,y,z]$ that is minimally generated by monomials $g_1, \ldots, g_m$ whose degrees are bounded
    above by $d$.  Then
    \[
        \mu(\widetilde\syz{I}) = -d + \frac{\mo_d (I)}{m-1}.
    \]
\end{lemma}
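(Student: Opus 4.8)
The plan is to compute $c_1(\widetilde\syz{I})$ directly from the defining exact sequence and then divide by the rank $m-1$. First I would take the first Chern class in the short exact sequence
\[
    0 \rightarrow \widetilde\syz{I} \rightarrow \bigoplus_{i=1}^{m}\SO_{\PP^2}(-\deg g_i) \rightarrow \widetilde{I} \rightarrow 0,
\]
using additivity of $c_1$ on exact sequences. Since $I$ is Artinian, it is $\mathfrak{m}$-primary, so its sheafification $\widetilde{I}$ agrees with $\SO_{\PP^2}$ away from finitely many points; hence $\widetilde{I} \cong \SO_{\PP^2}$ as a sheaf of rank one is \emph{not} quite right — rather $\widetilde{I}$ is isomorphic to $\SO_{\PP^2}$ because $I$ and $R$ agree in all large degrees, so $c_1(\widetilde{I}) = 0$. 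Therefore $c_1(\widetilde\syz{I}) = \sum_{i=1}^m c_1(\SO_{\PP^2}(-\deg g_i)) = -\sum_{i=1}^m \deg g_i$.

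Next I would substitute into the definition of the slope:
\[
    \mu(\widetilde\syz{I}) = \frac{c_1(\widetilde\syz{I})}{\rank \widetilde\syz{I}} = \frac{-\sum_{i=1}^m \deg g_i}{m-1}.
\]
This is exactly the right-hand side appearing in Theorem~\ref{thm:stable-syz}. It remains to rewrite the numerator in terms of the over-puncturing coefficient. By definition, the puncture associated to $g_i$ in $\mathcal{T}_d$ has side length $d - \deg g_i$, and since the $g_i$ have degree at most $d$ (so each contributes a genuine, possibly empty, puncture in degree $d$), the side lengths sum to $m = \sum_{i=1}^m (d - \deg g_i) = md - \sum_{i=1}^m \deg g_i$ — here I am overloading $m$; in the paper's notation $\mo_d(I) = \bigl(\sum_{i=1}^m (d - \deg g_i)\bigr) - d$. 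Solving gives $\sum_{i=1}^m \deg g_i = (m-1)d - \mo_d(I)$, and hence
\[
    \mu(\widetilde\syz{I}) = \frac{-(m-1)d + \mo_d(I)}{m-1} = -d + \frac{\mo_d(I)}{m-1},
\]
which is the claimed formula.

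The only genuine subtlety — and the one step I would be careful to justify cleanly — is the claim that $c_1(\widetilde I) = 0$, equivalently that the sheafification of an $\mathfrak{m}$-primary monomial ideal is the structure sheaf of $\PP^2$. This follows because $I$ contains powers of each variable, so $I_j = R_j$ for $j \gg 0$, whence the inclusion $I \hookrightarrow R$ is an isomorphism in all large degrees and thus induces an isomorphism on sheaves $\widetilde I \xrightarrow{\sim} \SO_{\PP^2}$; in particular $\widetilde I$ is a line bundle with trivial first Chern class. Everything else is the additivity of Chern classes and the bookkeeping identity relating degrees of generators to puncture side lengths, both of which are routine. One should note the hypothesis that $K$ has characteristic zero is not actually needed for this Chern class computation — it is inherited from the standing assumption of the section because the lemma is used in tandem with Theorem~\ref{thm:stable-syz}.
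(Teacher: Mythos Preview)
Your proof is correct and follows essentially the same route as the paper: both reduce to the identity $\mu(\widetilde\syz{I}) = \frac{-\sum_{i=1}^m \deg g_i}{m-1}$ and then perform the same algebraic rewriting in terms of $\mo_d(I)$. The only difference is that the paper takes this slope formula as already known (it is noted immediately after Theorem~\ref{thm:stable-syz} that the right-hand side there is the slope), whereas you re-derive it from the exact sequence and the observation that $\widetilde{I} \cong \SO_{\PP^2}$ for an Artinian ideal; your extra justification is sound but not needed given the paper's setup.
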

\begin{proof}
    Recall that the side length of the puncture associated to $g_i$ in $T = T_d(I)$ is $d - \deg{g_i}$.
    Thus, $\mo_d (I)= \sum_{i=1}^{m} (d - \deg g_i) - d$. Hence we obtain
    \begin{equation*}
        \begin{split}
            \mu(\widetilde\syz{I}) &= \frac{-\sum_{i=1}^{m} \deg g_i}{m-1} \\
                                    &= \frac{[\sum_{i=1}^{m} (d - \deg g_i) - d] - d(m-1)}{m-1} \\
                                    &= -d + \frac{\sum_{i=1}^{m} (d - \deg g_i) - d}{m-1} \\
                                    &= -d + \frac{\mo_d (I)}{m-1}.
        \end{split}
    \end{equation*}
\end{proof}

Observe that $\mo_{d+1} (I) = \mo_d (I) + m-1$.

Now we reinterpret Theorem~\ref{thm:stable-syz} by using over-puncturing coefficients.

\begin{corollary} \label{cor:T-stable}
    Let $I$ be an Artinian ideal in $R = K[x,y,z]$ that is minimally generated by monomials $g_1, \ldots, g_m$ of degree at most $d$.
    For every proper subset $J$ of $\{1, \ldots, m\}$ with at least two elements, let $I_J$ be the monomial ideal that is
    generated by $\{ g_j / g \st j \in J\}$, where $g = \gcd\{g_j \st j \in J \}$.

    Then $I$ has a semistable syzygy bundle if and only if, for every proper subset $J$ of $\{1, \ldots, m\}$ with at
    least two elements, the inequality
    \[
        \frac{\mo_{d-\deg{g}} (I_J)}{|J| - 1} \leq \frac{\mo_{d} (I)}{m-1}
    \]
    holds.  Furthermore, $I$ has a stable syzygy bundle if and only if the above inequality is always strict.
\end{corollary}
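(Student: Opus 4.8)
\textbf{Proof plan for Corollary~\ref{cor:T-stable}.}
The plan is to simply translate the inequality in Theorem~\ref{thm:stable-syz} term by term into the language of over-puncturing coefficients, using Lemma~\ref{lem:T-slope} as the main tool. First I would note that the right-hand side of Brenner's inequality is exactly $\mu(\widetilde\syz{I})$, and by Lemma~\ref{lem:T-slope} this equals $-d + \frac{\mo_d(I)}{m-1}$; so I subtract $-d$ from both sides, which clears the ambient degree and leaves $\frac{\mo_d(I)}{m-1}$ on the right. The real content is to rewrite the left-hand side, $\frac{d_J - \sum_{j \in J}\deg g_j}{|J|-1}$, after the same shift by $-d$.

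The key step is the following observation about the ideal $I_J$ generated by $\{g_j/g : j \in J\}$, where $g = \gcd\{g_j : j \in J\}$: since $I_J$ has $|J|$ minimal generators whose degrees are bounded above by $d - \deg g$, Lemma~\ref{lem:T-slope} (applied with $|J|$ in place of $m$ and $d - \deg g$ in place of $d$) gives
\[
    \mu(\widetilde\syz{I_J}) = -(d - \deg g) + \frac{\mo_{d-\deg g}(I_J)}{|J|-1}.
\]
On the other hand, the generators of $I_J$ have degrees $\deg g_j - \deg g$, so directly from the definition of slope, $\mu(\widetilde\syz{I_J}) = \frac{-\sum_{j\in J}(\deg g_j - \deg g)}{|J|-1} = \frac{|J|\deg g - \sum_{j\in J}\deg g_j}{|J|-1}$. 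Now I would verify that $d_J = \deg g$ (the degree of the gcd of the $g_j$, $j \in J$, is precisely $\deg g$), so that Brenner's left-hand side is $\frac{d_J - \sum_{j\in J}\deg g_j}{|J|-1} = \frac{\deg g - \sum_{j\in J}\deg g_j}{|J|-1}$. Comparing, $\frac{\deg g - \sum_{j\in J}\deg g_j}{|J|-1} = \mu(\widetilde\syz{I_J}) - \deg g \cdot \frac{|J|-1}{|J|-1} = \mu(\widetilde\syz{I_J}) - \deg g$ — more transparently, one sees directly that Brenner's left-hand side minus $(-d)$ equals $\mu(\widetilde\syz{I_J}) + (d - \deg g) - \deg g \cdot \tfrac{|J|}{|J|-1}$; rather than chase this I would just combine the two expressions for $\mu(\widetilde\syz{I_J})$ to get $\mo_{d-\deg g}(I_J) = (|J|-1)\deg g + (|J|-1)(d-\deg g) - \sum_{j\in J}(\deg g_j - \deg g) \cdot \tfrac{|J|-1}{|J|-1}$, i.e. the clean identity $\frac{d_J - \sum_{j\in J}\deg g_j}{|J|-1} + d = \frac{\mo_{d-\deg g}(I_J)}{|J|-1}$.

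So the spine of the argument is: subtract $-d$ from both sides of Brenner's inequality; use Lemma~\ref{lem:T-slope} on the right to get $\frac{\mo_d(I)}{m-1}$; and use the identity just derived (which is itself a one-line consequence of $\mo_{d-\deg g}(I_J) = \sum_{j\in J}(d-\deg g_j) - (d - \deg g)$ together with $d_J = \deg g$) on the left to get $\frac{\mo_{d-\deg g}(I_J)}{|J|-1}$. The statement about strictness is immediate since all the manipulations are equivalences, so the inequality is strict on one side exactly when it is strict on the other. The only point requiring any care — and the ``main obstacle,'' though it is a mild one — is confirming that $d_J = \deg g$ and that $I_J$ is indeed minimally generated by the $g_j/g$ (so that Lemma~\ref{lem:T-slope}'s hypotheses apply); this holds because dividing all the $g_j$, $j \in J$, by their common gcd produces monomials with trivial gcd, and a minimal generating set of a monomial ideal stays minimal under this operation since divisibility relations among monomials are preserved.
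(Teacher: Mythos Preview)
Your proposal is correct and takes essentially the same approach as the paper: both arguments reduce to the identity $\frac{d_J - \sum_{j\in J}\deg g_j}{|J|-1} = -d + \frac{\mo_{d-\deg g}(I_J)}{|J|-1}$, then cancel $-d$ against the $-d$ from Lemma~\ref{lem:T-slope} on the right. The paper obtains this identity by a direct algebraic manipulation (writing $\deg g - \sum_j \deg g_j = -(|J|-1)\deg g - \sum_j \deg(g_j/g)$ and then recognising $\mo_{d-\deg g}(I_J)$), whereas you route it through a second application of Lemma~\ref{lem:T-slope} to $I_J$; these are the same computation organised slightly differently. One small caution: Lemma~\ref{lem:T-slope} is stated for \emph{Artinian} ideals, and $I_J$ need not be Artinian, so strictly speaking you are invoking the formula $\mo_{d'}(I') = \sum(d' - \deg g_i') - d'$ from its proof rather than the lemma as stated---but this is exactly what the paper does implicitly as well, and your verification that the $g_j/g$ remain a minimal generating set is the right thing to check here.
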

\begin{proof}
    Let $J$ be a  proper subset of $\{1, \ldots, m\}$ with $n \geq 2$ elements. Then a computation similar to the one in Lemma~\ref{lem:T-slope} provides
    \begin{equation*}
        \begin{split}
            \frac{\deg{g} - \displaystyle\sum_{j \in J} \deg g_j}{|J|-1} &= \frac{-(n-1)\deg{g} - \displaystyle\sum_{j \in J}(\deg{g_j} - \deg{g})}{|J|-1} \\
                &= -\deg{g} + \frac{-\displaystyle\sum_{j \in J} \deg(g_j/g)}{|J|-1} \\
                &= -\deg{g} + (\deg{g} - d) + \frac{\mo_{T_{d-\deg{g}}(I_J)}}{|J| - 1} \\
                &= -d + \frac{\mo_{d-\deg{g}} (I_J)}{|J| - 1}.
        \end{split}
    \end{equation*}

    Taking into account also Lemma~\ref{lem:T-slope},  Theorem~\ref{thm:stable-syz} shows that we need to compare
    $-d + \frac{\mo_{d-\deg{g}} (I_J)}{|J| - 1}$ and $-d +  \frac{\mo_d (I)}{m-1}$.
\end{proof}

In order to better interpret the last result we slightly extend the concept of a triangular region $T_d (I)$ in the
remainder of this section. Label the vertices in $\mathcal{T}_d$ by monomials of degree $d$ such that the label of
each unit triangle is the greatest common divisor of its vertex labels.  Then a minimal monomial generator of $I$ with
degree $d$ corresponds to a vertex of $\mathcal{T}_d$ that is removed in $T_d (I)$. We consider this removed vertex
as a puncture of side length zero. Observe that this is in line with our general definition of the side length of a
puncture (see Subsection \ref{sub:trideg}).    Now Corollary~\ref{cor:T-stable} can be rephrased as saying that
the syzygy bundle of $I$ is semistable if and only if the ``average'' over-puncturing per puncture for any
non-trivial collection of punctures of $I$ (restricted to their smallest containing triangle) is at most the ``average''
over-puncturing per puncture for the entire ideal $I$.

\begin{example} \label{ex:stability}
    We illustrate this point of view by giving quick proofs of some known results.
    \begin{enumerate}
        \item (\cite[Theorem~0.2]{Ba}) \emph{For each $d \geq 1$, the syzygy bundle of $(x, y, z)^d$ is stable.}
            \begin{proof}
                Consider $T_d = T_d ((x, y, z)^{d-1})$, that is, $T_d$ is obtained from $\mathcal{T}_d$ by removing all $\binom{d+1}{2}$ upward-pointing triangles.  Then
                \[
                    \frac{\mo_{T_d}}{\binom{d+1}{2}-1} = \frac{\binom{d+1}{2} - d}{\binom{d+1}{2}-1} = \frac{d}{d+2}.
                \]

                Now we consider the average over-puncturing of any non-trivial collection of punctures in $T$
                in a triangle of side length $e$, where $2 \leq e \leq d$:

                \begin{enumerate}
                    \item If $e < d$, then the average over-puncturing is maximised when all the punctures in the triangle are present, i.e.,
                        when the associated monomial subregion is $T_e$.  Clearly, $\frac{e}{e+2} < \frac{d}{d+2}$ if  $2\leq e < d$.
                    \item If $e = d$, then it is maximised when all but one puncture is present.  The over-puncturing is thus
                        \[
                            \frac{\binom{d+1}{2} - d-1}{\binom{d+1}{2}-2} < \frac{\binom{d+1}{2} - d}{\binom{d+1}{2}-1}.\qedhere
                        \]
                \end{enumerate}
            \end{proof}
    \item (\cite[Corollary~7.1]{Br} \& \cite[Lemma 2.1]{HMNW}) \emph{Let $I = (x^c, y^{c-a}, z^{c-b})$ be a monomial complete intersection where,
        without loss of generality, $0 \leq a \leq b < c$. Then $\widetilde\syz{I}$ is semistable (or stable) if and only if the punctures
        in $T_c(I)$ do not overlap (or touch).}
        \begin{proof}
            Notice that $T_c(I)$ has a puncture of side length zero at its top, corresponding to $x^c$. Thus,
            the average over-puncturing of $T_c(I)$ is $\frac{1}{2}(a+b-c)$ and the average over-puncturing of the
            three non-trivial collections of punctures is $a-c$, $b-c$, and $a+b-c$ for $T_c(x^c, y^{c-a})$, $T_c(x^c, z^{c-b})$,
            and $T_c(y^{c-a}, z^{c-b})$, respectively.  The latter is maximised at $a+b-c$.

            \begin{figure}[!ht]
                \includegraphics[scale=1]{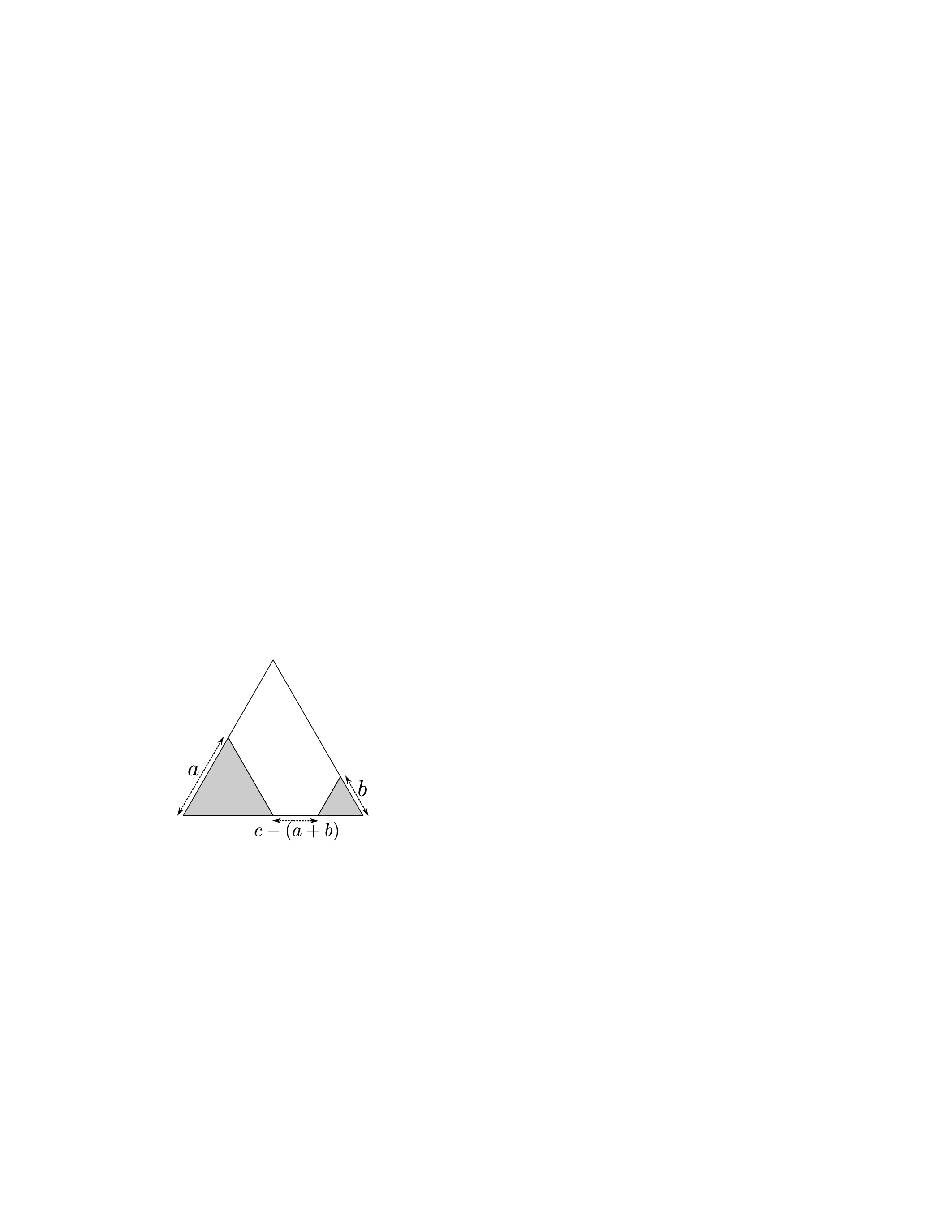}
                \caption{The region $T_c(x^c, y^{c-a}, z^{c-b})$, where $0 \leq a \leq b < c$.}
                \label{fig:ci-abc-c}
            \end{figure}

            Using Corollary~\ref{cor:T-stable}, we see that $I$ has a semistable (stable) syzygy bundle if and only if
            $\frac{1}{2}(a+b-c) \geq a+b-c$ (strictly), i.e., $c \geq a+b$ (strictly).  Interpreting this in  $T_c(I)$
            (see Figure~\ref{fig:ci-abc-c}) yields the desired conclusion.
        \end{proof}
    \end{enumerate}
\end{example}

Using Corollary~\ref{cor:pp-tileable}, we see that semistability is strongly related to tileability of a region.

\begin{theorem} \label{thm:tileable-semistable}
    Let $I$ be an Artinian ideal in $R = K[x,y,z]$ generated by monomials whose degrees are bounded above by $d$,
    and let $T = T_d(I)$.  If $T$ is non-empty, then any two of the following conditions imply the third:
    \begin{enumerate}
        \item $I$ is perfectly-punctured;
        \item $T$ is tileable; and
        \item $\widetilde\syz{I}$ is semistable.
    \end{enumerate}
\end{theorem}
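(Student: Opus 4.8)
The plan is to exploit the two previously established structural results, namely Corollary~\ref{cor:pp-tileable}, which characterizes tileability for perfectly-punctured regions, and Corollary~\ref{cor:T-stable}, which rephrases semistability of $\widetilde{\syz}{I}$ in terms of over-puncturing coefficients of the ideals $I_J$ attached to subsets $J$ of the generators. Since both of the auxiliary conditions in Theorem~\ref{thm:tileable-semistable} are couched in terms of over-puncturing, the argument should be a matter of translating each of the three implications into an inequality between $\mo$-coefficients and then comparing. I would first observe that if $I$ is perfectly-punctured then $\mo_d(J(T)) \le \mo_d(I) = 0$; because $T$ is non-empty and Artinian the punctures cannot all have vanished, so in fact $\mo_T = \mo_d(J(T)) = 0$, i.e.\ $T$ itself is perfectly-punctured and $I = J(T)$ in all degrees below $d$. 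This lets me pass freely between statements about $I$ and statements about the region $T$.

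First I would prove $(1) \wedge (2) \Rightarrow (3)$. Assume $I$ is perfectly-punctured and $T$ is tileable. By Corollary~\ref{cor:pp-tileable}, $T$ has no over-punctured monomial subregion, which says precisely that no punctures of $T$ overlap; equivalently, for every proper subset $J$ of the generators with $|J| \ge 2$, the region $T_{d - \deg g}(I_J)$ (where $g = \gcd\{g_j : j \in J\}$) is not over-punctured, so $\mo_{d-\deg g}(I_J) \le 0$. Since the right-hand side $\mo_d(I)/(m-1)$ equals $0$, the inequality in Corollary~\ref{cor:T-stable} holds for every such $J$, and hence $\widetilde{\syz}{I}$ is semistable. (Here one must be a little careful: the $I_J$ can involve punctures of side length zero, so I would appeal to the extended notion of triangular region introduced just before Example~\ref{ex:stability}, where side-length-zero punctures are legitimate and the $\mo$-formula still makes sense.)

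Next, $(1) \wedge (3) \Rightarrow (2)$. Assume $I$ is perfectly-punctured, so $\mo_d(I) = 0$, and $\widetilde{\syz}{I}$ is semistable. By Corollary~\ref{cor:T-stable}, $\mo_{d-\deg g}(I_J) \le 0$ for every admissible $J$. I claim this forces $T$ to have no over-punctured monomial subregion. Indeed, any monomial subregion $U$ of $T$ with overlapping punctures is the region associated to some subset of the generators lying in the smallest containing triangle, and its over-puncturing coefficient is exactly one of these $\mo_{d-\deg g}(I_J)$ values; so all of them being $\le 0$ means no $U$ is over-punctured. Then Corollary~\ref{cor:pp-tileable}, with conditions (1) and (2) of that corollary in hand, yields condition (3) of that corollary, i.e.\ $T$ is tileable.

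Finally, $(2) \wedge (3) \Rightarrow (1)$. Assume $T$ is tileable and $\widetilde{\syz}{I}$ is semistable, and suppose for contradiction that $I$ is not perfectly-punctured. Since $T$ is tileable it is balanced, and a balanced tileable region cannot be over-punctured in total either — one checks from the definitions that a perfectly-punctured balanced region already forces an overlap, so a tileable balanced region must have $\mo_T \le 0$; hence the only way to fail (1) is $\mo_d(I) < 0$, and then also $\mo_T \le \mo_d(I) < 0$, so $T$ is under-punctured. Now the right-hand side $\mo_d(I)/(m-1)$ is strictly negative, while taking $J$ equal to any two generators whose associated punctures are "far apart" (e.g.\ two of the corner powers of the variables, whose gcd is $1$) gives $\mo_{d}(I_J) = $ (sum of two side lengths) $- d$, which can be made $\ge 0$ — for an Artinian ideal the three corner punctures alone have side lengths summing to at least $d$ unless the region degenerates. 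I would pin this down by choosing $J$ to be the three corner generators, for which $I_J$ is a monomial complete intersection and the over-puncturing is computed as in Example~\ref{ex:stability}(2); comparing with the strictly negative slope contradicts the semistability inequality. Thus $\mo_d(I) = 0$, i.e.\ $I$ is perfectly-punctured.

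The main obstacle I anticipate is the last implication, specifically verifying cleanly that "$T$ tileable and Artinian, $I$ not perfectly-punctured" produces a subset $J$ witnessing a violation of the semistability inequality; the bookkeeping with side-length-zero punctures and with which generators actually survive in $J(T)$ needs care, and one has to make sure the chosen $J$ is a \emph{proper} subset with at least two elements (so if $m = 3$ and the three corner generators are everything, one instead takes a two-element subset and argues directly). The first two implications, by contrast, should be essentially immediate once the dictionary $\mo_{d-\deg g}(I_J) = $ (over-puncturing of the corresponding monomial subregion) is made explicit.
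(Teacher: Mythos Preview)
Your treatment of $(1)\wedge(2)\Rightarrow(3)$ and $(1)\wedge(3)\Rightarrow(2)$ is essentially the paper's argument: both reduce to the observation that when $\mo_d(I)=0$, Corollary~\ref{cor:T-stable} says semistability is equivalent to $\mo_{d-\deg g}(I_J)\le 0$ for all admissible $J$, and Corollary~\ref{cor:pp-tileable} says tileability is equivalent to having no over-punctured monomial subregion. These match up.

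The implication $(2)\wedge(3)\Rightarrow(1)$, however, contains a genuine error. You claim that a balanced tileable region must have $\mo_T\le 0$, so that failure of (1) forces $\mo_d(I)<0$. The inequality goes the other way: in $\mathcal{T}_d$ there are exactly $d$ more upward than downward unit triangles, and a puncture of side length $k$ removes \emph{at most} $k$ more upward than downward triangles (equality when it does not overlap any other puncture). Hence any balanced region satisfies $\mo_T\ge 0$, and since $\mo_d(I)\ge\mo_T$, failure of (1) forces $\mo_d(I)>0$, i.e.\ $I$ is \emph{over}-punctured. Your subsequent plan of exhibiting a two- or three-element $J$ with $\mo_{d-\deg g}(I_J)\ge 0$ exceeding a negative right-hand side therefore collapses: the right-hand side $\mo_d(I)/(m-1)$ is strictly positive, and you need a subset whose average over-puncturing is even larger.

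This is where the paper does real work that your outline does not anticipate. Starting from a tileable, over-punctured $T$, the paper repeatedly replaces a pair of overlapping punctures by their minimal covering region (which is uniquely tileable, so tileability is preserved and $\mo$ strictly drops), until reaching a perfectly-punctured region $T'=T_d(J)$ with $J\supseteq I$. One then partitions the generators of $I$ into groups $F_j$ according to which generator $f_j$ of $J$ they divide, and a pigeonhole-type comparison of the local over-puncturing $\mo_j$ of each group against the global $\mo_d(I)$ produces a subset $F_k$ with $\mo_k/(\#F_k-1)>\mo_d(I)/(m-1)$, violating semistability via Corollary~\ref{cor:T-stable}. (A separate short argument handles the degenerate case $T'=\emptyset$.) This reduction-and-partition step is the substantive content of the implication, and there is no shortcut through the corner generators.
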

\begin{proof}
    Assume $T$ is perfectly-punctured, that is, $\mo_d (T) = 0$.  By Corollary~\ref{cor:pp-tileable}, $T$ is tileable
    if and only if $T$ has no over-punctured monomial subregions.  The latter condition is equivalent to every monomial subregion of
    $T$ having a non-positive over-puncturing coefficient.  By Corollary~\ref{cor:T-stable}, this is equivalent
    to $\widetilde\syz{I}$ being semistable.

    Now assume $I$ is not perfectly-punctured, but $T$ is tileable. We have to show that $\widetilde\syz{I}$ is not semistable.

    Observe that $\mathcal{T}_d$ has exactly $d$ more upward-pointing triangles than downward-pointing triangles. It follows
    that every balanced monomial subregion of $\mathcal{T}_d$ cannot be under-punctured. Since $T$ is balanced, but not perfectly
    punctured we conclude that $T$ is over-punctured. Using again that $T$ is balanced, $T$ must have overlapping punctures.
    Consider two such overlapping punctures of $T$. Then the smallest monomial subregion $U$ containing these two punctures does
    not overlap with any other puncture of $T$ with positive side length  (because it is not $\dntri$-heavy by Theorem~\ref{thm:tileable}) and is uniquely tileable.
    Hence $T' = T \setminus U$ is tileable (see Lemma~\ref{lem:replace-tileable}) and $0 \leq \mo_{T'} < \mo_d (I)$.  If $T'$ is still
    over-punctured, then we repeat the above replacement procedure until we get a perfectly-punctured monomial subregion of $T$.
    Abusing notation slightly, denote this region by $T'$.  Let $J$ be the largest monomial ideal containing $I$ and with generators
    whose degrees are bounded above by $d$   such that $T' = T_d (J)$. Observe  that $\mo_d (J) = \mo_{T'} = 0$.

    Notice that a single replacement step above amounts to replacing the triangular region to an ideal $I'$ by the region
    to the ideal $(I', f)$, where $f$ is the greatest common divisor of a family of minimal generators of $I'$ having
    degree less than $d$.

    Assume now that $T'$ is empty.  By the above considerations, this means that $I$ has a family of minimal generators, say,
    $g_1, \ldots, g_t$ of degrees $d-a_1, \ldots, d-a_t < d$ that are relatively prime and whose corresponding punctures form
    two overlapping punctures of $T$.  Thus, $a_1 + \cdots + a_t > d$ (see Example~\ref{ex:stability}(ii)).  Furthermore,
    all other minimal generators of $I$, of which there must be at least one as $I$ is Artinian, must have degree $d$
    since $T$ is balanced.  Hence the average over-puncturing of $I$ is
    \[
        \frac{\mo_d (I)}{m-1} = \frac{a_1 + \cdots + a_t - d}{m-1} \leq \frac{a_1 + \cdots + a_t - d}{t},
    \]
    where $m \geq t+1$ is the number of minimal generators of $I$.  However, the average over-puncturing corresponding
    to the ideal $I'$ generated by $g_1, \ldots, g_t$ is
    \[
      \frac{  \mo_d (I')}{t - 1} = \frac{a_1 + \cdots + a_t - d}{t-1} > \frac{a_1 + \cdots + a_t - d}{t}.
    \]
    Hence, Corollary~\ref{cor:T-stable} shows that $\widetilde\syz{I}$ is not semistable.

    It remains to consider the case where $T'$ is not empty, i.e., $J$ is a proper ideal of $R$.  Let $g_1, \ldots, g_m$ and
    $f_1, \ldots, f_n$ be the minimal monomial generators of $I$ and $J$, respectively. Partition the generating set of $I$
    into $F_j = \{ g_i \st g_i \mbox{~divides~} f_j\}$. Notice $f_j = \gcd\{F_j\}$. In particular, $n > 1$ as $I$ is an
    Artinian ideal.

    Set $\mo_j = \sum_{g \in F_j} (d - \deg{g}) - (d - \deg{f_j})$. Observe $\mo_j \geq 0$ as the region associated
    to the ideal generated by $F_j$ is tileable, hence not under-punctured.  Moreover,
    \begin{equation*}
        \begin{split}
            \mo_d (J)  = \sum_{j = 1}^{n}(d - \deg{f_j}) - d & = \sum_{j=1}^{n} \left( \sum_{g \in F_j} (d - \deg{g}) - \mo_j \right) - d \\
                                                            & = \sum_{j=1}^{n} \sum_{g \in F_j} (d - \deg{g}) - d - \sum_{j=1}^{n} \mo_j.
        \end{split}
    \end{equation*}
    As $\mo_d (J)  = 0$, we conclude that $\mo_d (I) = \sum_{j=1}^{n} \mo_j$ and, in particular, $\mo_d (I) \geq \mo_j$ for each $j$.

    Assume $m \cdot \mo_j < \# F_j \cdot \mo_d (I)$ for all $j$.  Then $m \sum_{j=1}^{n}\mo_j < \mo_d (I) \sum_{j=1}^{n} \# F_j$, but this
    implies $m \cdot \mo_d (I) < m \cdot \mo_d (I)$, which is absurd.  Hence, there is some $k$ such that  $m \cdot \mo_k \geq \# F_k \cdot \mo_d (I)$.
    Since $\mo_d (I)\geq \mo_k$ it follows that $\frac{\mo_k}{\# F_k-1} > \frac{\mo_T}{m-1}$.  Indeed, this is immediate if
    $\mo_d (I) > \mo_k$. If $\mo_d (I) = \mo_k$, then it is also true because $\# F_k  < m$. Now Corollary~\ref{cor:T-stable} provides
    that $\widetilde\syz{I}$ is not semistable.
\end{proof}

We get the following criterion when focusing solely on the triangular region. Recall that $J(T)$ denotes the monomial ideal of
a triangular region $T$ (see Subsection~\ref{subsec:gcd}).

\begin{corollary}  \label{cor:semistability-by-region}
    Let $I$ be an Artinian ideal in $R = K[x,y,z]$ generated by monomials whose degrees are bounded above by $d$,
    and let $T = T_d(I)$.  Assume $T$ is non-empty and tileable.
    \begin{enumerate}
        \item If $I \neq I + J(T)$, then $\widetilde\syz{I}$ is not semistable.
        \item $\widetilde\syz(I + J(T))$ is semistable if and only if  $T$ is perfectly-punctured.
    \end{enumerate}
\end{corollary}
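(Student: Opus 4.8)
The plan is to reduce both parts to Theorem~\ref{thm:tileable-semistable}, applied once to $I$ and once to $I' := I + J(T)$. Besides that theorem, the ingredients are the relation $\mo_T = \mo_d(J(T)) \le \mo_d(I)$ noted after the definition of the over-puncturing coefficient, together with its equality criterion — equality holds exactly when $I$ and $J(T)$ agree in all degrees less than $d$ — and the fact established inside the proof of Theorem~\ref{thm:tileable-semistable} that a balanced monomial subregion of $\mathcal T_d$, in particular $T$ itself (which is balanced, being tileable), is never under-punctured, so $\mo_T \ge 0$. I also record the routine facts that $I$ Artinian forces $I'$ Artinian, that $I'$ is again generated in degrees at most $d$, and that $T_d(I') = T_d(I) = T$, since adjoining the generators of $J(T)$ deletes no triangle of $\mathcal T_d$ not already deleted by $I$.

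For part (i), suppose to the contrary that $\widetilde\syz{I}$ is semistable. Since $T$ is non-empty and tileable and $I$ is Artinian, conditions (ii) and (iii) of Theorem~\ref{thm:tileable-semistable} hold for $I$, and therefore so does condition (i): $I$ is perfectly-punctured, i.e.\ $\mo_d(I) = 0$. Then $0 \le \mo_T \le \mo_d(I) = 0$, so $\mo_T = \mo_d(I)$, and the equality criterion gives that $I$ and $J(T)$ coincide in all degrees less than $d$. As $J(T)$ is generated in degrees at most $d-1$, every minimal generator of $J(T)$ then lies in $I$, whence $I + J(T) = I$, contradicting the hypothesis. Hence $\widetilde\syz{I}$ is not semistable.

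For part (ii), set $I' := I + J(T)$. I first claim that $I'$ and $J(T)$ agree in all degrees less than $d$: the minimal generators of $I$ of degree at most $d-1$ already generate an ideal whose triangular region in degree $d$ is $T$ (the generators of $I$ of degree $d$ delete no triangle of $\mathcal T_d$), so they lie in $J(T)$ by maximality of $J(T)$, and hence below degree $d$ one has $I' = I + J(T) = J(T)$. Applying the equality criterion to $I'$, whose region in degree $d$ is again $T$, gives $\mo_d(I') = \mo_T$, so $I'$ is perfectly-punctured if and only if $T$ is. Now apply Theorem~\ref{thm:tileable-semistable} to $I'$: since $T$ is non-empty and tileable, its conditions (i) and (iii) are equivalent, i.e.\ $\widetilde\syz{I'}$ is semistable if and only if $I'$ is perfectly-punctured, which we have just seen is equivalent to $T$ being perfectly-punctured.

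Neither part contains a genuinely hard step; the content is carried by Theorem~\ref{thm:tileable-semistable}. The one point requiring care is the numerical identity $\mo_d(I+J(T)) = \mo_T$ — i.e.\ that passing from $I$ to $I+J(T)$ does not change the over-puncturing coefficient in degree $d$ — which, as indicated above, rests on the observation that $I+J(T)$ and $J(T)$ are the same ideal in all degrees below $d$.
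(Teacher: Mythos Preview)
Your proof is correct and follows essentially the same approach as the paper's: both reduce to Theorem~\ref{thm:tileable-semistable} via the chain $0 \le \mo_T = \mo_d(J(T)) = \mo_d(I+J(T)) \le \mo_d(I)$, with the equality criterion handling the comparison of $I$ and $I+J(T)$. The only cosmetic difference is that for part~(i) you argue by contraposition (assume semistable, force $\mo_d(I)=0$, deduce $I=I+J(T)$), whereas the paper argues directly that $I\ne I+J(T)$ forces $\mo_d(I)>\mo_d(I+J(T))\ge 0$, so $I$ is not perfectly-punctured and Theorem~\ref{thm:tileable-semistable} rules out semistability; your more explicit justification of $\mo_d(I+J(T))=\mo_T$ is a welcome addition, since the paper asserts it without comment.
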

\begin{proof}
    Since $T$ is balanced, we get $0 \leq \mo_T = \mo_d (J(T)) = \mo_d (I + J(T))$. Hence Theorem~\ref{thm:tileable-semistable} provides our assertions.
    Note for claim (i) that $I \ne I + J(T)$ implies $\mo_d (I+J(T)) < \mo_d (I)$.
\end{proof}


For stability, we obtain the following result.

\begin{proposition} \label{pro:pp-stable}
    Let $I$ be an Artinian ideal in $R = K[x,y,z]$ generated by monomials whose degrees are bounded above by $d$.
    If $T = T_d(I)$ is non-empty, tileable, and perfectly-punctured, then $\widetilde\syz(I + J(T))$ is stable
    if and only if every proper monomial subregion of $T$ is under-punctured.
\end{proposition}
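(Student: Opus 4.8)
The plan is to reduce the claim to the numerical stability criterion of Corollary~\ref{cor:T-stable} and then translate it into the language of monomial subregions. Put $\hat{I} := I + J(T)$, so that $T = T_d(\hat{I})$, and note three things: first, since $T$ is balanced and perfectly-punctured, $\mo_d(\hat{I}) = \mo_T = 0$; second, in degrees less than $d$ the ideal $\hat{I}$ coincides with $J(T)$, so the minimal generators of $\hat{I}$ of degree less than $d$ are exactly the punctures of $T$; third, since $T$ is moreover tileable, Corollary~\ref{cor:pp-tileable} shows that $T$ has no over-punctured monomial subregion, and in particular its punctures do not overlap. Feeding $\mo_d(\hat{I}) = 0$ into Corollary~\ref{cor:T-stable}, $\widetilde\syz(\hat{I})$ is stable if and only if for every proper subset $J$ of the minimal generators of $\hat{I}$ with $|J| \ge 2$ one has $\mo_{d - \deg g_J}(\hat{I}_J) < 0$, where $g_J = \gcd\{g_j \st j \in J\}$ and $\hat{I}_J = (g_j/g_J \st j \in J)$. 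Call such a $J$ \emph{admissible}.

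The core of the argument is a dictionary between admissible subsets and monomial subregions. The key geometric fact is that the intersection of two upward-pointing triangular subregions of $\mathcal{T}_d$ is again an upward-pointing triangular subregion (possibly a single vertex, or empty), since such a subregion is cut out by three inequalities ``at least so many triangles from each edge'' and the intersection takes maxima. Hence, for any monomial $w$, the punctures of the monomial subregion $U = T \cap \Delta_w$ of $T$ are the intersections $\Delta_p \cap \Delta_w$ as $p$ ranges over the punctures of $T$; these again do not overlap, so that $\mo_U$ equals the number of $\dntri$ in $U$ minus the number of $\uptri$ in $U$; in particular $U$ is under-punctured exactly when it is $\uptri$-heavy. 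The same computation applied to the region $\Delta_{g_J} \setminus \bigcup_{j \in J}\Delta_{g_j}$ — which is what $\hat{I}_J$ describes — shows that $\mo_{d - \deg g_J}(\hat{I}_J)$ equals its own downward-minus-upward count; comparing with the monomial subregion $U_J = T \cap \Delta_{g_J}$, obtained by deleting the \emph{remaining} punctures of $T$ that meet $\Delta_{g_J}$ (each deletion of an upward piece of side length $t$ raising the count by $t$), gives
\[
    \mo_{d-\deg g_J}(\hat{I}_J) \;\le\; \mo_{U_J} \;\le\; 0 ,
\]
the last inequality because $T$ has no over-punctured monomial subregion. Thus $\widetilde\syz(\hat{I})$ fails to be stable exactly when some admissible $J$ achieves equality on the left with both sides zero, forcing $\mo_{U_J} = 0$ and forcing every puncture of $T$ outside $J$ to avoid $\Delta_{g_J}$.

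Now I would settle the two implications. If every proper monomial subregion of $T$ is under-punctured, take any admissible $J$: when $\deg g_J \ge 1$ and $\Delta_{g_J} \not\supseteq T$, the region $U_J$ is a proper monomial subregion, so $\mo_{U_J} < 0$ and the displayed chain yields $\mo_{d-\deg g_J}(\hat{I}_J) < 0$; the cases $\deg g_J = 0$ and $T \subseteq \Delta_{g_J}$ are handled by a direct argument, using that $\hat{I}$ is Artinian and $|J| < m$ to exhibit the missing strict inequality. Conversely, if some proper monomial subregion $U = T \cap \Delta_w$ with $\deg w \ge 1$ is not under-punctured, then $\mo_U \ge 0$, hence $\mo_U = 0$; let $J$ consist of those minimal generators of $\hat{I}$ whose puncture meets $\Delta_w$ in positive side length. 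One checks that $|J| \ge 2$ — a single puncture covering $\Delta_w$ would force $U$ to be empty — and that $J$ is proper, because $\hat{I}$ is Artinian and $\deg w \ge 1$ keeps at least one corner puncture away from $\Delta_w$; then the dictionary gives $\mo_{d-\deg g_J}(\hat{I}_J) = \mo_U = 0$, so $\widetilde\syz(\hat{I})$ is not stable.

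I expect the principal difficulty to be the bookkeeping in this translation rather than a single conceptual hurdle: one must control how the greatest common divisor can enlarge $\Delta_{g_J}$ relative to the triangle $\Delta_w$ one starts from, make sure the subsets $J$ produced are genuinely proper and of size at least two, and keep careful track of side-length-zero punctures (the degree-$d$ minimal generators of $\hat{I}$), which affect the generator count $m$ but not the triangle-counting identities. Each of these points is governed by the single observation that the punctures of $T$ are non-overlapping, but they must be dispatched one case at a time.
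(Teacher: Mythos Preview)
Your approach is the same as the paper's: reduce to Corollary~\ref{cor:T-stable} using $\mo_d(\hat I)=0$, and then translate the condition ``$\mo_{d-\deg g_J}(\hat I_J)<0$ for all admissible $J$'' into ``every proper monomial subregion is under-punctured.'' The paper's proof is four sentences and simply asserts this last equivalence without argument; all of the bookkeeping you outline (the inequality $\mo_{d-\deg g_J}(\hat I_J)\le \mo_{U_J}$, the edge case $\deg g_J=0$, the degree-$d$ generators contributing side-length-zero punctures, and verifying $|J|\ge 2$ and $J$ proper in the converse) is exactly what a reader would have to supply, and you have identified the right pressure points.
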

\begin{proof}
    We may assume $I = I + J(T)$. As $T$ is perfectly-punctured, we have that $\mo_d (I) = \mo_T = 0$.  Using Corollary~\ref{cor:T-stable}, we see that
    $\widetilde\syz{I}$ is stable if and only if $\mo_{T_{d-\deg{g}}(I_J)} < 0$, where $g = \gcd\{J\}$, for all
    proper subsets $J$ of the set of minimal generators of $I$.  This is equivalent to every proper monomial subregion of $T$ being under-punctured.
\end{proof}

By the preceding theorem and proposition, we have an understanding of semistability and stability for
perfectly-punctured triangular regions.  However, when a region is over-punctured and non-tileable  more
information is needed to decide semistability.

\begin{example} \label{exa:stability}
    There are monomial ideals with stable syzygy bundles whose corresponding triangular regions are
    over-punctured and  non-tileable.  See Example~\ref{ex:stability}(i) and Figure~\ref{fig:nss-examples}(i) for a specific
    example.

    \begin{figure}[!ht]
        \begin{minipage}[b]{0.30\linewidth}
            \centering
            \includegraphics[scale=1]{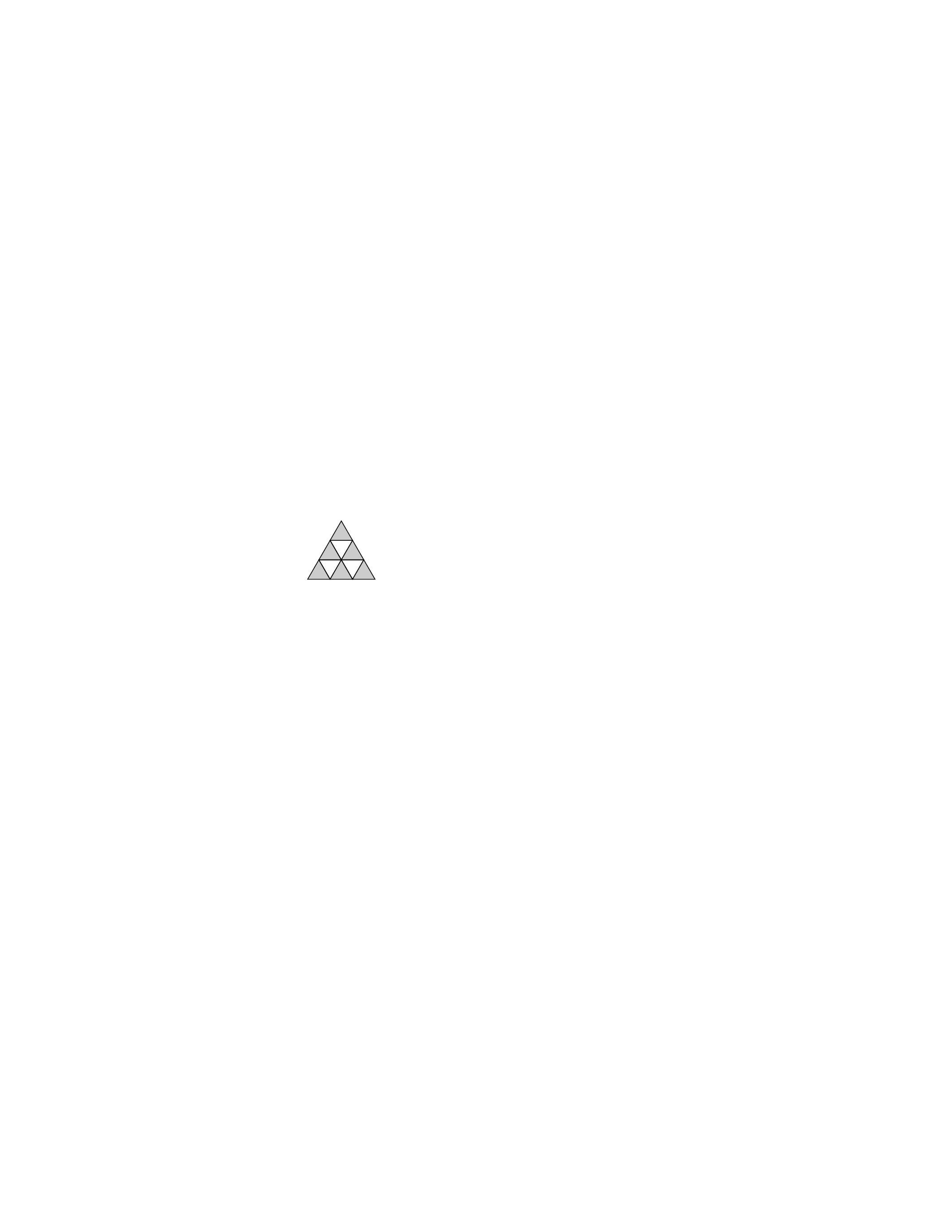}\\
            \emph{(i) $T_3(x^2, y^2, z^2, xy, xz, yz)$}
        \end{minipage}
        \begin{minipage}[b]{0.28\linewidth}
            \centering
            \includegraphics[scale=1]{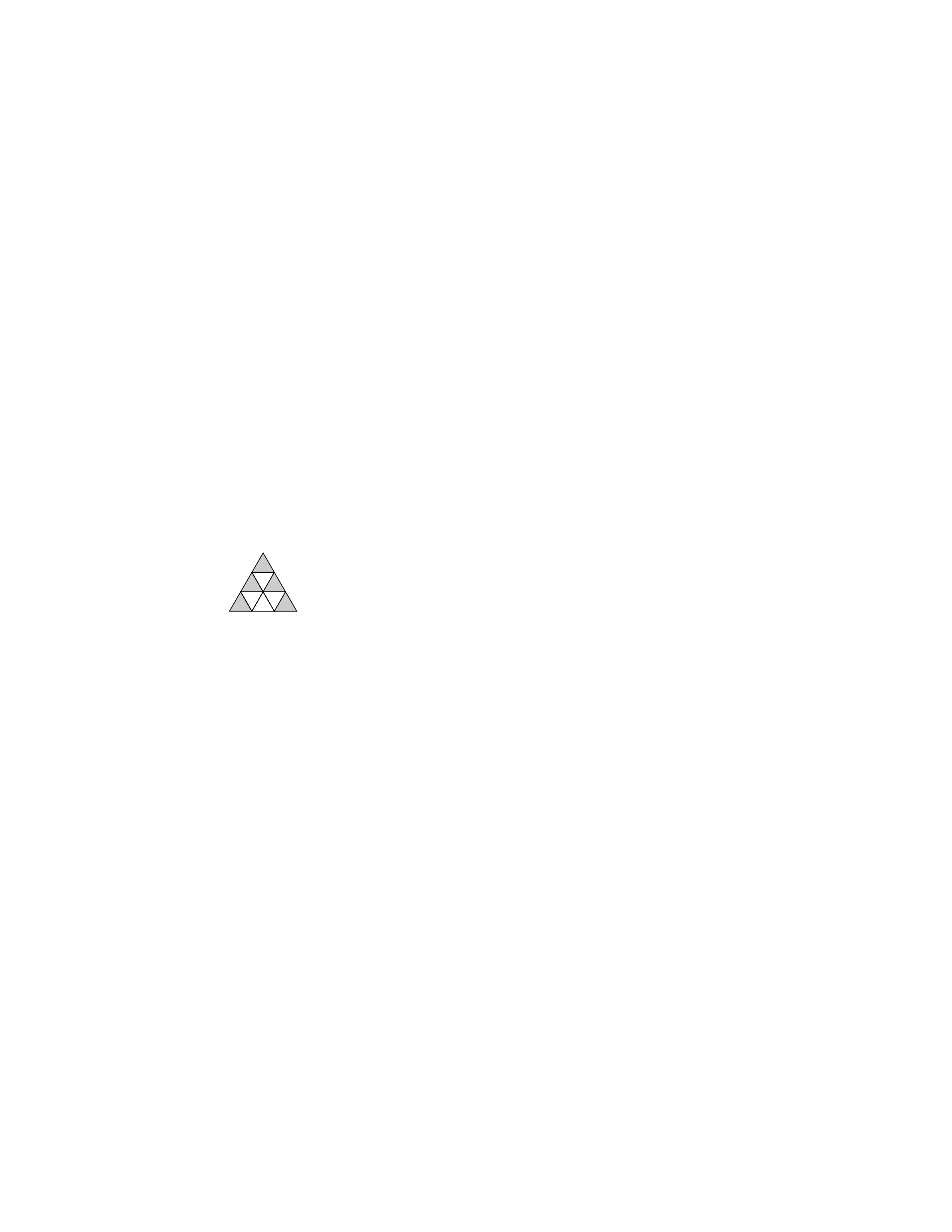}\\
            \emph{(ii) $T_3(x^2, y^2, z^2, xy, xz)$}
        \end{minipage}
        \begin{minipage}[b]{0.40\linewidth}
            \centering
            \includegraphics[scale=1]{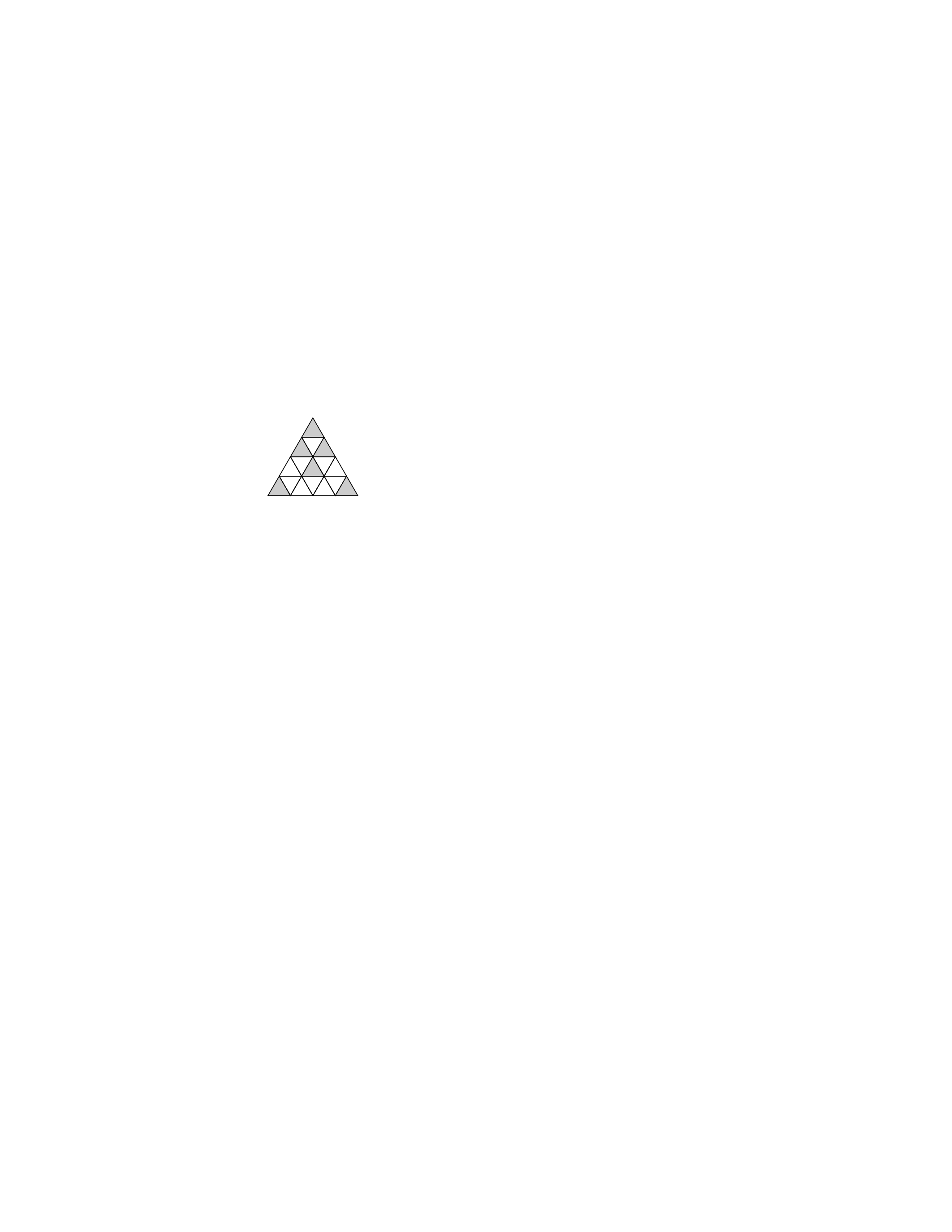}\\
            \emph{(iii) $T_4(x^3, y^3, z^3, xyz, x^2y, x^2z)$}
        \end{minipage}
        \caption{Over-punctured, non-tileable regions and various levels of stability.}
        \label{fig:nss-examples}
    \end{figure}

    Moreover, the ideal $(x^2, y^2, z^2, xy, xz)$ has a semistable, but non-stable  syzygy bundle (the monomial subregion associated
    to $x$ breaks stability), and the ideal $(x^3, y^3, z^3, xyz, x^2y, x^2z)$ has a non-semistable
   syzygy bundle (the monomial subregion associated to $x^2$ breaks semistability).  Both of their triangular regions,
    see Figures~\ref{fig:nss-examples}(ii) and (iii), respectively, are over-punctured and non-tileable.
\end{example}

\section{Signed lozenge tilings and enumerations} \label{sec:signed}

In Section~\ref{sec:tiling} we considered whether a triangular region $T_d(I)$ is tileable by lozenges.  Now we want to
\emph{enumerate} the tilings of tileable regions $T_d(I)$. In fact, we introduce two ways for assigning a sign to a lozenge tiling and then compare the resulting enumerations.

In order to derive the (unsigned) enumeration, we consider the enumeration of perfect matchings of an associated bipartite
graph.  If we consider the bi-adjacency matrix, a zero-one matrix, of the bipartite graph, then the permanent of the matrix yields
the desired enumeration.  However, the determinant of this matrix yields a (possibly different) integer, which may be negative.
We consider this a \emph{signed} enumeration of the perfect matchings of the graph, and hence of lozenge tilings.

We derive a different \emph{signed} enumeration of the lozenge tilings by considering the enumeration of families of non-intersecting
lattice paths on an associated finite sub-lattice of $\ZZ^2$.  Using the Lindstr\"om-Gessel-Viennot Theorem (\cite{Li}, \cite{GV};
see Theorem~\ref{thm:lgv}), we generate a binomial matrix for the finite sub-lattice with a determinant that gives a signed enumeration
of families of non-intersecting lattice paths, hence of lozenge tilings.  The two signed enumerations appear to be different, but we show
that they are indeed the same, up to sign.

~\subsection{Perfect matchings}\label{sub:pm}~\par

A subregion $T (G) \subset \mathcal{T}_d$ can be associated to a bipartite planar graph $G$ that is an induced subgraph
of the honeycomb graph. Lozenge tilings of $T(G)$ can be then associated to perfect matchings on $G$. The connection was
used by Kuperberg in~\cite{Kup}, the earliest citation known to the authors, to study symmetries on plane partitions.
Note that $T(G)$ is often called the \emph{dual graph} of $G$ in the literature (e.g., \cite{Ci-1997}, \cite{Ci-2005},
and \cite{Ei}). Here we begin with a subregion $T$ and then construct a graph $G$.

Let $T \subset \mathcal{T}_d$ be any subregion. As above, we consider $T$ as a union of unit triangles. We associate to
$T$ a bipartite graph. First, place a vertex at the center of each triangle. Let $B$ be the set of centers of the
downward-pointing triangles, and let $W$ be the set of centers of the upward-pointing triangles. Consider both sets
ordered by the reverse-lexicographic ordering applied to the monomial labels of the corresponding triangles (see
Subsection~\ref{sub:trideg}). The \emph{bipartite graph associated to $T$}%
\index{triangular region!bipartite graph associated to}
is the bipartite graph $G(T)$ on the vertex set $B \cup W$ that has an edge between vertices $B_i \in B$ and $W_j \in W$
if the corresponding upward- and downward-pointing triangle share are edge. In other words, edges of $G(T)$ connect vertices
of adjacent triangles. See Figure~\ref{fig:build-pm}(i).

\begin{figure}[!ht]
    \begin{minipage}[b]{0.32\linewidth}
        \centering
        \includegraphics[scale=1]{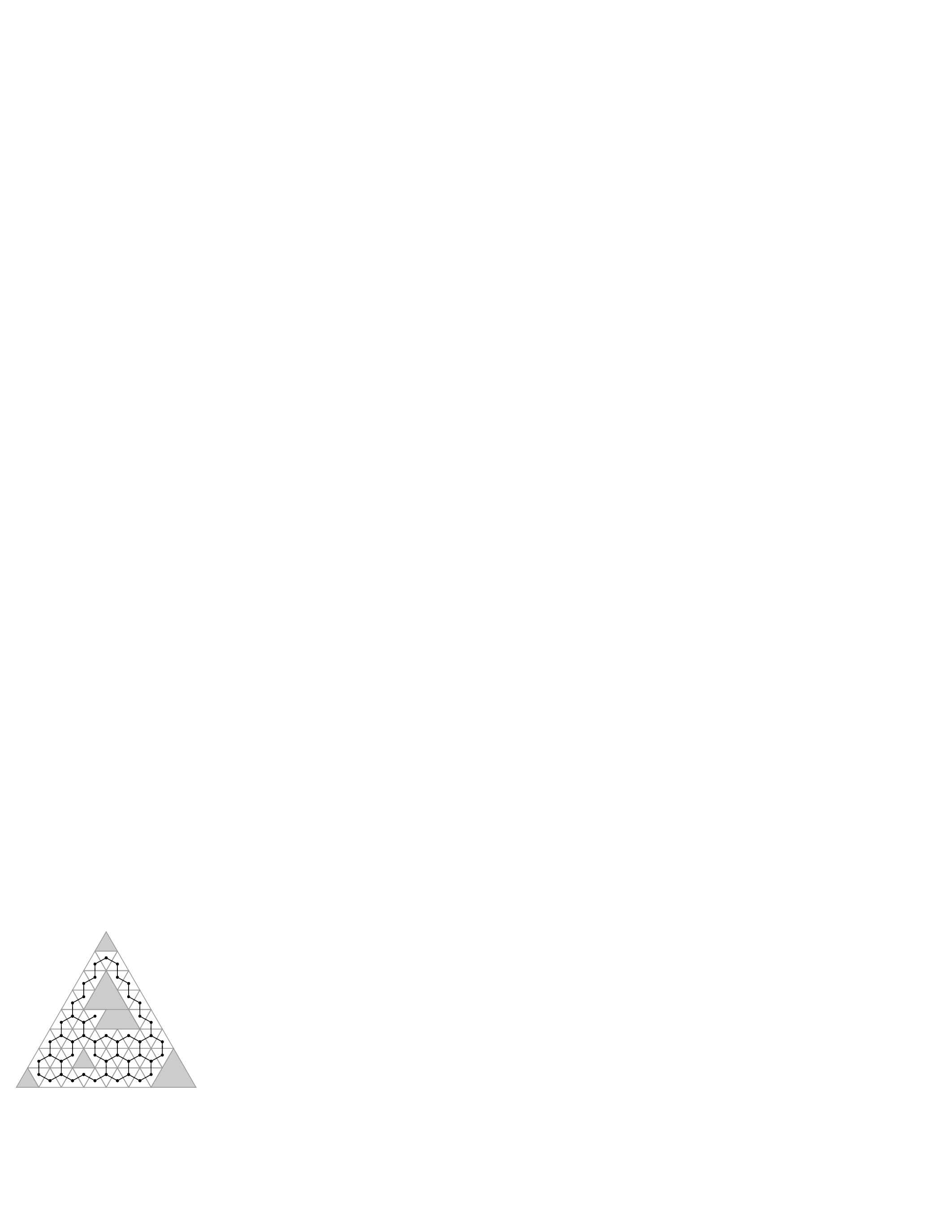}\\
        \emph{(i) The graph $G(T)$.}
    \end{minipage}
    \begin{minipage}[b]{0.32\linewidth}
        \centering
        \includegraphics[scale=1]{figs/build-pm-2}\\
        \emph{(ii) Selected covered edges.}
    \end{minipage}
    \begin{minipage}[b]{0.32\linewidth}
        \centering
        \includegraphics[scale=1]{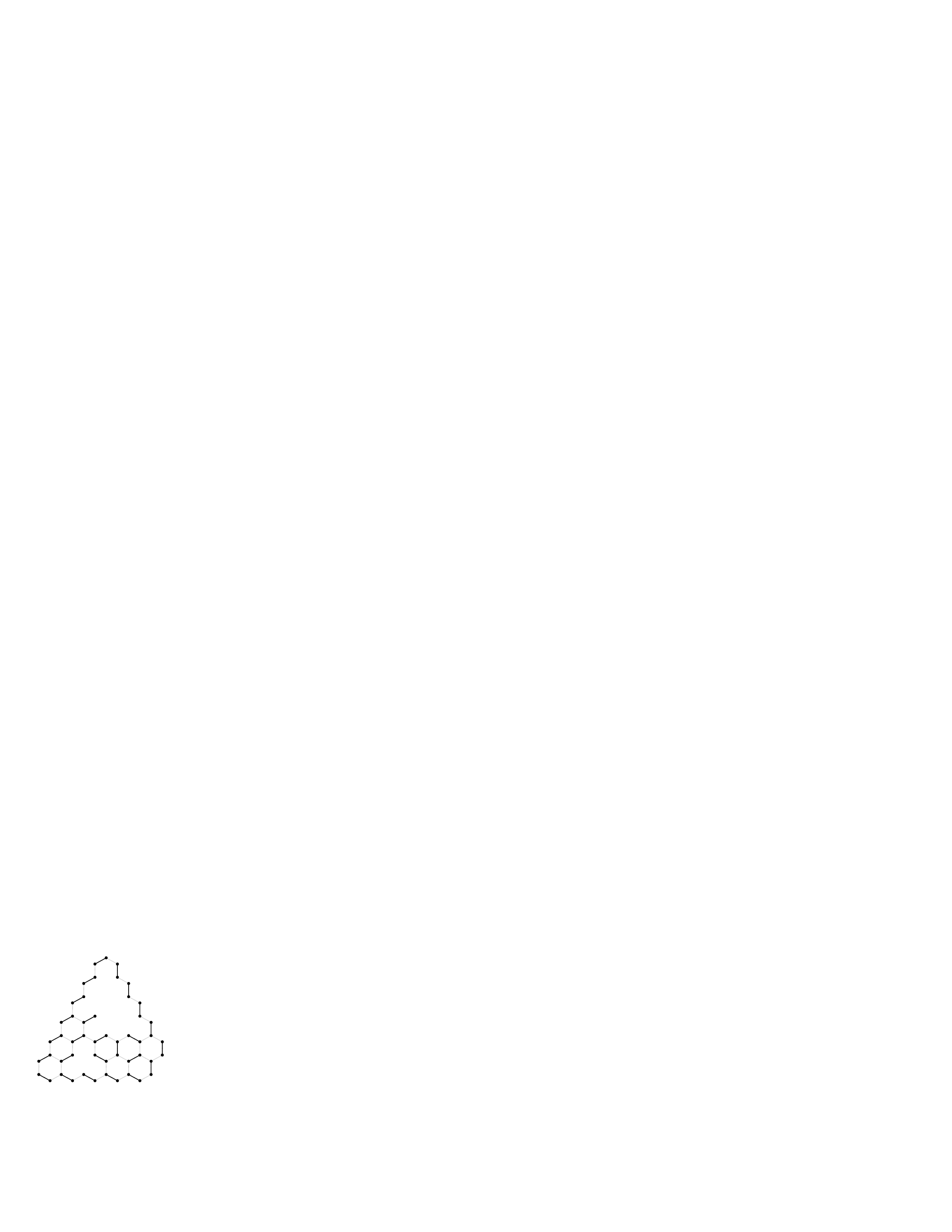}\\
        \emph{(iii) The perfect matching.}
    \end{minipage}
    \caption{Given the tiling $\tau$ in Figure~\ref{fig:triregion-tiling} of $T$, we construct the perfect
        matching $\pi$ of the bipartite graph $G(T)$ associated to $\tau$.}
    \label{fig:build-pm}
\end{figure}

Using the above ordering of the vertices, we define the \emph{bi-adjacency matrix}%
\index{triangular region!bi-adjacency matrix of}
\index{0@\textbf{Symbol list}!ZT@$Z(T)$}
of $T$ as the bi-adjacency matrix $Z(T) := Z(G(T))$ of the graph $G(T)$. It is the zero-one matrix $Z(T)$ of
size $\# B \times \# W$ with entries $Z(T)_{(i,j)}$ defined by
\begin{equation*}
    Z(T)_{(i,j)} =
    \begin{cases}
        1 & \text{if $(B_i, W_j)$ is an edge of $G(T)$ } \\
        0 & \text{otherwise.}
    \end{cases}
\end{equation*}

\begin{remark} \label{rem:Z-non-square}
    Note that that $Z(T)$ is a square matrix if and only of the region $T$ is balanced.  Observe also that the construction
    of $G(T)$ and $Z(T)$ do not require any restrictions on $T$.  In particular, $T$ need not be
    balanced and so $Z(T)$ need not be square.  This generality is needed in Section~\ref{sub:wlp-pm}.
\end{remark}

A \emph{perfect matching of a graph $G$}%
\index{perfect matching}
is a set of pairwise non-adjacent edges of $G$ such that each vertex is matched. There is well-known bijection between lozenge
tilings of a balanced subregion $T$ and perfect matchings of $G(T)$. A lozenge tiling $\tau$ is transformed in to a perfect
matching $\pi$ by overlaying the triangular region $T$ on the bipartite graph $G(T)$ and selecting the edges of the graph that
the lozenges of $\tau$ cover. See Figures~\ref{fig:build-pm}(ii) and~(iii) for the overlayed image and the perfect matching by
itself, respectively.

\begin{remark}
    The graph $G(T)$ is a ``honeycomb graph,'' a type of graph that has been studied, especially for its perfect matchings.
    \begin{enumerate}
        \item In particular, honeycomb graphs are investigated for their connections to physics.  Honeycomb graphs model the bonds
            in dimers (polymers with only two structural units), and perfect matchings correspond to so-called \emdx{dimer coverings}.
            Kenyon~\cite{Ke} gave a modern recount of explorations on dimer models, including random dimer coverings and their
            limiting shapes.  The recent memoir~\cite{Ci-2005} of Ciucu, which has many connections to this paper (see Section~\ref{sec:mirror}),
            describes further results in this direction.
        \item Kasteleyn~\cite{Ka} provided, in 1967, a general method for computing the number of perfect matchings of a planar graph by means
            of a determinant.  In the following proposition, we compute the number of perfect matchings on $G(T)$ by means of a permanent.
    \end{enumerate}
\end{remark}

Recall that the \emdx{permanent} of an $n \times n$ matrix $M = (M_{(i, j)})$ is given by
\[
    \per{M} := \sum_{\sigma \in \PS_n} \prod_{i=1}^{n} M_{(i, \sigma(i))}.
\]
\index{0@\textbf{Symbol list}!perM@$\per{M}$}

\begin{proposition} \label{pro:per-enum}
    Let $T \subset \mathcal{T}_d$ be a non-empty balanced subregion.  Then the lozenge tilings of $T$ and the perfect matchings of $G(T)$ are both
    enumerated by $\per{Z(T)}$.
\end{proposition}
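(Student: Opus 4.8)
The plan is to reduce the statement to two observations: first, that the permanent of the zero-one matrix $Z(T)$ literally counts the perfect matchings of $G(T)$; second, that the bijection recalled just before the proposition identifies these perfect matchings with the lozenge tilings of $T$.

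First I would record that, since $T$ is balanced, Remark~\ref{rem:Z-non-square} guarantees that $Z(T)$ is a square matrix, say of size $n \times n$ with $n = \# B = \# W$, so that $\per Z(T)$ is defined. Next I would set up the correspondence between permutations and potential matchings: using the fixed reverse-lexicographic orderings of $B$ and $W$, a perfect matching $\pi$ of $G(T)$ determines the permutation $\sigma_\pi \in \PS_n$ sending $i$ to the index $j$ with $\{B_i, W_j\} \in \pi$; conversely, an arbitrary $\sigma \in \PS_n$ gives a set of edges $\{\{B_i, W_{\sigma(i)}\}\}_{i=1}^{n}$, and this set is a perfect matching of $G(T)$ exactly when $(B_i, W_{\sigma(i)})$ is an edge of $G(T)$ for every $i$, i.e., when $Z(T)_{(i, \sigma(i))} = 1$ for all $i$.

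Then I would expand the permanent. Because every entry of $Z(T)$ lies in $\{0, 1\}$, for each $\sigma \in \PS_n$ the product $\prod_{i=1}^{n} Z(T)_{(i, \sigma(i))}$ is $1$ when $Z(T)_{(i, \sigma(i))} = 1$ for all $i$ and is $0$ otherwise; by the previous paragraph this product is $1$ precisely for the permutations $\sigma = \sigma_\pi$ arising from perfect matchings $\pi$ of $G(T)$, and this assignment $\pi \mapsto \sigma_\pi$ is injective. Hence
\[
    \per Z(T) = \sum_{\sigma \in \PS_n} \prod_{i=1}^{n} Z(T)_{(i, \sigma(i))}
              = \#\{\text{perfect matchings of } G(T)\}.
\]
Finally, I would invoke the bijection between lozenge tilings of the balanced subregion $T$ and perfect matchings of $G(T)$ — sending a tiling $\tau$ to the set of edges of $G(T)$ crossed by the lozenges of $\tau$ — to conclude that the number of lozenge tilings of $T$ equals the number of perfect matchings of $G(T)$, and therefore both equal $\per Z(T)$.

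There is no serious obstacle here; the argument is essentially an unwinding of definitions. The two points requiring a little care are: (a) that $\per Z(T)$ is a genuine (unweighted) enumeration rather than a weighted count, which is exactly where the entries of $Z(T)$ being $0$ or $1$ is used; and (b) that the tiling–matching assignment is indeed a bijection, which follows because a lozenge in $\mathcal{T}_d$ covers exactly one upward- and one downward-pointing triangle, so in a tiling of the balanced region $T$ the covered edges meet every vertex of $G(T)$ exactly once, and conversely each perfect matching reassembles a unique tiling.
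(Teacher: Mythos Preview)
Your proposal is correct and follows essentially the same approach as the paper's proof: both argue that $Z(T)$ is a square zero-one matrix because $T$ is balanced, that the nonzero summands of $\per Z(T)$ are in bijection with perfect matchings of $G(T)$, and then invoke the tiling--matching bijection. Your version simply spells out the permutation correspondence and the role of the zero-one entries in more detail than the paper does.
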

\begin{proof}
    As $T$ is balanced, $Z(T)$ is a square zero-one matrix. Each non-zero summand of $\per{Z(T)}$ corresponds to a
    perfect matching, as it corresponds to a bijection between the two colour classes $B$ and $W$ of $G(T)$ (determined
    by the downward- and upward-pointing triangles of $T$).  Hence, $\per{Z(T)}$ enumerates the perfect matchings of
    $G(T)$, and thus the tilings of $T$.
\end{proof}

Recall that the \emdx{determinant} of an $n \times n$ matrix $M$ is given by
\[
    \det{M} := \sum_{\sigma \in \PS_n} \prod_{i=1}^{n} \sgn{\sigma} M_{(i, \sigma(i))}.
\]
\index{0@\textbf{Symbol list}!detM@$\det{M}$}
Each non-zero summand of the determinant of $M$ is given a sign based on the signature (or sign) of the permutation associated to it. We take the
convention that the permanent and determinant of a $0 \times 0$ matrix its one.

By the proof of Proposition~\ref{pro:per-enum}, each lozenge tiling $\tau$ corresponds to a perfect matching $\pi$ of $G(T)$, that is, a bijection
$\pi: B \to W$. Considering $\pi$ as a permutation on $\#\uptri(T) = \#\dntri (T)$ letters, it is natural to assign a sign to each lozenge tiling
using the signature of the permutation $\pi$.

\begin{definition} \label{def:pm-sign}
    Let $T \subset \mathcal{T}_d$ be a non-empty balanced subregion. Then we define the \emph{perfect matching sign}%
    \index{perfect matching!sign}
    \index{0@\textbf{Symbol list}!msgn@$\msgn{\tau}$}
    of a lozenge tiling $\tau$ of $T$ as $\msgn{\tau} := \sgn{\pi}$, where $\pi \in \PS_{\#\uptri(T)}$ is the perfect
    matching determined by $\tau$.
\end{definition}

It follows that the determinant of $Z(T)$ gives an enumeration of the \emph{perfect matching signed lozenge tilings} of $T$.

\begin{theorem} \label{thm:pm-matrix}
     Let $T \subset \mathcal{T}_d$ be a non-empty balanced subregion. Then the perfect matching signed lozenge tilings of $T$
    are enumerated by $\det{Z(T)}$, that is,
    \[
        \sum_{\tau \text{tiling of } T}  \msgn{\tau} = \det Z(T).
    \]
\end{theorem}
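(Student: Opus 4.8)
The plan is to expand $\det Z(T)$ directly from its definition as a signed sum over permutations and match the nonzero terms with lozenge tilings of $T$. Write $n = \#\uptri(T) = \#\dntri(T)$, which makes sense because $T$ is balanced, so that $Z(T)$ is an $n \times n$ zero-one matrix and $\det Z(T) = \sum_{\sigma \in \PS_n} \sgn{\sigma} \prod_{i=1}^n Z(T)_{(i, \sigma(i))}$. A summand indexed by $\sigma$ is nonzero precisely when $Z(T)_{(i,\sigma(i))} = 1$ for every $i$, that is, when $(B_i, W_{\sigma(i)})$ is an edge of $G(T)$ for all $i$; in that case the summand equals $\sgn{\sigma}$.

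Next I would recall, as established in the discussion preceding Proposition~\ref{pro:per-enum}, that the set of $\sigma \in \PS_n$ for which $(B_i, W_{\sigma(i)})$ is an edge of $G(T)$ for all $i$ is exactly the set of perfect matchings $\pi \colon B \to W$ of $G(T)$, and that the map sending a lozenge tiling $\tau$ of $T$ to the perfect matching $\pi$ obtained by selecting the edges covered by the lozenges of $\tau$ is a bijection between the tilings of $T$ and the perfect matchings of $G(T)$. Under this bijection the permutation $\pi$ associated to $\tau$ is, by Definition~\ref{def:pm-sign}, the one whose signature is $\msgn{\tau}$.

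Combining these two observations gives
\[
    \det Z(T) = \sum_{\substack{\sigma \in \PS_n \\ \sigma \text{ a perfect matching}}} \sgn{\sigma} = \sum_{\pi \text{ perfect matching of } G(T)} \sgn{\pi} = \sum_{\tau \text{ tiling of } T} \msgn{\tau},
\]
which is the claimed identity. There is no genuine obstacle here: the only points requiring care are that the hypothesis that $T$ is balanced is precisely what makes $Z(T)$ square (so that the determinant, and the notion of the signature of $\pi$ as an element of $\PS_n$, are defined), and that one must use the same reverse-lexicographic ordering of $B$ and $W$ both to index the rows and columns of $Z(T)$ and to regard $\pi$ as a permutation, so that the $\sgn{\pi}$ of Definition~\ref{def:pm-sign} agrees with the $\sgn{\sigma}$ appearing in the expansion of the determinant. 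With those conventions fixed, the proof is a direct term-by-term comparison.
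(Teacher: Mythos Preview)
Your proof is correct and is exactly the argument the paper has in mind; in fact the paper gives no explicit proof of this theorem, treating it as immediate from the definition of $\msgn{\tau}$ together with the bijection between tilings and perfect matchings established before Proposition~\ref{pro:per-enum}. You have simply written out that implicit one-line argument in full detail.
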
~

\begin{example} \label{exa:Z-matrix}
    Consider the triangular region $T = T_6(x^3, y^4, z^5)$, as seen in the first picture of Figure~\ref{fig:three-rotations}.
    Then $Z(T)$ is the $11 \times 11$ matrix
    \[
        Z(T) =
        \left[
            \begin{array}{ccccccccccc}
                1&1&0&0&0&0&0&0&0&0&0\\
                0&1&1&0&0&0&0&0&0&0&0\\
                0&0&1&1&0&0&0&0&0&0&0\\
                1&0&0&0&1&0&0&0&0&0&0\\
                0&1&0&0&1&1&0&0&0&0&0\\
                0&0&1&0&0&1&1&0&0&0&0\\
                0&0&0&1&0&0&1&1&0&0&0\\
                0&0&0&0&1&0&0&0&1&0&0\\
                0&0&0&0&0&1&0&0&1&1&0\\
                0&0&0&0&0&0&1&0&0&1&1\\
                0&0&0&0&0&0&0&1&0&0&1
            \end{array}
        \right].
    \]
    We note that $\per Z(T) = \det{Z(T)} = 10$. Thus, $T$ has exactly $10$ lozenge tilings, all of which have the same sign.
    We derive a theoretical explanation for this fact in the following two subsections.
\end{example}~

\subsection{Families of non-intersecting lattice paths}\label{sub:nilp}~\par

We follow~\cite[Section~5]{CEKZ} (similarly,~\cite[Section~2]{Fi}) in order to associate to a subregion $T \subset \mathcal{T}_d$ a
finite set  $L(T)$ that can be identified with a subset of  the lattice $\ZZ^2$.  Abusing notation, we refer to $L(T)$ as a
sub-lattice of $\ZZ^2$. We then translate lozenge tilings of $T$ into families of non-intersecting lattice paths on $L(T)$.%
\index{triangular region!lattice associated to}
\index{0@\textbf{Symbol list}!LT@$L(T)$}

We first construct $L(T)$ from $T$.  Place a vertex at the midpoint of the edge of each triangle of $T$  that is parallel to the upper-left
boundary of the triangle $\mathcal{T}_d$.  These vertices form $L(T)$. We will consider paths in $L(T)$. There we think of rightward motion parallel to the
bottom edge of $\mathcal{T}_d$ as ``horizontal'' and downward motion parallel to the upper-right edge of $\mathcal{T}_d$ as ``vertical'' motion.  If one simply orthogonalises
$L(T)$ with respect to the described ``horizontal'' and ``vertical'' motions, then we can consider $L(T)$ as a finite sub-lattice of $\ZZ^2$.
As we can translate $L(T)$ in $\ZZ^2$ and not change its properties, we may assume that the vertex associated to the
lower-left triangle of $\mathcal{T}_d$ is the origin.  Notice that each vertex of $L(T)$  is on the upper-left edge of an upward-pointing triangle of $\mathcal{T}_d$ (even if
this triangle is not present in $T$). We use the monomial label of this upward-pointing triangle to specify a vertex of $L(T)$. Under this identification the mentioned
orthogonalisation of $L(T)$ moves the vertex associated to the monomial $x^a y^b z^{d-1-(a+b)}$ in $L(T)$  to the point $(d-1-b, a)$ in $\ZZ^2$.

We next single out special vertices of $L(T)$. We label the vertices of $L(T)$ that are only on upward-pointing triangles in $T$, from smallest to largest
in the reverse-lexicographic order, as $A_1, \ldots, A_m$.  Similarly, we label the vertices of $L(T)$ that are only on downward-pointing triangles in $T$,
again from smallest to largest in the reverse-lexicographic order, as $E_1, \ldots, E_n$.  See Figure~\ref{fig:build-nilp}(i).  We note that there are an equal
number of vertices  $A_1, \ldots, A_m$ and $E_1, \ldots, E_n$ if and only if the region $T$ is balanced.  This follows from the fact the these vertices are
precisely the vertices of $L(T)$ that are in exactly one unit triangle of $T$.

A \emdx{lattice path} in a lattice $L \subset \ZZ^2$ is a finite sequence of vertices of $L$  so that all single steps move either to the right or down.
Given any vertices $A, E \in \ZZ^2$, the number of lattice paths in $\ZZ^2$ from $A$ to $E$ is a binomial coefficient.  In fact, if $A$ and $E$ have
coordinates $(u,v), (x,y) \in \ZZ^2$ as above, there are $\binom{x-u+v-y}{x-u}$ lattice paths from $A$ to $E$ as each path has $x-u + v-y$ steps
and $x-u \geq 0$ of these must be horizontal steps.

Using the above identification of $L(T)$ as a sub-lattice of $\ZZ^2$, a \emph{lattice path} in $L(T)$ is a finite sequence of vertices of $L(T)$
so that all single steps move either to the East or to the Southeast. The \emph{lattice path matrix}%
\index{triangular region!lattice path matrix of}
of $T$ is the $m \times n$ matrix  $N(T)$ with entries $N(T)_{(i,j)}$ defined by
\[
    N(T)_{(i,j)} = \# \text{lattice paths in $\ZZ^2$ from $A_i$ to $E_j$}.
\]
Thus,  the entries of $N(T)$ are binomial coefficients.
\index{0@\textbf{Symbol list}!NT@$N(T)$}

Next we consider several lattice paths simultaneously. A \emph{family of non-intersecting lattice paths}%
\index{lattice path!family of non-intersecting}
is a finite collection of lattice paths such that no two lattice paths have any points in common.  We call a family of non-intersecting
lattice paths \emph{minimal}%
\index{lattice path!minimal family of non-intersecting}
if every path takes vertical steps before it takes horizontal steps, whenever possible.  That is, every time a horizontal step is followed by a vertical step, then replacing
these with a vertical step followed by a horizontal step would cause paths in the family to intersect.

Assume now that the subregion $T$ is balanced, so $m = n$. Let $\Lambda$ be a family of $m$ non-intersecting lattice paths in $L(T)$ from
$A_1, \ldots, A_m$ to $E_1, \ldots, E_m$. Then $\Lambda$ determines a permutation $\lambda \in \PS_m$ such that the path in $\Lambda$ that
begins at $A_i$ ends at $E_{\lambda(i)}$.

Now we are ready to apply a beautiful theorem relating enumerations of signed families of non-intersecting lattice paths and
determinants.  In particular, we use a theorem first given by Lindstr\"om in~\cite[Lemma~1]{Li} and stated independently
in~\cite[Theorem~1]{GV} by Gessel and Viennot.  Stanley gives a very nice exposition of the topic in~\cite[Section~2.7]{Stanley-2011}.

\begin{theorem}{\cite[Lemma~1]{Li} \& \cite[Theorem~1]{GV}} \label{thm:lgv}
    Assume $T \subset \mathcal{T}_d$ is a non-empty balanced subregion with identified lattice points
    $A_1, \ldots, A_m, E_1, \ldots, E_m \in L(T)$ as above. Then
    \[
        \det{N(T)} = \sum_{\lambda \in \PS_m} \sgn(\lambda) P^+_\lambda(A\rightarrow E),
    \]
    where, for each permutation $\lambda \in \PS_m$, $P^+_\lambda(A \rightarrow E)$ is the number of families of non-intersecting
    lattice paths with paths in $L(T)$ going from $A_i$ to $E_{\lambda(i)}$.
\end{theorem}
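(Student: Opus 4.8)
The plan is to run the classical sign-reversing involution argument of Lindström and of Gessel--Viennot on the lattice $L(T)$. Call a \emph{path system} any tuple $\mathcal{P} = (P_1, \dots, P_m)$ in which $P_i$ is a lattice path in $L(T)$ starting at $A_i$ and the terminal vertices of $P_1, \dots, P_m$ are $E_1, \dots, E_m$ in some order; such a system determines the permutation $\lambda(\mathcal{P}) \in \PS_m$ with $P_i \colon A_i \to E_{\lambda(\mathcal{P})(i)}$. First I would expand the determinant and reorganise the resulting sum. Since
\[
    \det N(T) = \sum_{\lambda \in \PS_m} \sgn(\lambda)\prod_{i=1}^{m} N(T)_{(i,\lambda(i))},
\]
and, by the definition of its entries, $\prod_{i=1}^{m} N(T)_{(i,\lambda(i))}$ is the number of tuples of lattice paths in which $P_i$ runs from $A_i$ to $E_{\lambda(i)}$ with no disjointness imposed — and each such lattice path stays within $L(T)$ — summing over $\lambda$ yields $\det N(T) = \sum_{\mathcal{P}} \sgn(\lambda(\mathcal{P}))$, the sum ranging over all path systems.

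The heart of the proof is a sign-reversing involution $\iota$ on the set of path systems that have at least one pair of paths meeting in a common vertex. Given such a system $\mathcal{P}$, let $i_0$ be the smallest index such that $P_{i_0}$ shares a vertex with another path; traversing $P_{i_0}$ from $A_{i_0}$, let $v$ be the first vertex of $P_{i_0}$ that also lies on another path; and let $j_0$ be the smallest index with $j_0 \neq i_0$ and $v \in P_{j_0}$. Define $\iota(\mathcal{P})$ by exchanging the portions of $P_{i_0}$ and $P_{j_0}$ strictly after $v$. This is again a path system, its permutation is $\lambda(\mathcal{P})$ composed with the transposition $(i_0\; j_0)$, so $\sgn(\lambda(\iota(\mathcal{P}))) = -\sgn(\lambda(\mathcal{P}))$, and the path systems outside the domain of $\iota$ are exactly the families of non-intersecting lattice paths. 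The step requiring real care — and where essentially all the bookkeeping lives — is that $\iota$ is an involution: because lattice paths in $L(T)$ are strictly monotone in one of the two coordinates, every vertex of the exchanged tails has that coordinate strictly beyond the value at $v$, so the tails cannot meet the unchanged initial segments nor the unchanged paths; from this one checks in turn that $i_0$, then $v$, then $j_0$ are the same for $\mathcal{P}$ and for $\iota(\mathcal{P})$, whence applying the exchange a second time restores $\mathcal{P}$.

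Finally, the terms indexed by intersecting path systems cancel in pairs under $\iota$, so the surviving contributions come from the non-intersecting path systems; grouping these according to their permutation gives
\[
    \det N(T) = \sum_{\substack{\mathcal{P}\ \text{non-intersecting}}} \sgn(\lambda(\mathcal{P})) = \sum_{\lambda \in \PS_m} \sgn(\lambda)\, P^{+}_{\lambda}(A \rightarrow E),
\]
which is the asserted identity. I expect the determinant expansion and the interpretation of the matrix entries as path counts to be routine; the genuine obstacle is proving the invariance of the triple $(i_0, v, j_0)$ under the tail exchange, which is precisely what makes $\iota$ a well-defined sign-reversing involution.
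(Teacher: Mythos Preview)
The paper does not supply its own proof of this theorem: it is stated as a citation of Lindstr\"om and Gessel--Viennot (with a pointer to Stanley's exposition) and then used as a black box. Your argument is exactly the classical sign-reversing involution proof found in those references, so your approach coincides with what the paper invokes.

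One point deserves attention, however. You write that each individual path counted by $N(T)_{(i,\lambda(i))}$ ``stays within $L(T)$'', but the paper explicitly defines $N(T)_{(i,j)}$ as the number of lattice paths \emph{in $\ZZ^2$} from $A_i$ to $E_j$, not in $L(T)$; a single such path may well traverse a puncture. Your involution therefore acts on path systems in $\ZZ^2$, and its fixed points are the non-intersecting families in $\ZZ^2$. To match the right-hand side as stated (non-intersecting families \emph{in $L(T)$}) you need the additional observation that any non-intersecting family in $\ZZ^2$ connecting all the $A_i$ to all the $E_j$ automatically avoids every puncture: a path entering a puncture must pass through one of the $E$-vertices on its upper-right boundary and a path leaving must pass through one of the $A$-vertices on its lower-left boundary, and in a non-intersecting family those vertices are already occupied as endpoints of other paths. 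With this remark added, your proof is complete and correct.
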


We now use a well-know bijection between lozenge tilings of $T$ and families of non-intersecting lattice paths from $A_1, \ldots, A_m$ to $E_1, \ldots, E_m$;
see, e.g., the survey~\cite{Pr}.  Let $\tau$ be a lozenge tiling of $T$.  Using the lozenges of $\tau$ as a guide,
we connect each pair of vertices of $L(T)$ that occur on a single lozenge.  This generates the family of non-intersecting lattice
paths $\Lambda$ of $L(T)$ corresponding to $\tau$.  See Figures~\ref{fig:build-nilp}(ii) and~(iii) for the overlayed image and the family of non-intersecting
lattice paths by itself, respectively.

\begin{figure}[!ht]
    \begin{minipage}[b]{0.32\linewidth}
        \centering
        \includegraphics[scale=1]{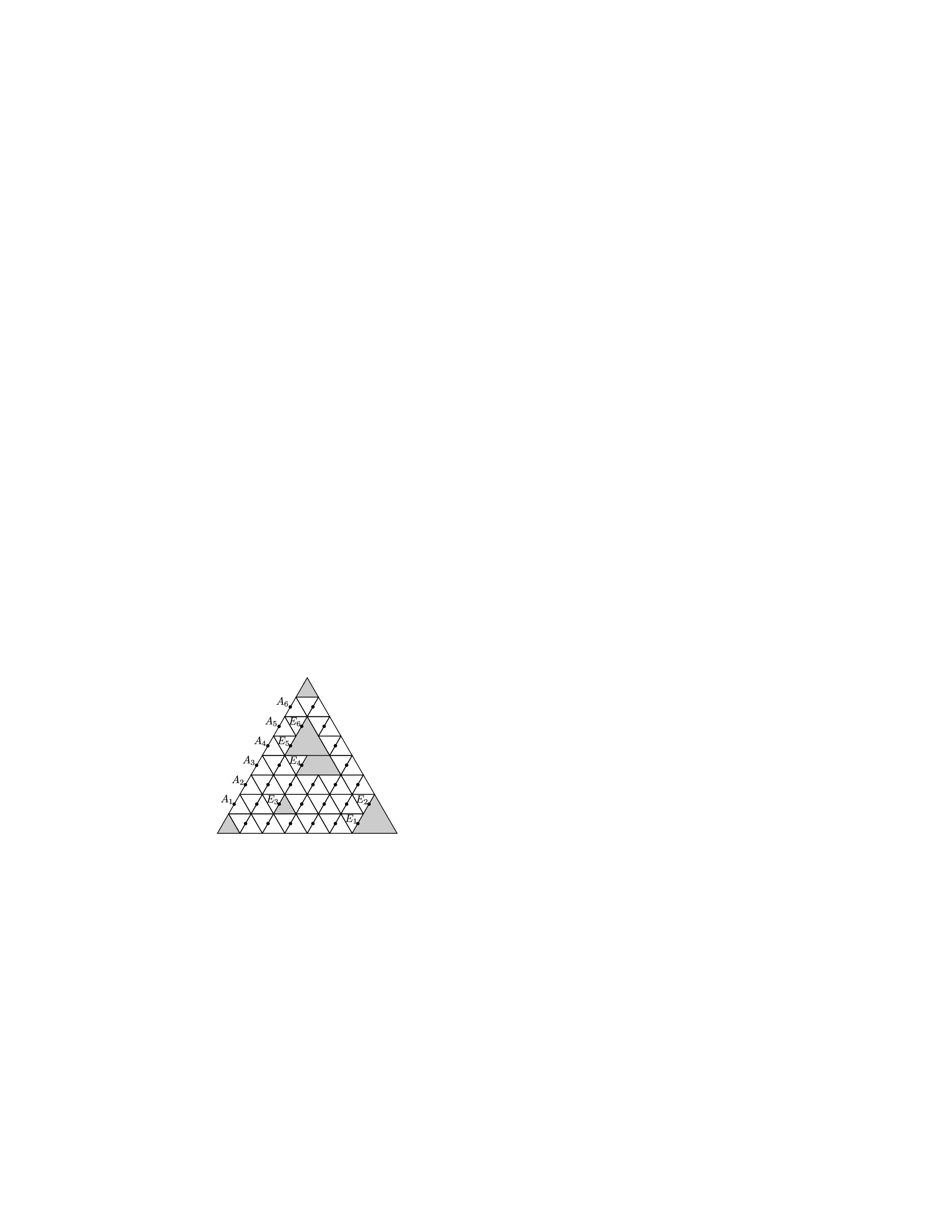}\\
        \emph{(i) The sub-lattice $L(T)$.}
    \end{minipage}
    \begin{minipage}[b]{0.32\linewidth}
        \centering
        \includegraphics[scale=1]{figs/build-nilp-2}\\
        \emph{(ii) The overlayed image.}
    \end{minipage}
    \begin{minipage}[b]{0.32\linewidth}
        \centering
        \includegraphics[scale=1]{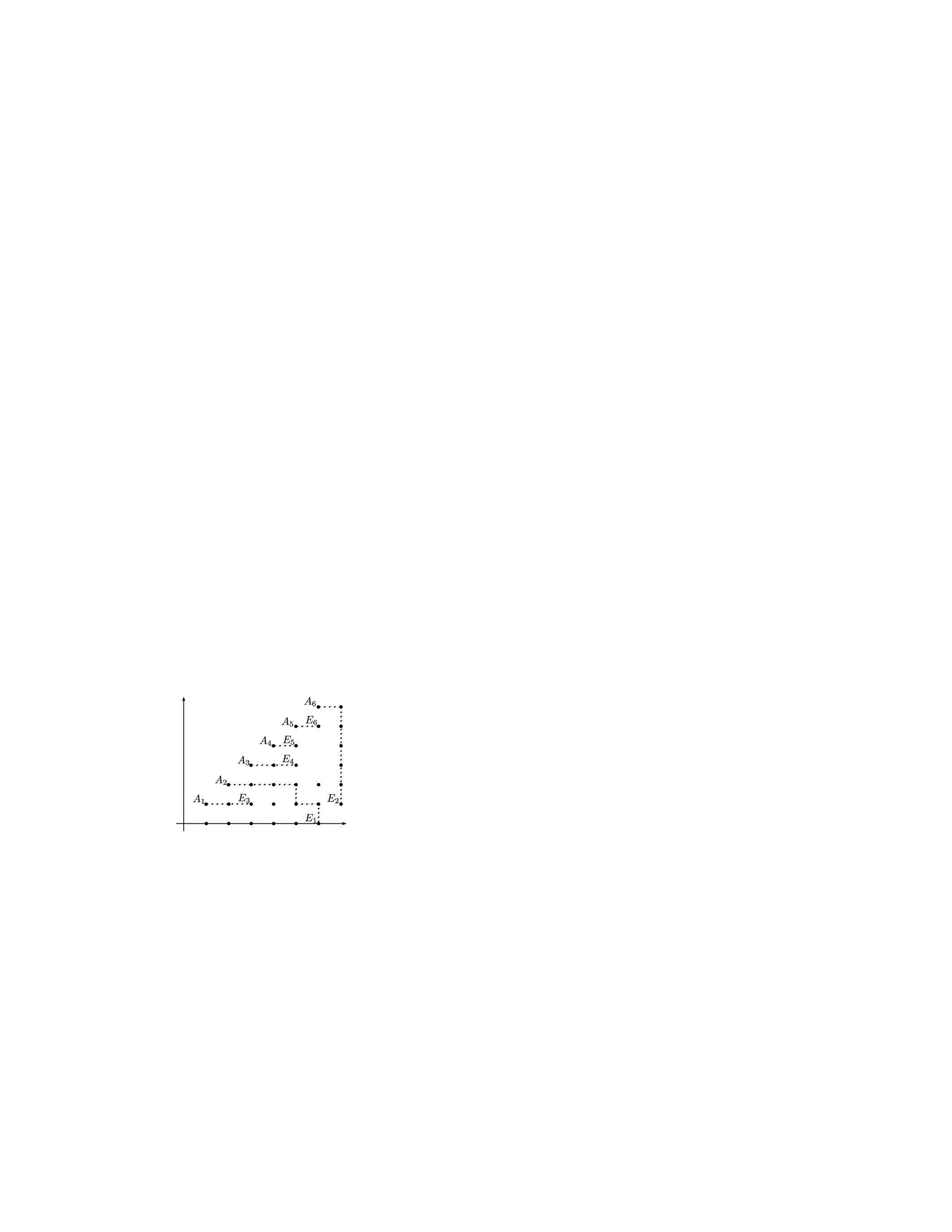}\\
        \emph{(iii) The family $\Lambda$.}
    \end{minipage}
    \caption{The family of non-intersecting lattice paths $\Lambda$ associated to the tiling $\tau$ in Figure~\ref{fig:triregion-tiling}.}
    \label{fig:build-nilp}
\end{figure}

This bijection provides another way for assigning a sign to a lozenge tiling, this time using the signature of the permutation $\lambda$.

\begin{definition} \label{def:nilp-sign}
    \index{lattice path!sign}
    \index{0@\textbf{Symbol list}!lpsgn@$\lpsgn{\tau}$}
    Let $T \subset \mathcal{T}_d$ be a non-empty balanced subregion as above, and let $\tau$ be a lozenge tiling of $T$.
    Then we define the \emph{lattice path sign} of $\tau$ as $\lpsgn{\tau} := \sgn{\lambda}$, where $\lambda \in \PS_m$ is
    the permutation such that, for each $i$, the lattice path determined by $\tau$ that starts at $A_i$ ends at $E_{\lambda (i)}$.
\end{definition}

It follows that the determinant of $N(T)$ gives an enumeration of the \emph{lattice path signed lozenge tilings of $T$}.

\begin{theorem} \label{thm:nilp-matrix}
    Let $T \subset \mathcal{T}_d$ be a non-empty balanced subregion.  Then the lattice path signed lozenge tilings of $T$
    are enumerated by $\det{N(T)}$, that is,
    \[
        \sum_{\tau \text{tiling of } T} \lpsgn{\tau} = \det{N(T)}.
    \]
\end{theorem}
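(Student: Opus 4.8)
The plan is to combine the Lindstr\"om--Gessel--Viennot formula (Theorem~\ref{thm:lgv}) with the bijection between lozenge tilings of $T$ and families of non-intersecting lattice paths in $L(T)$ that was described just before the statement. Write $m = \#\uptri(T) = \#\dntri(T)$, which by balancedness of $T$ equals both the number of source vertices $A_1, \ldots, A_m$ and the number of sink vertices $E_1, \ldots, E_m$. The first step is to observe that, for each permutation $\lambda \in \PS_m$, the quantity $P^+_\lambda(A \rightarrow E)$ counts precisely those families $\Lambda$ of non-intersecting lattice paths in $L(T)$ going from $A_i$ to $E_{\lambda(i)}$; hence regrouping the right-hand side of Theorem~\ref{thm:lgv} by the families themselves instead of by the permutations they induce gives
\[
    \det N(T) = \sum_{\lambda \in \PS_m} \sgn(\lambda)\, P^+_\lambda(A \rightarrow E) = \sum_{\Lambda} \sgn(\lambda_\Lambda),
\]
where $\Lambda$ ranges over all families of non-intersecting lattice paths in $L(T)$ from $A_1, \ldots, A_m$ to $E_1, \ldots, E_m$, and $\lambda_\Lambda \in \PS_m$ is the permutation determined by $\Lambda$.

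Next I would invoke the bijection $\tau \mapsto \Lambda_\tau$ between lozenge tilings of $T$ and such families: each lozenge of $\tau$ joins a unique pair of vertices of $L(T)$, and the resulting edges assemble into a family of non-intersecting lattice paths. The one point deserving a word of care is that the paths obtained this way have their starting points exactly at the vertices of $L(T)$ lying on an upward-pointing triangle of $T$ but on no downward-pointing one (these are the $A_i$) and their endpoints exactly at the vertices lying on a downward-pointing triangle but on no upward-pointing one (the $E_j$), while a vertex interior to the tiling lies on a single lozenge that contributes one incoming and one outgoing step; this is the classical fact recorded in the survey~\cite{Pr}. By Definition~\ref{def:nilp-sign}, the lattice path sign of $\tau$ is $\lpsgn{\tau} = \sgn(\lambda_{\Lambda_\tau})$.

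Finally, using this bijection to re-index the sum over $\Lambda$ as a sum over tilings $\tau$ yields
\[
    \det N(T) = \sum_{\Lambda} \sgn(\lambda_\Lambda) = \sum_{\tau \text{ tiling of } T} \sgn(\lambda_{\Lambda_\tau}) = \sum_{\tau \text{ tiling of } T} \lpsgn{\tau},
\]
which is the assertion. I do not anticipate a serious obstacle: the argument is essentially the parallel of the perfect matching case (Theorem~\ref{thm:pm-matrix}), with the permanent/determinant pair replaced by the Lindstr\"om--Gessel--Viennot determinant. The only thing to verify with some attention is that $\tau \leftrightarrow \Lambda_\tau$ is genuinely a bijection onto the set of non-intersecting lattice path families with sources $A_1, \ldots, A_m$ and sinks $E_1, \ldots, E_m$ — that is, well-definedness of the sources and sinks together with invertibility of the construction — after which the identity is pure bookkeeping of signs.
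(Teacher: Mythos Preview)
Your proposal is correct and is exactly the approach the paper takes: the paper states Theorem~\ref{thm:nilp-matrix} as an immediate consequence of the Lindstr\"om--Gessel--Viennot formula (Theorem~\ref{thm:lgv}), the standard bijection between lozenge tilings of $T$ and families of non-intersecting lattice paths in $L(T)$, and Definition~\ref{def:nilp-sign}, without writing out the details you supply.
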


\begin{remark} \label{rem:rotations}
    Notice that we can use the above construction to assign, for each subregion $T$,  three (non-trivially) different lattice path matrices.
    The matrix $N(T)$ from Theorem~\ref{thm:nilp-matrix} is one of these matrices, and the other two are the $N(\cdot)$ matrices of the
    $120^{\circ}$ and $240^{\circ}$ rotations of $T$.  See Figure~\ref{fig:three-rotations} for an example.

    \begin{figure}[!ht]
        \includegraphics[scale=1]{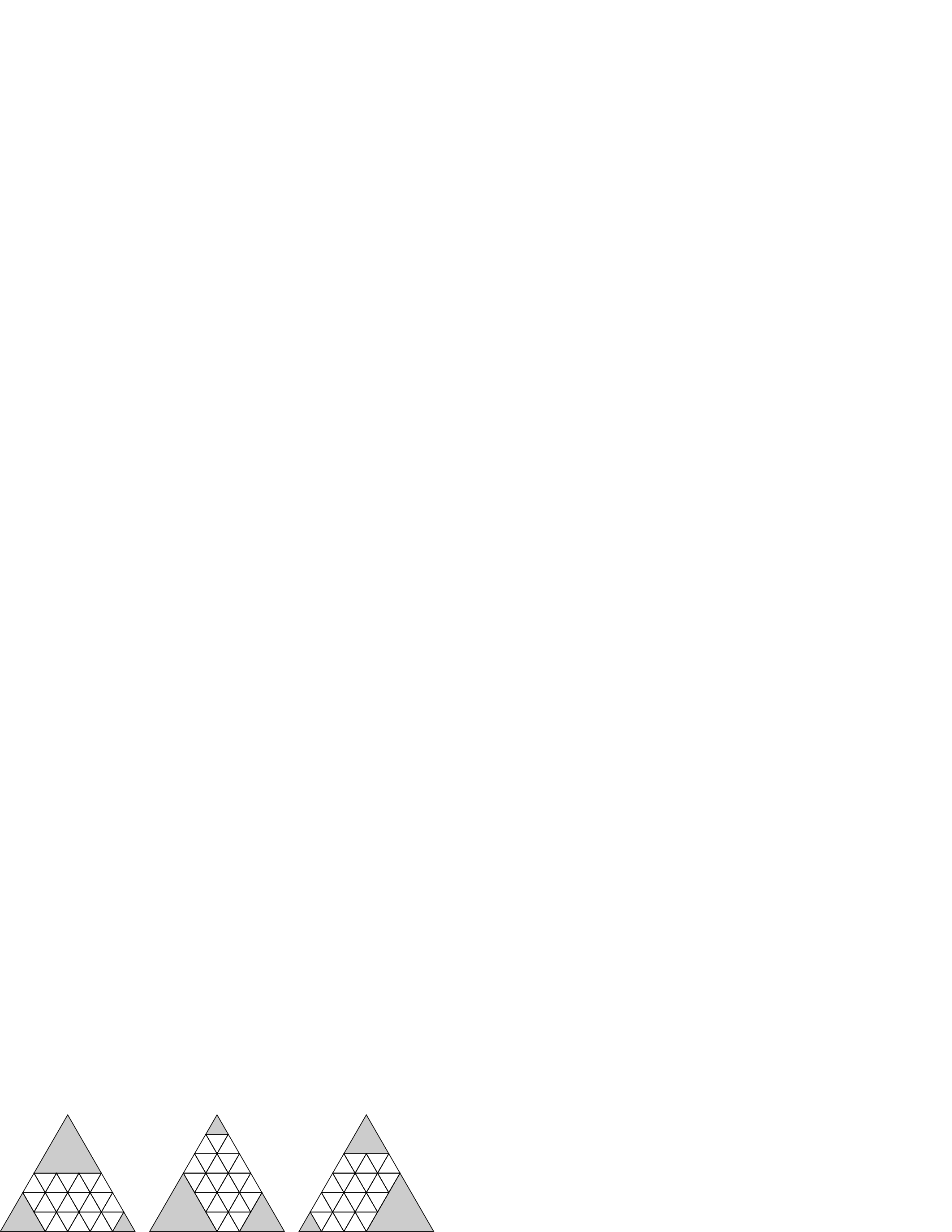}
        \caption{The triangular region $T_6(x^3, y^4, z^5)$ and its rotations, along with the $N(\cdot)$.}
        \label{fig:three-rotations}
    \end{figure}
\end{remark}

Finally, we note that in~\cite{Pr}, Propp gave a history of the connections between lozenge tilings (of non-punctured hexagons),
perfect matchings, plane partitions, and non-intersecting lattice paths.

\subsection{Interlude of signs}\label{sub:signs}~\par

We now have two different signs, the perfect matching sign and the lattice path sign, associated to each lozenge tiling of a
balanced region $T$.  In the case where $T$ is a triangular region, we demonstrate in this subsection that the
signs are equivalent, up to a scaling factor dependent only on $T = T_d(I)$. In particular, the main result of this
section (Theorem~\ref{thm:detZN}) states that $|\det{Z(T)}| = |\det{N(T)}|$. In order to prove this theorem, we
first make a few definitions. Throughout this subsection $T = T_d(I)$ is a tileable triangular region as introduced in
Section~\ref{sec:dictionary}. In particular, $T$ is balanced, and each puncture of $T$ has positive side length.

\subsubsection{Resolution of punctures}\label{subsub:rez}~\par

The first is a tool to remove a puncture from a triangular region, relative to some tiling, in a controlled fashion.

First, suppose that $T \subset \mathcal{T}_d$ has at least one puncture, call it $\mathcal{P}$, that is not overlapped
by any other puncture of $T$. Let $\tau$ be some lozenge tiling of $T$, and denote by $k$ the side length of
$\mathcal{P}$. Informally, we will replace $T$ by a triangular region in $\mathcal{T}_{d + 2k}$, where the place of
the puncture $\mathcal{P}$ of $T$ is taken by a tiled regular hexagon of side length $k$ and three corridors to the outer
vertices of $\mathcal{T}_{d + 2k}$ that are all part of the new region.

\begin{figure}[!ht]
    \begin{minipage}[b]{0.48\linewidth}
        \centering
        \includegraphics[scale=1]{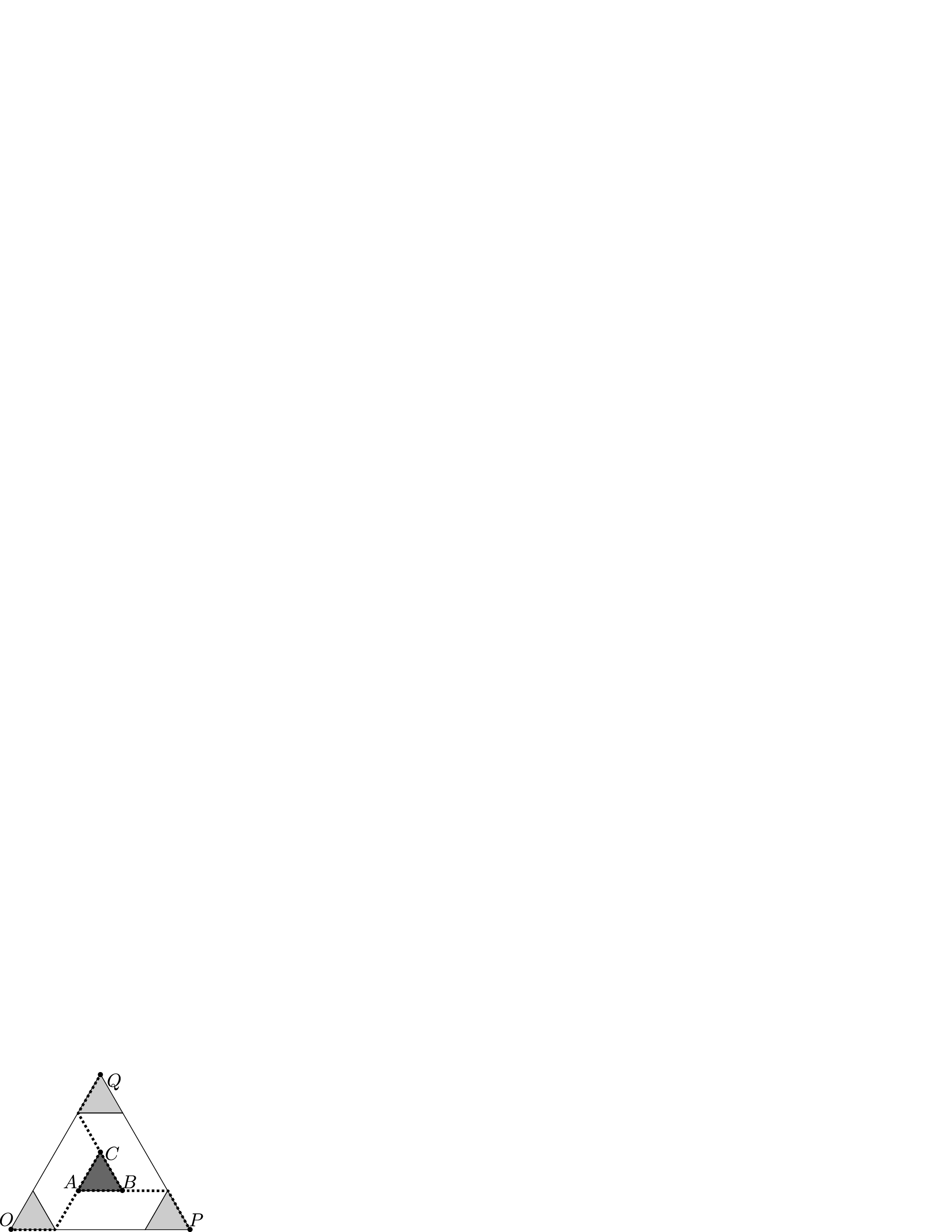}\\
        \emph{(i)  The splitting chains.}
    \end{minipage}
    \begin{minipage}[b]{0.48\linewidth}
        \centering
        \includegraphics[scale=1]{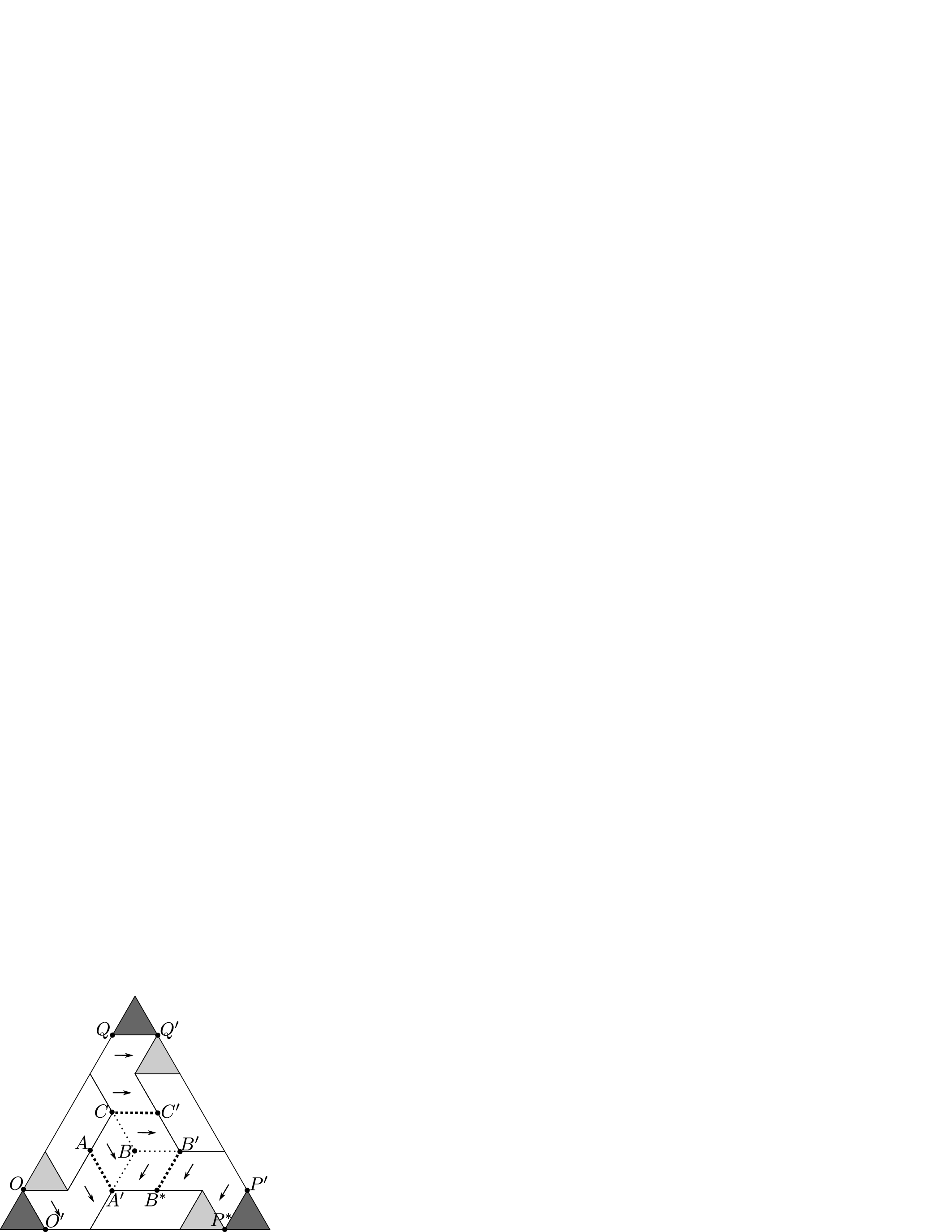}\\
        \emph{(ii) The resolution $T'$.}
    \end{minipage}
    \caption{The abstract resolution of a puncture.}
    \label{fig:resolve-abstract}
\end{figure}

As above, we label the vertices of $\mathcal{T}_d$ such that the label of each unit triangle is the greatest common
divisor of its vertex labels. For ease of reference, we denote the lower-left, lower-right, and top vertex of the
puncture $\mathcal{P}$ by $A, B$, and $C$, respectively. Similarly, we denote the lower-left, lower-right, and top vertex
of $\mathcal{T}_d$ by $O, P$, and $Q$, respectively. Now we select three chains of unit edges such that each edge
is either in $T$ or on the boundary of a puncture of $T$. We start by choosing chains connecting $A$ to $O$, $B$
to $P$, and $C$ to $Q$, respectively, subject to the following conditions:
\begin{itemize}
    \item The chains do not cross, that is, do not share any vertices.
    \item There are no redundant edges, that is, omitting any unit edge destroys the connection between the desired end points of a chain.
    \item There are no moves to the East or Northeast on the lower-left chain $OA$.
    \item There are no moves to the West or Northwest on the lower-right chain $PB$.
    \item There are no moves to the Southeast or Southwest on the top chain $CQ$.
\end{itemize}
For these directions we envision a particle that starts at a vertex of the puncture and moves on a chain to the corresponding corner vertex of $\mathcal{T}_d$.

Now we connect the chains $OA$ and $CQ$ to a chain of unit edges $OACQ$ by using the Northeast edge of $\mathcal{P}$. Similarly we
connect the chains $OA$ and $BP$ to a chain $OABP$ by using the horizontal edge of $\mathcal{P}$, and we connect
$PB$ and $CQ$ to the chain $PBCQ$ by using the Northwest side of $\mathcal{P}$. These three chains subdivide
$\mathcal{T}_d$ into four regions. Part of the boundary of three of these regions is an edge of $\mathcal{T}_d$.
The fourth region, the central one, is the area of the puncture $\mathcal{P}$. See Figure~\ref{fig:resolve-abstract}(i)
for an illustration.

Now consider $T \subset \mathcal{T}_d$ as embedded into $\mathcal{T}_{d+ 2k}$ such that the original region
$\mathcal{T}_d$ is identified with the triangular region $T_{d+2k} (x^k y^k)$. Retain the names $A, B, C, O, P$, and $Q$
for the specified vertices of $T$ as above. We create new chains of unit edges in $\mathcal{T}_{d+ 2k}$.

First, multiply each vertex in the chain $PBCQ$ by $\frac{z^k}{y^k}$ and connect the resulting vertices to a chain
$P'B'C'Q'$ that is parallel to the chain $PBCQ$. Here $P', B', C'$, and $Q'$ are the images of $P, B, C$, and $Q$ under
the multiplication by $\frac{z^k}{y^k}$. Informally, the chain $P'B'C'Q'$ is obtained by moving the chain $PBCQ$ just
$k$ units to the East.

Second, multiply each vertex in the chain $OA$ by $\frac{z^k}{x^k}$ and connect the resulting vertices to a chain $O'A'$
that is parallel to the chain $OA$. Here $A'$ and $O'$ are the points corresponding to $A$ and $O$. Informally the chain
$O'A'$ is obtained by moving the chain $OA$ just $k$ units to the Southeast.

Third, multiply each vertex in the chain $P'B'$ by $\frac{y^k}{x^k}$ and connect the resulting vertices to a chain
$P^*B^*$ that is parallel to the chain $P'B'$, where $P^*$ and $B^*$ are the images of $P'$ and $B'$, respectively.
Thus, $P^*B^*$ is $k$ units to the Southwest of the chain $P'B'$. Connecting $A'$ and $B^*$ by horizontal edges, we
obtain a chain $O'A'B^*P^*$ that has the same shape as the chain $OABP$. See Figure~\ref{fig:resolve-abstract}(ii) for
an illustration.

We are ready to describe the desired triangular region $T' \subset \mathcal{T}_{d+2k}$ along with a tiling. Place
lozenges and punctures in the region bounded by the chain $OACQ$ and the Northeast boundary of $\mathcal{T}_{d+2k}$ as
in the corresponding region of $T$. Similarly place lozenges and punctures in the region bounded by the chain $P'B'C'Q'$
and the Northwest boundary of $\mathcal{T}_{d+2k}$ as in the corresponding region of $T$ that is bounded by $PBCQ$.
Next, place lozenges and punctures in the region bounded by the chain $O'A'B^*P^*$ and the horizontal boundary of
$\mathcal{T}_{d+2k}$ as in the exterior region of $T$ that is bounded by $OABP$. Observe that corresponding vertices of
the parallel chains $BCQ$ and $B'C'Q'$ can be connected by horizontal edges. The region between two such edges that are
one unit apart is uniquely tileable. This gives a lozenge tiling for the region between the two chains. Similarly, the
corresponding vertices of the parallel chains $OAC$ and $O'A'C'$ can be connected by Southeast edges. Respecting these
edges gives a unique lozenge tiling for the region between the chains $OAC$ and $O'A'C'$. In a similar fashion, the
corresponding vertices of the parallel chains $P'B'$ and $P^*B^*$ can be connected by Southwest edges, which we use as a
guide for a lozenge tiling of the region between the two chains. Finally, the rhombus with vertices $A', B^*, B'$, and
$B$ admits a unique lozenge tiling. Let $\tau'$ the union of all the lozenges we placed in $\mathcal{T}_{d+2k}$, and
denote by $T'$ the triangular region that is tiled by $\tau'$. Thus, $T' \subset \mathcal{T}_{d+2k}$ has a puncture of
side length $k$ at each corner of $\mathcal{T}_{d+2k}$. See Figure~\ref{fig:resolve-simple} for an illustration of this.
We call the region $T'$ with its tiling $\tau'$ a \emph{resolution of the puncture $\mathcal{P}$ in $T$ relative to $\tau$}
or, simply, a \emph{resolution of $\mathcal{P}$}.%
\index{puncture!resolution of}

Observe that the tiles in $\tau'$ that were not carried over from the tiling $\tau$ are in the region that is the union
of the regular hexagon with vertices $A, A', B^*, B', C'$ and $C$ and the regions between the parallel chains $OA$ and
$O'A'$, $CQ$ and $C'Q'$ as well as $P'B'$ and $P^*B^*$. We refer to the latter three regions as the \emph{corridors} of
the resolution. Furthermore, we call the chosen chains $OA$, $PB$, and $CQ$ the \emph{splitting chains}%
\index{puncture!resolution of!splitting chains of}
of the resolution. The resolution blows up each splitting chain to a corridor of width $k$.

\begin{figure}[!ht]
    \begin{minipage}[b]{0.48\linewidth}
        \centering
        \includegraphics[scale=1]{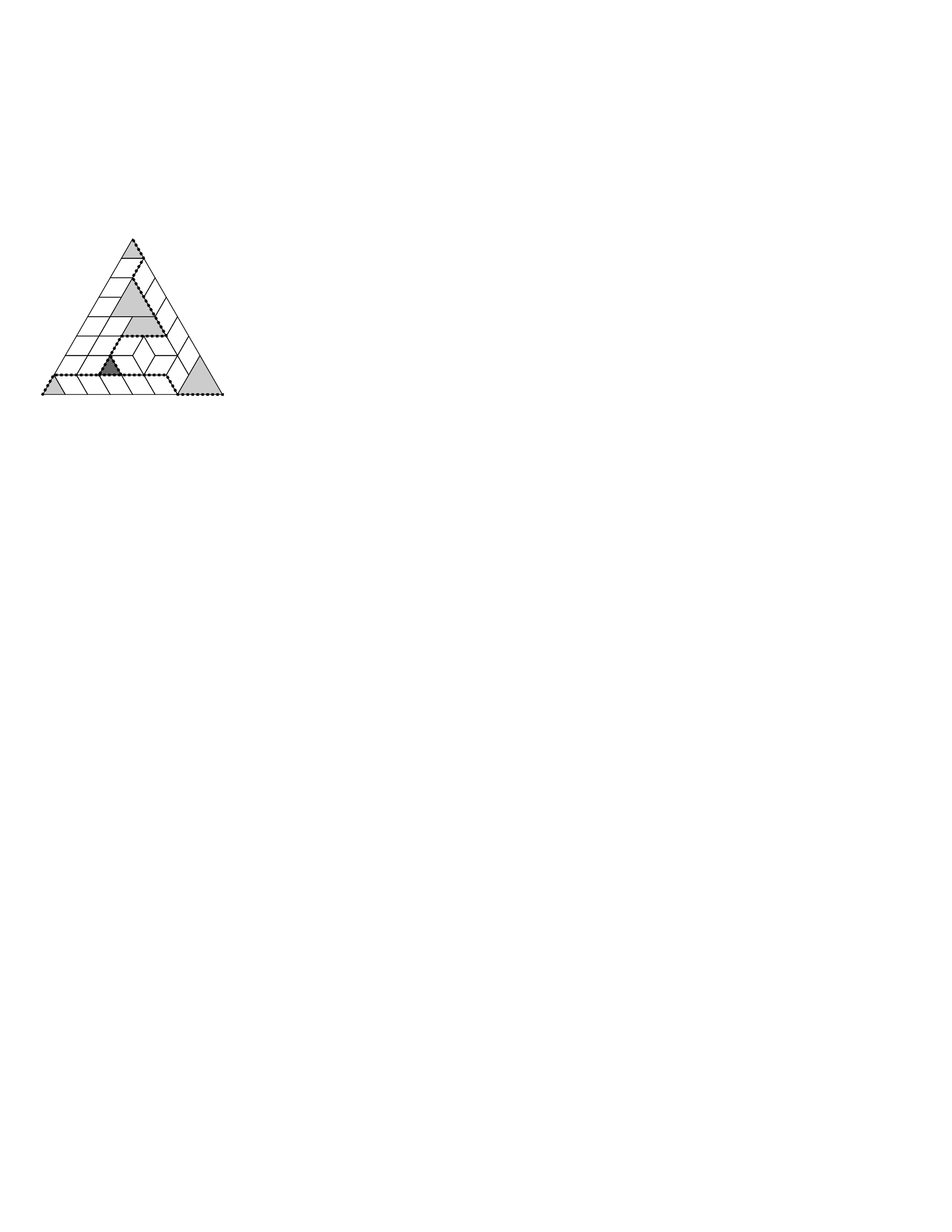}\\
        \emph{(i)  The selected lozenge and puncture edges.}
    \end{minipage}
    \begin{minipage}[b]{0.48\linewidth}
        \centering
        \includegraphics[scale=1]{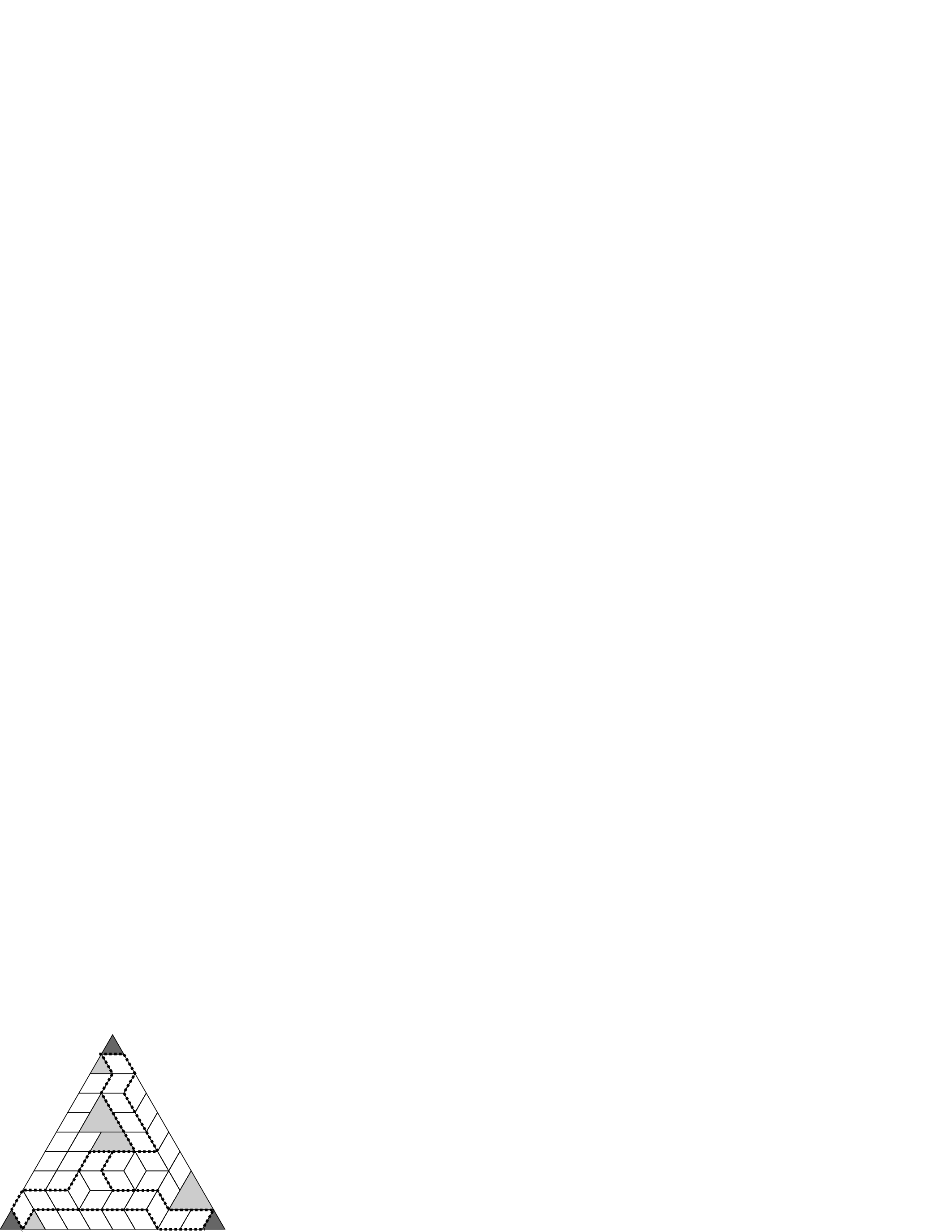}\\
        \emph{(ii) The resolution $T'$ with tiling $\tau'$.}
    \end{minipage}
    \caption{A resolution of the puncture associated to $x y^4 z^2$, given the tiling $\tau$ in Figure~\ref{fig:triregion-tiling} of $T$.}
    \label{fig:resolve-simple}
\end{figure}

Second, suppose the puncture $\mathcal{P}$ in $T$ is overlapped by another puncture of $T$. Then we cannot resolve
$\mathcal{P}$ using the above technique directly as it would result in a non-triangular region. Thus, we adapt it. Since
$T$ is balanced, $\mathcal{P}$ is overlapped by exactly one puncture of $T$ (see Theorem~\ref{thm:tileable}). Let $U$ be
the smallest monomial subregion of $T$ that contains both punctures. We call $U$ the \emph{minimal covering region}%
\index{puncture!minimal covering region}
of the two punctures. It is is uniquely tileable, and we resolve the puncture $U$ of $T \setminus U$. Notice that the
lozenges inside $U$ are lost during resolution. However, since $U$ is uniquely tileable, they are recoverable from the
two punctures of $T$ in $U$. See Figure~\ref{fig:resolve-family} for an illustration.

\begin{figure}[!ht]
    \begin{minipage}[b]{0.48\linewidth}
        \centering
        \includegraphics[scale=1]{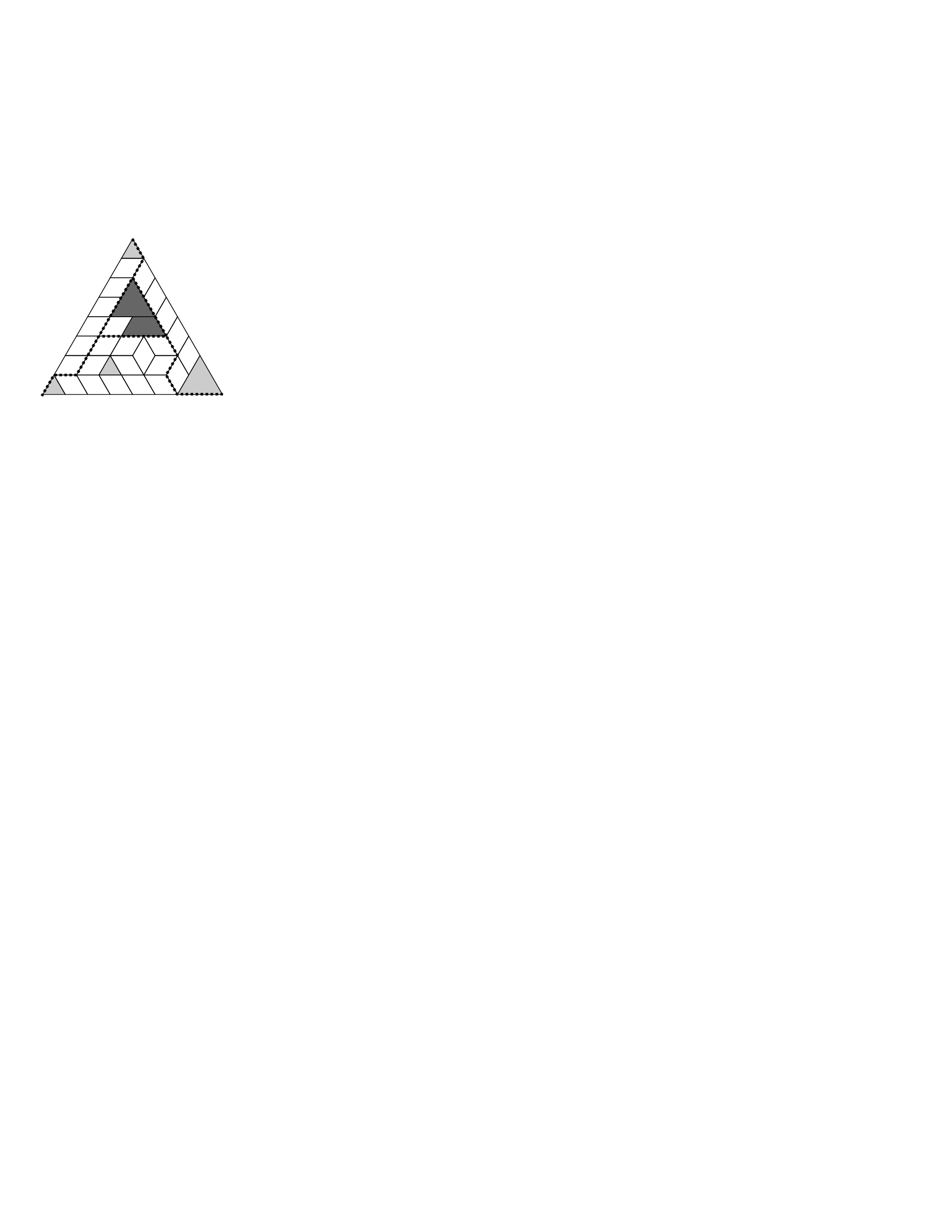}\\
        \emph{(i)  The selected lozenge and puncture edges.}
    \end{minipage}
    \begin{minipage}[b]{0.48\linewidth}
        \centering
        \includegraphics[scale=0.75]{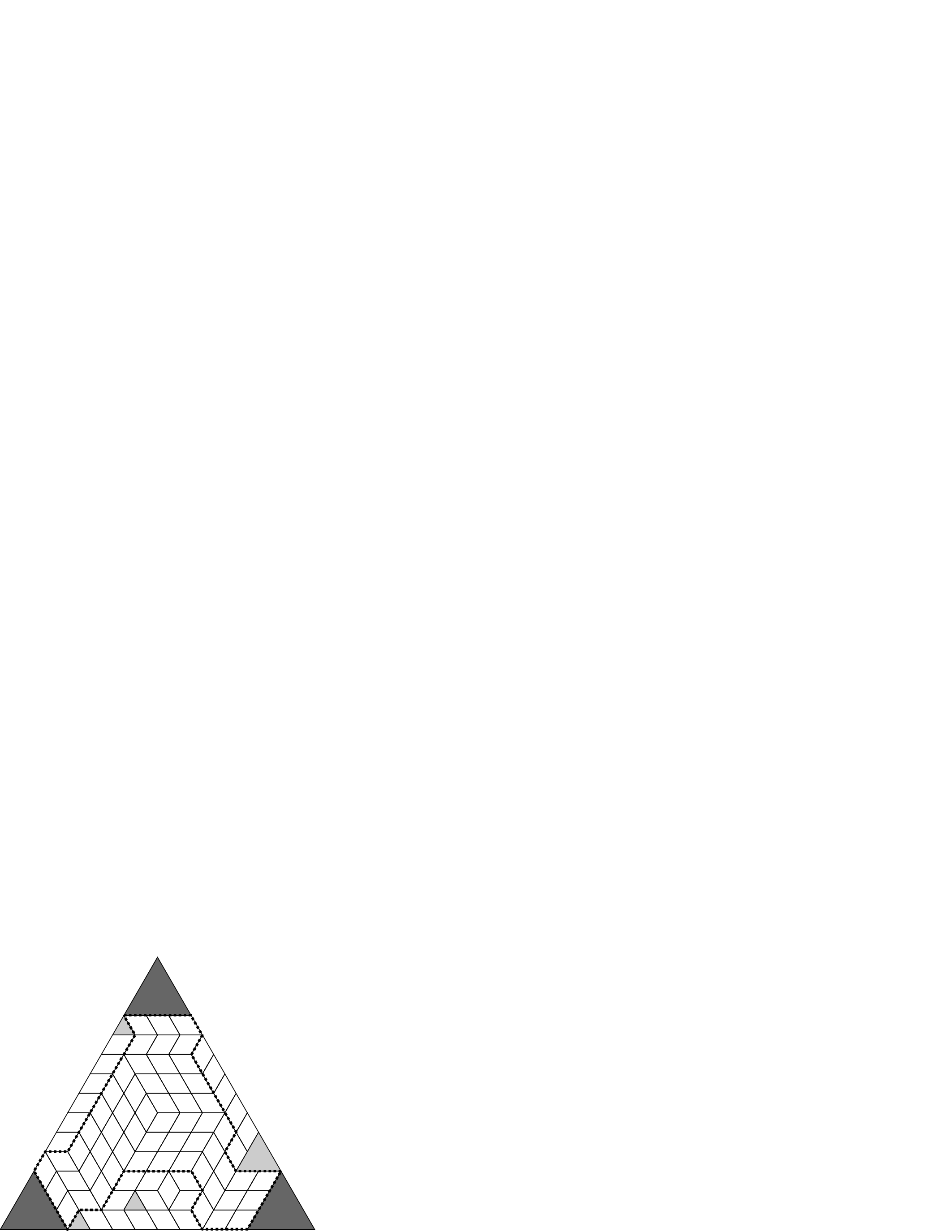}\\
        \emph{(ii) The resolution $T'$ with tiling $\tau'$.}
    \end{minipage}
    \caption{Resolving overlapping punctures, given the tiling in Figure~\ref{fig:triregion-tiling}.}
    \label{fig:resolve-family}
\end{figure}

\subsubsection{Cycles of lozenges}\label{subsub:cyc}~\par

Let $\tau$ be some tiling of $T$. An \emph{$n$-cycle (of lozenges)}%
\index{lozenge!cycle of}
$\sigma$ in $\tau$ is an ordered collection of distinct lozenges $\ell_1, \ldots, \ell_n$ of $\tau$ such that the
downward-pointing triangle of $\ell_i$ is adjacent to the upward-pointing triangle of $\ell_{i+1}$ for $1 \leq i < n$
and the downward-pointing triangle of $\ell_n$ is adjacent to the upward-pointing triangle of $\ell_1$. The smallest
cycle of lozenges is a three-cycle; see Figure~\ref{fig:three-cycle}.

\begin{figure}[!ht]
    \includegraphics[scale=1]{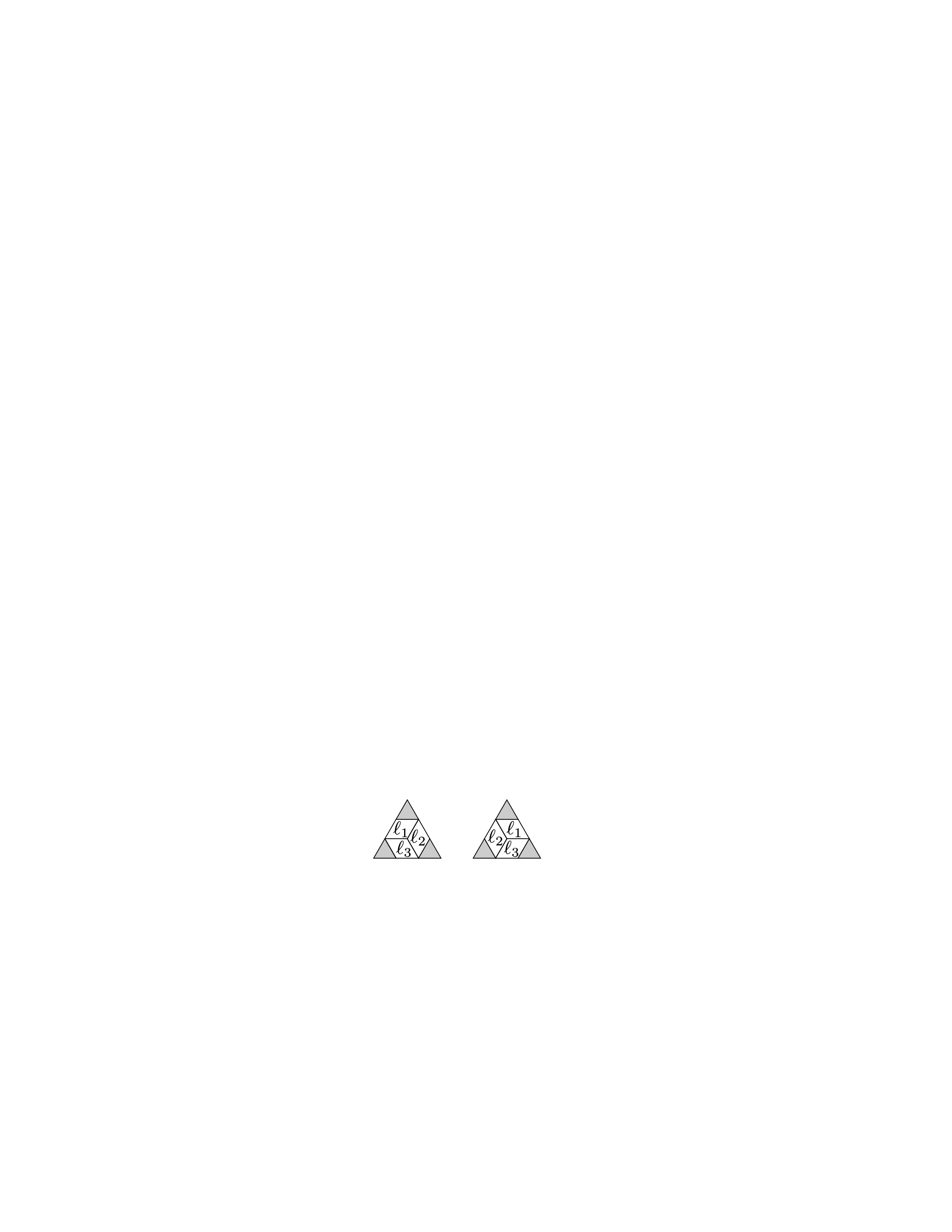}
    \caption{$T_3(x^2, y^2, z^2)$ has two tilings, both are three-cycles of lozenges.}
    \label{fig:three-cycle}
\end{figure}

Let $\sigma = \{\ell_1, \ldots, \ell_n\}$ be an $n$-cycle of lozenges in the tiling $\tau$ of $T$. If we replace the
lozenges in $\sigma$ be the $n$ lozenges created by adjoining the downward-pointing triangle of $\ell_i$ with the
upward-pointing triangle of $\ell_{i+1}$ for $1 \leq i < n$ and the downward-pointing triangle of $\ell_n$ with the
upward-pointing triangle of $\ell_1$, then we get a new tiling $\tau'$ of $T$. We call this new tiling the \emph{twist of $\sigma$}%
\index{lozenge!cycle of!twist of}
in $\tau$. The two three-cycles in Figure~\ref{fig:three-cycle} are twists of each other. See
Figure~\ref{fig:cycle-twist} for another example of twisting a cycle. A puncture is \emph{inside}%
\index{puncture!inside a cycle}
the cycle $\sigma$ if the lozenges of the cycle fully surround the puncture. In Figure~\ref{fig:cycle-twist}(i), the
puncture associated to $x y^4 z^2$ is inside the cycle $\sigma$ and all other punctures of $T$ are not inside the cycle $\sigma$.

\begin{figure}[!ht]
    \begin{minipage}[b]{0.48\linewidth}
        \centering
        \includegraphics[scale=1]{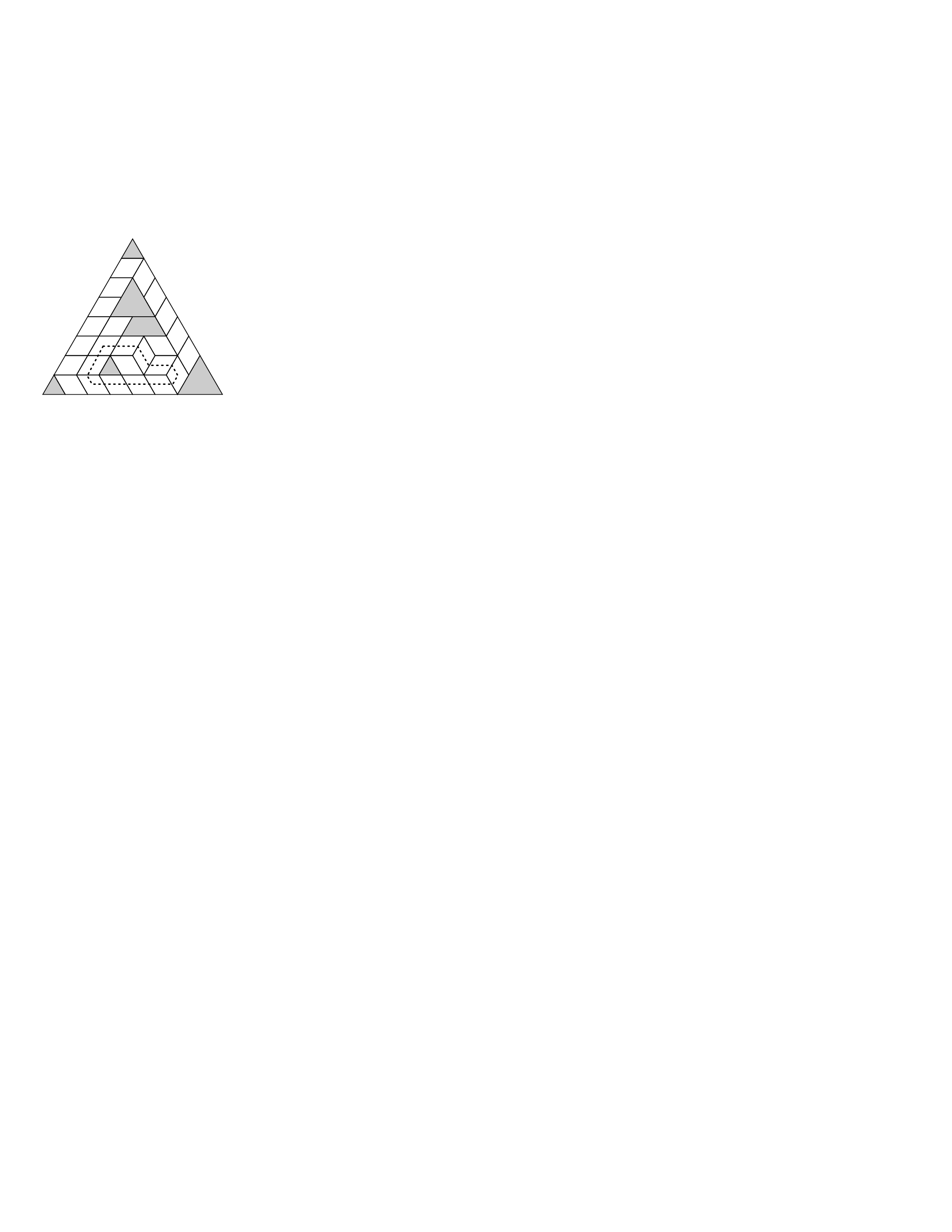}\\
        \emph{(i) A $10$-cycle $\sigma$.}
    \end{minipage}
    \begin{minipage}[b]{0.48\linewidth}
        \centering
        \includegraphics[scale=1]{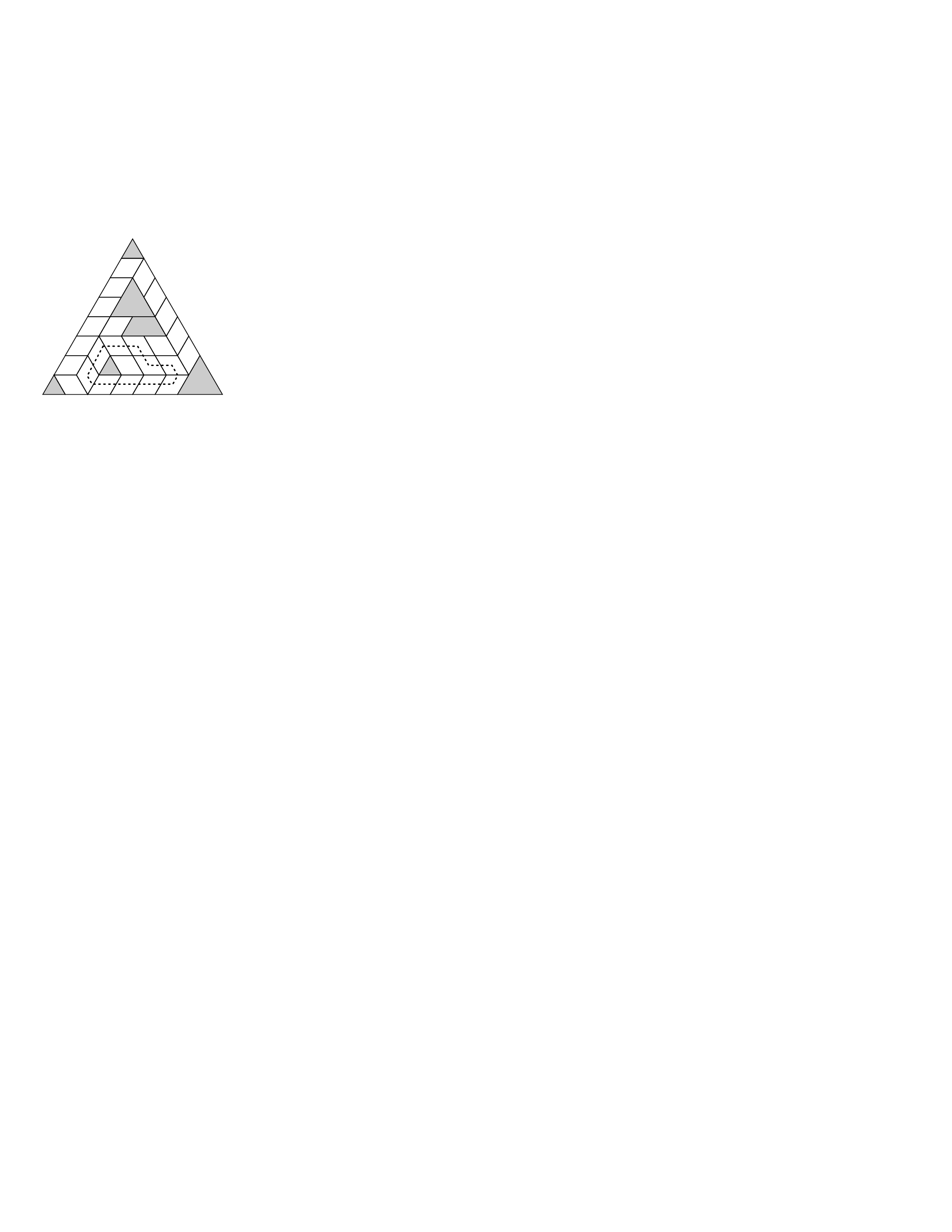}\\
        \emph{(ii) The twist of $\sigma$ in $\tau$.}
    \end{minipage}
    \caption{A $10$-cycle $\sigma$ in the tiling $\tau$ (see Figure~\ref{fig:triregion-tiling}) and its twist.}
    \label{fig:cycle-twist}
\end{figure}

Recall that the perfect matching sign of a tiling $\tau$ is denoted by $\msgn{\tau}$ (see Definition~\ref{def:pm-sign}).

\begin{lemma} \label{lem:twist-sign}
    Let $\tau$ be a lozenge tiling of a triangular region $T = T_d(I)$, and let $\sigma$ be an $n$-cycle of lozenges in $\tau$.
    Then the twist $\tau'$ of $\sigma$ in $\tau$ satisfies $\msgn{\tau'} = (-1)^{n-1}\msgn{\tau}$.
\end{lemma}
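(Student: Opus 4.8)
The plan is to track how the perfect matching permutation changes when we twist a single $n$-cycle. Recall that $\msgn{\tau} = \sgn{\pi}$, where $\pi \colon B \to W$ sends the center of a downward-pointing triangle to the center of the upward-pointing triangle with which it is paired in the tiling $\tau$. First I would fix the $n$-cycle $\sigma = \{\ell_1, \ldots, \ell_n\}$ and name the unit triangles involved: let $D_i$ be the center of the downward-pointing triangle of $\ell_i$ and $U_i$ the center of its upward-pointing triangle, for $1 \le i \le n$. By the definition of an $n$-cycle, $D_i$ is adjacent to $U_{i+1}$ for $1 \le i < n$, and $D_n$ is adjacent to $U_1$; this is exactly what makes the twist well-defined. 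Under the matching $\pi$ determined by $\tau$ we have $\pi(D_i) = U_i$ for all $i$, while under the matching $\pi'$ determined by the twist $\tau'$ we have $\pi'(D_i) = U_{i+1}$ for $1 \le i < n$ and $\pi'(D_n) = U_1$. Every other matched pair is unchanged, since the twist only alters the lozenges of $\sigma$.

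Next I would compare $\pi$ and $\pi'$ directly as permutations of the common index set $\#\uptri(T) = \#\dntri(T)$. Identifying $B$ and $W$ via their reverse-lexicographic orderings as in Definition~\ref{def:pm-sign}, write $\pi$ and $\pi'$ as honest permutations $\sigma_\tau, \sigma_{\tau'} \in \PS_{\#\uptri(T)}$. Then $\sigma_{\tau'} = \sigma_\tau \circ c$, where $c$ is the permutation of $B$ that cyclically shifts $D_1 \mapsto D_2 \mapsto \cdots \mapsto D_n \mapsto D_1$ and fixes every other element of $B$ — indeed, $\sigma_{\tau'}(D_i) = U_{i+1} = \sigma_\tau(D_{i+1}) = \sigma_\tau(c(D_i))$ for $i<n$, $\sigma_{\tau'}(D_n) = U_1 = \sigma_\tau(D_1) = \sigma_\tau(c(D_n))$, and both agree with $\sigma_\tau$ elsewhere. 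Since $c$ is a single $n$-cycle, $\sgn{c} = (-1)^{n-1}$, and therefore $\msgn{\tau'} = \sgn{\sigma_{\tau'}} = \sgn{\sigma_\tau} \cdot \sgn{c} = (-1)^{n-1} \msgn{\tau}$, which is the claim.

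The main thing to be careful about — and what I expect is the only real obstacle — is justifying that $\tau'$ really is a tiling of the \emph{same} region $T$, and that the permutation bookkeeping is unaffected by the (fixed) identification of $B$ with $W$. For the first point, one checks that the union of the triangles covered by $\ell_1, \ldots, \ell_n$ equals the union of the triangles covered by the new lozenges (each new lozenge is $D_i \cup U_{i+1}$, indices mod $n$), so the twist neither adds nor removes triangles and leaves the complement untouched; this is already implicit in Subsection~\ref{subsub:cyc} where the twist is introduced. For the second point, the key observation is that the identification $B \cong \{1, \ldots, \#\dntri(T)\} \cong W$ is a \emph{fixed} bijection that does not depend on $\tau$; hence both $\sigma_\tau$ and $\sigma_{\tau'}$ are computed with respect to the same labeling, and the relation $\sigma_{\tau'} = \sigma_\tau \circ c$ holds with $c$ independent of that labeling's details. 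Once these two points are in place, the signature computation $\sgn(\sigma_\tau \circ c) = (-1)^{n-1}\sgn(\sigma_\tau)$ is immediate from multiplicativity of the signature and the fact that an $n$-cycle has signature $(-1)^{n-1}$.
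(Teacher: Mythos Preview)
Your proposal is correct and follows essentially the same approach as the paper: both arguments observe that the perfect matching permutations of $\tau$ and $\tau'$ differ by composition with a single $n$-cycle, whence the sign changes by $(-1)^{n-1}$. The paper's proof is terser (it assumes without loss of generality that the triangles in the cycle carry labels $1,\ldots,n$ with $\pi(i)=i$, writing $\pi' = (1,2,\ldots,n)\cdot\pi$), while you keep general labels and compose on the right as $\sigma_{\tau'} = \sigma_\tau \circ c$; these are the same argument.
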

\begin{proof}
    Let $\pi$ and $\pi'$ be the perfect matching permutations associated to $\tau$ and $\tau'$, respectively.  Without loss of generality,
    assume $\ell_i$ corresponds to the upward- and downward-pointing triangles labeled $i$.  As $\tau'$ is a twist of $\tau$ by $\sigma$,
    then $\pi'(i) = i+1$ for $1 \leq i < n$ and $\pi'(n) = 1$.  That is, $\pi' = (1, 2, \ldots, n) \cdot \pi$, as permutations.  Hence,
    $\msgn{\tau'} = (-1)^{n-1}\msgn{\tau}$.
\end{proof}

\subsubsection{Resolutions, cycles of lozenges, and signs}\label{subsub:rez-cyc}~\par

Resolving a puncture modifies the length of a cycle of lozenges in a predictable fashion. We first need a definition. It
uses the starting and end points of lattice paths $A_1,\ldots,A_m$ and $E_1,\ldots,E_m$, as introduced at the beginning
of Subsection~\ref{sub:nilp}.

The \emph{$E$-count}%
\index{lozenge!cycle of!$E$-count}
of a cycle is the number of lattice path end points $E_j$ ``inside'' the cycle. Alternatively, this
can be seen as the sum of the side lengths of the non-overlapping punctures plus the sum of the side lengths of the
minimal covering regions of pairs of overlapping punctures. For example, the cycles shown in
Figure~\ref{fig:three-cycle} have $E$-counts of zero, the cycles shown in Figure~\ref{fig:cycle-twist} have $E$-counts
of $1$, and the (unmarked) cycle going around the outer edge of the tiling shown in Figure~\ref{fig:cycle-twist}(i) has an
$E$-count of $1 + 3 = 4$.

Now we describe the change of a cycle surrounding a puncture when this puncture is resolved.

\begin{lemma} \label{lem:cycle-res}
    Let $\tau$ be a lozenge tiling of $T = T_d(I)$, and let $\sigma$ be an $n$-cycle of lozenges in $\tau$.
    Suppose $T$ has a puncture $P$ (or a minimal covering region of a pair of overlapping punctures) with $E$-count $k$.
    Let $T'$ be a resolution of $P$ relative to $\tau$.  Then the resolution takes $\sigma$ to an $(n+kl)$-cycle of
    lozenges $\sigma'$ in the resolution, where $l$ is the number of times the splitting chains of the resolution
    cross the cycle $\sigma$ in $\tau$.  Moreover, $l$ is odd if and only if $P$ is inside $\sigma.$
\end{lemma}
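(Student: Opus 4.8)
The statement has two assertions: first, that resolving a puncture $P$ of $E$-count $k$ sends an $n$-cycle $\sigma$ to an $(n+kl)$-cycle $\sigma'$, where $l$ counts the crossings of $\sigma$ by the splitting chains; second, that $l$ is odd exactly when $P$ lies inside $\sigma$. The plan is to track what happens to the lozenges of $\sigma$ under the resolution construction of Subsection~\ref{subsub:rez}, using the fact that the resolution replaces each splitting chain by a corridor of width $k$, filled in a forced (uniquely tileable) way between two parallel copies of the chain.

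\textbf{The corridor bookkeeping.} First I would isolate the local picture. Away from the three corridors and the central hexagon, the tiling $\tau'$ agrees with $\tau$, so the only lozenges of $\sigma$ that change are those meeting a corridor (or the hexagon, if $P$ is inside $\sigma$). Fix one splitting chain and suppose $\sigma$ crosses it; I would argue that each such crossing point corresponds to a single lozenge of $\sigma$ straddling the chain, and that under the resolution this lozenge is ``stretched across'' the width-$k$ corridor. Because the corridor is uniquely tileable along the direction transverse to the chain (Southeast edges for the $OA$-corridor, Southwest for the $P'B'$-corridor, horizontal for the middle piece — exactly as spelled out in the construction), the single straddling lozenge is replaced by a short chain of $k+1$ lozenges running across the corridor, i.e.\ it is subdivided into $k+1$ lozenges. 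Thus each of the $l$ crossings contributes a net increase of $k$ to the cycle length, giving $n \mapsto n + kl$. I would also need to observe that the cyclic adjacency structure is preserved — the stretched pieces splice into $\sigma'$ in the same cyclic order — which follows because the corridor is a thin ``product'' region and the new lozenges inherit the up/down adjacency pattern from the old straddling lozenge.

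\textbf{The parity claim.} For the second assertion, note that the three splitting chains, together with the boundary of the puncture, divide $\mathcal{T}_d$ into four regions, three of which touch an outer edge of $\mathcal{T}_d$ and one of which (the central one) is $P$ itself. The union of the three splitting chains forms a closed curve $\gamma$ (closing up around $\partial P$) that separates $P$ from the outer boundary of $\mathcal{T}_d$. A cycle of lozenges $\sigma$ is a closed curve in the plane, and ``$P$ is inside $\sigma$'' means precisely that $P$ lies in the bounded component cut off by $\sigma$. By a Jordan-curve / parity argument, a generic closed curve $\sigma$ crosses the separating curve $\gamma$ an odd number of times if and only if it separates $P$ from $\infty$, i.e.\ iff $P$ is inside $\sigma$. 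Since $l$ is the total number of crossings of $\sigma$ with the three splitting chains, and the extra pieces of $\gamma$ along $\partial P$ can be taken not to meet $\sigma$ (the lozenges of $\sigma$ adjacent to $\partial P$ only do so along their triangle edges, not transversally — or one perturbs $\gamma$ slightly inside $T$), this count $l$ has the same parity as the crossing number of $\sigma$ with $\gamma$. Hence $l$ is odd iff $P$ is inside $\sigma$.

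\textbf{Main obstacle.} The routine part is the corridor count; the delicate part is making the parity argument rigorous in the discrete setting — in particular verifying that $l$, defined combinatorially as ``the number of times the splitting chains cross $\sigma$,'' really does compute the mod-$2$ winding/linking number of $\sigma$ around $P$, with no ambiguity from chains passing through vertices of $\sigma$ or running along edges of lozenges of $\sigma$. The conditions imposed on the splitting chains in Subsection~\ref{subsub:rez} (no redundant edges, monotone directions, chains do not cross each other) are exactly what is needed to rule out the degenerate incidences, so I expect the proof to lean on those conditions to reduce to the clean transversal-crossing case, after which the Jordan-curve parity statement applies directly.
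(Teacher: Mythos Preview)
Your corridor bookkeeping for the first assertion is essentially the paper's argument: each crossing of $\sigma$ with a splitting chain corresponds to a unit edge of a lozenge of $\sigma$ lying on the chain, and under the resolution that edge is expanded to $k+1$ parallel edges bounding $k$ new lozenges spliced into the cycle, giving $n\mapsto n+kl$.

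The parity argument, however, has a real gap in its formalization. You propose to close up the three splitting chains (together with arcs along $\partial P$) into a single closed curve $\gamma$ and then assert that $\sigma$ crosses $\gamma$ an odd number of times iff $P$ is inside $\sigma$. But two closed curves in the plane generically intersect an \emph{even} number of times, so this cannot hold for any simple closed $\gamma$; the three chains do not assemble into a closed curve for which such a statement is true. The construction of $\gamma$ as described does not separate $P$ from $\infty$ in a way that yields the desired crossing parity.

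The paper avoids this by not forming a closed curve at all. It treats each of the three chains \emph{individually} as an arc from a vertex of $P$ to a corner of $\mathcal{T}_d$, hence to a point outside $\sigma$. For a single such arc the crossing number with $\sigma$ is odd precisely when its starting point lies inside $\sigma$ (a path from inside to outside crosses oddly; from outside to outside, evenly). All three chains start at $P$, so all three contribute the same parity, and $l$ is a sum of three numbers of that common parity: odd if $P$ is inside, even if $P$ is outside. The paper records this as $l=3+2j$ in the inside case. This is the elementary replacement for your Jordan-curve step, and once you make it the rest of your outline goes through.
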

\begin{proof}
    Fix a resolution $T' \subset \mathcal{T}_{d+2k}$ of $P$ with tiling $\tau'$ as induced by $\tau$.

    First, note that if $P$ is a minimal covering region of a pair of overlapping punctures, then any cycle of lozenges must
    avoid the lozenges present in $P$ as all such lozenges are forcibly chosen, i.e., immutable.  Thus, all lozenges of
    $\sigma$ are present in $\tau'$.

    The resolution takes the cycle $\sigma$ to a cycle $\sigma'$ by adding $k$ new lozenges for each unit edge of a lozenge in $\sigma$
    that belongs to a splitting chain.  More precisely, such an edge is expanded to $k+1$ parallel edges.  Any two consecutive
    edges form the opposite sides of a lozenge (see Figure~\ref{fig:cycle-insert}).  Thus, each time a splitting chain of the resolution
    crosses the cycle $\sigma$ we insert $k$ new lozenges.  As $l$ is the number of times the splitting chains of the resolution
    cross the cycle $\sigma$ in $\tau$, the resolution adds exactly $k l$ new lozenges to the extant lozenges of $\sigma$.
    Thus, $\sigma'$ is an $(n + k l)$-cycle of lozenges in $\tau'$.

    \begin{figure}[!ht]
        \includegraphics[scale=1]{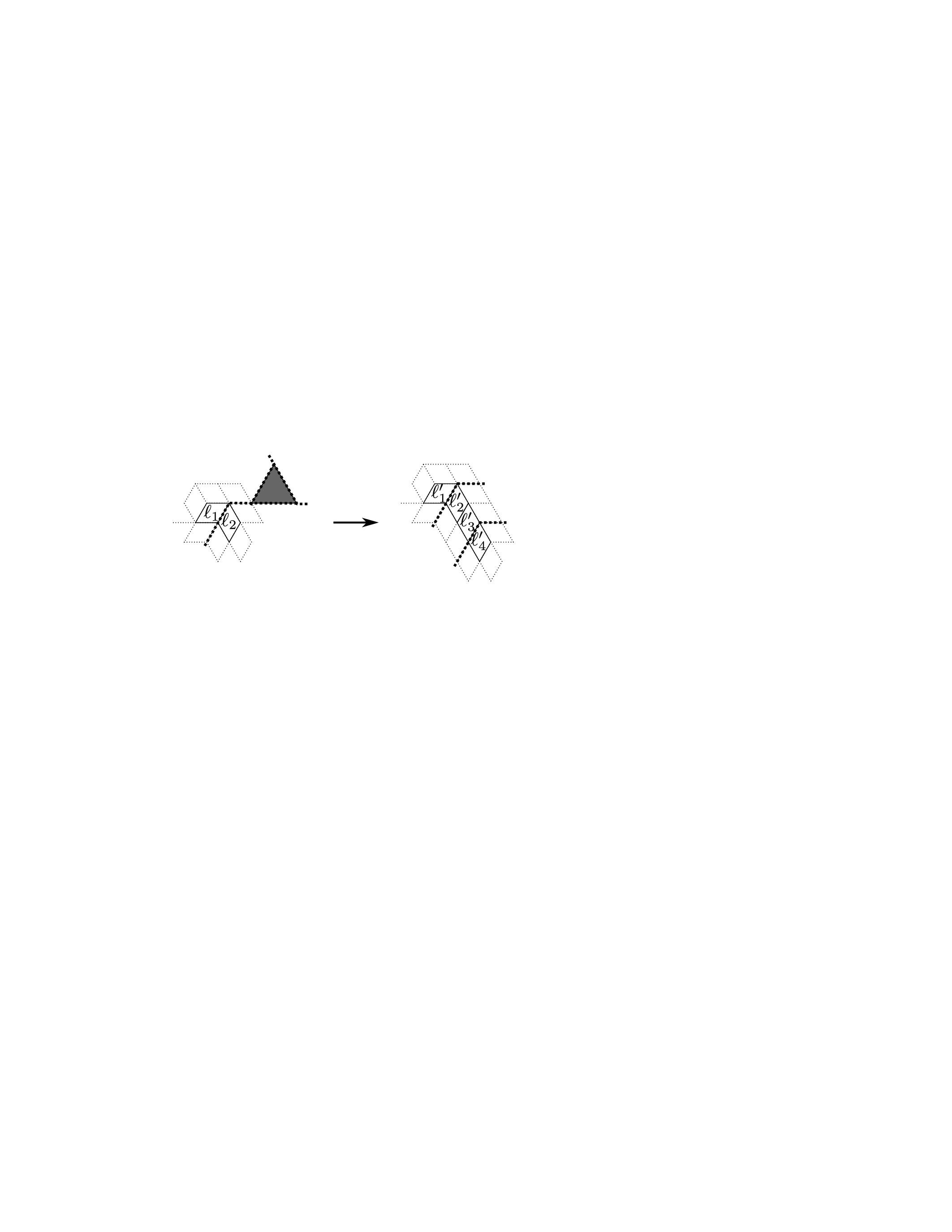}
        \caption{Expansion of a lozenge cycle at a crossing of a splitting chain.}
        \label{fig:cycle-insert}
    \end{figure}

    Since the splitting chains are going from $P$ to the exterior triangle of $\mathcal{T}_d$, the splitting chains terminate
    outside the cycle.  Hence if the splitting chain crosses into the cycle, it must cross back out.  If $P$ is outside
    $\sigma$, then the splitting chains start outside $\sigma$, and so $l$ must be even.  On the other hand,
    if $P$ is inside $\sigma$, then the splitting chains start inside of $\sigma$, and so $l = 3 + 2j$, where $j$
    is the number of times the splitting chains cross \emph{into} the cycle.
\end{proof}

Let $\tau_1$ and $\tau_2$ be tilings of $T$, and let $\pi_1$ and $\pi_2$ be their respective perfect matching permutations.  Suppose
$\pi_2 = \rho \pi_1$, for some permutation $\rho$.  Write $\rho$ as a product of disjoint cycles whose length is at least two.
(Note that these cycles will be of length at least three, as discussed above.)
Then each factor corresponds to cycle of lozenges of $\tau_1$. If all these cycles are twisted we get $\tau_2$. We call these
lozenge cycles the \emph{difference cycles} of $\tau_1$ and $\tau_2$.%
\index{lozenge!tiling!difference cycles of}

Using the idea of difference cycles, we characterise when two tilings have the same perfect matching sign.

\begin{corollary} \label{cor:msgn-cycle}
    Let $\tau$ be a lozenge tiling of $T = T_d(I)$, and let $\sigma$ be an $n$-cycle of lozenges in $\tau$.  Then
    the following statements hold.
    \begin{itemize}
        \item The $E$-count of $\sigma$ is even if and only if $n$ is odd.
        \item Two lozenge tilings of $T$ have the same perfect matching sign if and only if the sum of the
            $E$-counts of the difference cycles is even.
    \end{itemize}
\end{corollary}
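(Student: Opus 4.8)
The plan is to prove the first statement and then derive the second from it, in both cases tracking how twisting a cycle of lozenges affects the perfect matching sign. The two tools are Lemma~\ref{lem:twist-sign}, which says that twisting an $n$-cycle multiplies $\msgn$ by $(-1)^{n-1}$, and Lemma~\ref{lem:cycle-res}, which says that a resolution of a puncture (or of a minimal covering region of an overlapping pair) with $E$-count $k$ carries an $n$-cycle $\sigma$ to an $(n+k\ell)$-cycle, where $\ell$ is odd precisely when that puncture lies inside $\sigma$.

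To prove the first statement I would show that $n$ and the $E$-count of $\sigma$ always have opposite parities, by induction on the number of punctures of $T$. For the inductive step, suppose $T$ is not a hexagon; then it has a puncture $\mathcal P$ that is not a corner puncture of $\mathcal T_d$, or a pair of overlapping punctures whose minimal covering region $\mathcal P$ is not a corner. Resolve $\mathcal P$ relative to $\tau$, producing a region $T'$ with strictly fewer punctures, a tiling $\tau'$, and---by Lemma~\ref{lem:cycle-res}---an $(n+k\ell)$-cycle $\sigma'$ in $\tau'$, where $k$ is the $E$-count of $\mathcal P$ and $\ell$ is odd exactly when $\mathcal P$ lies inside $\sigma$. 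The point requiring care is the $E$-count of $\sigma'$: the punctures of $T'$ lying inside $\sigma'$ are exactly the punctures of $T$ that lay inside $\sigma$, except that if $\mathcal P$ was inside $\sigma$ it is now replaced by a tiled hexagon, and the three new corner punctures of $\mathcal T_{d+2k}$ never lie inside $\sigma'$. Hence the $E$-count of $\sigma'$ equals that of $\sigma$ when $\mathcal P$ is outside $\sigma$, and equals it minus $k$ when $\mathcal P$ is inside $\sigma$; in each case, combining this with the relation $n' = n+k\ell$ and the parity of $\ell$, one checks that $n'$ and the $E$-count of $\sigma'$ have opposite parities if and only if $n$ and the $E$-count of $\sigma$ do. So the claim descends to $T'$. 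Iterating resolutions reduces $T$ to a hexagon, or to a region with fewer than three punctures; the latter are readily seen to be uniquely tileable, so they support no cycles and the claim is vacuous there.

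In the base case $T$ is a hexagon: then every cycle of lozenges lies in the interior of $T$ and therefore encloses none of the lattice-path endpoints $E_j$, so its $E$-count is $0$, and I must check that $n$ is odd. This follows from the classical fact that all lozenge tilings of a hexagon carry the same perfect matching sign---equivalently $\per Z(T) = |\det Z(T)|$ for hexagons---so the twist of $\sigma$ in $\tau$ has the same $\msgn$ as $\tau$, forcing $(-1)^{n-1}=1$ by Lemma~\ref{lem:twist-sign}. Granting the first statement, the second is immediate: if $\pi_2 = \rho\,\pi_1$ with $\rho$ written as a product of disjoint cycles, then $\tau_2$ is obtained from $\tau_1$ by successively twisting the pairwise disjoint difference cycles $\sigma_1,\dots,\sigma_r$ (disjointness makes the twists independent, and the cycles not yet twisted remain cycles of lozenges after each twist), so $\msgn \tau_2 = \bigl(\prod_{j=1}^{r}(-1)^{n_j-1}\bigr)\msgn \tau_1$ by Lemma~\ref{lem:twist-sign}, where $n_j$ is the length of $\sigma_j$. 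By the first statement $n_j-1 \equiv E\text{-count}(\sigma_j) \pmod 2$, so $\msgn\tau_1 = \msgn\tau_2$ if and only if $\sum_{j=1}^{r} E\text{-count}(\sigma_j)$ is even.

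The main obstacle is the base case of the first statement---showing a cycle of lozenges in a hexagon has odd length---together with the adjoining bookkeeping: confirming exactly which punctures stay ``inside'' $\sigma$ after a resolution, and that a resolution preserves the parity of $n+(E\text{-count})$. I would dispose of the hexagon base case by citing the well-known equality of permanent and $|\det|$ for hexagonal regions; a self-contained alternative would induct on the area enclosed by $\sigma$, with base a three-cycle around a single vertex, but making the inductive step precise---producing a flip inside the enclosed region that shrinks the area while controlling parity---is delicate.
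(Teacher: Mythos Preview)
Your proposal is correct and follows essentially the same approach as the paper: reduce to the hexagon case by resolving all non-corner punctures and invoke the known fact that all tilings of a hexagon share the same perfect matching sign. The only cosmetic differences are that the paper resolves all non-corner punctures in one stroke (writing $n' = n + \sum j_i l_i + \sum k_i m_i$ and then doing the parity analysis once) rather than inducting one puncture at a time, and that the paper cites \cite[Theorem~1.2]{CGJL} (connectivity of hexagon tilings by three-cycle flips) for the base case rather than the equivalent statement $\per Z = |\det Z|$.
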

\begin{proof}
    Suppose $T$ has $a$ punctures and pairs of overlapping punctures, $P_1, \ldots, P_a$, inside $\sigma$ that are
    \emph{not} in a corner, i.e., not associated to $x^k$, $y^k$, or $z^k$, for some $k$.  Let $j_i$ be the
    $E$-count of $P_i$.  Similarly, suppose $T$ has $b$ punctures and pairs of overlapping punctures, $Q_1, \ldots, Q_b$,
    outside $\sigma$ that are \emph{not} in a corner, i.e., not associated to $x^k$, $y^k$, or $z^k$, for some $k$.
    Let $k_i$ be the $E$-count of $Q_i$.

    If we resolve all of the punctures $P_1, \ldots, P_a, Q_1, \ldots, Q_b$, then $\sigma$ is taken to a  cycle
    $\sigma'$.  By Lemma~\ref{lem:cycle-res}, $\sigma'$ has length
    \[
        n' := n + (j_1 l_1 + \cdots + j_a l_a) + (k_1 m_1 + \cdots + k_b m_b),
    \]
    where the integers $l_1, \ldots, l_a$ are odd and the integers $m_1, \ldots, m_b$ are even.

    Moreover, the $a + b$ times resolved region $T'$ is, after merging touching punctures, the region of
    some complete intersection, i.e., of the form $T_e(x^a, y^b, z^c)$, for appropriate values of
    $a, b, c,$ and $e$.  By \cite[Theorem~1.2]{CGJL}, every tiling of $T'$ is thus obtained from any
    other tiling of $T'$ through a sequence of three-cycle twists, as in Figure~\ref{fig:three-cycle}.
    By Lemma~\ref{lem:twist-sign}, such twists do not change the perfect matching sign of the tiling,
    hence $n'$ is an odd integer.

    Since $n'$ is odd, $n' - (k_1 m_1 + \cdots + k_b m_b) = n + (j_1 l_1 + \cdots + j_a l_a)$ is also odd.
    Thus, $n$ is odd if and only if $j_1 l_1 + \cdots + j_a l_a$ is even.  Since the integers $l_1, \ldots, l_a$ are odd,
    we see that $j_1 l_1 + \cdots + j_a l_a$ is even if and only if an even number of the $l_i$ are odd, i.e., the sum
    $l_1 + \cdots + l_a$ is even.  Notice that this sum is the $E$-count of $\sigma$. Thus, claim (i) follows.

    Suppose two tilings $\tau_1$ and $\tau_2$ of $T$ have difference cycles $\sigma_1, \ldots, \sigma_p$.  Then
    by Lemma~\ref{lem:twist-sign}, $\msgn{\tau_2} = \sgn{\sigma_1} \cdots \sgn{\sigma_p} \msgn{\tau_1}$.  By claim (i), $\sigma_i$
    is a cycle of odd length if and only if the $E$-count of $\sigma_i$ is even.  Thus, $\sgn{\sigma_1} \cdots \sgn{\sigma_p} = 1$ if and only if
    an even number of the $\sigma_i$ have an odd $E$-count.  An even number of the $\sigma_i$ have an odd $E$-count if and only
    if the sum of the $E$-counts of $\sigma_1, \ldots, \sigma_p$ is even.  Hence, claim (ii) follows.
\end{proof}

\begin{remark}
    In \cite[Theorem~1.2]{CGJL}, Chen, Guo, Jin, and Liu show that three-cycles in the perfect matching permutation
    correspond to adding or removing boxes from the stacks the tiling represents.  See Figure~\ref{fig:pp-tile} where
    a lozenge tiling of a complete intersection region is shown in correspondence with a plane partition, i.e.,
    boxes stacked in a room.
\end{remark}

The lattice path permutation also changes predictably when twisting a cycle of lozenges.  To see this we single out certain punctures. We recursively define a puncture of $T \subset \mathcal{T}_d$ to be
a \emph{non-floating} puncture if it  touches the boundary of $ \mathcal{T}_d$ or if it overlaps or touches a non-floating puncture of $T$. Otherwise we call a puncture
a \emph{floating} puncture.%
\index{puncture!floating}

We also distinguish between \emph{preferred} and \emph{acceptable directions} on the splitting chains used for resolving
a puncture. Here we use again the perspective of a particle that starts at a vertex of the puncture and moves on a chain
to the corresponding corner vertex of $\mathcal{T}_d$. Our convention is:

\begin{itemize}
    \item On the lower-left chain the preferred direction are Southwest and West, the acceptable directions are Northwest and Southeast.
    \item On the lower-right chain the preferred directions are Southeast and East, the acceptable directions are Northeast and Southwest.
    \item On the top chain the preferred directions are Northeast and Northwest, the acceptable directions are East and West.
\end{itemize}

\begin{lemma} \label{lem:lpsgn-cycle}
    Let $\tau$ be a lozenge tiling of $T = T_d(I)$,  and let $\sigma$ be a cycle of lozenges in $\tau$.  Then the lattice
    path signs of $\tau$ and the twist of $\sigma$ in $\tau$ are the same if and only if the $E$-count of $\sigma$ is even.
\end{lemma}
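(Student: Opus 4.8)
Write $\tau'$ for the twist of $\sigma$ in $\tau$ and let $\lambda,\lambda'\in\PS_m$ be the permutations recording the families of non-intersecting lattice paths attached to $\tau$ and $\tau'$, so that $\lpsgn\tau=\sgn\lambda$ and $\lpsgn{\tau'}=\sgn{\lambda'}$. Put $e$ for the $E$-count of $\sigma$; the assertion is that the ``sign defect'' $\sgn(\lambda'\lambda^{-1})$ (which is $+1$ exactly when $\lpsgn\tau=\lpsgn{\tau'}$) equals $(-1)^e$. The plan is to run the same induction used for Corollary~\ref{cor:msgn-cycle}: repeatedly resolve the non-corner punctures of $T$ until one reaches the region of a monomial complete intersection, tracking how the sign defect, and how the $E$-count of the image of $\sigma$, transform under a single resolution.

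For the base case let $T=T_e(x^a,y^b,z^c)$ be the region of a monomial complete intersection, i.e.\ a triangle punctured only at its three corners (a possibly degenerate hexagon, or empty). Then the source points $A_1,\dots,A_m$ and sink points $E_1,\dots,E_m$ sit in the classical Lindstr\"om--Gessel--Viennot configuration counting plane partitions in a box, so every family of non-intersecting lattice paths realizes the identity permutation; hence $\lpsgn\equiv+1$ on all tilings of $T$ and the sign defect of every pair $(\tau,\sigma)$ is $+1$. On the other hand no ring of lozenges can encircle a corner puncture, since two of its three sides lie on $\partial\mathcal{T}_e$, leaving no room for lozenges on its far side; thus every cycle of lozenges in $T$ surrounds no puncture and has $E$-count $0$. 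So the sign defect equals $+1=(-1)^0$, as required.

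For the inductive step choose a puncture $P$ of $T$, or the minimal covering region of a pair of overlapping punctures of $T$, that is not in a corner, and resolve it relative to both $\tau$ and $\tau'$ using splitting chains obeying the preferred/acceptable direction conventions; write $\tilde T\subset\mathcal{T}_{d+2k}$ for the resolved region, $\tilde\tau,\tilde{\tau'}$ for the induced tilings, $\tilde\sigma$ for the image of $\sigma$, and $\tilde\lambda,\tilde{\lambda'}$ for the associated permutations. Three facts drive the step. (i) With splitting chains chosen compatibly for $\tau$ and $\tau'$, the resolution of $\tau'$ is again the twist of $\tilde\sigma$ in $\tilde\tau$; moreover by Lemma~\ref{lem:cycle-res}, $\tilde\sigma$ has length $n+kl$, where $k$ is the $E$-count of $P$ and $l$ the number of crossings of $\sigma$ by the splitting chains, with $l$ odd exactly when $P$ lies inside $\sigma$. (ii) Passing from $T$ to $\tilde T$ changes the lattice-path permutation of \emph{every} tiling by one and the same bijection: the three corridors are uniquely tileable, so their contribution is forced and tiling-independent, and the inserted central hexagon is crossed only by identity strands; the preferred/acceptable conventions are exactly what makes this forced reshuffling the same for $\tau$ and for $\tau'$, up to a factor $(-1)^{kl}$ coming from the $k$ sink points along the upper-left side of $P$ that are absorbed into the hexagon (an effect present precisely when $P$ is inside $\sigma$, i.e.\ $l$ odd). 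Hence $\sgn(\lambda'\lambda^{-1})=(-1)^{kl}\,\sgn(\tilde{\lambda'}\tilde\lambda^{-1})$. (iii) The $E$-count of $\tilde\sigma$ equals $e-k$ if $P$ is inside $\sigma$ and $e$ otherwise, so $(-1)^{e}=(-1)^{kl}\cdot(-1)^{E\text{-count of }\tilde\sigma}$. Combining (ii) and (iii), the identity ``sign defect $=(-1)^{E\text{-count}}$'' for $(\tau,\sigma)$ is equivalent to the same identity for $(\tilde\tau,\tilde\sigma)$ in $\tilde T$; since $\tilde T$, after merging touching punctures, has strictly fewer non-corner punctures (or overlapping pairs) than $T$ and becomes a complete intersection region once all are resolved, the induction closes.

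The substantive work, and the expected main obstacle, is fact (ii): establishing that a resolution alters the lattice-path permutation by a tiling-independent bijection, and that this bijection agrees for $\tau$ and for its twist up to the explicit $(-1)^{kl}$ factor. This requires a careful local accounting of how the corridors and the central hexagon route the lattice paths, and it is precisely to make this accounting canonical that the preferred and acceptable directions on the splitting chains were introduced. By contrast the analogue on the perfect-matching side (Lemma~\ref{lem:twist-sign}) is immediate, since a twist of an $n$-cycle of lozenges simply multiplies the matching permutation by an $n$-cycle.
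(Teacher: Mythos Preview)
Your proposal follows essentially the same inductive architecture as the paper's proof: reduce by resolving punctures until one reaches a complete-intersection region, where the lattice-path permutation is forced and the $E$-count of every cycle is zero, and track how the sign defect and the $E$-count transform under a single resolution. You also arrive at the correct transformation factor $(-1)^{kl}$, and your fact~(iii) and the way you combine (ii) and (iii) to close the induction are correct.

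There is, however, a genuine gap in your justification of fact~(ii), and it is precisely where the paper does the real work. Two points:

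\begin{itemize}
\item The paper does not resolve an arbitrary non-corner puncture. It inducts on the number of \emph{floating} punctures and chooses $P$ to be the floating puncture (or minimal covering region) containing the upward triangle with the smallest monomial label. This guarantees that no other floating puncture lies to the lower-right of $P$, so the lower-right splitting chain never crosses a lattice path except at an endpoint. This is what makes the sign change under resolution computable: the paper shows $\sgn(\tilde\lambda)=(-1)^{js}\sgn(\lambda)$, where $j$ counts the unit edges on the lower-right chain in acceptable (as opposed to preferred) directions and $s$ is the side length of $P$. Without this specific choice of $P$ your ``corridors are uniquely tileable'' argument does not pin down the effect on the permutation.
\item Your statement that ``passing from $T$ to $\tilde T$ changes the lattice-path permutation of every tiling by one and the same bijection'' is not what actually happens. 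The splitting chains for $\tau$ and for the twist $\tau'$ are necessarily different: the edges of $\sigma$ that were shared by consecutive lozenges disappear after twisting and must be replaced. The paper makes the two systems of chains agree except on $\sigma$, then shows the chain for $\tau'$ has $j+c$ acceptable-direction edges on the lower-right (versus $j$ for $\tau$), with $c$ the number of crossings of $\sigma$ by that chain; since $c$ is odd exactly when $P$ is inside $\sigma$, one gets $\sgn(\tilde\mu)=(-1)^{(j+c)s}\sgn(\mu)$, and the difference $(-1)^{cs}$ is your $(-1)^{kl}$. So the correct statement is not that the two bijections coincide, but that they differ by a factor you can name once you have done this edge-by-edge accounting on the lower-right chain.
\end{itemize}

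In short: right strategy, right answer, but the heart of the argument (your fact~(ii)) needs the paper's specific choice of $P$ and its explicit bookkeeping of acceptable-direction edges on the lower-right splitting chain; unique tileability of the corridors is not the mechanism that produces the $(-1)^{kl}$.
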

\begin{proof}
    Suppose $T$ has $n$  floating punctures.  We proceed by induction on $n$ in five steps.

    \emph{Step $1$: The base case.}

    If $n = 0$, then every tiling of $T$ induces the same bijection $\{A_1,\ldots,A_m\} \to \{E_1,\ldots,E_m\}$. Thus, all tilings have the same lattice path sign.  Since $T$ has no
    floating punctures, $\sigma$  has an $E$-count of zero.  Hence, the claim is true if $n=0$.

    \emph{Step $2$: The set-up.}

    Suppose now that $n > 0$, and choose $P$ among the floating punctures and the minimal covering regions of two
    overlapping floating punctures of $T$ as the one that covers the upward-pointing unit triangle of $\mathcal{T}_d$
    with the smallest monomial label. Let $s > 0$ be the side length of $P$, and let $k$ be the $E$-count of $\sigma$.
    Furthermore, let $\upsilon$ be the lozenge tiling of $T$ obtained as twist of $\sigma$ in $\tau$. Both, $\tau$ and
    $\upsilon$, induce bijections $\{A_1,\ldots,A_m\} \to \{E_1,\ldots,E_m\}$, and we denote by $\lambda \in \PS_m$ and
    $\mu \in \PS_m$ the corresponding lattice path permutations, respectively. We have to show $\lpsgn \tau = (-1)^k \lpsgn \upsilon$, that is,
    \[
        \sgn \lambda = (-1)^k \sgn \mu.
    \]

    \emph{Step $3$: Resolutions.}

    We resolve $P$ relative to the tilings $\tau$ and $\upsilon$, respectively. For the resolution of $P$ relative to
    $\tau$, choose the splitting chains so that each unit edge has a preferred direction, except possibly the unit edges on
    the boundary of a puncture of $T$;  this is always possible. By our choice of $P$, no other floating punctures are to the
    lower-right of $P$. It follows that no edge on the lower-right chain crosses a lattice path, except possibly at the end
    of the lattice path.

    For the resolution of $P$ relative to $\upsilon$, use the splitting chains described in the previous paragraph, except
    for the edges that cross the lozenge cycle $\sigma$. They have to be adjusted since these unit edges disappear when
    twisting $\sigma$. We replace each such unit edge by a unit edge in an acceptable direction followed by a unit edge in a
    preferred direction so that the result has the same starting and end point as the unit edge they replace. Note that this
    is always possible and that this determines the replacement uniquely. The new chains meet the requirements on splitting
    chains.

    Using these splitting chains we resolve the puncture $P$ relative to $\tau$ and $\upsilon$, respectively. The result is
    a triangular region $T' \subset \mathcal{T}_{d+2s}$ with induced tilings $\tau'$ and $\upsilon'$, respectively. Denote
    by $\sigma'$ the extension of the cycle $\sigma$ in $T'$ (see Lemma~\ref{lem:cycle-res}). Since $\tau$ and $\upsilon$
    differ exactly on the cycle $\sigma$ and the splitting chains were chosen to be the same except on $\sigma$, it follows
    that twisting $\sigma'$ in $\tau'$ results in the tiling $\upsilon'$ of $T'$.

      \begin{figure}[!ht]
        \includegraphics[scale=1]{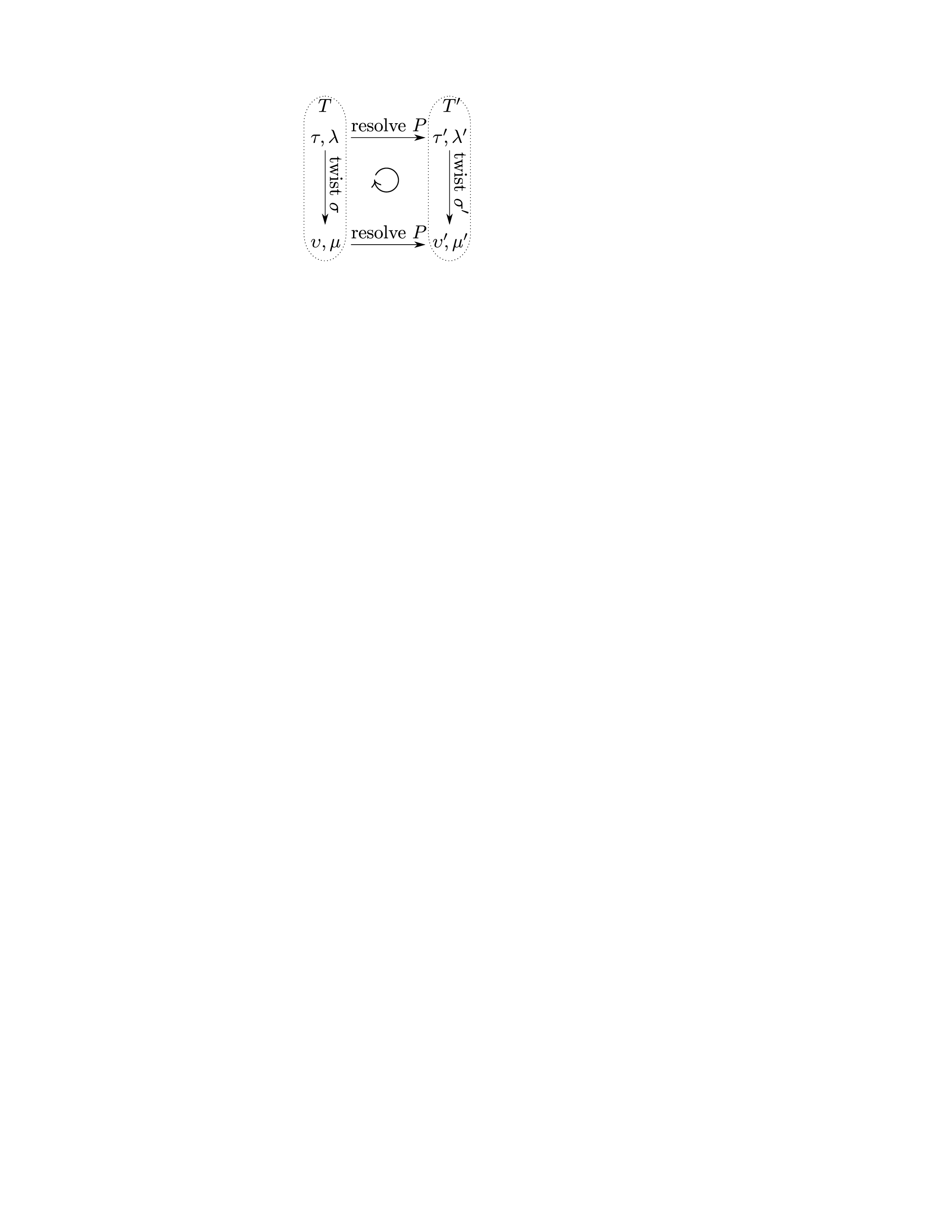}
        \caption{The commutative diagram used in the proof of Lemma~\ref{lem:lpsgn-cycle}.}
        \label{fig:res-comm-diag}
    \end{figure}

    \emph{Step $4$: Lattice path permutations.}

    Now we compare the signs of $\lambda, \mu \in \PS_m$ with the signs of  $\lambda'$ and $\mu'$, the lattice path permutations
    induced by the tilings $\tau'$ and $\upsilon'$ of $T'$, respectively.

    First, we compare the starting and end points of lattice paths in $T$ and $T'$. Resolution of the puncture identifies
    each starting and end point in $T$ with one such point in $T'$. We refer to these points as the \emph{old} starting and
    end points in $T'$. Note that the end points on the puncture $P$ correspond to the end points on the puncture in the
    Southeast corner of $T'$. The starting points in $T$ that are on one of the splitting chains used for resolving $P$
    relative to $\tau$ and $\upsilon$ are the same. Assume there are $t$ such points. After resolution, each point gives
    rise to a new starting and end point in $T'$. Both are connected by a lattice path that is the same in both resolutions
    of $P$. Hence, in order to compare the signs of the permutations $\lambda'$ and $\mu'$ on $m+t$ letters, it is enough to
    compare the lattice paths between the old starting and end points in both resolutions.

    Retain for these points the original labels used in $T$. Using this labeling, the lattice paths induce permutations
    $\tilde{\lambda}$ and $\tilde{\mu}$ on $m$ letters. Again, this is the same process in both resolutions. It follows that
    \begin{equation}\label{eq:compare-res-signs}
        \sgn (\tilde{\lambda}) \cdot \lpsgn (\tau') = \sgn (\tilde{\mu}) \cdot \lpsgn (\upsilon').
    \end{equation}

    Assume now that $P$ is a puncture. Then the end points on $P$ are indexed by $s$ consecutive integers. Since we retain
    the labels, the same indices label the end points on the puncture in the Southeast corner of $T'$. The end points on $P$
    correspond to the points in $T'$ whose labels are obtained by multiplying by $x^s y^s$. Consider now the case, where all
    edges in the lower-right splitting chain in $T$ are in preferred directions. Then the lattice paths induced by $\tau'$
    connect each point in $T'$ that corresponds to an end point on $P$ to the end point in the Southeast corner of $T'$ with
    the same index. Thus, $\sgn (\lambda) = \sgn (\tilde{\lambda})$. Next, assume that there is exactly one edge in
    acceptable direction on the lower-right splitting chain of $T$. If this direction is Northeast, then the $s$ lattice
    paths passing through the points in $T'$ corresponding to the end points on $P$ are moved one unit to the North. If the
    acceptable direction was Southwest, then the edge in this direction leads to a shift of these paths by one unit to the
    South. In either case, this shift means that the paths in $T$ and $T'$ connect to end points that differ by $s$
    transpositions, so $\sgn (\tilde{\lambda})= (-1)^{s} \sgn (\lambda)$. More generally, if $j$ is the number of unit edges
    on the lower-right splitting chain of $T$ that are in acceptable directions, then
    \[
        \sgn (\tilde{\lambda}) = (-1)^{js} \sgn (\lambda).
    \]

    Next, denote by $c$ the number of unit edges on the lower-right splitting chain that have to be adjusted when twisting
    $\sigma$. Since each of these edges is replaced by an edge in a preferred and one edge in an acceptable direction, after
    twisting the lower-right splitting chain in $T$ has exactly $j+c$ unit edges in acceptable directions. It follows as
    above that
    \[
        \sgn (\tilde{\mu}) = (-1)^{(j+c)s} \sgn (\mu).
    \]
    Since a unit edge on the splitting chain has to be adjusted when twisting if and only if it is shared by two
    consecutive lozenges in the cycle $\sigma$, the number $c$ is even if and only if the puncture $P$ is outside
    $\sigma$.

   Moreover, as the puncture $P$ has been resolved in $T'$, we conclude by induction that $\tau'$ and $\upsilon'$ have
   the same lattice path sign if and only if the $E$-count of $\sigma'$ is even. Thus, we get
    \begin{equation}
        \lpsgn (\upsilon') =
        \begin{cases}
            (-1)^{k-s} \lpsgn (\tau') & \text{if $P$ is inside $\sigma$}, \\
            (-1)^{k} \lpsgn (\tau') & \text{if $P$ is outside $\sigma$}.
        \end{cases}
    \end{equation}

    \emph{Step $5$: Bringing it all together.}

    We consider the two cases separately:
    \begin{enumerate}
        \item Suppose $P$ is inside $\sigma$.  Then $c$ is odd. Hence,  the above considerations imply
            \begin{equation*}
                \begin{split}
                    \sgn (\lambda) & =  (-1)^{js}\sgn (\tilde{\lambda}) =  (-1)^{js + k-s}\sgn (\tilde{\mu}) \\
                                   & =  (-1)^{js + k-s + (j+c) s} \sgn ({\mu}) \\
                                   & = (-1)^k  \sgn ({\mu}),
                \end{split}
            \end{equation*}
            as desired.

        \item Suppose $P$ is outside of $\sigma$.  Then $c$ is even, and we conclude
            \begin{equation*}
                \begin{split}
                    \sgn (\lambda) & =  (-1)^{js}\sgn (\tilde{\lambda}) =  (-1)^{js + k}\sgn (\tilde{\mu}) \\
                                   & =  (-1)^{js + k-s + (j+c) s} \sgn ({\mu}) \\
                                   & = (-1)^k  \sgn ({\mu}).
                 \end{split}
            \end{equation*}
    \end{enumerate}

    Finally, it remains to consider the case where $P$ is the minimal covering region of two overlapping punctures of
    $T$. Let $\hat{T}$ be the triangular region that differs from $T$ only by having $P$ as a puncture, and let
    $\hat{\tau}$ and $\hat{\upsilon}$ be the tilings of $\hat{T}$ induced by $\tau$ and $\upsilon$, respectively. Since
    we order the end points of lattice paths using monomial labels, it is possible that the indices of the end points on
    the Northeast boundary of $P$ in $\tilde{T}$ differ from those of the points on the Northeast boundary of the
    overlapping punctures in $T$. However, the lattice paths induced by $\tau$ and $\upsilon$ connecting the points on
    the Northeast boundary of $P$ to the points on the Northeast boundary of the overlapping punctures are the same.
    Hence the lattice paths sign of $\tau$ and $\hat{\tau}$ differ in the same ways as the signs of $\upsilon$ and
    $\hat{\upsilon}$. Since we have shown our assertion for $\hat{\tau}$ and $\hat{\upsilon}$, it also follows for
    $\tau$ and $\upsilon$.
\end{proof}

Using difference cycles, we now characterise when two tilings of a region have the same lattice path sign.

\begin{corollary} \label{cor:lpsgn-cycle}
    Let $T = T_d(I)$ be a non-empty, balanced triangular region. Then two tilings of $T$ have the same lattice path sign
    if and only if the sum of the $E$-counts (which may count some end points $E_j$ multiple times) of the difference cycles
    is even.
\end{corollary}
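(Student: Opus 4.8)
The plan is to deduce this from Lemma~\ref{lem:lpsgn-cycle} in exactly the way Corollary~\ref{cor:msgn-cycle}(ii) was deduced from Lemma~\ref{lem:twist-sign}: Lemma~\ref{lem:lpsgn-cycle} already records the effect of a single cycle twist on the lattice path sign, so all that remains is to decompose the passage from one tiling to another into a sequence of such twists and multiply the resulting sign changes.

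Concretely, let $\tau_1$ and $\tau_2$ be tilings of $T$ with perfect matching permutations $\pi_1$ and $\pi_2$, and write $\pi_2 = \rho\,\pi_1$. Decompose $\rho$ into disjoint cycles of length at least three; as recalled in the paragraph preceding Corollary~\ref{cor:msgn-cycle}, each such factor is realised by a cycle of lozenges of $\tau_1$, and these are the difference cycles $\sigma_1, \ldots, \sigma_p$ of $\tau_1$ and $\tau_2$. Since the factors of $\rho$ are disjoint as permutations, the $\sigma_i$ are pairwise disjoint as collections of lozenges. I would then form a chain of intermediate tilings $\tau_1 = \upsilon_0, \upsilon_1, \ldots, \upsilon_p = \tau_2$, where $\upsilon_i$ is the twist of $\sigma_i$ in $\upsilon_{i-1}$; this makes sense because twisting $\sigma_1, \ldots, \sigma_{i-1}$ alters only lozenges disjoint from those of $\sigma_i$, so $\sigma_i$ remains a cycle of lozenges in $\upsilon_{i-1}$ with an unchanged set of punctures inside it, hence an unchanged $E$-count $k_i$; moreover $\upsilon_p = \tau_2$ because disjoint cycles commute, so the perfect matching permutation of $\upsilon_p$ is $\sigma_p \cdots \sigma_1 \pi_1 = \rho\,\pi_1 = \pi_2$. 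Applying Lemma~\ref{lem:lpsgn-cycle} at each step gives $\lpsgn(\upsilon_i) = (-1)^{k_i}\lpsgn(\upsilon_{i-1})$, and multiplying yields
\[
    \lpsgn(\tau_2) = (-1)^{k_1 + \cdots + k_p}\,\lpsgn(\tau_1).
\]
Thus $\lpsgn(\tau_1) = \lpsgn(\tau_2)$ precisely when $k_1 + \cdots + k_p$ — the sum of the $E$-counts of the difference cycles — is even, which is the assertion. The parenthetical remark in the statement is accounted for here: a puncture nested inside several of the $\sigma_i$ contributes to each of their $E$-counts, so an end point $E_j$ may be counted with multiplicity in the total.

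The argument is essentially bookkeeping once Lemma~\ref{lem:lpsgn-cycle} is in hand, so I do not anticipate a serious obstacle. The one point needing a little care is the claim that twisting one difference cycle leaves the others, and their $E$-counts, intact; this follows from the disjointness of the $\sigma_i$ as lozenge collections together with the fact that the $E$-count of a cycle depends only on the placement of its lozenges relative to the punctures of $T$, neither of which is affected by twisting a disjoint cycle. An alternative, equally short route is induction on $p$: peel off $\sigma_1$, apply Lemma~\ref{lem:lpsgn-cycle} to $\tau_1$ and $\upsilon_1$, and apply the inductive hypothesis to $\upsilon_1$ and $\tau_2$, which have difference cycles $\sigma_2, \ldots, \sigma_p$.
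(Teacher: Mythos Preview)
Your proposal is correct and follows essentially the same approach as the paper: apply Lemma~\ref{lem:lpsgn-cycle} once per difference cycle and multiply the resulting sign changes. The paper's proof is terser---it asserts directly that by Lemma~\ref{lem:lpsgn-cycle} the signs agree if and only if an even number of the $\sigma_i$ have odd $E$-count---but your explicit chain $\upsilon_0,\ldots,\upsilon_p$ and the disjointness check simply spell out what the paper takes for granted.
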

\begin{proof}
    Suppose two tilings $\tau_1$ and $\tau_2$ of $T$ have difference cycles $\sigma_1, \ldots, \sigma_p$.
    By Lemma~\ref{lem:lpsgn-cycle}, $\lpsgn{\tau_1} = \lpsgn{\tau_2}$ if and only if an even number of the $\sigma_i$ have an odd $E$-count.
    The latter is equivalent to the sum of the $E$-counts of $\sigma_1, \ldots, \sigma_p$ being even.
\end{proof}

Our above results imply that the two signs that we assigned to a given lozenge tiling, the perfect matching sign (see
Definition~\ref{def:pm-sign}) and the lattice path sign (see Definition~\ref{def:nilp-sign}), are the same up to a
scaling factor depending only on $T$. The main result of this section follows now easily.

\begin{theorem} \label{thm:detZN}
    Let $T = T_d(I)$ be a balanced triangular region.  The following statements hold.
    \begin{enumerate}
        \item Let $\tau$ and $\tau'$ be two lozenge tilings of $T$. Then their perfect matching signs are
            the same if and only ifvtheir lattice path signs are the same, that is,
            \[
                \msgn (\tau) \cdot \lpsgn (\tau) = \msgn (\tau') \cdot \lpsgn (\tau').
            \]
        \item In particular, we have that
            \[
                |\det{Z(T)}| = |\det{N(T)}|.
            \]
    \end{enumerate}
\end{theorem}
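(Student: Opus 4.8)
The plan is to deduce Theorem~\ref{thm:detZN} directly from Corollaries~\ref{cor:msgn-cycle} and \ref{cor:lpsgn-cycle}, which were precisely set up for this purpose. First I would observe that, by the Lindström--Gessel--Viennot correspondence and the perfect matching correspondence, it suffices to prove part (i): if every lozenge tiling of $T$ has $\msgn(\tau)\cdot\lpsgn(\tau)$ equal to a single fixed value $\varepsilon_T\in\{\pm 1\}$, then Theorems~\ref{thm:pm-matrix} and~\ref{thm:nilp-matrix} give
\[
    \det Z(T) = \sum_\tau \msgn(\tau) = \varepsilon_T \sum_\tau \lpsgn(\tau) = \varepsilon_T \det N(T),
\]
where the middle equality uses $\msgn(\tau) = \varepsilon_T\,\lpsgn(\tau)$ for every $\tau$; taking absolute values yields part (ii). (If $T$ is not tileable both sides are $0$, so we may assume $T$ is tileable, hence balanced with all punctures of positive side length as required by the results of Subsection~\ref{sub:signs}.)

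For part (i) itself, the key point is that both signs transform the same way under twisting a difference cycle. Fix two tilings $\tau$ and $\tau'$ of $T$ and let $\sigma_1,\ldots,\sigma_p$ be their difference cycles. By Corollary~\ref{cor:msgn-cycle}(ii), $\msgn(\tau)=\msgn(\tau')$ if and only if the sum of the $E$-counts of $\sigma_1,\ldots,\sigma_p$ is even; by Corollary~\ref{cor:lpsgn-cycle}, $\lpsgn(\tau)=\lpsgn(\tau')$ holds under exactly the same parity condition. Hence $\msgn(\tau)=\msgn(\tau') \iff \lpsgn(\tau)=\lpsgn(\tau')$, which is equivalent to the stated identity $\msgn(\tau)\lpsgn(\tau)=\msgn(\tau')\lpsgn(\tau')$ (a product of two $\pm 1$'s flips precisely when exactly one factor flips; here they flip together or not at all). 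One should note here that any two tilings of a (tileable) triangular region are connected by a sequence of cycle twists, so difference cycles are well-defined and cover all pairs of tilings — this is implicit in the difference-cycle discussion preceding Corollary~\ref{cor:msgn-cycle} and in the appeal to \cite[Theorem~1.2]{CGJL} made there after resolving all floating punctures.

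The genuine content has already been spent: the hard work is Lemma~\ref{lem:lpsgn-cycle} (the induction on floating punctures, analysing how resolution of a puncture shifts lattice-path endpoints by $s$ transpositions per acceptable edge and how twisting introduces the parity-$c$ correction) together with Lemma~\ref{lem:cycle-res} and Corollary~\ref{cor:msgn-cycle}. Given those, the proof of Theorem~\ref{thm:detZN} is essentially a bookkeeping argument matching two identical parity criteria, so I expect \emph{no} serious obstacle in writing it — the only thing to be careful about is the reduction to the tileable case and making explicit that "same parity criterion for both signs" immediately forces the product $\msgn\cdot\lpsgn$ to be constant across all tilings, from which (ii) follows by the two determinantal enumeration theorems.
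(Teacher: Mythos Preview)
Your proposal is correct and follows essentially the same route as the paper: invoke Corollaries~\ref{cor:msgn-cycle} and~\ref{cor:lpsgn-cycle} to see that both signs are governed by the identical parity criterion on the $E$-counts of difference cycles, then conclude $|\det Z(T)|=|\det N(T)|$ via Theorems~\ref{thm:pm-matrix} and~\ref{thm:nilp-matrix}. Your write-up is in fact more explicit than the paper's (you spell out the non-tileable case and the constant $\varepsilon_T$ argument), but the substance is identical.
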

\begin{proof}
    Consider two lozenge tilings of $T$. According to Corollaries~\ref{cor:msgn-cycle} and~\ref{cor:lpsgn-cycle}, they have
    the same perfect matching and the same lattice path signs if and only if the sum of the $E$-counts of the difference
    cycles is even. Hence using Theorems~\ref{thm:pm-matrix} and~\ref{thm:nilp-matrix}, it follows that $|\det{Z(T)}| =
    |\det{N(T)}|$.
\end{proof}

This result allows us to move freely between the points of view using lozenge tilings, perfect matchings, and families
of non-intersecting lattice paths, as needed. In particular, it implies that rotating a triangular region by
$120^{\circ}$ or $240^{\circ}$ does not change the enumerations. Thus, for example, the three matrices described in
Remark~\ref{rem:rotations} as well as the matrix given in Example~\ref{exa:Z-matrix} all have the same determinant, up
to sign.

\section{Determinants} \label{sec:det}

By Theorems~\ref{thm:pm-matrix} and~\ref{thm:nilp-matrix}, the enumerations of signed lozenge tilings of a balanced
triangular region, where the sign is determined by perfect matchings (see Definitions~\ref{def:pm-sign}) or lattice
paths (see Definition~\ref{def:nilp-sign}), are both given by determinants of integer matrices. Furthermore, the
absolute values of the two determinants are the same by Theorem~\ref{thm:detZN}. In this section we determine these
determinants in various cases. If the determinant is non-vanishing, then we are also interested in its prime divisors,
and, failing that, an upper bound on the prime divisors of the enumeration. This is important for applications later on.

~\subsection{Building enumerations by replacement}\label{sub:det-replace}~\par

Recall that, by Lemma~\ref{lem:replace-tileable}, removing a tileable region does not affect unsigned tileability. Using
the structure of the bi-adjacency matrix $Z(T)$, we analyse how removing a balanced region affects signed enumerations.

\begin{proposition} \label{pro:rep-enum}
    Let $T \subset \mathcal{T}_d$   be a balanced  subregion, and let $U$ be a balanced monomial subregion of $T$.   The
    following statements hold.
    \begin{enumerate}
        \item $\per{Z(T)} = \per{Z(T \setminus U)} \cdot \per{Z(U)}$; and
        \item $|\det{Z(T)}| = |\det{Z(T \setminus U)} \cdot \det{Z(U)}|$.
    \end{enumerate}
\end{proposition}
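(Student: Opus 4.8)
The plan is to exhibit a block-triangular structure for $Z(T)$ that comes from the fact that a monomial subregion sits inside an upward-pointing triangle. Write $U = T \cap \Delta$, where $\Delta \subset \mathcal{T}_d$ is the upward-pointing triangle associated to the monomial defining $U$, and let $B_U$ and $W_U$ denote the sets of centres of the downward- and upward-pointing unit triangles of $U$, with $B_{T\setminus U}$ and $W_{T\setminus U}$ defined analogously for $T\setminus U$. First I would record the elementary geometric fact already used in the proof of Lemma~\ref{lem:replace-tileable}: every downward-pointing unit triangle contained in $\Delta$ has all of its (at most three) edge-neighbours inside $\Delta$ as well. Since edges of the bipartite graph $G(T)$ join centres of adjacent triangles, this says that every edge of $G(T)$ incident to a vertex of $B_U$ has its other endpoint in $W_U$.

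Next I would reorder the rows of $Z(T)$ so that the vertices of $B_U$ come first and the columns so that the vertices of $W_U$ come first. Because $T$ and $U$ are both balanced we have $\#B_U = \#W_U$ and $\#B_{T\setminus U} = \#W_{T\setminus U}$, so after this reordering $Z(T)$ is a square matrix partitioned into square diagonal blocks. By the observation above, the block with rows indexed by $B_U$ and columns indexed by $W_{T\setminus U}$ is zero. The block with rows $B_U$ and columns $W_U$ is the bi-adjacency matrix of the subgraph of $G(T)$ induced on $B_U \cup W_U$, which is precisely $G(U)$; since both matrices use the reverse-lexicographic ordering of vertices, this block equals $Z(U)$. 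For the same reason the block with rows $B_{T\setminus U}$ and columns $W_{T\setminus U}$ equals $Z(T\setminus U)$. Hence the reordered matrix has the form
\[
    \begin{pmatrix} Z(U) & 0 \\ \ast & Z(T\setminus U) \end{pmatrix}.
\]

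Finally I would invoke the standard fact that for a block-triangular matrix with one vanishing off-diagonal block, both the determinant and the permanent factor as the product of the permanents (respectively determinants) of the diagonal blocks: for the permanent, any permutation contributing a nonzero term must send the rows of the first block into the columns of the first block (the complementary block being zero) and therefore splits as a pair of permutations of the two index sets; for the determinant this is the usual Laplace expansion, with the added bookkeeping that signs multiply. Reordering rows and columns multiplies the determinant only by the product of the signs of the two reordering permutations and leaves the permanent unchanged; this is exactly why statement (i) is an equality on the nose while statement (ii) is phrased with absolute values. Putting these together gives $\per Z(T) = \per Z(U)\cdot \per Z(T\setminus U)$ and $|\det Z(T)| = |\det Z(U)\cdot \det Z(T\setminus U)|$.

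I do not expect any serious obstacle here: the only points requiring care are (a) checking that the intrinsic reverse-lexicographic orderings on the sub-blocks agree with those used to define $Z(U)$ and $Z(T\setminus U)$, so that the diagonal blocks are literally those matrices and not merely permutation-equivalent to them, and (b) tracking the sign contributed by the reordering, which is what necessitates the absolute values in (ii). The geometric input is minimal and is essentially borrowed verbatim from the proof of Lemma~\ref{lem:replace-tileable}.
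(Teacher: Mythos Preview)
Your proposal is correct and is essentially the paper's own argument: reorder rows and columns so that the triangles of $U$ and of $T\setminus U$ are grouped together, observe that the downward-pointing triangles of $U$ have no upward-pointing neighbours in $T\setminus U$ so one off-diagonal block vanishes, and apply the block-triangular formulae for permanents and determinants. The only cosmetic difference is that the paper lists $T\setminus U$ first and $U$ second, obtaining the block form $\left(\begin{smallmatrix} Z(T\setminus U) & X \\ 0 & Z(U) \end{smallmatrix}\right)$, whereas you put $U$ first.
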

\begin{proof}
    Recall that the rows of the matrices $Z(\cdot)$ are indexed by the downward-pointing triangles, and the columns of
    the matrices $Z(\cdot)$ are indexed by the upward-pointing triangles, using the reverse-lexicographic order of their
    monomial labels. Reorder the downward-pointing (respectively, upward-pointing) triangles of $T$ so that the triangles of
    $T \setminus U$ come first and the triangles of $U$ come second, where we preserve the internal order of the triangles
    of $T \setminus U$ and $U$. Using this new ordering, we reorder the rows and columns of $Z(T)$. The result is a block
    matrix
    \[
        \left( \begin{array}{cc} Z(T \setminus U) & X \\ Y & Z(U) \end{array} \right).
    \]
    Since the downward-pointing triangles of $U$ are not adjacent  to any upward-pointing triangle of $T \setminus U$, the
    matrix $Y$ is a zero matrix. Thus,  the claims follow by using block matrix formul\ae\ for permanents and determinants.
\end{proof}

In particular, if we remove a monomial region with a unique lozenge tiling, then we do not modify the enumerations of
lozenge tilings in that region. This is true in greater generality.

\begin{proposition}\label{prop:remove-unique-tileable}
    Let $T \subset \mathcal{T}_d$ be a balanced subregion, and let $U$ be any subregion of $T$ such that each lozenge
    tiling of $T$ induces a tiling of $U$ and all the induced tilings of $U$ agree. Then:
    \begin{enumerate}
        \item $Z(T)$ has maximal rank if and only if $Z(T \setminus U)$ has maximal rank.
        \item $\per{Z(T)} = \per{Z(T \setminus U)}$ and $|\det{Z(T)}| = |\det{Z(T \setminus U)}|$.
    \end{enumerate}
\end{proposition}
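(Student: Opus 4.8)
The plan is to prove this by a forcing argument on the bi-adjacency matrix $Z(T)$ that refines the block decomposition used in the proof of Proposition~\ref{pro:rep-enum}; the new feature is that $U$ need not be a monomial subregion, so neither off-diagonal block of $Z(T)$ is automatically zero, and the point is that the hypothesis on $U$ forces those blocks to contribute nothing anyway. First I would record the easy reductions. Since the hypothesis concerns tilings of $T$, we may assume $T$ is tileable; as all tilings of $T$ restrict to the same tiling $\tau_U$ of $U$, the region $U$ is uniquely tileable, hence balanced, and therefore so is $T \setminus U$, so that $Z(U)$ and $Z(T \setminus U)$ are square. Because $U$ has exactly one tiling, Proposition~\ref{pro:per-enum} gives $\per Z(U) = 1$; the permanent expansion of $Z(U)$ then has a single nonzero term, so $\det Z(U) = \pm 1$ and $|\det Z(U)| = 1$.

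The heart of the argument is the forcing claim: in every perfect matching of $G(T)$ --- equivalently, in every lozenge tiling of $T$ --- each vertex of $G(T)$ lying in $U$ is matched to another vertex lying in $U$. Indeed, if $\tau$ is a tiling of $T$ and $t$ is a unit triangle of $U$, the lozenge of $\tau$ covering $t$ belongs to the tiling of $U$ induced by $\tau$, which by hypothesis equals $\tau_U$; in particular both triangles of that lozenge lie in $U$, so the edge of $G(T)$ recording it joins two vertices of $U$. (This is exactly where one uses that ``$\tau$ induces a tiling of $U$'' forbids any lozenge of $\tau$ from straddling the boundary of $U$.) Now reorder the rows and the columns of $Z(T)$ so that the triangles of $T \setminus U$ precede those of $U$, preserving the internal reverse-lexicographic orders; up to a sign change of the determinant coming from these row and column permutations, this puts $Z(T)$ into the form
\[
    \left( \begin{array}{cc} Z(T \setminus U) & X \\ Y & Z(U) \end{array} \right),
\]
with $X$ and $Y$ possibly nonzero. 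By the forcing claim, any permutation contributing a nonzero term to the permanent or the determinant of this matrix must carry the (contiguous) index block of $U$ to itself, hence is block-diagonal with sign equal to the product of the signs of its two blocks; so $X$ and $Y$ never appear in a surviving term, giving $\per Z(T) = \per Z(T \setminus U) \cdot \per Z(U)$ and $|\det Z(T)| = |\det Z(T \setminus U)| \cdot |\det Z(U)|$.

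Feeding in $\per Z(U) = 1$ and $|\det Z(U)| = 1$ yields statement (ii), and statement (i) follows at once since $Z(T)$ and $Z(T \setminus U)$ are square and a square integer matrix has maximal rank exactly when its determinant does not vanish. I expect the only genuine obstacle to be the bookkeeping in the forcing step --- making explicit that the hypothesis prevents any lozenge from crossing $\partial U$, so that the cross blocks $X$ and $Y$ are never used --- rather than anything substantive; with that in place the remainder is the permanent/determinant manipulation already carried out in the proof of Proposition~\ref{pro:rep-enum}.
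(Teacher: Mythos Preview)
Your proof is correct and follows essentially the same idea as the paper's: the paper's one-line argument invokes Proposition~\ref{pro:per-enum} and Theorem~\ref{thm:pm-matrix} to identify nonzero terms in the permanent and determinant expansions of $Z(T)$ with (signed) lozenge tilings of $T$, and then uses the bijection $\tau \mapsto \tau|_{T\setminus U}$ between tilings of $T$ and tilings of $T\setminus U$. Your block-matrix forcing argument is precisely the matrix-level unpacking of this, making explicit why the cross blocks $X,Y$ never contribute and why the sign discrepancy between $\msgn(\tau)$ and $\msgn(\tau|_{T\setminus U})$ is a fixed constant; one small point you state a bit quickly is that $U$ is \emph{uniquely} tileable (not merely that all tilings of $T$ restrict to the same $\tau_U$): this follows because any other tiling of $U$ glued to a tiling of $T\setminus U$ would yield a tiling of $T$ violating the hypothesis.
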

\begin{proof}
    Part (ii) follows from Theorem \ref{thm:pm-matrix} and Proposition \ref{pro:per-enum}, and it implies part (i).
\end{proof}

We point out the following special case.

\begin{corollary}\label{cor:replace-two-punctures}
    Let $T = T_d (I)$ be a balanced triangular region with two punctures $P_1$ and $P_2$ that overlap or touch each
    other. Let $P$ be the minimal covering region of $P_1$ and $P_2$. The following statements hold.
    \begin{enumerate}
        \item $\per{Z(T)} = \per{Z(T \setminus P)}$; and
        \item $|\det{Z(T)}| = |\det{Z(T \setminus P)} |$.
    \end{enumerate}
\end{corollary}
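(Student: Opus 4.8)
The plan is to deduce Corollary~\ref{cor:replace-two-punctures} directly from Proposition~\ref{prop:remove-unique-tileable} by taking $U = P$. First I would verify the hypotheses of that proposition: I need that every lozenge tiling of $T$ induces a tiling of $P$, and that all such induced tilings of $P$ agree. Here $P$ is the minimal covering region of two overlapping or touching punctures $P_1$ and $P_2$. By the discussion in Section~\ref{sec:tiling} (in particular the remarks in the proof of Theorem~\ref{thm:tileable} and of Theorem~\ref{thm:tileable-semistable}), the minimal monomial subregion containing two touching or overlapping punctures is \emph{uniquely} tileable by lozenges. So once I know that each tiling of $T$ restricts to a tiling of $P$, uniqueness of the tiling of $P$ is automatic, and the ``all induced tilings agree'' condition holds trivially.

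The one genuine point to check is therefore that a tiling $\tau$ of $T$ always induces a tiling of the subregion $P$ — that is, no lozenge of $\tau$ straddles the boundary of $P$. This is exactly the argument already used in the proof of Lemma~\ref{lem:replace-tileable}: the only unit triangles of $T \setminus P$ that share an edge with $P$ are downward-pointing triangles (because $P$ is an upward-pointing triangular region with the two punctures cut out, so its boundary edges that face outward are the slanted/horizontal edges of upward-pointing triangles lying outside). Hence if a lozenge of $\tau$ contained exactly one triangle of $P$, that triangle would have to be upward-pointing; since $P$ is balanced (it is the minimal covering region of two punctures in a balanced triangular region, hence $\dntri$-heavy or balanced, but in fact the relevant monomial subregion of a tileable or partially-tiled configuration is balanced once the two punctures are present — more precisely, $P$ with its two interior punctures has equally many up- and down-triangles), removing these boundary upward-pointing triangles would leave an unmatched downward-pointing triangle inside $P$, a contradiction. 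So $\tau$ restricts to a tiling of $P$.

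With the hypotheses of Proposition~\ref{prop:remove-unique-tileable} verified for $U = P$, parts~(i) and~(ii) of that proposition give immediately $\per Z(T) = \per Z(T \setminus P)$ and $|\det Z(T)| = |\det Z(T \setminus P)|$, which are exactly claims~(i) and~(ii) of the corollary.

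I expect the routine-looking boundary-adjacency check to be the only real content; everything else is a citation of Proposition~\ref{prop:remove-unique-tileable}. The subtle part is making sure $P$ is indeed balanced so that the ``unmatched downward triangle'' contradiction goes through — this follows because $P$ is the smallest upward-pointing triangle containing $P_1$ and $P_2$, and an upward-pointing triangular region of side length $s$ with punctures removed is balanced precisely when the punctures account for the excess of $s$ up-triangles, which holds here since $P_1, P_2$ overlap or touch inside the side-$s$ triangle $P$ in the configuration forced by $T$. In fact, one can sidestep this by noting that $T$ is balanced and $T \setminus P$ is balanced (its up/down triangles biject via the tiling $\tau$ restricted to $T \setminus P$), so $P$ is balanced by subtraction; this is the cleanest route and avoids any case analysis on the geometry of the two punctures.
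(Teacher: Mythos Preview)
Your approach is essentially the same as the paper's: apply Proposition~\ref{prop:remove-unique-tileable} to the (uniquely tileable) monomial subregion sitting inside the minimal covering triangle. The paper phrases this as taking $U := P \setminus (P_1 \cup P_2)$ and noting $T \setminus U = T \setminus P$; you phrase it as taking $U = P$, but since $P_1, P_2$ are punctures of $T$, the actual subregion of $T$ you are removing is the same $P \setminus (P_1 \cup P_2)$. Your explicit verification that every tiling of $T$ restricts to a tiling of this subregion is just the argument of Lemma~\ref{lem:replace-tileable}, which the paper leaves implicit.

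There is, however, a genuine gap in your ``cleanest route'' for showing $P$ is balanced. You argue: $T$ is balanced and $T \setminus P$ is balanced because $\tau$ restricted to $T \setminus P$ gives a bijection between its up- and down-triangles, hence $P$ is balanced by subtraction. But knowing that $\tau|_{T\setminus P}$ is a tiling of $T \setminus P$ is exactly the statement that no lozenge of $\tau$ straddles the boundary of $P$, which is what you are trying to prove. So this argument is circular. The fix is immediate and already in your hands: you have cited that the minimal covering region of two overlapping or touching punctures is \emph{uniquely tileable}; any tileable region is balanced. That single observation replaces both your geometric hand-wave and the circular subtraction, and then the Lemma~\ref{lem:replace-tileable} boundary argument goes through cleanly.
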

\begin{proof}
    The monomial region $U := P \setminus (P_1 \cup P_2)$ is uniquely tileable. Hence the claims follows from
    Proposition~\ref{prop:remove-unique-tileable} because $T \setminus U = T \setminus P$.
\end{proof}

We give an example of such a replacement.

\begin{example} \label{exa:split-puncture}
    Let $T = T_d(I)$ be a balanced triangular region. Suppose the ideal $I$ has minimal generators $x^{a + \alpha} y^b
    z^c$ and $x^a y^{b + \beta} z^{c+\gamma}$. The punctures associated to these generators overlap or touch if and only if $a + \alpha
    + b + \beta + c + \gamma \leq d$. In this case, the minimal overlapping region $U$ of the two punctures is associated to
    the greatest common divisor $x^a y^b z^c$. Assume that $U$ is not overlapped by any other puncture of $T$. Then $U$ is
    uniquely tileable. Hence the regions $T$ and $T' = T \setminus U = T_d (I, x^a y^b z^c)$ have the same enumerations.
    Note that the ideal $(I, x^a y^b z^c)$ has fewer minimal generators than $I$. See Corollary~\ref{cor:ci-split} and
    Figure~\ref{fig:ci-split} for an illustration of a special case of splitting a puncture.
\end{example}~

The above procedure allows us in some cases to pass from a triangular region to a triangular region with fewer punctures.
Enumerations are typically more amenable to explicit evaluations if we have few punctures, as we will see in the next
subsection.

~\subsection{Mahonian determinants}~\par

MacMahon computed the number of plane partitions (finite two-dimensional arrays that weakly decrease in all columns and rows)
in an $a \times b \times c$ box as (see, e.g., \cite[Page 261]{Pr})
\[
    \Mac(a,b,c) := \frac{\HF(a) \HF(b) \HF(c) \HF(a+b+c)}{\HF(a+b) \HF(a+c) \HF(b+c)},
\]
\index{MacMahon's enumeration of plane partitions}
\index{0@\textbf{Symbol list}!Mac@$\Mac(a,b,c)$}
\index{0@\textbf{Symbol list}!HFn@$\HF(n)$}
where $a$, $b$, and $c$ are nonnegative integers and $\HF(n) := \prod_{i=0}^{n-1}i!$ is the \emph{hyperfactorial} of
$n$. David and Tomei proved in~\cite{DT} that plane partitions in an $a \times b \times c$ box are in bijection with
lozenge tilings in a hexagon with side lengths $(a,b,c)$, that is, a hexagon whose opposite sides are parallel and have
lengths $a, b$, and $c$, respectively. However, Propp states on~\cite[Page 258]{Pr} that Klarner was likely the first to
have observed this. See Figure~\ref{fig:pp-tile} for an illustration of the connection.

\begin{figure}[!ht]
    \begin{minipage}[b]{0.48\linewidth}
        \[
            \vspace{1em}
            \begin{array}{cccccc}
               3 & 3 & 2 & 2 & 2 & 1 \\
               3 & 2 & 2 & 1 & 0 & 0
            \end{array}
        \]
    \end{minipage}
    \begin{minipage}[b]{0.48\linewidth}
        \centering
        \includegraphics[scale=1]{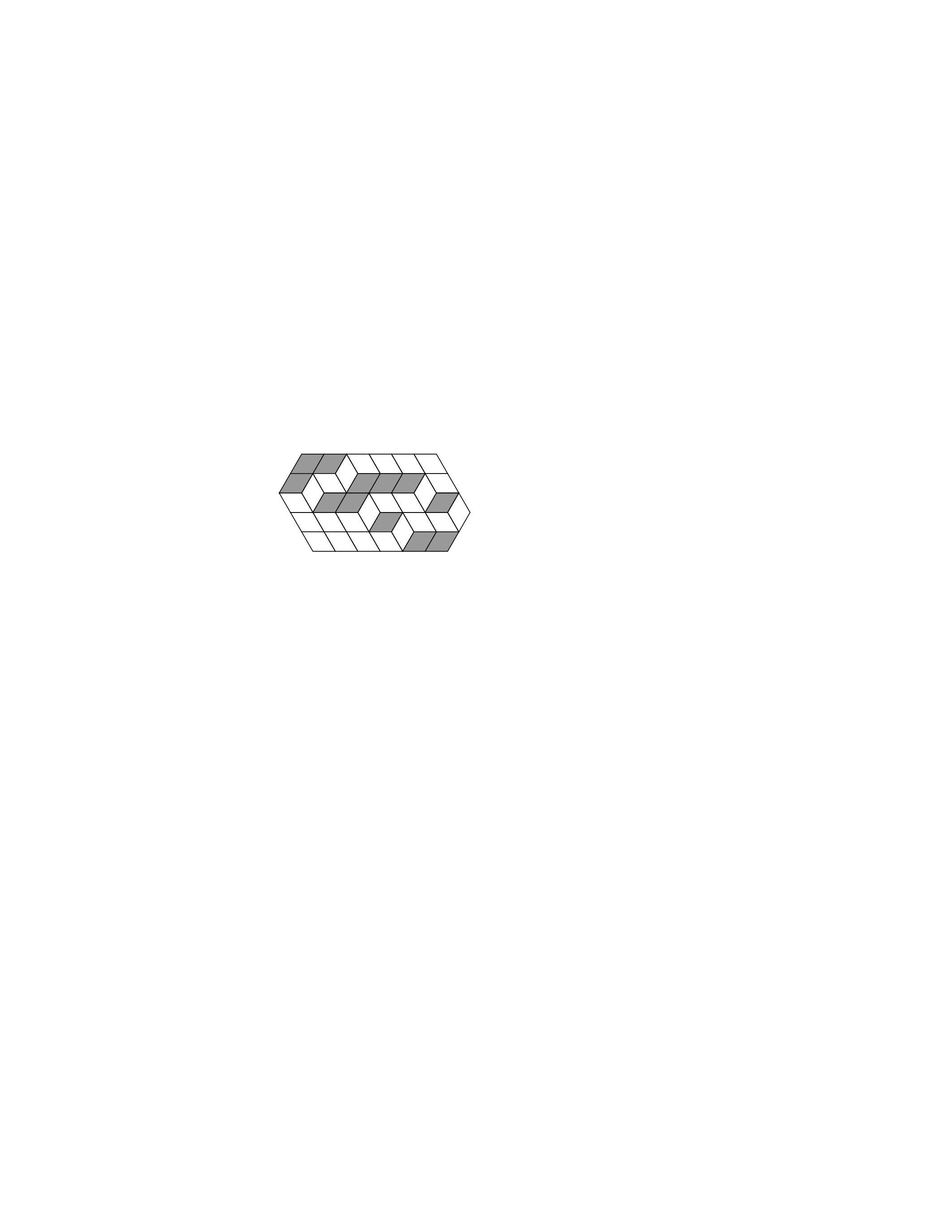}
    \end{minipage}
    \caption{An example of a $2 \times 6 \times 3$ plane partition and the associated lozenge tiling of a hexagon.
        The grey lozenges are the tops of the boxes.}
    \label{fig:pp-tile}
\end{figure}

We use the above formula in many explicit determinantal evaluations. As we are also interested in the prime divisors of
the various non-trivial enumerations we consider, we note that $\Mac(a,b,c) > 0$ and the prime divisors of $\Mac(a,b,c)$
are at most $a+b+c-1$ if $a,b$, and $c$ are positive. This bound is sharp if $a+b+c-1$ is a prime number. If one of $a,
b$, or $c$ is zero, then $\Mac(a,b,c) = 1$.

As a first example, we enumerate the (signed) lozenge tilings of a hexagon, i.e., the triangular region of a complete
intersection. (See Remark~\ref{rem:ci-history} for a brief history of results related to the following one.)

\begin{proposition} \label{pro:ci-enum}
    Let $a$, $b$, and $c$ be positive integers such that $a \leq b+c$,\; $b \leq a+c$, and $c \leq a+b$. Suppose that
    $d = \frac{1}{2}(a+b+c)$ is an integer. Then $T = T_d(x^a, y^b, z^c)$ is a hexagon with side lengths $(d-a, d-b, d-c)$ and
    \[
        |\det{Z(T)}| = \per{Z(T)} = \Mac(d-a, d-b, d-c).
    \]

    Moreover, the prime divisors of the enumeration are  bounded above by $d-1$.
\end{proposition}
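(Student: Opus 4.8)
The plan is to identify $T$ geometrically as the asserted hexagon, count its lozenge tilings via MacMahon's formula, and then upgrade the unsigned count $\per Z(T)$ to the signed count $|\det Z(T)|$ by showing that all tilings of a complete intersection region carry the same perfect matching sign.

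First I would verify the geometric claim. By the construction of $T_d(I)$ in Subsection~\ref{sub:trideg}, the punctures associated to $x^a$, $y^b$, and $z^c$ are upward-pointing triangles of side lengths $d-a$, $d-b$, and $d-c$ located at the top, bottom-left, and bottom-right corners of $\mathcal{T}_d$, respectively. The hypotheses $a\le b+c$, $b\le a+c$, $c\le a+b$ are equivalent (using $a+b+c=2d$) to $d-a,d-b,d-c\ge 0$ together with the pairwise non-overlapping conditions such as $(d-a)+(d-b)\le d$; hence removing the three corner punctures from $\mathcal{T}_d$ leaves a hexagon, and since $(d-a)+(d-b)+(d-c)=3d-(a+b+c)=d$, a short bookkeeping of which part of each side of $\mathcal{T}_d$ survives shows that its three pairs of opposite sides have lengths $d-a$, $d-b$, $d-c$. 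In particular $T$ is non-empty and balanced.

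Next, Proposition~\ref{pro:per-enum} gives that $\per Z(T)$ equals the number of lozenge tilings of $T$. By the bijection of David and Tomei \cite{DT} between lozenge tilings of a hexagon with side lengths $(p,q,r)$ and plane partitions in a $p\times q\times r$ box, together with MacMahon's enumeration \cite{MacMahon}, this number is $\Mac(d-a,d-b,d-c)$. It remains to prove $|\det Z(T)|=\per Z(T)$, i.e.\ that all tilings of $T$ have the same perfect matching sign. Since $T=T_d(x^a,y^b,z^c)$ is the triangular region of a monomial complete intersection, \cite[Theorem~1.2]{CGJL} shows that any two tilings of $T$ are connected by a sequence of three-cycle twists; by Lemma~\ref{lem:twist-sign} each such twist multiplies the perfect matching sign by $(-1)^{3-1}=1$, hence leaves it unchanged. (Equivalently, one could invoke Corollary~\ref{cor:msgn-cycle}(ii): the three punctures of $T$ are non-floating and no difference cycle of two tilings of a hexagon encloses a lattice-path end point, so every difference cycle has $E$-count zero.) Together with Theorem~\ref{thm:pm-matrix} this yields $\det Z(T)=\pm\per Z(T)$.

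Finally, the bound on prime divisors follows from the elementary estimate recalled just before the statement: when $d-a,d-b,d-c$ are all positive, the prime divisors of $\Mac(d-a,d-b,d-c)$ are at most $(d-a)+(d-b)+(d-c)-1=d-1$; and if one of $d-a,d-b,d-c$ vanishes then $\Mac(d-a,d-b,d-c)=1$ and there is nothing to prove. I expect the only genuinely delicate point to be the sign-constancy step, and the cleanest way to handle it is to lean on \cite[Theorem~1.2]{CGJL} via Lemma~\ref{lem:twist-sign} exactly as above, rather than re-deriving it by hand.
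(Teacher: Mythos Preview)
Your proof is correct and follows essentially the same outline as the paper: identify the hexagon, apply MacMahon via Proposition~\ref{pro:per-enum}, establish sign constancy, and read off the prime bound.

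The only noteworthy difference is in the sign-constancy step. You argue directly on the perfect matching side, invoking \cite[Theorem~1.2]{CGJL} together with Lemma~\ref{lem:twist-sign} to show that three-cycle twists preserve $\msgn$. The paper instead argues on the lattice path side: since the hexagon has no floating punctures, every tiling induces the identity lattice path permutation, so $\lpsgn$ is constant; it then transfers this to $\msgn$ via Theorem~\ref{thm:detZN}. Your route is more elementary in that it avoids the heavy machinery of Theorem~\ref{thm:detZN}, while the paper's route illustrates how that theorem can be used. Both are valid, and your parenthetical alternative via Corollary~\ref{cor:msgn-cycle}(ii) is essentially the paper's argument rephrased.
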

\begin{proof}
    As $a \leq b + c$, we have $d = \frac{1}{2}(a+b+c) \geq \frac{1}{2}(a + a) = a$. Similarly, $d \geq b$ and $d \geq
    c$. Thus $T$ has three punctures of length $d-a$, $d-b$, and $d-c$ in the three corners. Moreover, $d -(d-a + d-b) =
    d-c$ is the distance between the punctures of length $d-a$ and $d-b$, and similarly for the other two puncture
    pairings.

    Thus, the unit triangles of $T$ form a hexagon with side lengths $(d-a,d-b,d-c)$. By MacMahon's formula we have
    $\per{Z(T)} = \Mac(d-a,d-b,d-c)$. Moreover, each lozenge tiling of $T$ induces the identity permutation as its
    lattice path permutation. Thus, all tilings of $T$ have the same perfect matching sign by Theorem~\ref{thm:detZN}.
    Hence, $|\det{N(T)}| = |\det{Z(T)}| = \Mac(d-a,d-b,d-c)$. The prime divisors of this integer are bounded above by
    $(d-a) + (d-b) + (d-c) - 1 = d-1$.
\end{proof}

Combining Example~\ref{exa:split-puncture} and the preceding proposition, we get the following result for a certain
region with four punctures.

\begin{corollary} \label{cor:ci-split}
    Let $T = T_d(x^{a+\alpha}, y^b, z^c, x^a y^{\beta} z^{\gamma})$, where $a$, $b$, $c$, and $d$ are as in
    Proposition~\ref{pro:ci-enum}, $\alpha$ is a positive integer, and $\beta$ and $\gamma$ are nonnegative integers,
    not both zero. Suppose additionally that $\alpha + \beta + \gamma \leq \frac{1}{2} (b+c-a)$. Then
    \[
        |\det{Z(T)}| = \per{Z(T)} = \Mac(d-a, d-b, d-c),
    \]
    and the prime divisors of the enumeration are bounded above by $d-1$.
\end{corollary}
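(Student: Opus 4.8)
The plan is to split off a uniquely tileable region from $T$ and reduce to the hexagon treated in Proposition~\ref{pro:ci-enum}, exactly in the spirit of Example~\ref{exa:split-puncture}. Write $I = (x^{a+\alpha}, y^b, z^c, x^a y^\beta z^\gamma)$ and let $P_1, P_2$ be the punctures of $T = T_d(I)$ associated to $x^{a+\alpha}$ and to $x^a y^\beta z^\gamma$. Since $\alpha > 0$ and $(\beta,\gamma)\neq(0,0)$, we have $\gcd(x^{a+\alpha}, x^a y^\beta z^\gamma) = x^a$, so the smallest upward-pointing triangle $U$ containing $P_1$ and $P_2$ is the monomial subregion of $T$ associated to $x^a$, namely the triangle of side length $d-a$ in the top corner of $\mathcal{T}_d$ with $P_1$ and $P_2$ removed. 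Note $d - a \ge 1$: from $\alpha \ge 1$ and $\alpha+\beta+\gamma \le \frac{1}{2}(b+c-a)$ we get $b+c-a \ge 2$, hence $a < d$.

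The first thing to check is that $P_1$ and $P_2$ overlap or touch. This happens exactly when $\deg(x^{a+\alpha}) + \deg(x^a y^\beta z^\gamma) - \deg(x^a) \le d$, i.e.\ when $a + \alpha+\beta+\gamma \le d$, and this follows from the hypothesis since $a+\alpha+\beta+\gamma \le a + \frac{1}{2}(b+c-a) = \frac{1}{2}(a+b+c) = d$. The second thing to check is that $U$ is not overlapped by any other puncture of $T$; the remaining punctures are those of $y^b$ and $z^c$, and since $\gcd(x^a, y^b) = \gcd(x^a, z^c) = 1$ it suffices to verify $a + b \ge d$ and $a + c \ge d$ (so that these punctures share at most a vertex with $U$, not an edge). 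Both are immediate from $d = \frac{1}{2}(a+b+c)$ together with the hypotheses $c \le a+b$ and $b \le a+c$ of Proposition~\ref{pro:ci-enum}: for instance $c \le a+b$ gives $a+b+c \le 2(a+b)$, hence $d \le a+b$.

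With these two facts in hand, Example~\ref{exa:split-puncture} (equivalently, Corollary~\ref{cor:replace-two-punctures}) applies: $U$ is uniquely tileable, $T = T' \sqcup U$ is balanced, and $T$ has the same enumerations, both $\per Z(\cdot)$ and $|\det Z(\cdot)|$, as $T' := T \setminus U = T_d(I, x^a)$. Since $x^{a+\alpha}$ and $x^a y^\beta z^\gamma$ are multiples of $x^a$, we get $T' = T_d(x^a, y^b, z^c)$, which is precisely the hexagon of Proposition~\ref{pro:ci-enum} (whose hypotheses $a \le b+c$, $b\le a+c$, $c\le a+b$, and $d=\frac{1}{2}(a+b+c)\in\mathbb{Z}$ hold by assumption). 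Hence $|\det Z(T)| = |\det Z(T')| = \per Z(T') = \per Z(T) = \Mac(d-a, d-b, d-c)$, and the prime divisors of this integer are at most $(d-a)+(d-b)+(d-c)-1 = d-1$. I expect the only genuinely delicate point to be the non-overlap claim for $U$ in the second paragraph, since it is exactly what guarantees the reduction lands on the clean hexagon $T_d(x^a,y^b,z^c)$ rather than some further-modified region; but, as indicated, it dissolves into the elementary inequalities $a+b\ge d$ and $a+c\ge d$, which are forced by the standing hypotheses on $a,b,c,d$.
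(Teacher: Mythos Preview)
Your proof is correct and follows the same approach as the paper: reduce to the hexagon $T_d(x^a,y^b,z^c)$ via Example~\ref{exa:split-puncture} and then invoke Proposition~\ref{pro:ci-enum}. The paper's proof is terser---it simply notes that $\alpha+\beta+\gamma \le \tfrac{1}{2}(b+c-a)$ is equivalent to $a+\alpha+\beta+\gamma \le d$ and appeals to Example~\ref{exa:split-puncture}---whereas you explicitly verify the non-overlap condition $a+b\ge d$, $a+c\ge d$ needed for that example to apply; this extra care is warranted and the verification is correct.
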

\begin{proof}
    The assumption $\alpha + \beta + \gamma \leq \frac{1}{2} (b+c-a)$ is equivalent to $a + \alpha + \beta + \gamma \le
    d$. Thus, Example~\ref{exa:split-puncture} shows that $T$ has the same enumerations as $T_d(x^a, y^b, z^c)$, and we
    conclude using Proposition~\ref{pro:ci-enum}. See Figure~\ref{fig:ci-split} for an illustration of the triangular
    regions.
    \begin{figure}[!ht]
        \includegraphics[scale=1]{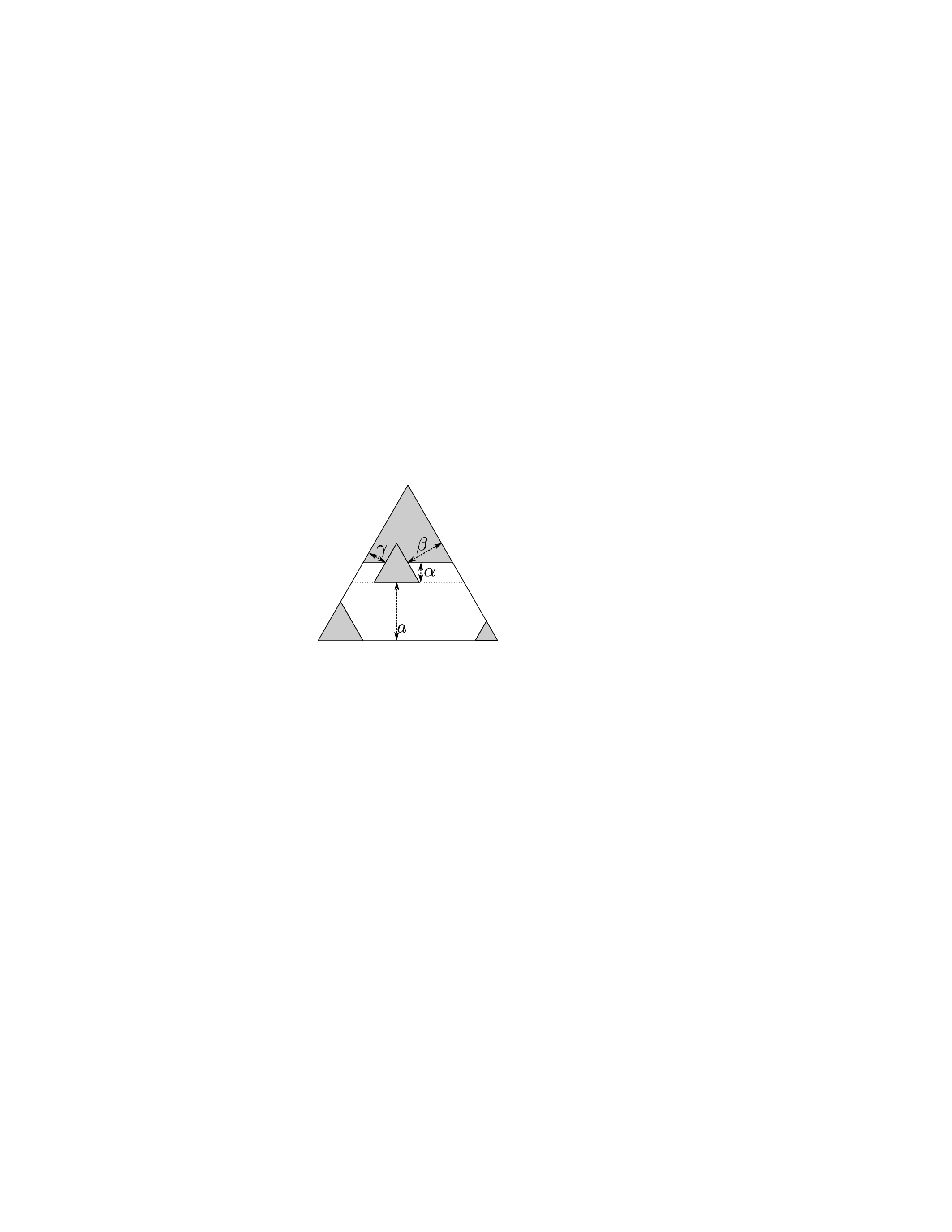}
        \caption{Covering the punctures associated to $x^{a+\alpha}$ and $x^a y^{\beta} z^{\gamma}$ by $x^a$.}
        \label{fig:ci-split}
    \end{figure}
\end{proof}

Moreover, combining Propositions~\ref{pro:rep-enum} and~\ref{pro:ci-enum} we get the enumeration for a slightly more
complicated triangular region. (We will use this observation in Section~\ref{sec:type-two}.) Clearly, the process of
removing a hexagon from a puncture can be repeated.

\begin{corollary} \label{cor:ci-nest}
    Let $T = T_d(x^{a+\alpha}, y^b, z^c, x^a y^{\beta}, x^a z^{\gamma})$, where the quadruples $(a,b,c,d)$ and $(\alpha,
    \beta, \gamma, d-a)$ are both as in Proposition~\ref{pro:ci-enum}. In particular, $a + \alpha + \beta + \gamma = b +
    c$ and $d = \frac{1}{2}(a+b+c)$. Then
    \[
        |\det{Z(T)}| = \per{Z(T)} = \Mac(d-a, d-b, d-c) \Mac(d-a-\alpha, d-a-\beta, d-a-\gamma),
    \]
    and the prime divisors of the enumeration are bounded above by $d-1$.
\end{corollary}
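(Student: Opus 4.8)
The plan is to strip from $T$ the monomial subregion $U$ associated to the monomial $x^{a}$ and then to invoke Propositions~\ref{pro:rep-enum} and~\ref{pro:ci-enum}. First I would pin down $U$ and $T\setminus U$ explicitly. The three punctures of $T$ coming from $x^{a+\alpha}$, $x^{a}y^{\beta}$, and $x^{a}z^{\gamma}$ all lie inside the triangle of side length $d-a$ associated to $x^{a}$: a direct check with the monomial labelling of Subsection~\ref{sub:trideg} shows that, after relabelling this triangle as its own copy of $\mathcal{T}_{d-a}$, these punctures sit in its three corners (the extra factor of $x$, $y$, or $z$ dictating which corner) and have side lengths $(d-a)-\alpha$, $(d-a)-\beta$, $(d-a)-\gamma$. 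Since the quadruple $(\alpha,\beta,\gamma,d-a)$ satisfies the hypotheses of Proposition~\ref{pro:ci-enum}, $U$ is precisely the hexagon with side lengths $(d-a-\alpha,\,d-a-\beta,\,d-a-\gamma)$; in particular $U$ is a balanced monomial subregion of $T$, so Proposition~\ref{pro:rep-enum} applies. Because those three punctures sit inside the $x^{a}$-triangle, deleting $U$ from $T$ removes the whole $x^{a}$-triangle together with the $y^{b}$- and $z^{c}$-punctures and nothing else, so $T\setminus U = T_{d}(x^{a},y^{b},z^{c})$; by Proposition~\ref{pro:ci-enum} this is the hexagon with side lengths $(d-a,\,d-b,\,d-c)$, where one uses $d=\frac{1}{2}(a+b+c)$ and the triangle inequalities on $(a,b,c)$. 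Note that both $U$ and $T\setminus U$ are hexagons, so $T$ is balanced and $Z(T)$ is square.

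With these identifications in hand, I would simply combine the pieces. Proposition~\ref{pro:rep-enum} gives $\per Z(T)=\per Z(T\setminus U)\cdot\per Z(U)$ and $|\det Z(T)|=|\det Z(T\setminus U)\cdot\det Z(U)|$, while Proposition~\ref{pro:ci-enum} evaluates
\[
 \per Z(T\setminus U)=|\det Z(T\setminus U)|=\Mac(d-a,d-b,d-c), \qquad \per Z(U)=|\det Z(U)|=\Mac(d-a-\alpha,d-a-\beta,d-a-\gamma).
\]
Substituting yields the claimed product formula for both $\per Z(T)$ and $|\det Z(T)|$; in particular the two are equal. For the prime–divisor bound I would use the estimate recorded just before Proposition~\ref{pro:ci-enum}: the primes dividing $\Mac(d-a,d-b,d-c)$ are at most $(d-a)+(d-b)+(d-c)-1=3d-(a+b+c)-1=d-1$, and those dividing $\Mac(d-a-\alpha,d-a-\beta,d-a-\gamma)$ are at most $3(d-a)-(\alpha+\beta+\gamma)-1=(d-a)-1\le d-1$, using $d-a=\frac{1}{2}(\alpha+\beta+\gamma)$. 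Hence every prime divisor of the product is at most $d-1$.

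The whole argument is bookkeeping on top of the two cited propositions; the one place that warrants care is the explicit description of $U$ and of $T\setminus U$. Concretely, I would want to verify that the $y^{b}$- and $z^{c}$-punctures do not cut into the $x^{a}$-triangle — they can at most touch its boundary, and only in the degenerate case $c=a+b$ (equivalently $d=c$), where touching removes no unit triangles — so that $U$ really is the hexagon claimed and $T\setminus U$ really is $T_{d}(x^{a},y^{b},z^{c})$. Once this is confirmed, nothing further is needed.
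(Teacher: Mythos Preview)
Your proposal is correct and follows exactly the same approach as the paper: identify the monomial subregion $U$ associated to $x^{a}$, recognise it as the hexagon $T_{d-a}(x^{\alpha},y^{\beta},z^{\gamma})$, observe that $T\setminus U=T_d(x^{a},y^{b},z^{c})$, and then apply Propositions~\ref{pro:rep-enum} and~\ref{pro:ci-enum}. The paper's own proof is a one-sentence sketch together with a figure, so your version simply fills in the bookkeeping (including the prime-divisor estimate) that the paper leaves implicit.
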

\begin{proof}
    The region $T$ is obtained from $T_d(x^a, y^b, z^c)$ by replacing the puncture associated to $x^a$ by
    $T_{d-a}(x^{\alpha}, y^{\beta}, z^{\gamma})$. See Figure~\ref{fig:ci-ci}.
    \begin{figure}[!ht]
        \includegraphics[scale=1]{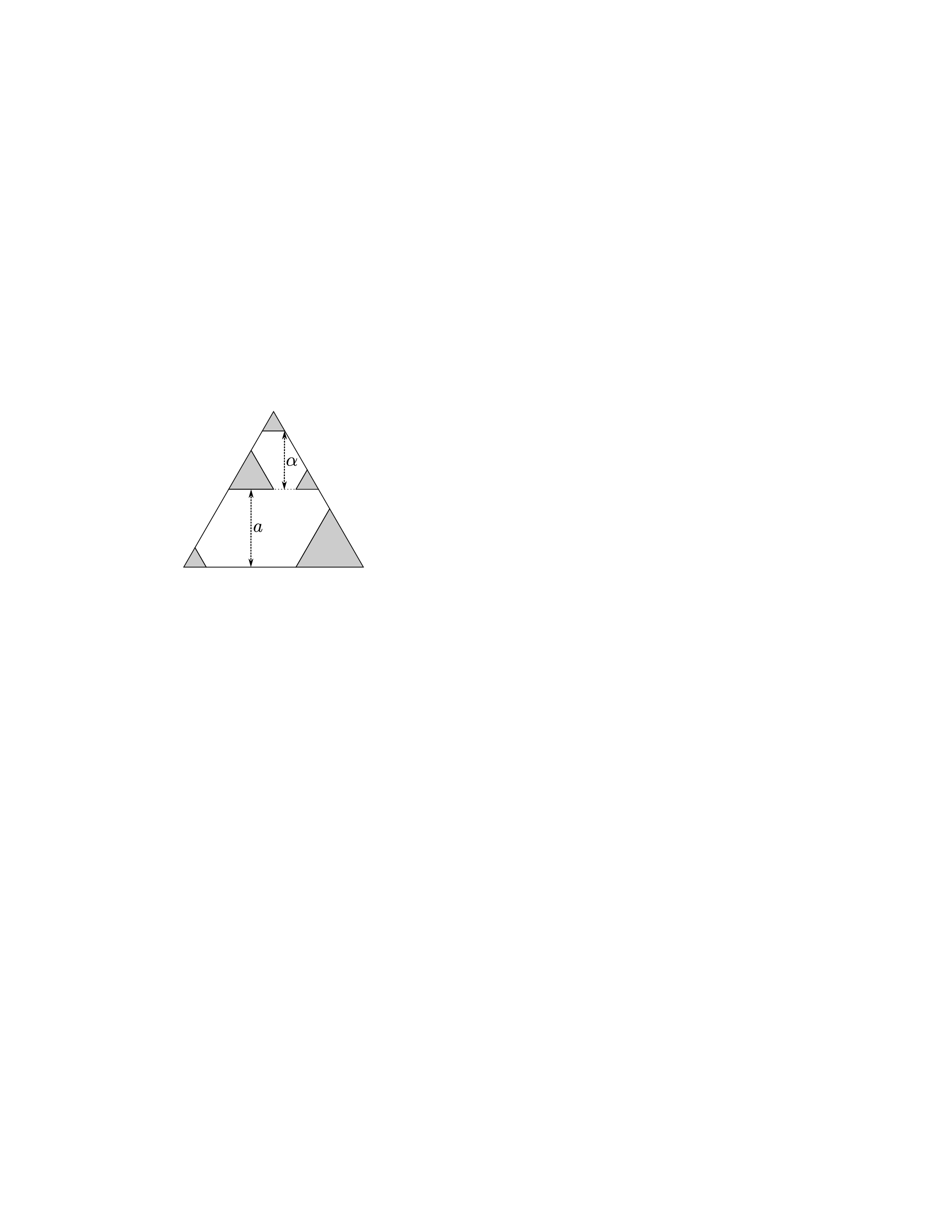}
        \caption{A simple hexagon with a puncture replaced by a simple hexagon.}
        \label{fig:ci-ci}
    \end{figure}
\end{proof}

A tileable triangular region with punctures in two corners and a third non-corner puncture also has a Mahonian
determinant since many of the tiles are fixed.

\begin{proposition} \label{pro:C-is-zero}
    Let $T = T_d(x^a, y^b, x^{\alpha} y^{\beta} z^{2d-(a+b+\alpha+\beta)})$, where $\alpha + b, a + \beta \leq d \leq a + b$.  Then
    \[
        |\det{Z(T)}| = \per{Z(T)} = \Mac(a+b-d, d-(\alpha + b), d-(a + \beta)),
    \]
    and the prime divisors of the enumeration are sharply bounded above by $d-(\alpha + \beta) - 1$.
\end{proposition}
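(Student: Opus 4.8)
The plan is to cut $T$ along a monomial subregion into a hexagon, to which Proposition~\ref{pro:ci-enum} applies, and a uniquely tileable remainder that contributes only a sign. First I would do the bookkeeping: the three punctures of $T$ are the corner punctures associated to $x^a$ and $y^b$, of side lengths $d-a$ and $d-b$, and the puncture associated to $g := x^\alpha y^\beta z^{2d-(a+b+\alpha+\beta)}$, of side length $d-(2d-(a+b)) = a+b-d$. Adding $\alpha+b\le d$ and $a+\beta\le d$ gives $a+b+\alpha+\beta\le 2d$, so $g$ is a genuine monomial; together with $d\le a+b$ these also yield $\alpha\le a$, $\beta\le b$, and $a+b-d\ge 0$. (One checks by inclusion--exclusion that, under these inequalities, the three puncture-triangles are pairwise disjoint, so $T$ is balanced; in fact $T$ is perfectly-punctured and tileable, but we only need balancedness.)

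The main step is to consider $V$, the monomial subregion of $T$ associated to $x^\alpha y^\beta$, i.e.\ the triangles of $T$ whose label is divisible by $x^\alpha y^\beta$. Dividing labels by $x^\alpha y^\beta$ identifies $V$ with a triangular region of size $d':=d-\alpha-\beta$; since $g$ is divisible by $x^\alpha y^\beta$, the three punctures of $T$ restrict to the punctures $x^{a'}$, $y^{b'}$ and $z^{c'}$ with $a'=a-\alpha$, $b'=b-\beta$, $c'=2d-(a+b+\alpha+\beta)$, so $V = T_{d'}(x^{a'},y^{b'},z^{c'})$. A direct computation gives $d'=\tfrac12(a'+b'+c')$, and the three triangle inequalities $a'\le b'+c'$, $b'\le a'+c'$, $c'\le a'+b'$ required by Proposition~\ref{pro:ci-enum} are exactly the hypotheses $a+\beta\le d$, $\alpha+b\le d$, $d\le a+b$. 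Hence $V$ is a hexagon and
\[
    |\det Z(V)| = \per Z(V) = \Mac(d'-a',\,d'-b',\,d'-c') = \Mac\bigl(a+b-d,\;d-(\alpha+b),\;d-(a+\beta)\bigr),
\]
with prime divisors bounded above, sharply, by $d'-1 = d-(\alpha+\beta)-1$.

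Next I would set $U := T\setminus V$. As $g$ lies inside the $x^\alpha y^\beta$-triangle, removing it changes nothing outside that triangle, so $U = T_d(x^a, y^b, x^\alpha y^\beta)$: its triangles are the monomials $x^iy^jz^k$ of degree $d-1$ or $d-2$ with $i\le a-1$, $j\le b-1$ and ($i\le\alpha-1$ or $j\le\beta-1$). For every such triangle, $i+j \le \max(\alpha+b,\,a+\beta)-2 \le d-2$, so each up-triangle $x^iy^jz^k$ of $U$ has $k\ge 1$ and $x^iy^jz^{k-1}$ is a down-triangle of $U$; one checks this pairing is a bijection between the up- and down-triangles of $U$, hence a lozenge tiling. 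It is the only one: the up-triangle $x^iy^jz^k$ is adjacent only to those of $x^{i-1}y^jz^k$, $x^iy^{j-1}z^k$, $x^iy^jz^{k-1}$ lying in $U$, and an induction on $i+j$ (base case the up-triangle $z^{d-1}$, forced to pair with $z^{d-2}$) shows that $x^{i-1}y^jz^k$ and $x^iy^{j-1}z^k$ are already matched to $x^{i-1}y^jz^{k+1}$ and $x^iy^{j-1}z^{k+1}$ (which lie in $U$ whenever $x^iy^jz^k$ does), so $x^iy^jz^k$ is forced to pair with $x^iy^jz^{k-1}$. Thus $\per Z(U)=|\det Z(U)|=1$.

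Finally, $V$ is a balanced monomial subregion of the balanced region $T$, so Proposition~\ref{pro:rep-enum} gives $\per Z(T)=\per Z(U)\,\per Z(V)=\per Z(V)$ and $|\det Z(T)|=|\det Z(U)\det Z(V)|=|\det Z(V)|$; the asserted formula and the prime bound then follow from the evaluation of $\det Z(V)$. (When $a'$, $b'$ or $c'$ vanishes, the hypotheses force one argument of the above $\Mac$ to be $0$, so both sides equal $1$, consistent with $V$ being empty.) The step needing the most care is the bookkeeping that identifies $V$ as a hexagon --- turning the three hypotheses into the triangle inequalities of Proposition~\ref{pro:ci-enum} and checking $d'=\tfrac12(a'+b'+c')$; the unique tileability of $U$ is then a routine forced-tiling induction.
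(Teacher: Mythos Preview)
Your proof is correct. Both you and the paper identify the same hexagon $V = T_{d'}(x^{a-\alpha},y^{b-\beta},z^{2d-a-b-\alpha-\beta})$ inside $T$ and observe that the complement $U$ is uniquely tiled by lozenges of a single orientation; the difference is only in the language. The paper argues via the lattice path matrix $N(T)$: all end points $E_j$ lie on the Northeast boundary of the puncture to $g$, so every family of non-intersecting lattice paths is confined to $V$, while outside $V$ the tiles are forced (this is the ``dark grey region with fixed tiles'' of Figure~\ref{fig:two-corners}). You instead work with the bi-adjacency matrix $Z(T)$, invoke Proposition~\ref{pro:rep-enum} with the balanced monomial subregion $V$, and give a direct forced-tiling induction to show $\per Z(U)=|\det Z(U)|=1$. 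Your route is a bit longer but has the advantage of being self-contained in the $Z$-picture and of making the parameter bookkeeping (translating the three hypotheses into the triangle inequalities $a',b',c'\le d'$ of Proposition~\ref{pro:ci-enum}) completely explicit; the paper's proof is terser but relies on the lattice-path interpretation and on Figure~\ref{fig:two-corners} to carry the geometric content.
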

\begin{proof}
    First we note that as $\alpha + b, a + \beta \leq d \leq a + b$, none of the punctures overlap.

    Now we consider the families of non-intersecting lattice paths in the lattice $L(T)$. Their end points $E_j$ are
    \emph{all} along the Northeast boundary of the puncture associated to the monomial $x^{\alpha} y^{\beta}
    z^{2d-(a+b+\alpha+\beta)}$. The lattice paths are thus confined to the region bounded by the starting and end points
    of the paths. This region is a hexagon of side lengths $(a+b-d, d-(\alpha + b), d-(a + \beta))$. See
    Figure~\ref{fig:two-corners} for an illustration.
    \begin{figure}[!ht]
        \includegraphics[scale=1]{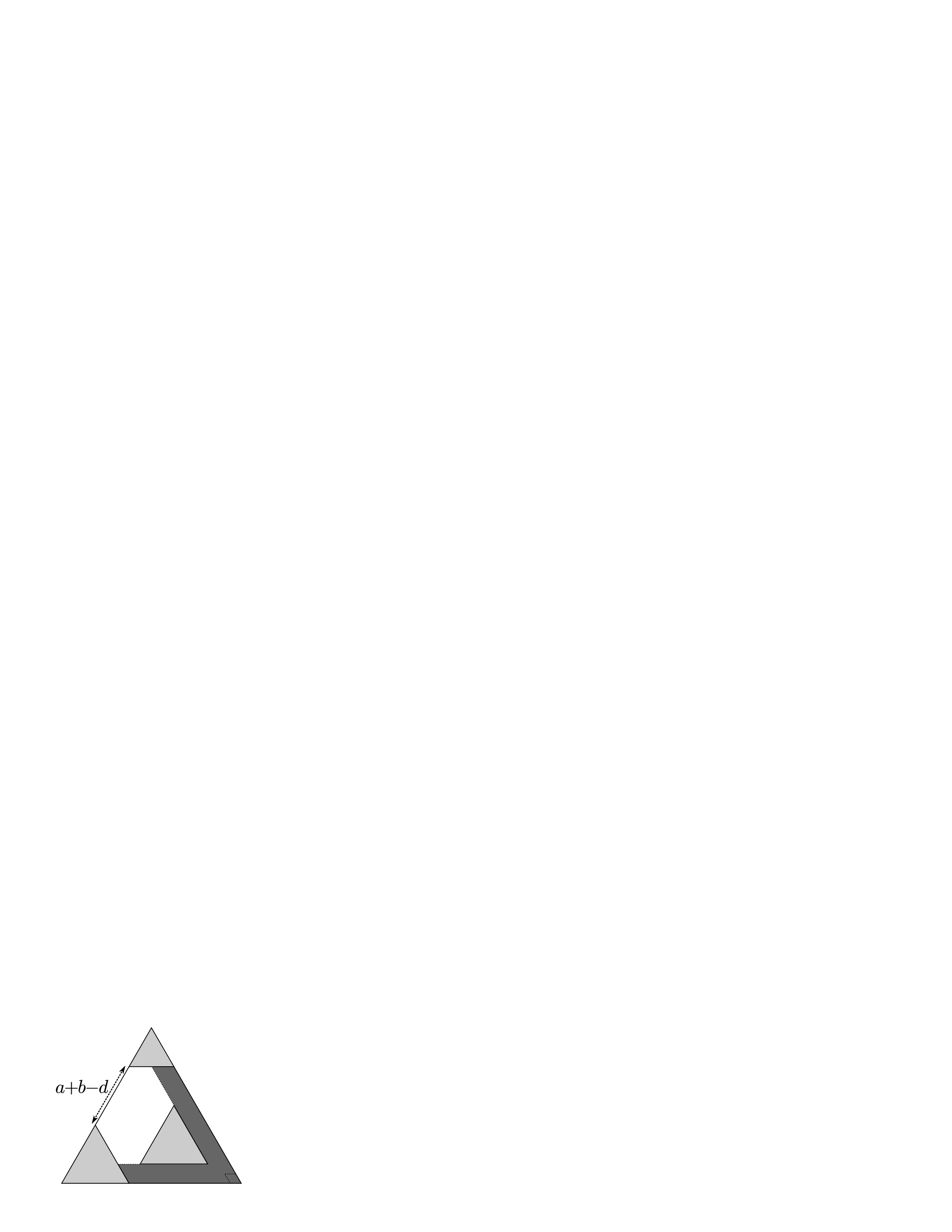}
        \caption{The portion of $T$ shaded dark grey has fixed tiles of the same orientation, leaving a monomial subregion of $T$ that is a  hexagon.}
        \label{fig:two-corners}
    \end{figure}
\end{proof}

Before computing our final Mahonian-type determinant, we need a more general determinant calculation, which may be of
independent interest.

\begin{lemma} \label{lem:split-binom-det}
    Let $M$ be an $n$-by-$n$ matrix  with entries
    \[
        (M)_{i,j} = \left\{
            \begin{array}{ll}
                \displaystyle \binom{p}{q + j - i}     & \mbox{if } 1     \leq j \leq m, \\[1.1em]
                \displaystyle \binom{p}{q + r + j - i} & \mbox{if } m + 1 \leq j \leq n, \\
            \end{array}
        \right.
    \]
    where $p,q,r,$ and $m$ are nonnegative integers and $1 \leq m \leq n$.  Then
    \[
        \det{M} = \Mac(m,q,r) \Mac(n-m, p-q-r, r) \frac{\HF(q+r)\HF(p-q)\HF(n+r)\HF(n+p)}{\HF(n+p-q)\HF(n+q+r)\HF(p)\HF(r)}.
    \]
\end{lemma}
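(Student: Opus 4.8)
The plan is to recognise $\det M$ as a specialisation $s_\lambda(1^p)$ of a Schur polynomial (its value at $p$ variables all equal to $1$) for a suitable ``fat hook'' shape $\lambda$, and then to evaluate this by the hook--content formula. Throughout one may assume $p$ is large, since each entry $\binom{p}{\ast}$ of $M$ is a polynomial in $p$ (vanishing when the lower argument is negative), hence so is $\det M$, and likewise the right-hand side is a polynomial in $p$ after the hyperfactorials cancel; so it suffices to prove the stated equality for all $p\ge q+r$.

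First I would put $M$ into Jacobi--Trudi form. Reflecting $M$ across its anti-diagonal, i.e.\ replacing the $(i,j)$-entry by $M_{n+1-j,\,n+1-i}$, leaves the determinant unchanged, and a direct computation shows that the reflected matrix has $(i,j)$-entry $\binom{p}{\gamma_i+j}$ with $\gamma_i=q+r-i$ for $1\le i\le n-m$ and $\gamma_i=q-i$ for $n-m<i\le n$. Since $\binom{p}{k}$ is the $k$-th elementary symmetric function of $p$ variables all set to $1$, and $\gamma_i=\lambda'_i-i$ is strictly decreasing for the partition $\lambda$ with conjugate $\lambda'=(\underbrace{q+r,\dots,q+r}_{n-m},\underbrace{q,\dots,q}_{m})$, the dual Jacobi--Trudi identity yields
\[
    \det M \;=\; s_\lambda(1^p), \qquad \lambda \;=\; (\,\underbrace{n,\dots,n}_{q},\ \underbrace{n-m,\dots,n-m}_{r}\,).
\]
This $\lambda$ is the Young diagram obtained by stacking a $q\times n$ rectangle on top of an $r\times(n-m)$ rectangle --- equivalently a $(q+r)\times n$ rectangle with an $r\times m$ rectangle deleted from its lower-right corner --- and its largest part $n$ equals the size of $M$, so the identity applies (the cases $q=0$ or $r=0$ being covered by the usual padding conventions).

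It then remains to evaluate $s_\lambda(1^p)=\prod_{u\in\lambda}\frac{p+c(u)}{h(u)}$, the product of $\frac{p+\text{content}}{\text{hook length}}$ over the cells of $\lambda$. The content product depends only on cell positions, so it equals the product over the whole $(q+r)\times n$ rectangle divided by the product over the deleted $r\times m$ rectangle, and each of these is a standard hyperfactorial expression via $\prod_{1\le i\le a,\,1\le j\le b}(i+j-1+c)=\HF(a+b+c)\,\HF(c)\big/\bigl(\HF(a+c)\HF(b+c)\bigr)$. For the hook product I would partition the cells of $\lambda$ into the three rectangular blocks on which the hook length is an affine function of the cell --- columns $1,\dots,n-m$ in rows $1,\dots,q$; columns $1,\dots,n-m$ in rows $q+1,\dots,q+r$; and columns $n-m+1,\dots,n$ in rows $1,\dots,q$ --- and evaluate each block product by the same type of identity. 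Assembling these pieces and regrouping the hyperfactorials produces $\Mac(m,q,r)\,\Mac(n-m,p-q-r,r)$ times the stated correction factor. I expect this regrouping to be the main obstacle: the hooks of cells in the top-left block reach across the ``waist'' of the fat hook, so one must track carefully which hyperfactorials come from hooks and which from contents, and it is prudent to check the outcome against the base cases $n=1$ (where both sides equal $\binom{p}{q}$) and $(n,m)=(2,1)$. As an independent check, the same evaluation also follows from the Lindström--Gessel--Viennot description of $\det M$ (Theorem~\ref{thm:lgv}) together with MacMahon's enumeration $\Mac$: the associated family of non-intersecting lattice paths occupies a region that is a hexagon joined to a second hexagon along a corridor of forced lozenges.
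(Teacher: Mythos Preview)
Your approach is correct and takes a genuinely different route from the paper's. The paper does not pass through Schur functions at all: it directly invokes a closed-form product formula from \cite[Equation~(12.5)]{CEKZ} for $\det\bigl(\binom{p}{L_j-i}\bigr)_{1\le i,j\le n}$ with arbitrary strictly increasing integers $L_1<\cdots<L_n$, specialised here to $L_j=q+j$ for $j\le m$ and $L_j=q+r+j$ for $j>m$, and the remainder of the argument is exactly the hyperfactorial bookkeeping you anticipate --- splitting $\prod_{i<j}(L_j-L_i)$ and $\prod_i (p+i-1)!\big/\bigl((n+p-L_i)!(L_i-1)!\bigr)$ according to the two ranges of $j$. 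Your route --- antidiagonal reflection, dual Jacobi--Trudi to recognise $s_\lambda(1^p)$ for the fat hook $\lambda=(n^q,(n-m)^r)$, then the hook--content formula with your three-block decomposition of the hook lengths --- is more self-contained and more explanatory (it makes transparent why the answer factors into terms linear in~$p$), at the cost of a slightly longer setup. The final regrouping of hyperfactorials that you flag as the main obstacle is of the same mechanical character and comparable length in both approaches, so it is routine rather than a genuine difficulty. Your closing LGV heuristic is in fact how the paper \emph{applies} the lemma (see Proposition~\ref{pro:two-mahonian} and Remark~\ref{rem:two-mahonian}), though not how it proves it.
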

\begin{proof}
    We begin by  using~\cite[Equation~(12.5)]{CEKZ} to evaluate $\det{M}$ to be
    \[
        \prod_{1\leq i < j \leq n} (L_j - L_i) \prod_{i=1}^n \frac{(p+i-1)!}{(n+p-L_i)!(L_i-1)!},
    \]
    where
    \[
        L_j =
        \begin{cases}
            q + j & \text{if } 1 \leq j \leq m, \\
            q + r + j & \text{if } m + 1 \leq j \leq n.
        \end{cases}
    \]
    If we split the products in the previously displayed equation relative to the split in $L_j$, then we get the
    following equations:
    \begin{equation*}
        \begin{split}
            \prod_{1\leq i < j \leq n} (L_j - L_i)
                = & \left(\prod_{1\leq i < j \leq m} (j - i)\right) \left(\prod_{m< i < j \leq n} (j - i)\right) \left(\prod_{1\leq i \leq m < j \leq n} (r+j-i)\right) \\[0.3em]
                = & \left(\HF(m)\right) \left(\HF(n-m)\right) \left(\frac{\HF(n+r) \HF(r)}{\HF(n+r-m) \HF(m+r)}\right)
        \end{split}
    \end{equation*}
    and
    \begin{equation*}
        \begin{split}
            \prod_{i=1}^n \frac{(p+i-1)!}{(n+p-L_i)!(L_i-1)!}
                = & \left( \prod_{i=1}^{n}(p+i-1)! \right) \left( \prod_{i=1}^m \frac{1}{(n+p-q-i)!(q+i-1)!}\right) \\[0.3em]
                  & \left( \prod_{i=m+1}^{n} \frac{1}{(n+p-q-r-i)!(q+r+i-1)!}\right) \\[0.3em]
                = & \left( \frac{\HF(n+p)}{\HF(p)} \right) \left( \frac{\HF(n+p-m-q)\HF(q)}{\HF(n+p-q)\HF(m+q)}  \right) \\[0.3em]
                  & \left( \frac{\HF(p-q-r)\HF(m+q+r)}{\HF(n+p-m-q-r)\HF(n+q+r)}  \right).
        \end{split}
    \end{equation*}
    Bringing these equations together we get that $\det{M}$ is
    {\footnotesize \[
        \frac{\HF(m)\HF(q)\HF(r)\HF(m+q+r)}{\HF(m+r)\HF(m+q)}
        \frac{\HF(n-m)\HF(p-q-r)\HF(n+p-m-q)}{\HF(n+r-m)\HF(n+p-m-q-r)}
        \frac{\HF(n+r)\HF(n+p)}{\HF(p)\HF(n+p-q)\HF(n+q+r)},
    \] }
    which, after minor manipulation, yields the claimed result.
\end{proof}

\begin{remark} \label{rem:split-binom-det}
    The preceding lemma generalises \cite[Lemma~2.2]{LZ}, which handles the case $r = 1$.  Furthermore, if $r = 0$,
    then $\det{M} = \Mac(n, p-q, q)$, as expected (see the running example, $\det \binom{a+b}{a-i+j}$, in~\cite{Kr}).
\end{remark}

A tileable, simply-connected triangular region with four non-floating punctures has a Mahonian-type determinant. This
particular region is of interest in Section~\ref{sec:type-two}. While in the previous evaluations we directly considered
a bi-adjacency matrix, this time we work primarily with a lattice path matrix and then use Theorem~\ref{thm:detZN}.

\begin{proposition} \label{pro:two-mahonian}
    Let $T = T_d(x^a, y^b, z^c, x^{\alpha} y^{\beta})$, where $d = \frac{1}{3}(a+b+c+\alpha+\beta)$ is an integer,
    $0 < \alpha < a$, $0 < \beta < b$, and $\max\{a,b,c,\alpha+\beta\} \leq d \leq \min\{a + \beta, \alpha + b, a+c, b+c\}$.
    Then $|\det{Z(T)}| = \per{Z(T)}$ is
    \begin{equation*}
        \begin{split}
                   & \Mac(a+\beta - d,d-a,d-(\alpha+\beta)) \Mac(\alpha+b-d,d-b,d-(\alpha+\beta)) \\[0.3em]
            \times & \frac{\HF(d-a+d-(\alpha+\beta))\HF(d-b+d-(\alpha+\beta))\HF(d-c+d-(\alpha+\beta))\HF(d)}{\HF(a)\HF(b)\HF(c)\HF(d-(\alpha+\beta))}.
        \end{split}
    \end{equation*}
    Moreover, the prime divisors of the enumeration are  bounded above by $d - 1$.
\end{proposition}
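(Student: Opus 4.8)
The plan is to compute $\det N(T)$ with the Lindstr\"om--Gessel--Viennot formula (Theorem~\ref{thm:lgv}) and the determinant evaluation of Lemma~\ref{lem:split-binom-det}, to transfer the answer to $|\det Z(T)|$ via Theorem~\ref{thm:detZN}, and to check separately that $\per Z(T) = |\det Z(T)|$. For the last point, write $x^a = x^a y^0 z^0$, $y^b = x^0 y^b z^0$, $z^c = x^0 y^0 z^c$, and $x^\alpha y^\beta = x^\alpha y^\beta z^0$: the four punctures of $T$ sit at the top, bottom-left, and bottom-right corners of $\mathcal{T}_d$ and on its upper-left edge, so each of them touches the boundary of $\mathcal{T}_d$ and hence cannot be enclosed by a cycle of lozenges. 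In the language of Subsection~\ref{sub:signs}, every difference cycle of two tilings of $T$ therefore has $E$-count zero, so by Corollaries~\ref{cor:msgn-cycle} and~\ref{cor:lpsgn-cycle} (see also Proposition~\ref{prop:same-sign}) all lozenge tilings of $T$ have the same perfect matching sign and the same lattice path sign; consequently $\per Z(T) = |\det Z(T)| = |\det N(T)|$.

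Next I would make the geometry explicit. Reading the upper-left edge of $\mathcal{T}_d$ from bottom to top, one meets the $y^b$-puncture of side length $d-b$, then a gap of length $\alpha+b-d$, then the $x^\alpha y^\beta$-puncture of side length $d-(\alpha+\beta)$, then a gap of length $a+\beta-d$, then the $x^a$-puncture of side length $d-a$; these five lengths sum to $d$. The hypotheses $\max\{a,b,c,\alpha+\beta\} \le d \le \min\{a+\beta,\alpha+b,a+c,b+c\}$ make all five lengths nonnegative and force the four punctures to be pairwise disjoint, so $T$ is a simply connected tileable region with four non-floating punctures. Identifying $L(T)$ with a sub-lattice of $\ZZ^2$ as in Subsection~\ref{sub:nilp}, a short analysis of which lattice points lie in exactly one triangle of $T$ shows that the starting points $A_1,\ldots,A_n$ are precisely the lattice points over the two gaps on the upper-left edge of $\mathcal{T}_d$ (so they form two consecutive runs, of sizes $a+\beta-d$ and $\alpha+b-d$), while the end points $E_1,\ldots,E_n$ are the lattice points on the upper-left edge of the $z^c$-puncture (one run of size $d-c$); note $n = d-c$ by the standing equality $a+b+c+\alpha+\beta = 3d$.

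Consequently the $(i,j)$-entry of $N(T)$, the number of lattice paths from $A_i$ to $E_j$, is a single binomial coefficient: it equals $\binom{p}{q+j-i}$ when $A_i$ lies in the first run and $\binom{p}{q+r+j-i}$ when $A_i$ lies in the second run, where $r = d-(\alpha+\beta)$ is the side length of the intervening $x^\alpha y^\beta$-puncture, $p = c$ is the (common) number of steps of such a path, and $q$ and the block size $m = a+\beta-d$ are the remaining bookkeeping parameters. Thus, after transposing (which does not change the determinant), $N(T)$ is exactly the matrix of Lemma~\ref{lem:split-binom-det}. Applying that lemma with $(n,m,p,q,r) = (d-c,\ a+\beta-d,\ c,\ d-a,\ d-(\alpha+\beta))$ and simplifying the hyperfactorials (repeatedly using $a+b+c+\alpha+\beta = 3d$) produces the two stated $\Mac$-factors together with the displayed hyperfactorial quotient; by Theorem~\ref{thm:detZN} and the first paragraph this common value is $|\det Z(T)| = \per Z(T)$.

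Finally, for the prime-divisor bound, expand the formula entirely in terms of hyperfactorials. Using $a,b,c,\alpha+\beta \le d$, $a+b+c+\alpha+\beta = 3d$, and $d \le \alpha+b,\ a+\beta,\ b+c$, one checks that every hyperfactorial $\HF(N)$ occurring in the numerator has argument $N \le d$. Since $\HF(N) = \prod_{i=0}^{N-1} i!$ has all its prime divisors at most $N-1 \le d-1$, and the enumeration is a positive integer, every prime dividing it is at most $d-1$. The step I expect to be the real work is the bookkeeping in the middle two paragraphs: identifying the exact positions of $A_1,\ldots,A_n$ and $E_1,\ldots,E_n$, checking from the inequalities that the two $A$-runs and the single $E$-run are as claimed (so that the various binomial entries take the precise shape required), and reading off the correct values of $m$, $q$, and $p$ so that Lemma~\ref{lem:split-binom-det} applies verbatim; the ensuing hyperfactorial algebra is routine.
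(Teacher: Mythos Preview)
Your proposal is correct and follows essentially the same route as the paper: identify the $A$- and $E$-vertices along the upper-left edge and the $z^c$-puncture, write $N(T)$ as a two-block binomial matrix, transpose and apply Lemma~\ref{lem:split-binom-det}, and transfer the result to $Z(T)$ via Theorem~\ref{thm:detZN}. Two cosmetic differences: the paper argues $\per Z(T)=|\det Z(T)|$ by noting directly that the only admissible lattice-path permutation is the identity (rather than invoking the ``no floating punctures'' criterion you cite, which also works), and the paper takes $(m,q)=(\alpha+b-d,\,d-b)$ whereas you take $(m,q)=(a+\beta-d,\,d-a)$; these correspond to reading the two $A$-runs in opposite order, and Lemma~\ref{lem:split-binom-det} yields the same hyperfactorial expression either way.
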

\begin{proof}
    Note that $\max\{a,b,c,\alpha+\beta\} \leq d$ implies that all four punctures have nonnegative side length. Further,
    the condition $d \leq \min\{a + \beta, \alpha + b, a+c, b+c\}$ guarantees that none of the punctures overlap.

    We now compute the lattice path matrix $N(T)$ as introduced in Subsection \ref{sub:nilp}. Recall that a point in the
    lattice $L(T)$ with label $x^u y^v z^{d-1-(u+v)}$ is identified with the point $(d-1-v, u) \in \ZZ^2$. Thus, the
    starting points of the lattice paths are
    \begin{equation*}
        A_i =
        \begin{cases}
            (d-b+i-1, d-b+i-1) & \text{if } 1 \leq i \leq \alpha + b - d, \\
            (2d-(\alpha + \beta+b) + i-1, 2d-(\alpha + \beta+b) + i-1) & \text{if } \alpha + b - d <  i \leq d-c.
        \end{cases}
    \end{equation*}
    For the end points of the lattice paths, we get
    \begin{equation*}
        E_j  = (c-1+j, j-1), \quad \text{where } 1 \leq j \leq d-c.
    \end{equation*}
    Thus, the entries of the lattice path matrix $N(T)$ are
    \[
        (N(T))_{i, j} =
        \left\{
            \begin{array}{ll}
                \displaystyle \binom{c}{d-b+i-j}                    & \mbox{if } 1                \leq i \leq \alpha+b-d, \\[0.8em]
                \displaystyle \binom{c}{2d-(\alpha+\beta+b)+i-j}    & \mbox{if } \alpha+b-d  <  i \leq d-c. \\
            \end{array}
        \right.
    \]
    Transposing $N(T)$, we get a matrix of the form in Lemma~\ref{lem:split-binom-det}, where $m = \alpha + b -d$, $n =
    d-c$, $p = c$, $q = d-b$, and $r = d-(\alpha+\beta)$. Thus, we have the desired determinant evaluation.

    Moreover, the only lattice path permutation that admits non-intersecting lattice paths is the identity permutation,
    so all families of non-intersecting lattice paths have the same sign. Hence $|\det{Z(T)}| = \per{Z(T)}$ by Theorem
    \ref{thm:detZN}.

    Finally, as $d-\alpha$ and $d-\beta$ are smaller than $d$, the prime divisors of $|\det{N(T)}| = |\det{Z(T)}|$ are
    bounded above by $d-1$.
\end{proof}

\begin{remark} \label{rem:two-mahonian}
    The evaluation of the determinant in the preceding proposition includes two Mahonian terms and a third non-Mahonian
    term. It should be noted that both hexagons associated to the Mahonian terms actually show up in the punctured hexagon.
    \begin{figure}[!ht]
        \includegraphics[scale=1]{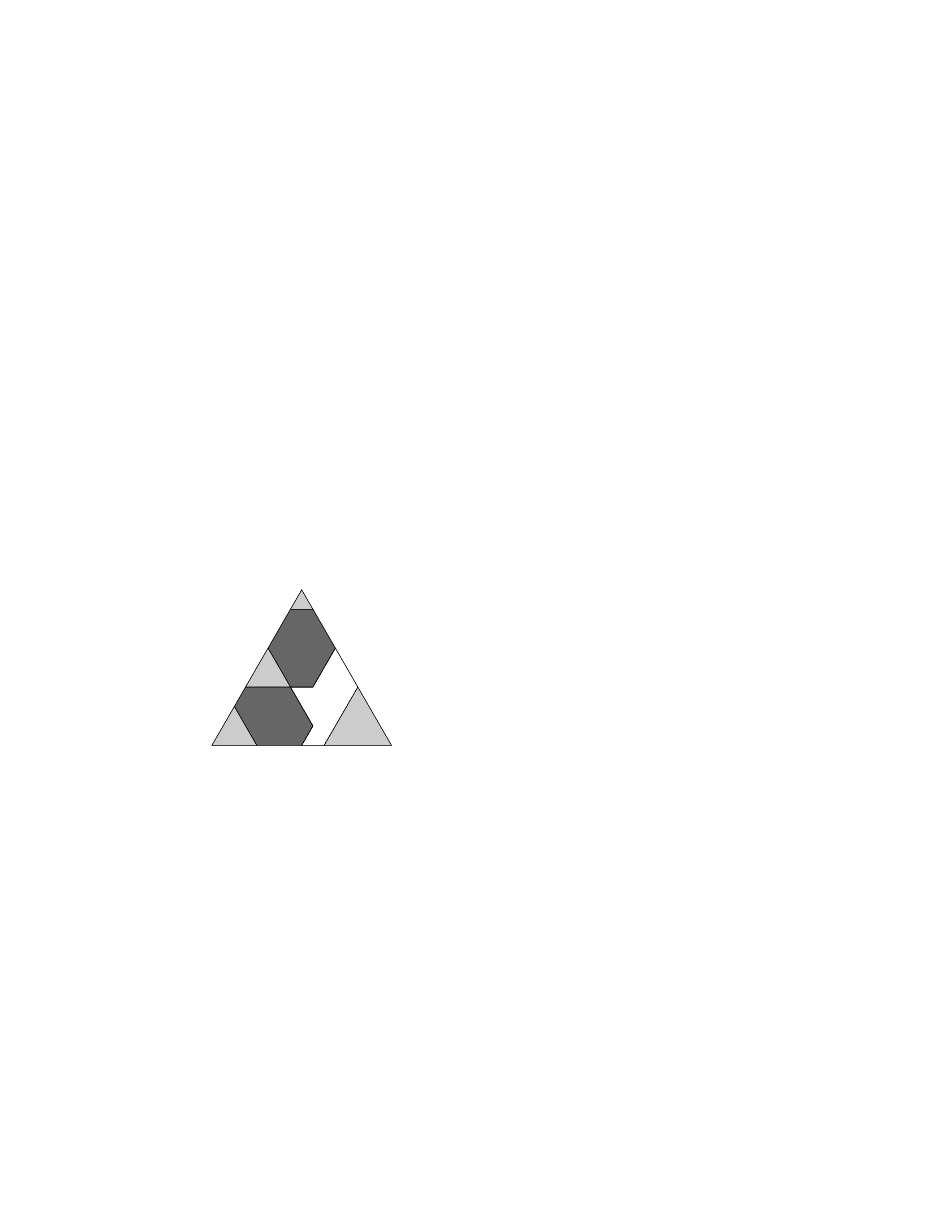}
        \caption{The darkly shaded hexagons correspond to the two Mahonian terms.}
        \label{fig:two-mahonian}
    \end{figure}
    See Figure~\ref{fig:two-mahonian} where the darkly shaded hexagons correspond to the Mahonian terms. It is not clear
    (to us) where the third term comes from, though it may be of interest that if one subtracts $d-(\alpha+\beta)$ from
    each hyperfactorial parameter, before the evaluation, then what remains is $\Mac(d-a,d-b,d-c)$.
\end{remark}~

\subsection{A single sign}~\par

We exhibit further triangular regions such that all lozenge tilings have the same sign by partially extending
Proposition~\ref{pro:ci-enum}. Indeed, this is guaranteed to happen if all floating punctures (see the definition preceding
Lemma~\ref{lem:lpsgn-cycle}) have an even side length.

\begin{proposition} \label{pro:same-sign}
    Let $T$ be a tileable triangular region, and suppose all floating punctures of $T$ have an even side length. Then
    every lozenge tiling of $T$ has the same perfect matching sign as well as the same lattice path sign, and so
    $\per{Z(T)} = |\det{Z(T)}|$.

    In particular, simply-connected regions that are tileable have this property.
\end{proposition}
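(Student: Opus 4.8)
The plan is to reduce the assertion, via the sign–comparison machinery of Subsection~\ref{sub:signs}, to a parity statement about cycles of lozenges, and then to use that in a tileable region only \emph{floating} punctures can be enclosed by a cycle. First I would apply Corollary~\ref{cor:msgn-cycle}(ii) together with Corollary~\ref{cor:lpsgn-cycle}: two lozenge tilings of $T$ have the same perfect matching sign, and also the same lattice path sign, exactly when the sum of the $E$-counts of their difference cycles is even. Since difference cycles are themselves cycles of lozenges in $T$, it therefore suffices to prove that \emph{every} cycle of lozenges $\sigma$ arising in a tiling of $T$ has an even $E$-count (a sum of even numbers being even). Granting this, all tilings of $T$ share a single perfect matching sign $\varepsilon\in\{\pm1\}$, so by Theorem~\ref{thm:pm-matrix} we get $\det Z(T)=\sum_{\tau}\msgn(\tau)=\varepsilon\cdot\#\{\text{tilings of }T\}$, and by Proposition~\ref{pro:per-enum} the number of tilings equals $\per Z(T)$; hence $|\det Z(T)|=\per Z(T)$. (The lattice path sign part is purely for the first clause, and one could equivalently route the last equality through $\det N(T)$ and Theorem~\ref{thm:detZN}.)

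The next step is to pin down which punctures a cycle can enclose. Recall that the $E$-count of $\sigma$ is the sum of the side lengths of the non-overlapping punctures of $T$ lying inside $\sigma$, plus the side lengths of the minimal covering regions of the pairs of overlapping punctures lying inside $\sigma$. I claim that a non-floating puncture can never lie inside a cycle of lozenges: by definition such a puncture is joined to the boundary of $\mathcal{T}_d$ by a finite chain of punctures that successively overlap or touch, and the union of these punctures is a connected subset of the removed region meeting $\partial\mathcal{T}_d$. Every lozenge of the tiling is disjoint from every puncture, and, since $T$ is tileable, the lozenges inside the minimal covering region of a touching pair of punctures are forced, so no lozenge of a non-trivial cycle bridges such a touching vertex; therefore $\sigma$ cannot separate this connected chain from $\partial\mathcal{T}_d$. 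Hence no non-floating puncture — and no overlapping pair one of whose members is non-floating — can lie inside $\sigma$. This already yields the last ("in particular") assertion: a tileable simply-connected region has polygonal boundary (see Remark~\ref{rem:complexity}) and hence no interior holes, i.e.\ no floating punctures at all, so every cycle has $E$-count $0$ and the hypothesis is vacuously satisfied; in this form the result generalises Proposition~\ref{pro:ci-enum}.

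It remains to treat a cycle $\sigma$ that encloses only floating punctures. Those that are non-overlapping have even side length by hypothesis, so their contributions to the $E$-count are even; the delicate contributions are those of pairs of overlapping floating punctures. Since $T$ is tileable, such a pair $\{P_1,P_2\}$ is isolated and the region $P\setminus(P_1\cup P_2)$ inside its minimal covering region $P$ is uniquely tileable. Here I would pass, by Corollary~\ref{cor:replace-two-punctures}, to the region obtained by replacing the pair with $P$: this changes neither $\per Z(T)$ nor $|\det Z(T)|$, and since the discarded lozenges fill a uniquely tileable region it is harmless for the difference–cycle analysis, so one may induct on the number of overlapping pairs, reducing to a region with no overlapping punctures, in which every $E$-count is literally a sum of even side lengths and the proof closes. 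The heart of the argument — and the step I expect to be the main obstacle — is to make this reduction legitimate: one must control the contribution of an overlapping floating pair to the $E$-count (equivalently the side length of $P$, or of $P_1\cap P_2$) using the tileability of $T$ together with the parity hypothesis, so as to rule out a cycle enclosing a pair whose covering region has odd side length, and one must check that the enclosure relation and the $E$-counts transform correctly under the replacement. Once this bookkeeping is settled, Lemmas~\ref{lem:twist-sign} and~\ref{lem:cycle-res}, combined with Corollaries~\ref{cor:msgn-cycle} and~\ref{cor:lpsgn-cycle}, finish everything.
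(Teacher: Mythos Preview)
Your approach is exactly the paper's: the paper's proof consists of two lines citing Corollary~\ref{cor:msgn-cycle} for the perfect matching sign, Corollary~\ref{cor:lpsgn-cycle} for the lattice path sign, and Theorem~\ref{thm:pm-matrix} for the identity $\per Z(T)=|\det Z(T)|$, followed by the remark that simply-connected regions have no floating punctures. You have correctly unpacked the implicit step---only floating punctures can lie inside a lozenge cycle, so under the hypothesis every difference cycle has even $E$-count---and your derivation of $\per Z(T)=|\det Z(T)|$ is the intended one.

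Your worry about overlapping floating punctures is legitimate, but do not expect your proposed induction to close it. Two floating punctures of even side length can have a minimal covering region of odd side length (for instance, in $\mathcal{T}_{10}$ the punctures associated to $x^2y^3z^3$ and $x^3y^2z^3$ each have side length $2$, they overlap, and their covering region has side length $3$), and tileability does not exclude this; so passing to the covering region via Corollary~\ref{cor:replace-two-punctures} can produce a single floating puncture of odd side length, and the hypothesis is lost. The paper does not address this in its proof of Proposition~\ref{pro:same-sign} either; it is handled instead in the very next result, Proposition~\ref{prop:same-sign}, whose hypothesis explicitly adds the parenthetical ``(and minimal covering regions of overlapping punctures)''. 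Every application of Proposition~\ref{pro:same-sign} in the paper is to regions without overlapping punctures (see, e.g., Assumption~\ref{assump:mirror}(iv) and the proof of Theorem~\ref{thm:wlp-iff-semistab}), so in practice the gap is harmless; but as a matter of logic you should read the hypothesis as also covering these minimal covering regions, or simply assume no overlapping floating punctures. For the simply-connected clause your argument is already complete, since there are no floating punctures at all.
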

\begin{proof}
    The equality of the perfect matching signs follows from Corollary~\ref{cor:msgn-cycle}, and the equality of the lattice
    path signs from Corollary~\ref{cor:lpsgn-cycle}. Now Theorem~\ref{thm:pm-matrix} implies $\per{Z(T)} = |\det{Z(T)}|$.

    The second part is immediate as simply-connected regions have no floating punctures.
\end{proof}

\begin{remark}
    The above proposition vastly extends \cite[Theorem~1.2]{CGJL}, where hexagons (as in Proposition~\ref{pro:ci-enum})
    are considered, using a different approach. This special case was also established independently in
    \cite[Section~3.4]{Ke}, with essentially the same proof as~\cite{CGJL}.

    Proposition~\ref{pro:same-sign} can also be derived from Kasteleyn's theorem on enumerating perfect
    matchings~\cite{Ka}. To see this, notice that in the case, where all floating punctures have even side lengths, all
    ``faces'' of the bipartite graph $G(T)$ have size congruent to $2 \pmod{4}$.
\end{remark}

We now extend Proposition~\ref{pro:same-sign}. To this end we define the \emph{shadow} of a puncture to be the region of
$T$ that is both below the puncture and to the right of the line extending from the upper-right edge of the puncture.
See Figure~\ref{fig:puncture-shadow}.

\begin{figure}[!ht]
    \includegraphics[scale=1]{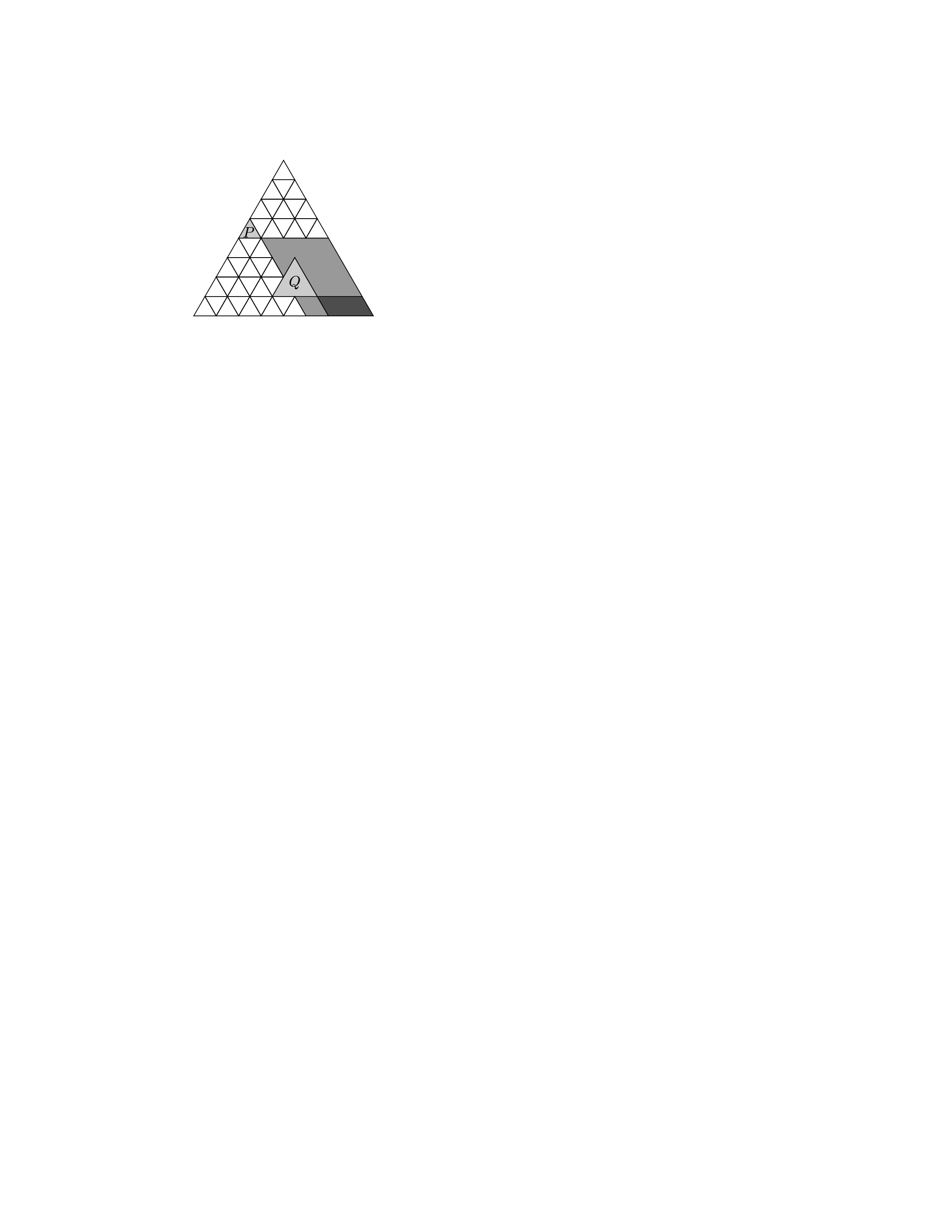}
    \caption{The puncture $P$ has the puncture $Q$ in its shadow (light grey), but $Q$ does not have a puncture in its shadow (dark grey).}
    \label{fig:puncture-shadow}
\end{figure}

\begin{proposition}\label{prop:same-sign}
    Let $T = T_d(I)$ be a balanced triangular region. If all floating punctures (and minimal covering regions of
    overlapping punctures) with other punctures in their shadows have even side length, then any two lozenge tilings of
    $T$ have the same perfect matching and the same lattice path sign. Thus, $\per{Z(T)} = |\det{Z(T)}|$.

    In particular, $Z(T)$ has maximal rank over a field of characteristic zero if and only if $T$ is tileable.
\end{proposition}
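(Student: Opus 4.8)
The plan is to push the $E$-count bookkeeping of Subsection~\ref{sub:signs} one step further using the shadow hypothesis. Everything reduces to proving that \emph{for any two lozenge tilings of $T$, the sum of the $E$-counts of their difference cycles is even}. Granting this, Corollary~\ref{cor:msgn-cycle}(ii) gives that all tilings share a perfect matching sign and Corollary~\ref{cor:lpsgn-cycle} gives that all tilings share a lattice path sign, whence $\per Z(T)=|\det Z(T)|$ by Theorem~\ref{thm:pm-matrix} and Proposition~\ref{pro:per-enum}. The last assertion then follows: over a field of characteristic zero $\det Z(T)=\pm\per Z(T)$, which is nonzero exactly when $T$ admits a tiling, i.e.\ when $T$ is tileable (if $T$ is not tileable then $\per Z(T)=0$, hence $\det Z(T)=0$ over every field).

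To prove the parity statement (where we may assume $T$ is tileable, else there are no tilings to compare) I would first make two reductions of the $E$-count of a cycle $\sigma$. A non-floating puncture is joined to $\partial\mathcal{T}_d$ by a chain of touching or overlapping punctures, and no lozenge meets this chain, so a non-floating puncture never lies inside a cycle of lozenges; since a puncture overlapping a non-floating puncture is again non-floating, the minimal covering regions of overlapping pairs that can lie inside $\sigma$ all come from pairs of \emph{floating} punctures. Hence only floating punctures (and minimal covering regions of overlapping floating pairs) contribute to $E\text{-count}(\sigma)$. Next, by hypothesis a contributor having another puncture in its shadow has even side length, so it does not affect the parity of $E\text{-count}(\sigma)$. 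Therefore $E\text{-count}(\sigma)$ is congruent mod $2$ to the number of shadow-free floating contributors of odd side length enclosed by $\sigma$, and for difference cycles $\sigma_1,\dots,\sigma_p$ of a pair of tilings
\[
    \sum_{i=1}^{p} E\text{-count}(\sigma_i)\ \equiv\ \sum_{F} w_F \pmod 2,
\]
where $F$ runs over the shadow-free floating contributors of odd side length and $w_F$ is the number of the $\sigma_i$ enclosing $F$.

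It remains to show $w_F$ is even for every such $F$, and this is the step I expect to be the main obstacle. The shadow of $F$ is precisely the region swept by the East/Southeast lattice paths passing through $F$, so shadow-freeness means no lattice path in any tiling is rerouted past $F$ and the $s(F)$ lattice-path endpoints on the upper-left edge of $F$ stay grouped on $F$. I would argue that, relative to the canonical minimal tiling of $T$ (which exists by the proof of Theorem~\ref{thm:tileable}), the lattice path permutation of any tiling is obtained by a sequence of moves each rerouting one whole block of endpoints between two punctures, such a rerouting being possible only when one of the two punctures casts a shadow on the other; since by hypothesis the shadow-casting floating puncture then has even side length, each move is an even permutation of the endpoints, so the lattice path permutations of any two tilings differ by an even permutation. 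By Corollary~\ref{cor:lpsgn-cycle} this is equivalent to the displayed congruence vanishing, and in particular forces every relevant $w_F$ to be even. The delicate point is to establish this block-rerouting description of all admissible lattice path permutations and to check that each move changes the sign by a power of $-1$ whose exponent is, modulo $2$, a side length that the hypothesis makes even; the tools for this are Lemma~\ref{lem:cycle-res} together with the $E$-count analysis underlying Corollary~\ref{cor:msgn-cycle}.
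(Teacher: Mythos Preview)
Your reduction to the parity of $\sum_i E\text{-count}(\sigma_i)$ via Corollaries~\ref{cor:msgn-cycle} and~\ref{cor:lpsgn-cycle} is exactly right, and so is the observation that non-floating punctures never sit inside a lozenge cycle and that contributors with something in their shadow have even side length by hypothesis. The place where you go astray is the endgame: you try to show that each shadow-free odd contributor $F$ is enclosed by an \emph{even} number of difference cycles, and you propose a block-rerouting analysis of lattice path permutations to do it. That argument is not completed, and in fact it is not needed.

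The paper's proof replaces this entire step with a one-line geometric observation: if $F$ is a floating puncture (or minimal covering region) with no other puncture in its shadow, then the shadow of $F$ is \emph{uniquely tileable}, so the lozenges filling the shadow are the same in every tiling of $T$. Consequently no difference cycle can pass through the shadow, and since any cycle enclosing $F$ would have to cross the shadow to get around $F$, no difference cycle encloses $F$ at all. In your notation this gives $w_F=0$, not merely $w_F$ even. With that, every puncture that can actually lie inside a difference cycle has even side length, every $E$-count is even, and the two sign corollaries finish the proof immediately. Your lattice-path rerouting machinery is aiming at a weaker conclusion by a harder route; the missing idea is simply that a puncture-free shadow forces its tiles.
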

\begin{proof}
    Let $P$ be a floating puncture or a minimal covering region with no punctures in its shadow. Then the shadow of $P$ is
    uniquely tileable, and thus the lozenges in the shadow are fixed in each lozenge tiling of $T$. Hence, no cycle of
    lozenges in any tiling of $T$ can contain $P$. Using Corollary~\ref{cor:msgn-cycle} and
    Corollary~\ref{cor:lpsgn-cycle}, we see that $P$ does not affect the sign of the tilings of $T$.

    Now our assumptions imply that all floating punctures (or minimal covering regions of overlapping punctures) of
    $T$ that can be contained in a difference cycle of two lozenge tilings of $T$ have even side length. Thus, we
    conclude $\per{Z(T)} = |\det{Z(T)}|$ as in the proof of Proposition~\ref{pro:same-sign}.

    The last assertion follows by Proposition \ref{pro:per-enum}.
\end{proof}~

\subsection{An axes-central puncture}\label{sub:axes-central}~\par

Last, we consider the case studied by Ciucu, Eisenk\"olbl, Krattenthaler, and Zare in~\cite{CEKZ}, that is, the case of
a hexagon with a central puncture that is equidistant from the hexagon sides along axes through the puncture. We call
such a puncture an \emph{axes-central puncture}.%
\index{puncture!axes-central}

We now describe the ideals whose triangular regions have an axes-central puncture. Let $A$, $B$, $C$, and $M$ be
nonnegative integers with at most one of $A$, $B$, and $C$ being zero. We must consider two cases, depending on parity.

First, suppose $A$, $B$, and $C$ all share the same parity.  We form the triangular region
\[
    T_{A+B+C+M}(x^{B+C+M}, y^{A+C+M}, z^{A+B+M}, x^{\frac{1}{2}(B+C)} y^{\frac{1}{2}(A+C)} z^{\frac{1}{2}(A+B)}).
\]
By construction, the region has a puncture of side length $A$, $B$, and $C$ in the top, bottom-left, and bottom-right
corners, respectively. Further, there is a puncture of side length $M$ that is axes-central. If we let $\alpha$,
$\beta$, and $\gamma$ be the exponents of the mixed term, then we get Figure~\ref{fig:axes-central}(i).

Now suppose $A$ and $B$ differ in parity from $C$. In this case, the axes-central puncture would have to be located a
non-integer distance from the edges of the triangle. To fix this, we shift the puncture up and right one-half unit to
create a valid triangular region. In particular, we form the triangular region
\[
    T_{A+B+C+M}(x^{B+C+M}, y^{A+C+M}, z^{A+B+M}, x^{\frac{1}{2}(B+C+1)} y^{\frac{1}{2}(A+C-1)} z^{\frac{1}{2}(A+B)}).
\]
As in the previous case, we get the desired punctures. Moreover, if we let $\alpha$, $\beta$, and $\gamma$ be the
exponents of the mixed term, then we get Figure~\ref{fig:axes-central}(ii).

\begin{figure}[!ht]
    \begin{minipage}[b]{0.48\linewidth}
        \centering
        \includegraphics[scale=1.0]{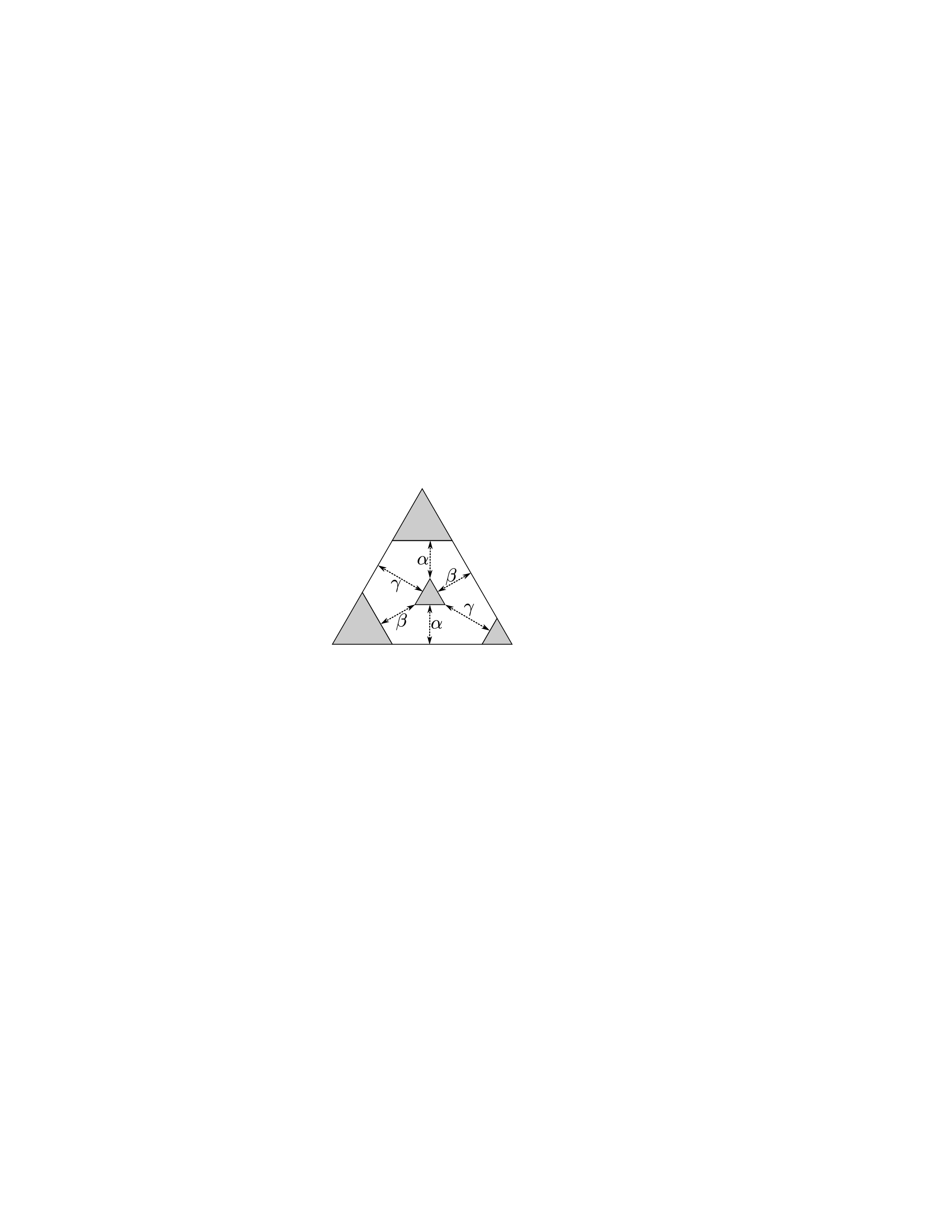}\\
        \emph{(i) The parity of $C$ agrees with $A$ and $B$.}
    \end{minipage}
    \begin{minipage}[b]{0.48\linewidth}
        \centering
        \includegraphics[scale=1.0]{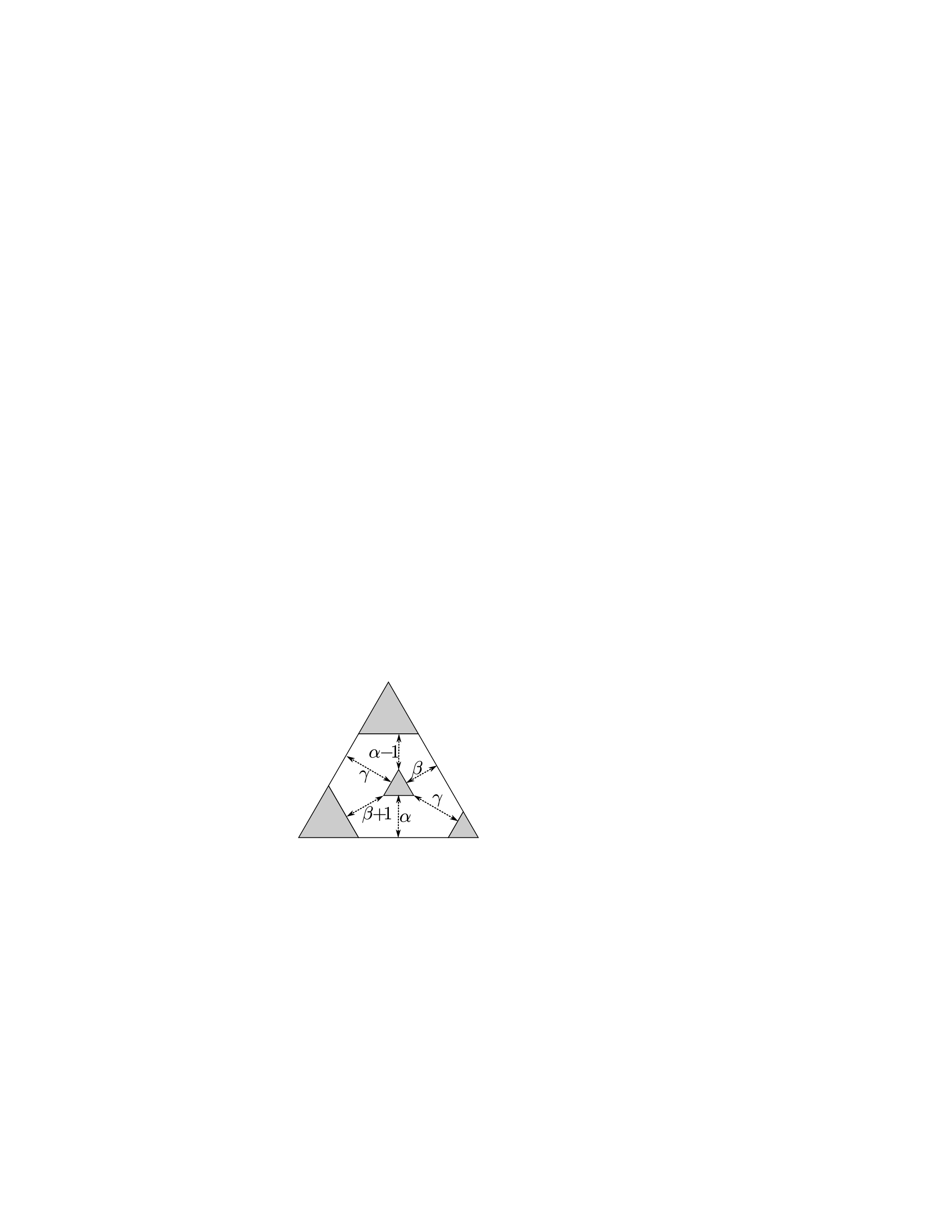}\\
        \emph{(ii) The parity of $C$ differs from $A$ and $B$.}
    \end{minipage}
    \caption{The two prototypical figures with axes-central punctures.}
    \label{fig:axes-central}
\end{figure}

Let $H$ be the punctured hexagon with an axes-central puncture associated to the nonnegative integers $A$, $B$, $C$,
and $M$. The total number of lozenge tilings of $H$ as well as a certain $(-1)$-enumeration of them have been calculated
in four theorems (two for each type and parity condition). We recall the four theorems here, although we forgo the exact
statements of the enumerations; the explicit enumerations can be found in~\cite{CEKZ}.

\begin{theorem}{\cite[Theorems 1, 2, 4, and 5]{CEKZ}} \label{thm:CEKZ-1245}
    Let $A, B, C,$ and $M$ be nonnegative integers, and let $H$ be the associated hexagon with an axes-central puncture.  Then
    the following statements hold:
    \index{0@\textbf{Symbol list}!CEKZ@$\CEKZ_i(A,B,C,M)$}
    \begin{enumerate}
        \item[(1)] The number of lozenge tilings of $H$ is $\CEKZ_1(A,B,C,M)$, if $A, B,$ and $C$ share a common parity.
        \item[(2)] The number of lozenge tilings of $H$ is $\CEKZ_2(A,B,C,M)$, if $A, B,$ and $C$ do not share a common parity.
        \item[(4)] The $(-1)$-enumeration of lozenge tilings of $H$ is
        \begin{enumerate}
            \item[(i)] $0$, if $A, B,$ and $C$ are all odd;
            \item[(ii)] $\CEKZ_4(A,B,C,M)$, if $A, B,$ and $C$ are all even.
        \end{enumerate}
        \item[(5)] The $(-1)$-enumeration of lozenge tilings of $H$ is $\CEKZ_5(A,B,C,M)$, if $A, B,$ and $C$ do not share a common parity.
    \end{enumerate}

    Moreover, the four functions $\CEKZ_i$ are polynomials in $M$ which factor completely into linear terms.  Further, each can be expressed
    as a quotient of products of hyperfactorials and, in each case, the largest hyperfactorial term is $\HF(A+B+C+M)$.
\end{theorem}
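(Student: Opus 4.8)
The plan is to obtain (1), (2), (4), and (5) as direct translations of \cite[Theorems~1, 2, 4, and~5]{CEKZ}, once the triangular region described just before the statement is identified with the punctured hexagon $H$ studied there. So the first step is to set up the dictionary between the two descriptions of $H$. Reading off the construction above, the region
\[
    T_{A+B+C+M}\bigl(x^{B+C+M},\, y^{A+C+M},\, z^{A+B+M},\, x^{\alpha} y^{\beta} z^{\gamma}\bigr),
\]
with $(\alpha,\beta,\gamma)$ either $(\frac12(B+C),\frac12(A+C),\frac12(A+B))$ in the equal-parity case or $(\frac12(B+C+1),\frac12(A+C-1),\frac12(A+B))$ in the shifted mixed-parity case, is, as a union of unit triangles, a hexagon whose three corner punctures have side lengths $A$, $B$, $C$, together with one further upward-pointing puncture of side length $M$ located equidistantly from the three pairs of opposite sides along the coordinate axes through it (respectively shifted by half a unit when $C$ has the opposite parity). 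In both cases $\alpha+\beta+\gamma = A+B+C$, so the fourth puncture indeed has side length $(A+B+C+M)-(A+B+C)=M$. This is precisely the hexagon with an axis-central triangular hole to which \cite[Theorems~1, 2, 4, and~5]{CEKZ} apply; matching the hexagon side lengths and the hole parameter $M$ in the two conventions identifies $\CEKZ_1,\CEKZ_2,\CEKZ_4,\CEKZ_5$ with the enumerations given there. In particular (1) and (2) are the plain tiling counts of \cite[Theorems~1 and~2]{CEKZ}, and (4) and (5) are the $(-1)$-enumerations of \cite[Theorems~4 and~5]{CEKZ} (each tiling weighted by $(-1)$ raised to the number of lozenges of a distinguished orientation, in the sense of \cite{CEKZ}), including the vanishing in case (4)(i) when $A$, $B$, and $C$ are all odd.

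For the final assertions I would argue directly from the closed forms in \cite{CEKZ}, which this paper deliberately does not reproduce. Each of the four functions $\CEKZ_i(A,B,C,M)$ is exhibited there as a quotient $\prod_k \HF(\lambda_k)\big/\prod_\ell \HF(\mu_\ell)$ in which every argument $\lambda_k,\mu_\ell$ is a nonnegative integer linear combination of $A$, $B$, $C$, and $M$; this is verified by inspecting the four formulas. It is recorded in \cite{CEKZ} that, for fixed $A$, $B$, $C$, each such expression is a polynomial in $M$ that factors completely into linear terms (in the all-odd subcase of (4) this polynomial is identically zero, consistent with the stated vanishing). Finally, among the arguments occurring in the numerator the one of largest value is $A+B+C+M$: since $A,B,C,M\ge 0$, every argument appearing is $A+B+C+M$ minus a nonnegative integer combination of $A,B,C,M$, so $\HF(A+B+C+M)$ is the hyperfactorial of largest argument present and it lies in the numerator — which is the last claim. (Note that $A+B+C+M$ is exactly the parameter $d$ of the ambient region $\mathcal{T}_d$, which is why this bound matters for the applications.)

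The main obstacle here is bookkeeping rather than depth: one must get the parameter dictionary exactly right, in particular the half-unit shift of the hole in the mixed-parity case, which forces the asymmetric exponents $\alpha=\frac12(B+C+1)$, $\beta=\frac12(A+C-1)$, $\gamma=\frac12(A+B)$ rather than the symmetric choice, and one must confirm that the ``$(-1)$-enumeration'' of \cite{CEKZ} is the weighting intended here so that the cited theorems really do yield functions $\CEKZ_i$ with the asserted polynomiality, linear-factorization, and leading-hyperfactorial properties. Once these identifications are pinned down, the theorem follows immediately from \cite[Theorems~1, 2, 4, and~5]{CEKZ}.
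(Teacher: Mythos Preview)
Your proposal is correct and matches the paper's approach exactly: the paper gives no proof at all for this theorem, treating it purely as a citation of \cite[Theorems~1, 2, 4, and~5]{CEKZ}, with the preceding paragraphs setting up precisely the parameter dictionary you describe. Your additional remarks on verifying the dictionary, the half-unit shift in the mixed-parity case, and reading off the polynomiality and hyperfactorial structure from the explicit formulas in \cite{CEKZ} are accurate elaborations of what the citation entails, but the paper itself simply defers all of this to \cite{CEKZ}.
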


Using Proposition~\ref{pro:per-enum} and Theorem~\ref{thm:pm-matrix} together with the fact that the sign used in the
$(-1)$-enumeration in \cite{CEKZ} is equivalent to our perfect matching sign, we find the permanent and the determinant
of the bi-adjacency matrix of $H$.

\begin{corollary} \label{cor:axes-central-Z}
    Let $A, B, C,$ and $M$ be nonnegative integers, and let $H$ be the associated hexagon with an axes-central puncture.  Then
    \[
        \per{Z(H)} =
        \left\{ \begin{array}{ll}
            \CEKZ_1(A,B,C,M) & \mbox{if $A,B,$ and $C$ share a common parity;} \\[0.3em]
            \CEKZ_2(A,B,C,M) & \mbox{otherwise.}
        \end{array} \right.
    \]
    Further, if $M$ is even, then
    \[
        |\det{Z(H)}| =
        \left\{ \begin{array}{ll}
            \CEKZ_1(A,B,C,M) & \mbox{if $A,B,$ and $C$ share a common parity;} \\[0.3em]
            \CEKZ_2(A,B,C,M) & \mbox{otherwise.}
        \end{array} \right.
    \]
    And if $M$ is odd, then
    \[
        |\det{Z(H)}| =
        \left\{ \begin{array}{ll}
            0                  & \mbox{if $A,B,$ and $C$ are all odd;} \\[0.3em]
            |\CEKZ_4(A,B,C,M)| & \mbox{if $A,B,$ and $C$ are all even;} \\[0.3em]
            |\CEKZ_5(A,B,C,M)| & \mbox{otherwise.}
        \end{array} \right.
    \]

    Moreover, the prime divisors of the enumerations are  bounded above by $A+B+C+M - 1$.
\end{corollary}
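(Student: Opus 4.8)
The proof assembles three ingredients: the two enumerative interpretations of $Z(H)$ established earlier, namely Proposition~\ref{pro:per-enum} (the permanent of $Z(H)$ counts all lozenge tilings of $H$) and Theorem~\ref{thm:pm-matrix} (the determinant of $Z(H)$ counts the perfect matching signed lozenge tilings), together with the explicit evaluations of Ciucu, Eisenk\"olbl, Krattenthaler, and Zare recorded in Theorem~\ref{thm:CEKZ-1245}. First I would fix notation: with $d = A+B+C+M$, the region $H = T_d(I)$ is balanced, its corner punctures of side lengths $A$, $B$, $C$ are non-floating (they touch the boundary of $\mathcal{T}_d$), and the axes-central puncture of side length $M$ is the only puncture of $H$ that can possibly be floating. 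The statement about the permanent is then immediate: $\per Z(H)$ is the number of lozenge tilings of $H$ by Proposition~\ref{pro:per-enum}, and Theorem~\ref{thm:CEKZ-1245}(1)--(2) identifies this number with $\CEKZ_1(A,B,C,M)$ or $\CEKZ_2(A,B,C,M)$ according to whether $A,B,C$ share a common parity.

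For the determinant I would split on the parity of $M$. If $M$ is even, then every floating puncture of $H$ has even side length (there is at most one, of side length $M$), so by Proposition~\ref{pro:same-sign} all lozenge tilings of $H$ have the same perfect matching sign; hence $|\det Z(H)| = \per Z(H)$ by Theorem~\ref{thm:pm-matrix}, and the previous step supplies the value. (If $H$ is not tileable, then $\per Z(H) = 0 = |\det Z(H)|$ trivially.) If $M$ is odd, the tilings need not all share a sign, and here I would use the key compatibility: the $(-1)$-enumeration sign of \cite{CEKZ} agrees, up to one overall sign, with our perfect matching sign; combined with Theorem~\ref{thm:pm-matrix} this gives that $|\det Z(H)|$ equals the absolute value of the $(-1)$-enumeration of $H$. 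Plugging in Theorem~\ref{thm:CEKZ-1245}(4)--(5) then yields $|\det Z(H)| = 0$ when $A,B,C$ are all odd, $|\det Z(H)| = |\CEKZ_4(A,B,C,M)|$ when $A,B,C$ are all even, and $|\det Z(H)| = |\CEKZ_5(A,B,C,M)|$ when $A,B,C$ do not share a common parity.

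The bound on prime divisors follows from the closing assertion of Theorem~\ref{thm:CEKZ-1245}: each $\CEKZ_i$ is a quotient of products of hyperfactorials whose largest term is $\HF(A+B+C+M)$. Since $\HF(k) = \prod_{i=0}^{k-1} i!$ has all of its prime divisors at most $k-1$, every prime divisor of the numerator of $\CEKZ_i$ is at most $A+B+C+M-1$; as $\CEKZ_i$ is an integer, the same bound holds for $|\CEKZ_i|$, and it is vacuous in the case $|\det Z(H)| = 0$.

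The one genuinely substantive point is the identification of the $(-1)$-enumeration sign of \cite{CEKZ} with our perfect matching sign, which is needed only for the $M$ odd case. I would prove this by comparing how each sign transforms under a twist of a cycle of lozenges: by Lemma~\ref{lem:twist-sign} together with Corollary~\ref{cor:msgn-cycle}(i), the perfect matching sign is multiplied by $-1$ precisely when the $E$-count of the twisted cycle is odd, and one checks directly that the $(-1)$-weight of \cite{CEKZ} changes by the same factor on cycles of lozenges enclosing the central puncture. Since any two tilings of $H$ are related by a sequence of such twists (this follows, after resolving the central puncture to reduce to a complete intersection region, from \cite[Theorem~1.2]{CGJL}, exactly as in the proof of Corollary~\ref{cor:msgn-cycle}), the two signed enumerations coincide up to the sign of a fixed reference tiling, which is all that is needed once absolute values are taken.
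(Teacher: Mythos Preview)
Your proposal is correct and follows essentially the same approach as the paper. The paper's own ``proof'' is just the sentence preceding the corollary: invoke Proposition~\ref{pro:per-enum} for the permanent, Theorem~\ref{thm:pm-matrix} for the determinant, and the fact that the $(-1)$-sign of \cite{CEKZ} is equivalent to the perfect matching sign. Your explicit split on the parity of $M$ (using Proposition~\ref{pro:same-sign} directly for $M$ even rather than routing through the sign equivalence) and your sketch of why the two signs agree are natural elaborations of that terse argument, not a different route.
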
~

\section{Mirror symmetric triangular regions}~\label{sec:mirror}

A \emph{mirror symmetric}%
\index{triangular region!mirror symmetric}
region is a triangular region $T = T_d(I)$ that is invariant under reflection about the
vertical line that goes through the top center vertex of the containing triangular region $\mathcal{T}_d$. Furthermore,
we call a puncture an \emph{axial puncture}%
\index{puncture!axial}
if its top vertex is on the axis of symmetry, i.e., it is itself symmetric.

In this section we consider mirror symmetric regions under some strong restrictions. We collect the assumptions used for
the entirety of the section here.
\begin{assumption} \label{assump:mirror}
    Let $T$ be a triangular region that satisfies the following conditions:
    \begin{enumerate}
        \item $T$ is balanced and mirror symmetric.
        \item With the exception of a pair of punctures in the bottom corners, all punctures of $T$ are axial punctures.
        \item The top-most axial puncture of $T$ is in the top corner of ${\mathcal T}_d$.
        \item No punctures of $T$ touch or overlap.
    \end{enumerate}
\end{assumption}

Recall that such a region $T \subset {\mathcal T}_d$ is balanced if and only if the sum of the side lengths of its
punctures equals $d$ because the punctures of $T$ do not overlap. Note that the assumptions (iii) and (iv) above are
harmless. Indeed, if one of them is not satisfied, then placing the forced lozenges leads to a mirror symmetric
triangular region satisfying (iii) and (iv) (see Corollary~\ref{cor:replace-two-punctures} and
Figure~\ref{fig:two-corners}).

\begin{remark}\label{rem:mirror-reg-tileable}
    Each mirror symmetric region satisfying Assumption \ref{assump:mirror} is tileable by lozenges. This follows, for
    example, from Theorem \ref{thm:tileable}.
\end{remark}

We need some notation to specify a triangular region $T$ as above. We denote the side length of the base punctures
(those in the bottom corners) by $b$. We label the $m$ axial punctures $1, 2, \ldots, m$, starting from the top. The
vertical position and the side length of the $i^{\rm th}$ axial puncture are denoted by $h_i$ and $d_i$, respectively.
As the punctures do not touch or overlap, the numbers $b$ and $(h_1, d_1), \ldots, (h_m, d_m)$ uniquely define $T$. See
Figure~\ref{fig:mirror-symmetric} for an example.

\begin{figure}[!ht]
    \includegraphics[scale=1]{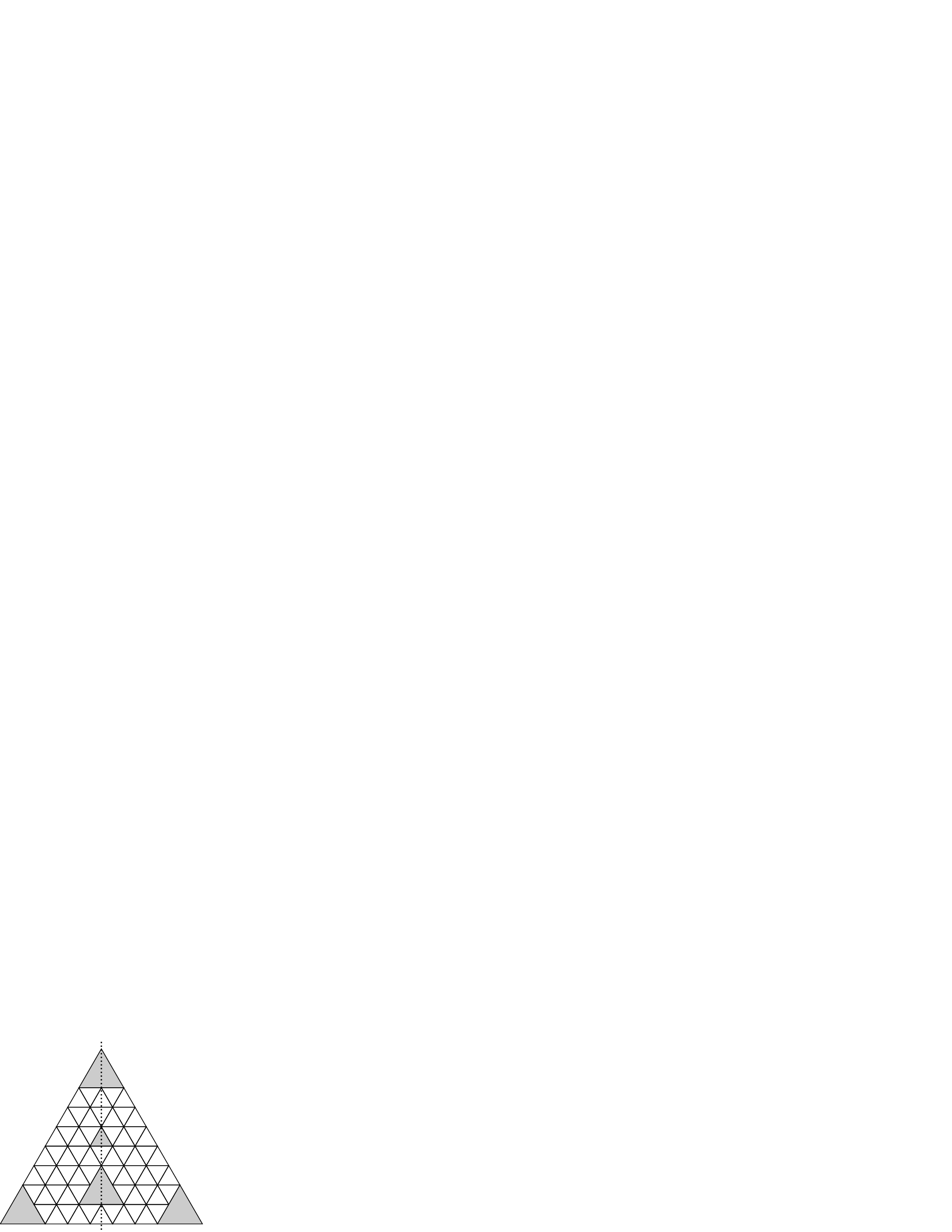}
    \caption{A mirror symmetric region with corner punctures of length $b = 2$ and axial punctures with parameters $(7, 2)$, $(4, 1)$, and $(1, 2)$.}
    \label{fig:mirror-symmetric}
\end{figure}

It is worth recording the conditions the parameters defining $T$ have to satisfy. These parameters also allow us to
describe the associated ideal.

\begin{remark} \label{rem:mirror-prop}
    Let $T \subset {\mathcal T}_d$ be a region with parameters $b$ and $(h_1, d_1), \ldots, (h_m, d_m)$. Then:
    \begin{enumerate}
        \item Since $T$ is balanced, we have $d := 2b + d_1 + \cdots + d_m$.
        \item The $1^{\rm st}$ axial puncture has height $h_1 = d - d_1$ by Assumption~\ref{assump:mirror}(iii).
        \item Assumption~\ref{assump:mirror}(iv) forces the inequalities $h_i - h_{i+1} > d_{i+1}$ for  all $i = 1,\ldots,m$.
        \item For each $i \in \{1,\ldots,m \}$, the integer $h_i$ is even if and only if $d - d_i$ is even because $d-d_i - h_i$ is even by symmetry.
        \item The region $T$ is $T_d (I)$, where
            \[
               I =  (x^{h_1}, y^{d-b}, z^{d-b},
                     x^{h_2} (y z)^{\frac{1}{2}(d - d_2 - h_2)},
                     \ldots,
                     x^{h_m} (y z)^{\frac{1}{2}(d - d_m - h_m)}).
            \]
    \end{enumerate}
\end{remark}~

\subsection{Regularity of the bi-adjacency matrix}~\label{sub:vanishing-det}

By assumption, the mirror symmetric region $T$ is balanced. Thus, its bi-adjacency matrix $Z(T)$ is a square matrix. Our
first main result of this section gives a condition, which implies that $Z(T)$ is not regular.

\begin{theorem} \label{thm:mirror-odd23}
    Let $T = T_d(I)$ be a mirror symmetric region that satisfies Assumption~\ref{assump:mirror}. If the number of its
    axial punctures (including the top-most puncture) with odd side length is either $2$ or $3$ modulo $4$, then
    $\det{Z(T)} = 0$.
\end{theorem}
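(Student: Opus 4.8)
The plan is to exploit the mirror symmetry of $T$ to show that $Z(T)$ is conjugate (via a permutation matrix recording the reflection) to a block-structured matrix, and then to combine this structure with the determinantal enumeration of Theorem~\ref{thm:pm-matrix} to force $\det Z(T)=0$. The key observation is that the reflection $\iota$ about the axis of symmetry acts on the unit triangles of $T$, hence on the index sets $B$ and $W$ of $Z(T)$. Because the region is symmetric, $\iota$ gives an involution on $B$ and on $W$ such that $Z(T)_{(i,j)} = Z(T)_{(\iota(i),\iota(j))}$; in other words $Z(T)$ commutes with the permutation matrices representing $\iota$. Diagonalising this $\ZZ/2$-action (over a field of characteristic zero, say $\mathbb{Q}$) splits $Z(T)$ into a ``symmetric'' block and an ``antisymmetric'' block, and $\det Z(T)$ factors accordingly. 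The axial punctures and the unit triangles lying on the axis of symmetry are the fixed points of $\iota$; the triangles lying on the axis of symmetry contribute the symmetric block, while the remaining triangles pair up and split between the two blocks.

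First I would set up the combinatorics of the fixed-point set precisely: which downward- and upward-pointing unit triangles of $T$ lie on the axis of symmetry, and how axial punctures remove some of them. Because the top-most axial puncture sits in the top corner (Assumption~\ref{assump:mirror}(iii)) and no punctures touch or overlap (Assumption~\ref{assump:mirror}(iv)), the axis of $\mathcal{T}_d$ is subdivided into alternating ``present'' segments (strips of unit triangles surviving in $T$) and ``absent'' segments (the axial punctures). I would count the fixed upward-pointing triangles minus the fixed downward-pointing triangles on the axis; a short combinatorial computation along the axis shows that the present segments between consecutive axial punctures are balanced, so the imbalance of the fixed-point set along the axis is governed solely by the parities of the side lengths $d_1,\dots,d_m$ of the axial punctures. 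Each axial puncture of \emph{odd} side length shifts the parity of the starting orientation of the next present segment, so if $N$ denotes the number of axial punctures with odd side length, the symmetric block is a square matrix whose size has a prescribed parity, while the antisymmetric block is an \emph{antisymmetric} (skew-symmetric up to sign conventions from the $\ZZ/2$-eigenspace decomposition) square matrix of odd size exactly when $N \equiv 2$ or $3 \pmod 4$.

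The heart of the argument — and the main obstacle — is to show that under the eigenspace splitting the ``$-1$-eigenspace'' block is genuinely skew-symmetric (or, more carefully, that the relevant determinant is the Pfaffian-type determinant of a skew-symmetric matrix), so that when its size is odd the determinant vanishes. This requires tracking signs carefully: $Z(T)$ itself is a $0$-$1$ matrix, not symmetric in the naive sense, because the row index set $B$ and column index set $W$ are \emph{different} sets of triangles; the reflection interchanges them only after one fixes a correspondence. Concretely, I would pair each downward-pointing triangle with a neighbouring upward-pointing one via a fixed reference tiling (which exists by Remark~\ref{rem:mirror-reg-tileable}), turning $Z(T)$ into an honest square matrix indexed by a single set with a $\ZZ/2$-action, and then verify that on the $(-1)$-eigenspace the resulting bilinear form is alternating. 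The parity bookkeeping of the previous paragraph then shows the $(-1)$-block has odd size precisely when $N\equiv 2,3\pmod 4$, forcing $\det Z(T)=0$. Alternatively — and this may be the cleaner route to actually write down — one can bypass the linear algebra entirely and argue via Theorem~\ref{thm:pm-matrix}: the reflection $\iota$ acts on the set of lozenge tilings of $T$, and I would show that $\msgn(\iota\tau) = (-1)^{N}\,(\text{sign contribution from fixed lozenges})\cdot \msgn(\tau)$ in such a way that, when $N\equiv 2,3\pmod 4$, every tiling $\tau$ satisfies $\msgn(\iota\tau) = -\msgn(\tau)$ while no tiling is $\iota$-fixed (any axial puncture of odd side length obstructs a symmetric tiling, or forces the central column of lozenges into a configuration of the wrong sign); the signed sum $\sum_\tau \msgn(\tau) = \det Z(T)$ then cancels in pairs $\{\tau,\iota\tau\}$ and vanishes. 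I expect the delicate point in either approach to be pinning down the precise sign picked up by a single reflection step, i.e.\ decomposing $\iota$ (as a permutation of $B\cup W$, or of the matched pairs) into transpositions and showing its signature is $(-1)^{\binom{N}{2}}$ up to an even correction, which is exactly what makes the residue of $N$ modulo $4$ the governing quantity.
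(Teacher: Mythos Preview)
Your overall idea---use the mirror reflection as a sign-reversing involution on the objects enumerated by the determinant---is exactly right and is what the paper does. But the paper executes it through a much cleaner channel than either of your two sketches, and the difference is worth noting.

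Rather than working with $Z(T)$ and perfect-matching signs, the paper passes to the lattice path matrix $N(T)$ (after rotating $T$ by $120^\circ$ so that the axis of symmetry runs through the lower-right corner). The reflection $\iota$ then acts on families $\Lambda$ of non-intersecting lattice paths, and its effect on the associated permutation $\lambda$ is completely explicit: $\hat\lambda = \alpha\,\lambda\,\beta$, where $\alpha$ is the reversal permutation on the $d_1+\cdots+d_m$ start points and $\beta$ is the product of $m$ separate reversals, one of length $d_i$ on the end points lying along the $i^{\rm th}$ axial puncture. Since a reversal of length $n$ has signature $(-1)^{\lfloor n/2\rfloor}$, one gets
\[
\sgn\hat\lambda \;=\; \sgn\lambda \cdot (-1)^{\lfloor (d_1+\cdots+d_m)/2\rfloor \,+\, \lfloor d_1/2\rfloor+\cdots+\lfloor d_m/2\rfloor}
\;=\; \sgn\lambda\cdot(-1)^{\lfloor q/2\rfloor},
\]
where $q$ is the number of odd $d_i$. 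This exponent is odd precisely when $q\equiv 2,3\pmod 4$, so in that case $\Lambda\mapsto\hat\Lambda$ is sign-reversing, $\det N(T)=0$, and Theorem~\ref{thm:detZN} gives $\det Z(T)=0$. No eigenspace decomposition, no reference tiling, no delicate $\msgn$ bookkeeping. Your guess that the signature comes out to $(-1)^{\binom{q}{2}}$ is correct (it has the same parity as $(-1)^{\lfloor q/2\rfloor}$), but the lattice-path framing makes the computation a two-line affair.

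Your first approach (block-decompose $Z(T)$ via the $\pm 1$ eigenspaces of $\iota$ and find a skew block of odd size) runs into a genuine obstacle you already flagged but did not resolve: rows and columns of $Z(T)$ are indexed by \emph{different} sets, so there is no intrinsic bilinear form to be skew. Fixing this with a reference tiling is possible in principle but amounts to redoing the sign computation in disguise. Your second approach (involution on tilings with $\msgn$) is viable but harder to make precise than the lattice-path version; also, your remark that one must separately show ``no tiling is $\iota$-fixed'' is unnecessary---once you know $\msgn(\iota\tau)=-\msgn(\tau)$ for all $\tau$, any fixed tiling would satisfy $\msgn(\tau)=-\msgn(\tau)$, which is impossible for a sign in $\{\pm 1\}$.
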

\begin{proof}
    Instead of directly considering the bi-adjacency matrix $Z(T)$ we study the lattice path matrix $N(T)$. To this end we
    first turn the region $T$ by $120^{\circ}$. Then $T = T_d (J)$, where
    \[
        J = (x^{d-b}, y^{d-b}, z^{h_1},
             (x y)^{\frac{1}{2}(d - d_2 - h_2)} z^{h_2} ,
             \ldots,
             (x y)^{\frac{1}{2}(d - d_m - h_m)} z^{h_m}).
    \]
    Note that the axis of symmetry of $T$ passes through its lower right corner. Thus the lattice path matrix $N(T)$ has
    $d - 2b = d_1 + \cdots + d_m$ rows and columns.

    Consider a family $\Lambda$ of non-intersecting lattice paths on $L(T)$. Let $\lambda$ be the associated
    permutation, that is, for each $i \in \{1,\ldots, d_1 + \cdots + d_m \}$, the path starting at the vertex $A_i$ ends
    at the vertex $E_{\lambda (i)}$.

    Recall that  a permutation of the form
    \[
        \sigma =
        \begin{pmatrix}
            1 & 2 & \ldots & n-1 & n \\
            n & n-1 & \ldots & 2 & 1
        \end{pmatrix}
    \]
    has signature
    \[
        \sgn \sigma = (-1)^{\flfr{n}{2}}.
    \]
    We refer to $\sigma$ as a reflection of length $n$.

    Consider now the family of lattice paths $\hat{\Lambda}$ that is obtained from $\Lambda$ by reflecting it across the
    axis of symmetry of $T$. Let $\hat{\lambda}$ be its associated permutation. (Note it is possible that $\Lambda =
    \hat{\Lambda}$.) This reflection permutes the $d_1 + \cdots + d_m$ starting points of the lattice paths and, for
    each $i = 1,\ldots,m$, the $d_i$ end points on the $i^{\rm th}$ axial puncture. It follows that
    \[
        \hat{\lambda} = \alpha \cdot \lambda \cdot \beta,
    \]
    where $\alpha$ is a reflection of length $d_1 + \cdots + d_m$ and $\beta$ is the product of $m$ reflections having
    lengths $d_1,\ldots,d_m$. Hence, the signature of $\hat{\lambda}$ is
    \[
        \sgn \hat{\lambda} = \sgn \lambda \cdot (-1)^{\flfr{d_1 + \cdots + d_m}{2} + \flfr{d_1}{2} + \cdots + \flfr{d_m}{2}}.
    \]

    Let $q$ be the number of odd side lengths $d_1,\ldots,d_m$. Then the last formula implies
    \[
        \sgn \hat{\lambda} = \sgn \lambda \cdot (-1)^{\flfr{q}{2}}.
    \]
    Observe that $\flfr{q}{2}$ is odd if and only if $q$ is either $2$ or $3$ modulo $4$.

    Finally, suppose that the integer $q$ is either $2$ or $3$ modulo $4$. Then $\sgn(\lambda) = -\sgn(\hat{\lambda})$.
    Therefore, we have found a bijection from the families of non-intersecting lattice paths of $L(T)$ with positive
    sign to the families with negative sign. Hence, Theorem~\ref{thm:nilp-matrix} gives $\det{N(T)} = 0$. Using
    Theorem~\ref{thm:detZN}, we obtain $\det{Z(T)} = 0$.
\end{proof}

In particular, the above result applies to the following ideals.

\begin{example}\label{exa:mirror-symm-singular}
    Consider the ideal
    \[
        I = (x^a, y^c, z^c,  x^{\alpha} y^{\gamma} z^{\gamma}),
    \]
    where $a, c, \alpha, \gamma$ are integers satisfying
    \begin{equation*}
        \begin{split}
            0 < \alpha < a,\; 0 < \gamma < c,\; 2 (c- 2 \gamma) < 2a - \alpha, \; \text{and } \\
            \max \{a, c, \alpha + 2 \gamma \} < \frac{1}{3} (a + \alpha + 2c + 2 \gamma) \in \ZZ.
        \end{split}
    \end{equation*}
    Then Theorem \ref{thm:mirror-odd23} gives $\det Z(T) = 0$, where $T = T_d (I)$ and
    $d = \frac{1}{3} (a + \alpha + 2c + 2 \gamma)$.
\end{example}

Based on an extensive computer search using \emph{Macaulay2}~\cite{M2}, we offer the following conjectured
characterisation of the regularity of the bi-adjacency matrix.

\begin{conjecture} \label{con:zero-mirror}
    Let $T = T_d(I)$ be a be a mirror symmetric region that satisfies Assumption~\ref{assump:mirror}. Then $\det{Z(T)}
    \neq 0$ if and only if the number of axial punctures (including the top-most puncture) with odd side length is at
    most one.
\end{conjecture}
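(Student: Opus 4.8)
The plan is to prove the two implications separately. For the forward direction one must show $\det Z(T)=0$ whenever the number $q$ of axial punctures of odd side length is at least two; Theorem~\ref{thm:mirror-odd23} already gives this when $q\equiv 2$ or $3\pmod 4$, so the open cases are $q\equiv 0$ or $1\pmod 4$ with $q\ge 4$. For the converse one must show $\det Z(T)\neq 0$ when $q\le 1$. I would dispose of $q=0$ at once: if every axial puncture has even side length then every floating puncture of $T$ has even side length (the base corner punctures and the top corner puncture are not floating, and the remaining axial punctures are even by hypothesis), so by Proposition~\ref{pro:same-sign} all lozenge tilings of $T$ have the same sign, whence $|\det Z(T)|=\per{Z(T)}>0$ since $T$ is non-empty and tileable (Remark~\ref{rem:mirror-reg-tileable}). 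The same argument covers $q=1$ when the unique odd puncture is not floating. Thus the genuinely new cases are (a) $q\equiv 0$ or $1\pmod 4$, $q\ge 4$, in the vanishing direction, and (b) $q=1$ with the odd axial puncture floating, in the non-vanishing direction.

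For (a) the obstruction is precisely that the reflection $\rho$ about the axis, which drives Theorem~\ref{thm:mirror-odd23}, now acts on tilings (equivalently on families of non-intersecting lattice paths) in a \emph{sign-preserving} way: tracking the proof of Theorem~\ref{thm:mirror-odd23}, $\rho$ multiplies the lattice path sign by $(-1)^{\lfloor q/2\rfloor}$, which is $+1$ exactly when $q\equiv 0,1\pmod 4$. So a finer sign-reversing device is needed. The approach I would pursue is to use the mirror symmetry to block-decompose the bi-adjacency matrix $Z(T)$ (which becomes centrosymmetric once rows and columns are ordered compatibly with $\rho$) into a factorization $\det Z(T)=\pm\det Z_{+}\cdot\det Z_{-}$, the signed analogue of Ciucu's matchings factorization. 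One would then reinterpret $\det Z_{+}$ and $\det Z_{-}$ as signed lozenge-tiling (or lattice-path) enumerations of the two ``half regions'' of $T$, and show that at least one of these factors vanishes precisely when $q\ge 2$, most plausibly by applying a reflection/parity argument of the Theorem~\ref{thm:mirror-odd23} type to a sub-collection of the odd punctures inside one half. An induction on the number $m$ of axial punctures seems the natural scaffolding; a possible alternative is to compose $\rho$ with a canonically chosen twist of a cycle of lozenges of odd $E$-count (controlled via Lemma~\ref{lem:twist-sign} and Lemma~\ref{lem:lpsgn-cycle}) encircling a symmetric pair of odd punctures, and to verify the composition is an involution.

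For (b) I would again invoke the factorization $\det Z(T)=\pm\det Z_{+}\cdot\det Z_{-}$: with $q\le 1$, I expect each factor to be evaluable --- after turning $T$ by $120^{\circ}$ as in Theorem~\ref{thm:mirror-odd23} --- by the Lindstr\"om--Gessel--Viennot method together with the determinant-calculus evaluations of Section~\ref{sec:det} (a multi-block generalisation of Lemma~\ref{lem:split-binom-det}) to a nonzero quotient of products of hyperfactorials; this is exactly the pattern visible for two axial punctures in Corollary~\ref{cor:axes-central-Z}. A complementary route is to first remove forced sub-regions or overlapping-puncture blocks via Proposition~\ref{prop:remove-unique-tileable} and Corollary~\ref{cor:replace-two-punctures}, or to peel off one puncture by a resolution (Subsection~\ref{subsub:rez}), reducing to a region with fewer punctures and the same determinant up to sign, and then induct.

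The main obstacle is case (a). Unlike in Theorem~\ref{thm:mirror-odd23}, the vanishing for $q\equiv 0\pmod 4$ is invisible to the global reflection and must emerge from the interaction of the reflection with the internal structure of the paths. I expect the real difficulty to be carrying out the symmetric/antisymmetric factorisation of $Z(T)$ while keeping track of signs, and then \emph{identifying} which factor vanishes and why --- here the interleaving of the $m$ puncture blocks along the boundary prevents a direct appeal to the known Mahonian-type evaluations, so the vanishing must be extracted by a structural (parity/involution) argument on the half regions rather than by a closed-form computation.
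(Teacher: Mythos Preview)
The statement you are attempting to prove is labeled \emph{Conjecture} in the paper, and the paper does not contain a proof of it. The paper explicitly presents Conjecture~\ref{con:zero-mirror} as an open problem supported by computer evidence; immediately afterwards it records the partial evidence it can establish, namely Theorem~\ref{thm:mirror-odd23} (the vanishing when $q\equiv 2,3\pmod 4$) and Proposition~\ref{pro:mirror-even} (non-vanishing when $d_2,\ldots,d_m$ are all even). These are precisely the cases you dispose of at the outset. The remaining cases---vanishing for $q\equiv 0,1\pmod 4$ with $q\ge 4$, and non-vanishing for $q=1$ with the odd puncture floating---are genuinely open in the paper, and later in the text the authors pose as a question whether even the case $d_1$ even and exactly one of $d_2,\ldots,d_s$ odd gives a nonzero determinant.

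Your proposal is therefore not a proof but a research outline for attacking an open conjecture. The strategies you sketch (a Ciucu-type symmetric/antisymmetric factorisation of $Z(T)$ with sign-tracking, or a refined sign-reversing involution beyond the global reflection) are reasonable directions, but you yourself flag the central obstruction: for $q\equiv 0\pmod 4$ the reflection is sign-preserving, so any involution argument must exploit finer structure, and you do not supply one. Likewise, for the non-vanishing direction you appeal to an anticipated closed-form evaluation of the factors $\det Z_{\pm}$ via a multi-block generalisation of Lemma~\ref{lem:split-binom-det}, but no such evaluation is carried out. In short, your write-up accurately surveys the boundary between what is proved and what is conjectured, but does not cross it; the paper does not cross it either.
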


We have additional evidence for this conjecture in the case where the side lengths of all axial punctures except
possibly the top axial puncture are even.

\begin{proposition} \label{pro:mirror-even}
    Let $T = T_d(I)$ be a triangular region as in Assumption~\ref{assump:mirror} with parameters $b$ and
    $(h_1, d_1)$, \ldots, $(h_m, d_m)$.  If $d_2, \ldots, d_m$ are even, then $|\det{Z(T)}| = |\per{Z(T)}| > 0$.
\end{proposition}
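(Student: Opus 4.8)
The plan is to prove the two assertions $\per{Z(T)} > 0$ and $|\det{Z(T)}| = \per{Z(T)}$ separately; together they give the statement.

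First I would dispatch positivity of the permanent. Since $T$ is balanced and non-empty by Assumption~\ref{assump:mirror}(i) and is tileable by Remark~\ref{rem:mirror-reg-tileable}, Proposition~\ref{pro:per-enum} identifies $\per{Z(T)}$ with the number of lozenge tilings of $T$, and this number is at least one, so $\per{Z(T)} > 0$.

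Next I would set up to apply Proposition~\ref{pro:same-sign}, whose hypothesis requires that every floating puncture of $T$ have even side length. To check this, I would enumerate the punctures of $T$: the two base punctures, which lie in the bottom corners of $\mathcal{T}_d$, and the $m$ axial punctures, the top-most of which lies in the top corner of $\mathcal{T}_d$ by Assumption~\ref{assump:mirror}(iii). Each of these three corner punctures meets the boundary of $\mathcal{T}_d$ and is therefore non-floating. Moreover, by Assumption~\ref{assump:mirror}(iv) no two punctures of $T$ touch or overlap, so the recursive clause in the definition of a floating puncture is never triggered, and a puncture of $T$ is non-floating precisely when it meets the boundary of $\mathcal{T}_d$. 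Hence every floating puncture of $T$ is an axial puncture with index in $\{2, \ldots, m\}$, and by hypothesis each such puncture has even side length $d_i$.

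With this in hand, Proposition~\ref{pro:same-sign} applies and yields $|\det{Z(T)}| = \per{Z(T)}$, which combined with the first step completes the proof. There is no genuine obstacle here; the only subtlety to watch is that the unique axial puncture allowed to have odd side length, namely the top-most one, is pinned to a corner of $\mathcal{T}_d$ and hence is non-floating, so the assumed parity of $d_2, \ldots, d_m$ really does account for all floating punctures of $T$.
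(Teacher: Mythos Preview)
Your proof is correct and follows the same approach as the paper's own argument: identify the floating punctures as a subset of the axial punctures indexed by $\{2,\ldots,m\}$, note these all have even side length by hypothesis, and invoke Proposition~\ref{pro:same-sign} together with tileability (Remark~\ref{rem:mirror-reg-tileable}). The paper's proof is simply terser, compressing your two paragraphs into two sentences.
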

\begin{proof}
    The assumptions imply that the floating punctures of $T$ all have an even side length. Since $T$ is tileable, the
    conclusion is an immediate consequence of Proposition~\ref{pro:same-sign}.
\end{proof}~

\subsection{Explicit enumerations}~\label{sub:ciucu}

Ciucu~\cite{Ci-2005} gave explicit formul\ae\ for the (unsigned) enumeration of lozenge tilings of mirror symmetric
regions. These formul\ae\ were found using techniques first described in~\cite{Ci-1997}. We recall a few
definitions following~\cite[Part~B, Section~1]{Ci-2005}.

Let $(a)_k := a (a+1) \cdots (a+k-1)$ be the shifted factorial, also known as the rising factorial. For nonnegative
integers $m$ and $n$, let $B_{m,n}(x)$ and $\overline{B}_{m,n}(x)$ be the monic polynomials defined by
\begin{equation} \label{eqn:bmn}
    \begin{split}
        B_{m,n}(x) =& 2^{-mn-m(m-1)/2} (x+n+1)_m (x+n+2)_m \\
                    & \times \prod_{i=1}^{\clfr{n-1}{2}} (x+1+i)_{n+1 - 2i} \; \times \prod_{i=1}^{\clfr{n}{2}} \left(x+\frac{1}{2}+i\right)_{n+2 - 2i} \\
                    & \times \prod_{i=1}^{n} \frac{(x+i)_m}{(x+i+1/2)_m} \times \prod_{i=1}^{m} (2x+n+i+2)_{n+i-1}
    \end{split}
\end{equation}
and
\begin{equation} \label{eqn:b-mn}
    \begin{split}
        \overline{B}_{m,n}(x) =& 2^{-mn-n(n+1)/2} (x+m+1)_n \\
                               & \times \prod_{i=1}^{\clfr{m}{2}} (x+i)_{m+2 - 2i} \; \times \prod_{i=1}^{\clfr{m-1}{2}} \left(x+\frac{1}{2}+i\right)_{m+1 - 2i} \\
                               & \times \prod_{i=1}^{m} \frac{(x+i)_n}{(x+i+1/2)_n} \times \prod_{i=1}^{n} (2x+m+i+2)_{m+i},
    \end{split}
\end{equation}
respectively.

Moreover, for (possibly empty) sequences $\mathbf{p} = (p_1, \ldots, p_m)$ and $\mathbf{q} = (q_1, \ldots, q_n)$ of
strictly increasing positive integers, define rational numbers $c_{\mathbf{p, q}}$ and $\overline{c}_{\mathbf{p, q}}$ by
\begin{equation} \label{eqn:cpq}
    c_{\mathbf{p, q}} =
        2^{\tbinom{n-m}{2}-m}
        \prod_{i=1}^m \frac{1}{(2p_i)!}
        \prod_{i=1}^n \frac{1}{(2q_i-1)!}
        \frac{\prod_{1 \leq i < j \leq m} (p_j - p_i) \prod_{1 \leq i < j \leq n} (q_j - q_i)}
             {\prod_{i=1}^m \prod_{j=1}^n (p_i + q_j)}
\end{equation}
and
\begin{equation} \label{eqn:c-pq}
    \overline{c}_{\mathbf{p, q}} =
        2^{\tbinom{n-m}{2}-m}
        \prod_{i=1}^m \frac{1}{(2p_i-1)!}
        \prod_{i=1}^n \frac{1}{(2q_i)!}
        \frac{\prod_{1 \leq i < j \leq m} (p_j - p_i) \prod_{1 \leq i < j \leq n} (q_j - q_i)}
             {\prod_{i=1}^m \prod_{j=1}^n (p_i + q_j)},
\end{equation}
respectively. (We note that in \cite{Ci-2005}, $\mathbf{l}$ is used in place of $\mathbf{p}$. We changed notation for
ease of reading.)

Following still~\cite[Part~B, Section~5]{Ci-2005}, for given parameters $\mathbf{p}$ and $\mathbf{q}$ as above, define
polynomials $P_{\mathbf{p, q}}$ and $\overline{P}_{\mathbf{p, q}}$ by
\begin{equation} \label{eqn:Ppq}
    \begin{split}
        P_{\mathbf{p,q}}(x) =& c_{\mathbf{p, q}} B_{m,n}(x+ p_m - m) \\
                             & \times \prod_{i=1}^m \prod_{j=i}^{p_i-1} (x + p_m - j) (x + p_m - m + n + j + 2) \\
                             & \times \prod_{i=1}^n \prod_{j=1}^{q_i-1} (x + p_m - m + n - j + 1) (x + p_m + j + 1)
    \end{split}
\end{equation}
and
\begin{equation} \label{eqn:P-pq}
    \begin{split}
        \overline{P}_{\mathbf{p,q}}(x) =& \overline{c}_{\mathbf{p, q}} \overline{B}_{m,n}(x+ p_m - m) \\
                                        & \times \prod_{i=1}^m \prod_{j=i}^{p_i-1} (x + p_m - j) (x + p_m - m + n + j + 1) \\
                                        & \times \prod_{i=1}^n \prod_{j=1}^{q_i-1} (x + p_m - m + n - j) (x + p_m + j + 1),
    \end{split}
\end{equation}
respectively.  Furthermore, define the following modifications of $\mathbf{p} = (p_1, \ldots, p_m)$:
\begin{equation*}
    \mathbf{p} - 1 =
    \begin{cases}
        (p_1 - 1, \ldots, p_m - 1) & \text{if } p_1 > 1 \\
        (p_2 - 1, \ldots, p_m - 1) & \text{if } p_1 = 1
    \end{cases}.
\end{equation*}
and
\[
    \mathbf{p}^{(m)} = (p_1, \ldots, p_{m-1}).
\]

Let $T = T_d(I)$ be a mirror symmetric region as in Assumption~\ref{assump:mirror} with parameters $b$ and $(h_1,
d_1), \ldots, (h_s, d_s)$, where $s \geq 1$ and $d_2, \ldots, d_{s-1}$ are all even. In order to use the notation
introduced above, define $a := d_1$ and $k := d_2 + d_3 + \cdots + d_s$. Moreover, if $d_s$ is even (i.e., $k$ is even),
then set
\begin{equation*}
    \begin{split}
        \mathbf{p} :=  &  \left (   1, 2,\ldots, \flfr{h_s}{2},  \flfr{d_s + h_s}{2} + 1, \flfr{d_s + h_s}{2} + 2,\ldots, \flfr{h_{s-1}}{2}, \ldots, \right.\\[1ex]
                       & \hspace*{0.2cm} \left. \flfr{d_3 + h_3}{2} +1, \flfr{d_3 + h_3}{2} + 2,\ldots, \flfr{h_2}{2}, \flfr{d_2 + h_2}{2} + 1, \flfr{d_2 + h_2}{2} +2,\ldots, \clfr{h_1}{2} \right )
    \end{split}
\end{equation*}
and $\mathbf{q} = \emptyset$. More precisely, $\mathbf{p}$ is a concatenation of $s$ lists, where the $i^{\rm th}$ list,
counted from the end, consists of consecutive integers $\flfr{d_i + h_i}{2} +1, \flfr{d_i + h_i}{2} + 2,\ldots,
\flfr{h_{i-1}}{2}$ if $2 \leq i \leq s-1$. Thus, if $s=1$, then $\mathbf{p} = (1, 2,\ldots,\clfr{h_1}{2})$. If $s \geq 2$,
then $\mathbf{p}$ has
\[
    m = \flfr{h_s}{2} + \clfr{h_1}{2} - \flfr{d_2 + h_2}{2} + \sum_{i=2}^{s-1} \left ( \flfr{h_{i-1}}{2} - \flfr{d_i + h_i}{2} \right )
\]
entries.

If $d_s$ is odd (i.e., $k$ is odd), then set  $\mathbf{p} := (1, 2, \ldots, \flfr{h_s}{2})$ and
\begin{equation*}
    \begin{split}
        \mathbf{q} := & \left ( \flfr{d_s}{2} + 1,  \flfr{d_s}{2} + 2,\ldots, \flfr{h_{s-1} - h_{s}}{2},  \right. \\[1ex]
                      & \hspace*{0.2cm} \flfr{d_{s-1} + h_{s-1} - h_{s}}{2} +1, \flfr{d_{s-1} + h_{s-1} - h_{s}}{2} + 2, \ldots,  \flfr{h_{s-2} - h_{s}}{2}, \ldots, \\
                      & \hspace*{0.2cm} \left. \flfr{d_{2} + h_{2} - h_{s}}{2}  + 1, \flfr{d_{2} + h_{2} - h_{s}}{2}  +  2,\ldots, \flfr{h_{1} - h_{s}}{2} \right ).
    \end{split}
\end{equation*}
This time the $(i-1)^{\rm st}$ sublist of $\mathbf{q}$, counted from the end, consists of consecutive integers
$\flfr{d_{i} + h_{i} - h_{s}}{2} + 1, \flfr{d_{i} + h_{i} - h_{s}}{2} + 2,\ldots, \flfr{h_{i-1} - h_{s}}{2}$ if
$2 \leq i \leq s$. Thus, if $s=1$, then $\mathbf{q} = \emptyset$. If $s \geq 2$, then $\mathbf{q}$ has
\[
    n = \flfr{h_{s-1} - h_s}{2} - \flfr{d_s}{2} +
    \sum_{i=2}^s \left ( \flfr{h_{i-1} - h_{s}}{2} - \flfr{d_{i} + h_{i} - h_{s}}{2} \right )
\]
entries.

Using this notation, we can now apply a result in \cite{Ci-2005} that enumerates the number of unsigned lozenge tilings
of a mirror-symmetric region in various cases.

\begin{theorem} \label{thm:ciucu-11}
    Let $T = T_d(I)$ be a mirror symmetric satisfying Assumption~\ref{assump:mirror} with parameters
    $b$ and $(h_1, d_1), \ldots, (h_s, d_s)$, where $s \geq 1$ and $d_2, \ldots, d_{s-1}$ are all even.
    Define $a$, $k$, $\mathbf{p}$, and $\mathbf{q}$ be as above.  Then:
    \begin{enumerate}
        \item If $d_1$ is even, $d_s$ is even, and $h_s \geq 2$, then    
            $\per{Z(T)} = 2^m P_{\emptyset, \mathbf{p}}\left(\frac{a+k-2}{2}\right) \overline{P}_{\mathbf{p}-1, \emptyset} \left(\frac{a}{2} \right).$
        \item If $d_1$ is even, $d_s$ is even, and $0 \leq h_s < 2$, then    
            $\per{Z(T)} = 2^m P_{\emptyset, \mathbf{p}}\left(\frac{a+k-2}{2}\right) P_{\mathbf{p}-1, \emptyset} \left(\frac{a}{2} \right).$
        \item If $d_1$ is odd and $d_s$ is even, then      
            $\per{Z(T)} = 2^m P_{\emptyset, \mathbf{p}^{(m)}}\left(\frac{a+k-1}{2}\right) \overline{P}_{\mathbf{p}, \emptyset} \left(\frac{a-1}{2} \right).$
        \item If $d_1$ is even, $d_s$ is odd, and $h_s \geq 2$, then    
            $\per{Z(T)} = 2^{m+n} \overline{P}_{\mathbf{p,q}} \left(\frac{a+k-1}{2}\right) P_{\mathbf{q, p}^{(m)}} \left(\frac{a}{2} \right).$
        \item If $d_1$ and $d_s$ are both odd, then    
            $\per{Z(T)} = 2^{m+n} \overline{P}_{\mathbf{p,q}^{(n)}} \left(\frac{a+k}{2}\right) P_{\mathbf{q, p}} \left(\frac{a-1}{2} \right).$
    \end{enumerate}
    Moreover, in cases {\rm (i)--(iii)}, $|\det{Z(T)}| = \per{Z(T)}$.
\end{theorem}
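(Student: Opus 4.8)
The plan is to establish the five enumeration formulas for $\per Z(T)$ by transporting Ciucu's tiling counts from \cite{Ci-2005}, and then, in cases (i)--(iii), to upgrade $\per Z(T)$ to $|\det Z(T)|$ via the single-sign criterion of Proposition~\ref{pro:same-sign}.

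First I would note that, by Proposition~\ref{pro:per-enum}, $\per Z(T)$ is exactly the number of lozenge tilings of $T$, and that $T$ is tileable by Remark~\ref{rem:mirror-reg-tileable}. Under Assumptions~\ref{assump:mirror}(iii)--(iv), the region $T = T_d(I)$ (with $I$ as in Remark~\ref{rem:mirror-prop}(v)) is a hexagon obtained from $\mathcal T_d$ by removing corner triangles of sizes $d_1$, $b$, $b$, carrying a vertical chain of upward-pointing triangular holes of sizes $d_2,\dots,d_s$ along the axis of symmetry --- precisely a region of the kind enumerated in \cite[Part~B]{Ci-2005}. The heart of this step is the parameter dictionary: one reads $b$ and $(h_1,d_1),\dots,(h_s,d_s)$ into Ciucu's parameters, confirms that the hexagon dimensions come out so that the arguments plugged into $P$ and $\overline P$ are the stated $\tfrac{a+k-2}{2},\ \tfrac a2,\ \tfrac{a+k-1}{2},\ \tfrac{a-1}{2},\ \tfrac{a+k}{2}$, and verifies that the index sequences $\mathbf p,\mathbf q$ defined before the theorem coincide, case by case, with Ciucu's index sets. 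The split into (i)--(v) mirrors the parity subdivision there (parity of $d_1$ and $d_s$, equivalently of $a$ and $k$) together with the dichotomy $h_s \ge 2$ versus $0\le h_s<2$, i.e.\ whether the bottom axial puncture touches a corner; in each case the relevant enumeration theorem of \cite{Ci-2005} then applies directly, the normalisation factors $2^m$, $2^{m+n}$ accounting for Ciucu's reduction of a symmetric region to ``half'' of itself.

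For the last assertion, I would show that in cases (i), (ii), (iii) every floating puncture of $T$ has even side length, so that Proposition~\ref{pro:same-sign} forces all lozenge tilings of $T$ to have the same perfect matching sign, giving $\per Z(T) = |\det Z(T)|$. By Assumption~\ref{assump:mirror}(ii) the punctures are the two base corner punctures and the axial punctures $1,\dots,s$; the base corner punctures and, by Assumption~\ref{assump:mirror}(iii), the top axial puncture touch the boundary of $\mathcal T_d$ and hence are non-floating; the only punctures that can be floating are then among $2,\dots,s$, of side lengths $d_2,\dots,d_s$, which are all even in cases (i)--(iii) --- in (i) and (ii) because $d_1,d_s$ are even and $d_2,\dots,d_{s-1}$ are even by hypothesis, and in (iii) because $d_2,\dots,d_s$ are even (only $d_1$, the side length of the non-floating top puncture, is odd). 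In cases (iv) and (v) one has $s\ge 2$ with $d_s$ odd, hence a floating puncture of odd side length, which is why the equality is not claimed there.

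The main obstacle is the first step: pinning down the dictionary between $(b;(h_i,d_i))$ and Ciucu's data, and in particular checking that the rather involved definitions of $\mathbf p$ and $\mathbf q$ preceding the theorem truly match Ciucu's index sequences in each of the parity/corner cases --- including the degenerate ones $s=1$ and $h_s<2$ --- and reconciling the conventions of the two treatments (orientation of the removed triangles, which half of the symmetric region is doubled, and the resulting powers of $2$). Once this bookkeeping is carried out, the five formulas follow from \cite{Ci-2005}, and the sign statement is immediate from Proposition~\ref{pro:same-sign}.
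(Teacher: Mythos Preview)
Your proposal is correct and follows essentially the same route as the paper: invoke Proposition~\ref{pro:per-enum} to identify $\per Z(T)$ with the tiling count, translate the five parity/position conditions on $(d_1,d_s,h_s)$ into the conditions on $(a,k,\mathbf p)$ used in \cite[Part~B, Theorem~1.1]{Ci-2005} and read off the formulas, then apply Proposition~\ref{pro:same-sign} in cases (i)--(iii) where all floating punctures have even side length. The paper's proof is terser about the dictionary --- it simply records that (i)--(v) correspond respectively to ($k$ even, $a$ even, $p_1=1$), ($k$ even, $a$ even, $p_1>1$), ($k$ even, $a$ odd), ($k$ odd, $a$ even, $\mathbf p\neq\emptyset$), ($k$ and $a$ odd) --- but the substance is the same as what you outline.
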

\begin{proof}
    By Proposition~\ref{pro:per-enum}, we know that $\per{Z(T)}$ enumerates the \emph{unsigned} lozenge tilings of $T$.
    Furthermore, observe that the five conditions in the statement are equivalent to the following conditions on $a$,
    $k$, and $\mathbf{p}$ in the corresponding order:
    \begin{enumerate}
        \item $k$ is even, $a$ is even, and $p_1 = 1$.
        \item $k$ is even, $a$ is even, and $p_1 > 1$.
        \item $k$ is even and $a$ is odd.
        \item $k$ is odd, $a$ is even, and $\mathbf{p} \neq \emptyset$.
        \item $k$ and $a$ are both odd.
    \end{enumerate}
    Now the stated formul\ae\ for $\per Z(T)$ follow from  \cite[Part~B, Theorem~1.1]{Ci-2005}.

    Finally, note that $k$ is even if and and only if all but the top axial punctures have even side length. Hence
    Proposition~\ref{pro:same-sign} gives $|\det{Z}| = \per{Z}$ in the cases (i)--(iii).
\end{proof}

In case (v) of Theorem \ref{thm:ciucu-11}, the determinant is not equal to the permanent. This is also true in case (iv) in general.

\begin{remark}
    In case (v) of Theorem \ref{thm:ciucu-11}, as $d_1$ and $d_s$ are odd, $T_d(I)$ has exactly two axial punctures
    with odd side lengths. Thus, Theorem~\ref{thm:mirror-odd23} gives $\det{Z(T_d(I))} = 0 < \per Z(T)$.

    Moreover, in case (iv), in general, the permanent does not enumerate the \emph{signed} lozenge tilings of
    $T_d(I)$. Consider, for example, the ideal $I = (x^5, y^5, z^5, x^2y^2z^2)$. Then $|\det{Z(T_7(I))}| = 50$ and
    $\per{Z(T_7(I))} = 54$.
\end{remark}

Observe that in Theorem~\ref{thm:ciucu-11}(iv), the conditions that $d_1$ be even and $d_s$ be odd force that $h_s$ is even.
Thus, we left out precisely the case, where $h_s = 0$ although this case is included in \cite{Ci-2005}. It seems (to us)
that in this specific situation the formula given in \cite[Part~B, Theorem~1.1]{Ci-2005} needs an adjustment.

\begin{example}
    Let $T = T_5(I)$, where $I = (x^3, y^4, z^4, y^2 z^2)$; see Figure~\ref{fig:ciucu-12-cn-8}.
    \begin{figure}[!ht]
        \includegraphics[scale=1]{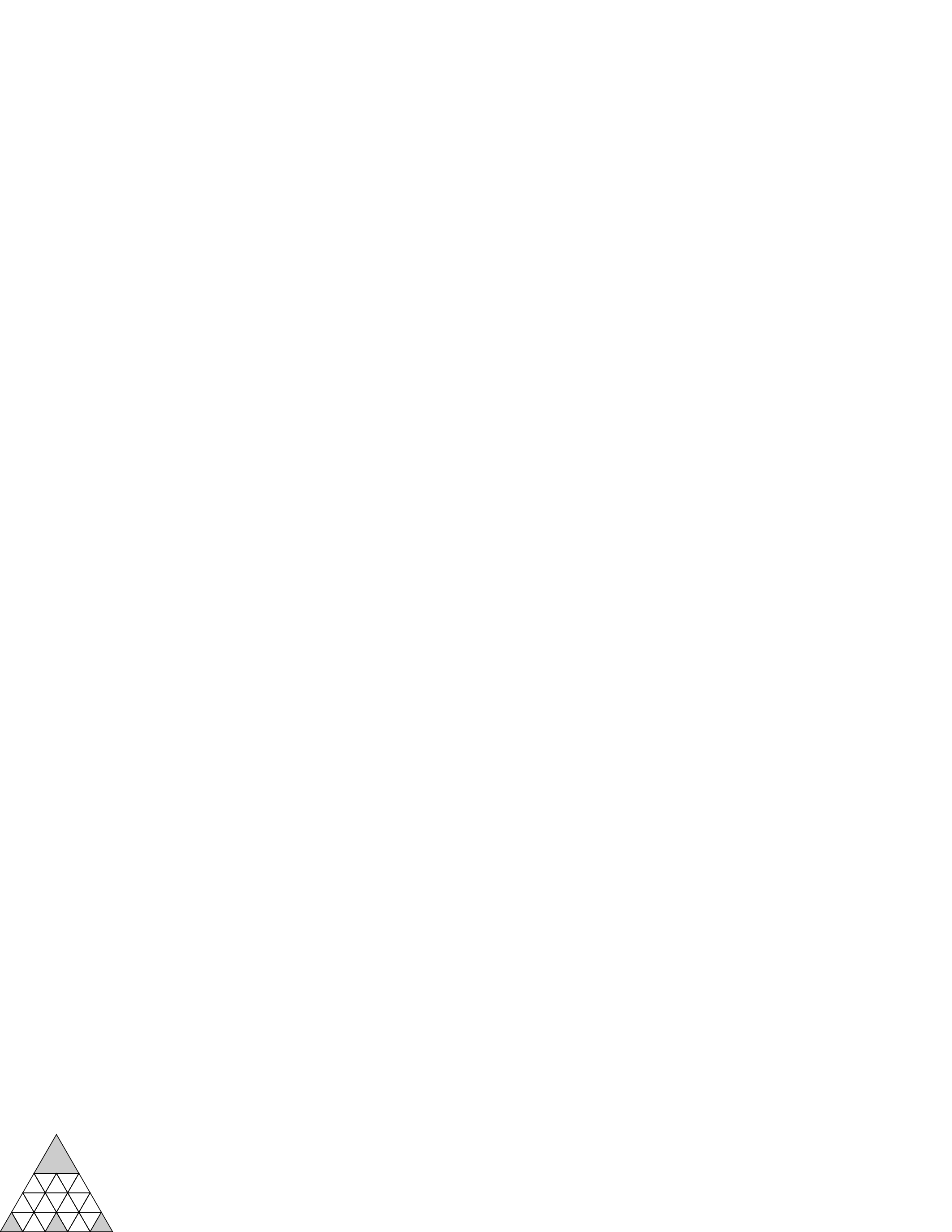}
        \caption{The triangular region $T_5(x^3, y^4, z^4, y^2 z^2)$.}
        \label{fig:ciucu-12-cn-8}
    \end{figure}
    Then $b = 1$, $(h_1, d_1) = (3, 2)$, and $(h_2, d_2) = (0,1)$.  That is, $a = 2$, $b = 1$, $k = 1$, $\mathbf{p} = \emptyset$,
    and $\mathbf{q} = \{1\}$.  If we apply the formula in Theorem~\ref{thm:ciucu-11}(iv) with these parameters, then we get
    \[
        2^1 \cdot \overline{P}_{\emptyset, \{1\}}(1) \cdot P_{\{1\},\emptyset}(1) = 2 \cdot 2 \cdot 3 = 12.
    \]
    However, in this case we can use Proposition~\ref{pro:two-mahonian} with parameters $c = 3$, $a = b = 4$, and
    $\alpha = \beta = 2$ to see that
    \[
    \per Z(T) =     \Mac(1,1,1) \cdot \Mac(1,1,1) \cdot \frac{\HF(2) \HF(2) \HF(3) \HF(5)}{\HF(4) \HF(4) \HF(3) \HF(1)}
        = 2 \cdot 2 \cdot \frac{1 \cdot 1 \cdot 2 \cdot 288}{12 \cdot 12 \cdot 2 \cdot 1} = 8.
    \]
    Similarly, we can compute the determinant of the lattice path matrix of $T$:
    \[
        \det{N(T)} = \det{\left( \begin{array}{cc} 4 & 2 \\ 6 & 1 \end{array} \right)} = -8.
    \]

    We note, however, that if modify the formula in Theorem~\ref{thm:ciucu-11}(iv) to read
    (notice the function $P$ is now the function $\overline{P}$)
    \[
        2^{m+n} \overline{P}_{\mathbf{\emptyset,q}} \left(\frac{a+k-1}{2}\right) \overline{P}_{\mathbf{q, \emptyset}} \left(\frac{a}{2} \right),
    \]
    then we \emph{do} get the correct value:
    \[
        2^1 \cdot \overline{P}_{\emptyset, \{1\}}(1) \cdot \overline{P}_{\{1\},\emptyset}(1) = 2 \cdot 2 \cdot 2 = 8.
    \]
\end{example}

The above modification of the formula in Theorem~\ref{thm:ciucu-11}(iv) gives the correct result in general. It follows
that we get an explicit formula for the permanent, and hence for the determinant of the bi-adjacency matrix of the
associated bipartite graph.

\begin{theorem} \label{thm:ciucu-corrected}
    Let $T = T_d(I)$ be a triangular region as in Assumption~\ref{assump:mirror} with parameters $b$ and $(h_1, d_1),
    \ldots, (h_s, d_s)$, where $s \geq 1$ and $d_2, \ldots, d_{s-1}$ are all even. Define $a$, $k$, $\mathbf{p}$, and
    $\mathbf{q}$ as introduced above Theorem~\ref{thm:ciucu-11}.

    If $d_1$ is even, $d_s$ is odd, and $h_s = 0$, then
    \[
        |\det{Z(T)}| = \per{Z(T)} =
        2^{m+n} \overline{P}_{\mathbf{\emptyset,q}} \left(\frac{a+k-1}{2}\right) \overline{P}_{\mathbf{q, \emptyset}} \left(\frac{a}{2} \right).
    \]
\end{theorem}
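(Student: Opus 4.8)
The plan is to establish the two asserted equalities in turn: first the identity $|\det Z(T)| = \per Z(T)$, and then the closed-form evaluation of $\per Z(T)$.

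For the first, I would appeal to Proposition~\ref{pro:same-sign}. By Remark~\ref{rem:mirror-reg-tileable} the region $T$ is tileable, so it suffices to check that every floating puncture of $T$ has even side length. The punctures of $T$ are the two base punctures of side length $b$ together with the axial punctures $1,\ldots,s$ (Remark~\ref{rem:mirror-prop}(v)). The base punctures sit in the bottom corners of $\mathcal{T}_d$, the first axial puncture sits in the top corner by Assumption~\ref{assump:mirror}(iii), and --- this is where the hypothesis $h_s = 0$ enters --- the $s$-th axial puncture has its base on the bottom edge of $\mathcal{T}_d$; hence all of these punctures are non-floating. Consequently the only floating punctures of $T$ are among the axial punctures $2,\ldots,s-1$, whose side lengths $d_2,\ldots,d_{s-1}$ are even by hypothesis. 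Proposition~\ref{pro:same-sign} then gives $\per Z(T) = |\det Z(T)|$. (Observe that this is exactly the structural feature absent in Theorem~\ref{thm:ciucu-11}(iv) when $h_s \geq 2$: there the $s$-th puncture is floating with odd side length $d_s$, which is why that case genuinely satisfies $|\det Z(T)| < \per Z(T)$ in general, as the preceding remark records.)

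It remains to count the lozenge tilings of $T$, since $\per Z(T)$ is their number by Proposition~\ref{pro:per-enum}. The region $T$ is one of the vertically symmetric cored regions enumerated in \cite[Part~B]{Ci-2005}; with $h_s = 0$, $d_s$ odd, and $d_1$ even one has $\mathbf{p} = \emptyset$ and $m = 0$, so the bottom axial puncture now abuts the boundary and $T$ degenerates to the boundary between two of Ciucu's parity families. Rather than quoting the printed formula (which needs the indicated correction in precisely this configuration), I would re-derive the count directly. Following the proof of Theorem~\ref{thm:mirror-odd23}, rotate $T$ by $120^{\circ}$ and write down its lattice path matrix $N(T)$: because every non-intersecting family of lattice paths in $L(T)$ is forced into channels delimited by the axial punctures, $N(T)$ is a matrix of binomial coefficients whose block structure is governed by the runs of consecutive integers making up $\mathbf{q}$ (and the trivial list $\mathbf{p} = \emptyset$). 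Evaluating $\det N(T)$ by the determinant identity \cite[Equation~(12.5)]{CEKZ} --- the same tool used to prove Lemma~\ref{lem:split-binom-det} --- collecting the resulting product of hyperfactorials, and recognising it against the definitions of $\overline{B}_{m,n}$, $\overline{c}_{\mathbf{p},\mathbf{q}}$, and $\overline{P}_{\mathbf{p},\mathbf{q}}$ yields $|\det N(T)| = 2^{m+n}\,\overline{P}_{\emptyset,\mathbf{q}}\!\left(\tfrac{a+k-1}{2}\right)\overline{P}_{\mathbf{q},\emptyset}\!\left(\tfrac{a}{2}\right)$. By Theorem~\ref{thm:detZN} this equals $|\det Z(T)|$, which we have already identified with $\per Z(T)$, completing the proof. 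The degeneration $\mathbf{p} = \emptyset$ is exactly what replaces the factor $P_{\mathbf{q},\mathbf{p}^{(m)}}$ of Theorem~\ref{thm:ciucu-11}(iv) by the ``barred'' factor $\overline{P}_{\mathbf{q},\emptyset}$: once the lowest channel of paths is flush against the bottom edge, it is counted by the $\overline{P}$-type rather than the $P$-type kernel.

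The main obstacle is the determinant evaluation in the second step: organising the multi-block binomial matrix $N(T)$, whose block sizes are the halved lengths of the runs in $\mathbf{q}$, and then massaging the product of hyperfactorials coming out of \cite[Equation~(12.5)]{CEKZ} into the compact $\overline{P}$-form --- essentially redoing Ciucu's computation in the degenerate case. To control the bookkeeping I would first verify the base cases $s = 1$ and $s = 2$ directly; the case $s=2$, $a=2$, $b=1$, $k=1$ is precisely the example $I = (x^3, y^4, z^4, y^2 z^2)$ worked out above, where $\det N(T) = \det\!\left(\begin{array}{cc} 4 & 2 \\ 6 & 1 \end{array}\right) = -8 = -\,2^{1}\overline{P}_{\emptyset,\{1\}}(1)\,\overline{P}_{\{1\},\emptyset}(1)$, and then induct on the number $s$ of axial punctures, at each step peeling off the forced lozenges attached to the outermost axial puncture and invoking Theorem~\ref{thm:detZN} and Proposition~\ref{pro:same-sign} to guarantee that no sign cancellation occurs, so that the inductive determinant computation faithfully computes the permanent.
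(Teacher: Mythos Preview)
Your first step --- the argument that $|\det Z(T)| = \per Z(T)$ via Proposition~\ref{pro:same-sign}, using that $h_s = 0$ forces the bottom axial puncture onto the boundary so that the only possible floating punctures are the even-sided axial punctures $2,\ldots,s-1$ --- is exactly what the paper does.

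Your second step, however, takes a genuinely different route. The paper does \emph{not} compute $\det N(T)$ directly. Instead it follows Ciucu's proof of \cite[Part~B, Theorem~1.1]{Ci-2005} line by line and locates the single place where that argument breaks when $\mathbf{p} = \emptyset$: a congruence on page~123 of \cite{Ci-2005} asserts that a certain half-region is $R_{\mathbf{q},\mathbf{p}^{(m)}}(a/2)$, relying on the last vertebra below the centre being triangular, which is false precisely when $\mathbf{p} = \emptyset$. Replacing that half-region by $\overline{R}_{\mathbf{q},\emptyset}(a/2)$ repairs the congruence, and the rest of Ciucu's machinery (the Matchings Factorization Theorem, which is what produces the \emph{product} of two $\overline{P}$-factors) runs unchanged. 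This is a one-line fix to an existing proof, and it explains structurally why the answer is a product of two pieces: the symmetric region splits along its axis into two half-regions counted separately.

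Your proposed direct evaluation of $\det N(T)$ via \cite[Equation~(12.5)]{CEKZ} is plausible in outline but substantially harder, and there is a real gap you should be aware of. The identity \cite[Equation~(12.5)]{CEKZ} will hand you a single product of factorials indexed by the sequence $L_j$ encoding all the axial punctures at once; the claimed answer, by contrast, is already factored as $\overline{P}_{\emptyset,\mathbf{q}}(\cdot)\cdot\overline{P}_{\mathbf{q},\emptyset}(\cdot)$. Matching these is not just bookkeeping: that factorisation is the combinatorial content of Ciucu's symmetric decomposition, and recovering it from the raw hyperfactorial product amounts to re-proving that decomposition algebraically. Your fallback induction on $s$ (peeling off the outermost axial puncture) would also need an identity relating $\overline{P}_{\emptyset,\mathbf{q}}\,\overline{P}_{\mathbf{q},\emptyset}$ to the corresponding product with $\mathbf{q}$ truncated, which you have not supplied. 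So while nothing you wrote is wrong, the hard part of the argument is deferred rather than done, and the paper's one-step correction to Ciucu is both shorter and more illuminating.
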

\begin{proof}
    We first note that $d_1$ being even implies $a$ is even, $d_s$ being odd implies $k$ is odd, and $h_s = 0$ implies
    $\mathbf{p} = \emptyset$. Since $d_2, \ldots, d_{s-1}$ are all even, all floating punctures of $Z(T)$ have even side
    length. Hence, Proposition~\ref{pro:same-sign} gives $|\det{Z}(T)| = \per{Z(T)}$.

    Proposition~\ref{pro:per-enum} shows that $\per{Z(T)}$ enumerates the \emph{unsigned} lozenge tilings of $T$. Thus,
    by making a single adjustment, the claim follows as in~\cite[Part~B, Section~3, Proof of Theorem~1.1]{Ci-2005}. We
    defer the details to the Appendix (see Proposition~\ref{prop:ciucu-corrected}) as this requires different arguments
    that will not be used again in the body of this work.
\end{proof}

The explicit formul\ae\ above allow us to extract upper bounds on the prime divisors of the signed enumerations of
lozenge tilings. In the next section we will show how this is related to the presence of the weak Lefschetz property in
positive characteristic.

\begin{proposition} \label{pro:ciucu-prime-bounds}
    Let $T = T_d(I)$ be a triangular region as in Assumption~\ref{assump:mirror} with parameters
    $b$ and $(h_1, d_1), \ldots, (h_s, d_s)$, where $s \geq 1$ and $d_2, \ldots, d_{s-1}$ are all even.

    If either (i) $d_s$ is even, or (ii) $d_1$ is even, $d_s$ is odd, and $h_s = 0$, then the prime divisors
    of $\det{Z(T)}$ are less than $d$.
\end{proposition}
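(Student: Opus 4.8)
The plan is to reduce the statement to an elementary divisibility fact about hyperfactorials. First I would record the observation: since $\HF(n) = \prod_{i=0}^{n-1} i!$ and every prime dividing $i!$ is less than $i+1 \le n$, every prime dividing $\HF(n)$ is less than $n$; hence, if a positive integer $N$ divides some $2^M \prod_i \HF(a_i)$ with $M \ge 0$ and all $a_i \le d$, then (for $d \ge 3$) every prime divisor of $N$ is less than $d$. The cases $d \le 2$ are trivial, as then $T$ is empty or a single lozenge and $\det Z(T) = \pm 1$.

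Next I would reduce to the explicit Ciucu enumerations. In case~(i) the integer $k = d_2 + \cdots + d_s$ is even (recall $d_2,\dots,d_{s-1}$ are even by hypothesis and $d_s$ is even), so — according to the parity of $d_1$ — we are in case (i), (ii) or (iii) of Theorem~\ref{thm:ciucu-11}; in case~(ii) we are in the situation of Theorem~\ref{thm:ciucu-corrected}. In each of these cases the cited theorem gives $|\det Z(T)| = \per Z(T)$, a positive integer since $T$ is tileable (Remark~\ref{rem:mirror-reg-tileable}), and expresses it explicitly through the polynomials $P_{\mathbf{p},\mathbf{q}}$ and $\overline{P}_{\mathbf{p},\mathbf{q}}$ evaluated at one of the points $\tfrac{a+k-2}{2}$, $\tfrac{a+k-1}{2}$, $\tfrac{a+k}{2}$, $\tfrac{a}{2}$, $\tfrac{a-1}{2}$, times a power of $2$.

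It then remains to rewrite each of these closed forms as $2^M \prod_i \HF(a_i) \big/ \prod_j \HF(b_j)$ with every numerator argument $a_i$ at most $d$; once this is done, $|\det Z(T)|$ divides $2^M \prod_i \HF(a_i)$ (the denominator being a positive integer), and the first paragraph concludes. Unwinding the definitions \eqref{eqn:Ppq}, \eqref{eqn:P-pq}, \eqref{eqn:bmn}, \eqref{eqn:b-mn}, \eqref{eqn:cpq}, \eqref{eqn:c-pq}, writing every shifted factorial $(x)_\ell$ and every ordinary factorial as a ratio of Gamma values and collecting, the half-integer shifts and residual powers of $2$ combine into an overall $2^M$ while the rest reassembles into a quotient of hyperfactorials — this is precisely the shape in which Ciucu derives these formulae in \cite{Ci-2005}, paralleling the statement in Theorem~\ref{thm:CEKZ-1245} that in the axes-central case the largest hyperfactorial is $\HF(A+B+C+M)$. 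Using $a = d_1$, $a+k = d-2b$, $h_1 = d - d_1$, the inequalities $h_i > h_{i+1} + d_{i+1}$ and $h_1 = d - d_1$ from Remark~\ref{rem:mirror-prop}, together with the index ranges for $m$, $n$, $p_i$, $q_i$, one checks that after the cancellations the largest surviving hyperfactorial argument is $d$.

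The main obstacle is exactly this last bookkeeping: confirming that the products in \eqref{eqn:bmn}--\eqref{eqn:P-pq}, after the cancellations forced by the $\Mac$/hyperfactorial-ratio shape of the answer, really do collapse into a quotient of hyperfactorials with all numerator arguments $\le d$ — in particular that no hyperfactorial of argument $d+1$ or larger survives in the numerator, as that would reintroduce the prime $d$. This is routine but tedious (it is the mechanism behind every Ciucu-type product formula), and one must track carefully how $m$, $n$ and the $p_i$, $q_i$ depend on the $h_i$ and $d_i$. I would also note that cases (iv), (v) of Theorem~\ref{thm:ciucu-11} are deliberately excluded here, since there $\det Z(T)$ need not equal $\per Z(T)$ — it vanishes in case (v) by Theorem~\ref{thm:mirror-odd23} — and the residual subcase $h_s = 0$ of case (iv) is exactly what Theorem~\ref{thm:ciucu-corrected}, hence part (ii), covers.
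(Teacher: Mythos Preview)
Your reduction to the explicit formulae in Theorems~\ref{thm:ciucu-11}(i)--(iii) and~\ref{thm:ciucu-corrected} is exactly right, as is the observation that in each of these cases $|\det Z(T)| = \per Z(T)$ is a positive integer. The difference from the paper lies in what you do next.

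You propose to collapse the formulae \eqref{eqn:bmn}--\eqref{eqn:P-pq} into a quotient $2^M \prod_i \HF(a_i) / \prod_j \HF(b_j)$ with each $a_i \le d$, and then invoke the trivial bound on primes dividing a hyperfactorial. This detour is both unnecessary and unverified. It is not clear that the rewriting holds in the form you claim: the constants $c_{\mathbf{p},\mathbf{q}}$ and $\overline{c}_{\mathbf{p},\mathbf{q}}$ contain Vandermonde products $\prod_{i<j}(p_j - p_i)$ over sequences $\mathbf{p}$ that are concatenations of \emph{several} blocks of consecutive integers (one block per gap between axial punctures), and such products do not in general telescope to a single ratio of hyperfactorials. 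Your appeal to the CEKZ case, where there is only one floating puncture and the lists are single blocks, does not transfer. So the ``routine but tedious'' step is in fact the whole proof, and it may not go through as stated.

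The paper avoids this entirely. Each of \eqref{eqn:bmn}--\eqref{eqn:P-pq} is visibly a product and quotient of \emph{linear} factors (each shifted factorial $(y)_\ell$ contributes the factors $y, y+1, \dots, y+\ell-1$). Since $\per Z(T)$ is a positive integer, any prime dividing it must divide one of the numerator factors, hence is at most the largest such factor. One then reads off directly that the largest factor in $B_{m,n}(x)$ or $\overline{B}_{m,n}(x)$ is at most $2x+2m+2n$, and consequently in $P_{\mathbf{p},\mathbf{q}}(x)$ or $\overline{P}_{\mathbf{p},\mathbf{q}}(x)$ at most $\max\{2x+2p_m+2n,\, x+p_m+q_n\}$. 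Feeding in the elementary bounds $q_n \le p_m = \lceil h_1/2 \rceil < b + \tfrac{k}{2}$ and $m,n \le b$, together with the specific arguments $x \le \tfrac{a+k}{2}$ (first factor) and $x \le \tfrac{a}{2}$ (second factor) from the four formulae, the largest factor is at most $a+k+2b-1 = d-1$. No hyperfactorial rewriting is needed; the linear-factor bound is both simpler and immediately rigorous.
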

\begin{proof}
    Each of the equations~\eqref{eqn:bmn}--\eqref{eqn:P-pq} is given as a product of factors, so the prime divisors are
    bounded by the largest factors. In particular, $B_{m,n}(x)$ and $\overline{B}_{m,n}(x)$, given in \eqref{eqn:bmn}
    and \eqref{eqn:b-mn}, respectively, have largest factors bounded above by $2x + 2n + 2m$. Similarly,
    $c_{\mathbf{p,q}}$ and $\overline{c}_{\mathbf{p,q}}$, given in \eqref{eqn:cpq} and \eqref{eqn:c-pq}, respectively,
    have largest factors bounded by $\max\{p_m - p_1, q_n - q_1\}$. Bringing this together, along with the factors in
    their defining equations, we see that $P_{\mathbf{p,q}}(x)$ and $\overline{P}_{\mathbf{p,q}}(x)$, given in
    \eqref{eqn:Ppq} and \eqref{eqn:P-pq}, respectively, have largest factors bounded above by $\max\{2x + 2p_m + 2n,
    x+p_m+q_n\}$.

    By definition, $q_n \leq p_m = \clfr{h_1}{2}$, and since $h_1 \leq 2b+k-2$, we have that $q_n \leq p_m < b +
    \frac{1}{2}k$. Further, $m$ and $n$ are bounded above by the number of vertebra in $T$ less the number of vertebra
    covered by axial punctures, that is $m$ and $n$ are bounded above by $(b + \flfr{k}{2}) - \flfr{k}{2} = b$.

    In each of the four relevant formul\ae, see Theorems~\ref{thm:ciucu-11}(i)--(iii) and~\ref{thm:ciucu-corrected},
    one of $\mathbf{p}$ and $\mathbf{q}$ is empty. In any case, the largest factor of the first term of all four
    formulae is bounded by $2x + 2\max\{m, n\}$, where $x$ is bounded by $\frac{1}{2}(a+k-1)$, hence it is bounded by
    $a+k-1 + 2b = d -1$. Similarly, in any case, the largest factor of the second term of all four formul\ae\ is bounded
    by $2x + 2\max\{p_m, q_n\}$, where $x$ is bounded by $\frac{1}{2}a$, hence it is bounded by $a + 2b + k - 2 < d-1$.
\end{proof}

The explicit signed enumerations in Theorems~\ref{thm:ciucu-11} and~\ref{thm:ciucu-corrected} were found by a
decomposition of $T$ into two regions which could be enumerated separately using techniques first described
in~\cite{Ci-1997}. Can this approach be used to handle the, conjecturally, one remaining case with non-zero determinant?

\begin{question}
    Let $T = T_d(I)$ be a triangular region as in Assumption~\ref{assump:mirror} with parameters $b$ and
    $(h_1, d_1), \ldots, (h_s, d_s)$. Suppose $d_1$ is even and exactly one of $d_2, \ldots, d_s$ is odd. Is then the
    signed enumeration of lozenge tilings of $T$ not zero, i.e., is $\det{Z(T)} \neq 0$? If so, what is the explicit
    enumeration thereof?
\end{question}

Note that a positive answer to the first of the preceding questions would prove one direction of
Conjecture~\ref{con:zero-mirror}.

\section{The weak Lefschetz property} \label{sec:wlp}

Let $R = K[x_1, \ldots, x_n]$ be the standard graded $n$-variate polynomial ring over an infinite field $K$, and let $A$
be a standard graded quotient of $R$. Suppose $A$ is Artinian, i.e., $A$ is finite dimensional as a vector space over
$K$. Then $A$ is said to have the \emdx{weak Lefschetz property} if there exists a linear form $\ell \in [A]_1$ such
that, for all integers $d$, the multiplication map $\times \ell: [A]_d \rightarrow [A]_{d+1}$ has maximal rank, that is,
the map is injective or surjective. Such a linear form is called a \emdx{Lefschetz element} of $A$.

In what follows, we first derive a few general tools for determining the presence or absence of the weak Lefschetz
property. Using these results we find a more specific criterion for the weak Lefschetz property for monomial ideals in
$K[x,y,z]$. In particular, we relate the bi-adjacency and lattice path matrices to the maps that decide the weak
Lefschetz properties. We show that the prime divisors of the signed enumerations of lozenge tilings of the triangular
region to an ideal govern the presence or absence of the weak Lefschetz property of the ideal. We close this section
with a reinterpretation of a few of the results in the previous sections.

~\subsection{Tools}~\par\label{subsec:tools}

There are some general results that are helpful in order to determine the presence or absence of the weak Lefschetz
property. We recall or derive these tools here.

First, we review the appropriate generalisations of some concepts that were first discussed in Subsection~\ref{sub:socle}.
All $R$-modules in this paper are assumed to be finitely generated and graded. Let $M$ be an Artinian $R$-module. The
\emph{socle} of $M$, denoted $\soc{M}$, is the annihilator of $\mathfrak{m} = (x_1, \ldots, x_n)$, the homogeneous
maximal ideal of $R$, that is, $\soc M = \{y \in M \st y \cdot \mathfrak{m} = 0\}$. The \emph{socle degree} or%
\index{socle!degree}
\emdx{Castelnuovo-Mumford regularity} of $M$ is the maximum degree of a non-zero element in $\soc{M}$. The module $M$ is
said to be \emdx{level} if all socle generators have the same degree, i.e., its socle is concentrated in one degree.

Alternatively, assume that the minimal free resolution of $M$ over $R$ ends with a free module $\bigoplus_{i=1}^{m}
R(-t_i)^{r_i},$ where $0 < t_1 < \cdots < t_m$ and $0 < r_i$ for all $i$. In this case, the socle generators of $M$ have
degrees $t_1 - n, \ldots, t_m - n$. Thus, $M$ is level if and only if $m = 1$.
\smallskip

We now begin with deriving some rather general facts. Once multiplication by a general linear form on an algebra is
surjective, then it remains surjective.

\begin{proposition}{\cite[Proposition~2.1(a)]{MMN-2011}} \label{pro:surj}
    Let $A = R/I$ be an Artinian standard graded $K$-algebra, and let $\ell$ be a general linear form.  If the map
    $\times \ell: [A]_{d} \rightarrow [A]_{d+1}$ is surjective for some $d \geq 0$, then
    $\times \ell: [A]_{d+1} \rightarrow [A]_{d+2}$ is surjective.
\end{proposition}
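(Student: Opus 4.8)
The plan is to pass to the quotient algebra $B := A/\ell A$ and exploit that it is standard graded. First I would record the elementary reformulation: for any linear form $\ell$ and any degree $e$, the map $\times \ell : [A]_{e-1} \to [A]_e$ is surjective if and only if $[B]_e = 0$, since $[B]_e = [A]_e / \ell\cdot[A]_{e-1} = \coker\left(\times \ell : [A]_{e-1} \to [A]_e\right)$. Thus the hypothesis that $\times \ell : [A]_d \to [A]_{d+1}$ is surjective is exactly the statement $[B]_{d+1} = 0$, and the conclusion to be proved is exactly $[B]_{d+2} = 0$.

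Next I would use that $A = R/I$ with $R$ standard graded and $\ell \in [A]_1$, so $B = A/\ell A$ is again a standard graded $K$-algebra, i.e.\ it is generated as a $K$-algebra by $[B]_1$. Hence $[B]_{e+1} = [B]_1 \cdot [B]_e$ for every $e \ge 0$, and therefore $[B]_e = 0$ forces $[B]_{e+1} = 0$. Applying this with $e = d+1$ gives $[B]_{d+2} = [B]_1 \cdot [B]_{d+1} = 0$, which by the first paragraph means $\times \ell : [A]_{d+1} \to [A]_{d+2}$ is surjective. (Iterating, $[B]_e = 0$ for all $e \ge d+1$, so surjectivity in fact persists in every degree $\ge d$.)

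There is no genuine obstacle here; the only points worth a remark are that surjectivity of multiplication by $\ell$ is merely the vanishing of a graded component of the standard graded algebra $A/\ell A$, and that the generality of $\ell$ plays no role at all — the statement holds verbatim for every linear form, the hypothesis being phrased with a general $\ell$ only to match the way the proposition is invoked elsewhere.
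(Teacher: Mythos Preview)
Your argument is correct and is exactly the standard one: the paper does not supply its own proof of this proposition (it is cited from \cite{MMN-2011}), but the immediately following Lemma~\ref{lem:mod-surj} is proved by precisely the same cokernel trick, observing that $[M/\ell M]_d=0$ forces $[M/\ell M]_j=0$ for all $j\ge d$. Your remark that the generality of $\ell$ is irrelevant is also accurate.
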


This can be extended to modules generated in degrees that are sufficiently small.

\begin{lemma} \label{lem:mod-surj}
    Let $M$ be a graded $R$-module such that the degrees of its minimal generators are at most $d$. Let $\ell \in R$ be
    a general linear form. If the map $\times\ell: [M]_{d-1} \rightarrow [M]_{d}$ is surjective, then the map
    $\times\ell: [M]_{j-1} \rightarrow [M]_{j}$ is surjective for all $j \geq d$.
\end{lemma}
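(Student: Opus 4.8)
The plan is to reduce the claim to an elementary observation about graded modules generated in bounded degree and then propagate surjectivity by a one–line induction. The key observation is: \emph{if $N$ is a finitely generated graded $R$-module all of whose minimal generators have degree at most $d$, then $[N]_j = [R]_1 \cdot [N]_{j-1}$ for every $j \ge d+1$.} To see this, write $N = \sum_i R\, n_i$ with $\deg n_i \le d$, so that $[N]_j = \sum_i [R]_{j-\deg n_i}\, n_i$. Since $j - \deg n_i \ge 1$ in the stated range and $R$ is standard graded, each $[R]_{j-\deg n_i}$ equals $[R]_1 \cdot [R]_{j-1-\deg n_i}$; pulling out the common factor $[R]_1$ gives the identity. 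I would also record the trivial remark that, because $[R]_1 \cdot [M]_{j-1} \subseteq [M]_j$ for all $j$, the hypothesis $[M]_d = \ell\,[M]_{d-1}$ forces $[R]_1 \cdot [M]_{d-1} = [M]_d$ as well; hence $[R]_1 \cdot [M]_{j-1} = [M]_j$ in fact holds for every $j \ge d$.

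Then I would run an induction on $j \ge d$ proving $[M]_j = \ell\,[M]_{j-1}$, i.e. that $\times \ell\colon [M]_{j-1}\to[M]_j$ is surjective. The base case $j=d$ is the hypothesis. For the inductive step, assume $[M]_j = \ell\,[M]_{j-1}$ with $j \ge d$; since $j+1 \ge d+1$, the observation applied to $N = M$ gives
\[
    [M]_{j+1} = [R]_1 \cdot [M]_j = [R]_1 \cdot \ell\,[M]_{j-1} = \ell\,\bigl([R]_1 \cdot [M]_{j-1}\bigr) = \ell\,[M]_j,
\]
the last equality using $[R]_1\cdot[M]_{j-1} = [M]_j$ from the previous paragraph. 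This completes the induction. (Equivalently, one may phrase the whole argument by passing to $\overline M := M/\ell M$, viewed as a module over $R/(\ell)$ whose generators still lie in degrees $\le d$: the hypothesis says $[\overline M]_d = 0$, the observation then forces $[\overline M]_j = 0$ for all $j \ge d$, and $[\overline M]_j = \coker(\times\ell\colon [M]_{j-1}\to[M]_j)$ yields the conclusion.)

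There is no serious obstacle here; the only subtlety worth flagging is the range of validity of $[M]_j = [R]_1\cdot[M]_{j-1}$. Straight from the generation hypothesis this identity is available only for $j \ge d+1$, which is exactly why one needs the separate — but immediate — remark that, given surjectivity in degree $d$, it also holds in degree $d$, so that the induction starts cleanly. Finally I would note that generality of $\ell$ is not actually used (any linear form works), but keeping the hypothesis as stated matches Proposition~\ref{pro:surj}, of which this lemma is the module-theoretic generalisation.
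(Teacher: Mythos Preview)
Your proof is correct and, in content, identical to the paper's: the paper simply uses your parenthetical quotient formulation directly, observing that $[M/\ell M]_d = 0$ together with generation in degrees $\le d$ forces $[M/\ell M]_j = 0$ for all $j \ge d$. Your explicit induction is just an unpacking of this same step, and your remark that generality of $\ell$ is unused is also accurate.
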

\begin{proof}
    Consider the exact sequence $[M]_{d-1} \stackrel{\times\ell}{\longrightarrow} [M]_{d} \rightarrow [M/\ell M]_{d} \rightarrow 0.$
    Notice the first map is surjective if and only if $[M/\ell M]_{d} = 0.$ Thus, the assumption gives $[M/\ell M]_{d} = 0$.
    Hence $[M/\ell M]_{j+1}$ is zero for all $j \geq d$ because $M$ does not have minimal generators having a degree
    greater than $d$, by assumption.
\end{proof}

As a consequence, we get a generalisation of \cite[Proposition~2.1(b)]{MMN-2011}, which considers the case of level algebras.

\begin{corollary}\label{cor:inj}
    Let $M$ be an Artinian graded $R$-module such that the degrees of its non-trivial socle elements are at least
    $\geq d-1$. Let $\ell \in R$ be a general linear form. If the map $\times \ell: [A]_{d-1} \rightarrow [A]_{d}$ is
    injective, then the map $\times \ell: [A]_{j-1} \rightarrow [A]_{j}$ is injective for all $j \leq d$.
\end{corollary}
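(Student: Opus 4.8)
The plan is to mirror the proof of Lemma~\ref{lem:mod-surj}, replacing the cokernel $M/\ell M$ by the kernel of multiplication by $\ell$. Write $N := (0 :_M \ell)$, a graded submodule of $M$ (it is closed under the $R$-action since $\ell(rn) = r(\ell n) = 0$ for $n \in N$). For every integer $j$, the map $\times\ell : [M]_{j-1} \to [M]_{j}$ is injective if and only if $[N]_{j-1} = 0$. Thus the hypothesis reads $[N]_{d-1} = 0$, and the conclusion is equivalent to $[N]_i = 0$ for all $i \le d-1$.

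The key observation is that $N$ and $M$ have the same socle. Since $\ell \in [R]_1 \subseteq \mathfrak{m}$, every element of $\soc M$ is annihilated by $\ell$, so $\soc M \subseteq N$; on the other hand $\soc N = N \cap \soc M$, and therefore $\soc N = \soc M$. In particular all non-zero elements of $\soc N$ have degree $\ge d-1$.

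Now suppose for contradiction that $[N]_i \ne 0$ for some $i \le d-1$, and let $i_0$ be the largest such index. Because $[N]_{d-1} = 0$ we have $i_0 \le d-2$, and then $[N]_{i_0+1} = 0$, either by maximality of $i_0$ or (if $i_0 = d-2$) by hypothesis. Hence $[R]_1 \cdot [N]_{i_0} \subseteq [N]_{i_0+1} = 0$, and since $\mathfrak{m}$ is generated in degree one this forces $\mathfrak{m}\cdot [N]_{i_0} = 0$, i.e.\ $[N]_{i_0} \subseteq \soc N = \soc M$. But $i_0 \le d-2 < d-1$, contradicting the assumption that the non-trivial socle elements of $M$ lie in degrees $\ge d-1$. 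Therefore $[N]_i = 0$ for all $i \le d-1$, which is precisely the assertion that $\times\ell : [M]_{j-1} \to [M]_{j}$ is injective for every $j \le d$.

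This argument is entirely elementary; the only point needing a little care is the identity $\soc N = \soc M$ together with the degree bookkeeping, so that is where I would be most careful in writing it up. As an alternative one could deduce the corollary formally from Lemma~\ref{lem:mod-surj} via Matlis duality, using that $M$ has socle degrees $\ge d-1$ exactly when $M^{\vee}$ is generated in degrees $\le 1-d$, that injectivity of $\times\ell$ on $M$ dualises to surjectivity of $\times\ell$ on $M^{\vee}$, and that Lemma~\ref{lem:mod-surj} applied to $M^{\vee}$ propagates surjectivity; I would nonetheless prefer the direct argument above, as it avoids setting up Matlis duality for modules.
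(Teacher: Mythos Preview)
Your proof is correct. The paper, however, takes precisely the duality route you sketch at the end as an alternative: it sets $M^{\vee} = \Hom_K(M,K)$, observes that injectivity of $\times\ell$ on $[M]_{j-1}\to[M]_j$ is equivalent to surjectivity of $\times\ell$ on $[M^{\vee}]_{-j}\to[M^{\vee}]_{-j+1}$, notes that the socle bound on $M$ translates into a generator-degree bound on $M^{\vee}$, and then invokes Lemma~\ref{lem:mod-surj} directly.

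Your primary argument is genuinely different and more elementary: by working with the kernel submodule $N=(0:_M\ell)$ and exploiting $\soc N=\soc M$, you avoid setting up $K$-duality altogether and give a self-contained contradiction argument. The paper's approach is shorter on the page because it reuses Lemma~\ref{lem:mod-surj} wholesale, but it relies on the reader accepting the dictionary between socle degrees of $M$ and generator degrees of $M^{\vee}$. Your approach trades that for a small amount of direct degree bookkeeping, which is arguably more transparent for someone not already fluent with Matlis/$K$-duality. Either is perfectly acceptable here.
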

\begin{proof}
    The $K$-dual of $M$ is $M^{\vee} = \Hom_K (M, K)$. Then $\times \ell: [M]_{j-1} \rightarrow [M]_{j}$ is injective if
    and only if the map $\times \ell: [M^{\vee}]_{-j} \rightarrow [M^{\vee}]_{-j+1}$ is surjective. The assumption on
    the socle of $M$ means that the degrees of the minimal generators of $M^{\vee}$ are at most $-d+1$. Thus, we
    conclude by Lemma~\ref{lem:mod-surj}.
\end{proof}

The above observations imply that to decide the presence of the weak Lefschetz property we need only check near a
``peak'' of the Hilbert function.

\begin{proposition} \label{pro:wlp}
    Let $A \neq 0$ be an Artinian standard graded $K$-algebra. Let $\ell$ be a general linear form. Then:
    \begin{enumerate}
        \item Let $d$ be the smallest integer such that $h_{A}(d-1) > h_{A}(d)$.
        If $A$ has a non-zero socle element of degree less than $d-1$, then $A$ does not have  the weak Lefschetz property.

        \item Let $d$ be the largest integer such that $h_{A}(d-2) < h_{A}(d-1)$. If $A$ has the weak Lefschetz property, then
        \begin{enumerate}
            \item $\times \ell: [A]_{d-2} \rightarrow [A]_{d-1}$ is injective,
            \item $\times \ell: [A]_{d-1} \rightarrow [A]_{d}$ is surjective, and
            \item $A$ has no socle generators of degree less than $d-1$.
        \end{enumerate}

        \item Let $d \geq 0$ be an integer such that $A$ has the following three properties:
        \begin{enumerate}
            \item $\times \ell: [A]_{d-2} \rightarrow [A]_{d-1}$ is injective,
            \item $\times \ell: [A]_{d-1} \rightarrow [A]_{d}$ is surjective, and
            \item $A$ has no socle generators of degree less than $d-2$.
        \end{enumerate}
        Then $A$ has the weak Lefschetz property.
         \end{enumerate}
\end{proposition}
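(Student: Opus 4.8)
The plan is to derive all three parts of Proposition~\ref{pro:wlp} from the propagation statements already in hand --- Proposition~\ref{pro:surj}, Lemma~\ref{lem:mod-surj}, and Corollary~\ref{cor:inj} --- combined with elementary bookkeeping of the Hilbert function $h_A$ near its peak. I will use that, since $K$ is infinite, the Lefschetz elements of $A$ form a Zariski-open subset of $[A]_1$, so that ``$A$ has the weak Lefschetz property'' is equivalent to ``a general linear form is a Lefschetz element''. I will also use freely that $\soc A$, being annihilated by $\mathfrak{m}$, is a graded $K$-vector space whose minimal generators form a homogeneous basis; hence $A$ has a socle generator in degree $e$ if and only if $[\soc A]_e \ne 0$, equivalently if and only if $A$ has a non-zero socle element of degree $e$. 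This identifies the ``socle generator'' and ``non-zero socle element'' conditions appearing in the statement.

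For part (i): by minimality of $d$ one has $h_A(j-1) \le h_A(j)$ for every $j \le d-1$, so if $A$ had the weak Lefschetz property a Lefschetz element $\ell$ would make $\times \ell: [A]_{j-1}\to[A]_j$ of maximal rank, hence injective, for all $j \le d-1$; but a non-zero socle element $s$ of degree $e$ with $e < d-1$ satisfies $\ell s = 0$ with $0 \le e \le d-2$, so $\times \ell: [A]_e \to [A]_{e+1}$ is not injective although $e+1 \le d-1$ --- a contradiction. For part (ii): assume $A$ has the weak Lefschetz property and let $\ell$ be a general, hence Lefschetz, linear form. Maximality of $d$ gives $h_A(d-1) \ge h_A(d)$, while $h_A(d-2) < h_A(d-1)$ by hypothesis; maximal rank then forces $\times \ell: [A]_{d-2}\to[A]_{d-1}$ injective and $\times \ell: [A]_{d-1}\to[A]_d$ surjective, which are (a) and (b). For (c), if $A$ had a non-zero socle element of degree $e$ with $0 \le e < d-1$, then $\times \ell: [A]_e \to [A]_{e+1}$ would fail to be injective, hence be surjective, and Proposition~\ref{pro:surj} would propagate surjectivity of $\times \ell: [A]_{j-1}\to[A]_j$ to all $j \ge e+1$, in particular to $j = d-1$, contradicting $h_A(d-2) < h_A(d-1)$.

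For part (iii): fix a general linear form $\ell$ for which hypotheses (a) and (b) hold, and show it is a Lefschetz element. Hypothesis (c) says all non-trivial socle elements of $A$ have degree $\ge d-2$, so Corollary~\ref{cor:inj}, applied with the integer ``$d$'' of that corollary replaced by $d-1$ and fed the injectivity of $\times \ell: [A]_{d-2}\to[A]_{d-1}$ from (a), yields that $\times \ell: [A]_{j-1}\to[A]_j$ is injective for all $j \le d-1$. On the other side, (b) together with Proposition~\ref{pro:surj} (equivalently Lemma~\ref{lem:mod-surj}, since $A$ is generated in degree $0$ as an $R$-module) gives that $\times \ell: [A]_{j-1}\to[A]_j$ is surjective for all $j \ge d$. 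These two ranges exhaust all degrees, so $\times \ell$ has maximal rank in every degree, $\ell$ is a Lefschetz element, and $A$ has the weak Lefschetz property.

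The one point that needs care is the degree bookkeeping: matching the one-step shifts built into Corollary~\ref{cor:inj} and Proposition~\ref{pro:surj} to the exact degrees $d-2$, $d-1$, $d$ in the statement --- in particular the deliberate asymmetry between the socle bound $d-1$ in part (ii) and the socle bound $d-2$ in part (iii), which is precisely what makes (iii) a sharp partial converse to (ii) --- and checking the small-$d$ edge cases where some of the graded pieces involved vanish. Beyond the three cited tools no new idea is required.
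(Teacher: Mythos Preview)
Your proof is correct and follows the paper's approach in parts (i) and (iii) essentially verbatim. The only substantive difference is in (ii)(c): the paper argues that a non-zero socle element of degree $e < d-1$ forces $h_A(e) > h_A(e+1)$, rules out $e = d-2$ directly, and then notes that $h_A(e) > h_A(e+1)$ together with $h_A(d-2) < h_A(d-1)$ (with $e \le d-3$) makes the Hilbert function non-unimodal, which is impossible under the weak Lefschetz property by an external reference to \cite{HMNW}. Your argument instead propagates surjectivity via Proposition~\ref{pro:surj} from degree $e+1$ up to degree $d-1$, obtaining the contradiction $h_A(d-2) \ge h_A(d-1)$ directly. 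Your route is more self-contained --- it avoids the appeal to unimodality --- and in fact re-derives that piece of unimodality as a byproduct; the paper's route is shorter once one is willing to quote the unimodality fact.
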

\begin{proof}
    Suppose in case (i) $A$ has a socle element $y \neq 0$ of degree $e < d-1$. Then $\ell y = 0$, and so the map
    $\times \ell: [A]_{e} \rightarrow [A]_{e+1}$ is not injective. Moreover, since $e < d-1$ we have $h_{R/I}(e) \le
    h_{R/I}(e+1)$. Hence, the map $\times \ell: [A]_{e} \rightarrow [A]_{e+1}$ does not have maximal rank. This proves
    claim (i).

    For showing (ii), suppose $A$ has the weak Lefschetz property. Then, by its definition, $A$ satisfies (ii)(a) and
    (ii)(b) because $h_A (d-1) \geq h_A (d)$. Assume (ii)(c) is not true, that is, $A$ has a socle element $y \neq 0$ of
    degree $e < d-1$. Then the map $\times \ell: [A]_{e} \rightarrow [A]_{e+1}$ is not injective. Since $A$ has the weak
    Lefschetz property, this implies $h_A (e) > h_A (e+1)$. Hence the assumption on $d$ gives $e \leq d-3$. However, this
    means that the Hilbert function of $A$ is not unimodal. This is impossible if $A$ has the weak Lefschetz property
    (see \cite{HMNW}).

    Finally, we prove (iii). Corollary~\ref{cor:inj} and Assumptions (iii)(a), and (iii)(c) imply that the map
    $\times \ell: [A]_{i-2} \rightarrow [A]_{i-1}$ is injective if $i \leq d$. Furthermore, using (iii)(b) and
    Proposition~\ref{pro:surj}, we see that $\times \ell: [A]_{i-1} \rightarrow [A]_{i}$ is surjective if $i \geq d$.
    Thus, $A$ has the weak Lefschetz property.
\end{proof}

The same arguments also give the following result.

\begin{corollary}\label{cor:twin-peaks-wlp}
    Let $A$ be an Artinian standard graded $K$-algebra, and let $\ell$ be a general linear form. Suppose there is an
    integer $d$ such that $0 \neq h_{A}(d-1) = h_{A}(d)$ and $A$ has no socle elements of degree less than $d-1$. Then
    $A$ has the weak Lefschetz property if and only if $\times \ell: [A]_{d-1} \rightarrow [A]_{d}$ is bijective.
\end{corollary}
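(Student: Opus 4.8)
The plan is to deduce this directly from Proposition~\ref{pro:wlp} together with Corollary~\ref{cor:inj}, exploiting the fact that the hypothesis $0 \neq h_A(d-1) = h_A(d)$ places $d$ at a ``flat peak'' of the Hilbert function, where injectivity, surjectivity, and bijectivity of $\times \ell \colon [A]_{d-1} \to [A]_d$ all coincide.

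For the forward implication, I would suppose that $A$ has the weak Lefschetz property. Since $K$ is infinite, a general linear form $\ell$ is then a Lefschetz element (the locus of such forms is Zariski open and non-empty), so in particular $\times \ell \colon [A]_{d-1} \to [A]_d$ has maximal rank. Because $h_A(d-1) = h_A(d)$, a map of maximal rank between these two equidimensional spaces is an isomorphism, hence $\times \ell \colon [A]_{d-1} \to [A]_d$ is bijective.

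For the converse, I would assume $\times \ell \colon [A]_{d-1} \to [A]_d$ is bijective for a general $\ell$; in particular it is both injective and surjective. The first step is to apply Corollary~\ref{cor:inj} with $M = A$: the hypothesis that $A$ has no socle elements of degree less than $d-1$ says exactly that the non-trivial socle elements of $A$ lie in degrees $\geq d-1$, and $\times \ell \colon [A]_{d-1} \to [A]_d$ is injective, so the corollary yields that $\times \ell \colon [A]_{j-1} \to [A]_j$ is injective for all $j \leq d$; taking $j = d-1$ gives that $\times \ell \colon [A]_{d-2} \to [A]_{d-1}$ is injective. Now the three hypotheses of Proposition~\ref{pro:wlp}(iii) hold for this $d$: condition (a) is the injectivity just established, condition (b) is the surjectivity of $\times \ell \colon [A]_{d-1} \to [A]_d$, and condition (c) holds because $A$ has no socle generators of degree less than $d-1$, hence none of degree less than $d-2$. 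Therefore $A$ has the weak Lefschetz property.

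The argument is essentially bookkeeping with degree shifts, so I do not anticipate a genuine obstacle; the only points deserving a little care are the (standard) use of the fact that a general linear form is Lefschetz once $A$ has the weak Lefschetz property over an infinite field, and the correct alignment of indices when invoking Corollary~\ref{cor:inj} and Proposition~\ref{pro:wlp}(iii).
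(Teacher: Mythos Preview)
Your proof is correct and follows essentially the same route as the paper, which simply remarks that ``the same arguments'' as in Proposition~\ref{pro:wlp} give the result. Your packaging via Corollary~\ref{cor:inj} followed by Proposition~\ref{pro:wlp}(iii) is a valid (if slightly redundant) way to carry this out; one could equally well bypass part~(iii) and argue directly that bijectivity at $d-1 \to d$ together with Corollary~\ref{cor:inj} and Proposition~\ref{pro:surj} forces maximal rank in every degree.
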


Sometimes we will rephrase the above assumption on the Hilbert function of $A$ by saying that it has ``twin peaks.''%
\index{Hilbert function!twin peaks}

The following easy, but useful observation is essentially the content of \cite[Proposition~2.2]{MMN-2011}.

\begin{proposition} \label{pro:mono}
    Let $A = R/I$ be an Artinian  $K$-algebra, where $I$ is generated by monomials and $K$ is an infinite field.  Let $d$ and $e>0$ be integers. Then the following conditions are equivalent:
    \begin{enumerate}
        \item The multiplication map $\times L^e: [A]_{d-e} \to [A]_d$ has maximal rank, where $L \in R$ is a general linear form.
        \item The multiplication map $\times (x_1 + \cdots + x_n)^e: [A]_{d-e} \to [A]_d$ has maximal rank.
    \end{enumerate}
\end{proposition}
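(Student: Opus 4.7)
The plan is to exploit the fact that a monomial ideal is preserved by the diagonal torus action on $R$. Write an arbitrary linear form as $L = a_1 x_1 + \cdots + a_n x_n$. First I would observe that, fixing a monomial basis for $[A]_{d-e}$ and $[A]_d$, the entries of the matrix representing $\times L^e$ are polynomials in the coefficients $a_1, \ldots, a_n$, so the locus of $(a_1, \ldots, a_n) \in \mathbb{A}_K^n$ on which $\times L^e$ has maximal rank is Zariski-open, defined by the non-vanishing of at least one minor of the appropriate size. Hence this locus is either empty or dense, and in the latter case it meets every other dense open subset of $\mathbb{A}^n_K$ (using that $K$ is infinite).

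Next I would use the torus-invariance of $I$. For any $(a_1, \ldots, a_n) \in (K^*)^n$, let $\varphi$ be the graded $K$-algebra automorphism of $R$ defined by $\varphi(x_i) = a_i x_i$. Because $I$ is generated by monomials, $\varphi(I) = I$, so $\varphi$ descends to a graded automorphism of $A$ that restricts to $K$-linear isomorphisms $\varphi_j : [A]_j \to [A]_j$ in every degree. Since $\varphi(x_1 + \cdots + x_n) = L$, the multiplication maps $\times (x_1+\cdots+x_n)^e$ and $\times L^e$ from $[A]_{d-e}$ to $[A]_d$ are intertwined by the isomorphisms $\varphi_{d-e}$ and $\varphi_d$, and therefore have the same rank.

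Combining the two steps yields the equivalence. If (ii) holds, then $x_1 + \cdots + x_n$ belongs to the maximal-rank locus, so that locus is non-empty and hence dense, giving (i). Conversely, if (i) holds, the maximal-rank locus is dense and so intersects the dense open torus $(K^*)^n \subset \mathbb{A}^n_K$; pick any $L$ in the intersection and transport the maximal-rank property back to $x_1 + \cdots + x_n$ via the automorphism in the previous paragraph, giving (ii). There is no real obstacle here; the only point needing care is the appeal to infinitude of $K$, which guarantees both the density of non-empty Zariski-open subsets of $\mathbb{A}^n_K$ and the existence of points in such a subset having all coordinates non-zero.
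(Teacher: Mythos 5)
Your proof is correct and takes essentially the same approach as the paper: the key point in both is that rescaling each variable $x_i \mapsto a_i x_i$ (with all $a_i \neq 0$) is a graded automorphism of $R$ fixing the monomial ideal $I$ and carrying $x_1 + \cdots + x_n$ to $L$. You merely make explicit the Zariski-density considerations that the paper compresses into the phrase ``general linear form.''
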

\begin{proof}
    For the convenience of the reader we recall the argument. Let $L = a_1 x_1 + á á á + a_r x_r \in R$ be a general
    linear form. Thus, we may assume that each coefficient $a_i$ is not zero. Rescaling the variables $x_i$ such that
    $L$ becomes $x_1 + \cdots + x_n$ provides an automorphism of $R$ that maps $I$ onto $I$.
\end{proof}

Hence, for monomial algebras, it is enough to decide whether the sum of the variables is a Lefschetz element. As a
consequence, we show that, for a monomial algebra, the presence of the weak Lefschetz property in characteristic zero is
equivalent to the presence of the weak Lefschetz property in some (actually, almost every) positive characteristic. Here
we use that the minimal generators of a monomial ideal are not affected by the characteristic of the ground field $K$.

Recall that a \emdx{maximal minor} of a matrix $B$ is the determinant of a maximal square sub-matrix of $B$. Let us also
mention again that throughout this section we assume that $K$ is an infinite field.

\begin{corollary} \label{lem:wlp-0-p}
    Let $A$ be an  Artinian monomial $K$-algebra.  Then the following conditions are equivalent:
    \begin{enumerate}
        \item $A$ has the weak Lefschetz property in characteristic zero.
        \item $A$ has the weak Lefschetz property in some positive characteristic.
        \item $A$ has the weak Lefschetz property in every sufficiently large positive characteristic.
    \end{enumerate}
\end{corollary}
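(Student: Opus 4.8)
The plan is to reduce the statement to a question about the rank of certain integer matrices and then apply elementary facts about how matrix ranks behave under reduction modulo a prime. By Proposition~\ref{pro:mono}, for a monomial algebra $A = R/I$ it suffices to test whether $\ell = x_1 + \cdots + x_n$ is a Lefschetz element, and this is the same linear form in every characteristic. Fix a minimal monomial generating set of $I$; this set does not depend on $\charf K$. For each degree $d$, the multiplication map $\times \ell : [A]_{d-1} \to [A]_d$ is represented, in the monomial bases of source and target, by a matrix $M_d$ whose entries are $0$ or $1$ (each monomial of degree $d-1$ not in $I$ maps to the sum of the $n$ monomials obtained by multiplying by a variable, retaining only those not in $I$). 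Crucially, $M_d$ is an \emph{integer} matrix that is literally the same in every characteristic; passing to characteristic $p$ simply means reducing $M_d$ modulo $p$. Thus $A$ has the weak Lefschetz property in characteristic $c$ if and only if each $M_d$ attains its maximal possible rank (namely $\min\{h_A(d-1), h_A(d)\}$) when its entries are viewed in the prime field of characteristic $c$.

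First I would make precise that only finitely many maps $M_d$ need to be checked: $A$ is Artinian, so $[A]_d = 0$ for $d$ large, and the relevant $d$ range over a finite interval determined by the socle degree of $A$, which again is characteristic-independent since $I$ is monomial (alternatively one may invoke Proposition~\ref{pro:wlp} to see that only the behaviour near a peak of the Hilbert function matters, but the finiteness is all that is needed here). Next, for a fixed integer matrix $M$ of size $a \times b$ with $r := \min\{a,b\}$, recall the standard fact: $M$ has rank $r$ over $\QQ$ if and only if some maximal ($r \times r$) minor of $M$ is nonzero; and $M$ has rank $r$ over the prime field $\mathbb{F}_p$ if and only if some maximal minor of $M$ is nonzero modulo $p$. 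If $M$ does not have rank $r$ over $\QQ$, then all maximal minors vanish identically, so $M$ fails to have maximal rank in \emph{every} characteristic. If $M$ does have rank $r$ over $\QQ$, then letting $\Delta_M$ be the (finite, nonempty) set of nonzero maximal minors of $M$, the map $M$ fails maximal rank in characteristic $p$ precisely when $p$ divides every element of $\Delta_M$; in particular this excludes only the finitely many primes dividing $\gcd(\Delta_M)$.

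Now I would assemble the equivalence. Applying the above to each of the finitely many matrices $M_d$: if $A$ has the weak Lefschetz property in characteristic zero, then every $M_d$ has maximal rank over $\QQ$, hence each $M_d$ has maximal rank over $\mathbb{F}_p$ for all $p$ outside the finite set $S := \bigcup_d \{\, p \st p \mid \gcd(\Delta_{M_d})\,\}$, whence $A$ has the weak Lefschetz property in every characteristic $p \notin S$; this gives (i) $\Rightarrow$ (iii). The implication (iii) $\Rightarrow$ (ii) is trivial. For (ii) $\Rightarrow$ (i): if $A$ has the weak Lefschetz property in some positive characteristic $p$, then each $M_d$ has maximal rank over $\mathbb{F}_p$, so for each $d$ some maximal minor of $M_d$ is a nonzero element of $\mathbb{F}_p$, and therefore that minor is a nonzero \emph{integer}; hence $M_d$ has maximal rank over $\QQ$, so $A$ has the weak Lefschetz property in characteristic zero.

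The only genuinely delicate point is the book-keeping that the integer matrices $M_d$ are the same across characteristics and that the relevant index set of degrees is finite and characteristic-independent; once that is in place the argument is just the elementary observation that a rank condition on an integer matrix is "generic" over the primes. I expect the main obstacle — really a matter of care rather than depth — to be cleanly justifying that the finitely many maximal minors one must consider for the full weak Lefschetz property can be bounded uniformly (equivalently, bounding the socle degree of $A$ independently of $\charf K$), which follows from $I$ being monomial, for instance because a minimal free resolution of $R/I$ can be computed combinatorially (e.g.\ via the Taylor complex) with characteristic-independent shifts.
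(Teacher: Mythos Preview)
Your proposal is correct and follows essentially the same approach as the paper: reduce to the fixed linear form $\ell = x_1 + \cdots + x_n$ via Proposition~\ref{pro:mono}, observe that the finitely many multiplication maps are given by characteristic-independent zero-one matrices, and use that a nonzero maximal minor over some field is a nonzero integer with only finitely many prime divisors. The paper's write-up is slightly more compressed (it does not separate the three implications or invoke the Taylor complex for the finiteness bookkeeping), but the substance is identical.
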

\begin{proof}
    Let $\ell = x_0 + \cdots + x_n$. By Proposition~\ref{pro:mono}, $A$ has the weak Lefschetz property if, for each
    integer $d$, the map $\times \ell: [A]_{d-1} \to [A]_d$ has maximal rank. As $A$ is Artinian, there are only
    finitely many non-zero maps to be checked. Fixing monomial bases for all non-trivial components $[A]_j$, the
    mentioned multiplication maps are described by zero-one matrices.

    Suppose $A$ has the weak Lefschetz property in some characteristic $q \geq 0$. Then for each of the finitely many
    matrices above, there exists a maximal minor that is non-zero in $K$, hence non-zero as an integer. The finitely
    many non-zero maximal minors, considered as integers, have finitely many prime divisors. Hence, there are only
    finitely many prime numbers, which divide one of these minors. If the characteristic of $K$ does not belong to this
    set of prime numbers, then $A$ has the weak Lefschetz property.
\end{proof}

The following result is a generalisation of~\cite[Proposition~3.7]{LZ}.

\begin{lemma} \label{lem:wlp-p}
    Let $A$ be an Artinian monomial $K$-algebra. Suppose that $a$ is the least positive integer such that $x_i^a \in I$
    whenever $1 \leq i \leq n$, and suppose that the Hilbert function of $A$ weakly increases to degree $s$. Then, for
    any positive prime $p$ such that $a \leq p^m \leq s$ for some positive integer $m$, $A$ fails to have the weak
    Lefschetz property in characteristic $p$.
\end{lemma}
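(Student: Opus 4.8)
The plan is to reduce, via Proposition~\ref{pro:mono}, to showing that the multiplication map $\times (x+y+z+\cdots)^{p^m - p^m}$... actually, more precisely, that $\times \ell^{p^m}$ fails to have maximal rank in a suitable degree, where $\ell = x_1 + \cdots + x_n$. The key mechanism is the standard ``Frobenius freshman'' identity: in characteristic $p$ one has $\ell^{p^m} = (x_1 + \cdots + x_n)^{p^m} = x_1^{p^m} + \cdots + x_n^{p^m}$. Since $a \le p^m$, each $x_i^{p^m}$ lies in $I$ (because $x_i^a \in I$ and $x_i^a \mid x_i^{p^m}$), so $\ell^{p^m} = 0$ in $A = R/I$. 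Thus the map $\times \ell^{p^m} : [A]_{j} \to [A]_{j + p^m}$ is the zero map for every $j$.

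Next I would argue that for some degree $j$ this zero map does \emph{not} have maximal rank, which forces $A$ to fail the strong Lefschetz property, and then upgrade this to a failure of the weak Lefschetz property. For the first point: pick $j$ with $0 \le j$ and $j + p^m \le s$, e.g.\ $j = 0$ and use that $p^m \le s$ — then $[A]_0 = K \ne 0$ and $[A]_{p^m} \ne 0$ since the Hilbert function weakly increases up to degree $s$ and $[A]_0 \ne 0$; hence $\times \ell^{p^m} : [A]_0 \to [A]_{p^m}$ being zero is not maximal rank (it is neither injective, as the source is nonzero, nor surjective, as the target is nonzero). This already shows $A$ fails the strong Lefschetz property. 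To get the failure of the \emph{weak} Lefschetz property one needs a bit more: if $A$ had a weak Lefschetz element $\ell'$, one would argue — using that the Hilbert function weakly increases up to $s$ and that $p^m \le s$ — that $\times \ell' : [A]_{i-1} \to [A]_i$ is injective for all $i \le s$, hence $\times (\ell')^{p^m} : [A]_0 \to [A]_{p^m}$ is injective (a composite of injections between components all in the weakly increasing range), contradicting that $\times (\ell')^{p^m}$ is the zero map by the Frobenius identity. Here one invokes Proposition~\ref{pro:mono} to replace the general $\ell'$ by $x_1 + \cdots + x_n$, so that the characteristic-$p$ collapse $\ell^{p^m} = \sum x_i^{p^m} \in I$ applies literally.

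The one subtlety — and the main thing to get right — is the claim that weak Lefschetz forces injectivity of $\times \ell'$ on each $[A]_{i-1} \to [A]_i$ for all $i$ in the weakly increasing range $1 \le i \le s$. This follows because weak Lefschetz means each such map has maximal rank, and $h_A(i-1) \le h_A(i)$ for $i \le s$ means maximal rank is exactly injectivity; composing gives that $\times (\ell')^{p^m}: [A]_{i - p^m} \to [A]_i$ is injective whenever $0 \le i - p^m$ and $i \le s$. Taking $i = p^m \le s$ gives injectivity of $\times(\ell')^{p^m}: [A]_0 \to [A]_{p^m}$, and since $[A]_0 \ne 0$ this contradicts the map being identically zero. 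One should double-check the edge cases: $a \le p^m$ guarantees $x_i^{p^m} \in I$; $p^m \le s$ together with $[A]_0 \ne 0$ and weak increase guarantees $[A]_{p^m} \ne 0$ and that all intermediate components are nonzero, so the chain of injective maps is between nonzero spaces and the argument does not degenerate. No deeper combinatorics from the earlier sections is needed here; the only external input is Proposition~\ref{pro:mono}.
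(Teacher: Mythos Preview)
Your proof is correct and follows essentially the same approach as the paper: use Proposition~\ref{pro:mono} to reduce to $\ell = x_1 + \cdots + x_n$, apply the Frobenius identity to get $\ell^{p^m} = \sum x_i^{p^m} \in I$, and observe that this forces some $\times \ell$ map in the weakly increasing range to fail injectivity. The only cosmetic difference is that the paper phrases the contradiction using the map $\times \ell^{p^m-1}: [A]_1 \to [A]_{p^m}$ (noting $\ell \neq 0$ in $[A]_1$), whereas you use $\times \ell^{p^m}: [A]_0 \to [A]_{p^m}$; both versions work for the same reason.
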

\begin{proof}
    Suppose the characteristic of $K$ is $p$, and let $\ell = x_1 + \cdots + x_n$. Then, by the Frobenius endomorphism,
    $\ell \cdot \ell^{p^m-1} = \ell^{p^m} = x_1^{p^m} + \cdots + x_n^{p^m}$. Moreover, $\ell^{p^m} = 0$ in $A$ as $a
    \leq p^m$. Since $\ell \neq 0$ in $A$, the map $\times \ell^{p^m-1}: [A]_{1} \rightarrow [A]_{p^m}$ is not
    injective. Thus, $A$ does not have the weak Lefschetz property by Proposition~\ref{pro:mono}.
\end{proof}

We conclude this subsection by noting that any Artinian ideal in two variables has the weak Lefschetz property. This was
first proven for characteristic zero in~\cite[Proposition~4.4]{HMNW} and then for arbitrary characteristic
in~\cite[Corollary~7]{MZ}, though it was not specifically stated therein (see~\cite[Remark~2.6]{LZ}). We provide a
brief, direct proof of this fact to illustrate the weak Lefschetz property. Unfortunately, the argument cannot be
extended to the case of three variables, not even for monomial ideals.

\begin{proposition} \label{pro:2-wlp}
    Let $R = K[x,y]$, where $K$ is an infinite field of \emph{arbitrary} characteristic. Then every Artinian graded
    algebra $R/I$ has the weak Lefschetz property.
\end{proposition}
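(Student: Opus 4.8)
The plan is to use that $R/I$ is Artinian of codimension two and hence Cohen--Macaulay, so a single general linear form $\ell$ is a system of parameters, and $A/\ell A$ is Artinian of dimension zero concentrated in one variable. First I would choose $\ell \in [R]_1$ general. Since $\dim R/I = 0$, $\ell$ is a nonzerodivisor on $R$ but of course not on $A = R/I$; the right object is the graded module $A/\ell A$, which is a quotient of $K[x,y]/(\ell) \cong K[t]$, hence of the form $K[t]/(t^r)$ for some $r\ge 0$. The key point is that the Hilbert function of $A/\ell A$, call it $c(j) = \dim_K [A/\ell A]_j$, therefore consists of a string of $1$'s followed by $0$'s: there is an integer $r$ with $c(j) = 1$ for $0\le j < r$ and $c(j) = 0$ for $j \ge r$. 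Dually, the kernel $K_j := \ker(\times\ell\colon [A]_{j-1}\to[A]_j)$ also has dimension at most the ``overlap'' forced by surjectivity failing; more precisely, from the exact sequence
\[
    0 \to [0:_A \ell]_{j-1} \to [A]_{j-1} \xrightarrow{\times \ell} [A]_j \to [A/\ell A]_j \to 0
\]
one reads $h_A(j) - h_A(j-1) = c(j) - \dim[0:_A\ell]_{j-1}$.

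Next I would exploit that $c(j)\in\{0,1\}$ for all $j$. Let $j$ be arbitrary. If $c(j) = 0$, the map $\times\ell\colon [A]_{j-1}\to[A]_j$ is surjective, hence of maximal rank. If $c(j) = 1$, I claim the map is injective; equivalently $\dim[0:_A\ell]_{j-1} = 0$. Suppose not, so there is $0\ne \bar f\in[A]_{j-1}$ with $\ell f\in I$. Since $\ell$ is a general linear form and $R/I$ is a one-dimensional-free... here the cleanest route is: $0:_A \ell$ is the submodule of $A$ annihilated by the nonzerodivisor-direction $\ell$, and because $A$ is CM of dimension $0$ over the subring generated by $\ell$... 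Actually the crispest argument uses that $A/\ell A = K[t]/(t^r)$ forces $c$ to be unimodal with no internal zeros, so $\{j : c(j) = 1\} = \{0,1,\dots,r-1\}$ is an initial segment; correspondingly the generating degrees of $A/\ell A$ as a $K[t]$-module are all $0$, so $A/\ell A$ is generated in degree $0$. Then by Lemma~\ref{lem:mod-surj} applied to the module $M = A$ (with $d$ replaced by any degree $\ge$ the top generating degree... ) one gets that once $\times\ell$ is surjective it stays surjective; combined with Corollary~\ref{cor:inj} applied to $A^\vee$ one gets that once it is injective it stays injective. The crux is therefore to show there is a single transition degree: an integer $d$ with $\times\ell$ injective in all degrees $\le d$ and surjective in all degrees $\ge d$.

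To nail the transition, I would argue by counting dimensions. Because $c(j) \le 1$ and $c(j) = 0$ for $j$ large, let $d$ be minimal with $c(d) = 0$; by the structure of $K[t]/(t^d)$ we have $c(j) = 1$ for $j < d$ and $c(j) = 0$ for $j\ge d$. For $j \ge d$: $\times\ell\colon[A]_{j-1}\to[A]_j$ is surjective, done. For $j \le d-1$: here $c(j) = 1$, and I must show $\dim[0:_A\ell]_{j-1} = 0$. Since $0:_A\ell$ is annihilated by $\ell$, it embeds in $\soc(A)$-adjacent behavior; more directly, the graded module $0:_A\ell$ injects into $A(-1)/\ell\cdot(\text{something})$... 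The cleanest finish: $A$ being a $1$-dimensional CM quotient of $K[x,y]$, the minimal free resolution of $A$ over $R$ is $0\to R^{b}(\text{shifts})\to R^{b+1}(\text{shifts})\to R\to A\to 0$ (Hilbert--Burch), so $A$ is ``almost'' torsion-free; in particular the module $0:_A \ell$ is the $\ell$-torsion, which for a general $\ell$ has Hilbert function supported exactly where $h_A$ strictly decreases, i.e. in degrees $\ge d-1$. Putting the two half-ranges together: $\times\ell$ is injective for $j-1 \le d-2$ and surjective for $j\ge d$, and in the single degree $j = d-1$ we have $h_A(d-2) = h_A(d-1) - c(d-1) + \dim[0:_A\ell]_{d-2} $, and by the torsion computation $\dim[0:_A\ell]_{d-2} = 0$, so there $\times\ell$ is injective as well. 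Hence $\times\ell$ has maximal rank in every degree, so $\ell$ is a Lefschetz element and $R/I$ has the weak Lefschetz property.

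\textbf{Main obstacle.} The delicate point is the torsion computation: controlling $\dim_K[0:_A\ell]_{j}$ for a \emph{general} linear form $\ell$ in arbitrary characteristic. In characteristic zero one typically invokes genericity of $\ell$ plus the Hilbert--Burch structure; in positive characteristic one must be careful that ``general'' still suffices, which is where the cited results \cite[Corollary~7]{MZ} and the discussion in \cite[Remark~2.6]{LZ} do the real work. My proof plan essentially reduces the whole statement to the single assertion that for general $\ell$ the $\ell$-torsion of $A$ is concentrated in the degrees where the Hilbert function strictly decreases; everything else is bookkeeping with Lemma~\ref{lem:mod-surj}, Corollary~\ref{cor:inj}, and the fact that $A/\ell A$ is a cyclic $K[t]$-module.
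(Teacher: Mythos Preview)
Your approach shares the key structural observation with the paper: $A/\ell A$ is a quotient of $K[x,y]/(\ell)\cong K[t]$, hence of the form $K[t]/(t^d)$, so its Hilbert function is a block of $1$'s followed by $0$'s. From this you correctly read off surjectivity of $\times\ell\colon[A]_{j-1}\to[A]_j$ for $j\ge d$. The difficulty lies entirely in the injectivity half, and there your argument has a genuine gap.

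You reduce injectivity in degrees $<d$ to the vanishing $[0:_A\ell]_{j-1}=0$ for $j\le d-1$, but you never establish this. The Hilbert--Burch aside does not yield it (and note $A$ is $0$-dimensional, not $1$-dimensional). The assertion that ``for a general $\ell$ the $\ell$-torsion of $A$ is supported exactly where $h_A$ strictly decreases'' is precisely the statement that $\times\ell$ is injective wherever $h_A$ does not decrease; combined with the surjectivity you already have, this \emph{is} the weak Lefschetz property, so invoking it is circular. Deferring to \cite{MZ} at that point defeats the purpose of a direct proof.

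The missing idea is a one-line identification you never make: your transition degree $d$ equals $s:=\min\{j:[I]_j\ne 0\}$, the initial degree of $I$. Indeed, any nonzero $f\in[I]_s$ has only finitely many linear factors, so a general $\ell$ does not divide it; hence the image of $I$ in $R/(\ell)\cong K[x]$ contains a nonzero element of degree $s$ and none of smaller degree, giving $(I,\ell)/(\ell)=(x^s)$ and $d=s$. Once this is known, injectivity for $j<s$ is immediate: there $[R/I]_{j-1}=[R]_{j-1}$ and $[R/I]_j=[R]_j$, and $\times\ell$ is injective on $R$ itself. This is exactly the paper's three-line argument, and it bypasses any computation of $0:_A\ell$.
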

\begin{proof}
    Let $\ell \in R$ be a general linear form, and put $s = \min\{j \in \ZZ \st [I]_j \neq 0 \}$. As $[R]_i = [R/I]_i$
    for $i < s$ and multiplication by $\ell$ on $R$ is injective, we see that $[R/I]_{i-1} \rightarrow [R/I]_{i}$ is
    injective if $i < s$. Moreover, since $R/(I,\ell) \cong K[x]/(x^s)$ and $[K[x]/(x^s)]_i = 0$ for $i \geq s$, the map
    $[R/I]_{i-1} \rightarrow [R/I]_{i}$ has a trivial cokernel if $i \geq s$, that is, the map is surjective if $i \geq
    s$. Hence $R/I$ has the weak Lefschetz property.
\end{proof}~

\subsection{The weak Lefschetz property and perfect matchings}\label{sub:wlp-pm}~\par

If $T \subset {\mathcal T}_d$ is a triangular region, then its associated bi-adjacency matrix $Z(T)$ (see Section \ref{sub:pm})
admits an alternative description using multiplication by $\ell = x+y+z$.

\begin{proposition}\label{prop:interp-Z}
    Let $I$ be a monomial ideal in $R = K[x,y,z]$, and let $\ell = x+y+z$. Fix an integer $d$ and consider the
    multiplication map $\times(x+y+z): [R/I]_{d-2} \rightarrow [R/I]_{d-1}$. Let $M(d)$ be the matrix to this linear map
    with respect to the monomial bases of $[R/I]_{d-2}$ and $[R/I]_{d-1}$ in reverse-lexicographic order. Then the
    transpose of $M(d)$ is the bi-adjacency matrix $Z(T_d (I))$.
\end{proposition}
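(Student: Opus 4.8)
The plan is to unwind both sides of the claimed identity and check that the matrix entries agree. First I would recall the setup: the downward‑pointing unit triangles of $T = T_d(I)$ are labeled by the monomials in $[R/I]_{d-2}$, and the upward‑pointing unit triangles by the monomials in $[R/I]_{d-1}$, both listed in reverse‑lexicographic order (Subsection~\ref{sub:trideg}). The matrix $M(d)$ of $\times(x+y+z)$ is then the matrix whose columns are indexed by the basis monomials $m$ of $[R/I]_{d-2}$ and whose rows are indexed by the basis monomials $m'$ of $[R/I]_{d-1}$, with $(m',m)$‑entry equal to the coefficient of $m'$ in $(x+y+z)\cdot m$ (read in $R/I$). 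On the other side, $Z(T_d(I))$ is the zero‑one matrix with rows indexed by the downward‑pointing triangles (i.e.\ by the monomials $m \in [R/I]_{d-2}$) and columns indexed by the upward‑pointing triangles ($m' \in [R/I]_{d-1}$), with a $1$ in position $(m,m')$ exactly when the corresponding triangles share an edge in $T_d(I)$. So the transpose $Z(T_d(I))^{\mathsf T}$ has rows indexed by $m' \in [R/I]_{d-1}$, columns by $m \in [R/I]_{d-2}$, and $(m',m)$‑entry equal to $1$ iff the triangles labeled $m$ and $m'$ are adjacent in $T_d(I)$.

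The key step is the local combinatorial fact relating adjacency to multiplication: in $\mathcal{T}_d$, a downward‑pointing triangle labeled by a monomial $m$ of degree $d-2$ shares an edge with exactly three upward‑pointing triangles, and their labels are precisely $xm$, $ym$, $zm$ (this is exactly how the labeling of $\mathcal{T}_d$ was defined — the label of an upward‑pointing triangle is obtained from the label of an adjacent downward‑pointing triangle by multiplying by a variable). Hence, in $R$, one has $(x+y+z)m = xm + ym + zm$, a sum of three distinct monomials of degree $d-1$, each corresponding to one of the three triangles adjacent to the triangle of $m$. Passing to $R/I$: a monomial $xm$ (say) survives in $[R/I]_{d-1}$ exactly when $xm \notin I$, which is exactly the condition that the corresponding upward‑pointing triangle is present in $T_d(I)$; and the downward‑pointing triangle of $m$ is present in $T_d(I)$ exactly when $m \notin I$. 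So for a basis monomial $m$ of $[R/I]_{d-2}$, the image $(x+y+z)m$ in $[R/I]_{d-1}$ is the sum (with coefficient $1$) of precisely those monomials among $xm, ym, zm$ that are not in $I$, i.e.\ of precisely those basis monomials $m'$ of $[R/I]_{d-1}$ whose triangle is adjacent to the triangle of $m$ in $T_d(I)$.

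Putting this together: the coefficient of $m'$ in $(x+y+z)m$ equals $1$ if the triangles of $m$ and $m'$ are adjacent in $T_d(I)$ and $0$ otherwise (note distinct monomials among $xm,ym,zm$ cannot coincide, so no coefficient exceeds $1$). This is exactly the $(m,m')$‑entry of $Z(T_d(I))$, while by definition it is the $(m',m)$‑entry of $M(d)$. Therefore $M(d)^{\mathsf T} = Z(T_d(I))$, as claimed. I expect the only point requiring any care — the ``main obstacle'' — is bookkeeping the direction of the transpose and making sure the row/column indexing conventions for $M(d)$ (the matrix \emph{of} a linear map, so columns $\leftrightarrow$ source basis) and for $Z(T)$ (rows $\leftrightarrow$ downward triangles $\leftrightarrow [R/I]_{d-2}$) are lined up consistently; once the conventions are fixed, the identity of entries is immediate from the local adjacency‑equals‑multiply‑by‑a‑variable property built into the definition of $\mathcal{T}_d$.
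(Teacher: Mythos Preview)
Your proof is correct and follows essentially the same approach as the paper: both arguments unwind the definitions of $M(d)$ and $Z(T_d(I))$ and verify the entry-by-entry agreement using the fact that a downward-pointing triangle labeled $m$ is adjacent to an upward-pointing triangle labeled $m'$ precisely when $m'$ is a variable times $m$. Your explicit remark that $xm, ym, zm$ are distinct (so no coefficient exceeds $1$) is a nice clarification that the paper leaves implicit.
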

\begin{proof}
    Set $s = h_{R/I}(d-2)$ and $t = h_{R/I}(d-1)$, and let $\{m_1, \ldots, m_s\}$ and $\{n_1, \ldots, n_t\}$ be the
    distinct monomials in $[R]_{d-2} \setminus I$ and $[R]_{d-1} \setminus I$, respectively, listed in
    reverse-lexicographic order. Then the matrix $M(d)$ is a $t \times s$ matrix. Its column $j$ is the coordinate
    vector of $\ell m_j = x m_i + y m_i + z m_i$ modulo $I$ with respect to the chosen basis of $[R/I]_{d-1}$. In
    particular, the entry in column $j$ and row $i$ is $1$ if and only if $n_i$ is a multiple of $m_j$.

    Recall from Subsection~\ref{sub:pm} that the rows and of $Z(T_d(I))$ are indexed by the downward- and upward-pointing
    unit triangles, respectively. These triangles are labeled by the monomials in $[R]_{d-2} \setminus I$ and
    $[R]_{d-1} \setminus I$, respectively. Since the label of an upward-pointing triangle is a multiple of the label of a
    downward-pointing triangle if and only if the triangles are adjacent, it follows that $Z(T_d(I)) = M(d)^{\rm T}$.
\end{proof}

For ease of reference, we record the following consequence.

\begin{corollary}\label{cor:max-Z}
    Let $I$ be a monomial ideal in $R = K[x,y,z]$. Then the multiplication map $\times(x+y+z): [R/I]_{d-2} \rightarrow [R/I]_{d-1}$
    has maximal rank if and only if the matrix $Z(T_d(I))$ has maximal rank.
\end{corollary}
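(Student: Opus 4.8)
The plan is to obtain this statement immediately from Proposition~\ref{prop:interp-Z} together with the elementary fact that a matrix and its transpose have the same rank. First I would recall that, by Proposition~\ref{prop:interp-Z}, the bi-adjacency matrix $Z(T_d(I))$ equals $M(d)^{\mathrm{T}}$, where $M(d)$ is the matrix representing the multiplication map $\times(x+y+z)\colon [R/I]_{d-2} \rightarrow [R/I]_{d-1}$ with respect to the monomial bases of these two components, taken in reverse-lexicographic order. Since $\rank M(d) = \rank M(d)^{\mathrm{T}}$, both matrices have the same rank, which is by definition the rank of the multiplication map.

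Next I would unwind what ``maximal rank'' means on each side. For the linear map this says $\rank M(d) = \min\{h_{R/I}(d-2),\, h_{R/I}(d-1)\}$, and $h_{R/I}(d-2)$ and $h_{R/I}(d-1)$ are precisely the number of columns and the number of rows of $M(d)$, hence of $Z(T_d(I)) = M(d)^{\mathrm{T}}$ as well (with rows and columns interchanged). Thus $\times(x+y+z)\colon [R/I]_{d-2} \rightarrow [R/I]_{d-1}$ has maximal rank if and only if $Z(T_d(I))$ has maximal rank, which is the assertion.

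There is no genuine obstacle here: the entire content has already been absorbed into Proposition~\ref{prop:interp-Z}, and the corollary is just the observation that rank and the property of being of maximal rank are invariant under transposition. The only point requiring a little care is to keep the conventions consistent, namely that the downward-pointing triangles (monomials in $[R]_{d-2}\setminus I$) index one side and the upward-pointing triangles (monomials in $[R]_{d-1}\setminus I$) the other; but this bookkeeping is exactly what Proposition~\ref{prop:interp-Z} fixes, so nothing further needs to be checked.
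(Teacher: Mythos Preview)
Your proposal is correct and matches the paper's approach: the paper simply records this corollary as an immediate consequence of Proposition~\ref{prop:interp-Z} without giving a separate proof, and your argument (transpose preserves rank, hence also the property of having maximal rank) is exactly the intended justification.
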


Combined with Proposition \ref{pro:mono}, we get a criterion for the presence of the weak Lefschetz property.

\begin{corollary}\label{cor:wlp-biadj}
    Let $I$ be an Artinian monomial ideal in $R = K[x,y,z]$. Then $R/I$ has the weak Lefschetz property if and only if,
    for each positive integer $d$, the matrix $Z(T_d(I))$ has maximal rank.
\end{corollary}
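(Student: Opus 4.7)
The plan is to simply combine the three pieces already at hand: Proposition \ref{pro:mono} (which lets us replace a general linear form by $\ell = x+y+z$ for monomial algebras), Corollary \ref{cor:max-Z} (which identifies maximal rank of multiplication by $\ell$ with maximal rank of the bi-adjacency matrix), and the definition of the weak Lefschetz property.

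First I would unwind the definition: $R/I$ has the weak Lefschetz property if and only if there exists a linear form $L$ such that $\times L\colon [R/I]_{e-1}\to [R/I]_e$ has maximal rank for every integer $e$. Since $K$ is infinite and the property is open, the existence of such an $L$ is equivalent to the statement for a \emph{general} $L$. Because $I$ is a monomial ideal, Proposition \ref{pro:mono} (applied with exponent $e=1$) allows us to replace a general linear form by the specific form $\ell = x+y+z$ without changing the rank of any of these multiplication maps. Thus $R/I$ has the weak Lefschetz property if and only if, for every integer $e$, the map $\times \ell\colon [R/I]_{e-1}\to [R/I]_e$ has maximal rank.

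Next I reindex: setting $d = e+1$, the condition becomes that $\times \ell\colon [R/I]_{d-2}\to [R/I]_{d-1}$ has maximal rank for every integer $d$. By Corollary \ref{cor:max-Z}, this is equivalent to the matrix $Z(T_d(I))$ having maximal rank for every integer $d$. It remains to check that restricting to positive integers $d$ is harmless: for $d\le 0$, the source $[R/I]_{d-2}$ is zero, the map has maximal rank trivially, and $T_d(I)$ is empty so $Z(T_d(I))$ is the empty matrix (vacuously of maximal rank). Also, because $R/I$ is Artinian, for all sufficiently large $d$ both $[R/I]_{d-2}$ and $[R/I]_{d-1}$ vanish and $Z(T_d(I))$ is again empty, so only finitely many $d$ actually impose a non-trivial condition.

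There is no real obstacle here: the corollary is purely a bookkeeping consequence of Proposition \ref{prop:interp-Z} and Proposition \ref{pro:mono}. The only tiny care needed is to make sure the index shift between $d$ (size of the triangular region) and $e$ (the source degree of the multiplication map) is handled consistently, and to note that the Artinian hypothesis ensures the family of conditions ``$Z(T_d(I))$ has maximal rank'' is really a \emph{finite} set of conditions in disguise.
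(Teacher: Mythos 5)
Your proof is correct and follows exactly the route the paper intends: combine Proposition \ref{pro:mono} (to replace a general linear form by $x+y+z$) with Corollary \ref{cor:max-Z} (to translate maximal rank of the multiplication map into maximal rank of $Z(T_d(I))$). The extra care you take with the index shift and the trivial small/large $d$ cases is fine but not essential; the paper treats this corollary as an immediate consequence of those two results.
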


Assuming large enough socle degrees, it is enough to consider at most two explicit matrices to check for the weak Lefschetz property.

\begin{corollary}\label{cor:wlp-Z}
    Let $I$ be an Artinian monomial ideal in $R = K[x,y,z]$, and suppose the degrees of the  socle generators of  $R/I$ are at least $d-2$. Then:
    \begin{enumerate}
        \item If $0 \neq h_{R/I}(d-1) = h_{R/I}(d)$, then $R/I$ has the weak Lefschetz property if and only if
            $\det{Z(T_d(I))}$ is not zero in $K$.
        \item If $h_{R/I}(d-2) < h_{R/I}(d-1)$ and $h_{R/I}(d-1) > h_{R/I}(d)$, then $R/I$ has the weak Lefschetz
            property if and only if $Z(T_d(I))$ and $Z(T_{d+1}(I))$ both have a maximal minor that is not zero in $K$.
    \end{enumerate}
\end{corollary}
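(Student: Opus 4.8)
The plan is to reduce everything to the structural results of Proposition~\ref{pro:wlp} together with the matrix interpretation of the multiplication map $\times(x+y+z)$ provided by Proposition~\ref{prop:interp-Z} and Corollary~\ref{cor:max-Z}. By Proposition~\ref{pro:mono}, $R/I$ has the weak Lefschetz property if and only if $\ell = x+y+z$ is a Lefschetz element, so throughout I may work with this specific linear form. The key point is that the hypotheses pin down $d$ so that $d-1$ is a ``peak'' of the Hilbert function, and the assumption on socle degrees eliminates the pathological low-degree failures that Proposition~\ref{pro:wlp}(i) rules out.

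For part (i): the hypothesis $0 \neq h_{R/I}(d-1) = h_{R/I}(d)$ says the Hilbert function has ``twin peaks'' in degrees $d-1$ and $d$, and the socle generators all have degree at least $d-2$. Thus Corollary~\ref{cor:twin-peaks-wlp} applies once I check there are no socle elements of degree less than $d-1$; but a socle \emph{generator} of degree $\geq d-2$ could still produce a socle element of degree exactly $d-2 < d-1$, so I need the slightly sharper statement. Actually the cleaner route is: by Corollary~\ref{cor:twin-peaks-wlp}, with the peak at $d-1 = d$, $R/I$ has the weak Lefschetz property iff $\times\ell : [R/I]_{d-1} \to [R/I]_d$ is bijective — but I want the map in degree $d-2 \to d-1$. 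Rotational/transpose symmetry saves us: since $h_{R/I}(d-1) = h_{R/I}(d)$, the map $\times\ell$ in this degree is bijective iff its matrix $M(d+1)$ (equivalently $Z(T_{d+1}(I))^{\mathrm T}$ by Proposition~\ref{prop:interp-Z}) is invertible, i.e. has nonzero determinant in $K$. Hmm — but the statement asserts the criterion is $\det Z(T_d(I)) \neq 0$, which concerns the map $[R/I]_{d-2}\to[R/I]_{d-1}$. So the correct argument is: the socle-degree hypothesis forces $h_{R/I}$ to be strictly increasing up to degree $d-1$ (any earlier descent would leave a socle element of degree $< d-2$, contradicting the hypothesis via unimodality, cf. the proof of Proposition~\ref{pro:wlp}(ii)); hence $d-1$ is the unique peak, $h_{R/I}(d-2) < h_{R/I}(d-1) = h_{R/I}(d)$, and Proposition~\ref{pro:wlp}(ii)--(iii) with this $d$ reduce the weak Lefschetz property to injectivity of $\times\ell : [R/I]_{d-2}\to[R/I]_{d-1}$ (surjectivity in degree $d-1\to d$ follows by bijectivity counting plus Proposition~\ref{pro:surj}, and there are no low socle generators). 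This injectivity, since $Z(T_d(I))$ is square of size $h_{R/I}(d-1) = h_{R/I}(d-2)$... wait, it is square of size $h_{R/I}(d-2)\times h_{R/I}(d-1)$ which is \emph{not} square here. This is the main subtlety, and I expect it to be the main obstacle: reconciling which degree's matrix is square. Resolving it: under twin peaks, one rotates the region $T_d(I)$ by $120^{\circ}$ (Remark~\ref{rem:rotations}, Theorem~\ref{thm:detZN}) so that the relevant square matrix of size $h_{R/I}(d-1)$ appears, or one simply invokes Corollary~\ref{cor:twin-peaks-wlp} directly on degrees $d-1, d$ and then observes that bijectivity there is equivalent, via the already-established injectivity in lower degrees coming from the socle hypothesis, to $Z(T_d(I))$ (which in the twin-peaks case \emph{is} square, since $h_{R/I}(d-2) = h_{R/I}(d-1)$ would be needed) — I will need to recheck the Hilbert-function bookkeeping carefully here, but the upshot is that in the twin-peaks-plus-socle situation $T_d(I)$ is balanced and $Z(T_d(I))$ square, and ``weak Lefschetz property'' $\iff$ $\det Z(T_d(I)) \neq 0$ in $K$ by Corollary~\ref{cor:max-Z}.

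For part (ii): here $h_{R/I}(d-2) < h_{R/I}(d-1) > h_{R/I}(d)$, so $d-1$ is a strict peak. The socle-degree hypothesis again forces strict increase of $h_{R/I}$ up to degree $d-1$ and hence rules out socle generators of degree $< d-2$. By Proposition~\ref{pro:wlp}(ii)--(iii), $R/I$ has the weak Lefschetz property if and only if $\times\ell : [R/I]_{d-2}\to[R/I]_{d-1}$ is injective \emph{and} $\times\ell : [R/I]_{d-1}\to[R/I]_d$ is surjective. By Corollary~\ref{cor:max-Z}, injectivity of the first map is equivalent to $Z(T_d(I))$ having maximal rank (which, being a tall matrix $h_{R/I}(d-2) < h_{R/I}(d-1)$ columns... again $Z$ has rows indexed by $\dntri$ i.e. $h_{R/I}(d-2)$ rows and $h_{R/I}(d-1)$ columns, so maximal rank means rank $h_{R/I}(d-2)$, i.e. some maximal minor — a $h_{R/I}(d-2)\times h_{R/I}(d-2)$ minor — is nonzero in $K$). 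Surjectivity of the second map is, by the same Proposition~\ref{prop:interp-Z} applied in degree $d+1$ and Corollary~\ref{cor:max-Z}, equivalent to $Z(T_{d+1}(I))$ having maximal rank, i.e. a maximal minor nonzero in $K$. Combining the two gives exactly the stated criterion. The routine verifications — that the socle hypothesis genuinely forces the monotonicity of $h_{R/I}$ and excludes the degree-$(<d-2)$ socle pathology, and the precise identification of ``maximal minor'' with the correct submatrix size in each of the tall/wide/square cases — are the bookkeeping I would carry out in detail; the conceptual content is entirely Propositions~\ref{pro:wlp}, \ref{pro:mono}, \ref{prop:interp-Z} and Corollary~\ref{cor:max-Z}.
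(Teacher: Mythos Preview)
Your overall strategy is exactly the paper's: reduce to $\ell = x+y+z$ via Proposition~\ref{pro:mono}, translate maximal rank of the multiplication map into maximal rank of the bi-adjacency matrix via Corollary~\ref{cor:max-Z}, and then invoke Proposition~\ref{pro:wlp} (for part (ii)) and Corollary~\ref{cor:twin-peaks-wlp} (for part (i)). The paper's proof is a one-line citation of precisely these four results, so at the level of approach you are in full agreement.

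Your part (ii) is essentially complete and correct.

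Your difficulty in part (i) is not a flaw in your reasoning but rather a genuine indexing inconsistency in the statement as written. Recall that $Z(T_d(I))$ is an $h_{R/I}(d-2) \times h_{R/I}(d-1)$ matrix (rows indexed by downward triangles, columns by upward), so it is square exactly when $h_{R/I}(d-2) = h_{R/I}(d-1)$, \emph{not} when $h_{R/I}(d-1) = h_{R/I}(d)$. Combining Corollary~\ref{cor:twin-peaks-wlp} (which under $h(d-1)=h(d)$ reduces the weak Lefschetz property to bijectivity of $[R/I]_{d-1}\to[R/I]_d$) with Corollary~\ref{cor:max-Z} actually yields the criterion $\det Z(T_{d+1}(I)) \neq 0$, not $\det Z(T_d(I)) \neq 0$. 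Equivalently, the hypothesis in (i) should read $h_{R/I}(d-2) = h_{R/I}(d-1)$; this is the form in which the corollary is actually applied later (e.g., in Proposition~\ref{pro:amaci-balanced}, where ``$T_d(I)$ balanced'' means precisely $h(d-2)=h(d-1)$). Your attempted rescue, asserting that the given hypotheses force $T_d(I)$ to be balanced, is not correct in general (take $I=(x^2,y^2,z^2)$ with $d=2$: then $h(1)=h(2)=3$ and the socle degree is $3\geq d-2$, but $Z(T_2(I))$ is $1\times 3$). So you have correctly sensed the discrepancy; the fix is an index shift in the statement, not an additional argument.
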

\begin{proof}
    By Proposition~\ref{pro:mono}, it is enough to check whether $\ell = x+y+z$ is a Lefschetz element of $R/I$. Hence,
    the result follows by combining Corollary~\ref{cor:max-Z} and Proposition~\ref{pro:wlp} and
    Corollary~\ref{cor:twin-peaks-wlp}, respectively.
\end{proof}

In the case, where the region $T_d(I)$ is balanced, we interpreted the determinant of $Z(T_d(I))$ as the signed
enumeration of perfect matchings on the bipartite graph $G(T)$ (see Subsection~\ref{sub:pm}). In general, we can similarly
interpret the maximal minors of $Z(T_d(I))$ by removing unit triangles from $T_d(I)$, since the rows and columns of
$Z(T_d(I))$ are indexed by the triangles of $T_d(I)$. More precisely, let $T = T_d(I)$ be a $\dntri$-heavy triangular
region with $k$ more downward-pointing triangles than upward-pointing triangles. Abusing notation slightly, we define a
\emph{maximal minor}%
\index{triangular region!maximal minor}
of $T$ to be a balanced subregion $U$ of $T$ that is obtained by removing $k$ downward-pointing triangles
from $T$. Similarly, if $T$ is $\uptri$-heavy, then we remove only upward-pointing triangles to get a maximal minor.

Clearly, if $U$ is a maximal minor of $T$, then $\det{Z(U)}$ is indeed a maximal minor of $Z(T)$. Thus, $Z(T)$ has
maximal rank if and only if there is a maximal minor $U$ of $T$ such that $Z(U)$ has maximal rank.

\begin{example} \label{exa:minors-Z}
    Let $I = (x^4, y^4, z^4, x^2 z^2)$. Then the Hilbert function of $R/I$, evaluated between degrees $0$ and $7$, is
    $(1,3,6,10,11,9,6,2)$, and $R/I$ is level with socle degree $7$. Hence, by Corollary~\ref{cor:wlp-Z}, $R/I$ has the
    weak Lefschetz property if and only if $Z(T_5 (I))$ and $Z(T_6 (I))$ both have a maximal minor of maximal rank.

    \begin{figure}[!ht]
        \begin{minipage}[b]{0.54\linewidth}
            \centering
            \includegraphics[scale=1]{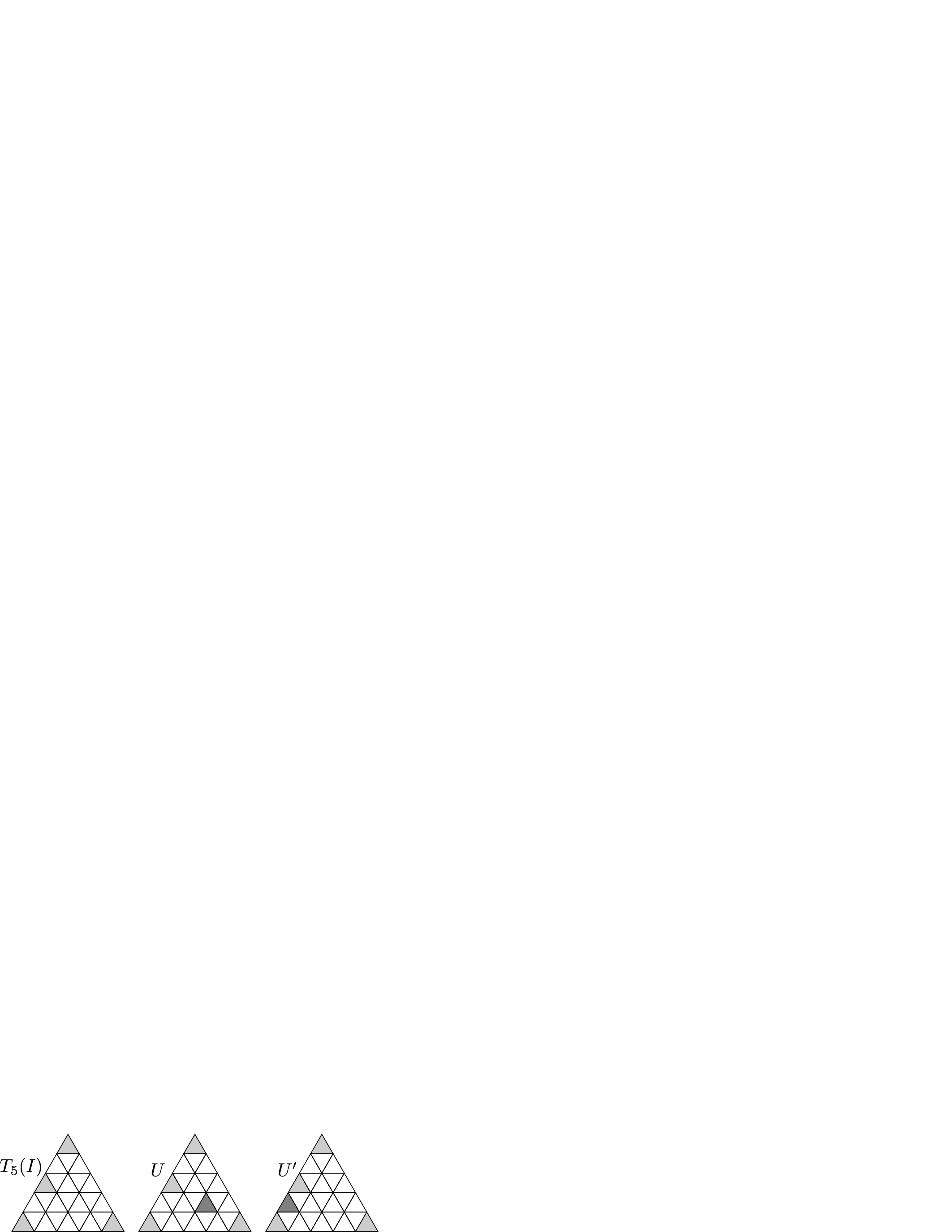}\\
            \emph{(i) $\det{Z(U)} = 0$ and $|\det{Z(U')}| = 4$}
        \end{minipage}
        \begin{minipage}[b]{0.44\linewidth}
            \centering
            \includegraphics[scale=1]{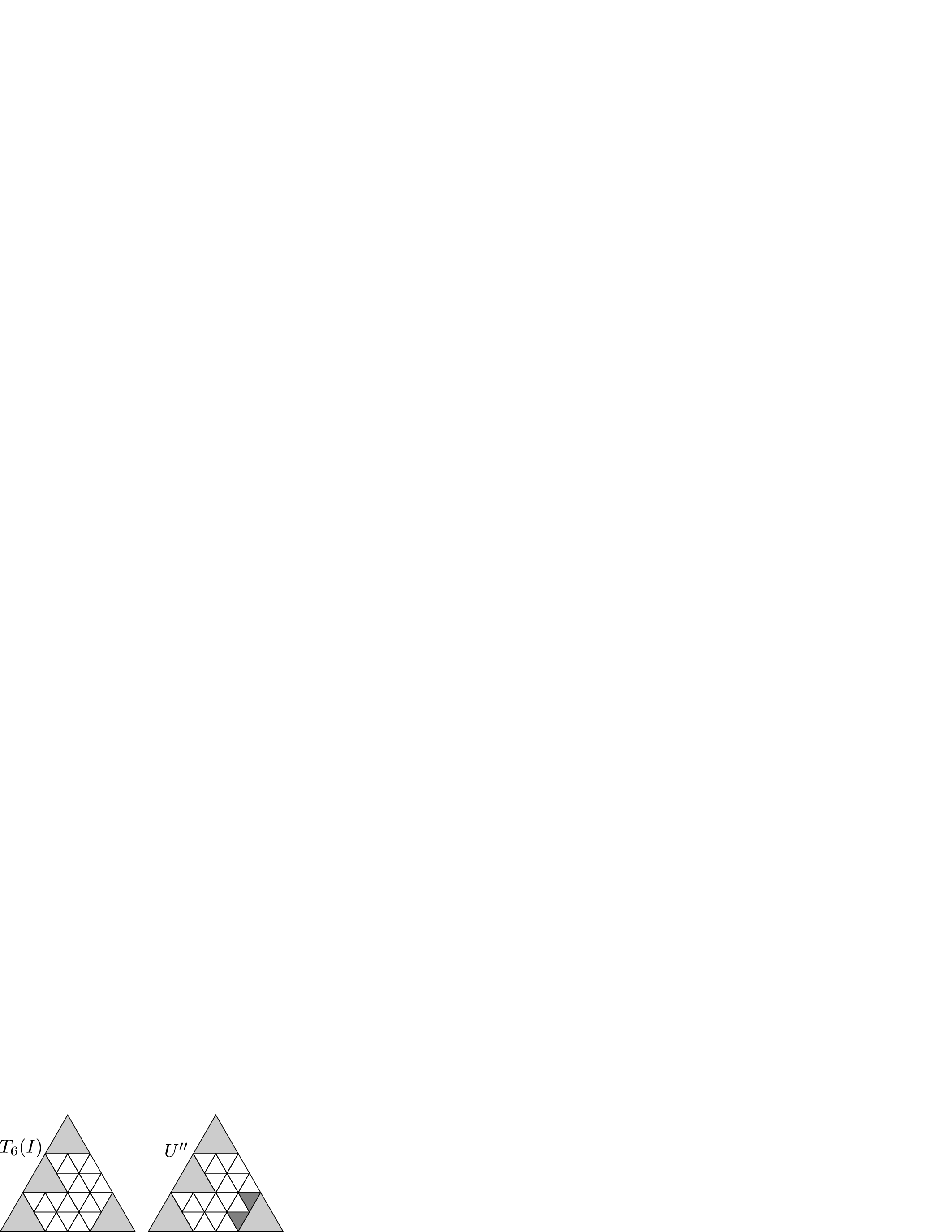}\\
            \emph{(ii) $|\det{Z(U'')}| = 1$}
        \end{minipage}
        \caption{Examples of maximal minors of $T_d(I)$, where $I = (x^4, y^4, z^4, x^2 z^2)$.}
        \label{fig:maximal-minors}
    \end{figure}

    Since $h_{R/I}(3) = 10 < h_{R/I}(4) = 11$, we need to remove $1$ upward-pointing triangle from $T_5(I)$ to get a
    maximal minor of $T_5(I)$; see Figure~\ref{fig:maximal-minors}(i) for a pair of examples. There are $\binom{11}{10} = 11$
    maximal minors, and these have signed enumerations with magnitudes $0, 4,$ and $8$. Thus multiplication from
    degree $3$ to degree $4$ fails injectivity exactly if the characteristic of $K$ is $2$.

    Furthermore, since $h_{R/I}(4) = 11 > h_{R/I}(5) = 9$, we need to remove $2$ downward-pointing triangles from
    $T_6(I)$ to get a maximal minor of $T_6(I)$; see Figure~\ref{fig:maximal-minors}(ii) for an example. There are
    $\binom{11}{9} = 55$ maximal minors, and these have signed enumerations with magnitudes $0, 1,2,3,5,$ and $8$. Thus
    multiplication from degree $4$ to degree $5$ is always surjective (choose the maximal minor whose signed enumeration
    is $1$).

    Hence, we conclude that $R/I$ has the weak Lefschetz property if and only if the characteristic of the base field is
    \emph{not} $2$.
\end{example}

\subsection{The weak Lefschetz property and lattice paths}\label{sub:wlp-nilp}~\par

The key observation in this section is that the lattice path matrix $N(T_d(I))$ (see Section \ref{sub:nilp}) can be used
to study the cokernel of multiplication by $\ell = x+y+z$ on $R/I$.

Let $I$ be a monomial ideal of $R = K[x,y,z]$. Then the cokernel of the multiplication map
$\times(x+y+z): [R/I]_{d-2} \rightarrow [R/I]_{d-1}$ is $[R/(I, x+y+z)]_{d-1}$. This is isomorphic to $[S/J]_{d-1}$,
where $S = K[x,y]$ and $J$ is the ideal generated by the generators of $I$ with $x+y$ substituted for $z$.

\begin{proposition} \label{prop:interp-N}
    Let $I$ be a monomial ideal of $R = K[x,y,z]$, and let $J$ be the ideal of $S = K[x,y]$ generated by the generators
    of $I$ with $x+y$ substituted for $z$. Fix an integer $d$ and set $N = N(T_d(I))$. Then $\dim_K{[S/J]_{d-1}} = \dim_K {\ker{N^{\rm T}}}$.
\end{proposition}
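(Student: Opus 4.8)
The plan is to reduce the statement to a rank identity between the two matrices and then to pass from $Z(T_d(I))$ to $N(T_d(I))$ by an explicit unitriangular elimination. As recalled just before the statement, $[S/J]_{d-1}$ is the cokernel of the multiplication map $\times(x+y+z)\colon [R/I]_{d-2}\to[R/I]_{d-1}$, whose matrix, by Proposition~\ref{prop:interp-Z}, is the transpose of $Z=Z(T_d(I))$. Hence $\dim_K[S/J]_{d-1}=h_{R/I}(d-1)-\rank Z$. Letting $m$ and $n$ denote the numbers of rows and columns of $N=N(T_d(I))$, we have $\dim_K\ker N^{\rm T}=m-\rank N$, so it suffices to prove
\[
    \rank Z(T_d(I))=\rank N(T_d(I))+\bigl(h_{R/I}(d-1)-m\bigr).
\]

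To this end I would first record how the lattice $L=L(T_d(I))$ of Subsection~\ref{sub:nilp} organizes the triangles of $T_d(I)$. Each vertex of $L$ lies on a unique upward-pointing triangle of $\mathcal T_d$ and on at most one downward-pointing triangle, so, restricting to triangles actually present in $T_d(I)$, the upward-pointing triangles split into the $m$ sources $A_1,\dots,A_m$ and a set $\mathcal M$, while the downward-pointing triangles split into the $n$ sinks $E_1,\dots,E_n$ and the \emph{same} set $\mathcal M$: a vertex of $\mathcal M$ is a pair consisting of an upward-pointing triangle $U$ (of positive $z$-degree) and the downward-pointing triangle $U/z$, both lying in $T_d(I)$ and sharing their common edge parallel to the upper-left boundary. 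In particular $h_{R/I}(d-1)-m=\#\mathcal M$. Now order $\mathcal M$ by the reverse-lexicographic order of the labels $U$. The triangle $U/z$ is adjacent to $U$, and its only other upward-pointing neighbors in $\mathcal T_d$ are $U\cdot(x/z)$ and $U\cdot(y/z)$, both of which exceed $U$ in reverse-lexicographic order; hence the submatrix of $Z$ with rows and columns indexed by $\mathcal M$ is upper triangular with units on the diagonal, so it is invertible over $\ZZ$. Block elimination therefore gives $\rank Z=\#\mathcal M+\rank C$, where $C$ is the Schur complement, an $n\times m$ matrix with rows indexed by the sinks and columns by the sources.

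It remains to identify $C$ with $N(T_d(I))^{\rm T}$ up to harmless rescalings. Expanding the inverse of the unitriangular $\mathcal M$-block as an alternating sum over chains of $\mathcal M$-triangles, the entry $C_{ji}$ becomes a sum of $\pm1$ over walks in the honeycomb graph $G(T_d(I))$ that start at $A_i$, pass only through triangles of $\mathcal M$, and end at $E_j$, the sign of a walk being $(-1)$ to its length. By the bijection between lozenge tilings and families of non-intersecting lattice paths recalled in Subsection~\ref{sub:nilp} (following \cite{CEKZ} and \cite{Fi}), these walks correspond precisely to the monotone lattice paths from $A_i$ to $E_j$, whose number is the entry $N(T_d(I))_{(i,j)}$ and whose common length is fixed by the coordinates of the two endpoints and splits as a sum of a term depending on $i$ and a term depending on $j$. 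Consequently $C=\operatorname{diag}(\varepsilon)\,N(T_d(I))^{\rm T}\,\operatorname{diag}(\delta)$ for suitable sign vectors $\varepsilon\in\{\pm1\}^n$, $\delta\in\{\pm1\}^m$, so $\rank C=\rank N(T_d(I))$. This gives the displayed identity, and therefore $\dim_K[S/J]_{d-1}=h_{R/I}(d-1)-\#\mathcal M-\rank N(T_d(I))=m-\rank N(T_d(I))=\dim_K\ker N(T_d(I))^{\rm T}$.

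The main obstacle is the telescoping in the last step: one must verify that the inclusion--exclusion over chains in the $\mathcal M$-part of $G(T_d(I))$ collapses exactly to the unsigned count of monotone lattice paths, with the surviving sign of product form --- equivalently, that the Schur complement of the distinguished unitriangular block of $Z(T_d(I))$ is the transpose of the lattice path matrix up to diagonal rescaling. By contrast, the partition of the triangles induced by $L(T_d(I))$ and the unitriangularity of the $\mathcal M$-block are routine.
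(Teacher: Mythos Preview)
Your setup is correct and the strategy is genuinely different from the paper's: the paper never touches $Z(T)$ here but instead builds, directly in $S=K[x,y]$, a matrix $M$ whose columns are the coordinate vectors of an explicit generating set of $[J]_{d-1}$, and then observes that $M$ has the block shape
\[
\begin{pmatrix} \mathcal I_{d-b} & * & 0 \\ 0 & N & 0 \\ 0 & * & \mathcal I_{d-a} \end{pmatrix},
\]
from which the dimension formula follows by a rank count. Your partition of the unit triangles into $\mathcal M$, sources, and sinks is correct, as is the unitriangularity of the $\mathcal M$-block and the resulting identity $\rank Z = \#\mathcal M + \rank C$.

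The gap is in the last step. The entry $C_{ji}$ of the Schur complement is, as you say, a signed count of walks from $A_i$ to $E_j$ whose \emph{intermediate} vertices all lie in $\mathcal M$. But the entry $N(T)_{(i,j)}$ is defined as the number of lattice paths in $\ZZ^2$ from $A_i$ to $E_j$, a bare binomial coefficient with no constraint on intermediate vertices. These two counts disagree as soon as some lattice path from $A_i$ to $E_j$ passes through another $A_{i'}$ or $E_{j'}$, or through a vertex lying entirely outside $L(T)$. Concretely, take $I=(x^5,y^5,z^5,x^2y^2z^2)$ and $d=7$. Then $A_1=x^2y^4$ sits at $(2,2)$, $E_1=yz^5$ at $(5,0)$, and the floating puncture gives $E_3=x^2y^2z^2$ at $(4,2)$. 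One has $N_{11}=\binom{5}{2}=10$, while the Schur complement entry is $\pm 7$, since the three lattice paths through $(4,2)$ are excluded (their first intermediate vertex outside $\mathcal M$ is $E_3$). So $C$ is $\operatorname{diag}(\varepsilon)\,\widetilde N^{\rm T}$ for a matrix $\widetilde N\neq N$, and you still owe the identity $\rank_K \widetilde N=\rank_K N$ over an arbitrary field. That identity is true (it follows from the proposition itself), but your argument does not supply it; the appeal to the lozenge--lattice-path bijection of Subsection~\ref{sub:nilp} concerns families of non-intersecting paths and determinants, not individual matrix entries or ranks.
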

\begin{proof}
    First, we describe a matrix whose rank equals $\dim_K [J]_{d-1}$. Define an integer $a$ as the least power of $x$ in
    $I$ that is less than $d$, and set $a := d$ if no such power exist. Similarly, define an integer $b \leq d$ using
    powers of $y$ in $I$. Let $G_1$ and $G_3$ be the sets of monomials in $x^a [S]_{d-1-a}$ and $y^b [S]_{d-1-b}$,
    respectively. Furthermore, let $G_2$ be the set consisting of the polynomials $x^p y^{d-1-p-e} (x+y)^e \in
    [J]_{d-1}$ such that $x^i y^j z^e$ is a minimal generator of $I$, where $e > 0,\; i \leq p$, and $j \leq d-1-p-e$.
    Replacing $x+y$ by $z$, each element of $G_2$ corresponds to a monomial $x^p y^{d-1-p-e} z^e \in [I]_{d-1}$. Order
    the elements of $G_2$ by using the reverse-lexicographic order of the corresponding monomials in $[I]_{d-1}$, from
    smallest to largest. Similarly, order the monomials in $G_1$ and $G_3$ reverse-lexicographically, from smallest to
    largest. Note that $G_1 \cup G_2 \cup G_3$ is a generating set for the vector space $[J]_{d-1}$. The coordinate
    vector of a polynomial in $[S]_{d-1}$ with respect to the monomial basis of $[S]_{d-1}$ has as entries the
    coefficients of the monomials in $[S]_{d-1}$. Order this basis again reverse-lexicographically from smallest to
    largest. Now let $M$ be the matrix whose column vectors are the coordinate vectors of the polynomials in $G_1, G_2$,
    and $G_3$, listed in this order. Then $\dim_K [J]_{d-1} = \rank M$ because $G$ generates $[J]_{d-1}$.

    Second, consider the lattice path matrix $N = N(T_d(I))$. Its rows and columns are indexed by the starting and
    end points of lattice paths, respectively. Fix a starting point $A_i$ and an end point $E_j$. The monomial label of
    $A_i$ is of the form $x^s y^{d-1-s}$, where $x^s y^{d-1-s} \notin I$. Thus, the orthogonalised coordinates of $A_i
    \in \ZZ^2$ are $(s, s)$. The monomial label of the end point $E_j$ is of the form $x^p y^{d-1-p-e} z^e$, where $x^p
    y^{d-1-p-e} z^e$ is a multiple of a minimal generator of $I$ of the form $x^i y^j z^e$ with the same exponent of
    $z$. The orthogonalised coordinates of $E_j$ are $(p+e, p)$. Hence there are
    \[
        \binom{p+e-s + s - p}{s-p} = \binom{e}{s-p}
    \]
    lattice paths in $\ZZ^2$ from $A_i$ to $E_j$. By definition, this is the $(i, j)$-entry of  the lattice path matrix $N$.

    The monomial label of the end point $E_j$ corresponds to the polynomial
    \[
        x^p y^{d-1-p-e} (x+y)^e = \sum_{k=0}^e \binom{e}{k} x^{p+k} y^{d-1-p-k}
    \]
    in $G_2$. Thus, its coefficient of the monomial label $x^s y^{d-1-s}$ is $N_{(i, j)}$. It follows that the matrix
    $M$ has the form
    \begin{equation*}
        M =
        \begin{pmatrix}
            {\mathcal I}_{d - b} & * & 0 \\
            0 & N & 0 \\
            0 & * & {\mathcal I}_{d - a}
        \end{pmatrix},
    \end{equation*}
    where we used ${\mathcal I}_{k}$ to denote the $k \times k$ identity matrix.

    Notice that the matrices $M$ and $N$ have $d = \dim_K [S]_{d-1}$ and $a+b - d$ rows, respectively. We conclude that
    \begin{equation*}
        \begin{split}
            \dim_K [S/J]_{d-1} &= d - \dim_K [J]_{d-1}\\
                               &= d - \rank M \\
                               &= a+b-d - \rank N \\
                               &= \dim_K \ker N^{\rm T},
        \end{split}
    \end{equation*}
    as claimed.
\end{proof}

The last result provides another way for checking whether the multiplication by $x+y+z$ has maximal rank.

\begin{corollary} \label{cor:max-N}
    Let $I$ be a monomial ideal in $R = K[x,y,z]$. Then the multiplication map
    $\varphi_d = \times(x+y+z): [R/I]_{d-2} \rightarrow [R/I]_{d-1}$ has maximal rank if and only if $N = N(T_d(I))$
has maximal rank.
\end{corollary}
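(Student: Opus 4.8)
The plan is to derive this directly from Proposition~\ref{prop:interp-N}, which already carries out the hard work. Write $T = T_d(I)$, put $s = h_{R/I}(d-2) = \#\dntri(T)$ and $t = h_{R/I}(d-1) = \#\uptri(T)$, and let $N = N(T_d(I))$ have $m$ rows and $n$ columns, so $m$ is the number of starting points $A_i$ and $n$ the number of end points $E_j$ of lattice paths in $L(T)$. The first step is the standard numerical reformulation of ``maximal rank'': the map $\varphi_d \colon [R/I]_{d-2} \to [R/I]_{d-1}$ has maximal rank if and only if $\dim_K \coker \varphi_d = t - \rank \varphi_d$ equals its least possible value $\max\{0,\,t-s\}$; and, regarding $N$ as a linear map $K^{n} \to K^{m}$, it has maximal rank if and only if $\rank N = \min\{m,n\}$, equivalently $\dim_K \ker N^{\rm T} = \max\{0,\,m-n\}$ (note $\ker N^{\rm T} \subseteq K^{m}$ has dimension $m - \rank N$). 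So the statement will follow once I show $\dim_K \coker \varphi_d = \dim_K \ker N^{\rm T}$ and $\max\{0,t-s\} = \max\{0,m-n\}$.

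For the first equality I would use that $\coker \varphi_d = [R/(I,x+y+z)]_{d-1}$ and that $R/(I,x+y+z) \cong S/J$, where $S = K[x,y]$ and $J \subset S$ is obtained from the generators of $I$ by substituting $x+y$ for $z$ (substituting $-(x+y)$ gives the same ideal); this isomorphism is immediate. Hence $\dim_K \coker \varphi_d = \dim_K [S/J]_{d-1}$, and Proposition~\ref{prop:interp-N} identifies the latter with $\dim_K \ker N^{\rm T}$.

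For the second, I claim the sharper identity $t - s = m - n$. Recall that the vertices of $L(T)$ are the midpoints of those edges of unit triangles of $T$ that are parallel to the upper-left boundary of $\mathcal{T}_d$. Each upward-pointing triangle of $T$ contributes exactly one such vertex, distinct upward triangles contribute distinct vertices, and a vertex of $L(T)$ lies on an upward-pointing triangle of $T$ only if that triangle contributed it; thus the vertices of $L(T)$ lying on some upward triangle of $T$ are in bijection with $\uptri(T)$, and likewise for downward triangles. Letting $c$ be the number of vertices of $L(T)$ that lie on both an upward and a downward triangle of $T$, we get $t = \#\uptri(T) = m + c$ and $s = \#\dntri(T) = n + c$, whence $t - s = m - n$ and in particular $\max\{0,t-s\} = \max\{0,m-n\}$. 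Combining this with the previous paragraph finishes the proof.

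The substance is essentially all in Proposition~\ref{prop:interp-N}; the only delicate point I expect is bookkeeping. One must keep the transpose straight so that $\ker N^{\rm T}$, and not $\ker N$, appears, and one must check that the ``heavy'' direction matches on the two sides — that $\varphi_d$ can fail injectivity only when $N$ has more columns than rows and can fail surjectivity only when it has more rows than columns — which is precisely why one wants the full identity $t-s = m-n$ rather than merely the equivalence ``$T$ balanced $\iff m=n$'' recorded in Subsection~\ref{sub:nilp}.
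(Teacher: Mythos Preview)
Your proof is correct and follows essentially the same approach as the paper: both reduce to Proposition~\ref{prop:interp-N} for the identity $\dim_K \coker \varphi_d = \dim_K \ker N^{\rm T}$, and both establish $t-s = m-n$ by the same vertex-counting argument in $L(T)$ (the paper uses $t$ for what you call $c$, but the content is identical).
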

\begin{proof}
    Consider the exact sequence
    \[
        [R/I]_{d-2} \stackrel{\varphi_d}{\longrightarrow}
        [R/I]_{d-1} \longrightarrow [S/J]_{d-1} \longrightarrow 0.
    \]
    It gives that $\varphi_d$ has maximal rank if and only if $\dim_K [S/J]_{d-1} = \max\{0, \dim_K{[R/I]_{d-1}} - \dim_K{[R/I]_{d-2}}\}$.
    By Proposition~\ref{prop:interp-N}, this is equivalent to
    \[
        \dim_K {\ker{N^{\rm T}}} = \max\{0, \dim_K{[R/I]_{d-1}} - \dim_K{[R/I]_{d-2}}\}.
    \]

    Recall that, by construction, the vertices of the lattice $L(T_d(I))$ are on edges of the triangles that are
    parallel to the upper-left edge of ${\mathcal T}_d$, where this edge belongs to just an upward-pointing triangle
    ($A$-vertices), just a downward-pointing triangle ($E$-vertices), or an upward- and a downward-pointing unit
    triangle (all other vertices). Suppose there are $m$ $A$-vertices, $n$ $E$-vertices, and $t$ other vertices. Then
    there are $m + t$ upward-pointing triangles and $n+t$ downward-pointing triangles, that is, $\dim_K{[R/I]_{d-1}} =
    m+t$ and $\dim_K{[R/I]_{d-2}} = n+t$. Hence
    \[
        \dim_K{[R/I]_{d-1}} - \dim_K{[R/I]_{d-2}} = (m+t) - (n+t) = m-n.
    \]
    Since the rows and columns of $N$ are indexed by $A$- and $E$-vertices, respectively, $N$ is an $m \times n$ matrix.
    Hence, $N$ has maximal rank if and only if
    \[
        \dim_K {\ker{N^{\rm T}}} = \max\{0, m-n\} = \max\{0, \dim_K{[R/I]_{d-1}} - \dim_K{[R/I]_{d-2}}\}.
    \]
\end{proof}

The last argument shows in particular that, for any region $T \subset {\mathcal T}_d$, the bi-adjacency matrix $Z(T)$ is
a square matrix if and only if the lattice path matrix $N(T)$ is a square matrix. Hence, using Corollary~\ref{cor:max-N}
instead of Corollary~\ref{cor:max-Z}, we obtain a result that is analogous to Corollary~\ref{cor:wlp-Z}.

\begin{corollary}\label{cor:wlp-N}
    Let $I$ be an Artinian monomial ideal in $R = K[x,y,z]$, and suppose the degrees of the socle generators of $R/I$
    are at least $d-2$. Then:
    \begin{enumerate}
        \item If $0 \neq h_{R/I}(d-1) = h_{R/I}(d)$, then $R/I$ has the weak Lefschetz property if and only if
            $\det{N(T_d(I))}$ is not zero in $K$.
        \item If $h_{R/I}(d-2) < h_{R/I}(d-1)$ and $h_{R/I}(d-1) > h_{R/I}(d)$, then $R/I$ has the weak Lefschetz
            property if and only if $N(T_d(I))$ and $N(T_{d+1}(I))$ both have a maximal minor that is not zero in $K$.
    \end{enumerate}
\end{corollary}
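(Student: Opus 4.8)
The plan is to reproduce the proof of Corollary~\ref{cor:wlp-Z} essentially line for line, replacing the bi-adjacency matrix by the lattice path matrix and invoking Corollary~\ref{cor:max-N} wherever that proof used Corollary~\ref{cor:max-Z}. To begin, since $I$ is a monomial ideal over an infinite field, Proposition~\ref{pro:mono} lets us test the weak Lefschetz property of $A = R/I$ using the single linear form $\ell = x+y+z$: the property holds if and only if $\times\ell\colon [A]_{e-2}\to[A]_{e-1}$ has maximal rank for every $e$. By Corollary~\ref{cor:max-N}, the map $\times\ell\colon[A]_{e-2}\to[A]_{e-1}$ has maximal rank exactly when $N(T_e(I))$ does. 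So the argument reduces to identifying which of the finitely many maps $\times\ell$ must actually be checked in each of the two Hilbert-function regimes, and then rewriting ``$N$ has maximal rank over $K$'' as the stated determinant or minor condition.

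For part (i), the hypothesis $0\neq h_{R/I}(d-1)=h_{R/I}(d)$ together with the assumption that the socle generators of $R/I$ lie in degree at least $d-2$ puts $A$ in the twin-peaks situation of Corollary~\ref{cor:twin-peaks-wlp}; hence $A$ has the weak Lefschetz property if and only if the multiplication map $\times\ell$ across the twin peak is bijective. Translating this through Corollary~\ref{cor:max-N} identifies bijectivity with maximal rank of the corresponding lattice path matrix, and since the two flanking components of $A$ now have equal dimension the relevant region is balanced, so that matrix is square; maximal rank of a square matrix over $K$ is the same as its determinant being nonzero in $K$. This gives the criterion of part (i).

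For part (ii), the hypotheses $h_{R/I}(d-2)<h_{R/I}(d-1)>h_{R/I}(d)$ exhibit $d-1$ as a strict local maximum of the Hilbert function, and together with the socle assumption this is precisely what is needed to apply Proposition~\ref{pro:wlp}(ii) and (iii) (using that the weak Lefschetz property forces the Hilbert function to be unimodal, so that $d$ is the largest integer with $h_{R/I}(d-2)<h_{R/I}(d-1)$): $A$ has the weak Lefschetz property if and only if $\times\ell\colon[A]_{d-2}\to[A]_{d-1}$ is injective and $\times\ell\colon[A]_{d-1}\to[A]_d$ is surjective. By Corollary~\ref{cor:max-N}, and since here the first map goes between spaces of strictly increasing dimension while the second goes between spaces of strictly decreasing dimension, this pair of conditions says exactly that the matrices $N(T_d(I))$ and $N(T_{d+1}(I))$, neither of which is square, each have maximal rank over $K$, i.e.\ each has a maximal minor that is nonzero in $K$.

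I expect no genuine obstacle: the mathematical content is carried entirely by Corollary~\ref{cor:max-N}, Proposition~\ref{pro:wlp}, and Corollary~\ref{cor:twin-peaks-wlp}, all already available. The only points requiring care are bookkeeping ones --- matching the degree index $d$ in the statement with the indexing conventions of Proposition~\ref{prop:interp-N} and Corollary~\ref{cor:max-N} as well as of Corollary~\ref{cor:twin-peaks-wlp}, checking the socle-degree hypotheses against the exact hypotheses of Proposition~\ref{pro:wlp}, and observing that, because the entries of $N(T_e(I))$ are binomial coefficients and hence integers, the phrase ``maximal minor nonzero in $K$'' is genuinely sensitive to the characteristic of $K$, which is exactly why the criterion is formulated this way.
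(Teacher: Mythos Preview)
Your approach is exactly the paper's: its proof is the one-line observation that the argument for Corollary~\ref{cor:wlp-Z} goes through verbatim once Corollary~\ref{cor:max-N} replaces Corollary~\ref{cor:max-Z}, and you have spelled out precisely that substitution, invoking Proposition~\ref{pro:mono}, Corollary~\ref{cor:twin-peaks-wlp}, and Proposition~\ref{pro:wlp} just as the proof of Corollary~\ref{cor:wlp-Z} does. Your closing remark about the degree-index bookkeeping between the statement and Corollaries~\ref{cor:max-N} and~\ref{cor:twin-peaks-wlp} is well placed and is the only thing worth double-checking.
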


In the case where $T = T_d(I)$ is balanced we interpreted the determinant of $N(T)$ as the signed enumeration of
families of non-intersecting lattice paths in the lattice $L(T)$ (see Subsection~\ref{sub:nilp}). In general, we can
similarly interpret the maximal minors of $N(T)$ by removing $A$-vertices or $E$-vertices from $L(T)$, since the rows
and columns of $N(T)$ are indexed by these vertices. Note that removing the $A$- and $E$-vertices is the same as
removing the associated unit triangles in $T$. For example, $U'$ in Figure~\ref{fig:maximal-minors}(i) corresponds to
removing the starting point $A_1$ from $U$. It follows that the maximal minors of $N(T)$ are exactly the determinants of
maximal minors of $T$ that are obtained from $T$ by removing only unit triangles corresponding to $A$- and $E$-vertices.
We call such a maximal minor a \emph{restricted maximal minor} of $T$.%
\index{triangular region!maximal minor!restricted}

Clearly, $N(T)$ has maximal rank if and only if there is a restricted maximal minor $U$ of $T$ such that $N(U)$ has
maximal rank. As a consequence, for a $\uptri$-heavy region $T$, it is enough to check the restricted maximal minors in
order to determine whether $Z(T)$ has maximal rank.

\begin{proposition} \label{pro:restricted-only}
    Let $T = T_d(I)$ be an $\uptri$-heavy triangular region. Then $Z(T)$ has maximal rank if and only if there is a
    restricted maximal minor $U$ of $T$ such that $Z(U)$ has maximal rank.
\end{proposition}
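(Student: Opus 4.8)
The plan is to reduce the statement for $Z(T)$, which is a non-square ($\dntri$-heavy on the transpose side, i.e.\ $\uptri$-heavy meaning more upward triangles) matrix, to the corresponding statement for the lattice path matrix $N(T)$, where it follows essentially from the Lindstr\"om--Gessel--Viennot machinery. The key fact I would invoke is Corollary~\ref{cor:max-N}: the multiplication map $\times(x+y+z)\colon [R/I]_{d-2}\to[R/I]_{d-1}$ has maximal rank if and only if $N(T)$ has maximal rank, together with the (already noted) observation that $N(T)$ is square precisely when $Z(T)$ is square, so that since $T$ is $\uptri$-heavy, $N(T)$ has more rows than columns, i.e.\ full rank means its columns are independent.

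First I would argue that $Z(T)$ has maximal rank if and only if $N(T)$ has maximal rank. One direction is clean: by Proposition~\ref{prop:interp-Z} the transpose of $Z(T)$ is the matrix of $\times(x+y+z)$, so $Z(T)$ has maximal rank iff that multiplication map does, and by Corollary~\ref{cor:max-N} this holds iff $N(T)$ has maximal rank. So the ranks are governed by the same condition. Next, I would recall from Subsection~\ref{sub:wlp-nilp} that a \emph{restricted maximal minor} $U$ of $T$ is obtained by deleting unit triangles corresponding only to $A$- and $E$-vertices of $L(T)$, and that such deletions correspond exactly to deleting rows/columns of $N(T)$; hence the maximal minors of $N(T)$ are precisely the $\det N(U)$ as $U$ ranges over restricted maximal minors. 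Therefore $N(T)$ has maximal rank iff some restricted maximal minor $U$ has $\det N(U)\ne 0$, i.e.\ iff $N(U)$ has maximal rank for some restricted maximal minor $U$.

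Finally, I would close the loop: for each restricted maximal minor $U$ of $T$, the region $U$ is balanced, so $Z(U)$ and $N(U)$ are square, and by Theorem~\ref{thm:detZN} we have $|\det Z(U)| = |\det N(U)|$; in particular $Z(U)$ has maximal rank (is nonsingular) iff $N(U)$ has maximal rank. Combining the three steps: $Z(T)$ has maximal rank $\iff$ $N(T)$ has maximal rank $\iff$ $N(U)$ has maximal rank for some restricted maximal minor $U$ $\iff$ $Z(U)$ has maximal rank for some restricted maximal minor $U$, which is the claim. The main obstacle, and the place I would be most careful, is the bookkeeping in the middle step: verifying that deleting $A$- and $E$-vertices from $L(T)$ corresponds cleanly to deleting a row or column of $N(T)$ and simultaneously to passing to a genuine balanced triangular subregion $U=T_d(I')$ so that Theorem~\ref{thm:detZN} applies to $U$. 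One must also make sure that the identification ``restricted maximal minors of $T$'' $=$ ``maximal minors of $N(T)$'' is a bijection of the relevant index sets, not merely a containment; this was essentially set up in the paragraph preceding the proposition, so I would cite that discussion and then the three ingredients above suffice.
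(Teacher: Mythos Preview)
Your proposal is correct and follows essentially the same route as the paper: reduce to $N(T)$ via Corollaries~\ref{cor:max-Z} and~\ref{cor:max-N}, identify the maximal minors of $N(T)$ with the restricted maximal minors of $T$, and then invoke Theorem~\ref{thm:detZN} on each balanced $U$. The paper's proof is terser but makes explicit the point you flagged as the main obstacle: since $T$ is $\uptri$-heavy, a restricted maximal minor $U$ is obtained by removing only upward-pointing triangles, so $U$ is genuinely of the form $T_d(I')$ for a monomial ideal $I'$, and Theorem~\ref{thm:detZN} applies.
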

\begin{proof}
    By Corollaries~\ref{cor:max-Z} and~\ref{cor:max-N}, we have that $Z(T)$ has maximal rank if and only if $N(T)$ has
    maximal rank. Since each restricted maximal minor $U$ of $T$ is obtained by removing upward-pointing triangles, it
    is the triangular region of some monomial ideal. Thus, Theorem~\ref{thm:detZN} gives $|\det{Z(U)}| = |\det{N(U)}|$.
\end{proof}

\begin{remark} \label{rem:restricted}
    The preceding proposition allows us to reduce the number of minors of $Z(T)$ that need to be considered.
    In Example~\ref{exa:minors-Z}(i), there are $11$ maximal minors of $T_5(I)$, but only $2$ \emph{restricted}
    maximal minors.

    In the special case of triangular regions as in Proposition~\ref{pro:ci-enum}, Proposition~\ref{pro:restricted-only}
    was observed by Li and Zanello in \cite[Theorem~3.2]{LZ}.
\end{remark}

We continue to consider Example~\ref{exa:minors-Z}, using lattice path matrices now.

\begin{example} \label{exa:minors-N}
    Recall the ideal $I = (x^4, y^4, z^4, x^2 z^2)$ from Example~\ref{exa:minors-Z}. By Corollary~\ref{cor:wlp-N}, $R/I$
    has the weak Lefschetz property if and only if $N(T_5(I))$ and $N(T_6(I))$ have maximal rank. Since $N(T_5(I))$ is a
    $2 \times 1$ matrix, we need to remove $1$ $A$-vertex to get a maximal minor (see $U'$ in
    Figure~\ref{fig:maximal-minors}(i) for one of the two choices). Both choices have signed enumeration $4$. Since
    $N(T_6(I))$ is a $0 \times 2$ matrix we need to remove $2$ $E$-vertices to get a restricted maximal minor. The
    region $U''$ in Figure~\ref{fig:maximal-minors}(ii) is the only choice, and the signed enumeration is $1$. Thus, we
    see again that $R/I$ has the weak Lefschetz property if and only if the base field $K$ has not characteristic $2$.
\end{example}~

\subsection{Complete Intersections}~\par

We now begin to reinterpret the results in Section~\ref{sec:det} about signed enumerations of triangular regions as
results about the weak Lefschetz property for the associated Artinian ideals. In this subsection we restrict ourselves
to the ideals with the fewest number of generators, namely the ideals of the form $I = (a^a, y^b, z^c)$. These are
monomial complete intersections, and the question whether they have the weak Lefschetz property has motivated a great deal of
research (see \cite{MN-survey} and Remark~\ref{rem:ci-history} below).

We recall a well-known result of Reid, Roberts, and Roitman about the shape of Hilbert functions of monomial complete
intersections.

\begin{lemma}{\cite[Theorem~1]{RRR}} \label{lem:h-ci}
    Let $I = (a^a, y^b, z^c)$, where $a, b,$ and $c$ are positive integers. Then the Hilbert function $h = h_{R/I}$ of
    $R/I$ has the following properties:
    \begin{enumerate}
        \item $h(j-2) < h(j-1)$ if and only if $1 \leq j < \min \{a+b, a+c, b+c, \frac{1}{2}(a+b+c)\}$;
        \item $h(j-2) = h(j-1)$ if and only if $\min \{a+b, a+c, b+c, \frac{1}{2}(a+b+c)\} \leq j \leq \max \{a, b, c, \frac{1}{2}(a+b+c)\}$; and
        \item $h(j-2) > h(j-1)$ if and only if $\max \{a, b, c, \frac{1}{2}(a+b+c)\} < j \leq a+b+c-1$.
    \end{enumerate}
\end{lemma}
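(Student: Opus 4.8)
The plan is to compute the Hilbert function $h = h_{R/I}$ of a monomial complete intersection explicitly and then read off the three claimed equivalences. The starting point is the well-known fact that the Hilbert series of $R/(x^a, y^b, z^c)$ is
\[
    \sum_{j \ge 0} h(j) t^j = \frac{(1-t^a)(1-t^b)(1-t^c)}{(1-t)^3},
\]
since $x^a, y^b, z^c$ form a regular sequence. From this, $h(j)$ has the closed form
\[
    h(j) = \binom{j+2}{2} - \binom{j-a+2}{2} - \binom{j-b+2}{2} - \binom{j-c+2}{2}
           + \binom{j-a-b+2}{2} + \binom{j-a-c+2}{2} + \binom{j-b-c+2}{2},
\]
with the convention $\binom{m}{2} = 0$ for $m < 2$, where the last term $\binom{j-a-b-c+2}{2}$ vanishes in the Artinian range since $h(j) = 0$ for $j \ge a+b+c-1$ is not quite right — one keeps it but it only matters at the very top. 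The first step is to record this formula and the symmetry $h(j) = h(a+b+c-3-j)$, which comes from Gorenstein duality (the socle degree is $a+b+c-3$).

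Next I would analyze the first difference $\Delta h(j) := h(j) - h(j-1)$, which by the Hilbert series equals the coefficient of $t^j$ in $\frac{(1-t^a)(1-t^b)(1-t^c)}{(1-t)^2}$. Expanding, $\Delta h(j) = (j+1) - (j-a+1) - (j-b+1) - (j-c+1) + (j-a-b+1) + \cdots$ with the same truncation convention. The key observation is that in the "generic" range, before any of the "correction" terms beyond the linear ones kick in, $\Delta h$ is an affine function that is eventually non-positive; the task is to locate precisely the thresholds where $\Delta h$ changes sign. Concretely: for $j$ small, $\Delta h(j) = j+1 > 0$; once $j \ge \min\{a,b,c\}$ the term $-(j - \min + 1)$ enters but $\Delta h$ stays positive for a while; the sign of $\Delta h$ ultimately flips, and by the duality $\Delta h(j) = -\Delta h(a+b+c-2-j)$, the unique flip point is symmetric about $j = \tfrac{1}{2}(a+b+c)-1$. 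The strictly increasing range is thus $1 \le j < \min\{a+b, a+c, b+c, \tfrac{1}{2}(a+b+c)\}$: one checks that $\Delta h(j) > 0$ exactly when $j$ is below all four of these bounds, by casework on which of the truncated binomial terms are active, and dually the strictly decreasing range is $\max\{a,b,c,\tfrac12(a+b+c)\} < j \le a+b+c-1$. The flat range in (ii) is then forced as the complement.

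I expect the main obstacle to be the bookkeeping in the casework: $\Delta h(j)$ is a piecewise-affine function with breakpoints at $\min\{a,b,c\}$, the two middle values, $\max\{a,b,c\}$, and at $a+b$, $a+c$, $b+c$, and making the sign analysis airtight requires checking that no "accidental" sign change occurs between consecutive breakpoints and that the flip happens exactly at the claimed threshold. A clean way to organize this is to note that $\Delta h$ is non-increasing as a function of $j$ on the relevant range (its own first difference, the coefficient of $t^j$ in $\frac{(1-t^a)(1-t^b)(1-t^c)}{1-t}$, is $\le 0$ there), so $\Delta h$ changes sign at most once, reducing the problem to evaluating $\Delta h$ at the single candidate threshold value $j_0 = \min\{a+b,a+c,b+c,\tfrac12(a+b+c)\}$ and at $j_0 - 1$, and invoking the duality for the upper end. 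This monotonicity of $\Delta h$ is the conceptual heart of the argument and makes the rest essentially a two-point verification rather than a full piecewise analysis.
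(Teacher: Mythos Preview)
The paper does not prove this lemma; it is stated with a citation to Reid--Roberts--Roitman and used as a black box throughout (for instance in the proofs of Proposition~\ref{pro:ci-wlp} and Theorem~\ref{thm:type-two}). There is therefore no proof in the paper to compare against.

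Your approach is sound and is essentially the standard one: compute the Hilbert series of the complete intersection, exploit the Gorenstein symmetry $h(j) = h(a+b+c-3-j)$, and analyze the first difference $\Delta h$ to locate its sign change. One point deserves sharpening. The first difference $\Delta h$ is \emph{not} non-increasing on the whole range $[0,a+b+c-2]$: its own difference, the coefficient of $t^j$ in $(1-t^a)(1-t^b)(1-t^c)/(1-t)$, equals $1$ for $j<\min\{a,b,c\}$ and becomes $+1$ again for $j\ge\max\{a+b,a+c,b+c\}$. What is true (and easy to check from that same formula) is that this second difference is $\le 0$ on the interval $[\min\{a,b,c\},\,\max\{a+b,a+c,b+c\}-1]$. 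On that interval $\Delta h$ is non-increasing, and the antisymmetry $\Delta h(j)=-\Delta h(a+b+c-2-j)$ shows that $\Delta h$ takes the value $\min\{a,b,c\}>0$ at the left end and $-\min\{a,b,c\}<0$ at the right end, so it changes sign exactly once (possibly across a plateau of zeros). With the range made explicit in this way, your two-point verification at the candidate threshold goes through and yields (i)--(iii).
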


Depending on the characteristic of the base field we get the following sufficient conditions that guarantee the weak
Lefschetz property.

\begin{proposition} \label{pro:ci-wlp}
    Let $I = (x^a, y^b, z^c)$, where $a, b,$ and $c$ are positive integers.  Set $d = \flfr{a+b+c}{2}$. Then:
    \begin{enumerate}
        \item If $d < \max\{a,b,c\}$, then $R/I$ has the weak Lefschetz property, regardless of the characteristic of $K$.
        \item If $a+b+c$ is even, then $R/I$ has the weak Lefschetz property in characteristic $p$ if and only if $p$
            does not divide $\Mac(d-a, d-b, d-c)$.
        \item If $a+b+c$ is odd, then $R/I$ has the weak Lefschetz property in characteristic $p$ if and only if $p$
            does not divide at least one of the integers
            \[
                \frac{\binom{d-1}{a-1}}{\binom{d-1}{i}} \binom{d-c}{a-i-1} \Mac(d-a-1, d-b, d-c),
            \]
            where $d-1-b < i < a$.
    \end{enumerate}

    In any case, $R/I$ has the weak Lefschetz property in characteristic $p$ if $p = 0$ or $p \geq \flfr{a+b+c}{2}$.
\end{proposition}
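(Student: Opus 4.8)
The plan is to combine the shape of the Hilbert function of $R/I$ (Lemma~\ref{lem:h-ci}) with the general tools of Subsection~\ref{subsec:tools} and the determinant evaluations of Section~\ref{sec:det}. By Proposition~\ref{pro:mono} it suffices to test the linear form $\ell=x+y+z$, and by Corollaries~\ref{cor:max-Z} and~\ref{cor:max-N} the rank of $\times\ell:[R/I]_{e-2}\to[R/I]_{e-1}$ is measured by $Z(T_e(I))$, equivalently by $N(T_e(I))$. I will also use that the monomial complete intersection $R/I$ is Gorenstein with socle degree $a+b+c-3$, so that $\times\ell:[R/I]_{j-1}\to[R/I]_j$ is injective if and only if $\times\ell:[R/I]_{a+b+c-2-j}\to[R/I]_{a+b+c-1-j}$ is surjective.

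First I would dispose of (i). Assuming $c=\max\{a,b,c\}$, the condition $d<\max\{a,b,c\}$ is equivalent to $a+b<c$. Then $[R/I]_j=[R/(x^a,y^b)]_j$ for every $j\le c-1$; since $(x,y)$ is the only associated prime of $(x^a,y^b)$ and $\ell\notin(x,y)$, multiplication by $\ell$ on $K[x,y,z]/(x^a,y^b)$ is injective in all degrees, so $\times\ell$ is injective on $[R/I]_{j-1}$ whenever $j\le c-1$. On the other hand $[R/(I,\ell)]_j$ is a quotient of $[K[x,y,z]/(x^a,y^b,\ell)]_j\cong[K[x,y]/(x^a,y^b)]_j$, which vanishes for $j\ge a+b-1$, so $\times\ell$ is surjective in those degrees. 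Since $a+b<c$, the ranges $[0,c-1]$ and $[a+b-1,a+b+c-3]$ cover all degrees; comparing with the shape of $h_{R/I}$ from Lemma~\ref{lem:h-ci} (strictly increasing to degree $a+b-2$, constant on $[a+b-2,c-1]$, strictly decreasing afterwards) shows $\times\ell$ has maximal rank in every degree, in any characteristic.

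For (ii) and (iii) we are in the complementary range $d\ge\max\{a,b,c\}$, so $a,b,c\le d$. If $a+b+c=2d$, Lemma~\ref{lem:h-ci} yields a ``twin peak'' $h_{R/I}(d-2)=h_{R/I}(d-1)$, and since the unique socle generator has degree $a+b+c-3\ge d-2$, Corollary~\ref{cor:twin-peaks-wlp} (equivalently Corollary~\ref{cor:wlp-Z}) reduces the weak Lefschetz property to $\det Z(T_d(I))\neq 0$ in $K$. As $T_d(I)=T_d(x^a,y^b,z^c)$ is the hexagon with side lengths $(d-a,d-b,d-c)$, Proposition~\ref{pro:ci-enum} gives $|\det Z(T_d(I))|=\Mac(d-a,d-b,d-c)$, which is (ii). If $a+b+c=2d+1$, Lemma~\ref{lem:h-ci} shows $h_{R/I}$ is strictly unimodal with a single peak at degree $d-1$; by Proposition~\ref{pro:wlp} (equivalently Corollary~\ref{cor:wlp-Z}) the weak Lefschetz property is equivalent to $Z(T_d(I))$ and $Z(T_{d+1}(I))$ both having maximal rank, and by the Gorenstein duality recorded above these two conditions coincide. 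Hence, using Proposition~\ref{pro:restricted-only} and Theorem~\ref{thm:detZN}, the weak Lefschetz property holds iff some restricted maximal minor of $T_d(I)$ (obtained by deleting a single upward-pointing triangle, indexed by an integer $i$ with $d-1-b<i<a$) has non-vanishing lattice path determinant. Evaluating these binomial determinants by MacMahon's enumeration and the Lindstr\"om--Gessel--Viennot setup of Subsection~\ref{sub:nilp} yields exactly the integers $\tfrac{\binom{d-1}{a-1}}{\binom{d-1}{i}}\binom{d-c}{a-i-1}\Mac(d-a-1,d-b,d-c)$, which gives (iii).

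Finally, the characteristic bound follows by reading off prime divisors. In case (i) there is nothing to check. In case (ii), the prime divisors of $\Mac(d-a,d-b,d-c)$ are at most $(d-a)+(d-b)+(d-c)-1=d-1$ (Section~\ref{sec:det}), so no prime $p\ge d$, and of course not $p=0$, divides it. In case (iii), each factor $\binom{d-1}{a-1}$, $\binom{d-c}{a-i-1}$ and $\Mac(d-a-1,d-b,d-c)$ has all prime divisors $<d$ (the binomial coefficients because their parameters are $<d$, the MacMahon term because $(d-1-a)+(d-b)+(d-c)-1<d$), hence no prime $p\ge d$ divides the integer $\tfrac{\binom{d-1}{a-1}}{\binom{d-1}{i}}\binom{d-c}{a-i-1}\Mac(d-a-1,d-b,d-c)$; such a $p$ therefore divides none of them, so the criterion of (iii) is met and $R/I$ has the weak Lefschetz property. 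The main obstacle I expect is the determinant bookkeeping in (iii): identifying $N(T_d(I))$ and its restricted maximal minors, pinning down the admissible range of the deletion index $i$, and evaluating the not-quite-hexagonal binomial determinants to the stated closed form; everything else is a routine application of the results cited above.
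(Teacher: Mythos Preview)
Your outline is correct and matches the paper's strategy for parts (ii) and (iii): reduce via Corollary~\ref{cor:wlp-Z}, use Proposition~\ref{pro:ci-enum} for the even case, and in the odd case pass to restricted maximal minors $T_i = T_d(x^a,y^b,z^c,x^i y^{d-1-i})$ via Proposition~\ref{pro:restricted-only}. Two remarks. First, your treatment of (i) differs from the paper's: you argue algebraically (injectivity on $R/(x^a,y^b)$ plus vanishing of $R/(I,\ell)$ in high degree), whereas the paper observes that when $\max\{a,b,c\}>d$ the two remaining punctures of $T_d(I)$ overlap, so the region is balanced and uniquely tileable, giving $|\det Z(T_d(I))|=1$ and hence the weak Lefschetz property in every characteristic via Corollary~\ref{cor:wlp-Z}. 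Both are valid; the paper's version stays within the combinatorial framework. Second, the ``obstacle'' you anticipate in (iii)---evaluating the determinant of the region with one boundary unit puncture removed---is precisely Proposition~\ref{pro:two-mahonian} (a hexagon with a fourth non-floating puncture), specialised to puncture side length $d-(i+(d-1-i))=1$; the paper applies that formula directly and then simplifies using $\Mac(n,k,1)=\binom{n+k}{k}$ and $\HF(n)=(n-1)!\,\HF(n-1)$ to reach the stated closed form. Your Gorenstein-duality reduction of $T_{d+1}(I)$ to $T_d(I)$ is equivalent to the paper's $180^\circ$-rotation observation.
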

\begin{proof}
    The algebra $R/I$ has exactly one socle generator. It  has degree $a+b+c-3 \geq d-2$.

    If $d < \max\{a,b,c\}$, then without loss of generality we may assume $a > d$, that is, $a > b+c$. In this case,
    $T_d(I)$ has two punctures, one of length $d-b$ and one of length $d-c$. Moreover, $d-b + d-c = a > d$ so the two
    punctures overlap. Hence $T_d(I)$ is balanced and has a unique tiling. That is, $|\det{Z(T)}| = 1$ and so $R/I$ has
    the weak Lefschetz property, regardless of the characteristic of $K$ (see Corollary~\ref{cor:wlp-Z}).

    Suppose $d \geq \max\{a,b,c\}$.  By Lemma~\ref{lem:h-ci}, we have $h_{R/I}(d-2) \leq h_{R/I}(d-1) > h_{R/I}(d)$.

    Assume $a+b+c$ is even. Then Proposition~\ref{pro:ci-enum} gives that $|\det{Z(T_d(I))}| = \Mac(d-a, d-b, d-c)$, and
    so claim (ii) follows by Corollary~\ref{cor:wlp-Z}.

    Assume $a+b+c$ is odd, and so $d = \frac{1}{2}(a+b+c-1)$. In this case it is enough to find non-trivial maximal
    minors of $T_{d}(I)$ and $T_{d+1}(I)$ by Corollary~\ref{cor:wlp-Z}. Consider the hexagonal regions formed by the
    present unit triangles of each $T_{d+1}(I)$ and $T_{d}(I)$. The former hexagon is obtained from the latter by a
    rotation about $180^{\circ}$. Thus, we need only consider the maximal minors of $T_{d}(I)$. This region has exactly
    one more upward-pointing triangle than downward-pointing triangle. Hence, by Proposition~\ref{pro:restricted-only},
    it suffices to check whether the restricted maximal minors of $T_{d}(I)$ have maximal rank. These minors are exactly
    $T_{i} := T_d(x^a, y^b, z^c, x^{i} y^{d-1-i})$, where $d-1-b < i < a$. Using Proposition~\ref{pro:two-mahonian}, we
    get that $|\det{Z(T_{i})}|$ is
    \[
       \Mac(a -1-i, d - a, 1) \Mac(i + b -d, d-b, 1) \frac{\HF(d-a+1) \HF(d-b+1) \HF(d-c+1) \HF(d)}{\HF(a)\HF(b)\HF(c)\HF(1)},
    \]
    where we notice $d - (i + (d-1-i)) = 1$. Since $\Mac(n, k, 1) = \binom{n+k}{k}$ and $\HF(n) = (n-1)! \HF(n-1)$, for
    positive integers $n$ and $k$, we can rewrite $|\det{Z(T_{i})}|$ as
    \[
        \binom{d-1-i}{d-a} \binom{i}{d-b} \frac{(d-b)! (d-c)!}{(a-1)!} \Mac(d-a-1, d-b, d-c).
    \]
    Simplifying this expression, we get part (iii).

    Finally, using both Propositions~\ref{pro:ci-enum} and~\ref{pro:two-mahonian} we see that the prime divisors of
    $|\det{Z(T_{i})}|$ are bounded above by $d-1$ in each case.
\end{proof}

As announced, we briefly comment on the history of the last result and the research it motivated.

\begin{remark} \label{rem:ci-history}
    The presence of the weak Lefschetz property for monomial complete intersections has been studied by many authors.
    The fact that \emph{all} monomial complete intersections, in any number of variables, have the strong Lefschetz
    property in characteristic zero was proven first by Stanley~\cite{Stanley-1980} using the Hard Lefschetz Theorem.
    (See \cite{Co}, and the references contained therein, for more on the history of this theorem.) However, the weak
    Lefschetz property can fail in positive characteristic.

    The weak Lefschetz property in arbitrary characteristic in the case where one generator has much larger degree than
    the others (case (i) in the preceding proposition) was first established by Watanabe~\cite[Corollary~2]{Wa} for
    arbitrary complete intersections in three variables, not just monomial ones. Migliore and
    Mir\'o-Roig~\cite[Proposition~5.2]{MM} generalised this to complete intersections in $n$ variables.

    Part (ii) of the above result was first established by the authors~\cite[Theorem~4.3]{CN-IJM} (with an extra
    generator of sufficiently large degree), and independently by Li and Zanello~\cite[Theorem~3.2]{LZ}. The latter also
    proved part (iii) above (use $i = a - k$). However, while both papers mentioned the connection to lozenge tilings of
    hexagons, it was Chen, Guo, Jin, and Li~\cite{CGJL} who provided the first combinatorial explanation. In particular,
    the case (ii) was studied in~\cite[Theorem~1.2]{CGJL}. We also note that~\cite[Theorem~4.3]{LZ} can be recovered
    from Proposition~\ref{pro:ci-wlp} if we set $a = \beta + \gamma$, $b = \alpha + \gamma$, and $c = \alpha + \beta$.

    More explicit results have been found in the special case where all generators have the same degree, i.e., $I_a =
    (x^a, y^a, z^a)$. Brenner and Kaid used the idea of a syzygy gap to explicitly classify the prime characteristics in
    which $I_a$ has the weak Lefschetz property~\cite[Theorem~2.6]{BK-p}. Kustin, Rahmati, and Vraciu used this result
    in \cite{KRV}, in which they related the presence of the weak Lefschetz property of $R/I_a$ to the finiteness of the
    projective dimension of $I_a : (x^n + y^n + z^n)$. Moreover, Kustin and Vraciu later gave an alternate explicit
    classification of the prime characteristics in which $I_a$ has the weak Lefschetz property~\cite[Theorem~4.3]{KV}.

    As a final note, Kustin and Vraciu~\cite{KV} also gave an explicit classification of the prime characteristics in
    which monomial complete intersections in arbitrarily many variables with all generators of the same degree have the
    weak Lefschetz property. This was expanded by the first author~\cite[Theorem~7.2]{Co} to an explicit classification
    of the prime characteristics, in which the algebra has the \emph{strong Lefschetz property}. In this work another
    combinatorial connection was used to study the presence of the weak Lefschetz property for monomial complete
    intersections in arbitrarily many variables.
\end{remark}~



\subsection{Reinterpretation}~\par\label{subsec:reinterpret}

We provide some rather direct interpretations of earlier results to more general ideals than complete intersections.
More involved uses of our methods will be described in the following sections.

Here we will restrict ourselves to considering balanced regions. In this case we observe the following necessary
condition for the presence of the weak Lefschetz property.

\begin{proposition} \label{pro:non-tileable-non-wlp}
    Let $I$ be a monomial ideal such that $T_d(I)$ is a balanced region that is not tileable. Put $J = I + (x^d, y^d, z^d)$.
    Then $R/J$ never has the weak Lefschetz property, regardless of the characteristic of $K$.
\end{proposition}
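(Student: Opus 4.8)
The plan is to reduce the statement directly to the criterion of Corollary~\ref{cor:wlp-biadj} together with the enumerative meaning of $Z(T)$ from Proposition~\ref{pro:per-enum}. First I would observe that $J = I + (x^d,y^d,z^d)$ is an Artinian monomial ideal, since it contains a power of each variable; hence Corollary~\ref{cor:wlp-biadj} applies and says that $R/J$ has the weak Lefschetz property in a given characteristic if and only if the matrix $Z(T_{d'}(J))$ has maximal rank in that characteristic for every positive integer $d'$. Next I would note that the generators $x^d, y^d, z^d$ have degree $d$ and therefore remove no unit triangle from $\mathcal{T}_d$: a monomial of degree at most $d-1$ lies in $J$ if and only if it lies in $I$, so $T_d(J) = T_d(I)$. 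Thus it suffices to show that $Z(T_d(I))$ fails to have maximal rank over \emph{every} field $K$.

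For this, recall that $T_d(I)$ is balanced, so $Z(T_d(I))$ is a square zero-one matrix (Remark~\ref{rem:Z-non-square}); moreover $T_d(I)$ is non-empty, because by Theorem~\ref{thm:tileable} a balanced non-tileable region contains a $\dntri$-heavy monomial subregion, hence at least one downward-pointing triangle. Since $T_d(I)$ is not tileable it admits no lozenge tiling, so Proposition~\ref{pro:per-enum} gives $\per Z(T_d(I)) = 0$. Now $\per Z(T_d(I))$ is a sum of non-negative integers, one per permutation, so every such summand vanishes; the same summands, up to sign, constitute $\det Z(T_d(I))$, whence $\det Z(T_d(I)) = 0$ as an integer, and therefore $\det Z(T_d(I)) = 0$ in every field $K$. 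Hence the square matrix $Z(T_d(J)) = Z(T_d(I))$ is singular in every characteristic, and Corollary~\ref{cor:wlp-biadj} then shows that $R/J$ never has the weak Lefschetz property. As an alternative to Proposition~\ref{pro:per-enum}, one can argue directly from Theorem~\ref{thm:tileable}: the $\dntri$-heavy monomial subregion $U$ has more downward- than upward-pointing triangles, and the downward-pointing triangles of $U$ are adjacent only to upward-pointing triangles of $U$, so the rows of $Z(T_d(I))$ indexed by the triangles in $\dntri(U)$ are supported on the strictly fewer columns indexed by $\uptri(U)$ and are therefore linearly dependent over any field.

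There is essentially no deep step here: once the tileability criterion of Section~\ref{sec:tiling}, the matrix $Z(T)$ of Section~\ref{sec:signed}, and their translation into the weak Lefschetz property in Subsection~\ref{sub:wlp-pm} are available, the statement follows formally. The only points that require a moment of care — and which I expect to be the main (mild) obstacle — are verifying the identity $T_d(J) = T_d(I)$, so that adjoining the three pure powers genuinely does not disturb the degrees that matter, and the elementary observation that a zero-one matrix with vanishing permanent has vanishing determinant; it is precisely the latter that makes the failure of the weak Lefschetz property uniform in $\charf K$.
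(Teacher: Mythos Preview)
Your proof is correct and follows essentially the same line as the paper's: identify $T_d(J)=T_d(I)$, use that a balanced non-tileable region forces $\det Z(T_d(I))=0$ as an integer (the paper cites Theorem~\ref{thm:pm-matrix} directly, you route through $\per Z=0$ via Proposition~\ref{pro:per-enum}, which is equivalent for a zero-one matrix), and conclude by Corollary~\ref{cor:wlp-biadj}. Your added checks that $J$ is Artinian, that $T_d(J)=T_d(I)$, and that the integer vanishing persists in every characteristic are sound and make explicit what the paper leaves implicit.
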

\begin{proof}
    Since $T_d(I) = T_d(J)$ is not tileable, Theorem~\ref{thm:pm-matrix} gives $\det Z(T_d(J)) = 0$. Thus, $Z(T_d(J))$
    does not have maximal rank. Now we conclude by Corollary~\ref{cor:wlp-biadj}.
\end{proof}

We illustrate the preceding proposition with an example.

\begin{example}
    Consider the regions depicted in Figure~\ref{fig:nontileable}.
    \begin{figure}[!ht]
        \begin{minipage}[b]{0.48\linewidth}
            \centering
            \includegraphics[scale=1]{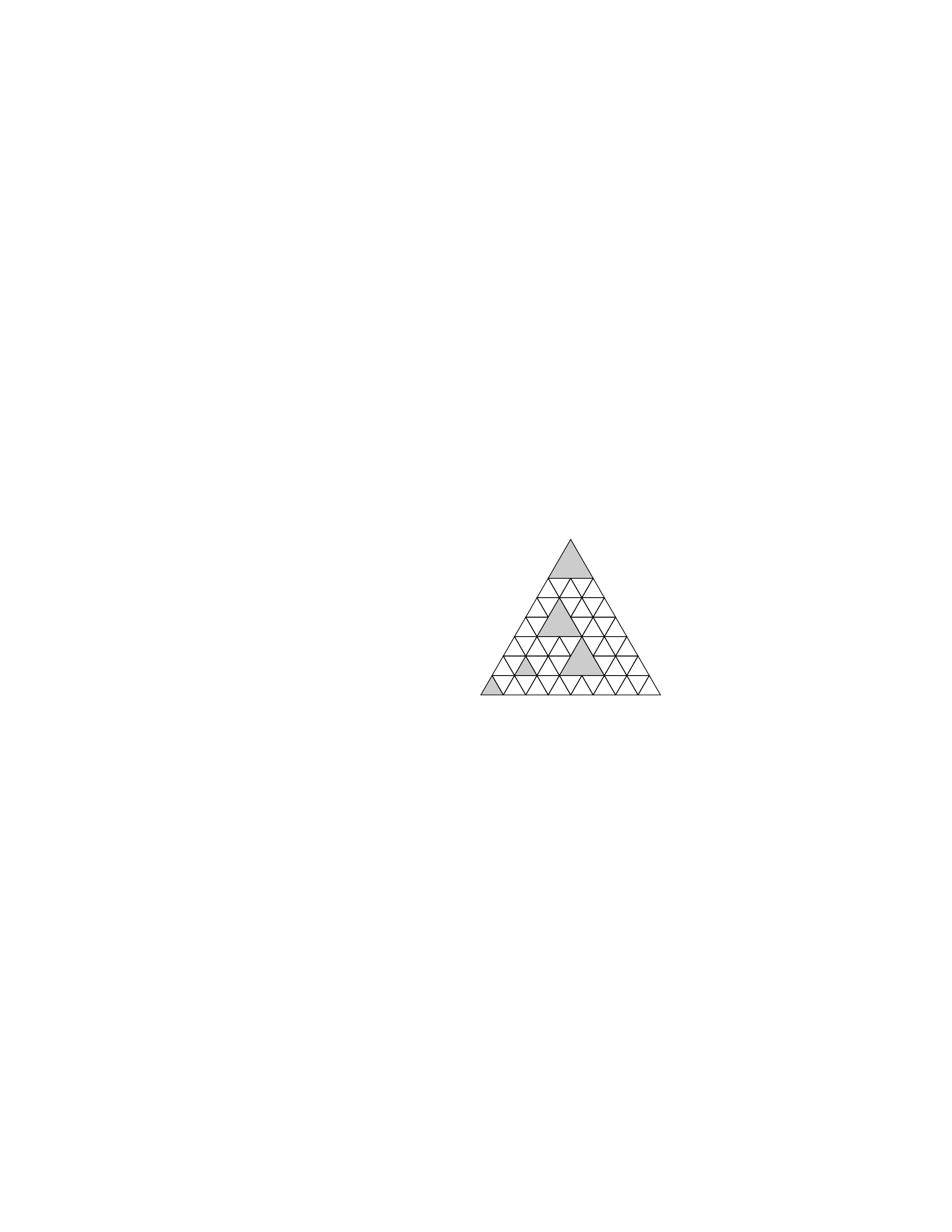}\\
            \emph{(i) $T = T_8(x^6, y^7, z^8, xy^5z, xy^2z^3, x^3y^2z)$}
        \end{minipage}
        \begin{minipage}[b]{0.48\linewidth}
            \centering
            \includegraphics[scale=1]{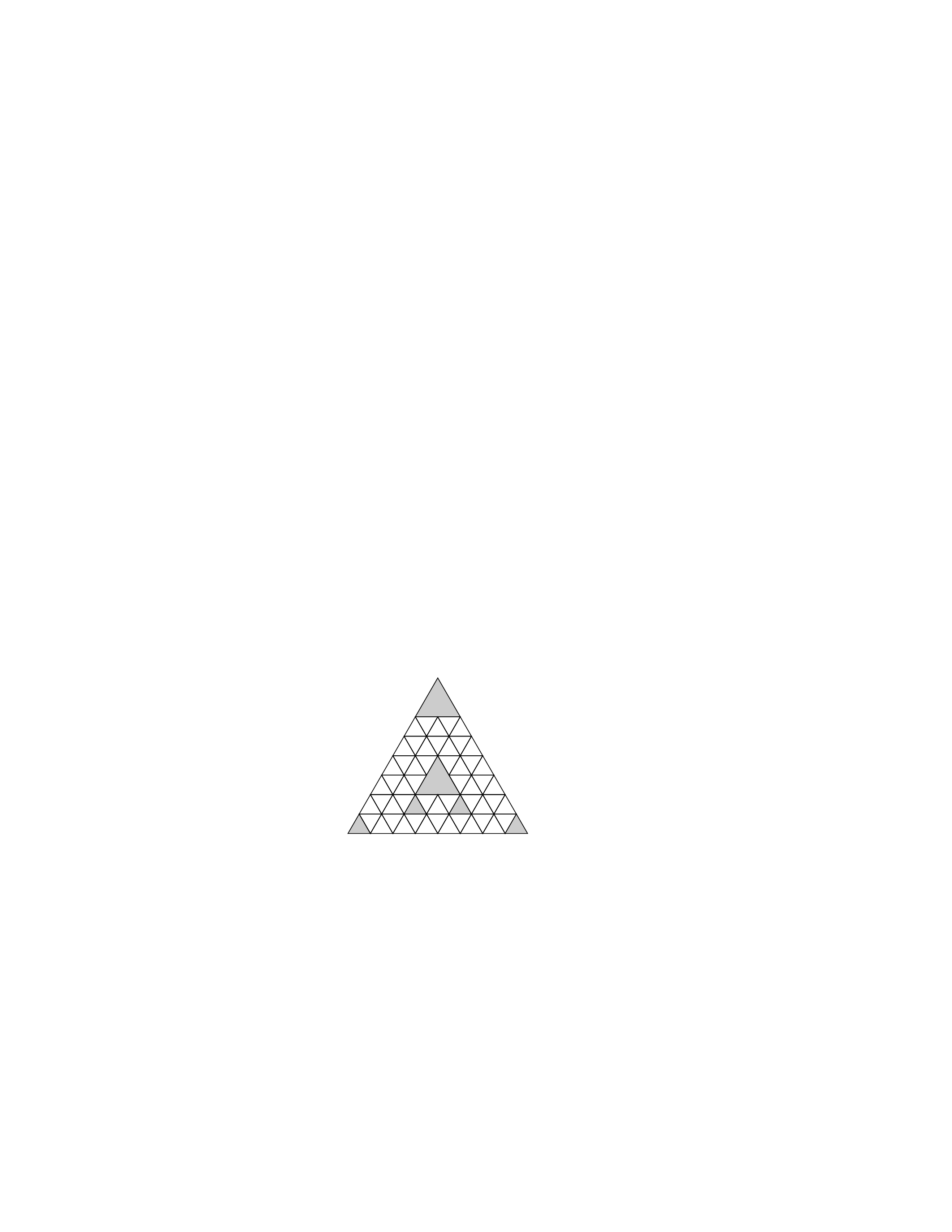}\\
            \emph{(ii) $T' = T_8(x^6, y^7, z^7, xy^4z^2, xy^2z^4, x^2y^2z^2)$}
        \end{minipage}
        \caption{Two balanced non-tileable triangular regions.}
        \label{fig:nontileable}
    \end{figure}
    These regions are both balanced, but non-tileable as they contain $\dntri$-heavy monomial subregions (see
    Theorem~\ref{thm:tileable}). In particular, the monomial subregion assoicated to $xy^2z$ in $T$ and the monomial
    subregion associated to $xy^2z^2$ in $T'$ are both $\dntri$-heavy. Thus, $R/ (x^6, y^7, z^8, xy^5z, xy^2z^3, x^3y^2z)$
    and $R/(x^6, y^7, z^7, xy^4z^2, xy^2z^4, x^2y^2z^2)$ both fail to have the weak Lefschetz property,
    regardless of the characteristic of the base field.
\end{example}

Now we use Propostion~\ref{pro:non-tileable-non-wlp} in order to relate the weak Lefschetz property and semistability of
syzygy bundles (see Section~\ref{sec:syz}). In preparation, we record the following observation. Recall that the
monomial ideal of a triangular region $T \subset {\mathcal T}_d$ is the largest ideal $J$ whose minimal generators have
degrees less than $d$ such that $T = T_d (J)$ (see Subsection~\ref{subsec:gcd}).

\begin{lemma}\label{lem:lcm-overlap}
    Let $J \subset R$ be the monomial ideal of a triangular region $T \subset {\mathcal T}_d$. Then:
    \begin{enumerate}
        \item The region $T$ has no overlapping punctures if and only if each degree of a least common multiple of two
            distinct minimal generators of $J$ is at least $d$.
        \item The punctures of $T$ are not overlapping nor touching if and only if each degree of a least common
            multiple of two distinct minimal generators of $J$ is at least $d+1$.
    \end{enumerate}
\end{lemma}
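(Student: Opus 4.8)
The plan is to translate the geometric condition on punctures into an arithmetic condition on the exponents of the minimal generators of $J$, and then observe that the two statements differ only by replacing a strict inequality with a non-strict one, so it suffices to understand when exactly two punctures share precisely a vertex versus share an edge.

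First I would fix notation: let $f = x^{a_1} y^{b_1} z^{c_1}$ and $g = x^{a_2} y^{b_2} z^{c_2}$ be two distinct minimal generators of $J$, both of degree less than $d$. The puncture associated to $f$ is the upward-pointing triangle of side length $d - (a_1 + b_1 + c_1)$ positioned $a_1$ triangles from the bottom, $b_1$ from the upper-right edge, $c_1$ from the upper-left edge, and similarly for $g$ (see Subsection~\ref{sub:trideg}). Using the coordinate description of the puncture as the set of points $(u, v)$ in a suitable simplex — e.g., the upward triangle associated to $x^{a} y^{b} z^{c}$ is $\{(u,v,w) : u \geq a,\ v \geq b,\ w \geq c,\ u+v+w \leq d\}$ in barycentric-type coordinates — the two punctures have a common interior (share at least an edge) exactly when the three inequalities $a_1 + a_2' $-type conditions hold; concretely, the punctures of $f$ and $g$ overlap if and only if $\max\{a_1, a_2\} + \max\{b_1, b_2\} + \max\{c_1, c_2\} < d$, i.e. $\deg \operatorname{lcm}(f, g) < d$. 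This is the key computation, and it is essentially a routine check that the smallest upward triangle containing both punctures — which is the puncture of $\gcd$ in the sense dual to what is described around Figure~\ref{fig:triregion-gcd}, but here we want $\operatorname{lcm}$ because the two punctures, being upward triangles "growing downward," overlap precisely when the triangle cut out by the \emph{larger} of each exponent still has positive side length, and that side length is $d - \deg\operatorname{lcm}(f,g)$.

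Then part (i) follows immediately: $T$ has no overlapping punctures if and only if for every pair of distinct minimal generators $f, g$ of $J$ we have $\deg \operatorname{lcm}(f,g) \geq d$. For part (ii), I would note that two punctures are non-overlapping and non-touching precisely when the upward triangle with the max-exponents has side length strictly less than zero, rather than at most zero; an easy way to see this is that "touching" (sharing exactly a vertex) corresponds to $d - \deg\operatorname{lcm}(f,g) = 0$, i.e. the common upward triangle has degenerated to a single point (the shared vertex), whereas "disjoint" corresponds to $d - \deg\operatorname{lcm}(f,g) < 0$. Hence non-overlapping and non-touching is equivalent to $d - \deg\operatorname{lcm}(f,g) \leq -1$, that is, $\deg\operatorname{lcm}(f,g) \geq d+1$, for all distinct pairs.

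The main obstacle is the bookkeeping in the coordinate computation: one must be careful that the relevant combinatorial object governing overlap is the puncture of $\operatorname{lcm}$ (not $\gcd$), and that its "side length" $d - \deg\operatorname{lcm}(f,g)$ correctly tracks the overlap type — positive meaning a genuine common region, zero meaning a shared vertex, negative meaning disjoint. I would verify this on the small examples already in the paper (e.g. Figures~\ref{fig:triregion-intro} and~\ref{fig:triregion-gcd}) to make sure the parity of the "shared vertex" case is handled correctly, since that boundary case is exactly what separates statement (i) from statement (ii). Once the overlap/touching dichotomy is pinned to the single inequality $\deg\operatorname{lcm}(f,g)$ versus $d$ and $d+1$, both equivalences are immediate, and I would also remark that it suffices to check pairs of \emph{minimal} generators of $J(T)$ since any puncture of $T$ corresponds to such a generator (see the discussion at the end of Subsection~\ref{subsec:gcd}).
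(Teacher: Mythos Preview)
Your proposal is correct and follows essentially the same approach as the paper. The paper phrases it slightly more concisely by observing directly that two punctures overlap precisely when there is a monomial of degree $d-1$ (an upward-pointing unit triangle) that is a multiple of both generators, i.e., $\deg\operatorname{lcm}(m_1,m_2)\le d-1$, and similarly that they touch when there is a common multiple of degree $d$; your coordinate computation $\max\{a_1,a_2\}+\max\{b_1,b_2\}+\max\{c_1,c_2\}=\deg\operatorname{lcm}(f,g)$ unpacks exactly this.
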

\begin{proof}
    Let $m_1$ and $m_2$ be two distinct minimal generators of $J$. Then their associated punctures overlap if and only
    if there is a monomial of degree $d-1$ that is a multiple of $m_1$ and $m_2$. The existence of such a monomial means
    precisely that the degree of the least common multiple of $m_1$ and $m_2$ is at most $d-1$. Now claim (i) follows.

    Assertion (ii) is shown similarly by observing that the punctures to $m_1$ and $m_2$ touch if and only if there is a monomial of degree $d$ that is a multiple of $m_1$ and $m_2$.
\end{proof}

The following consequence is useful later on.

\begin{corollary}\label{cor:socle-degree-bound}
    Assume $T \subset {\mathcal T}_d$ is a triangular region whose punctures are not overlapping nor touching, and let $J$
    be the monomial ideal of $T$. Then $R/J$ does not have non-zero socle elements of degree less than $d-1$.
\end{corollary}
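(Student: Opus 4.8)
The plan is to argue by contradiction, turning the non-overlapping/non-touching hypothesis into the divisibility statement of Lemma~\ref{lem:lcm-overlap}(ii). Suppose $R/J$ has a non-zero socle element of some degree $e$ with $e < d-1$ (this is automatically vacuous when $d=1$, so assume $d\ge 2$ and $0\le e\le d-2$). Since $J$ is a monomial ideal, $\soc{R/J}$ is spanned by monomials, so I may choose a monomial $m$ with $\deg m = e$, $m\notin J$, and $xm,ym,zm\in J$.

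From the membership relations I would extract two minimal generators of $J$. Because $xm\in J$ while $m\notin J$, some minimal generator $f_1 = x^{a_1}y^{b_1}z^{c_1}$ of $J$ divides $xm$ but not $m$; comparing exponents shows the $x$-exponent of $f_1$ must be exactly one more than that of $m$ (the $y$- and $z$-exponents cannot exceed those of $m$, since $f_1\mid xm$), whence $a_1\ge 1$ and $f_1/x\mid m$. Symmetrically, $ym\in J$ yields a minimal generator $f_2 = x^{a_2}y^{b_2}z^{c_2}$ with $b_2\ge 1$ and $f_2/y\mid m$. These two generators are distinct: if $f_1 = f_2 = f$, then both $f/x$ and $f/y$ divide $m$, so $\lcm(f/x,f/y) = f$ divides $m$, contradicting $m\notin J$.

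Now I estimate $\deg\lcm(f_1,f_2)$. Since $f_1/x\mid m$ and $f_2/y\mid m$, we have $\lcm(f_1/x,f_2/y)\mid m$, hence $\deg\lcm(f_1/x,f_2/y)\le e$. An exponent-by-exponent comparison (the $x$-exponent of the lcm goes from $\max(a_1-1,a_2)$ to $\max(a_1,a_2)$, the $y$-exponent from $\max(b_1,b_2-1)$ to $\max(b_1,b_2)$, the $z$-exponent is unchanged, and $\max$ is $1$-Lipschitz) shows $\deg\lcm(f_1,f_2)\le \deg\lcm(f_1/x,f_2/y)+2 \le e+2\le d$. On the other hand $f_1$ and $f_2$ are distinct minimal generators of $J = J(T)$, whose punctures in $T$ are by hypothesis neither overlapping nor touching; so Lemma~\ref{lem:lcm-overlap}(ii) gives $\deg\lcm(f_1,f_2)\ge d+1$, a contradiction. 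Therefore no socle element of degree less than $d-1$ exists. The computation is elementary throughout; the only place to be careful is the exponent bookkeeping in the two steps "$f_1\mid xm$ but $f_1\nmid m$ forces $f_1/x\mid m$" and "the lcm of the divided generators has total degree at least $\deg\lcm(f_1,f_2)-2$", where an off-by-one slip would be easy, so I would write those out in full before invoking Lemma~\ref{lem:lcm-overlap}(ii).
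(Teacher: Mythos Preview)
Your proof is correct, but it takes a genuinely different route from the paper's.

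The paper argues via the minimal free resolution: for a monomial ideal in three variables, socle generators of $R/J$ correspond to minimal second syzygies of $J$, and those sit in degrees at least one higher than the first syzygies. Since every first syzygy of a monomial ideal comes from a relation $m_i n_i = m_j n_j$ with $m_i, m_j$ distinct minimal generators, its degree is at least $\deg\lcm(m_i,m_j)\ge d+1$ by Lemma~\ref{lem:lcm-overlap}(ii); hence second syzygies live in degree $\ge d+2$, and socle generators in degree $\ge (d+2)-3 = d-1$.

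Your argument is more elementary and self-contained: you pick a hypothetical socle monomial $m$ directly, extract two distinct minimal generators $f_1,f_2$ from the conditions $xm,ym\in J$, $m\notin J$, and bound $\deg\lcm(f_1,f_2)$ by hand. You never touch free resolutions or the syzygy/socle correspondence, which makes the argument accessible without that background and keeps the link to Lemma~\ref{lem:lcm-overlap}(ii) completely explicit. The paper's approach, on the other hand, is shorter once that homological machinery is in place, and it explains structurally why the shift by $3$ (the number of variables) appears. Both proofs hinge on the same lemma; yours just reaches it by a bare-hands divisibility computation rather than through syzygy degrees. Your cautionary note about the exponent bookkeeping is well placed; those two steps are exactly where a careless reader could slip, though your sketch handles them correctly. (Incidentally, the distinctness $f_1\neq f_2$ follows even more directly by comparing $x$-exponents: $a_1 = p+1$ while $a_2\le p$.)
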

\begin{proof}
    Since $J$ is a monomial ideal, every first syzygy of $J$ corresponds to a relation $m_i n_i - m_j n_j = 0$ for
    suitable monomials $n_i$ and $n_j$, where $m_i$ and $m_j$ are distinct monomial minimal generators of $J$. Applying
    Lemma~\ref{lem:lcm-overlap} to the equality $m_i n_i = m_j n_j$, we conclude that the degree of each first syzygy of
    $J$ is at least $d+1$. It follows that the degree of every second syzygy of $J$ is at least $d+2$. Each minimal
    second syzygy of $J$ corresponds to a socle generator of $R/J$ (see the beginning of Subsection~\ref{subsec:tools}).
    Hence, the degrees of the socle generators of $R/J$ are at least $d-1$ .
\end{proof}

The converse of Corollary~\ref{cor:socle-degree-bound} is not true in general. For example, the socle generators of
$R/(x^6, y^7, z^8, xy^5z, xy^2z^3, x^3y^2z)$ have degrees greater than 7, but two punctures of
$T_8 (x^6, y^7, z^8, xy^5z, xy^2z^3, x^3y^2z)$ touch each other (see Figure~\ref{fig:nontileable}).

Recall that perfectly-punctured regions were defined above Corollary~\ref{cor:pp-tileable}. This concept is used in
the proof of the following result.

\begin{theorem}\label{thm:wlp-to-semistab}
    Let $I \subset R$ be an Artinian ideal whose minimal monomial generators have degrees  $d_1,\ldots,d_t$. Set
    \[
        d := \frac{d_1 + \cdots + d_t}{t-1}.
    \]
    Assume $\charf K = 0$ and that the following conditions are satisfied:
    \begin{enumerate}
        \item The number $d$ is an integer.
        \item For all $i = 1,\ldots,t$, one has $d > d_i$.
        \item Each degree of a least common multiple of two distinct minimal generators of $I$ is at least $d$.
    \end{enumerate}
    Then the syzygy bundle of $I$ is semistable if  $R/I$ has the weak Lefschetz property.
\end{theorem}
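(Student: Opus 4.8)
The plan is to prove the contrapositive together with the structural results already developed: show that if the syzygy bundle of $I$ is \emph{not} semistable, then $R/I$ does not have the weak Lefschetz property. First I would translate non-semistability via Corollary~\ref{cor:T-stable}: there is a proper subset $J$ of the minimal generators with $|J| \geq 2$ such that, writing $g = \gcd\{g_j : j \in J\}$ and $I_J$ for the ideal generated by $\{g_j/g : j \in J\}$, one has
\[
    \frac{\mo_{d-\deg g}(I_J)}{|J|-1} > \frac{\mo_d(I)}{t-1}.
\]
Since $d = (d_1+\cdots+d_t)/(t-1)$ is an integer by hypothesis (i) and $d > d_i$ for all $i$ by hypothesis (ii), we have $\mo_d(I) = \sum_i (d - d_i) - d = 0$, i.e.\ $I$ is perfectly-punctured in degree $d$. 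So the inequality reads $\mo_{d-\deg g}(I_J) > 0$: the collection $J$, restricted to its smallest containing triangle, is over-punctured. I would then use hypothesis (iii) and Lemma~\ref{lem:lcm-overlap}(i): because every least common multiple of two distinct minimal generators has degree at least $d$, no two punctures of $T := T_d(I)$ overlap. Combined with $\mo_d(I)=0$ this forces all punctures of $T$ (with side length $d-d_i \geq 1$) to be exactly packed, so $T$ is balanced and, by Theorem~\ref{thm:tileable} (no $\dntri$-heavy subregions since none overlap), $T$ is tileable.

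Next I would pin down the Hilbert function behaviour near degree $d$. By hypothesis (ii) no generator has degree $\geq d$, and since $\mo_d(I)=0$ the region $T_d(I)$ is balanced, hence $h_{R/I}(d-2) = h_{R/I}(d-1)$; since $T$ is tileable and balanced, $\per Z(T) \geq 1$, so $h_{R/I}(d-1) \neq 0$. Moreover Lemma~\ref{lem:lcm-overlap}(i) gives, via the syzygy argument of Corollary~\ref{cor:socle-degree-bound} (the non-touching hypothesis there needs degree $\geq d+1$, so I would instead argue directly: every first syzygy of $I$ has degree $\geq d$, every second syzygy degree $\geq d+1$, hence socle generators of $R/I$ have degree $\geq d-2$), that $R/I$ has no socle elements of degree $< d-2$. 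Actually a cleaner route: hypothesis (iii) says lcm-degrees are $\geq d$, so the first syzygies sit in degree $\geq d$, the second syzygies in degree $\geq d+1$, and since socle generators correspond to minimal second syzygies shifted by $n=3$, they have degree $\geq d-2$. This is exactly the hypothesis needed for Corollary~\ref{cor:wlp-Z}(i) / Corollary~\ref{cor:twin-peaks-wlp} in the twin-peaks situation $0 \neq h_{R/I}(d-1) = h_{R/I}(d)$ — but I should double-check whether $h_{R/I}(d-1)=h_{R/I}(d)$ or only $h_{R/I}(d-2)=h_{R/I}(d-1)$; in the latter case I would use Proposition~\ref{pro:wlp}(ii) and Corollary~\ref{cor:max-Z} instead, concluding that if $R/I$ had the weak Lefschetz property then $Z(T_d(I))$ would have maximal rank.

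Now I would derive the contradiction. By Corollary~\ref{cor:max-Z} (or Corollary~\ref{cor:wlp-Z}(i)/Corollary~\ref{cor:wlp-biadj}), the weak Lefschetz property of $R/I$ in characteristic zero forces $Z(T_d(I))$ to have maximal rank, equivalently (since $T$ is balanced) $\det Z(T_d(I)) \neq 0$ in $K$, hence $\det N(T_d(I)) \neq 0$ by Theorem~\ref{thm:detZN}. But the over-punctured subcollection $J$ produces, inside its smallest containing triangle of side length $d - \deg g$, a $\dntri$-heavy monomial subregion of the corresponding region $T_{d-\deg g}(I_J)$ — here I would use that over-punctured $\Rightarrow$ it contains a $\dntri$-heavy monomial subregion in the balanced case, or more carefully: the $|J| \geq 2$ non-overlapping punctures of positive side length summing to more than $d - \deg g$ cannot all be packed, so some monomial subregion of $T$ is $\dntri$-heavy. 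Wait — non-overlapping punctures summing to more than the triangle size is impossible, so I would reconcile: hypothesis (iii) prevents overlaps in $\mathcal{T}_d$, but within the smaller triangle $\mathcal{T}_{d-\deg g}$ the translated punctures $\{g_j/g\}$ \emph{can} overlap, making $T_{d-\deg g}(I_J)$ non-tileable, hence $\dntri$-heavy somewhere, hence $T$ itself has a $\dntri$-heavy monomial subregion. By Theorem~\ref{thm:tileable} this contradicts tileability of $T$ established above. The main obstacle, and the step I would be most careful with, is exactly this reconciliation: translating ``$\mo_{d-\deg g}(I_J) > 0$'' into ``$T$ has a $\dntri$-heavy monomial subregion'' while respecting that hypothesis (iii) only controls overlaps at the full scale $d$, not at every intermediate scale — so I expect the real content is showing that an over-punctured restricted subcollection, under hypotheses (ii) and (iii), must already violate tileability of $T_d(I)$, which then conflicts with the weak Lefschetz property via the maximal-rank criterion.
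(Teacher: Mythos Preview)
Your contrapositive approach can be made to work, but as written it contains a real error and, more importantly, it re-derives from scratch what the paper already packages in Theorem~\ref{thm:tileable-semistable}. The paper's argument is three lines: hypotheses (i)--(iii) give that $T=T_d(I)$ is balanced and perfectly-punctured (non-overlap by Lemma~\ref{lem:lcm-overlap}, and $\sum_i(d-d_i)=d$ from the definition of $d$); then the weak Lefschetz property forces $T$ to be tileable via Proposition~\ref{pro:non-tileable-non-wlp} (note $I$ Artinian with all $d_i<d$ gives $x^d,y^d,z^d\in I$, so the $J$ there equals $I$); and finally Theorem~\ref{thm:tileable-semistable} (perfectly-punctured $+$ tileable $\Rightarrow$ semistable) finishes. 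No socle-degree estimate, no twin-peaks analysis, no $\det N$ is needed.

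Your specific gap is the sentence ``by Theorem~\ref{thm:tileable} (no $\dntri$-heavy subregions since none overlap), $T$ is tileable.'' Non-overlapping punctures do \emph{not} preclude $\dntri$-heavy monomial subregions: several disjoint punctures can still crowd into a small sub-triangle so that their side lengths sum to more than its side. Concretely, under your assumption that the bundle is \emph{not} semistable, Corollary~\ref{cor:T-stable} hands you a subset $J$ with $\mo_{d-\deg g}(I_J)>0$; since $g\mid g_j$ implies $\deg\operatorname{lcm}(g_i/g,g_j/g)=\deg\operatorname{lcm}(g_i,g_j)-\deg g\geq d-\deg g$ by hypothesis (iii), the translated punctures $g_j/g$ \emph{still do not overlap} in $\mathcal T_{d-\deg g}$, and hence the monomial subregion of $T$ associated to $g$ already has $\#\uptri-\#\dntri=(d-\deg g)-\sum_{j\in J}(d-d_j)=-\mo_{d-\deg g}(I_J)<0$. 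So $T$ is \emph{not} tileable---directly contradicting what you asserted a few lines earlier. Your later statement ``non-overlapping punctures summing to more than the triangle size is impossible'' is exactly the mistaken intuition; it is possible, and it is precisely the mechanism here. Once you drop the spurious tileability claim, your contrapositive collapses to: not semistable $\Rightarrow$ $T$ has a $\dntri$-heavy subregion $\Rightarrow$ $T$ not tileable $\Rightarrow$ $\det Z(T)=0$ $\Rightarrow$ $R/I$ fails the weak Lefschetz property (Corollary~\ref{cor:wlp-biadj}). But that chain is just the contrapositive of the paper's two citations, so you gain nothing by unpacking it.
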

\begin{proof}
    Consider the triangular region $T = T_d (I)$. By assumption (iii) and Lemma~\ref{lem:lcm-overlap}, we obtain that
    the punctures of $T$ do not overlap. Recall that the side length of the puncture to a minimal generator of degree
    $d_i$ is $d - d_i$. The definition of $d$ is equivalent to
    \[
        d = \sum_{i = 1}^t (d - d_i).
    \]
    We conclude that the region $T$ is balanced and perfectly-punctured. Combined with the weak Lefschetz property of
    $R/I$, the first property implies that $T$ is tileable by Proposition~\ref{pro:non-tileable-non-wlp}. Now
    Theorem~\ref{thm:tileable-semistable} gives the semistability of the syzygy bundle of $I$.
\end{proof}

The converse of the above result is not true, in general.

\begin{remark}
    The mirror symmetric regions considered in Section~\ref{sec:mirror} are all balanced and tileable. Thus,
    Theorem~\ref{thm:tileable-semistable} gives that each ideal of such a region (see Remark~\ref{rem:mirror-prop})
    \[
        J = (x^{h_1}, y^{d-b}, z^{d-b},
             x^{h_2} (y z)^{\frac{1}{2}(d - d_2 - h_2)},
             \ldots,
             x^{h_m} (y z)^{\frac{1}{2}(d - d_m - h_m)}).
    \]
    has a semi-stable syzygy bundle. However, Theorem~\ref{thm:mirror-odd23} shows that $R/J$ does not have the weak
    Lefschetz property if the number of axial punctures of $T_d(J)$ with odd side length is 2 or 3 modulo 4. If,
    instead, all axial punctures, except possibly the top one, do have an even side length, then $R/J$ has the weak
    Lefschetz property (see Proposition~\ref{pro:mirror-even}).
\end{remark}

However, under stronger assumptions the converse to Theorem~\ref{thm:wlp-to-semistab} is indeed true.

\begin{theorem}\label{thm:wlp-iff-semistab}
    Let $I \subset R$ be an Artinian ideal with minimal monomial generators $m_1,\ldots,m_t$. Set
    \[
        d := \frac{d_1 + \cdots + d_t}{t-1},
    \]
    where $d_i = \deg m_i$. Assume $\charf K = 0$ and that the following conditions are satisfied:
    \begin{enumerate}
        \item The number $d$ is an integer.
        \item For all $i = 1,\ldots,t$, one has $d > d_i$.
        \item If $i \neq j$, then the degree of the least common multiple of $m_i$ and $m_j$ is at least $d+1$.
        \item If $m_i$ is not a power of $x, y$, or $z$, then $d - d_i$ is even.
    \end{enumerate}
    Then the syzygy bundle of $I$ is semistable if and only if  $R/I$ has the weak Lefschetz property.
\end{theorem}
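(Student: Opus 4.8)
The plan is to prove Theorem~\ref{thm:wlp-iff-semistab} by combining Theorem~\ref{thm:wlp-to-semistab} with a careful analysis of the signed enumeration in the perfectly-punctured balanced case. First I would observe that the hypotheses here are strictly stronger than those of Theorem~\ref{thm:wlp-to-semistab}: condition (iii) here (least common multiples of degree at least $d+1$) implies condition (iii) there (degree at least $d$), so one direction — if $R/I$ has the weak Lefschetz property, then $\widetilde\syz{I}$ is semistable — is already established. It therefore remains to prove the converse: if $\widetilde\syz{I}$ is semistable, then $R/I$ has the weak Lefschetz property.

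So assume $\widetilde\syz{I}$ is semistable. As in the proof of Theorem~\ref{thm:wlp-to-semistab}, set $T = T_d(I)$; by condition (iii) and Lemma~\ref{lem:lcm-overlap}(ii) the punctures of $T$ are neither overlapping nor touching, and the definition of $d$ forces $T$ to be balanced and perfectly-punctured. By Corollary~\ref{cor:pp-tileable} together with Corollary~\ref{cor:T-stable}, semistability of $\widetilde\syz{I}$ (equivalently, no over-punctured monomial subregions) makes $T$ tileable. I then want to apply the weak Lefschetz criterion near the peak of the Hilbert function. Because the punctures do not touch or overlap, Corollary~\ref{cor:socle-degree-bound} tells us that $R/I$ has no non-zero socle elements of degree less than $d-1$, so Corollary~\ref{cor:wlp-Z} (or its twin-peaks companion Corollary~\ref{cor:twin-peaks-wlp}) applies: since $T = T_d(I)$ is balanced, we are in the situation $h_{R/I}(d-2) = h_{R/I}(d-1) \neq 0$, and $R/I$ has the weak Lefschetz property if and only if $\det Z(T)$ is non-zero in $K$, i.e.\ (as $\charf K = 0$) if and only if $\det Z(T) \neq 0$.

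Thus everything reduces to showing $\det Z(T) \neq 0$. Here is where condition (iv) enters. Since the punctures of $T$ are pairwise disjoint and the only punctures attached to a power of $x$, $y$, or $z$ are the three corner punctures, every floating puncture of $T$ corresponds to a generator that is not a pure power, hence has even side length $d - d_i$ by (iv). Now invoke Proposition~\ref{pro:same-sign} (the version stated just before Subsection~\ref{sub:axes-central}): since $T$ is tileable and all its floating punctures have even side length, every lozenge tiling of $T$ has the same perfect matching sign, so $|\det Z(T)| = \per Z(T)$. But $T$ is tileable, so it has at least one lozenge tiling, whence $\per Z(T) \geq 1 > 0$ and therefore $\det Z(T) \neq 0$. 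Combining this with Corollary~\ref{cor:wlp-Z}(i) yields the weak Lefschetz property of $R/I$, completing the proof.

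The main obstacle — and the reason the extra parity hypothesis (iv) is imposed — is precisely that without it the signs of distinct tilings of $T$ can cancel, as Theorem~\ref{thm:mirror-odd23} dramatically illustrates for mirror symmetric regions; there tileability and semistability hold, yet $\det Z(T) = 0$. So the real content is verifying that conditions (iii) and (iv) together force the ``single sign'' hypothesis of Proposition~\ref{pro:same-sign}: disjointness of punctures (from (iii)) rules out minimal covering regions of overlapping pairs, and parity (from (iv)) handles all remaining floating punctures, so no difference cycle of odd $E$-count can arise. The one point requiring a little care is checking that Corollary~\ref{cor:wlp-Z} is applicable — that is, that $d-2$ is at most the degree of every socle generator of $R/I$ — but this is exactly what Corollary~\ref{cor:socle-degree-bound} supplies from the non-touching hypothesis, so no genuine difficulty remains.
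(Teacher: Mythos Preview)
Your proof is correct and follows essentially the same route as the paper: invoke Theorem~\ref{thm:wlp-to-semistab} for one direction, and for the converse use semistability to get tileability of the balanced, perfectly-punctured region $T=T_d(I)$, then combine Corollary~\ref{cor:socle-degree-bound} with the twin-peaks criterion and the ``single sign'' proposition (all floating punctures have even side length by (iv)) to conclude $\det Z(T)\neq 0$. The only cosmetic differences are in which labels you cite (the paper uses Theorem~\ref{thm:tileable-semistable} directly for tileability and Proposition~\ref{prop:same-sign} rather than Proposition~\ref{pro:same-sign}, though either suffices here).
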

\begin{proof}
    By Theorem~\ref{thm:wlp-to-semistab}, it is enough to show that $R/I$ has the weak Lefschetz property if the syzygy
    bundle of $I$ is semistable.

    Consider the region $T = T_d(I)$. In the proof of Theorem~\ref{thm:wlp-to-semistab} we showed that $T$ is balanced
    and perfectly-punctured. Hence $T$ is tileable by Theorem~\ref{thm:tileable-semistable}. Since all floating
    punctures of $T$ have an even side length by assumption (iv), Theorem~\ref{thm:pm-matrix} and
    Proposition~\ref{prop:same-sign} give that $Z(T)$ has maximal rank.

    Assumption (iii) means that the punctures of $T$ are not overlapping nor touching (see Lemma~\ref{lem:lcm-overlap}).
    Hence, Corollary~\ref{cor:socle-degree-bound} yields that the degrees of the socle generators of $R/I$ are at least
    $d-1$. Therefore, Corollary~\ref{cor:twin-peaks-wlp} proves that $R/I$ has the weak Lefschetz property.
\end{proof}

We now show that, for all positive integers $d_1,\ldots,d_t$ with $t \geq 3$ that satisfy the numerical assumptions (i),
(ii), and (iv) of Theorem~\ref{thm:wlp-iff-semistab}, there is a monomial ideal $I$ whose minimal generators have
degrees $d_1,\ldots,d_t$ to which Theorem~\ref{thm:wlp-iff-semistab} applies and guarantees the weak Lefschetz property
of $R/I$.

\begin{example}\label{exa:ideals-with-wlp}
    Let $d_1,\ldots,d_t$ be $t \geq 3$ positive integers satisfying the following numerical conditions:
    \begin{enumerate}
        \item The number $d := \frac{d_1 + \cdots + d_t}{t-1}$ is an integer.
        \item For all $i = 1,\ldots,t$, one has $d > d_i$.
        \item At most three of the integers $d - d_i$ are not even.
    \end{enumerate}
    Re-indexing if needed, we may assume that $d_3 \leq \min \{d_1, d_2\}$ and that $d - d_i$ is even whenever
    $4 \leq i \leq t$. Consider the following ideal
    \begin{equation*}
        I = (x^{d_1}, y^{d_2}, z^{d_3}, m_4,\ldots,m_t),
    \end{equation*}
    where $m_4 = x^{d-d_3} y z^{-d-1 + d_3 +d_4}$ if $t \geq 4$, $m_5 = x^{2d - d_3 - d_4} y^2 z^{2d - 2 +d_3+d_4 + d_5}$
    if $t \geq 5$, and
    \begin{equation*}
        m_i =
        \begin{cases}
            x^{d-d_3} y^{1 + \sum_{k=4}^{i-1} (d- d_k)} z^{-d (i-3) - 1 + \sum_{k=3}^i d_k } & \text{if } 6 \leq i \leq t \text{ and $i$ is even}\\
            x^{-1 + \sum_{k=3}^{i-1} (d- d_k)} y^2 z^{-d (i-3) -1 + \sum_{k=3}^i d_k } & \text{if } 7 \leq i \leq t \text{ and $i$ is odd.}
        \end{cases}
    \end{equation*}
    Note that $\deg m_i = d_i$ for all $i$. One easily checks that the degree of the least common multiple of any two
    distinct minimal generators of $I$ is at least $d+1$, that is, the punctures of $T_d(I)$ do not overlap nor touch
    each other.
    \begin{figure}[!ht]
        \includegraphics[scale=1]{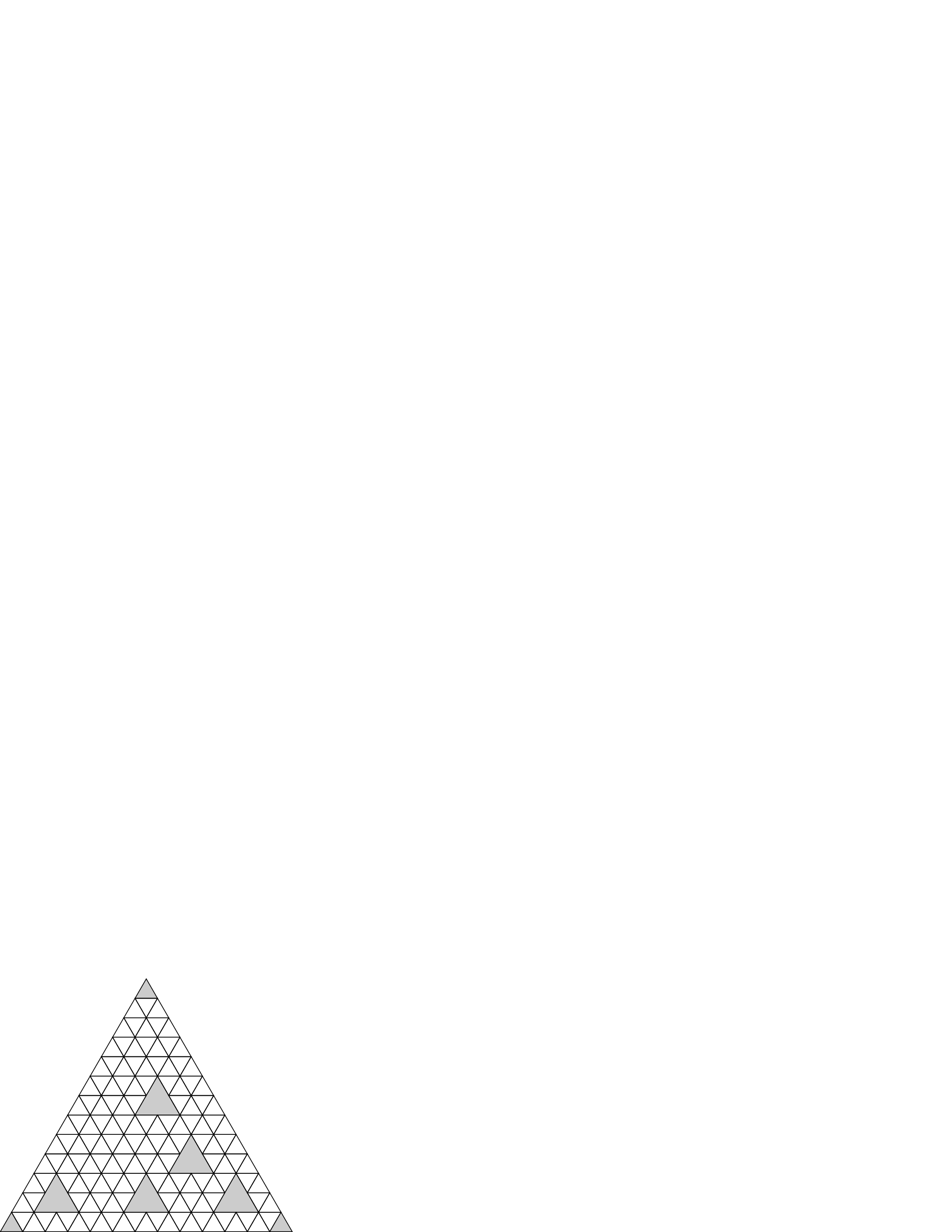}
        \caption{The region corresponding to $d_1 = d_2 = d_3 = 12$ and $d_4 = \cdots = d_8 = 11$ in Example~\ref{exa:ideals-with-wlp}.}
        \label{fig:example-d-13-t-8}
    \end{figure}
\end{example}

\begin{corollary}
    Let $I$ be any ideal as defined in Example~\ref{exa:ideals-with-wlp}. Assume that the base field $K$ has
    characteristic zero. Then $R/I$ has the weak Lefschetz property and the syzygy bundle of $I$ is semistable.
\end{corollary}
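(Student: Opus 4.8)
The plan is to deduce the statement from Theorem~\ref{thm:wlp-iff-semistab}. That result only provides, under its four hypotheses, the equivalence ``the syzygy bundle of $I$ is semistable $\iff$ $R/I$ has the weak Lefschetz property,'' so two things are needed: (a) verify those four hypotheses for the ideal $I$ constructed in Example~\ref{exa:ideals-with-wlp}, and (b) establish \emph{one} of the two equivalent properties by an independent argument. Once the weak Lefschetz property of $R/I$ is in hand, semistability of the syzygy bundle follows from Theorem~\ref{thm:wlp-to-semistab}, whose hypotheses (i)--(iii) are immediate here (conditions (a), (b) of the example, and $\deg\lcm \ge d+1 \ge d$), so I will aim squarely at the weak Lefschetz property.

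Checking the hypotheses of Theorem~\ref{thm:wlp-iff-semistab} is short: (i) and (ii) are exactly conditions (a) and (b) of Example~\ref{exa:ideals-with-wlp}; (iii) is the fact, recorded there and checked from the explicit exponents of $m_4,\dots,m_t$, that the least common multiple of any two distinct minimal generators has degree at least $d+1$; and (iv) holds because the only minimal generators that are not pure powers are $m_4,\dots,m_t$, for which $d-d_i$ is even after the re-indexing permitted by condition (c) (at most three of the $d-d_i$ are odd, and these get absorbed into the exponents of the pure powers $x^{d_1},y^{d_2},z^{d_3}$).

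To produce the weak Lefschetz property I set $T = T_d(I)$. Since $\sum_{i=1}^t (d-d_i) = td - \sum_i d_i = d$, and since by (iii) and Lemma~\ref{lem:lcm-overlap} the punctures of $T$ neither overlap nor touch, the region $T$ is balanced and perfectly-punctured. The next step is to show $T$ is tileable, i.e., by Theorem~\ref{thm:tileable} that it has no $\dntri$-heavy monomial subregion. Granting this, note that every floating puncture of $T$ is associated to one of $m_4,\dots,m_t$ (the corner punctures $x^{d_1},y^{d_2},z^{d_3}$ meet the boundary of $\mathcal{T}_d$), hence has even side length $d-d_i$; so Proposition~\ref{pro:same-sign} gives $|\det Z(T)| = \per Z(T)$, which is positive because $T$ is tileable and non-empty. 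As $T$ is balanced, $Z(T)$ is a square matrix of maximal rank, so $\times(x+y+z)\colon [R/I]_{d-2}\to[R/I]_{d-1}$ is bijective (Corollary~\ref{cor:max-Z}). Finally, Corollary~\ref{cor:socle-degree-bound} (applicable since the punctures do not overlap or touch) shows that $R/I$ has no socle elements of degree less than $d-1$, while $h_{R/I}(d-2)=h_{R/I}(d-1)\neq 0$ since $T$ is balanced and non-empty; Corollary~\ref{cor:twin-peaks-wlp} then delivers the weak Lefschetz property, and Theorem~\ref{thm:wlp-to-semistab} then gives semistability of the syzygy bundle.

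The main obstacle is the tileability step: one must rule out $\dntri$-heavy monomial subregions, equivalently (as $T$ is balanced and perfectly-punctured) show that the punctures lying inside any sub-triangle $\Delta$ of $\mathcal{T}_d$ have side lengths summing to at most the side length of $\Delta$. For a pair of punctures this is precisely hypothesis (iii) (non-touching even gives a strict inequality); for larger collections it must be extracted from the explicit ``staircase'' placement of $m_4,\dots,m_t$, each of which sits one row below and to the left of the union of its predecessors, so that the smallest triangle containing any subset is large enough (see Figure~\ref{fig:example-d-13-t-8}). This is routine but picture-driven; alternatively, the entire step can be bypassed by verifying semistability of the syzygy bundle directly via Corollary~\ref{cor:T-stable}, since $\mo_d(I)=0$ reduces that numerical criterion to exactly the same non-over-packing inequalities, after which the weak Lefschetz property follows from Theorem~\ref{thm:wlp-iff-semistab}.
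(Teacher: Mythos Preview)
Your argument is correct and hinges on the same unproved step as the paper's—the tileability of $T_d(I)$, which both you and the paper assert from the explicit ``staircase'' placement of the $m_i$ without a full verification.

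The route differs slightly. The paper proceeds \emph{tileable $\Rightarrow$ semistable $\Rightarrow$ WLP}: from tileability and perfect-puncturing it invokes Theorem~\ref{thm:tileable-semistable} to conclude semistability, and then Theorem~\ref{thm:wlp-iff-semistab} gives the weak Lefschetz property. You instead go \emph{tileable $\Rightarrow$ WLP $\Rightarrow$ semistable}: you use Proposition~\ref{pro:same-sign}, Corollary~\ref{cor:socle-degree-bound}, and Corollary~\ref{cor:twin-peaks-wlp} to get the weak Lefschetz property directly from tileability and the even-side-length condition on floating punctures, and then deduce semistability from Theorem~\ref{thm:wlp-to-semistab}. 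This is valid, but notice that you are essentially re-proving inline the ``semistable $\Rightarrow$ WLP'' half of Theorem~\ref{thm:wlp-iff-semistab}, so your detour through $\det Z(T)$ and twin peaks is redundant once you have cited that theorem. The paper's path is shorter because Theorem~\ref{thm:tileable-semistable} packages the passage from tileability to semistability in one step; your closing alternative (semistability via Corollary~\ref{cor:T-stable}) is essentially that same route.
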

\begin{proof}
    By construction, the considered ideals satisfy assumptions (i)--(iv) of Theorem~\ref{thm:wlp-iff-semistab}.
    Furthermore, the region $T_d(I)$ has no over-punctured monomial subregions. Hence, it is tileable by
    Corollary~\ref{cor:pp-tileable}. (Alternatively, one can exhibit a family of non-intersecting lattice paths to check
    tileability.) By Theorem~\ref{thm:tileable-semistable}, it follows that the syzygy bundle of $I$ is semistable, and
    hence $R/I$ has the weak Lefschetz property by Theorem~\ref{thm:wlp-iff-semistab}.
\end{proof}

\begin{remark}
    Given an integer $t \geq 3$, there are many choices for the integers $d_1,\ldots,d_t$, and thus for the ideals
    exhibited in Example~\ref{exa:ideals-with-wlp}. A convenient choice, for which the description of the ideal becomes
    simpler, is $d_1 = 2t -4$, $d_2 = d_3 = d-1$, and $d_4 = \cdots = d_t = d-2$, where $d$ is any integer satisfying
    $d \geq 2t -3$. Then the corresponding ideal is
    \begin{equation*}
        I = (x^{2t - 4}, y^{d - 1}, z^{d-1}, xyz^{d-4}, x^3 y^2 z^{d-7}, m_6,\ldots,m_t ),
    \end{equation*}
    where
    \begin{equation*}
        m_i =
        \begin{cases}
            xy^{2i -7} z^{d+4 - 2i} & \text{if } 6 \leq i \leq t \text{ and $i$ is even}\\
            x^{2i - 8} y^2 z^{d+4 - 2i} & \text{if } 7 \leq i \leq t \text{ and $i$ is odd.}
        \end{cases}
    \end{equation*}
    The ideal in Figure~\ref{fig:example-d-13-t-8} is generated as above with $d = 13$ and $t = 8$.
\end{remark}

\section{Artinian monomial algebras of type two in three variables} \label{sec:type-two}

Boij, Migliore, Mir\'o-Roig, Zanello, and the second author proved in \cite[Theorem~6.2]{BMMNZ} that the Artinian
monomial algebras of type two in three variables that are \emph{level} have the weak Lefschetz property in
characteristic zero. The proof given there is surprisingly intricate and lengthy. In this section, we establish a more
general result using techniques derived in the previous sections.

To begin, we classify the Artinian monomial ideals $I$ in $R = K[x,y,z]$ such that $R/I$ has type two, that is, its socle is
of the form $\soc (R/I) \cong K(-s) \oplus K(-t)$. The algebra $R/I$ is level if the socle degrees $s$ and $t$ are
equal. The classification in the level case has been established in \cite[Proposition~6.1]{BMMNZ}. The following more
general result is obtained similarly.

\begin{lemma} \label{lem:classify-type-two}
    Let $I$ be an Artinian monomial ideal in $R = K[x,y,z]$ such that $R/I$ is of type $2$. Then, up to a change of
    variables, $I$ has one of the following two forms:
    \begin{enumerate}
        \item $I = (x^a, y^b, z^c, x^{\alpha} y^{\beta})$, where $0 < \alpha < a$ and $0 < \beta < b$. In this case, the
            socle degrees of $R/I$ are $a + \beta + c-3$ and $\alpha + b + c-3$. Thus, $I$ is level if and only if
            $a - \alpha = b - \beta$.
        \item $I = (x^a, y^b, z^c, x^{\alpha} y^{\beta}, x^{\alpha} z^{\gamma})$, where $0 < \alpha < a$, $0 < \beta < b$,
            and $0 < \gamma < c$. In this case, the socle degrees of $R/I$ are $a + \beta + \gamma-3$ and $\alpha + b + c-3$.
            Thus, $I$ is level if and only if $a - \alpha = b - \beta + c - \gamma$.
    \end{enumerate}
\end{lemma}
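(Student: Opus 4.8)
The plan is to translate the statement into the combinatorics of standard monomials. Since $R/I$ is Artinian and $I$ is a monomial ideal, the monomials lying outside $I$ form a finite set $\Delta \subset \NN^3$ that is closed under passing to divisors, and (because $I : \mathfrak{m}$ is again a monomial ideal) the socle of $R/I$ has a $K$-basis consisting precisely of the monomials that are maximal in $\Delta$ under divisibility. Thus the hypothesis that $R/I$ has type $2$ says exactly that $\Delta$ has two maximal elements, say $M_1 = x^{p_1} y^{p_2} z^{p_3}$ and $M_2 = x^{q_1} y^{q_2} z^{q_3}$; these are distinct and incomparable under divisibility, every monomial of $\Delta$ divides $M_1$ or $M_2$, and hence a monomial lies in $I$ if and only if it divides neither $M_1$ nor $M_2$. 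In particular the socle degrees of $R/I$ are $\deg M_1$ and $\deg M_2$.

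Next I would run a short case analysis on how the exponent vectors of $M_1$ and $M_2$ compare. Incomparability forces one variable in which $M_1$ has the strictly larger exponent and one in which $M_2$ does; up to permuting the variables and interchanging $M_1 \leftrightarrow M_2$ there are exactly two possibilities for the remaining variable: (I) the exponents in some variable, say $z$, agree, and one may normalise to $p_1 < q_1$, $p_2 > q_2$, $p_3 = q_3$; or (II) $M_1$ strictly dominates in two variables, say $p_1 > q_1$, $p_2 > q_2$, $p_3 < q_3$. In each case the minimal generators of $I$ can be read off directly from the description ``divides neither $M_1$ nor $M_2$'': a monomial whose exponent in some variable exceeds $\max\{p_i,q_i\}$ is a proper multiple of the corresponding pure power, so a minimal generator is either one of the three pure powers $x^{\max\{p_1,q_1\}+1}$, $y^{\max\{p_2,q_2\}+1}$, $z^{\max\{p_3,q_3\}+1}$, or a ``mixed'' monomial which lies in $I$ only because it simultaneously escapes $M_1$ and $M_2$ while overshooting no coordinatewise maximum. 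Carrying this out yields, in case (I), the four generators $x^{q_1+1}$, $y^{p_2+1}$, $z^{p_3+1}$, $x^{p_1+1} y^{q_2+1}$, i.e. form (i) after setting $a = q_1+1$, $b = p_2+1$, $c = p_3+1$, $\alpha = p_1+1$, $\beta = q_2+1$ (the inequalities $0<\alpha<a$ and $0<\beta<b$ being precisely $p_1<q_1$ and $q_2<p_2$); and in case (II) the five generators $x^{p_1+1}$, $y^{p_2+1}$, $z^{q_3+1}$, $x^{q_1+1} z^{p_3+1}$, $y^{q_2+1} z^{p_3+1}$, which, after relabelling the variables so that the variable $z$ shared by the two mixed generators plays the role of $x$ in the statement, is form (ii). Reading off $\deg M_1 = \alpha+b+c-3$ and $\deg M_2 = a+\beta+c-3$ in case (i) (respectively $\alpha+b+c-3$ and $a+\beta+\gamma-3$ in case (ii)) gives the asserted socle degrees, and equating them produces exactly the stated levelness condition.

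The only genuine work is the elementary but fiddly verification in case (II) — and, more easily, case (I) — that the listed monomials really do lie in $I$, that none divides another, and that they exhaust the minimal generators; one must also keep the ``up to change of variables'' bookkeeping honest, since normalising the sign pattern of $(p_1-q_1,p_2-q_2,p_3-q_3)$ uses both a permutation of $x,y,z$ and the symmetry $M_1\leftrightarrow M_2$. Everything else is formal, and the argument runs parallel to the level case treated in \cite[Proposition~6.1]{BMMNZ}.
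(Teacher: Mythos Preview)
Your proof is correct and follows essentially the same approach as the paper's: both arguments identify the two socle generators as the pair of incomparable monomials $M_1, M_2$ whose divisors constitute the standard monomials, perform the same case analysis on the sign pattern of the exponent differences, and then read off the minimal generators of $I$. The paper phrases this via Macaulay--Matlis duality, writing $I$ as the intersection of the two irreducible ideals $(x^{p_1+1},y^{p_2+1},z^{p_3+1}) \cap (x^{q_1+1},y^{q_2+1},z^{q_3+1})$ and simplifying the resulting lcm-generators, whereas you work directly with the order ideal $\Delta$; the computations are the same.
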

\begin{proof}
    We use Macaulay-Matlis duality. An Artinian monomial algebra of type two over $R$ arises as the inverse system of
    two monomials, say $x^{a_1} y^{b_1} z^{c_1}$ and $x^{a_2} y^{b_2} z^{c_2}$, such that one does not divide the other.
    Thus we may assume without loss of generality that $a_1 > a_2$ and $b_1 < b_2$. We consider two cases: $c_1 = c_2$
    and $c_1 \neq c_2$.

    Suppose first that $c_1 = c_2$.  Then the annihilator of the monomials is the ideal
    \[
        (x^{a_1+1}, y^{b_1+1}, z^{c_1+1}) \cap (x^{a_2+1}, y^{b_2+1}, z^{c_1+1}) =   (x^{a_1+1}, y^{b_2+1}, z^{c_1+1}, x^{a_2+1} y^{b_1+1}),
    \]
    which is the form in (i).  By construction, the socle elements are $x^{a_1} y^{b_1} z^{c_1}$ and $x^{a_2} y^{b_2} z^{c_1}$.

    Now suppose $c_1 \neq c_2$; without loss of generality we may assume $c_1 < c_2$. Then the annihilator of the
    monomials is the ideal
    \[
        (x^{a_1+1}, y^{b_1+1}, z^{c_1+1}) \cap (x^{a_2+1}, y^{b_2+1}, z^{c_2+1}) =
        (x^{a_1 + 1}, y^{b_2+1}, z^{c_2+1}, x^{a_2 + 1} y^{b_1+1}, x^{a_2+1} z^{c_1+1}),
    \]
    which is the form in (ii).  By construction, the socle elements are $x^{a_1} y^{b_1} z^{c_1}$ and $x^{a_2} y^{b_2} z^{c_2}$.
\end{proof}

We give a complete classification of the type two algebras that have the weak Lefschetz property in characteristic zero.

\begin{theorem} \label{thm:type-two}
    Let $I$ be an Artinian monomial ideal in $R = K[x,y,z]$, where $K$ is a field of characteristic zero, such that
    $R/I$ is of type $2$. Then $R/I$ fails to have the weak Lefschetz property in characteristic zero if and only if
    $I = (x^a, y^b, z^c, x^{\alpha} y^{\beta}, x^{\alpha} z^{\gamma})$, up to a change of variables, where
    $0 < \alpha < a$, $0 < \beta < b$, and $0 < \gamma < c$, and there exists an integer $d$ with
    \begin{equation} \label{eqn:type-two}
        \begin{split}
                \max \left\{a, \alpha + \beta, \alpha + \gamma, \frac{a+\alpha+\beta+\gamma}{2} \right\} < d \hspace*{7cm}\\
                < \min \left\{a+\beta + \gamma, \frac{\alpha+b+c}{2}, b+c, \alpha + c, \alpha + b \right\}.
        \end{split}
    \end{equation}
\end{theorem}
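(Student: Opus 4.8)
The plan is to split the argument according to the classification in Lemma~\ref{lem:classify-type-two}, handling the two forms of $I$ separately, and in each case pinning down the shape of the Hilbert function and the degrees of the socle generators so that the tools of Section~\ref{sec:wlp} apply.

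First I would dispose of the case $I = (x^a, y^b, z^c, x^{\alpha} y^{\beta})$. Here the region $T_d(I)$ is simply-connected (there are no floating punctures), so by Proposition~\ref{prop:same-sign} the bi-adjacency matrix $Z(T_d(I))$ has maximal rank over a field of characteristic zero precisely when $T_d(I)$ is tileable. Using Corollary~\ref{cor:max-Z} and the twin-peaks/peak criteria (Corollary~\ref{cor:twin-peaks-wlp} and Proposition~\ref{pro:wlp}), together with Lemma~\ref{lem:classify-type-two}(i) for the socle degrees $a+\beta+c-3$ and $\alpha+b+c-3$, this reduces the presence of the weak Lefschetz property to a tileability question; one then checks that the relevant regions are always tileable (no $\dntri$-heavy monomial subregion can occur, by Theorem~\ref{thm:tileable}, because the single non-corner puncture $x^{\alpha}y^{\beta}$ together with the corner punctures never forces an over-punctured balanced subregion in the relevant degrees). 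In fact Proposition~\ref{pro:two-mahonian} or Proposition~\ref{pro:C-is-zero} gives an explicit nonzero Mahonian-type enumeration in the critical degree, so $R/I$ always has the weak Lefschetz property in characteristic zero in case (i). This is why case (i) does not appear in the statement.

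Next I would treat the main case $I = (x^a, y^b, z^c, x^{\alpha} y^{\beta}, x^{\alpha} z^{\gamma})$. From Lemma~\ref{lem:classify-type-two}(ii) the socle degrees are $a+\beta+\gamma-3$ and $\alpha+b+c-3$. I would use Proposition~\ref{pro:wlp} to locate the degree(s) where injectivity or surjectivity of $\times(x+y+z)$ can fail: by the shape of the Hilbert function (which one computes from the four-generator resolution, or via the triangular region), the only possibility for failure is at the ``twin peak'' or adjacent degrees, and one shows that the socle-degree hypotheses of Corollary~\ref{cor:wlp-Z} are met exactly when $d$ lies in the stated interval. The point is that the region $T_d(I)$ then has a \emph{floating} puncture, namely the one associated to $\gcd(x^{\alpha}y^{\beta}, x^{\alpha}z^{\gamma}) = x^{\alpha}$ (of side length $d-\alpha$), with the two punctures $x^{\alpha}y^{\beta}$ and $x^{\alpha}z^{\gamma}$ inside it — and, after placing the forced lozenges, this is a mirror symmetric configuration after a rotation. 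So by Corollary~\ref{cor:wlp-Z}(i) the question becomes whether $\det Z(T_d(I)) = 0$, and the key computation is that this determinant vanishes: the region (after rotating by $120^\circ$) is mirror symmetric in the sense of Assumption~\ref{assump:mirror} with exactly two axial punctures of odd side length — or, more elementarily, a direct sign-reversing involution on the relevant non-intersecting lattice path families (reflecting across the axis of symmetry, as in the proof of Theorem~\ref{thm:mirror-odd23}) pairs positive-sign families with negative-sign ones. Concretely I would first reduce, via Corollary~\ref{cor:replace-two-punctures}, to the region $T_d(x^a, y^b, z^c, x^{\alpha})$ (replacing the two overlapping punctures $x^{\alpha}y^{\beta}$, $x^{\alpha}z^{\gamma}$ by their minimal covering region $x^{\alpha}$), observe this region is mirror-symmetric about the vertical axis, apply Theorem~\ref{thm:mirror-odd23} (here $m=1$ axial puncture of odd side length $d-\alpha$ plus... ) — and here is the subtlety to get right: one needs the parity count of odd-side-length axial punctures to be $2$ or $3$ mod $4$, which is what forces the numerical constraints in \eqref{eqn:type-two} beyond mere tileability. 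I would need to verify that, in the regime \eqref{eqn:type-two}, the reflection involution on lattice path families is genuinely sign-reversing, i.e. $\lfloor q/2\rfloor$ is odd, and conversely that outside this regime one of the two matrices $Z(T_d(I))$, $Z(T_{d+1}(I))$ always has a nonzero maximal minor (using restricted maximal minors, Proposition~\ref{pro:restricted-only}, and an explicit Mahonian-type evaluation from Section~\ref{sec:det}).

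The hard part will be the ``if and only if'' bookkeeping on the inequalities: translating the socle-degree conditions and the Hilbert-function peak location into the displayed two-sided bound \eqref{eqn:type-two}, and checking that \emph{exactly} when $d$ lies in this interval do we land in a mirror-symmetric region whose determinant is forced to vanish, while for all other parameter values either the Hilbert function never has the critical ``peak'' shape (so the weak Lefschetz property holds automatically by Proposition~\ref{pro:wlp}), or a suitable restricted maximal minor is a nonzero product of binomials and Mahonian terms (prime divisors bounded, but in characteristic zero simply nonzero). I would organize this as: (a) show failure implies form (ii) and \eqref{eqn:type-two}; (b) show form (ii) plus \eqref{eqn:type-two} implies $\det Z = 0$ via the reflection argument and hence failure by Corollary~\ref{cor:wlp-Z}(i); (c) show that in form (ii) without \eqref{eqn:type-two}, and in form (i) always, the weak Lefschetz property holds, either because the Hilbert function forces it (no bad degree) or because an explicit nonzero determinant / maximal minor is available. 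Step (b), specifically certifying that the reflection involution reverses sign in the whole regime \eqref{eqn:type-two} (equivalently that the relevant lattice path matrix is singular), is where the real content lies; everything else is Hilbert-function combinatorics that the earlier sections have already set up.
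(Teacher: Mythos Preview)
Your approach to case (ii) via mirror symmetry is fundamentally flawed and will not work. The region $T_d(x^a, y^b, z^c, x^{\alpha} y^{\beta}, x^{\alpha} z^{\gamma})$ is \emph{not} mirror symmetric unless $b=c$ and $\beta=\gamma$; no rotation fixes this, and the reduction you propose via Corollary~\ref{cor:replace-two-punctures} to $T_d(x^a,y^b,z^c,x^{\alpha})$ (even when the two punctures happen to overlap, which condition~\eqref{eqn:type-two} does not guarantee) still leaves an asymmetric region whenever $b\neq c$. So Theorem~\ref{thm:mirror-odd23} and the reflection involution simply do not apply to the generic five-generator ideal, and there is no sign-reversing pairing available. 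Moreover, you assume the failure degree $d$ is a ``twin peaks'' degree so that Corollary~\ref{cor:wlp-Z}(i) applies; in fact the Hilbert function need not be unimodal here (see Example~10.5(ii)), and condition~\eqref{eqn:type-two} can be satisfied by several consecutive $d$, none of which need be a twin peak.

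The paper's mechanism is entirely different and does not use symmetry or sign cancellation at all. It splits $T=T_d(I)$ along the horizontal line at height $\alpha$ into an upper hexagon $T^u$ (associated to $(x^{a-\alpha},y^{\beta},z^{\gamma})$ in the five-generator case, or $(x^{a-\alpha},y^{\beta},z^{c})$ in the four-generator case) and a lower hexagon $T^l$ (associated to $(x^{\alpha},y^b,z^c)$). This makes $Z(T)$ block lower-triangular with diagonal blocks $Z(T^u)$ and $Z(T^l)$. When $T^u$ and $T^l$ favour the same orientation (or one is balanced), maximal rank follows from the complete-intersection case (Proposition~\ref{pro:ci-wlp}); when $T^u$ is $\uptri$-heavy and $T^l$ is $\dntri$-heavy, one balances them against each other and exhibits an explicit tiling. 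The failure case is precisely $T^u$ $\dntri$-heavy and $T^l$ $\uptri$-heavy: then \emph{every} maximal minor of $T$ is non-tileable (any tiling of the upper part must borrow upward triangles from below, worsening the imbalance of the already $\uptri$-heavy lower part), so $\det Z(T')=0$ for every maximal minor $T'$. The four-generator form is disposed of instantly because its $T^u$ has $z^c$ as a ``puncture'' and can never be $\dntri$-heavy. Condition~\eqref{eqn:type-two} is then read off from Lemma~\ref{lem:h-ci} as the exact translation of ``$T^u$ $\dntri$-heavy and $T^l$ $\uptri$-heavy''. This horizontal-split idea is the missing ingredient in your proposal.
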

\begin{proof}
    According to Corollary \ref{cor:wlp-biadj}, for each integer $d > 0$, we have to decide whether the bi-adjacency
    matrix $Z(T_d(I))$ has maximal rank. This is always true if $d = 1$. Let $d \geq 2$.

    By Lemma~\ref{lem:classify-type-two}, we may assume that $I$ has one of two forms given there. The difference
    between the two forms is an extra generator, $x^{\alpha} z^{\gamma}$. In order to determine the rank of $Z( T_d(I))$
    we split $T = T_d(I)$ across the horizontal line $\alpha$ units from the bottom edge. We call the monomial
    subregion above the line, which is the subregion associated to $x^\alpha$, the \emph{upper portion} of $T$, denoted
    by $T^u$, and we call the isosceles trapezoid below the line the \emph{lower portion} of $T$, denoted by $T^l$. Note
    that $T^u$ is empty if $d \leq \alpha$. Both portions, $T^u$ and $T^l$, are hexagons, i.e., triangular regions
    associated to complete intersections. In particular, if $I$ has four generators, then
    $T^u = T_{d-\alpha}(x^{a-\alpha}, y^{\beta}, z^c)$. Similarly, if $I$ has five generators, then
    $T^u = T_{d-\alpha}(x^{a-\alpha}, y^{\beta}, z^{\gamma})$. In both cases $T^l$ is $T_d(x^{\alpha}, y^b, z^c)$. See
    Figure~\ref{fig:decompose} for an illustration of this decomposition.
    \begin{figure}[!ht]
        \includegraphics[scale=2]{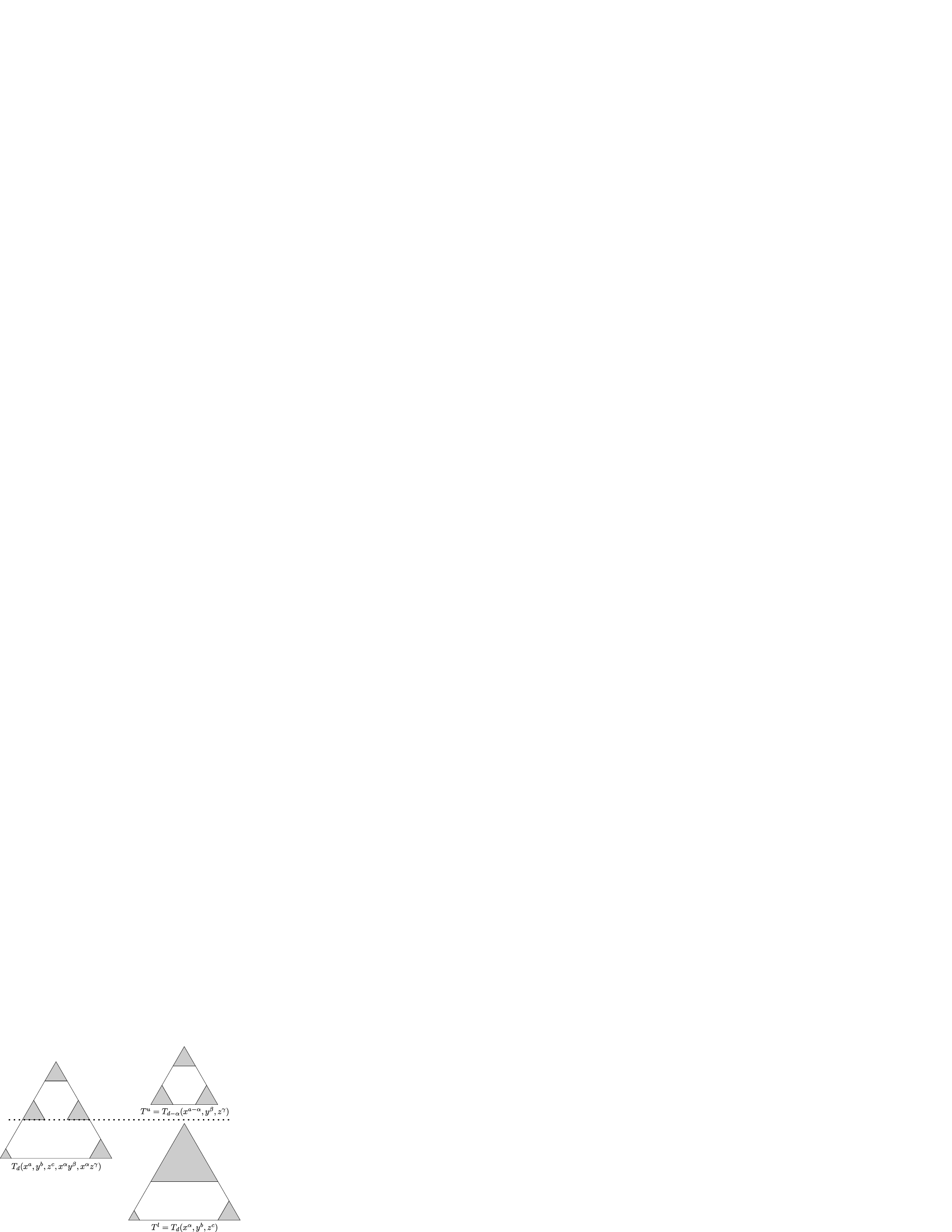}
        \caption{The decomposition of $T_d(I)$ into $T^u$ and $T^l$.}
        \label{fig:decompose}
    \end{figure}

    After reordering rows and columns of the bi-adjacency matrix  $Z(T)$, it becomes a  block matrix of the form
    \begin{equation}\label{eq:block-Z}
        Z =
        \begin{pmatrix}
            Z(T^u) & 0 \\
            Y & Z(T^l)
        \end{pmatrix}
    \end{equation}
    because the downward-pointing triangles in $T^u$ are not adjacent to any upward-pointing triangles in $T^l$. For
    determining when $Z$ has maximal rank, we study several cases, depending on whether $T^u$ and $T^l$ are $\uptri$-heavy,
    balanced, or $\dntri$-heavy.
    \smallskip

    First, suppose one of the following conditions is satisfied: (i) $T^u$ or $T^l$ is balanced, (ii) $T^u$ and $T^l$
    are both $\uptri$-heavy, or (iii) $T^u$ and $T^l$ are both $\dntri$-heavy. In other words, $T^u$ and $T^l$ do not
    ``favor'' triangles of opposite orientations. Since $T^u$ and $T^l$ are triangular regions associated to complete
    intersections, both $Z(T^u)$ and $Z(T^l)$ have maximal rank by Proposition~\ref{pro:ci-wlp}. Combining non-vanishing
    maximal minors of $Z(T^u)$ and $Z(T^l)$, if follows that the matrix $Z$ has maximal rank as well.
    \smallskip

    Second, suppose $T^u$ is $\uptri$-heavy and $T^l$ is $\dntri$-heavy. We will show that $Z$ has maximal rank in this case.

    Let $t_u = \#\uptri(T^u) - \#\dntri(T^u)$ and $t_l = \#\dntri(T^l) - \#\uptri(T^l)$ be the number of excess
    triangles of each region. In a first step, we show that we may assume $t_u = t_l$. To this end we remove enough of
    the appropriately oriented triangles from the more unbalanced of $T^u$ and $T^l$ until both regions are equally
    unbalanced. Set $t = \min\{t_u, t_l\}$.

    Assume $T^u$ is more unbalanced, i.e., $t_u > t$. Since $T^u$ is $\uptri$-heavy, the top $t_u$ rows of ${\mathcal
    T}_d$ below the puncture associated to $x^a$ do not have a puncture. Thus, we can remove the top $t_u - t$
    upward-pointing triangles in $T^u$ along the upper-left edge of ${\mathcal T}_d$, starting at the puncture
    associated to $x^a$, if present, or in the top corner otherwise. Denote the resulting subregion of $T$ by $T'$.
    Notice that $Z$ has maximal rank if $Z(T')$ has maximal rank. Furthermore, the $t_u - t$ rows in which $T$ and $T'$
    differ are uniquely tileable. Denote this subregion of $T'$ by $U$ (see Figure~\ref{fig:type-2-case-8}(i) for an
    illustration). By construction, the upper and the lower portion ${T^u}'$ and ${T^l}' = T^l$, respectively, of $T'
    \setminus U$ are equally unbalanced. Moreover, $Z(T')$ has maximal rank if and only if $Z(T' \setminus U)$ has
    maximal rank by Proposition~\ref{prop:remove-unique-tileable}. As desired, $T$ and $T' \setminus U$ have the same shape.

    Assume now that $T^l$ is more unbalanced, i.e., $t_l > t$. Since $T^l$ is $\dntri$-heavy, the two punctures
    associated to $x^b$ and $x^c$, respectively, cover part of the bottom $t_l$ rows of ${\mathcal T}_d$. Thus, we can
    remove the bottom $t_l - t$ downward-pointing triangles of $T^l$ along the puncture associated to $x^c$. Denote the
    resulting subregion of $T$ by $T'$. Notice that $Z$ has maximal rank if $Z(T')$ has maximal rank. Again, the $t_l -
    t$ rows in which $T$ and $T'$ differ form a uniquely tileable subregion. Denote it by $U$. By construction, the
    upper and the lower portion ${T^u}' = T^u$ and ${T^l}' $, respectively, of $T' \setminus U$ are equally
    unbalanced. Moreover, $Z(T')$ has maximal rank if and only if $Z(T' \setminus U)$ has maximal rank by
    Proposition~\ref{prop:remove-unique-tileable}. As before, $T$ and $T' \setminus U$ have the same shape.

    The above discussion shows it is enough to prove that the matrix $Z$ has maximal rank if $t_u = t_l = t$, i.e., $T$
    is balanced. Since $T$ has no floating punctures, Proposition~\ref{prop:same-sign} gives the desired maximal rank of
    $Z$ once we know that $T$ has a tiling. To see that $T'$ is tileable, we first place $t$ lozenges across the line
    separating $T^u$ from $T^l$, starting with the left-most such lozenge. Indeed, this is possible since $T^u$ has $t$
    more upwards-pointing than downwards-pointing triangles. Next, place all fixed lozenges. The portion of ${T^u}$ that
    remains untiled after placing these lozenges is a hexagon. Hence it is tileable. Consider now the portion of ${T^l}$
    that remains untiled after placing these lozenges. Since $t$ is at most the number of horizontal rows of $T^l$ this
    portion is, after a $60^{\circ}$ rotation, a region as described in Proposition~\ref{pro:two-mahonian}. Thus, it is
    tileable. Figure~\ref{fig:type-2-case-8}(ii) illustrates this procedure with an example.

    It follows that $T$ is tileable. Therefore $Z$ has maximal rank, as desired.
    \smallskip

    \begin{figure}[!ht]
        \begin{minipage}[b]{0.48\linewidth}
            \centering
            \includegraphics[scale=1]{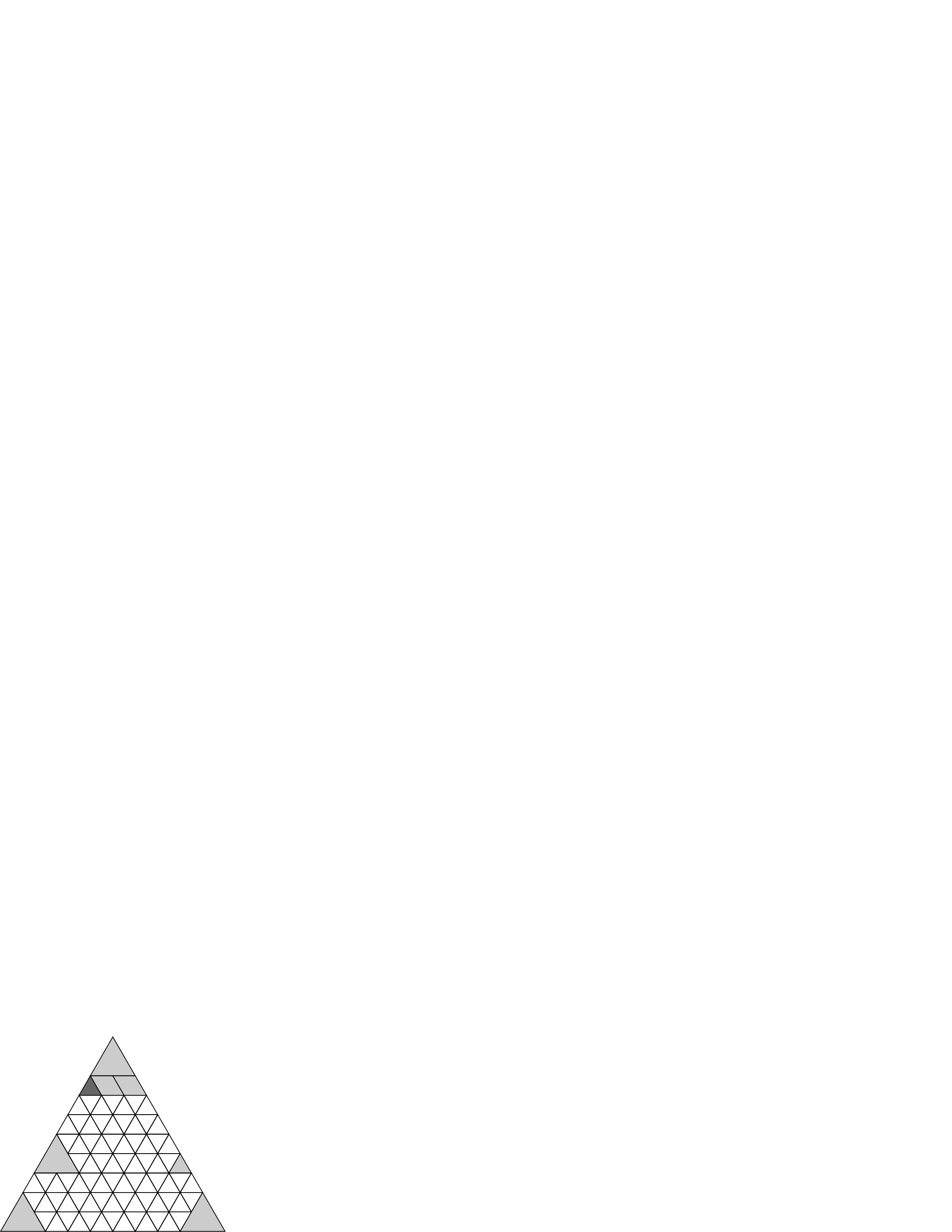}\\
            \emph{(i) A maximal minor of $T$; the removed triangle is darkly-shaded.}
        \end{minipage}
        \begin{minipage}[b]{0.48\linewidth}
            \centering
            \includegraphics[scale=1]{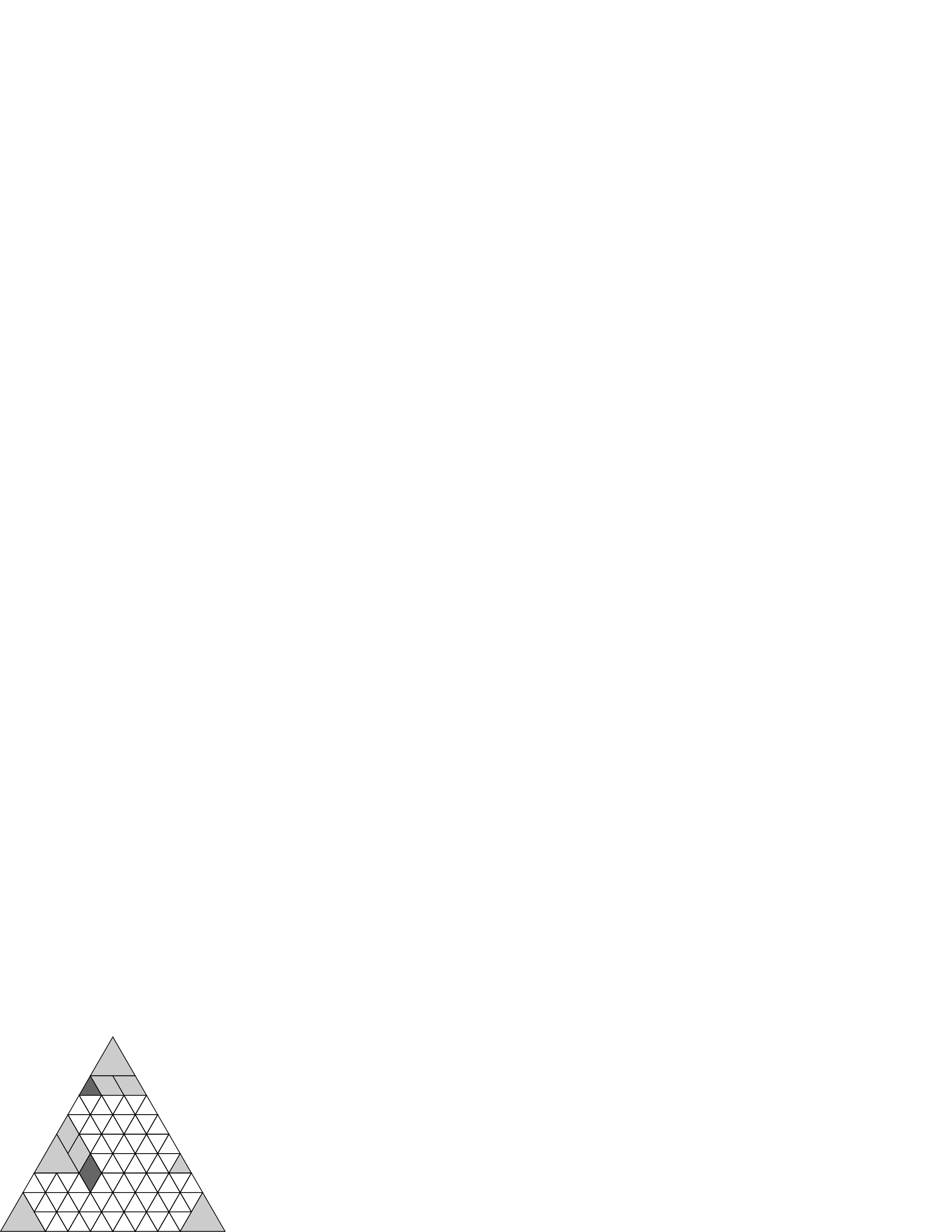}\\
            \emph{(ii) Placing a lozenge on the maximal minor to produce a tiling.}
        \end{minipage}
        \caption{Let $T = T_{10}(x^8, y^8, z^8, x^3 y^5, x^3 z^6)$.
            The lightly shaded lozenges are fixed lozenges.}
        \label{fig:type-2-case-8}
    \end{figure}

    Finally, suppose $T^u$ is $\dntri$-heavy and $T^l$ is $\uptri$-heavy. Consider any maximal minor of $Z(T)$. It
    corresponds to a balanced subregion $T'$ of $T$. Then its upper portion ${T^u}'$ is still $\dntri$-heavy, and its
    lower portion ${T^l}'$ is $\uptri$-heavy. Hence, any covering of ${T^u}'$ by lozenges must also cover some
    upward-pointing triangles of ${T^l}'$. The remaining part of ${T^l}'$ is even more unbalanced than ${T^l}'$. This
    shows that $T'$ is not tileable. Thus, $\det Z(T') = 0$ by Theorem \ref{thm:pm-matrix}. It follows that $Z$ does not
    have maximal rank in this case.
    \smallskip

    The above case analysis proves that $R/I$ fails the weak Lefschetz property if and only if there is an integer $d$
    so that the associated regions $T^u$ and $T^l$ are $\dntri$-heavy and $\uptri$-heavy, respectively. It remains to
    determine when this happens.

    If $I$ has only four generators, then no row of $T^u$ has more downward-pointing than upward-pointing triangles.
    Hence, $T^u$ is not $\dntri$-heavy. It follows that $I$ must have five generators if $R/I$ fails to have the weak
    Lefschetz property. For such an ideal $I$, the region $T^u = T_{d-\alpha}(x^{a-\alpha}, y^{\beta}, z^{\gamma})$ is
    $\dntri$-heavy if and only if
    \[
        \dim_K [R/(x^{a-\alpha}, y^{\beta}, z^{\gamma})]_{d-\alpha-2} > \dim_K [R/(x^{a-\alpha}, y^{\beta}, z^{\gamma})]_{d-\alpha-1},
    \]
    and $T^l = T_d(x^{\alpha}, y^b, z^c)$ is $\uptri$-heavy if and only if
    \[
        \dim_K [R/(x^{\alpha}, y^b, z^c)]_{d-2} < \dim_K [R/(x^{\alpha}, y^b, z^c)]_{d-1}.
    \]
    Using Lemma~\ref{lem:h-ci}, a straight-forward computation shows that these two inequalities are both true if and
    only of $d$ satisfies Condition~\ref{eqn:type-two}.
\end{proof}

\begin{remark}\label{rem:type2}
    The above argument establishes the following more precise version of Theorem~\ref{thm:type-two}:

    Let $R/I$ be a Artinian monomial algebra of type 2, where $K$ is a field of characteristic zero, and let $\ell \in
    R$ be a general linear form. Then the multiplication map $\times \ell: [R/I]_{d-2} \to [R/I]_{d-1}$ does not have
    maximal rank if and only if $I = (x^a, y^b, z^c, x^{\alpha} y^{\beta}, x^{\alpha} z^{\gamma})$, up to a change of
    variables, and $d$ satisfies Condition~\ref{eqn:type-two}.
\end{remark}

Condition~\ref{eqn:type-two} in Theorem~\ref{thm:type-two} is indeed non-vacuous.

\begin{example}
    We provide three examples, the latter two come from \cite[Example~6.10]{BMMNZ}, with various shapes of Hilbert functions.
    \begin{enumerate}
        \item Let $I = (x^4, y^4, z^4, x^3y, x^3z)$. Then $d = 5$ satisfies Condition~\ref{eqn:type-two}.
            Moreover, $T_d(I)$ is a balanced region, and $R/I$ has a strictly unimodal Hilbert function, \\
            $(1,3,6,10,10,9,6,3,1)$.
        \item Let $J = (x^3, y^7, z^7, xy^2, xz^2)$. Then Condition~\ref{eqn:type-two} is satisfied if and only if $d=5$ or  $d = 6$.
            Note that $R/J$ has a non-unimodal Hilbert function, \\
            $(1,3,6,7,6,6,7,6,5,4,3,2,1)$.
        \item Let $J' = (x^2, y^4, z^4, xy, xz)$. Then $d = 3$ satisfies Condition~\ref{eqn:type-two}.
            Moreover, $J'$ has a \emph{non-strict} unimodal Hilbert function $(1,3,3,4,3,2,1)$.
    \end{enumerate}
\end{example}

Using Theorem \ref{thm:type-two}, we easily recover \cite[Theorem~6.2]{BMMNZ}, one of the main results in the recent
memoir~\cite{BMMNZ}.

\begin{corollary} \label{cor:type-two-level}
    Let $R/I$ be a Artinian monomial algebra of type 2 over a field of characteristic zero. Then $R/I$ has the weak
    Lefschetz property.
\end{corollary}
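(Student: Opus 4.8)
The plan is to deduce Corollary~\ref{cor:type-two-level} directly from Theorem~\ref{thm:type-two}, which characterizes exactly when an Artinian monomial algebra $R/I$ of type two fails the weak Lefschetz property in characteristic zero. By that theorem, the only way $R/I$ can fail the weak Lefschetz property is if, up to a change of variables, $I = (x^a, y^b, z^c, x^{\alpha} y^{\beta}, x^{\alpha} z^{\gamma})$ with $0 < \alpha < a$, $0 < \beta < b$, $0 < \gamma < c$, and there exists an integer $d$ satisfying Condition~\ref{eqn:type-two}. So the whole task reduces to showing that Condition~\ref{eqn:type-two} is in fact vacuous, i.e., the displayed chain of inequalities
\[
    \max \left\{a, \alpha + \beta, \alpha + \gamma, \tfrac{a+\alpha+\beta+\gamma}{2} \right\} < d < \min \left\{a+\beta + \gamma, \tfrac{\alpha+b+c}{2}, b+c, \alpha + c, \alpha + b \right\}
\]
can never be satisfied by an integer $d$ when $R/I$ is level, i.e., when $a - \alpha = b - \beta + c - \gamma$ (see Lemma~\ref{lem:classify-type-two}(ii)).

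First I would record the level relation $a - \alpha = b - \beta + c - \gamma$, equivalently $a + \beta + \gamma = \alpha + b + c$. Call this common value $N$. Then two of the bounds in Condition~\ref{eqn:type-two} become transparent: the upper bound includes $d < a + \beta + \gamma = N$ while the lower bound includes $d > \tfrac{1}{2}(a+\alpha+\beta+\gamma)$, and separately the upper bound includes $d < \tfrac{\alpha+b+c}{2} = \tfrac{N + \alpha}{2}$... wait, more usefully, the lower bound forces $2d > a + \alpha + \beta + \gamma$ and the upper bound forces $2d < \alpha + b + c = N + \text{(something)}$. Let me be careful: I would compare $\tfrac{a+\alpha+\beta+\gamma}{2}$ (a lower bound for $d$) against $\tfrac{\alpha+b+c}{2}$ (an upper bound for $d$). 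For a valid $d$ to exist we would need $a+\alpha+\beta+\gamma < \alpha + b + c$, i.e., $a + \beta + \gamma < b + c$. But the level condition says $a + \beta + \gamma = \alpha + b + c > b + c$ since $\alpha > 0$. This is a contradiction. Hence no integer $d$ can satisfy Condition~\ref{eqn:type-two}, so the failure case in Theorem~\ref{thm:type-two} never occurs for level algebras, and $R/I$ has the weak Lefschetz property.

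The main point to get right is just which of the many bounds in Condition~\ref{eqn:type-two} to play against each other; there is no genuine obstacle here since everything is a short arithmetic comparison once the level relation from Lemma~\ref{lem:classify-type-two}(ii) is invoked. One should double-check the edge of the inequalities: Condition~\ref{eqn:type-two} uses strict inequalities $\max\{\ldots\} < d < \min\{\ldots\}$, and the argument above shows even the weaker requirement $\tfrac{a+\alpha+\beta+\gamma}{2} < \tfrac{\alpha+b+c}{2}$ already fails (it becomes an equality minus a positive amount, going the wrong way), so there is comfortable room. Therefore, by Theorem~\ref{thm:type-two}, $R/I$ has the weak Lefschetz property, completing the proof of Corollary~\ref{cor:type-two-level}.
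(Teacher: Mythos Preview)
Your proof is correct and follows essentially the same approach as the paper: invoke Theorem~\ref{thm:type-two}, use the level relation $a-\alpha = b-\beta+c-\gamma$ from Lemma~\ref{lem:classify-type-two}(ii) to rewrite $\tfrac{a+\alpha+\beta+\gamma}{2}$ as $\tfrac{2\alpha+b+c}{2}$, and then observe that this lower bound already meets or exceeds one of the upper bounds in Condition~\eqref{eqn:type-two}. The only cosmetic difference is which upper bound you compare against: you use $\tfrac{\alpha+b+c}{2}$, whereas the paper uses $\alpha+\min\{b,c\}$; both comparisons work.
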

\begin{proof}
    By Theorem~\ref{thm:type-two}, we know that if $I$ has four generators, then $R/I$ has the weak Lefschetz property.
    If $I$ has five generators, then it suffices to show that Condition~\ref{eqn:type-two} is vacuous in this case.
    Indeed, since $R/I$ is level, we have that $a - \alpha = b - \beta + c - \gamma$ by
    Lemma~\ref{lem:classify-type-two}. This implies
    \[
        \frac{a+\alpha+\beta+\gamma}{2} = \frac{2 \alpha+b+c}{2} \geq \alpha + \min\{b, c\}.
    \]
    Hence, no integer  $d$ satisfies Condition~\ref{eqn:type-two}.
\end{proof}

Moreover, in most of the cases when the weak Lefschetz property holds in characteristic zero, we can give a linear lower
bound on the characteristics for which the weak Lefschetz property must hold. Note that while the proof of
Theorem~\ref{thm:type-two} relies on properties of the bi-adjacency matrix $Z(T)$, the following argument also uses
Proposition~\ref{pro:ci-wlp}, which is based on the evaluation of a certain lattice path matrix.

\begin{corollary}\label{cor:type-two-pos-char}
    Let $R/I$ be a Artinian monomial algebra of type 2. Suppose that $R/I$ has the weak Lefschetz property in
    characteristic zero and that there is no integer $d$ such that
    \begin{equation}\label{eq:Cond-on-d}
        \max\left\{ \alpha, b, c, \frac{\alpha + b + c}{2} \right\}
        < d <
        \min\left\{ a + \beta, a + \gamma, \alpha + \beta + c, \frac{a + \alpha + \beta + c}{2} \right\}.
    \end{equation}
    Then $R/I$ has the weak Lefschetz property, provided $K$ has  characteristic $p \geq \frac{1}{2}(a+b+c)$.
\end{corollary}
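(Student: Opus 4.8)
The plan is to run the case analysis in the proof of Theorem~\ref{thm:type-two} while tracking the characteristic. By Corollary~\ref{cor:wlp-biadj} and Proposition~\ref{pro:mono} it is enough to show that $Z(T_d(I))$ has maximal rank over $K$ for every $d \geq 1$, and the case $d = 1$ is trivial. By Lemma~\ref{lem:classify-type-two} we may assume $I = (x^a, y^b, z^c, x^{\alpha} y^{\beta}, x^{\alpha} z^{\gamma})$ with $0 < \gamma \le c$, where $\gamma = c$ is allowed so as to subsume the four-generator case (since $x^{\alpha} z^c \in (z^c)$). As in the proof of Theorem~\ref{thm:type-two}, splitting $T = T_d(I)$ along the horizontal line at height $\alpha$ writes $Z(T_d(I))$ in the block-triangular form~\eqref{eq:block-Z} with diagonal blocks $Z(T^u)$ and $Z(T^l)$, where $T^u = T_{d-\alpha}(x^{a-\alpha}, y^{\beta}, z^{\gamma})$ and $T^l = T_d(x^{\alpha}, y^b, z^c)$ are complete-intersection regions.

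First I would note that the exponent sums $(a-\alpha)+\beta+\gamma$ and $\alpha+b+c$ are both strictly smaller than $a+b+c$ (because $\alpha \ge 1$, $\beta < b$, $\gamma \le c$), so their halves are smaller than $p$ whenever $p \ge \tfrac12(a+b+c)$. Hence Proposition~\ref{pro:ci-wlp}, applied to the complete intersections defining $T^u$ and $T^l$ and combined with Proposition~\ref{pro:mono} and Corollary~\ref{cor:max-Z}, shows that $Z(T^u)$ and $Z(T^l)$ both have maximal rank over $K$ for every $d$ once $\charf K \ge \tfrac12(a+b+c)$. The crux is then to check that, under the two hypotheses on $R/I$, the pair $(T^u,T^l)$ never lands in the two ``unbalanced against each other'' configurations of the proof of Theorem~\ref{thm:type-two}. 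The configuration ``$T^u$ is $\dntri$-heavy and $T^l$ is $\uptri$-heavy'' is ruled out because, by that proof, it would force $d$ to satisfy Condition~\eqref{eqn:type-two}, contradicting the presence of the weak Lefschetz property in characteristic zero. For the configuration ``$T^u$ is $\uptri$-heavy and $T^l$ is $\dntri$-heavy'', Lemma~\ref{lem:h-ci} shows that $T^l$ is $\dntri$-heavy exactly when $d > \max\{\alpha,b,c,\tfrac12(\alpha+b+c)\}$ and $T^u$ is $\uptri$-heavy exactly when $d < \min\{a+\beta,\,a+\gamma,\,\alpha+\beta+\gamma,\,\tfrac12(a+\alpha+\beta+\gamma)\}$; since $\gamma \le c$, each entry of this last minimum is bounded above by the corresponding entry of the minimum in Condition~\eqref{eq:Cond-on-d}, so such a $d$ satisfies~\eqref{eq:Cond-on-d}, contradicting the second hypothesis.

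Consequently, for every $d$ the pair $(T^u,T^l)$ falls into case~(a) of the proof of Theorem~\ref{thm:type-two}: one of the two is balanced, or they are both $\uptri$-heavy, or both $\dntri$-heavy. In each of these situations, exactly as in that proof, the block-triangular shape~\eqref{eq:block-Z} lets one assemble a nonvanishing maximal minor of $Z(T_d(I))$ from nonvanishing maximal minors of $Z(T^u)$ and $Z(T^l)$, which exist over $K$ by the second paragraph; hence $Z(T_d(I))$ has maximal rank over $K$. Since this holds for all $d$, $R/I$ has the weak Lefschetz property in characteristic $p \ge \tfrac12(a+b+c)$. I expect the main obstacle to be the careful verification in the last part of the second paragraph --- comparing the two minima entrywise using $\gamma \le c$, and being sure that the degenerate value $\gamma = c$ (the four-generator case) is handled correctly in both the containment of Hilbert-function ranges and in the block-minor assembly; everything else is a direct transcription of arguments already in place.
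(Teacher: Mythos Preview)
Your proposal is correct and follows essentially the same route as the paper's own proof: reduce to the block-triangular form~\eqref{eq:block-Z}, invoke Proposition~\ref{pro:ci-wlp} to get maximal rank of $Z(T^u)$ and $Z(T^l)$ once $p \ge \tfrac12(a+b+c)$, and use the two hypotheses to exclude the two ``opposite-heaviness'' configurations so that maximal minors of the blocks can be combined. Your treatment is in fact slightly more explicit than the paper's: you spell out the entrywise comparison (via $\gamma \le c$) showing that the exact Hilbert-function range for ``$T^u$ $\uptri$-heavy and $T^l$ $\dntri$-heavy'' is contained in Condition~\eqref{eq:Cond-on-d}, and your device of subsuming the four-generator case by setting $\gamma = c$ is a clean way to handle both forms of Lemma~\ref{lem:classify-type-two} at once---the paper instead records the bound $\flfr{a-\alpha+\beta+c}{2}$ for $T^u$, which is the four-generator value and dominates the five-generator value $\flfr{a-\alpha+\beta+\gamma}{2}$. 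The concern you flag about $\gamma = c$ is harmless: the redundant generator $x^\alpha z^c$ does not change $T_d(I)$, the decomposition into $T^u$ and $T^l$, or the Hilbert-function inequalities coming from Lemma~\ref{lem:h-ci}.
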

\begin{proof}
    We use the notation introduced in the proof of Theorem~\ref{thm:type-two}. Fix any integer $d \ge2$. Recall that,
    possibly after reordering rows and columns, the bi-adjacency matrix of $T = T_d (I)$ has the form (see
    Equation~\eqref{eq:block-Z})
    \[
        Z =
        \begin{pmatrix}
            Z(T^u) & 0 \\
            Y & Z(T^l)
        \end{pmatrix}.
    \]
    By assumption, $d$ does not satisfy Condition \eqref{eqn:type-two} nor \eqref{eq:Cond-on-d}. This implies that $T$
    has one of the following properties: (i) $T^u$ or $T^l$ is balanced, (ii) $T^u$ and $T^l$ are both $\uptri$-heavy,
    or (iii) $T^u$ and $T^l$ are both $\dntri$-heavy.

    The matrices $Z(T^u)$ and $Z(T^l)$ have maximal rank by Proposition~\ref{pro:ci-wlp} if the characteristic of $K$ is
    at least $\flfr{a - \alpha + \beta + c}{2}$ and $\flfr{\alpha + b + c}{2}$, respectively. Combining non-vanishing
    maximal minors of $Z(T^u)$ and $Z(T^l)$, if follows that the matrix $Z$ has maximal rank as well if $\charf K \geq \frac{1}{2}(a+b+c)$.
\end{proof}

In order to fully extend Theorem~\ref{thm:type-two} to sufficiently large positive characteristics, it remains to consider the
case where $T^u$ is $\uptri$-heavy and $T^l$ is $\dntri$-heavy. This is more delicate.

\begin{example}
    Let $T = T_{10}(x^8, y^8, z^8, x^3 y^5, x^3 z^6)$ as in Figure~\ref{fig:type-2-case-8}, and let $T'$ be the maximal
    minor given in Figure~\ref{fig:type-2-case-8}(i). In each lozenge tiling of $T'$, there is exactly one lozenge that
    crosses the splitting line. There are four possible locations for this lozenge; one of these is illustrated in
    Figure~\ref{fig:type-2-case-8}(ii). The enumeration of lozenge tilings of $T'$ is thus the sum of the lozenge
    tilings with the lozenge in each of the four places along the splitting line. Each of the summands is the product of
    the enumerations of the resulting upper and lower regions. In particular, we have that
    \begin{equation*}
        \begin{split}
            |\det{N(T')}| &= 20 \cdot 60 + 45 \cdot 64 + 60 \cdot 60 + 50 \cdot 48 \\
                          &= 2^4 \cdot 3 \cdot 5^2 + 2^6 \cdot 3^2 \cdot 5 + 2^4 \cdot 3^2 \cdot 5^2 + 2^5 \cdot 3 \cdot 5^2 \\
                          &= 2^5 \cdot 3^2 \cdot 5 \cdot 7 \\
                          &= 10080.
        \end{split}
    \end{equation*}
    Notice that while the four summands only have prime factors of $2$, $3$, and $5$, the final enumeration also has a
    prime factor of $7$.
\end{example}

Still, we can give a bound in this case, though we expect that it is very conservative. It provides the following
extension of Theorem~\ref{thm:type-two}.

\begin{proposition}\label{prop:type2-pos-char}
    Let $R/I$ be a Artinian monomial algebra of type 2 such that $R/I$ has the weak Lefschetz property in characteristic
    zero. Then $R/I$ has the weak Lefschetz property in positive characteristic, provided
    $\charf K \geq 3^e$, where $e = \frac{1}{2}\binom{\frac{1}{2} (a+b+c) + 2}{2}$.
\end{proposition}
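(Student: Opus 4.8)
The plan is to bound the characteristics for which the weak Lefschetz property could fail, using the case analysis already set up in the proof of Theorem~\ref{thm:type-two}. By Corollary~\ref{cor:wlp-biadj} it suffices to show, for each integer $d \geq 2$, that the bi-adjacency matrix $Z(T_d(I))$ has maximal rank over $K$. If $I$ has only four generators, or if (in the five-generator case) $d$ fails to satisfy Condition~\eqref{eqn:type-two}, then the argument in Theorem~\ref{thm:type-two} produces non-vanishing maximal minors of $Z(T^u)$ and $Z(T^l)$ that combine (using the block form~\eqref{eq:block-Z}) to a non-vanishing maximal minor of $Z(T_d(I))$; by Proposition~\ref{pro:ci-wlp}, these complete-intersection minors are non-zero as soon as $\charf K = 0$ or $\charf K \geq \frac{1}{2}(a+b+c)$, which is comfortably below the claimed bound. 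So the only remaining case is a five-generator ideal $I = (x^a, y^b, z^c, x^\alpha y^\beta, x^\alpha z^\gamma)$ with $d$ satisfying~\eqref{eqn:type-two}, i.e.\ $T^u$ being $\dntri$-heavy and $T^l$ being $\uptri$-heavy — but by the last paragraph of the proof of Theorem~\ref{thm:type-two}, in \emph{that} situation every maximal minor corresponds to a non-tileable subregion, so $\det Z = 0$ in \emph{every} characteristic and there is nothing to save. Hence the genuinely delicate case is the one just before it in that proof: $T^u$ is $\uptri$-heavy and $T^l$ is $\dntri$-heavy.

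In that case, after the reduction in the proof of Theorem~\ref{thm:type-two} (removing uniquely-tileable strips via Proposition~\ref{prop:remove-unique-tileable}) we may assume $T = T_d(I)$ is balanced with $t$ lozenges forced to cross the splitting line separating $T^u$ from $T^l$. Writing $\ell$ for a general linear form, I would express $\det Z(T)$ as a sum over the $\binom{\cdot}{t}$ ways to place the $t$ crossing lozenges along the splitting line: by the block structure, each term is (up to sign) a product $\det Z(T^u_\nu) \cdot \det Z(T^l_\nu)$ of determinants of the resulting upper and lower hexagonal or near-hexagonal subregions (compare the worked example with $T_{10}(x^8,y^8,z^8,x^3y^5,x^3z^6)$). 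Each factor is a Mahonian-type quantity by Proposition~\ref{pro:ci-enum} or Proposition~\ref{pro:two-mahonian}, whose prime divisors are bounded by $d - 1 < \frac{1}{2}(a+b+c)$; moreover, by Proposition~\ref{prop:same-sign} all the summands carry the same sign, so no cancellation occurs and $\det Z(T) = \per Z(T) > 0$, confirming the weak Lefschetz property in characteristic zero. The problem is that a sum of integers each with small prime divisors can itself have a large prime divisor (as the $7$ appearing in the worked example shows), so I cannot simply read off a bound from the summands.

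To get a bound I would estimate the \emph{size} of $\det Z(T)$ rather than its factorisation: any prime dividing it is at most $\det Z(T)$ itself, so it suffices to bound $\det Z(T) = \per Z(T)$ crudely from above. Since $Z(T)$ is an $N \times N$ zero-one matrix with $N = h_{R/I}(d-1)$, the permanent is at most $N!$, and $N$ is bounded by $\binom{d-1+2}{2} \le \binom{\frac{1}{2}(a+b+c)+2}{2}$ using the hexagon constraint $d - 1 \le \frac{1}{2}(a+b+c)$ coming from Lemma~\ref{lem:h-ci}(ii) (the relevant $d$'s for which the map can fail lie in the ``twin peaks'' range). A convexity/telescoping estimate then gives $N! < 3^{e}$ with $e = \frac{1}{2}\binom{\frac{1}{2}(a+b+c)+2}{2}$ — this is where the stated exponent comes from, and I would carry out this elementary but slightly fiddly inequality explicitly (bounding each Mac factor by a product of binomials, or just bounding $\per Z(T)$ by the number of tilings of the containing hexagon $\mathrm{Mac}$). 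The main obstacle is precisely this last numerical step: producing a clean, self-contained upper bound for $\per Z(T)$ of the claimed shape $3^e$ without a more refined understanding of the cancellation-free sum, which is why the authors flag the bound as ``very conservative.'' Given such a bound, every prime dividing $\det Z(T)$ is $< 3^e$, so for $\charf K \geq 3^e$ the matrix $Z(T_d(I))$ has maximal rank for all $d$, and $R/I$ has the weak Lefschetz property by Corollary~\ref{cor:wlp-biadj}.
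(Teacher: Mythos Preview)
Your case analysis is more elaborate than what the paper actually does. The paper's proof of Proposition~\ref{prop:type2-pos-char} is a one-line reduction to the general Proposition~\ref{prop:char-0-to-p}, which applies to \emph{any} Artinian monomial algebra in three variables: since $R/I$ has the weak Lefschetz property in characteristic zero, the Hilbert function forces the socle degrees to be at least $d-1$ (where $d$ is the smallest integer with $h_{R/I}(d-1)\ge h_{R/I}(d)$), so by Proposition~\ref{pro:wlp} only the two matrices $Z(T_d(I))$ and $Z(T_{d+1}(I))$ need to be checked. Each is a zero-one matrix whose rows have at most three nonzero entries, of size at most $N\times N$ with $N\le\binom{\frac{1}{2}(a+b+c)+2}{2}=2e$. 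Hadamard's inequality then gives $|\det|\le(\sqrt{3})^{N}\le 3^{e}$, and the bound follows. No type-two case analysis, no splitting into $T^u$ and $T^l$, and no permanents are used.

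Your route has a genuine gap at the crucial numerical step. You propose bounding $\det Z(T)=\per Z(T)$ by $N!$ and then asserting $N!<3^{e}$. But $N$ can be as large as $2e$, and $(2e)!$ dwarfs $3^{e}$ already for $e=2$. Even the sharper trivial bound $\per Z(T)\le 3^{N}$ (at most three choices per row) only gives $3^{2e}$, which is still off by a factor of two in the exponent. The missing idea is precisely Hadamard's inequality applied to the \emph{determinant}: because each row of $Z(T)$ has Euclidean norm at most $\sqrt{3}$, one gets $|\det Z(T)|\le 3^{N/2}\le 3^{e}$ directly. This is exactly why the exponent in the statement carries the factor $\tfrac{1}{2}$; your permanent-based estimates cannot recover it. Once you replace the $N!$ bound by Hadamard, the rest of your argument is fine, though the detour through the decomposition of Theorem~\ref{thm:type-two} and Corollary~\ref{cor:type-two-pos-char} becomes unnecessary.
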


This follows from Lemma~\ref{lem:classify-type-two} and the following general result, which provides an effective bound
for Corollary~\ref{lem:wlp-0-p} in the case of three variables.

\begin{proposition}\label{prop:char-0-to-p}
    Let $R/I$ be any Artinian monomial algebra such that $R/I$ has the weak Lefschetz property in characteristic zero.
    If $I$ contains the powers $x^a, y^b, z^c$, then $R/I$ has the weak Lefschetz property in positive characteristic
    whenever $\charf K > 3^{\frac{1}{2}\binom{\frac{1}{2} (a+b+c) + 2}{2}}$.
\end{proposition}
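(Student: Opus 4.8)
The plan is to reduce the statement to a bound on the absolute values of certain integer determinants, and then to combine Hadamard's inequality with the sparsity of the bi-adjacency matrices $Z(T_d(I))$.

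First I would reduce to the form $\ell = x+y+z$: by Proposition~\ref{pro:mono} it suffices to decide whether $\ell$ is a Lefschetz element of $R/I$, i.e., whether the maps $\times\ell\colon [R/I]_{d-2}\to[R/I]_{d-1}$ have maximal rank for every $d\ge 1$ (Corollary~\ref{cor:wlp-biadj}). Fixing reverse-lexicographic monomial bases of $[R/I]_{d-2}$ and $[R/I]_{d-1}$, such a map is given by the zero-one matrix $M(d)$ of Proposition~\ref{prop:interp-Z}, which satisfies $M(d)^{\mathrm T} = Z(T_d(I))$ and which is independent of $\charf K$. Since every unit triangle has exactly three edges, each row and each column of $M(d)$ has at most three nonzero entries. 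As $R/I$ has the weak Lefschetz property in characteristic zero, for each $d$ the matrix $M(d)$ has maximal rank over $\mathbb{Q}$, so there is an $N_d\times N_d$ submatrix, with $N_d := \min\{h_{R/I}(d-2),h_{R/I}(d-1)\}$, whose determinant is a nonzero integer.

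Next I would bound $N_d$. Because $I$ contains $x^a,y^b,z^c$ we have $h_{R/I}(j)\le h_{R/(x^a,y^b,z^c)}(j)$ for all $j$, and the Hilbert function of the complete intersection is symmetric about $(a+b+c-3)/2$ and unimodal (Lemma~\ref{lem:h-ci}), hence it attains its maximum at $j_0 := \lfloor (a+b+c-3)/2\rfloor$. Therefore $N_d \le h_{R/(x^a,y^b,z^c)}(j_0) \le \dim_K[R]_{j_0} = \binom{j_0+2}{2}$. Since $j_0+2 \le \tfrac12(a+b+c)+\tfrac12 < \tfrac12(a+b+c)+2$ and $\binom{x}{2}=\tfrac12 x(x-1)$ is increasing for $x\ge\tfrac12$, this gives $N_d \le \binom{\frac12(a+b+c)+2}{2}=:B$ for every $d$.

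Finally, the chosen $N_d\times N_d$ submatrix is a submatrix of $Z(T_d(I))$ (or of its transpose) in which every row and every column has at most three nonzero entries, all equal to $1$; by Hadamard's inequality its determinant has absolute value at most $(\sqrt 3)^{N_d}\le 3^{B/2}$. Thus if $\charf K = p > 3^{\frac{1}{2}\binom{\frac{1}{2} (a+b+c) + 2}{2}} = 3^{B/2}$, this nonzero integer does not vanish modulo $p$, so $M(d)$ still has maximal rank over $K$. As this holds for all $d\ge 1$, the form $\ell$ remains a Lefschetz element in characteristic $p$, and $R/I$ has the weak Lefschetz property. The only real subtlety is isolating a crude-but-correct bound on the sizes of the matrices: one must use simultaneously the sparsity coming from the honeycomb structure (at most three ones per line) and the fact that the pure powers $x^a,y^b,z^c$ force $h_{R/I}$ to be dominated by a complete-intersection Hilbert function whose peak is controlled by $\binom{\frac12(a+b+c)+2}{2}$; everything else is bookkeeping. (The same argument yields Proposition~\ref{prop:type2-pos-char} via Lemma~\ref{lem:classify-type-two}.)
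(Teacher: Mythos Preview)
Your argument is correct and follows the same core idea as the paper: bound the relevant maximal minors of the zero-one matrices $Z(T_d(I))$ using Hadamard's inequality together with the fact that each row and column has at most three nonzero entries, then conclude that no prime exceeding $3^{B/2}$ can divide any of these nonzero integers.

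The one structural difference is in how the size bound is obtained. The paper first invokes Proposition~\ref{pro:wlp} (via a socle argument: WLP in characteristic zero forces the socle generators to sit in degree at least $d-1$, where $d$ is the first place the Hilbert function stops increasing) to reduce to checking \emph{only} the two matrices $Z(T_d(I))$ and $Z(T_{d+1}(I))$, and then bounds their size by $h_{R/I}(d-1)<\binom{d+1}{2}\le\binom{\frac12(a+b+c)+2}{2}$ using $d\le d'$. You instead bound $N_d$ uniformly for \emph{every} $d$ by dominating $h_{R/I}$ with the unimodal Hilbert function of the complete intersection $R/(x^a,y^b,z^c)$ and taking its peak value. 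Your route is a little more direct since it sidesteps the socle discussion entirely; the paper's route makes explicit that only two degrees actually matter. Both yield the same bound and the same conclusion.
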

\begin{proof}
    Define $I' = (x^a, y^b, z^c)$, and let $d'$ be the smallest integer such that $0 \neq h_{R/I'}(d'-1) \geq h_{R/I'}(d')$.
    Thus, $d' - 1 \leq \frac{1}{2}(a+b+c)$ by Lemma~\ref{lem:h-ci}.

    Let $d$ be the smallest integer such that $0 \neq h_{R/I}(d-1) \geq h_{R/I}(d)$. Then $d \leq d'$, as $I' \subset I$
    and adding or enlarging punctures only exacerbates the difference in the number of upward- and downward-pointing
    triangles. Since $R/I$ has the weak Lefschetz property in characteristic zero, the Hilbert function of $R/I$ is
    strictly increasing up to degree $d-1$. Hence, Proposition~\ref{pro:wlp} implies that the degrees of non-trivial
    socle elements of $R/I$ are at least $d-1$. The socle of $R/I$ is independent of the characteristic of $K$.
    Therefore Proposition~\ref{pro:wlp} shows that, in any characteristic, $R/I$ has the weak Lefschetz property if and
    only if the bi-adjacency matrices of $T_d (I)$ and $T_{d+1} (I)$ have maximal rank. Each row and column of a
    bi-adjacency matrix has at most three entries that equal one. All other entries are zero. Moreover the maximal
    square submatrices of $Z(T_d (I))$ and $Z(T_{d+1} (I))$ have at most $h_{R/I} (d-1)$ rows. Since
    $h_{R/I} (d-1) < h_R (d-1) = \binom{d+1}{2} \leq 3e$, Hadamard's inequality shows
    that the absolute values of the maximal minors of $Z(T_d (I))$ and $Z(T_{d+1} (I))$, considered as integers, are
    less than $3^{2e}$. Hence, any prime number $p \geq 3^e$ does not divide any of these non-trivial maximal minors.
\end{proof}

As indicated above, we believe that the bound in Proposition~\ref{prop:type2-pos-char} is far from being optimal.
Through a great deal of computer experimentation, we offer the following conjecture.

\begin{conjecture} \label{con:type-two-pos-char}
    Let $I$ be an Artinian monomial ideal in $R = K[x,y,z]$ such that $R/I$ is of type two. If $R/I$ has the weak
    Lefschetz property in characteristic zero, then $R/I$ also has the weak Lefschetz property in characteristics
    $p > \frac{1}{2}(a+b+c)$.
\end{conjecture}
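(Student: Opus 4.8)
The goal is to replace the exponential bound of Proposition~\ref{prop:type2-pos-char} by the linear one. The plan is to push the case analysis in the proof of Theorem~\ref{thm:type-two} one step further while tracking prime divisors. By Proposition~\ref{pro:mono} it suffices to take $\ell = x+y+z$, and by Proposition~\ref{pro:wlp} together with Corollary~\ref{cor:wlp-Z} it is enough to show, for the degree $d$ near the peak of the Hilbert function of $R/I$ (and, in the strict-peak case, also for $d+1$), that $Z(T_d(I))$ has a maximal minor not divisible by $p$ for every prime $p>\frac12(a+b+c)$. Assume $R/I$ has the weak Lefschetz property in characteristic zero. If $I$ has only four minimal generators, then the relevant regions are, after collapsing overlapping punctures via Corollary~\ref{cor:replace-two-punctures} and removing forced lozenges, hexagons or regions of the type in Proposition~\ref{pro:two-mahonian}; since $\alpha+\beta\le d$ forces $d\le\frac12(a+b+c)$, the prime-divisor bounds of Propositions~\ref{pro:ci-enum} and~\ref{pro:two-mahonian} already settle this case. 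So assume $I=(x^a,y^b,z^c,x^{\alpha}y^{\beta},x^{\alpha}z^{\gamma})$ and split $T=T_d(I)$ along the horizontal line $\alpha$ units above the base into $T^u$ and $T^l$, with $Z$ in the block form of Equation~\eqref{eq:block-Z}. By Theorem~\ref{thm:type-two}, no $d$ satisfies Condition~\eqref{eqn:type-two}, i.e. $T^u$ is never $\dntri$-heavy with $T^l$ simultaneously $\uptri$-heavy. When $T^u$ or $T^l$ is balanced, or both are $\uptri$-heavy, or both $\dntri$-heavy, the diagonal blocks are bi-adjacency matrices of hexagons and Proposition~\ref{pro:ci-wlp} bounds their maximal-minor prime divisors by $\flfr{a-\alpha+\beta+\gamma}{2}<\frac12(a+b+c)$ and $\flfr{\alpha+b+c}{2}<\frac12(a+b+c)$, so products of non-vanishing minors of the blocks produce a non-vanishing maximal minor of $Z$. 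Hence everything reduces to the single configuration in which $T^u$ is $\uptri$-heavy and $T^l$ is $\dntri$-heavy.

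In that configuration, repeat the reduction of the proof of Theorem~\ref{thm:type-two}: remove the forced lozenges that equalize the imbalances and pass to a restricted maximal minor, so that $T$ may be taken balanced, tileable, and with no floating punctures. Then Proposition~\ref{prop:same-sign} and Theorem~\ref{thm:detZN} give $\per Z(T)=|\det Z(T)|=|\det N(T)|$, so the enumeration is unambiguous and may be computed on the lattice-path side. The central step is to expand this enumeration along the splitting line: a lozenge tiling of $T$ is the same as a choice of the configuration $c$ of lozenges (equivalently, of the tuple of points at which the non-intersecting lattice paths cross the line), together with a tiling of the region cut out above $c$ and one of the region cut out below $c$. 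At the level of matrices this is a Laplace/Cauchy--Binet expansion of $Z$ (or of $N(T)$) along the interface, yielding
\[
    \det Z(T)\;=\;\sum_{c}\,\pm\,D^u_c\,D^l_c,
\]
where $D^u_c$ and $D^l_c$ are the signed enumerations of the upper and lower pieces determined by $c$. Each such piece is again a triangular region with at most four punctures, so, by Propositions~\ref{pro:ci-enum}, \ref{pro:C-is-zero}, and~\ref{pro:two-mahonian} and Corollary~\ref{cor:ci-split} (possibly after a $120^{\circ}$ rotation), $D^u_c$ and $D^l_c$ are, up to sign, single ratios of products of hyperfactorials $\HF(\cdot)$ whose arguments depend linearly and explicitly on $c$.

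The difficulty, which is the whole content of the conjecture, is that these per-summand formulas do not by themselves respect the bound. A factor $\HF(d)$ occurring in some $D^l_c$ contributes the prime $d-1$, and in part of the parameter range --- for instance when $\alpha$ is large and $a$ exceeds $\alpha$ only slightly --- one genuinely has $d-1>\frac12(a+b+c)$, so some summands carry prime divisors larger than the asserted bound and must cancel; moreover, even when all summands are small, their sum may introduce a new large prime, as already the region $T_{10}(x^8,y^8,z^8,x^3y^5,x^3z^6)$ shows, where four summands with prime divisors at most $5$ add to $2^5\cdot3^2\cdot5\cdot7$. Thus one must actually \emph{evaluate} $\sum_{c}\pm D^u_c D^l_c$ in closed form, or at least exhibit a factorization of it into hyperfactorials of arguments at most $\frac12(a+b+c)$.

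Concretely, after transposing to a binomial matrix whose columns fall into several consecutive runs of shifted binomial coefficients, the required identity is a many-block generalization of Lemma~\ref{lem:split-binom-det}: one must evaluate $\det M$ for such a matrix. I would attempt this either by iterating the determinant-evaluation method of~\cite{CEKZ} used in the proof of that lemma, or by a condensation argument, recognizing the sum over $c$ as a Vandermonde--Chu or Saalsch\"utz-type summation that telescopes the extra binomial factors. The degree $d+1$ needed in the strict-peak case should then follow from the same evaluation applied to the restricted maximal minors of $T_{d+1}(I)$, exploiting the $180^{\circ}$-rotation relation between $T_{d+1}(I)$ and $T_d(I)$ as in the proof of Proposition~\ref{pro:ci-wlp}. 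I expect the identification and verification of the correct multi-block summation to be the main obstacle; the reductions above and the evaluations of the individual pieces are routine given Sections~\ref{sec:signed} and~\ref{sec:det}.
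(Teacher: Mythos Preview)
The statement you are attempting to prove is stated in the paper as a \emph{conjecture}, supported only by computer experimentation; the paper does not contain a proof of it. So there is nothing to compare your proposal against, and your write-up should be read as an attack on an open problem rather than a reconstruction of an existing argument.

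With that understood, your reduction is essentially what the paper already establishes: the block decomposition along the line $\alpha$ units from the base, together with Proposition~\ref{pro:ci-wlp}, gives the bound $\tfrac12(a+b+c)$ whenever $T^u$ and $T^l$ do not favor triangles of opposite orientations---this is exactly Corollary~\ref{cor:type-two-pos-char}. You have correctly isolated the only remaining configuration ($T^u$ is $\uptri$-heavy and $T^l$ is $\dntri$-heavy) as the obstruction. Your own example $T_{10}(x^8,y^8,z^8,x^3y^5,x^3z^6)$, where individually $5$-smooth summands combine to produce a factor $7$, already shows that no termwise estimate along the interface can succeed; a closed-form evaluation of the full sum is genuinely required. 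That evaluation---the ``multi-block generalization of Lemma~\ref{lem:split-binom-det}'' you describe---is precisely the missing ingredient, and you do not supply it; you only sketch possible methods (iterating the \cite{CEKZ} technique, condensation, hypergeometric summation) without carrying any of them out or giving evidence that the resulting determinant factors into hyperfactorials with arguments bounded by $\tfrac12(a+b+c)$. As written, then, your proposal is an honest outline that recovers the known partial result and names the hard step, but it is not a proof: the conjecture remains open after your attempt, for the same reason it is open in the paper.
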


\section{Artinian monomial almost complete intersections} \label{sec:amaci}

We discuss another generalisation of monomial complete intersections. The latter have three minimal generators. This section
presents an in-depth discussion of the Artinian monomial ideals with exactly four minimal generators. They are called
Artinian monomial almost complete intersections. These ideals have been discussed, for example, in \cite{BK}
and~\cite[Section~6]{MMN-2011}. In particular, we will answer some of the questions posed in \cite{MMN-2011}.
Some of our results are used in \cite{BMMMNW} for studying ideals with the Rees property.

Each Artinian ideal of $K[x,y,z]$ with exactly four monomial minimal generators is of the form
\[
    I_{a,b,c,\alpha,\beta,\gamma} = (x^a, y^b, z^c, x^\alpha y^\beta z^\gamma),
\]
where $0 \leq \alpha < a$, $0 \leq \beta < b,$ and $0 \leq \gamma < c$, such that at most one of $\alpha$, $\beta$, and
$\gamma$ is zero. If one of $\alpha$, $\beta$, and $\gamma$ is zero, then $R/I_{a,b,c,\alpha,\beta,\gamma}$ has type
two. In this case, the presence of the weak Lefschetz property has already been described in Section~\ref{sec:type-two},
see in particular, Theorem~\ref{thm:type-two} and Proposition~\ref{prop:char-0-to-p}. Thus, throughout this section we
assume that the integers $\alpha, \beta$, and $\gamma$ are all positive.

~\subsection{Presence of the weak Lefschetz property}~\par\label{subsec:aci-wlp}

We begin by recalling a few results. The first one shows that $R/I_{a,b,c,\alpha,\beta,\gamma}$ has type three. More precisely:

\begin{proposition}{\cite[Proposition 6.1]{MMN-2011}} \label{pro:amaci-props}
    Let $I = I_{a,b,c,\alpha,\beta,\gamma}$ be defined as above. Then $R/I$ has three minimal socle generators. They
    have degrees $\alpha + b + c - 3$, $a + \beta + c - 3$, and $a + b + \gamma - 3$.

    In particular, $R/I$ is level if and only if $a - \alpha = b - \beta = c - \gamma$.
\end{proposition}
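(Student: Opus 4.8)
The plan is to compute the socle of $R/I$ directly, using the fact that for a monomial ideal the socle is spanned by monomials: a monomial $x^iy^jz^k \notin I$ represents a non-zero element of $\soc(R/I)$ precisely when $x^{i+1}y^jz^k$, $x^iy^{j+1}z^k$, and $x^iy^jz^{k+1}$ all lie in $I = (x^a, y^b, z^c, x^\alpha y^\beta z^\gamma)$. First I would record the non-membership condition $x^iy^jz^k \notin I$: it says $i<a$, $j<b$, $k<c$, and at least one of $i<\alpha$, $j<\beta$, $k<\gamma$ holds. The exponents in the statement already anticipate which corner of the region each socle generator sits in, so the argument should split along exactly those three inequalities.

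Next I would run a three-way case analysis. Suppose $i<\alpha$. Then $x^{i+1}y^jz^k \in I$ cannot happen through $x^a$ (since $i+1 \le \alpha < a$) nor through $y^b$ or $z^c$, so it must happen through $x^\alpha y^\beta z^\gamma$, which forces $i=\alpha-1$, $j\ge\beta$, and $k\ge\gamma$. But then, with $i=\alpha-1<\alpha$, the generator $x^\alpha y^\beta z^\gamma$ is out of reach for $x^iy^{j+1}z^k$ and $x^iy^jz^{k+1}$, so their membership in $I$ forces $j=b-1$ and $k=c-1$. Hence the only socle monomial with $i<\alpha$ is $x^{\alpha-1}y^{b-1}z^{c-1}$, of degree $\alpha+b+c-3$, and one checks conversely that this monomial does satisfy the socle conditions (using $\beta \le b-1$ and $\gamma \le c-1$). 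By the symmetry interchanging the roles of $(x,a,\alpha)$, $(y,b,\beta)$, $(z,c,\gamma)$, the cases $j<\beta$ and $k<\gamma$ produce the socle monomials $x^{a-1}y^{\beta-1}z^{c-1}$ and $x^{a-1}y^{b-1}z^{\gamma-1}$, of degrees $a+\beta+c-3$ and $a+b+\gamma-3$. Since $1 \le \alpha \le a-1$ (so $a \ge 2$) and similarly for $\beta,\gamma$, these three monomials are pairwise distinct, so $\soc(R/I)$ is exactly their $K$-span; thus $R/I$ has type three with the asserted socle degrees. The leveledness claim is then immediate: the three degrees coincide iff $a-\alpha = b-\beta$ and $b-\beta = c-\gamma$, i.e. iff $a-\alpha = b-\beta = c-\gamma$.

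An equivalent and perhaps cleaner route, parallel to the proof of Lemma~\ref{lem:classify-type-two}, is to use Macaulay--Matlis duality: verify the irredundant irreducible decomposition
\[
    I = (x^\alpha, y^b, z^c) \cap (x^a, y^\beta, z^c) \cap (x^a, y^b, z^\gamma),
\]
then read off the socle generators as $x^{\alpha-1}y^{b-1}z^{c-1}$, $x^{a-1}y^{\beta-1}z^{c-1}$, $x^{a-1}y^{b-1}z^{\gamma-1}$ from the corners of the three components. Checking the identity is a routine inclusion-both-ways argument on exponent vectors, and irredundancy follows because the corner monomial of each component lies in the intersection of the other two but not in $I$.

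In either approach the only thing requiring genuine care is the bookkeeping: in the first, one must rule out monomials for which two of $i<\alpha$, $j<\beta$, $k<\gamma$ hold simultaneously (none of these can be in the socle, because, for instance, $i<\alpha$ already forces $j=b-1\ge\beta$), and in the second, one must confirm no component of the decomposition is redundant. Neither is a real obstacle, so I expect the whole argument to be short and elementary.
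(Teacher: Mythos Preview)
Your proof is correct. Note, however, that the paper does not supply its own proof of this proposition: it is quoted from \cite[Proposition~6.1]{MMN-2011} and stated without argument. Your direct socle computation is sound, and your alternative via the irreducible decomposition
\[
    I = (x^\alpha, y^b, z^c) \cap (x^a, y^\beta, z^c) \cap (x^a, y^b, z^\gamma)
\]
is exactly in the spirit of the paper's proof of Lemma~\ref{lem:classify-type-two}, where Macaulay--Matlis duality is used to classify the type-two monomial algebras; so this second route meshes cleanly with the surrounding exposition.
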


Brenner classified when the syzygy bundle of $I_{a,b,c,\alpha,\beta,\gamma}$ is semistable.

\begin{proposition}{\cite[Corollary~7.3]{Br}} \label{pro:amaci-semistable}
    Let $I = I_{a,b,c,\alpha,\beta,\gamma}$ be defined as above, and suppose $K$ is a field of characteristic zero. Set
    $d = \frac{1}{3}(a+b+c+\alpha + \beta + \gamma)$. Then $I$ has a semistable syzygy bundle if and only if the
    following three conditions are satisfied:
    \begin{enumerate}
        \item $\max\{a, b, c, \alpha + \beta + \gamma\} \leq d$;
        \item $\min\{\alpha + \beta + c, \alpha + b + \gamma, a + \beta + \gamma\} \geq d$; and
        \item $\min\{a+b, a+c, b+c\} \geq d$.
    \end{enumerate}
\end{proposition}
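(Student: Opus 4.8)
The plan is to apply the over-puncturing reformulation of Brenner's criterion, Corollary~\ref{cor:T-stable}, to $I = I_{a,b,c,\alpha,\beta,\gamma}$. Since $0 < \alpha < a$, $0 < \beta < b$, and $0 < \gamma < c$, the monomial $x^\alpha y^\beta z^\gamma$ is divisible by none of $x^a, y^b, z^c$, so $I$ has exactly $m = 4$ minimal generators, of degrees $a$, $b$, $c$, and $\alpha+\beta+\gamma$. Fix an integer $e$ that is at least all four of these degrees, and write $d = \frac{1}{3}(a+b+c+\alpha+\beta+\gamma)$ as in the statement. Counting side lengths of punctures gives $\mo_e(I) = \sum_{i=1}^4 (e - \deg g_i) - e = 3e - (a+b+c+\alpha+\beta+\gamma) = 3(e-d)$, so the right-hand side appearing in Corollary~\ref{cor:T-stable} is $\frac{\mo_e(I)}{m-1} = e - d$ (equivalently $\mu(\widetilde\syz I) = -d$ via Lemma~\ref{lem:T-slope}). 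Thus $\widetilde\syz I$ is semistable if and only if, for every proper subset $J$ of the four generators with $|J| \ge 2$, one has $\frac{\mo_{e-\deg g}(I_J)}{|J|-1} \le e - d$, where $g = \gcd(J)$ and $I_J$ is generated by the quotients $h/g$, $h \in J$.

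I would then run through the ten relevant subsets --- the $\binom{4}{2} = 6$ pairs and the $\binom{4}{3} = 4$ triples --- grouped into three families. For a pair of pure powers, e.g., $J = \{x^a, y^b\}$: here $g = 1$, $I_J = (x^a, y^b)$, and $\mo_e(I_J) = (e-a)+(e-b)-e = e-a-b$, so the inequality becomes $d \le a+b$; the three such pairs give exactly condition~(iii). For a pair formed by a pure power and the mixed generator, e.g., $J = \{x^a, x^\alpha y^\beta z^\gamma\}$: here $g = x^\alpha$ (since $\alpha < a$), $I_J = (x^{a-\alpha}, y^\beta z^\gamma)$, and $\mo_{e-\alpha}(I_J) = (e-a) + (e-\alpha-\beta-\gamma) - (e-\alpha) = e-a-\beta-\gamma$, so the inequality becomes $d \le a + \beta + \gamma$; the three such pairs give exactly condition~(ii). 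For the triple of pure powers $J = \{x^a, y^b, z^c\}$: here $g = 1$, $\mo_e(I_J) = 2e - (a+b+c)$, and $\frac{1}{2}\mo_e(I_J) = e - \frac{1}{2}(a+b+c)$, so the inequality becomes $d \le \frac{1}{2}(a+b+c)$, which, substituting $a+b+c = 3d - (\alpha+\beta+\gamma)$, is equivalent to $\alpha+\beta+\gamma \le d$. For a triple containing the mixed generator, e.g., $J = \{x^a, y^b, x^\alpha y^\beta z^\gamma\}$: again $g = 1$, $\mo_e(I_J) = 2e - a - b - (\alpha+\beta+\gamma)$, and the inequality becomes $d \le \frac{1}{2}(a+b+\alpha+\beta+\gamma)$, i.e., by the same substitution, $c \le d$. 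The four triples together give exactly condition~(i), $\max\{a,b,c,\alpha+\beta+\gamma\} \le d$.

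Collecting the three families, the system of ten inequalities is precisely the conjunction of (i), (ii), and (iii), which proves the claim; replacing every $\le$ above by $<$ and invoking the stability half of Corollary~\ref{cor:T-stable} would likewise characterize stability of $\widetilde\syz I$. The only point requiring attention is the bookkeeping: in each of the ten cases one must confirm that $\gcd(J)$ is as claimed --- immediate from $\alpha < a$, $\beta < b$, $\gamma < c$ together with the fact that distinct pure powers are coprime --- and then carry out the small substitution $a+b+c = 3d - (\alpha+\beta+\gamma)$ that converts each ``average over $J$'' bound into the stated corner- or edge-length inequality. There is no difficulty with $d$ being non-integral: one works throughout with the honest integer $e$, and $d$ enters only through the identity $\frac{\mo_e(I)}{m-1} = e - d$.
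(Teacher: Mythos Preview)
Your argument is correct. The paper does not supply its own proof of this proposition; it simply attributes the result to Brenner \cite[Corollary~7.3]{Br} and moves on. What you have done is rederive Brenner's characterisation from the paper's own reformulation, Corollary~\ref{cor:T-stable}, by explicitly checking the ten proper subsets $J$ with $|J|\ge 2$. This is a perfectly legitimate route, and in fact it is exactly the kind of computation Corollary~\ref{cor:T-stable} was set up to streamline. Your bookkeeping is accurate in every case: the gcd identifications follow from $\alpha<a$, $\beta<b$, $\gamma<c$; the over-puncturing counts are right; and the substitution $a+b+c+\alpha+\beta+\gamma=3d$ converts each inequality into the stated form. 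The device of working with an auxiliary integer $e$ (since the $d$ of the statement need be neither integral nor an upper bound for the generator degrees) is the correct way to handle the hypothesis of Corollary~\ref{cor:T-stable}, and the fact that $e$ cancels uniformly confirms that the criterion is intrinsic to the slope $-d$.
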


Furthermore, Brenner and Kaid showed that, for almost complete intersections, nonsemistability implies the weak
Lefschetz property in characteristic zero.

\begin{proposition}{\cite[Corollary~3.3]{BK}} \label{pro:amaci-nss-wlp}
    Let $K$ be a field of characteristic zero. Then $I_{a,b,c,\alpha,\beta,\gamma}$ has the weak Lefschetz property if
    its syzygy bundle is not semistable.
\end{proposition}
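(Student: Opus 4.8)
The plan is to reduce, via Corollary~\ref{cor:wlp-biadj}, the weak Lefschetz property of $R/I_{a,b,c,\alpha,\beta,\gamma}$ to the maximal rank of bi-adjacency matrices, and then to exploit the combinatorial content of non-semistability. Write $I = I_{a,b,c,\alpha,\beta,\gamma}$ and $d_0 = \frac{1}{3}(a+b+c+\alpha+\beta+\gamma)$, so that $\mo_{d_0}(I) = 0$, i.e.\ the region $T_{d_0}(I)$ is perfectly-punctured whenever $d_0 \in \ZZ$. The first step is to translate ``not semistable'' into geometry: by Proposition~\ref{pro:amaci-semistable} the syzygy bundle fails to be semistable exactly when one of the ten inequalities
\[
    \max\{a,b,c,\alpha+\beta+\gamma\} \le d_0, \qquad \min\{\alpha+\beta+c,\alpha+b+\gamma,a+\beta+\gamma\} \ge d_0, \qquad \min\{a+b,a+c,b+c\} \ge d_0
\]
is violated, and each violation says that a specific sub-collection of the four generators has ``too crowded'' punctures in $T_{d_0}(I)$. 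I would record this dictionary explicitly, separating the cases where the violation comes from the interior puncture $x^\alpha y^\beta z^\gamma$ being large ($\alpha+\beta+\gamma > d_0$), from a pure power being large ($a > d_0$, say), from the interior puncture meeting a corner puncture ($\alpha+\beta+c < d_0$, say), and from two corner punctures overlapping ($a+b < d_0$, say).

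The second step carries out the rank computation at the degrees that matter. By Proposition~\ref{pro:wlp}, and using the socle degrees $\alpha+b+c-3$, $a+\beta+c-3$, $a+b+\gamma-3$ from Proposition~\ref{pro:amaci-props} to see that $R/I$ has no socle element far below the peak of its Hilbert function, it is enough to check that $Z(T_e(I))$ has a non-vanishing maximal minor over $K$ at the one or two degrees $e$ where $h_{R/I}$ turns down. For each case of the first step I would argue that, after placing the forced lozenges, $T_e(I)$ at such a degree is one of the regions already enumerated above: if the interior puncture has non-positive side length at degree $e$, then $T_e(I) = T_e(x^a,y^b,z^c)$ and Proposition~\ref{pro:ci-wlp} applies; if the interior puncture overlaps or touches a corner puncture, Corollary~\ref{cor:replace-two-punctures} merges the two and we land either in the complete-intersection case or in the type-two classification Theorem~\ref{thm:type-two}; and if two corner punctures overlap, the forced tiles cut $T_e(I)$ down to a region of the shape treated in Proposition~\ref{pro:C-is-zero} (two corner punctures and one interior puncture whose Northeast edge bounds all lattice-path endpoints), or again to a hexagon. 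In each instance the relevant determinant is either $\pm 1$ (a unique tiling) or a Mahonian product of hyperfactorials, hence nonzero in characteristic zero; feeding this back through Proposition~\ref{pro:wlp} and Corollary~\ref{cor:twin-peaks-wlp} yields maximal rank in every degree, i.e.\ the weak Lefschetz property. The case $d_0 \notin \ZZ$ should be disposed of at the outset: then $h_{R/I}$ has no plateau, so no ``twin peaks'' occur and only the single injectivity-or-surjectivity test of Proposition~\ref{pro:wlp} is needed.

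I expect the second step to be the real obstacle. There is genuine case-checking here: for several of the triggering inequalities the region $T_e(I)$ at the peak is authentically four-punctured, so one must locate the confined part of the lattice $L(T_e(I))$ carefully and recognise the surviving sub-region among the Mahonian shapes of Section~\ref{sec:det}, keeping track of whether punctures overlap or merely touch. A subtler bookkeeping point, worth isolating early, is that when $d_0 \in \ZZ$ but the interior puncture has negative side length, the degree $d_0$ is \emph{not} the balanced degree of $R/I$, so one cannot invoke Theorem~\ref{thm:tileable-semistable} at $d_0$; the balanced region then sits at a different degree, and the ``perfectly-punctured'' degree and the ``balanced'' degree have to be reconciled. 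An alternative route, nearer the original argument of Brenner and Kaid, would avoid the combinatorics altogether: identify, via Theorem~\ref{thm:stable-syz}, the destabilising subsheaf of the rank-three bundle $\widetilde{\syz}(I)$ as the syzygy bundle of a sub-collection of the generators, compute the generic splitting type from the resulting extension, and read maximal rank of the multiplication maps off the $H^0$ and $H^1$ of twists along a general line.
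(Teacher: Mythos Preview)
The paper does not prove this proposition at all: it is quoted verbatim as \cite[Corollary~3.3]{BK}, and the original Brenner--Kaid argument is the vector-bundle route you sketch in your final paragraph (destabilising subsheaf, Harder--Narasimhan, generic splitting type). So your ``alternative route'' is in fact the existing proof, and your main combinatorial plan would be a genuinely new argument internal to this paper's framework.

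That combinatorial plan, however, is not yet a proof, and some of the reductions you propose do not land where you say. When the interior puncture overlaps or touches a corner puncture and you invoke Corollary~\ref{cor:replace-two-punctures}, the minimal covering region of the two punctures is generally \emph{not} a corner puncture, so the resulting region is not a hexagon and you do not ``land in the complete-intersection case''; nor does it have the shape $(x^a,y^b,z^c,x^{\alpha}y^{\beta},x^{\alpha}z^{\gamma})$ required by Theorem~\ref{thm:type-two}, which classifies five-generator type-two ideals rather than four-generator ones. Similarly, when two corner punctures overlap you do not automatically obtain the configuration of Proposition~\ref{pro:C-is-zero}: that proposition needs the non-corner puncture to sit so that all lattice-path endpoints lie on its Northeast edge, and a generic interior puncture need not satisfy this. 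Finally, your bookkeeping remark is correct but understated: in the non-semistable case the Hilbert function of $R/I$ need not peak near $d_0$ at all, so identifying the one or two critical degrees $e$ and controlling the socle against them (via Proposition~\ref{pro:amaci-props}) is itself a nontrivial case analysis that you have not carried out. The vector-bundle argument avoids all of this by working with the splitting type directly; if you want a purely combinatorial proof, the case analysis is substantially longer than your outline suggests.
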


The conclusion of this result is not necessarily true in positive characteristic.

\begin{example} \label{exa:amaci-nss}
    Let $I = I_{5,5,3,1,1,2}$, and thus $d = 6$. Then the syzygy bundle of $I$ is not semistable as
    $\alpha + \beta + c = 5 < d = 6$. However, the triangular region $T_6(I)$ is balanced and $\det{Z(T_6(I))} = 5$.
    Hence, $I$ does not have the weak Lefschetz property if and only if the characteristic of $K$ is $5$.
\end{example}

The following example illustrates that the assumption on the number of minimal generators cannot be dropped in
Proposition~\ref{pro:amaci-semistable}.

\begin{example} \label{exa:amaci-nss-2}
    Consider the ideal $J = (x^5, y^5, z^5, xy^2z, xyz^2)$ with five minimal generators. Then
    Corollary~\ref{cor:T-stable} gives that the syzygy bundle of $J$ is not semistable. Notice that $T_6(J)$ is
    balanced. However, $\det{Z(T_6(J))} = 0$, and so $R/J$ never has the weak Lefschetz property, regardless of the
    characteristic of $K$.
\end{example}

The number $d$ in Proposition~\ref{pro:amaci-semistable} is not assumed to be an integer. In fact, if it is not, then
the algebra has the weak Lefschetz property.

\begin{proposition}{\cite[Theorem~6.2]{MMN-2011}} \label{pro:amaci-not-3}
    Let $K$ be a field of characteristic zero. Then $I_{a,b,c,\alpha,\beta,\gamma}$ has the weak Lefschetz property if
    $a+b+c+\alpha + \beta + \gamma \not\equiv 0 \pmod{3}$.
\end{proposition}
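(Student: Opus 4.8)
The plan is to combine the dictionary between the weak Lefschetz property and bi-adjacency matrices with Brenner and Kaid's result that a nonsemistable syzygy bundle already forces the weak Lefschetz property. Put $I = I_{a,b,c,\alpha,\beta,\gamma}$, $N = a+b+c+\alpha+\beta+\gamma$, and $d = \tfrac13 N$; by hypothesis $3 \nmid N$, so $d \notin \ZZ$. By Proposition~\ref{pro:mono} it suffices to test $\ell = x+y+z$, and then by Corollary~\ref{cor:wlp-biadj} it suffices to show that $Z(T_j(I))$ has maximal rank for every $j \geq 1$. If $\widetilde\syz(I)$ is not semistable, this holds by Proposition~\ref{pro:amaci-nss-wlp}, and we are done. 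So assume $\widetilde\syz(I)$ is semistable. Then Proposition~\ref{pro:amaci-semistable} gives $a,b,c,\alpha+\beta+\gamma \leq d$ and $\min\{\alpha+\beta+c,\ \alpha+b+\gamma,\ a+\beta+\gamma,\ a+b,\ a+c,\ b+c\} \geq d$; since $d \notin \ZZ$, all of these inequalities are strict, so $d^* := \lfloor d\rfloor = \lfloor N/3\rfloor$ satisfies $\max\{a,b,c,\alpha+\beta+\gamma\} \leq d^*$ and $d^*+1 \leq \min\{\alpha+\beta+c,\dots,b+c\}$.

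Consequently, for $j \in \{d^*, d^*+1\}$ the four punctures of $T_j(I)$ — the corner punctures of side lengths $j-a$, $j-b$, $j-c$ and the central puncture associated to $x^\alpha y^\beta z^\gamma$ of side length $j-(\alpha+\beta+\gamma)$ — are pairwise non-overlapping, and the central puncture is disjoint from the boundary of $\mathcal{T}_j$ because $\alpha,\beta,\gamma>0$. Counting unit triangles, $T_j(I)$ has $j - \big[(j-a)+(j-b)+(j-c)+(j-(\alpha+\beta+\gamma))\big] = N-3j$ more upward- than downward-pointing triangles; hence, with $r := N-3d^* \in \{1,2\}$, the region $T_{d^*}(I)$ is $\uptri$-heavy with $r$ excess upward-pointing triangles and $T_{d^*+1}(I)$ is $\dntri$-heavy with $3-r$ excess downward-pointing triangles. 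In particular $h_{R/I}(d^*-2) < h_{R/I}(d^*-1) > h_{R/I}(d^*)$, so the Hilbert function of $R/I$ has a single-degree peak. By Proposition~\ref{pro:amaci-props} the socle generators of $R/I$ have degrees $\alpha+b+c-3$, $a+\beta+c-3$, $a+b+\gamma-3$, each of which, by the strict inequalities above, is at least $\lceil d\rceil - 3 = d^*-2$. Therefore Corollary~\ref{cor:wlp-Z}(ii), applied with $d=d^*$, reduces the claim to showing that $Z(T_{d^*}(I))$ and $Z(T_{d^*+1}(I))$ each possess a maximal minor that is nonzero as an integer.

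For the $\uptri$-heavy region $T_{d^*}(I)$ I would delete $r$ unit upward-pointing triangles from one corner of $\mathcal{T}_{d^*}$, producing a balanced restricted maximal minor $U$ in the sense of Proposition~\ref{pro:restricted-only}; after possibly rotating the whole configuration by $120^\circ$ or $240^\circ$ — which, by Theorem~\ref{thm:detZN}, changes neither $|\det Z|$ nor $|\det N|$ — the corner can be chosen so that no puncture of $U$ lies in the shadow of the central puncture. Then every floating puncture of $U$ that can occur in a difference cycle is forced, so all tilings of $U$ share the same perfect matching sign by Proposition~\ref{prop:same-sign}; since $U$ is tileable (exhibit a family of non-intersecting lattice paths, or invoke Corollary~\ref{cor:pp-tileable}, $U$ being perfectly-punctured with no over-punctured monomial subregion), Theorem~\ref{thm:pm-matrix} yields $|\det Z(U)| = \per Z(U) > 0$. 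The $\dntri$-heavy region $T_{d^*+1}(I)$ is handled through its lattice path matrix via Corollary~\ref{cor:max-N}: deleting $3-r$ of the end points $E_j$ lying on the boundary of a suitable puncture confines every admissible family of non-intersecting lattice paths to a fixed hexagonal sub-lattice and forces the underlying permutation to be the identity, so the corresponding maximal minor equals the positive number of such families, exactly as in the proof of Proposition~\ref{pro:C-is-zero}. Combining the two nonvanishing minors with Corollary~\ref{cor:wlp-Z}(ii) shows that $R/I$ has the weak Lefschetz property.

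The delicate point is the last paragraph: one must be certain that the chosen minors are simultaneously tileable and sign-uniform. The only way sign-uniformity can fail is when the central puncture has odd side length and some puncture (a corner puncture, or one of the freshly created side-one punctures) sits in its shadow, and ruling this out is exactly what the choice of rotation and of the corner from which triangles are deleted is for. A handful of borderline configurations must also be dispatched — a corner puncture of side length $0$, a central puncture that merely touches a corner puncture in $T_{d^*+1}(I)$ (in which case it is non-floating and Proposition~\ref{pro:same-sign} applies directly), and the small overlap or touching cases at the extreme values $d^* = \max\{a,b,c,\alpha+\beta+\gamma\}$ and $d^*+1 = \min\{\alpha+\beta+c,\dots\}$ — but each of these reduces to an already tiled or uniquely tileable piece. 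Carrying out this bookkeeping carefully is where the real work of the proof lies.
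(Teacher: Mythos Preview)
The paper does not prove this proposition; it simply quotes it from \cite{MMN-2011}. What the paper \emph{does} supply later is the vector-bundle machinery that makes the statement transparent in the semistable case: by the Grauert--M\"ulich theorem (Proposition~\ref{prop:splitt-type-semist}), when $a+b+c+\alpha+\beta+\gamma=3k+1$ or $3k+2$ the generic splitting type of $\widetilde\syz(I)$ is forced to be $(-k-1,-k,-k)$ or $(-k-1,-k-1,-k)$, and the corresponding restriction $S/J$ then has the right regularity to yield maximal rank in every degree. So the argument one would extract from this paper is analytic (splitting type) rather than combinatorial.

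Your combinatorial route is a genuinely different idea, and the reduction you carry out up through the invocation of Corollary~\ref{cor:wlp-Z}(ii) is fine. The gap is in the final step. The key claim --- that after a suitable rotation no puncture of the minor $U$ lies in the shadow of the central puncture --- is false in general. With the shadow of the central puncture (at $(\alpha,\beta,\gamma)$) being the region $\{a'<\alpha,\ b'<\beta\}$, the bottom-right corner puncture always meets it whenever $\alpha,\beta>0$ and that corner puncture has positive side length; by symmetry the same happens in each of the three orientations, just with a different corner. So Proposition~\ref{prop:same-sign} does not apply when the central puncture has odd side length, and you cannot rotate your way out of it. Your treatment of $T_{d^*+1}(I)$ has the analogous problem: with a genuine floating puncture present, lattice paths can pass on either side of it, so the associated permutation is \emph{not} forced to be the identity and the analogy with Proposition~\ref{pro:C-is-zero} breaks down. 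In short, the sign-uniformity step --- which you yourself flag as the delicate point --- is exactly where the argument fails, and it does not appear repairable by bookkeeping alone; one really needs the splitting-type argument (or something of comparable strength) here.
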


Again, the conclusion of this result may fail in positive characteristic. Indeed, for the ideal $I_{5,5,3,1,1,2}$ in
Example~\ref{exa:amaci-nss} we get $d = \frac{17}{3}$, but it does not have the weak Lefschetz property in
characteristic $5$.

The following result addresses the weak Lefschetz property in the cases that are left out by
Propositions~\ref{pro:amaci-nss-wlp} and~\ref{pro:amaci-not-3}. Its first part extends \cite[Lemma~7.1]{MMN-2011} from
level to arbitrary monomial almost complete intersections. Observe that balanced triangular regions correspond to an
equality of the Hilbert function in two consecutive degrees, dubbed ``twin-peaks'' in \cite{MMN-2011}.

\begin{proposition} \label{pro:amaci-balanced}
    Let $I = I_{a,b,c,\alpha,\beta,\gamma}$, and assume $d = \frac{1}{3}(a+b+c+\alpha+\beta+\gamma)$ is an integer. If
    the syzygy bundle of $I$ is semistable and $d$ is integer, then $T_d(I)$ is perfectly-punctured and balanced.

    Moreover, in this case $R/I$ has the weak Lefschetz property if and only if $\det{Z(T_d(I))}$ is not zero in $K$.
\end{proposition}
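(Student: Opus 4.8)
The plan is to treat the two assertions separately, drawing on Sections~\ref{sec:syz} and~\ref{sec:wlp}.

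For the first assertion I would invoke Proposition~\ref{pro:amaci-semistable}: semistability of the syzygy bundle of $I$ amounts to the inequalities $\max\{a,b,c,\alpha+\beta+\gamma\}\le d$, $\min\{\alpha+\beta+c,\alpha+b+\gamma,a+\beta+\gamma\}\ge d$, and $\min\{a+b,a+c,b+c\}\ge d$. The region $T=T_d(I)$ has the four punctures associated to $x^a,y^b,z^c,x^\alpha y^\beta z^\gamma$, of side lengths $d-a,d-b,d-c,d-(\alpha+\beta+\gamma)$; these are nonnegative by the first inequality, and they sum to $4d-(a+b+c+\alpha+\beta+\gamma)=4d-3d=d$, so $\mo_d(I)=0$. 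By Lemma~\ref{lem:lcm-overlap}(i) two of these punctures overlap only if the least common multiple of the corresponding generators has degree $\le d-1$; the relevant least common multiples have degrees $a+b,a+c,b+c$ (for pairs of pure powers) and $a+\beta+\gamma,\alpha+b+\gamma,\alpha+\beta+c$ (for a pure power together with $x^\alpha y^\beta z^\gamma$), each $\ge d$ by the remaining two inequalities, so no two punctures of $T$ overlap. Since $\mathcal{T}_d$ has exactly $d$ more upward- than downward-pointing unit triangles, and a puncture of side length $k$ contains $k$ more upward- than downward-pointing ones, deleting these four non-overlapping punctures of total side length $d$ leaves $T$ balanced; moreover, $0\le\mo_T=\mo_d(J(T))\le\mo_d(I)=0$, the lower bound because a balanced region of $\mathcal{T}_d$ is not under-punctured, so $\mo_T=0$ and $T$ is perfectly-punctured as well.

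For the ``moreover'' I would use that balancedness gives $h_{R/I}(d-2)=h_{R/I}(d-1)$ and makes $Z(T)$ a square matrix (Remark~\ref{rem:Z-non-square}). By Proposition~\ref{prop:interp-Z}, $Z(T)^{\rm T}$ is the matrix of $\times(x+y+z)\colon[R/I]_{d-2}\to[R/I]_{d-1}$, and by Proposition~\ref{pro:mono} one may test the weak Lefschetz property of $R/I$ using the form $x+y+z$; so it suffices to prove that $R/I$ has the weak Lefschetz property if and only if this square map is bijective, i.e.\ $\det Z(T)\ne0$ in $K$. This I would obtain from Corollary~\ref{cor:twin-peaks-wlp} applied with $d-1$ in place of its ``$d$''. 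Its hypotheses check out: the equality $h_{R/I}(d-2)=h_{R/I}(d-1)$ is balancedness, and this common value is nonzero, for otherwise $[R/I]_n=0$ for all $n\ge d-1$, forcing every socle generator of $R/I$ to have degree $d-2$, hence $\alpha+b+c=a+\beta+c=a+b+\gamma=d+1$ by Proposition~\ref{pro:amaci-props}; summing and using $a+b+c+\alpha+\beta+\gamma=3d$ yields $a+b+c=3$, impossible since $a,b,c\ge2$. Moreover the socle degrees $\alpha+b+c-3,\ a+\beta+c-3,\ a+b+\gamma-3$ of Proposition~\ref{pro:amaci-props} are each at least $d-2$ (for instance $\alpha+b+c\ge1+(b+c)\ge1+d$ by the semistability inequalities), so $R/I$ has no socle element of degree $<d-2=(d-1)-1$. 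Corollary~\ref{cor:twin-peaks-wlp} then gives the stated equivalence.

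The one step needing real care is the indexing: since here it is $h_{R/I}(d-2)=h_{R/I}(d-1)$, rather than $h_{R/I}(d-1)=h_{R/I}(d)$, that holds, the twin-peaks criterion must be applied one degree below the naive index, so that the relevant square matrix is $Z(T_d(I))$ and not $Z(T_{d+1}(I))$; after that observation the remaining verifications (non-overlap of punctures, the socle-degree bound, and $h_{R/I}(d-1)\neq0$) all reduce to the semistability inequalities together with Propositions~\ref{pro:amaci-semistable} and~\ref{pro:amaci-props}.
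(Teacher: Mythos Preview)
Your proof is correct and follows essentially the same route as the paper: Proposition~\ref{pro:amaci-semistable} plus Lemma~\ref{lem:lcm-overlap} for the first assertion, and the twin-peaks criterion (Corollary~\ref{cor:twin-peaks-wlp}, equivalently Corollary~\ref{cor:wlp-Z}) for the second. The only notable difference is that for the socle bound the paper observes that the punctures of $T_{d-1}(I)$ are non-overlapping and non-touching and invokes Corollary~\ref{cor:socle-degree-bound}, whereas you compute the socle degrees directly from Proposition~\ref{pro:amaci-props}; both work. One small slip: in your $h_{R/I}(d-1)\neq 0$ argument, since $h_{R/I}(d-2)=h_{R/I}(d-1)=0$ you actually get socle degrees $\le d-3$, which already contradicts $\ge d-2$ without the summation step.
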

\begin{proof}
    Note that condition (i) in Proposition~\ref{pro:amaci-semistable} says that $T_d(I)$ has punctures of nonnegative
    side lengths $d-a, d-b, d-c$, and $d-(\alpha + \beta + \gamma)$. Furthermore, conditions (ii) and (iii) therein are
    equivalent to the fact that the degree of the least common multiple of any two of the minimal generators of $I$ is
    at least $d$. Hence, Lemma~\ref{lem:lcm-overlap} gives that the punctures of $T_d (I)$ do not overlap. Using the
    assumption that $d$ is an integer, it follows that $T_d(I)$ is perfectly-punctured, and thus balanced.

    Since the punctures of $T_d(I)$ do not overlap, the punctures of $T_{d-1}(I)$ are not overlapping nor touching.
    Using Corollary~\ref{cor:socle-degree-bound}, we conclude that the degrees of the socle generators of $R/I$ are at
    least $d-2$. Hence, Corollary~\ref{cor:wlp-Z} gives that $R/I$ has the weak Lefschetz property if and only if
    $\det{Z(T_d(I))}$ is not zero in $K$.
\end{proof}


In the situation of Proposition~\ref{pro:amaci-balanced}, the fact that $R/I$ has the weak Lefschetz property implies
that $T_d(I)$ is tileable by Theorem~\ref{thm:pm-matrix}. This combinatorial property remains true even if $R/I$ fails
to have the weak Lefschetz property.
\begin{proposition} \label{pro:amaci-ss-tileable}
    Let $I = I_{a,b,c,\alpha,\beta,\gamma}$. If $R/I$ fails to have the weak Lefschetz property in characteristic zero,
    then $d = \frac{1}{3}(a+b+c+\alpha+\beta+\gamma)$ is an integer and $T_d(I)$ is tileable.
\end{proposition}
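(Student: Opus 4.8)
The plan is to derive both conclusions by contraposition, feeding the hypothesis into the structural results on monomial almost complete intersections recalled above. First I would observe that if $R/I$ fails the weak Lefschetz property in characteristic zero, then $a+b+c+\alpha+\beta+\gamma \equiv 0 \pmod{3}$; otherwise $R/I$ would have the weak Lefschetz property by Proposition~\ref{pro:amaci-not-3}. Hence $d = \frac{1}{3}(a+b+c+\alpha+\beta+\gamma)$ is an integer, which is the first assertion. This also makes the four punctures of $T_d(I)$ (associated to $x^a$, $y^b$, $z^c$, and $x^\alpha y^\beta z^\gamma$) have side lengths $d-a$, $d-b$, $d-c$, and $d-(\alpha+\beta+\gamma)$, and yields $\mo_d(I) = [(d-a)+(d-b)+(d-c)+(d-(\alpha+\beta+\gamma))]-d = 4d-3d-d = 0$, so that $I$ is automatically perfectly-punctured in degree $d$.

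Next I would use the failure of the weak Lefschetz property a second time: by the contrapositive of Proposition~\ref{pro:amaci-nss-wlp}, the syzygy bundle $\widetilde\syz{I}$ must be semistable. Semistability then supplies, via Proposition~\ref{pro:amaci-semistable}, the inequality $\max\{a,b,c,\alpha+\beta+\gamma\}\le d$, so all four minimal generators of $I$ have degree at most $d$ — precisely the hypothesis needed to apply Theorem~\ref{thm:tileable-semistable}. (Conditions (ii)--(iii) of Proposition~\ref{pro:amaci-semistable} together with Lemma~\ref{lem:lcm-overlap} also show that the punctures of $T_d(I)$ do not overlap, which is how Proposition~\ref{pro:amaci-balanced} records that $T_d(I)$ is balanced and perfectly-punctured.)

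Finally, to apply Theorem~\ref{thm:tileable-semistable} I must check that $T = T_d(I)$ is non-empty, and then the theorem closes the argument: two of its three conditions, any two of which imply the third, hold here — namely ``$I$ is perfectly-punctured'' and ``$\widetilde\syz{I}$ is semistable'' — so the third, ``$T$ is tileable'', follows. Non-emptiness is the only point requiring a small argument: since the non-overlapping punctures have side lengths summing to $d$ and $s \mapsto \binom{s+1}{2}$ is superadditive, the number of upward-pointing unit triangles removed from $\mathcal{T}_d$ is at most $\binom{d+1}{2}$, with equality only if a single puncture has side length $d$; but every side length here is strictly less than $d$ (because $a,b,c\ge 1$ and $\alpha+\beta+\gamma\ge 3$), so at least one unit triangle of $\mathcal{T}_d$ survives in $T_d(I)$. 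The main ``obstacle'' is thus only this piece of bookkeeping plus the choice of the cleanest citation; as an alternative one could combine Proposition~\ref{pro:amaci-balanced} with Corollary~\ref{cor:pp-tileable}, after noting that semistability of $\widetilde\syz{I}$ precludes over-punctured monomial subregions by Corollary~\ref{cor:T-stable}.
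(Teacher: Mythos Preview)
Your proof is correct and follows essentially the same route as the paper: contrapose Propositions~\ref{pro:amaci-not-3} and~\ref{pro:amaci-nss-wlp} to obtain that $d$ is an integer and that $\widetilde\syz{I}$ is semistable, observe perfect-puncturing (you compute $\mo_d(I)=0$ directly, the paper cites Proposition~\ref{pro:amaci-balanced}), and then invoke Theorem~\ref{thm:tileable-semistable}. Your version is in fact more careful, since you explicitly verify the degree bound on the generators and the non-emptiness of $T_d(I)$, both of which are hypotheses of Theorem~\ref{thm:tileable-semistable} that the paper's terse proof leaves implicit.
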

\begin{proof}
    By Propositions~\ref{pro:amaci-nss-wlp} and~\ref{pro:amaci-not-3}, we know that the syzygy bundle of $I$ is
    semistable and $d = \frac{1}{3}(a+b+c+\alpha+\beta+\gamma)$ is an integer. Hence by Proposition~\ref{pro:amaci-balanced},
    $T_d(I)$ is perfectly-punctured. Now we conclude by Theorem~\ref{thm:tileable-semistable}.
\end{proof}

Specialising results in Section~\ref{sec:det} and \ref{sec:mirror}, we can decide the presence of the weak Lefschetz property in almost all cases.

\begin{theorem} \label{thm:amaci-wlp}
    Let $I = I_{a,b,c,\alpha,\beta,\gamma} = (x^a, y^b, z^c, x^\alpha y^\beta z^\gamma)$ be an Artinian ideal with four
    minimal generators such that $\alpha$, $\beta$, and $\gamma$ are all positive. Assume the base field $K$ has
    characteristic zero, and consider the following conditions:
    \begin{enumerate}
        \item $\max\{a, b, c, \alpha + \beta + \gamma\} \leq d$;
        \item $\min\{\alpha + \beta + c, \alpha + b + \gamma, a + \beta + \gamma\} \geq d$;
        \item $\min\{a+b, a+c, b+c\} \geq d$; and
        \item $d = \frac{1}{3}(a+b+c+\alpha + \beta + \gamma)$ is an integer.
    \end{enumerate}
    Then the following statements hold:
        \begin{itemize}
            \item[(a)] If one of the conditions (i) - (iv) is not satisfied, then $R/I$ has the weak Lefschetz property.
            \item[(b)] Assume all the conditions (i) - (iv) are satisfied. Then:
            \begin{itemize}
                \item[(1)] The multiplication map $\times (x+y+z): [R/I]_{j-2} \to [R/I]_{j-1}$ has maximal rank whenever  $j \neq d$.
                \item[(2)] The algebra $R/I$ has the weak Lefschetz property if one of the following conditions is satisfied:
                \begin{itemize}
                    \item[(I)] Condition (ii) is an equality.
                    \item[(II)] $a+b+c+\alpha+\beta + \gamma$ is divisible by 6.
                    \item[(III)] $c = \frac{1}{2}(a+b+\alpha+\beta+\gamma)$.
                    \item[(IV)] The region $T_d(I)$ has an axes-central puncture (see Subsection~\ref{sub:axes-central})
                        and one of $d-a, d-b, d-c$, and $d-(\alpha+\beta+\gamma)$ is not odd.
                    \item[(V)] $a = b$, $\alpha = \beta$, and $c$ or $\gamma$ is even.
                \end{itemize}
                \item[(3)] The algebra $R/I$ fails to have the weak Lefschetz property if one of the following
                conditions is satisfied:
                \begin{itemize}
                    \item[(IV')] The region $T_d(I)$ has an axes-central puncture (see Subsection~\ref{sub:axes-central})
                        and all of $d-a, d-b, d-c$, and $d-(\alpha+\beta+\gamma)$ are odd; or
                    \item[(V')] $a = b$, $\alpha = \beta$, and both $c$ and $\gamma$ are odd.
                \end{itemize}
            \end{itemize}
    \end{itemize}
\end{theorem}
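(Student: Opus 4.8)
The plan is to assemble Theorem~\ref{thm:amaci-wlp} from the preparatory results already established, handling the three parts in order. For part (a), I would simply invoke Propositions~\ref{pro:amaci-nss-wlp} and~\ref{pro:amaci-not-3}: if condition (iv) fails, then $a+b+c+\alpha+\beta+\gamma\not\equiv 0\pmod 3$ and Proposition~\ref{pro:amaci-not-3} gives the weak Lefschetz property; if one of (i), (ii), (iii) fails (while (iv) holds), then by Proposition~\ref{pro:amaci-semistable} the syzygy bundle of $I$ is not semistable, so Proposition~\ref{pro:amaci-nss-wlp} applies. Thus throughout parts (b)(1)--(3) I may assume all of (i)--(iv), whence Proposition~\ref{pro:amaci-balanced} tells us $T := T_d(I)$ is perfectly-punctured and balanced, its punctures do not overlap, the punctures of $T_{d-1}(I)$ are neither overlapping nor touching, and (via Corollary~\ref{cor:socle-degree-bound}) the socle generators of $R/I$ have degree at least $d-2$.

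For part (b)(1), I would use Lemma~\ref{lem:h-ci}-type reasoning: conditions (i)--(iii) together with integrality of $d$ force, by Proposition~\ref{pro:amaci-balanced}, that $h_{R/I}(d-2)=h_{R/I}(d-1)$, and the Hilbert function of $R/I$ is strictly increasing up to degree $d-1$ and strictly decreasing afterward down to the socle (this follows from the shape of the Hilbert function of a complete intersection perturbed by a single non-corner puncture, combined with the degrees of the socle generators from Proposition~\ref{pro:amaci-props}). Since the socle degrees are all $\ge d-2$, Corollary~\ref{cor:inj} gives injectivity of $\times\ell$ from below up to degree $d-1$, and Proposition~\ref{pro:surj} (Lemma~\ref{lem:mod-surj}) gives surjectivity from degree $d-1$ onward; hence $\times(x+y+z):[R/I]_{j-2}\to[R/I]_{j-1}$ has maximal rank for every $j\neq d$, using Proposition~\ref{pro:mono} to replace the general linear form by $x+y+z$. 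For part (b)(2), the only remaining map to analyze is $\times(x+y+z):[R/I]_{d-2}\to[R/I]_{d-1}$, which by Corollary~\ref{cor:max-Z} has maximal rank iff $Z(T)$ does (and, $T$ being balanced, iff $\det Z(T)\neq 0$ in $K$); in characteristic zero this is just $\det Z(T)\neq 0$ as an integer. So each of (I)--(V) reduces to showing $\det Z(T)\neq 0$. Case (I): if condition (ii) is an equality, one of the punctures touches the minimal covering region of the other two (or one puncture has side length making two corners meet), so after placing forced lozenges $T$ becomes a tileable simply-connected region or a region with a puncture in two corners and a third non-corner puncture of the type in Proposition~\ref{pro:C-is-zero}, whose determinant is a positive Mahonian number. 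Case (II): $a+b+c+\alpha+\beta+\gamma\equiv 0\pmod 6$ means $d-a,d-b,d-c,d-(\alpha+\beta+\gamma)$ all have a convenient parity; more precisely their sum is $2d$, so an even number of them are odd, and when $6\mid(a+\cdots+\gamma)$ one checks all four side lengths are even, so all floating punctures have even side length and Proposition~\ref{pro:same-sign} gives $\per Z(T)=|\det Z(T)|>0$. Case (III): $c=\tfrac12(a+b+\alpha+\beta+\gamma)$ makes the $z$-corner puncture have side length $d-c=\tfrac12(a+b+\alpha+\beta+\gamma)-\tfrac13(\ldots)$; I would verify this forces the central puncture to lie in the "shadow" configuration of Proposition~\ref{prop:same-sign} or reduce $T$ to a region covered by Proposition~\ref{pro:two-mahonian} after a rotation, giving a nonzero Mahonian-type determinant. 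Case (IV): an axes-central puncture with one of the four side lengths even puts us in Corollary~\ref{cor:axes-central-Z}, which gives $|\det Z(H)|=\CEKZ_i(A,B,C,M)\neq 0$ (the vanishing case there is precisely when all of $A,B,C$ are odd and $M$ is odd, i.e. all four side lengths odd). Case (V): $a=b$, $\alpha=\beta$ makes $T$ mirror symmetric satisfying Assumption~\ref{assump:mirror} (after placing forced lozenges), with the non-corner puncture axial; if $c$ or $\gamma$ is even the floating punctures have even side length, so Proposition~\ref{pro:mirror-even} gives $|\det Z(T)|>0$.

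For part (b)(3), case (IV') is the vanishing clause of Corollary~\ref{cor:axes-central-Z}: an axes-central puncture with all four of $d-a,d-b,d-c,d-(\alpha+\beta+\gamma)$ odd forces $\det Z(H)=0$, hence $\times(x+y+z):[R/I]_{d-2}\to[R/I]_{d-1}$ is not of maximal rank, so $R/I$ fails the weak Lefschetz property. Case (V') is the mirror-symmetric case: $a=b$, $\alpha=\beta$, and both $c$ and $\gamma$ odd means $T$ (after placing forced lozenges) is a mirror symmetric region as in Assumption~\ref{assump:mirror} with exactly two axial punctures of odd side length (the $z$-corner puncture and the central one, since $d-c$ and $d-(\alpha+\beta+\gamma)$ are the odd side lengths when $c,\gamma$ odd and $a=b$, $\alpha=\beta$ force the corresponding parities). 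Then $2$ is $\equiv 2\pmod 4$, so Theorem~\ref{thm:mirror-odd23} gives $\det Z(T)=0$, and again $R/I$ fails the weak Lefschetz property.

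The main obstacle I anticipate is the bookkeeping in cases (III), (IV), (IV'), (V), (V'): each requires translating the numerical hypotheses on $(a,b,c,\alpha,\beta,\gamma)$ into the precise shape of $T_d(I)$ after forced lozenges are placed, and then matching that shape against the hypotheses of the relevant structural proposition (Proposition~\ref{pro:C-is-zero}, Proposition~\ref{pro:two-mahonian}, Corollary~\ref{cor:axes-central-Z}, Proposition~\ref{pro:mirror-even}, or Theorem~\ref{thm:mirror-odd23}). In particular, verifying that $a=b$, $\alpha=\beta$ exactly reproduces the mirror-symmetry setup of Assumption~\ref{assump:mirror} with the claimed parities of the axial punctures' side lengths, and that the "axes-central" condition of Subsection~\ref{sub:axes-central} correctly matches one of the two parity cases described there, is delicate but purely a matter of careful substitution. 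Once those identifications are made, each subcase is an immediate citation of the corresponding result together with Corollary~\ref{cor:wlp-Z} and Proposition~\ref{pro:mono}.
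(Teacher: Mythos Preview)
Your handling of part (a) and of parts (b)(2)(IV), (b)(2)(V), (b)(3)(IV'), (b)(3)(V') follows the paper closely and is fine. For (b)(2)(I)--(III) your citations differ somewhat from the paper's --- the paper dispatches both (I) and (II) by Proposition~\ref{prop:same-sign} (in case (I) the inner puncture touches a corner puncture, hence is not floating; in case (II) the inner puncture has even side length), and (III) by observing that $c=\tfrac12(a+b+\alpha+\beta+\gamma)$ is equivalent to $d-c=0$, so the $z$-corner puncture disappears and Proposition~\ref{pro:C-is-zero} applies --- but these are minor route differences. Note, however, that in your case (II) the claim ``all four side lengths are even'' is false (their sum is $d$, not $2d$); what one actually needs, and what the paper uses, is only that the \emph{floating} puncture has even side length.

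The genuine gap is in (b)(1). Your argument tries to use Corollary~\ref{cor:inj} and Proposition~\ref{pro:surj} to propagate injectivity downward and surjectivity upward, but both of those results require as \emph{input} that the multiplication map is already injective (respectively surjective) at some specific degree, and you never establish this. Knowing that the Hilbert function is strictly increasing up to degree $d-1$ and that the socle lives in degrees $\geq d-2$ does not by itself give injectivity of $\times\ell$ anywhere; it only tells you where injectivity would be \emph{expected}. The difficulty is precisely that if $R/I$ fails the weak Lefschetz property (so the map at $j=d$ is not bijective), you cannot bootstrap from that degree, and you have given no independent way to verify maximal rank at $j=d-1$ or $j=d+1$.

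The paper fills this gap with a genuinely different tool: the Grauert--M\"ulich theorem applied to the semistable syzygy bundle $\widetilde{\syz}\,I$. By Proposition~\ref{prop:splitt-type-semist}, if $R/I$ fails the weak Lefschetz property then the generic splitting type is $(-d-1,-d,-d+1)$, which forces $\reg J=d$ and $\dim_K[S/J]_{d-2}=3$ via the resolution of $J=(x^a,y^b,(x+y)^c,x^\alpha y^\beta(x+y)^\gamma)$. Surjectivity at $j=d+1$ follows from $\reg J=d$, and injectivity at $j=d-1$ follows because $\dim_K[S/J]_{d-2}=3$ exactly matches $h_{R/I}(d-2)-h_{R/I}(d-3)=3$ (four non-overlapping punctures in $T_{d-1}(I)$). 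Only then do Corollary~\ref{cor:inj} and Proposition~\ref{pro:surj} propagate to all other degrees. You should replace your Hilbert-function sketch for (b)(1) with this splitting-type argument; the paper defers it to the paragraph following Proposition~\ref{prop:splitt-type-semist}.
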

\begin{proof}
    Assertion (a) follows from Propositions~\ref{pro:amaci-semistable}, \ref{pro:amaci-nss-wlp}, and~\ref{pro:amaci-not-3}.

    Consider now the claims in part (b). Then Proposition~\ref{pro:amaci-balanced} gives that $R/I$ has the weak
    Lefschetz property if and only if $\det Z(T_d(I))$ is not zero.

    The assumptions in (b) guarantee that the punctures of $T = T_d (I)$ do not overlap and the degrees of the socle
    generators of $R/I$ are at least $d-2$. Then condition (I) implies that the puncture to the generator $x^\alpha y^\beta z^\gamma$
    touches another puncture, whereas condition (II) says that this puncture has an even side length. In either case,
    $R/I$ has the weak Lefschetz property by Proposition~\ref{prop:same-sign}.

    The proof of (b)(1) uses the Grauert-M\"ulich splitting theorem. We complete this part below
    Proposition~\ref{prop:splitt-type-semist}.

    The remaining assertions all follow from a result in Section~\ref{sec:det} or \ref{sec:mirror}, when combined with
    Proposition~\ref{pro:amaci-balanced}:

    (III). The condition $c = \frac{1}{2}(a+b+\alpha+\beta+\gamma)$ is equivalent to $d-c = 0$. Thus
    Proposition~\ref{pro:C-is-zero} gives the claim.

    (IV) and (IV').   Use Corollary~\ref{cor:axes-central-Z}.

    (V) and (V'). Use Proposition~\ref{pro:ciucu-prime-bounds} and Theorem~\ref{thm:mirror-odd23}.
\end{proof}

Notice that Theorem~\ref{thm:amaci-wlp}(b)(1) says that, for almost monomial complete intersections, the multiplication map can fail to
have maximal rank in at most one degree.


\begin{remark}\label{rem:q-and-a}
    \begin{enumerate}
        \item Theorem~\ref{thm:amaci-wlp} can be extended to fields of sufficiently positive characteristic by using
            Proposition~\ref{prop:char-0-to-p}. This lower bound on the characteristic can be improved whenever we know the
            determinant of $Z(T_d(I))$ from a result in Section~\ref{sec:det} or \ref{sec:mirror}. We leave the details to the
            reader.
        \item Question~8.2(2c) in \cite{MMN-2011} asked if there exist non-level almost complete intersections which never
            have the weak Lefschetz property. The almost complete intersection $I = I_{3,5,5,1,2,2} = (x^3, y^5, z^5, xy^2z^2)$
            is not level and never has the weak Lefschetz property, regardless of field characteristic, as
            $\det{Z(T_6(I))} = 0$ by Theorem~\ref{thm:mirror-odd23}.
    \end{enumerate}
\end{remark}~

\subsection{Level almost complete intersections}\label{sub:level}~\par

In Subsection~\ref{sub:axes-central}, we considered one way of centralising the inner puncture of a triangular region
associated to a monomial almost complete intersection. We called such punctures ``axes-central.'' In this section, we
consider another method of centralising the inner puncture of such a triangular region. It turns out this method of
centralisation is equivalent to the algebra being level.

Consider the ideal $I = I_{a,b,c,\alpha,\beta,\gamma}$ as above. Let $d$ be an integer and assume that $T = T_d(I)$ has
one floating puncture. We say the inner puncture of $T$ is a \emph{gravity-central puncture}%
\index{puncture!gravity-central}
if the vertices of the puncture are each the same distance from the puncture opposite to it (see Figure~\ref{fig:gravity-central}).

\begin{figure}[!ht]
    \includegraphics[scale=1]{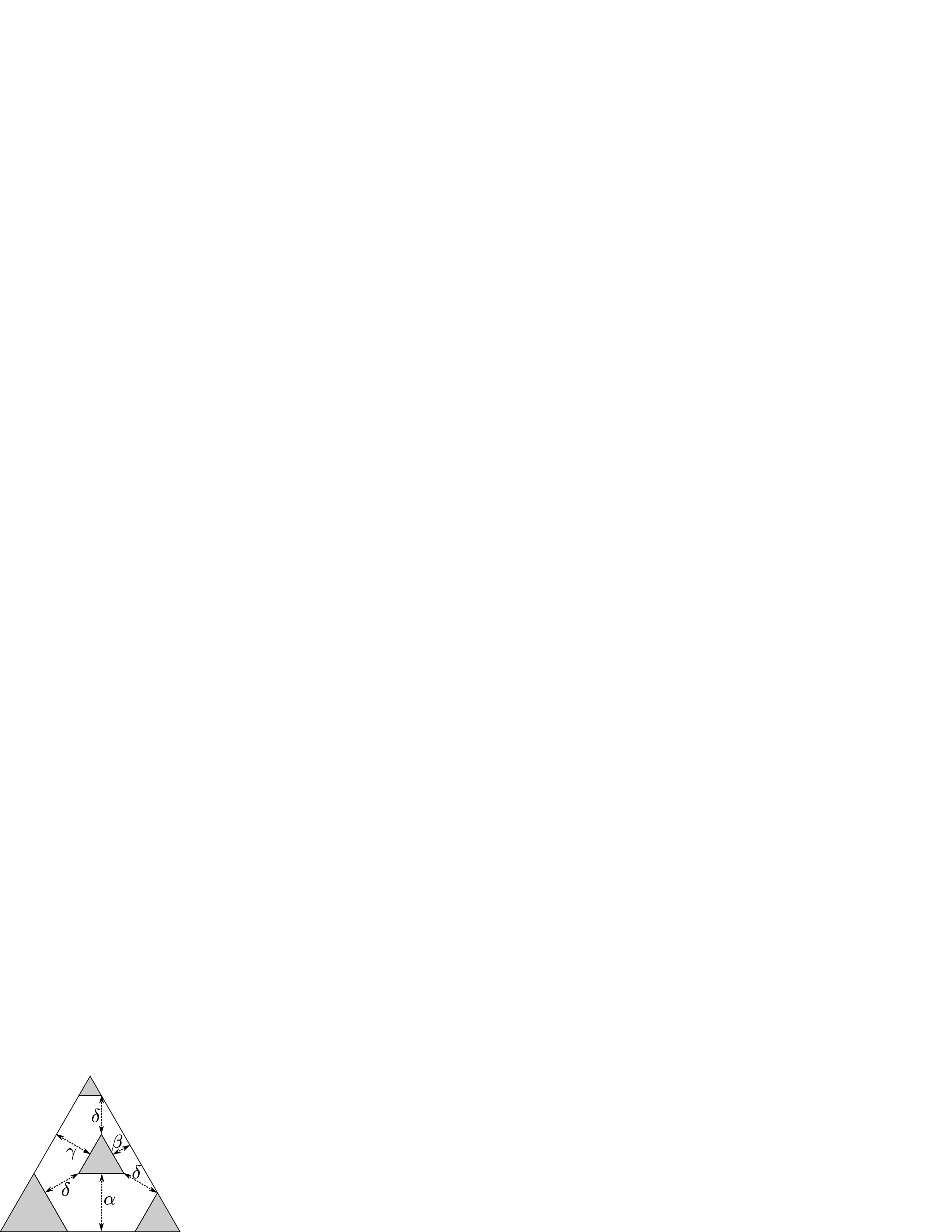}
    \caption{A prototypical figure with a gravity-central puncture.}
    \label{fig:gravity-central}
\end{figure}

\begin{lemma}
    Let $I = I_{a,b,c,\alpha,\beta,\gamma}$ such that $T_d(I)$ has a gravity-central puncture. Then $R/I$ is a level
    type $3$ algebra.
\end{lemma}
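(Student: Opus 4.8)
The plan is to unwind the definition of a gravity-central puncture into an arithmetic condition on the exponents and then apply Proposition~\ref{pro:amaci-props}. First I would set up coordinates: the puncture associated to $x^\alpha y^\beta z^\gamma$ has side length $d-(\alpha+\beta+\gamma)$ and sits $\alpha$ triangles from the bottom, $\beta$ from the upper-right edge, and $\gamma$ from the upper-left edge; the three corner punctures of $T_d(I)$ (to $x^a$, $y^b$, $z^c$) have side lengths $d-a$, $d-b$, $d-c$. The three vertices of the inner puncture are ``opposite'' to the three corner punctures in the natural way, and the distance from a vertex of the inner puncture to the corner puncture facing it is, in each of the three directions, an affine-linear expression in the exponents. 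I would compute these three distances explicitly (they are, up to relabeling, $a-\alpha-(\text{side length of inner puncture})$-type quantities; more precisely each equals $d - (\text{relevant corner exponent}) - (\text{distance of inner puncture from that corner})$, e.g.\ the distance toward the $x^a$-corner is $a - \alpha - \beta - \gamma$ adjusted by the side length, which simplifies). Setting all three equal gives two independent equations.

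Next I would show these two equations are exactly $a-\alpha = b-\beta = c-\gamma$. This is the routine computation: the side length $d-(\alpha+\beta+\gamma)$ is common to all three distance expressions, so it cancels when we equate them, leaving the differences of the corner-to-inner gaps, which reduce precisely to the level condition. By Proposition~\ref{pro:amaci-props}, $R/I$ is level if and only if $a-\alpha = b-\beta = c-\gamma$, so the gravity-central hypothesis forces $R/I$ to be level. Since $I = I_{a,b,c,\alpha,\beta,\gamma}$ with $\alpha,\beta,\gamma$ all positive, Proposition~\ref{pro:amaci-props} also tells us $R/I$ has exactly three minimal socle generators, i.e.\ $R/I$ has type $3$. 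Combining these gives the claim.

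The only genuine obstacle I anticipate is bookkeeping: making sure the three ``distances from a puncture vertex to the opposite puncture'' are set up consistently with the coordinate conventions of Subsection~\ref{sub:trideg} (which corner is $x$, which edge is measured by $\beta$ versus $\gamma$, and how the side length of the inner puncture enters each of the three measurements). A figure-driven check against Figure~\ref{fig:gravity-central} should pin down the signs, after which the algebra is immediate. I would also note for completeness that the hypothesis ``$T_d(I)$ has one floating puncture'' guarantees the inner puncture does not overlap or touch a corner puncture, so that these distances are genuinely positive and the configuration in Figure~\ref{fig:gravity-central} is the correct one; this is used only to make the notion of gravity-central well-posed and plays no further role.
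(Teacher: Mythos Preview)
Your proposal is correct and follows essentially the same route as the paper: compute the three vertex-to-opposite-puncture distances, set them equal, and observe that this reduces to $a-\alpha=b-\beta=c-\gamma$, which is the level criterion of Proposition~\ref{pro:amaci-props}. The paper's proof is terser only in that it writes down the three distances explicitly as $(d-b)+(d-c)-\alpha$, $(d-a)+(d-c)-\beta$, $(d-a)+(d-b)-\gamma$ and notes that equating them is equivalent to the level condition; your bookkeeping step would recover exactly these expressions.
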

\begin{proof}
    The defining property for the distances is $(d-b) + (d-c) - \alpha = (d-a) + (d-c) - \beta = (d-a) + (d-b) - \gamma$.
    This is equivalent to the condition in Proposition~\ref{pro:amaci-props} that $R/I$ is level, i.e.,
    $a - \alpha = b - \beta = c - \gamma$.
\end{proof}

Since having a gravity-central puncture has an algebraic interpretation, it is natural to wonder if this is also true
for the existence of an axes-central puncture.

\begin{question}
    Let $I = I_{a,b,c,\alpha,\beta,\gamma}$. Does the existence of an axes-central puncture in $T_d(I)$ admit an
    algebraic characterisation?
\end{question}

Level almost complete intersections were studied extensively in~\cite[Sections~6 and~7]{MMN-2011}. In particular,
Migliore, Mir\'o-Roig, and the second author proposed a conjectured characterisation for the presence of the weak
Lefschetz property for such algebras. We recall it here, though we present it in a different, but equivalent, form to
better elucidate the reasoning behind it.

\begin{conjecture}{\cite[Conjecture~6.8]{MMN-2011}} \label{conj:level-wlp}
    Let $I = I_{\alpha+t,\beta+t, \gamma+t, \alpha,\beta,\gamma}$ be an ideal of $R = K[x,y,z]$, where $K$ has
    characteristic zero, $0 < \alpha \leq \beta \leq \gamma \leq 2(\alpha+\beta)$, $t \geq \frac{1}{3}(\alpha+\beta+\gamma)$,
    and $\alpha + \beta + \gamma$ is divisible by three. If $(\alpha,\beta,\gamma,t)$ is not $(2,9,13,9)$ or
    $(3,7,14,9)$, then $R/I$ fails to have the weak Lefschetz property if and only if $t$ is even, $\alpha + \beta + \gamma$
    is odd, and $\alpha = \beta$ or $\beta = \gamma$. Furthermore, $R/I$ fails to have the weak Lefschetz property in
    the two exceptional cases.
\end{conjecture}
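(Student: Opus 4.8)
The plan is to reduce the conjecture to the vanishing behaviour of $\det Z(T_d(I))$ and then treat the two implications separately. The hypotheses $0<\alpha\le\beta\le\gamma\le 2(\alpha+\beta)$, $t\ge\tfrac13(\alpha+\beta+\gamma)$, and $3\mid\alpha+\beta+\gamma$ are exactly conditions (i)--(iv) of Theorem~\ref{thm:amaci-wlp} for the integer $d=\tfrac13(a+b+c+\alpha+\beta+\gamma)$, condition (ii) being automatic once $a-\alpha=b-\beta=c-\gamma=t$. Thus by Proposition~\ref{pro:amaci-balanced} the region $T=T_d(I)$ is balanced and perfectly punctured, with the four punctures of side lengths $d-a,\,d-b,\,d-c,\,d-(\alpha+\beta+\gamma)$, and $R/I$ has the weak Lefschetz property in characteristic zero if and only if $\det Z(T)\neq 0$ (equivalently, $\det Z(T)\neq 0$ as an integer). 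Since by Theorem~\ref{thm:amaci-wlp}(b)(1) this is the only degree in which multiplication by a general linear form can drop rank, the conjecture is precisely the dichotomy: $\det Z(T_d(I))=0$ if and only if $t$ is even, $\alpha+\beta+\gamma$ is odd, and two of $\alpha,\beta,\gamma$ agree, apart from the two exceptional tuples.

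For the ``failure'' direction one exploits symmetry. If (after permuting $x,y,z$, which changes neither $R/I$ nor $|\det Z|$) one has $\alpha=\beta$, then $a=b$ and, rotating $T_d(I)$ by $120^{\circ}$ (permissible by Theorem~\ref{thm:detZN}), the region becomes mirror symmetric about the vertical axis and satisfies Assumption~\ref{assump:mirror}: the pair $\{x^a,y^a\}$ occupies the bottom corners, the inner puncture is self-symmetric, the top corner puncture is the topmost axial puncture, and the punctures do not overlap; any accidental touching is first removed by placing the forced lozenges and passing to a smaller mirror symmetric region via Corollary~\ref{cor:replace-two-punctures}. The resulting region has exactly two axial punctures, of side lengths $d-c$ and $d-(\alpha+\beta+\gamma)$, and a short parity count shows that ``$t$ even and $\alpha+\beta+\gamma$ odd'' forces both of these to be odd. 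Hence the number of axial punctures of odd side length is $2\equiv 2\pmod 4$, and Theorem~\ref{thm:mirror-odd23} gives $\det Z(T_d(I))=0$, so $R/I$ fails the weak Lefschetz property; this is in fact already recorded in Theorem~\ref{thm:amaci-wlp}(b)(3). The two exceptional tuples $(\alpha,\beta,\gamma,t)=(2,9,13,9)$ and $(3,7,14,9)$ give $d=25$ and are settled by a single finite computation, which must yield $\det Z(T_{25}(I))=0$.

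The ``success'' direction is the heart of the matter: one must prove $\det Z(T_d(I))\neq 0$ whenever $t$ is odd, or $\alpha+\beta+\gamma$ is even, or $\alpha,\beta,\gamma$ are pairwise distinct, excluding those two tuples. Several sub-cases are already accessible from Section~\ref{sec:det}: if $d-c=0$ use Proposition~\ref{pro:C-is-zero}; if the inner puncture is axes-central use Corollary~\ref{cor:axes-central-Z}; if the inner puncture has even side length (or touches a corner puncture) use Proposition~\ref{prop:same-sign}; and degenerate configurations in which the inner puncture sits deep inside a corner reduce to nested hexagons via Corollary~\ref{cor:ci-nest} or Proposition~\ref{pro:two-mahonian}. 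For the remaining generic configuration --- a tileable punctured hexagon with a single floating puncture of odd side length and no extra symmetry --- the plan is to cut $T_d(I)$ along the horizontal line through the top of the inner puncture (as in the proof of Theorem~\ref{thm:type-two}), write $\det N(T_d(I))$ via Theorem~\ref{thm:nilp-matrix} as a signed sum, over the position of the unique tile crossing the cut, of products of two Mahonian determinants coming from the upper and lower sub-hexagons, and then show this alternating sum is nonzero.

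This last step is the main obstacle. The individual summands are products of $\Mac(\cdot,\cdot,\cdot)$'s with only small prime factors, but their alternating combination can carry a genuinely larger prime (as in the worked example $T_{10}(x^8,y^8,z^8,x^3y^5,x^3z^6)$, where the four summands involve only $2,3,5$ but the total equals $2^5\cdot 3^2\cdot 5\cdot 7$), so a crude prime-divisor bound will not suffice. One would need either a closed hypergeometric evaluation of this multi-parameter family of determinants --- perhaps obtainable by the decomposition method of Ciucu~\cite{Ci-2005} used in Section~\ref{sec:mirror}, since after the cut the region is a union of two hexagons joined by a corridor --- or a sufficiently sharp lower bound for $|\det Z(T_d(I))|$ away from the symmetric locus. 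The fact that the alternating sum does vanish at the isolated non-symmetric tuples $(2,9,13,9)$ and $(3,7,14,9)$ shows that any such argument must be delicate enough to detect exactly those coincidences, which is presumably why the statement has so far resisted a general proof.
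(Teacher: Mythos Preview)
The statement is a \emph{conjecture}, and the paper does not prove it; it explicitly records (in the remark following Proposition~\ref{pro:level-wlp}) that the conjecture remains open in precisely the two cases you isolate as the obstacle. So there is no proof in the paper against which to compare your proposal.

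That said, your reduction and your treatment of the established pieces are accurate and match the paper. The reduction to $\det Z(T_d(I))$ via Proposition~\ref{pro:amaci-balanced} and Theorem~\ref{thm:amaci-wlp}(b)(1) is exactly how the paper frames the problem. Your ``failure'' direction is the paper's argument: after permuting variables so that two of $\alpha,\beta,\gamma$ agree, the region is mirror symmetric with two axial punctures of odd side length, and Theorem~\ref{thm:mirror-odd23} gives $\det Z(T)=0$; this is Theorem~\ref{thm:amaci-wlp}(b)(3)(V'). The sub-cases you list for the ``success'' direction (even inner puncture, touching punctures, axes-central, $d-c=0$) are precisely those covered by Theorem~\ref{thm:amaci-wlp}(b)(2) and collected for level algebras in Proposition~\ref{pro:level-wlp}.

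Your honest assessment of the remaining generic case is correct, and your proposed attack --- cutting along a horizontal line as in the proof of Theorem~\ref{thm:type-two} and expressing $\det N(T)$ as an alternating sum of Mahonian products --- is a reasonable strategy that the paper does not pursue. The paper offers no further progress on this case; your observation that the two exceptional tuples $(2,9,13,9)$ and $(3,7,14,9)$ force any argument to be sharp enough to detect isolated coincidences is well taken and is presumably why the conjecture stands.
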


The necessity part of this conjecture was proven in \cite[Corollary~7.4]{MMN-2011}) by showing that $R/I$ does not have
the weak Lefschetz property if $t$ is even, $\alpha + \beta + \gamma$ is odd, and $\alpha = \beta$ or $\beta = \gamma$.
This result is covered by Theorem~\ref{thm:amaci-wlp}(b)(3)(V') because the region is mirror symmetric. It remained open
to establish the presence of the weak Lefschetz property. Theorem~\ref{thm:amaci-wlp} does this in many new cases.

\begin{proposition} \label{pro:level-wlp}
    Consider the ideal $I = I_{\alpha+t,\beta+t, \gamma+t, \alpha,\beta,\gamma}$ as given in
    Conjecture~\ref{conj:level-wlp}. Then $R/I$ has the weak Lefschetz property if one of the following
    conditions is satisfied:
    \begin{enumerate}
        \item $t$ and $\alpha + \beta + \gamma$ have the same parity; or
        \item $t$ is odd and $\alpha = \beta = \gamma$ is even.
    \end{enumerate}
\end{proposition}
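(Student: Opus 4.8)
The plan is to reduce each case to one of the sufficiency criteria in Theorem~\ref{thm:amaci-wlp}(b)(2). Write $I = I_{\alpha+t,\beta+t,\gamma+t,\alpha,\beta,\gamma}$ and set $d = \frac{1}{3}(a+b+c+\alpha+\beta+\gamma) = t + \frac{2}{3}(\alpha+\beta+\gamma)$, which is an integer by hypothesis. First I would check that conditions (i)--(iii) of Theorem~\ref{thm:amaci-wlp} hold for the parameters in Conjecture~\ref{conj:level-wlp}: indeed the constraints $\alpha \le \beta \le \gamma \le 2(\alpha+\beta)$ and $t \ge \frac{1}{3}(\alpha+\beta+\gamma)$ are precisely what is needed to verify the three semistability inequalities of Proposition~\ref{pro:amaci-semistable} (this is already implicit in \cite{MMN-2011}). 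So the region $T = T_d(I)$ is perfectly-punctured and balanced by Proposition~\ref{pro:amaci-balanced}, and $R/I$ has the weak Lefschetz property if and only if $\det Z(T) \neq 0$.

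For part (i), suppose $t$ and $\alpha+\beta+\gamma$ have the same parity. Then $a+b+c+\alpha+\beta+\gamma = 3t + 2(\alpha+\beta+\gamma) + (\alpha+\beta+\gamma)$... more directly: $a+b+c+\alpha+\beta+\gamma = 3(\alpha+\beta+\gamma) + 3t$, which I want to be divisible by $6$. Since $3(\alpha+\beta+\gamma+t)$ is divisible by $6$ exactly when $\alpha+\beta+\gamma+t$ is even, and that holds precisely when $t$ and $\alpha+\beta+\gamma$ have the same parity, condition (II) of Theorem~\ref{thm:amaci-wlp}(b)(2) is satisfied, so $R/I$ has the weak Lefschetz property. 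This case is essentially bookkeeping.

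For part (ii), suppose $t$ is odd and $\alpha = \beta = \gamma =: \alpha$ is even. Then the region $T_d(I)$ is fully symmetric under the $S_3$-action, and in particular mirror symmetric. I would verify that $T_d(I)$ fits Assumption~\ref{assump:mirror}: the inner puncture associated to $x^\alpha y^\alpha z^\alpha$ is axial (it is the gravity-central, hence symmetric, puncture), and the three corner punctures all have the same side length $d - (\alpha+t) = 2\alpha - t$... wait, $d - a = t + 2\alpha - (\alpha+t) = \alpha$; so each corner puncture has side length $\alpha$, which is even. After placing forced lozenges, one reduces (via Corollary~\ref{cor:replace-two-punctures}) to a mirror symmetric region whose only floating puncture is the inner one of even side length $d - 3\alpha$. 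One checks $d - 3\alpha = t - \alpha$ is odd minus even, hence odd --- so I cannot conclude directly via Proposition~\ref{prop:same-sign}. Instead I would observe that the inner puncture, being symmetric and the \emph{only} floating puncture, gives a region with exactly one axial puncture of odd side length beyond the top one, so Theorem~\ref{thm:mirror-odd23} does not force vanishing; rather, I would invoke condition (V) of Theorem~\ref{thm:amaci-wlp}(b)(2) directly: here $a = b$ (in fact $a=b=c$), $\alpha=\beta$, and $c = \alpha+t$ with $\gamma = \alpha$; since $\gamma = \alpha$ is even, condition (V) (``$c$ or $\gamma$ is even'') applies, giving the weak Lefschetz property.

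The main obstacle I anticipate is \emph{case (ii)}: one must make sure the hypotheses of condition (V) of Theorem~\ref{thm:amaci-wlp}(b)(2) are literally met --- namely that in the normalized form $I_{a,b,c,\alpha,\beta,\gamma}$ one genuinely has $a = b$ and $\alpha = \beta$, possibly after permuting the roles of $x,y,z$ --- and that $\gamma$ (not $c$) being even is what condition (V) allows. Since $\alpha=\beta=\gamma$ here, any labeling works, so this should go through cleanly, but it requires care with which of $c,\gamma$ is even versus odd and matching it to the exact wording of (V). The rest is routine verification that Propositions~\ref{pro:amaci-semistable} and~\ref{pro:amaci-balanced} apply under the stated numerical constraints.

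\begin{proof}
    Write $a = \alpha+t$, $b = \beta+t$, $c = \gamma+t$, and $d = \frac{1}{3}(a+b+c+\alpha+\beta+\gamma) = t + \frac{2}{3}(\alpha+\beta+\gamma)$, which is an integer since $\alpha+\beta+\gamma$ is divisible by three.

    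We first check that conditions (i)--(iv) of Theorem~\ref{thm:amaci-wlp} are satisfied. Condition (iv) holds by construction. For the remaining conditions, we verify the equivalent conditions (i)--(iii) of Proposition~\ref{pro:amaci-semistable}. We have $d - a = \frac{1}{3}(\alpha+\beta+\gamma) - \alpha + t - t \cdot 0$; more precisely $d - a = t + \frac{2}{3}(\alpha+\beta+\gamma) - (\alpha + t) = \frac{2}{3}(\alpha+\beta+\gamma) - \alpha = \frac{1}{3}(-\alpha + 2\beta + 2\gamma) \geq 0$ since $\beta,\gamma > 0$. Similarly $d - b, d - c \geq 0$, and $d - (\alpha+\beta+\gamma) = t - \frac{1}{3}(\alpha+\beta+\gamma) \geq 0$ by the hypothesis $t \geq \frac{1}{3}(\alpha+\beta+\gamma)$. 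Hence condition (i) of Proposition~\ref{pro:amaci-semistable} holds. For condition (ii): $(\alpha + \beta + c) - d = \alpha + \beta + \gamma + t - t - \frac{2}{3}(\alpha+\beta+\gamma) = \frac{1}{3}(\alpha+\beta+\gamma) \geq 0$, and likewise for the other two terms. For condition (iii): $(a+b) - d = \alpha + \beta + 2t - t - \frac{2}{3}(\alpha+\beta+\gamma) = \frac{1}{3}(\alpha + \beta - 2\gamma) + t \geq t - \frac{2\gamma}{3} \geq t - \frac{2}{3}(\alpha+\beta+\gamma) \cdot \frac{1}{1}$; using $\gamma \leq 2(\alpha+\beta)$ we get $\frac{1}{3}(\alpha+\beta-2\gamma) + t \geq \frac{1}{3}(\alpha + \beta - 4(\alpha+\beta)) + t = -(\alpha+\beta) + t$, which together with the other cyclic terms and $t \geq \frac{1}{3}(\alpha+\beta+\gamma)$ yields condition (iii). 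Thus the syzygy bundle of $I$ is semistable, and by Proposition~\ref{pro:amaci-balanced} the region $T = T_d(I)$ is perfectly-punctured and balanced, and $R/I$ has the weak Lefschetz property if and only if $\det Z(T)$ is not zero.

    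\emph{Case (i).} Suppose $t$ and $\alpha+\beta+\gamma$ have the same parity. Then $a+b+c+\alpha+\beta+\gamma = 3(\alpha+\beta+\gamma) + 3t = 3(\alpha+\beta+\gamma+t)$ is divisible by $6$, since $\alpha+\beta+\gamma+t$ is even. Hence condition (II) of Theorem~\ref{thm:amaci-wlp}(b)(2) is satisfied, and $R/I$ has the weak Lefschetz property.

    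\emph{Case (ii).} Suppose $t$ is odd and $\alpha = \beta = \gamma$ is even. After relabeling the variables arbitrarily (which is harmless since all three exponent triples of the pure powers coincide and $\alpha=\beta=\gamma$), the ideal has the form $I_{a,b,c,\alpha,\beta,\gamma}$ with $a = b$, $\alpha = \beta$, and $\gamma = \alpha$ even. Thus condition (V) of Theorem~\ref{thm:amaci-wlp}(b)(2) is satisfied, and $R/I$ has the weak Lefschetz property.
\end{proof}
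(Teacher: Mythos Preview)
Your overall strategy---reduce to the sufficiency clauses of Theorem~\ref{thm:amaci-wlp}(b)(2)---matches the paper's, but Case~(i) contains an arithmetic error that breaks the argument. You write $a+b+c+\alpha+\beta+\gamma = 3(\alpha+\beta+\gamma)+3t$, but in fact $a+b+c = (\alpha+t)+(\beta+t)+(\gamma+t) = \alpha+\beta+\gamma+3t$, so the correct total is $2(\alpha+\beta+\gamma)+3t$. Writing $\alpha+\beta+\gamma = 3k$, this equals $6k+3t$, which is divisible by $6$ if and only if $t$ is even. Thus condition~(II) of Theorem~\ref{thm:amaci-wlp} is \emph{not} equivalent to ``$t$ and $\alpha+\beta+\gamma$ have the same parity''; in particular your argument fails when both $t$ and $\alpha+\beta+\gamma$ are odd. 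The paper's fix is to work directly with the side length of the inner (floating) puncture: it computes this as $d-(\alpha+\beta+\gamma) = t - \tfrac{1}{3}(\alpha+\beta+\gamma) = t-k$, which is even exactly under hypothesis~(i). This is the real content behind condition~(II) (even floating puncture $\Rightarrow$ $\per Z(T) = |\det Z(T)| \neq 0$ via Proposition~\ref{prop:same-sign}), and invoking that directly repairs your proof.

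Two smaller points. First, your verification of condition~(iii) of Proposition~\ref{pro:amaci-semistable} ends with the bound $(a+b)-d \ge t-(\alpha+\beta)$, which is not obviously nonnegative from $t \ge \tfrac{1}{3}(\alpha+\beta+\gamma)$; the clean argument is $(a+b)-d = t - \tfrac{1}{3}(2\gamma-\alpha-\beta)$, and $\gamma \le 2(\alpha+\beta)$ gives $\tfrac{1}{3}(2\gamma-\alpha-\beta) \le \tfrac{1}{3}(\alpha+\beta+\gamma) \le t$. Second, for Case~(ii) you invoke condition~(V) while the paper uses condition~(IV) (the inner puncture is axes-central when $\alpha=\beta=\gamma$, and the corner punctures have side length $\alpha$, which is even). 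Both routes are valid here; yours is slightly more direct since no axes-central check is needed.
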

\begin{proof}
    We apply Theorem~\ref{thm:amaci-wlp} with $d = t + \frac{2}{3} (\alpha + \beta + \gamma)$. Then the side length of
    the inner puncture of $T_d(I)$ is $t - \frac{1}{3} (\alpha + \beta + \gamma)$. Hence (i) follows from
    Theorem~\ref{thm:amaci-wlp}(b)(II). Claim (ii) is a consequence of Theorem~\ref{thm:amaci-wlp}(b)(IV) as the given
    condition implies the inner puncture is axes-central.
\end{proof}

\begin{remark}
    Conjecture~\ref{conj:level-wlp} remains open in two cases, both of which are conjectured to have the weak Lefschetz property:
    \begin{enumerate}
        \item $t$ even, $\alpha + \beta + \gamma$ is odd, and $\alpha < \beta < \gamma$; and
        \item $t$ odd, $\alpha + \beta + \gamma$ is even, and $\alpha < \beta$ or $\beta < \gamma$.
    \end{enumerate}
    Note that if true, then Conjecture~\ref{con:zero-mirror} implies part (ii) in the case, where $\alpha = \beta$ or
    $\beta = \gamma$.
\end{remark}

Notice that $T = T_d(I_{a,b,c,\alpha,\beta,\gamma})$ is simultaneously axis- and gravity-central precisely if either
$a = b = c$ and $\alpha = \beta = \gamma$, or $a = b+2 = c+1$ and $\alpha=\beta+2=\gamma+1$. In the former case, the weak
Lefschetz property in characteristic zero is completely characterised below, strengthening
\cite[Corollary~7.6]{MMN-2011}.

\begin{corollary}
    Let $I = I_{a, a, a, \alpha, \alpha, \alpha} = (x^a, y^a, z^a, x^{\alpha}, y^{\alpha}, z^{\alpha})$, where $a > \alpha$.
    Then $R/I$ fails to have the weak Lefschetz property in characteristic zero if and only if $\alpha$ and $a$ are odd
    and $a \geq 2 \alpha + 1$.
\end{corollary}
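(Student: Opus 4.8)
The plan is to apply Theorem~\ref{thm:amaci-wlp} to the ideal $I = I_{a,a,a,\alpha,\alpha,\alpha}$ and to reduce the general situation to the axes-central results. First I would set $d = \frac{1}{3}(3a + 3\alpha) = a + \alpha$, which is automatically an integer, so condition (iv) of Theorem~\ref{thm:amaci-wlp} holds. Next I would check conditions (i)--(iii): here $\max\{a,b,c,\alpha+\beta+\gamma\} = \max\{a, 3\alpha\}$, $\min\{\alpha+\beta+c,\ldots\} = a + 2\alpha$, and $\min\{a+b,a+c,b+c\} = 2a$. Since $\alpha < a$, we have $a < a + 2\alpha = d + \alpha$ wait --- more carefully, $d = a+\alpha$, so (i) requires $a \le a+\alpha$ (true) and $3\alpha \le a + \alpha$, i.e., $a \ge 2\alpha$; (ii) requires $a + 2\alpha \ge a + \alpha$ (true); (iii) requires $2a \ge a+\alpha$ (true since $a > \alpha$). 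So when $a < 2\alpha$, condition (i) fails, and part~(a) of Theorem~\ref{thm:amaci-wlp} gives the weak Lefschetz property immediately; this matches the claim since $a < 2\alpha$ forces $a < 2\alpha + 1$, so $R/I$ should have the WLP. Thus I may assume $a \ge 2\alpha$, i.e., all four conditions (i)--(iv) hold, and I am in case~(b).

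In case~(b), Proposition~\ref{pro:amaci-balanced} (already invoked inside Theorem~\ref{thm:amaci-wlp}) tells us $T_d(I)$ is perfectly-punctured and balanced, and $R/I$ has the WLP iff $\det Z(T_d(I)) \ne 0$. The region $T_{a+\alpha}(I)$ is a hexagon with a central puncture: the three corner punctures all have side length $d - a = \alpha$, and the inner puncture has side length $d - 3\alpha = a - 2\alpha$. By the symmetry $a=b=c$, $\alpha=\beta=\gamma$, this is an axes-central puncture (and also gravity-central) in the sense of Subsection~\ref{sub:axes-central}: it is equidistant from all three sides. So I would translate into the $\CEKZ$ parameters of Theorem~\ref{thm:CEKZ-1245}: with $A = B = C = \alpha$ and $M = a - 2\alpha$ (this requires $M \ge 0$, i.e., $a \ge 2\alpha$, which we have; the case $M = 0$ meaning $a = 2\alpha$ corresponds to the inner puncture touching the corner punctures, handled by Theorem~\ref{thm:amaci-wlp}(b)(I) or (II)). Then I apply Theorem~\ref{thm:amaci-wlp}(b)(IV) and (IV'): the region has an axes-central puncture, and the four relevant numbers are $d - a = \alpha$, $d - b = \alpha$, $d - c = \alpha$, and $d - (\alpha+\beta+\gamma) = a - 2\alpha$. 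Condition (IV) gives the WLP when one of these is \emph{not} odd, and (IV') gives failure of the WLP when all four are odd.

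Now I would assemble the final equivalence. The four numbers $\alpha, \alpha, \alpha, a - 2\alpha$ are all odd precisely when $\alpha$ is odd and $a - 2\alpha$ is odd, i.e., $\alpha$ is odd and $a$ is odd (since $2\alpha$ is even, $a - 2\alpha$ odd $\iff$ $a$ odd). So: if $\alpha$ and $a$ are both odd and $a \ge 2\alpha + 1$ (the strict inequality ensures $M \ge 1$, so we are genuinely in the (IV') regime with a floating puncture of positive odd side length rather than the touching case), then $R/I$ fails the WLP by Theorem~\ref{thm:amaci-wlp}(b)(3)(IV'). Conversely, if $\alpha$ or $a$ is even, then one of the four numbers is even (namely $\alpha$ if $\alpha$ is even, or $a - 2\alpha$ if $a$ is even), so Theorem~\ref{thm:amaci-wlp}(b)(2)(IV) gives the WLP. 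And if $a \le 2\alpha$ (with $a > \alpha$), then either condition~(i) fails (when $a < 2\alpha$, giving the WLP by part~(a)) or $a = 2\alpha$ exactly, in which case $d - (\alpha+\beta+\gamma) = 0$ is even, so again (b)(2)(IV) applies --- or one simply notes the inner puncture touches the corner punctures and uses (b)(2)(I). In all remaining cases $R/I$ has the WLP. This establishes: $R/I$ fails the WLP in characteristic zero $\iff$ $\alpha$ and $a$ are odd and $a \ge 2\alpha + 1$.

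The main obstacle I anticipate is purely bookkeeping at the boundary: correctly handling the degenerate case $M = 0$ (equivalently $a = 2\alpha$), where the ``axes-central puncture'' picture degenerates into three corner punctures meeting a touching inner puncture, and verifying it falls cleanly under Theorem~\ref{thm:amaci-wlp}(b)(2)(I) or (II) rather than creating an exceptional case. One should also double-check that the hypothesis $0 < \alpha = \beta = \gamma$ genuinely matches the ``axes-central'' construction of Subsection~\ref{sub:axes-central} with $A=B=C$ sharing a common parity (trivially true here), so that parts (IV)/(IV') of Theorem~\ref{thm:amaci-wlp} are literally applicable. Beyond that the argument is a direct specialization with no substantive new computation, since all the hard enumeration work is already packaged in Corollary~\ref{cor:axes-central-Z} and Theorem~\ref{thm:amaci-wlp}.
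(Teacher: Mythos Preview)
Your proof is correct and follows essentially the same route as the paper: split off the case $a<2\alpha$ via Theorem~\ref{thm:amaci-wlp}(a), then handle $a\ge 2\alpha$ through part~(b). The only difference is cosmetic routing: the paper cites Proposition~\ref{pro:level-wlp} (which in turn appeals to parts~(II) and~(IV)) for the ``has WLP'' direction and part~(V') for the failure, whereas you go directly to~(IV) and~(IV'); since $A=B=C=\alpha$ share a parity and the region is genuinely axes-central, this is equally valid and arguably a touch cleaner.

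One small correction: your parenthetical alternative that ``(b)(2)(I)'' applies when $a=2\alpha$ is not right. Condition~(I) means condition~(ii) of Theorem~\ref{thm:amaci-wlp} is an equality, i.e.\ $\alpha+\beta+c=d$; here that reads $a+2\alpha=a+\alpha$, forcing $\alpha=0$. The inner puncture does not touch a corner puncture when $a=2\alpha$; it simply has side length $d-3\alpha=0$. Fortunately your primary justification for this boundary case---that $d-(\alpha+\beta+\gamma)=0$ is even, so~(IV) applies---is correct, and since $a=2\alpha$ is even one cannot have both $a$ and $\alpha$ odd there anyway. So the slip is harmless.
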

\begin{proof}
    If $a < 2 \alpha$, then $R/I$ has the weak Lefschetz property by Theorem~\ref{thm:amaci-wlp}(a).

    Assume now $a \geq 2 \alpha$. Then $R/I$ fails the weak Lefschetz property if $\alpha$ and $a$ are odd by
    \cite[Corollary~7.6]{MMN-2011} (or Theorem~\ref{thm:amaci-wlp}(b)(3)(V')). Otherwise, $R/I$ has this property by
    Proposition~\ref{pro:level-wlp}.
\end{proof}

For $a \geq 2 \alpha$, the triangular region $T_{a+\alpha} (I)$ was considered by Krattenthaler in \cite{Kr-06}. He
described a bijection between cyclically symmetric lozenge tilings of the region and descending plane partitions with
specific conditions.

~\subsection{Splitting type and regularity}~\par

The generic splitting type of a vector bundle on projective space is an important invariant. However, its computation is
often challenging. In this section we consider the splitting type of the syzygy bundles of monomial almost complete
intersections in $R$. These are rank three bundles on the projective plane. For the remainder of this section we assume
$K$ is an infinite field.

Let $I = I_{a,b,c,\alpha,\beta,\gamma}$ as above. Recall from Section~\ref{sec:syz} that the syzygy module $\syz{I}$ of
$I$ is defined by the exact sequence
\begin{equation*}
        0
    \longrightarrow
        \syz{I}
    \longrightarrow
        R(-\alpha-\beta-\gamma) \oplus R(-a) \oplus R(-b) \oplus R(-c)
    \longrightarrow
        I
    \longrightarrow
        0,
\end{equation*}
and the syzygy bundle $\widetilde\syz{I}$ on $\PP^2$ of $I$ is the sheafification of $\syz{I}$. Its restriction to any
line $H$ of $\PP^2$ splits as $\SO_H(p) \oplus \SO_H(q) \oplus \SO_H(r)$. The triple $(p, q, r)$ depends on the choice
of the line $H$, but is the same for all general lines. This latter triple is called the \emph{generic splitting type}%
\index{syzygy bundle!generic splitting type}
of $\widetilde\syz{I}$. Since $I$ is a monomial ideal, the arguments in Proposition~\ref{pro:mono} imply that the
generic splitting type $(p, q, r)$ can be determined if we restrict to the line defined by $\ell = x+ y + z$.

For computing the generic splitting type of $\widetilde\syz{I}$, we use the observation that $R/(I, \ell) \cong S/J$,
where $S = K[x,y]$, and $J = (x^a, y^b, (x+y)^c, x^\alpha y^\beta (x+y)^\gamma)$. Define an $S$-module $\syz{J}$ by the
exact sequence
\begin{equation} \label{eqn:syz-J}
        0
    \longrightarrow
        \syz{J}
    \longrightarrow
        S(-\alpha-\beta-\gamma) \oplus S(-a) \oplus S(-b) \oplus S(-c)
    \longrightarrow
        J
    \longrightarrow
        0
\end{equation}
using the, possibly non-minimal, set of generators $\{x^a, y^b, (x+y)^c, x^\alpha y^\beta (x+y)^\gamma\}$ of $J$. Then
$\syz{J} \cong S(p) \oplus S(q) \oplus S(r)$, where $(p, q, r)$ is the generic splitting type of the vector bundle
$\widetilde\syz{I}$. The Castelnuovo-Mumford regularity of the ideal $J$ is $\reg{J}= 1 + \reg S/J$.

For later use we record the following facts.

\begin{remark} \label{rem:splitting-type}
    Adopt the above notation. Then the following statements hold:
    \begin{enumerate}
        \item Using, for example,  the Sequence~(\ref{eqn:syz-J}), one gets  $-(p + q + r) = a+b+c+\alpha+\beta+\gamma$.
        \item If any of the generators of $J$ is extraneous, then the degree of that generator is one of $-p$, $-q$, or $-r$.
        \item As the regularity of $J$ is determined by the Betti numbers of $S/J$, we obtain that
            $\reg{J} + 1 = \max\{-p,-q,-r\}$ if the Sequence~(\ref{eqn:syz-J}) is a minimal free resolution of $J$.
    \end{enumerate}
\end{remark}

Before moving on, we prove a technical but useful lemma.

\begin{lemma} \label{lem:reg-2AMACI}
    Let $S = K[x,y]$, where $K$ is a field of characteristic zero. Consider the ideal $\fa = (x^a, y^b, x^\alpha y^\beta (x+y)^\gamma)$
    of $S$, and assume that the given generating set is minimal. Then $\reg{\fa}$ is
    \[
            -1 + \max \left \{a+ \beta, b+\alpha, \min \left \{a+b, a+ \beta + \gamma, b+ \alpha + \gamma,
                \left\lceil \frac{1}{2}(a+b+ \alpha + \beta + \gamma)\right\rceil  \right \} \right\}.
    \]
\end{lemma}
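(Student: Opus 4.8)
The plan is to compute $\reg \fa$ by realizing $\fa$, after substituting $x+y$ for $z$, as the restriction to a general line of the syzygy bundle of a suitable monomial ideal in $R = K[x,y,z]$, and then to apply the triangular-region machinery. Concretely, set $z := x+y$ so that $\fa = (x^a, y^b, x^\alpha y^\beta z^\gamma) \cap S$ is the image of the monomial ideal $I' = (x^a, y^b, x^\alpha y^\beta z^\gamma)$ of $R$ after the substitution; this is the ideal that governs the cokernel of multiplication by $\ell = x+y+z$ on $R/I'$ in every degree. By Proposition~\ref{prop:interp-N}, for each degree $d$ the dimension $\dim_K [S/\fa]_{d-1}$ is controlled by the lattice path matrix $N(T_d(I'))$, and hence $\reg(S/\fa)$ is the largest $d-1$ for which $N(T_d(I'))$ fails to have maximal rank in the appropriate direction. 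Since $\fa$ has only three generators, $T_d(I')$ is a triangular region with exactly three punctures: two corner punctures of side lengths $d-a$ and $d-b$ (from $x^a$ and $y^b$), and one non-corner puncture of side length $d-(\alpha+\beta+\gamma)$ (from $x^\alpha y^\beta z^\gamma$) located $\alpha$ rows from the bottom, $\gamma$ units from the upper-left edge, $\beta$ units from the upper-right edge.

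The key combinatorial step is to determine the range of degrees $d$ in which $T_d(I')$ is unbalanced in each direction, and, in the balanced range, whether $Z(T_d(I'))$ (equivalently $N(T_d(I'))$, by Theorem~\ref{thm:detZN}) is regular. The minimality assumption on the generating set of $\fa$ translates, via Lemma~\ref{lem:lcm-overlap} applied to the three pairwise least common multiples $x^{\max(a,\alpha)} y^\beta z^\gamma$, $x^\alpha y^{\max(b,\beta)} z^\gamma$, $x^a y^b$, into conditions on when the three punctures overlap. First I would run the elementary count: $T_d(I')$ is $\dntri$-heavy precisely when the non-corner puncture is ``too big,'' i.e. when $d - (\alpha+\beta+\gamma)$ forces a $\dntri$-heavy monomial subregion; this happens for $d$ below the threshold $\min\{a+b,\, a+\beta+\gamma,\, b+\alpha+\gamma,\, \lceil \tfrac12(a+b+\alpha+\beta+\gamma)\rceil\}$ — exactly the ``$\min$'' appearing in the statement. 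Symmetrically, $T_d(I')$ is $\uptri$-heavy once $d$ is large, namely once $d > \max\{a+\beta, b+\alpha\}$ (the socle degrees $a+\beta+\gamma-3$ type bounds), at which point (by an argument as in the proof of Proposition~\ref{pro:C-is-zero}, or by Proposition~\ref{prop:same-sign} applied to the simply-connected region) all the relevant maximal minors are $\pm 1$ and $N(T_d(I'))$ has maximal rank. Between these two thresholds $T_d(I')$ is balanced; here the region is simply-connected (no floating punctures), so Proposition~\ref{pro:same-sign} gives $\per Z(T_d(I')) = |\det Z(T_d(I'))|$, and tileability of $T_d(I')$ — which holds here by Theorem~\ref{thm:tileable}, since a three-punctured region with no $\dntri$-heavy monomial subregion is tileable — yields $\det Z(T_d(I')) \neq 0$ over a characteristic-zero field. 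Thus in the balanced range multiplication by $\ell$ already has maximal rank.

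Putting these together: $N(T_d(I'))$ fails maximal rank exactly in the $\dntri$-heavy range, so $\reg(S/\fa) + 1$ equals the largest such $d$, which is
\[
    \max\left\{a+\beta,\; b+\alpha,\; \min\left\{a+b,\; a+\beta+\gamma,\; b+\alpha+\gamma,\; \left\lceil \tfrac12(a+b+\alpha+\beta+\gamma)\right\rceil\right\}\right\} - 1,
\]
using $\reg \fa = 1 + \reg(S/\fa)$ and accounting for the degree shift between $[S/\fa]_{d-1}$ and $[S/\fa]_d$; the outer $\max$ with $a+\beta$ and $b+\alpha$ enters because even when the balanced/$\dntri$-heavy interplay is resolved, the module $S/\fa$ can have nonzero components (socle contributions) up to those degrees, coming from the two ``outer'' monomial subregions of $T_d(I')$ cut off by the non-corner puncture — this is precisely the decomposition used in the proof of Theorem~\ref{thm:type-two}, splitting $T_d(I')$ into an upper hexagon associated to a complete intersection and a lower trapezoid. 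The main obstacle I anticipate is the careful bookkeeping at the boundary between the two regimes: one must check that the transition degree is handled correctly (whether the critical $d$ lands in the $\dntri$-heavy or balanced range when the $\min$ and $\max$ inside the formula coincide or are off by one), and that the ceiling function in the $\min$ term arises correctly from the balanced-region threshold $\frac12(a+b+\alpha+\beta+\gamma)$ — this is the same subtlety as the integrality of $d$ in Proposition~\ref{pro:amaci-balanced}, and I would dispatch it by a direct Hilbert-function comparison via Lemma~\ref{lem:h-ci} applied to the two complete-intersection subregions $T^u$ and $T^l$.
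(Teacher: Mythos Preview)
Your approach is genuinely different from the paper's, and it contains a real error. The paper's proof is entirely elementary and never touches the triangular-region machinery: it uses the colon-ideal identity $\fa : x^\alpha y^\beta = (x^{a-\alpha}, y^{b-\beta}, (x+y)^\gamma)$ to obtain the short exact sequence
\[
0 \longrightarrow \bigl[S/(x^{a-\alpha}, y^{b-\beta}, (x+y)^\gamma)\bigr](-\alpha-\beta) \stackrel{\cdot\, x^\alpha y^\beta}{\longrightarrow} S/\fa \longrightarrow S/(x^a, y^b, x^\alpha y^\beta) \longrightarrow 0,
\]
whence $\reg \fa = \max\{\alpha+\beta + \reg(x^{a-\alpha}, y^{b-\beta}, (x+y)^\gamma),\; \reg(x^a, y^b, x^\alpha y^\beta)\}$. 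The first regularity is read off from the strong Lefschetz property of $S/(x^{a-\alpha}, y^{b-\beta})$ in characteristic zero (this yields the inner ``$\min$'' term after the shift by $\alpha+\beta$), and the second from the explicit minimal resolution of a two-variable monomial staircase (this yields $\max\{a+\beta, b+\alpha\}$). That is a two-line calculation where your route would take several pages.

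The concrete gap in your argument is that you have the heaviness directions reversed. The ideal $I' = (x^a, y^b, x^\alpha y^\beta z^\gamma)$ is not Artinian in $R$; the quotient $R/I'$ is one-dimensional, and from the sequence $0 \to [R/(x^{a-\alpha}, y^{b-\beta})](-\alpha-\beta-\gamma) \to R/(x^a, y^b) \to R/I' \to 0$ its Hilbert function is eventually constant. Hence $T_d(I')$ is $\uptri$-heavy for \emph{small} $d$ (it is essentially $\mathcal{T}_d$) and \emph{balanced} for all large $d$ --- never $\uptri$-heavy for large $d$ as you assert. Moreover, it is the $\uptri$-heavy range, not the $\dntri$-heavy one, that automatically forces $[S/\fa]_{d-1}\neq 0$: when $h_{R/I'}(d-1) > h_{R/I'}(d-2)$ the multiplication map $\times\ell$ cannot be surjective. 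Your identification of which range contributes to $\reg\fa$ is therefore inverted. The triangular-region approach can be salvaged once this is fixed --- one must then verify surjectivity of $\times\ell$ throughout the balanced and any $\dntri$-heavy range, which does follow from Proposition~\ref{prop:same-sign} (the non-corner puncture has nothing in its shadow) together with a tileability or restricted-maximal-minor argument --- but this is considerably more labor than the paper's colon-ideal trick, and you would still need the Hilbert-function computation of $R/I'$ (equivalently the paper's Steps~1 and~2) to pin down the thresholds.
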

\begin{proof}
    We proceed in three steps.

    First, considering the minimal free resolution of the ideal $(x^a, y^b, x^\alpha y^\beta)$, we conclude
    \[
        \reg (x^a, y^b, x^\alpha y^\beta) = -1 + \max \{a+ \beta, b+\alpha\}.
    \]

    Second, the algebra $S/(x^a, y^b)$ has the strong Lefschetz property in characteristic zero (see, e.g.,
    \cite[Proposition~4.4]{HMNW}). Thus, the Hilbert function of $S/(x^a, y^b, (x+y)^\gamma)$ is
    \[
        \dim_K{[S/(x^a, y^b, (x+y)^\gamma)]_j} = \max\{0, \dim_K{[S/(x^a, y^b)]_j} - \dim_K{[S/(x^a,y^b)]_{j-\gamma}}\}.
    \]
    By analyzing when the difference becomes non-positive, we get that
    \begin{equation}\label{eq:reg-restr-ci}
        \reg (x^a, y^b, (x+y)^\gamma) = -1 + \min \left \{a+b, a+ \gamma, b+\gamma, \left\lceil \frac{1}{2}(a+b+\gamma)\right\rceil \right \}.
    \end{equation}

    Third, notice that
    \[
        (x^a, y^b, x^\alpha y^\beta (x+y)^\gamma):x^\alpha y^\beta = (x^{a-\alpha}, y^{b-\beta}, (x+y)^\gamma).
    \]
    Hence, multiplication by $x^\alpha y^\beta$ induces the short exact sequence
    \[
        0 \rightarrow [S/(x^{a-\alpha}, y^{b-\beta}, (x+y)^\gamma)](-\alpha-\beta) \stackrel{\times x^\alpha y^\beta}{\longrightarrow}
        S/\fa \rightarrow S/(x^a, y^b, x^\alpha y^\beta) \rightarrow 0.
    \]
    It implies
    \[
        \reg{\fa} = \max\{\alpha + \beta + \reg{(x^{a-\alpha}, y^{b-\beta}, (x+y)^\gamma)},
        \reg{(x^a, y^b, x^\alpha y^\beta)}\}.
    \]

    Using the first two steps, the claim follows.
\end{proof}

Recall that Proposition~\ref{pro:amaci-semistable} gives a characterisation of the semistability of the syzygy bundle
$\widetilde\syz{I_{a,b,c,\alpha,\beta,\gamma}}$, using only the parameters $a$, $b$, $c$, $\alpha$, $\beta$, and
$\gamma$. We determine the splitting type of $\widetilde\syz{I_{a,b,c,\alpha,\beta,\gamma}}$ for the nonsemistable and
the semistable cases separately.

~\subsubsection{Nonsemistable syzygy bundle}~

We first consider the case when the syzygy bundle is not semistable, and therein we distinguish four cases. It turns out
that in three cases, at least one of the generators of the ideal $J$ is extraneous.

\begin{proposition} \label{pro:st-nss}
    Consider the ideal $I = I_{a,b,c,\alpha,\beta,\gamma} = (x^a, y^b, z^c, x^\alpha y^\beta z^\gamma)$ with four
    minimal generators. Assume that the base field $K$ has characteristic zero and, without loss of generality, that
    $a \leq b \leq c$. Set $d := \frac{1}{3}(a+b+c+\alpha+\beta+\gamma)$, and denote by $(p, q, r)$ the generic splitting
    type of $\widetilde\syz{I}$. Assume that $\widetilde\syz{I}$ is not semistable. Then:
    \begin{enumerate}
        \item If $\min \{\alpha + \beta + \gamma, c\} \geq a+b -1$, then
            \[
                (p, q, r) = (-c, -\alpha - \beta - \gamma, -a-b).
            \]
        \item Assume $\min \{\alpha + \beta + \gamma, c\} \leq a+b -2$ and
            \[
                \frac{1}{2}(a+b+ c) \leq \min \left \{a+ \beta + \gamma, b+ \alpha + \gamma, c + \beta + \gamma,
                    \frac{1}{2}(a+b+ \alpha + \beta + \gamma) \right \}.
            \]
            Then
            \[
                (p, q, r) = (-\alpha-\beta-\gamma, - \left\lceil \frac{1}{2}(a+b+c) \right\rceil,
                    - \left\lfloor \frac{1}{2}(a+b+c) \right\rfloor).
            \]
        \item Assume $\min \{\alpha + \beta + \gamma, c\} \leq a+b -2$ and
            \[
                \frac{1}{2}(a+b+ \alpha + \beta + \gamma)  \leq \min \left \{a+ \beta + \gamma, b+ \alpha + \gamma,
                    c + \beta + \gamma, \frac{1}{2}(a+b+ c)  \right \}.
            \]
            Then
            \[
                (p, q, r) =  (-c, q, -a-b-\alpha-\beta-\gamma+q),
            \]
            where $- q = \min \left \{a+ \beta + \gamma, b+ \alpha + \gamma,
            \left\lceil \frac{1}{2}(a+b+ \alpha + \beta + \gamma)\right\rceil  \right \}$.
        \item Assume $\min \{\alpha + \beta + \gamma, c\} \leq a+b -2$ and
            \begin{equation*}
                \begin{split}
                    -r = \min \left \{a+ \beta + \gamma, b+ \alpha + \gamma, c + \beta + \gamma  \right \} < \hspace*{5cm} \\
                    \min \left \{ \frac{1}{2}(a+b+ \alpha + \beta + \gamma), \frac{1}{2}(a+b+ c) \right \}.
                \end{split}
            \end{equation*}
            Then
            \begin{equation*}
                (p, q, r) = \left (  \left\lfloor \frac{1}{2}(-3d-r) \right\rfloor,  \left\lceil \frac{1}{2}(-3 d - r) \right\rceil, r \right).
            \end{equation*}
    \end{enumerate}
\end{proposition}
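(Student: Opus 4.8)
The plan is to compute the generic splitting type $(p,q,r)$ of $\widetilde\syz{I}$ via the restriction to the line $H = V(x+y+z)$, which by Proposition~\ref{pro:mono} is a general line. Concretely, set $S = K[x,y]$ and $J = (x^a, y^b, (x+y)^c, x^\alpha y^\beta (x+y)^\gamma)$, so that $\widetilde\syz{I}|_H \cong \SO_H(p)\oplus\SO_H(q)\oplus\SO_H(r)$ is the sheafification of the graded $S$-module $\syz{J}$ defined by Sequence~(\ref{eqn:syz-J}). Since $S$ is a polynomial ring in two variables, $\syz{J}$ is a free $S$-module of rank $3$ (the kernel of a map of free modules onto an ideal of a two-dimensional regular ring), so $\syz{J}\cong S(p)\oplus S(q)\oplus S(r)$, and Remark~\ref{rem:splitting-type}(i) fixes $p+q+r = -(a+b+c+\alpha+\beta+\gamma) = -3d$. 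The task in each of the four cases is to pin down the remaining data, and the key principle is Remark~\ref{rem:splitting-type}(ii)--(iii): if a generator of $J$ is extraneous (redundant), its degree is one of $-p,-q,-r$; and when Sequence~(\ref{eqn:syz-J}) is a minimal free resolution, $\reg J + 1 = \max\{-p,-q,-r\}$, so the regularity computation from Lemma~\ref{lem:reg-2AMACI} (together with the analogous restricted complete intersection regularity, Equation~\eqref{eq:reg-restr-ci}) identifies the largest among $-p,-q,-r$.

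The four cases should be handled as follows. In case (i), the hypothesis $\min\{\alpha+\beta+\gamma,c\}\ge a+b-1$ forces \emph{two} of the four generators of $J$ to be extraneous: the socle/Betti structure of $(x^a,y^b)$ shows every element of $[S]_{j}$ for $j\ge a+b-1$ lies in $(x^a,y^b)$, hence both $(x+y)^c$ and $x^\alpha y^\beta(x+y)^\gamma$ are redundant modulo $(x^a,y^b)$. Thus $-p,-q$ equal their degrees $c$ and $\alpha+\beta+\gamma$, and the third exponent is $-r = 3d - c - (\alpha+\beta+\gamma) = a+b$ by the degree sum. (Here one must also check that the non-semistability hypothesis guarantees the ordering is consistent — non-semistability in this regime is precisely what makes the bundle so unbalanced.) In case (ii), the generator $x^\alpha y^\beta (x+y)^\gamma$ is extraneous (its degree $\alpha+\beta+\gamma$ being small relative to $\lceil\tfrac12(a+b+c)\rceil$, one checks it lies in $(x^a,y^b,(x+y)^c)$), so $-p = \alpha+\beta+\gamma$, and $\syz{J}$ then has the same sheafification as the syzygy bundle of the restricted complete intersection $(x^a,y^b,(x+y)^c)$ in two variables; since $S/(x^a,y^b)$ has the strong Lefschetz property in characteristic zero (\cite[Proposition~4.4]{HMNW}), the splitting type of that rank-two piece is the balanced pair $(-\lceil\tfrac12(a+b+c)\rceil, -\lfloor\tfrac12(a+b+c)\rfloor)$ — this is the standard Grauert--M\"ulich-type balancing for semistable rank-two bundles, which applies because the relevant condition on $(a,b,c)$ puts us in the balanced regime. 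In case (iii), instead the generator $(x+y)^c$ is extraneous (its degree $c\le a+b-2$ is absorbed), giving $-p = c$; the remaining rank-two quotient is the syzygy bundle of $(x^a, y^b, x^\alpha y^\beta(x+y)^\gamma)$, so $-q$ is its regularity plus one, which by Lemma~\ref{lem:reg-2AMACI} equals $\min\{a+\beta+\gamma, b+\alpha+\gamma, \lceil\tfrac12(a+b+\alpha+\beta+\gamma)\rceil\}$ in this subcase, and then $r$ is forced by $p+q+r=-3d$. Finally in case (iv) no generator need be extraneous and Sequence~(\ref{eqn:syz-J}) is the minimal free resolution; here $-r = \min\{a+\beta+\gamma,\,b+\alpha+\gamma,\,c+\beta+\gamma\}$ is the strict minimum (one verifies it is indeed the largest of the $-p,-q,-r$ via Remark~\ref{rem:splitting-type}(iii) and Lemma~\ref{lem:reg-2AMACI}), and once $r$ is known the remaining pair $(p,q)$ sums to $-3d-r$ and must be as balanced as possible, i.e. $(\lfloor\tfrac12(-3d-r)\rfloor, \lceil\tfrac12(-3d-r)\rceil)$, because the sub-bundle obtained by quotienting out the sub-line-bundle $\SO(r)$ is a semistable rank-two bundle and hence splits with consecutive twists.

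The main obstacle I anticipate is not identifying which generator is extraneous — that is a direct consequence of the regularity/Hilbert-function inequalities in the hypotheses — but rather justifying the \emph{balancing} of the residual rank-two bundle in cases (ii) and (iv). For rank-two bundles on $\PP^2$ that are semistable, the generic splitting type is forced to be $(\lfloor -c_1/2\rfloor, \lceil -c_1/2\rceil)$, but one must establish semistability of the residual bundle; this follows from Proposition~\ref{pro:amaci-semistable} (or the structure of restricted complete intersections and the strong Lefschetz property of $S/(x^a,y^b)$ in characteristic zero, which controls the Hilbert function of the quotient and hence the degrees of the syzygies). A secondary, more bookkeeping-heavy obstacle is verifying that the three subcases (ii), (iii), (iv) together with (i) genuinely exhaust the non-semistable locus under the running assumption $a\le b\le c$ — this requires unwinding the negations of conditions (i)--(iii) of Proposition~\ref{pro:amaci-semistable} and matching them to the four dichotomies stated, which is routine but must be done carefully to be sure no non-semistable configuration is missed.
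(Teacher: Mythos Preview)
Your outline for cases (i)--(iii) is essentially the paper's approach: identify which generator of $J$ is redundant, invoke Remark~\ref{rem:splitting-type}(ii) to pin one of the exponents, and read off the remaining two from the regularity of the three-generated ideal that is left (via Equation~\eqref{eq:reg-restr-ci} or Lemma~\ref{lem:reg-2AMACI}). One minor slip: in case (ii) the mixed generator is extraneous because its degree $\alpha+\beta+\gamma$ is \emph{large} (at least $\reg(x^a,y^b,(x+y)^c)+1$), not small; non-semistability gives $\alpha+\beta+\gamma>\tfrac12(a+b+c)$. Also, the paper does not invoke Grauert--M\"ulich for the balancing in case (ii): once $J=(x^a,y^b,(x+y)^c)$, its minimal free resolution over $S$ is rank two with syzygy degrees summing to $a+b+c$ and maximum equal to $\reg J+1=\lceil\tfrac12(a+b+c)\rceil$, which forces the pair directly.

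Case (iv), however, has a genuine gap. You assert that $-r$ is the \emph{largest} of $-p,-q,-r$ and can be read off from $\reg J$ via Remark~\ref{rem:splitting-type}(iii) and Lemma~\ref{lem:reg-2AMACI}. This is backwards: in the displayed formula $r$ is the largest of $p,q,r$ (since $r>-d>q\ge p$), so $-r$ is the \emph{smallest} of the three, not $\reg J+1$. Moreover, Lemma~\ref{lem:reg-2AMACI} concerns three-generated ideals, but in case (iv) the ideal $J$ can be minimally four-generated (see Example~\ref{exa:st-nss-4mingen}), so there is no extraneous generator to peel off and no direct regularity formula available. Working purely on the line cannot produce the balancing of $(p,q)$ either: on $H$ every bundle splits and ``semistability of the quotient'' has no content there.

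The paper's argument in case (iv) is essentially different and takes place on $\PP^2$, not on $H$. Using \cite[Theorem~6.3]{Br}, the maximal slope of a subsheaf of $\widetilde\syz{I}$ equals $-\mu$ with $\mu=\min\{a+\beta+\gamma,\,b+\alpha+\gamma,\,c+\beta+\gamma\}$ in this case; this slope is realised by the Koszul syzygy between the two generators whose lcm has degree $\mu$, giving a sub-line-bundle $\mathcal{O}_{\PP^2}(r)\hookrightarrow\widetilde\syz{I}$ with $r=-\mu$. The Harder--Narasimhan filtration then yields an exact sequence $0\to\mathcal{O}_{\PP^2}(r)\to\widetilde\syz{I}\to\mathcal{E}\to 0$ with $\mathcal{E}$ a \emph{semistable} torsion-free sheaf of rank two and $c_1(\mathcal{E})=-3d-r$; Grauert--M\"ulich applied to $\mathcal{E}^{**}$ forces its generic splitting type to be $(\lfloor\tfrac12(-3d-r)\rfloor,\lceil\tfrac12(-3d-r)\rceil)$, and restricting the sequence to a general line gives the claimed $(p,q,r)$. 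The missing ingredients in your plan are precisely Brenner's identification of the maximal destabilising slope and the passage through the Harder--Narasimhan filtration on $\PP^2$.
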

\begin{proof}
    Set
    \begin{equation*}
        \mu = \min \left \{a+b, a+ \beta + \gamma, b+ \alpha + \gamma, c + \beta + \gamma,   \frac{1}{2}(a+b+ \alpha + \beta + \gamma), \frac{1}{2}(a+b+ c) \right \}.
    \end{equation*}

    Using $a \leq b \leq c$, \cite[Theorem 6.3]{Br} implies that the maximal slope of a subsheaf of $\widetilde\syz{I}$
    is $-\mu$. Since $\widetilde\syz{I}$ is not semistable, we have $\mu < d$ (see
    Proposition~\ref{pro:amaci-semistable}). Moreover, the generic splitting type of $\widetilde\syz{I}$ is determined
    by the minimal free resolution of $J = (x^a, y^b, (x+y)^c, x^\alpha y^\beta (x+y)^\gamma)$ as a module over $S = K[x,
    y]$. We combine both approaches to determine the generic splitting type.

    Since $\reg (x^a, y^b) = a+b-1$, all polynomials in $S$ whose degree is at least $a+b-1$ are contained in $(x^a,
    y^b)$. Hence, $J = (x^a, y^b)$ if $\min \{\alpha + \beta + \gamma, c\} \geq a+b -1$, and the claim in case (i)
    follows by Remark~\ref{rem:splitting-type}.

    For the remainder of the proof, assume $\min \{\alpha + \beta + \gamma, c\} \leq a+b -2$. Then $a+b > \frac{1}{2}(a+b+ c)$,
    and thus $\mu \neq a+b$.

    In case (ii), it follows that $\mu = \frac{1}{2}(a+b+ c)$ and $c \leq \alpha + \beta + \gamma$, and thus $c \leq a+b-2$.
    Using Equation \eqref{eq:reg-restr-ci}, we conclude that
    \[
        \reg (x^a, y^b, (x+y)^c) = -1 + \min \left \{a+b,  \left\lceil \frac{1}{2}(a+b+c)\right\rceil \right \} = -1 + \left\lceil \frac{1}{2}(a+b+c)\right\rceil.
    \]
    Observe now that $d > \mu = \frac{1}{2}(a+b+ c)$ is equivalent to $\alpha + \beta + \gamma > \frac{1}{2}(a+b+ c)$.
    This implies $\alpha + \beta + \gamma > \reg (x^a, y^b, (x+y)^c)$, and thus $J = (x^a, y^b, (x+y)^c)$. Using
    Remark~\ref{rem:splitting-type} again, we get the generic splitting type of $\widetilde\syz{I}$ as claimed in (ii).

    Consider now case (iii). Then $d > \mu = \frac{1}{2}(a+b+ \alpha + \beta + \gamma)$, which gives $c >
    \frac{1}{2}(a+b+ \alpha + \beta + \gamma)$. The second assumption in this case also implies $\frac{1}{2}(a+b+ \alpha
    + \beta + \gamma) \leq a + \beta + \gamma$, which is equivalent to $b+\alpha \leq a + \beta + \gamma$ and also to $b +
    \alpha \leq \frac{1}{2}(a+b+ \alpha + \beta + \gamma)$. Similarly, we have that $\frac{1}{2}(a+b+ \alpha + \beta +
    \gamma) \leq b+\alpha + \gamma$, which is equivalent to $a + \beta \leq b+ \alpha + \gamma$ and also to $a + \beta \le
    \frac{1}{2}(a+b+ \alpha + \beta + \gamma)$. It follows that
    \[
        \max \{a+\beta, b+ \alpha \} \leq  \min \left \{a+ \beta + \gamma, b+ \alpha + \gamma, \frac{1}{2}(a+b+ \alpha + \beta + \gamma)  \right \}.
    \]
    Hence Lemma~\ref{lem:reg-2AMACI} yields
    \begin{equation*}
        \begin{split}
            \reg (x^a, y^b,  x^\alpha y^\beta (x+y)^\gamma) =   \hspace*{9.7cm} \\
            -1 +   \min \left \{a+ \beta + \gamma, b+ \alpha + \gamma, \left\lceil \frac{1}{2}(a+b+ \alpha + \beta + \gamma)\right\rceil  \right \} < c.
        \end{split}
    \end{equation*}
    This shows that $(x+y)^c \in (x^a, y^b, x^\alpha y^\beta (x+y)^\gamma) = J$. Setting $- q = 1 + \reg J$,
    Remark~\ref{rem:splitting-type} provides the generic splitting type in case (iii).

    Finally consider case (iv). Then $\mu = -r$, and $\mu$ is equal to the degree of the least common multiple of two of
    the minimal generators of $I$. In fact, $-\mu = r$ is the slope of the syzygy bundle ${\mathcal O}_{\PP^2}(r)$ of
    the ideal generated by these two generators. Thus, the Harder-Narasimhan filtration (see
    \cite[Definition~1.3.2]{HM}) gives an exact sequence
    \[
        0 \to {\mathcal O}_{\PP^2}(r) \to \widetilde\syz{I} \to {\mathcal E} \to 0,
    \]
    where ${\mathcal E}$ is a semistable torsion-free sheaf on $\PP^2$ of rank two and first Chern class $-a-b-c -
    \alpha - \beta - \gamma -r = -3d -r$. Its bidual ${\mathcal E}^{**}$ is a stable vector bundle. Thus, by the theorem
    of Grauert and M\"ulich (see \cite{GM} or \cite[Corollary 1 of Theorem 2.1.4]{OSS}), its generic splitting type is
    $( \left\lfloor \frac{1}{2}(-3d-r) \right\rfloor, \left\lceil \frac{1}{2}(-3 d - r) \right\rceil)$. Now the claim
    follows by restricting the above sequence to a general line of $\PP^2$.
\end{proof}

We have seen that the ideal $J = (x^a, y^b, (x+y)^c, x^\alpha y^\beta (x+y)^\gamma)$ has at most three minimal
generators in the cases (i) - (iii) of the above proposition. In the fourth case, the associated ideal $J \subset S$ may
be minimally generated by four polynomials.

\begin{example} \label{exa:st-nss-4mingen}
    Consider the ideal
    \[
        I = I_{4,5,5,3,1,1} = (x^4, y^5, z^5, x^3yz).
    \]
    Then the corresponding ideal $J$ is minimally generated by $x^4, y^5, (x+y)^5$, and $x^3y(x+y)$. The syzygy bundle
    of $\widetilde\syz{I}$ is not semistable, and its generic splitting type is $(-7, -6, -6)$ by
    Proposition~\ref{pro:st-nss}(iv).
\end{example}~

\subsubsection{Semistable syzygy bundle}~

Order the entries of the generic splitting type $(p,q,r)$ of the semistable syzygy bundle $\widetilde\syz{I}$ such
that $p \leq q \leq r$. In this case, the splitting type determines the presence of the weak Lefschetz property (see
\cite[Theorem 2.2]{BK}). The following result is slightly more precise.

\begin{proposition}\label{prop:splitt-type-semist}
    Let $K$ be a field of characteristic zero, and assume the ideal $I = I_{a,b,c,\alpha,\beta,\gamma}$ has a semistable
    syzygy bundle. Set $k = \left\lfloor \frac{1}{3}(a+b+c+\alpha+\beta+\gamma) \right\rfloor$. Then the generic
    splitting type of $\widetilde\syz{I}$ is
    \begin{equation*}
        (p, q, r) =
        \begin{cases}
            (-k-1,-k,-k) & \text{if } a+b+c+\alpha+\beta+\gamma = 3k+1;\\
            (-k-1,-k-1,-k) & \text{if } a+b+c+\alpha+\beta+\gamma = 3k+2; \\
            (-k,-k,-k) & \text{if } a+b+c+\alpha+\beta+\gamma = 3k \text{ and} \\
                       & \text{$R/I$ has the weak Lefschetz property}; \\
            (-k-1,-k,-k+1) & \text{if } a+b+c+\alpha+\beta+\gamma = 3k \text{ and} \\
                       & \text{$R/I$ fails to have the weak Lefschetz property}.
        \end{cases}
    \end{equation*}
\end{proposition}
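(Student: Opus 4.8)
The plan is to determine the generic splitting type $(p,q,r)$ of $\widetilde\syz{I}$ by restricting to the line $H$ defined by $\ell = x+y+z$, which is legitimate since $I$ is a monomial ideal (the argument of Proposition~\ref{pro:mono} shows that a general line may be replaced by this one). On this line the syzygy bundle restricts to $\SO_H(p)\oplus\SO_H(q)\oplus\SO_H(r)$, and $(p,q,r)$ is read off from the module $\syz J$ in Sequence~\eqref{eqn:syz-J}, where $J = (x^a, y^b, (x+y)^c, x^\alpha y^\beta(x+y)^\gamma) \subset S = K[x,y]$. By Remark~\ref{rem:splitting-type}(i) we always have $p+q+r = -(a+b+c+\alpha+\beta+\gamma)$, so the three cases $3k+1$, $3k+2$, $3k$ are distinguished only by how evenly the weight is distributed.

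First I would observe that semistability forces the summands to be nearly balanced. Since $\widetilde\syz I$ is semistable of slope $\mu = -\tfrac{1}{3}(a+b+c+\alpha+\beta+\gamma)$, every line subbundle $\SO_H(t) \hookrightarrow \widetilde\syz I|_H$ must satisfy $t \le \mu$, hence $r \le \lfloor \mu\rfloor = \ldots$; combined with $p \le q \le r$ and the fixed sum this already pins down $(p,q,r)$ except possibly in the divisible case. Concretely: if $a+b+c+\alpha+\beta+\gamma = 3k+1$, then $r \le -k$ (as $\mu = -k - \tfrac13$) and $p+q+r = -3k-1$ with $p\le q\le r\le -k$ forces $(p,q,r) = (-k-1,-k,-k)$; similarly $3k+2$ gives $\mu = -k-\tfrac23$, so $r\le -k$ and the sum $-3k-2$ forces $(-k-1,-k-1,-k)$. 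These two cases are therefore immediate from semistability alone and do not require the weak Lefschetz property.

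The interesting case is $a+b+c+\alpha+\beta+\gamma = 3k$, where $\mu = -k$ and semistability only yields $r \le -k$, leaving the two candidates $(-k,-k,-k)$ and $(-k-1,-k,-k+1)$ (one checks $(-k-2,-k,-k+2)$ is excluded because then $\SO_H(-k+2)$ would be a destabilizing quotient, contradicting semistability; more precisely $\SO_H(-k+1)$ or higher as a subbundle would destabilize, so $r \le -k$, and a rank-one quotient of degree $\ge -k+2$ would correspond to a rank-two subbundle of slope $< \mu$, which is fine, but a summand $\SO_H(-k+2)$ forces $p = -k-2$ and then the rank-two subsheaf $\SO_H(-k-2)\oplus\SO_H(-k)$ has slope $-k-1 < \mu$, allowed — the real exclusion is that $\SO_H(-k+1)$ summand would give a subsheaf $\SO_H(-k+1)$ of slope $> \mu$). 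Granting that only $(-k,-k,-k)$ and $(-k-1,-k,-k+1)$ survive, the dichotomy is decided by whether $\widetilde\syz I|_H$ is balanced: the restriction is $\SO_H(-k)^{\oplus 3}$ if and only if the generic hyperplane restriction has no jump, which by the standard translation (cf.\ \cite[Theorem~2.2]{BK}) is equivalent to the multiplication map $\times\ell\colon [R/I]_{k-2}\to[R/I]_{k-1}$ having maximal rank — i.e.\ bijective, since by Proposition~\ref{pro:amaci-balanced} the region $T_k(I)$ is balanced so $h_{R/I}(k-2) = h_{R/I}(k-1)$. By Proposition~\ref{pro:amaci-balanced} again, this holds precisely when $\det Z(T_k(I)) \ne 0$ in $K$, which (the degrees of the socle generators of $R/I$ being at least $k-2$, hence Corollary~\ref{cor:twin-peaks-wlp} applies) is exactly the condition that $R/I$ has the weak Lefschetz property.

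The main obstacle I expect is making the jump-line/splitting-type correspondence precise and self-contained: one must argue carefully that in the balanced divisible case the \emph{only} possible deviation from $(-k,-k,-k)$ is $(-k-1,-k,-k+1)$ and that this deviation is detected precisely by the single multiplication map $\times\ell$ in degree $k-1$, rather than by maps in neighbouring degrees. This is where semistability does the real work: it confines all the summands to the window $\{-k-1,-k,-k+1\}$ and guarantees that the generic splitting type is constant, so that a single generic line — equivalently, by Proposition~\ref{pro:mono}, the line $\ell = x+y+z$ — suffices to read it off, and a single Hilbert-function jump (or its absence) in degree $k-1$ determines which of the two patterns occurs. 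Once this is in place, the four-way case split follows by assembling: the two non-divisible cases from the slope bound, and the divisible case from Proposition~\ref{pro:amaci-balanced} translating $\det Z(T_d(I)) \ne 0$ into the presence of the weak Lefschetz property.
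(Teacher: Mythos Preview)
Your core idea for the divisible case---tying the two candidate splitting types to the bijectivity of $\times\ell$ in degree $k-1$ via Proposition~\ref{pro:amaci-balanced}---is right and matches the paper. The gap is earlier: the inequality ``every line subbundle $\SO_H(t)\hookrightarrow\widetilde\syz I|_H$ must satisfy $t\le\mu$'' does \emph{not} follow from semistability of $\widetilde\syz I$ on $\PP^2$. A subbundle of the restriction to $H$ is not a subsheaf of the bundle on $\PP^2$, so the slope inequality for $\widetilde\syz I$ says nothing about it. (For a concrete counterexample, $T_{\PP^2}$ is stable of slope $3/2$, yet its restriction to any line is $\SO(2)\oplus\SO(1)$, with a sub-line-bundle of slope $2$.) Consequently your derivation of $r\le -k$ in the two non-divisible cases, and your exclusion of $(-k-2,-k,-k+2)$ in the divisible case, are not justified; your own parenthetical discussion already shows the argument unravelling there.

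What is actually needed is the Grauert--M\"ulich theorem: for a semistable bundle on $\PP^2$, the generic splitting type $(p,q,r)$ with $p\le q\le r$ satisfies $q-p\le 1$ and $r-q\le 1$. This is the input the paper uses, and once you have it the whole proposition falls out in one line: the sum constraint $p+q+r=-(a+b+c+\alpha+\beta+\gamma)$ together with the gap bound forces the stated triples in the $3k+1$ and $3k+2$ cases, and leaves exactly the two candidates $(-k,-k,-k)$ and $(-k-1,-k,-k+1)$ in the $3k$ case. Distinguishing those two then goes via $\reg J$: since the generators of $J$ all have degree $<k$ by Proposition~\ref{pro:amaci-semistable}(i), one has $\reg J+1=-p$, so $\reg J=k$ exactly when the splitting type is $(-k-1,-k,-k+1)$; and $\reg J\ge k$ is equivalent to $[S/J]_{k-1}\ne 0$, i.e.\ to the failure of the weak Lefschetz property in degree $k-1$, which by Proposition~\ref{pro:amaci-balanced} is the failure of the weak Lefschetz property tout court.
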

\begin{proof}
    The Grauert-M\"ulich theorem \cite{GM} gives that $r - q$ and $q - p$ are both nonnegative and at most 1. Moreover,
    $p, q$, and $r$ satisfy $a+b+c+\alpha+\beta+\gamma = -(p+q+r)$ (see Remark~\ref{rem:splitting-type}(i)). This gives
    the result if $k \neq d = \frac{1}{3}(a+b+c+\alpha+\beta+\gamma)$.

    It remains to consider the case when $k = d$. Then $(-k,-k,-k) $ and $(-k-1,-k,-k+1)$ are the only possible generic
    splitting types. By Proposition~\ref{pro:amaci-semistable}(i), the minimal generators of the ideal
    $J = (x^a, y^b, (x+y)^c, x^\alpha y^\beta (x+y)^\gamma)$ have degrees that are less than $d$. Hence $\reg J = d$ if and
    only if the splitting type of $\widetilde\syz{I}$ is $(-d-1, -d , -d+1)$. Since $\dim_K [R/I]_{d-2} = \dim_K [R/I]_{d-1}$,
    using Proposition~\ref{pro:amaci-balanced}, we conclude that $\reg J \geq d$ if and only if $R/I$ does not have the
    weak Lefschetz property.
\end{proof}

We are ready to add the missing piece in the proof of Theorem~\ref{thm:amaci-wlp}.

\begin{proof}[Completion of the proof of Theorem~\ref{thm:amaci-wlp}(b)(1)]
    \mbox{ }

    We have just seen that the ideal $J = (x^a, y^b, (x+y)^c, x^\alpha y^\beta (x+y)^\gamma)$ has regularity $d$ if
    $R/I$ fails the weak Lefschetz property. This implies that the multiplication map $\times (x+y+x): [R/I]_{j-2} \to [R/I]_{j-1}$
    is surjective whenever $j > d$. Moroever, since the minimal generators of $J$ have degrees that are less than $d$,
    the exact sequence
    \begin{equation*}
            0
        \longrightarrow
            S(-d+1) \oplus S(-d) \oplus S(-d-1)
        \longrightarrow
            S(-\alpha-\beta-\gamma) \oplus S(-a) \oplus S(-b) \oplus S(-c)
        \longrightarrow
            J
        \longrightarrow
            0
    \end{equation*}
    shows that $\dim_K [S/J]_{d-2} = 3$.

    In the above proof of Theorem~\ref{thm:amaci-wlp} we saw that the four punctures of $T_d (I)$ do not overlap and
    that $T_d(I)$ is balanced. Hence $T_{d-1} (I)$ has 3 more downward-pointing than upward-pointing triangles, that is,
    \[
        \dim_K [R/I]_{d-2} = \dim_K [R/I]_{d-3} + 3.
    \]
    It follows that the multiplication map in the exact sequence
    \[
        [R/I]_{d-3} \longrightarrow [R/I]_{d-2} \longrightarrow S/J \longrightarrow 0
    \]
    is injective. Since the degrees of the socle generators of $R/I$ are at least $d-2$, Corollary~\ref{cor:inj} gives
    that $\times (x+y+x): [R/I]_{j-2} \to [R/I]_{j-1}$ is injective whenever $j \leq d-1$.
\end{proof}

The second author would like to thank the authors of \cite{BMMMNW}; it was during a conversation in the preparation of that paper
that he learned about the use of the Grauert-M\"ulich theorem for deducing the injectivity of the map $[R/I]_{d-3} \longrightarrow [R/I]_{d-2}$
in the above argument.

\begin{example} \label{exa:syzygy}
    Consider the ideal $I_{7,7,7,3,3,3} = (x^7, y^7, z^7, x^3 y^3 z^3)$ which never has the weak Lefschetz property, by
    Theorem~\ref{thm:amaci-wlp}(vii). The generic splitting type of $\widetilde\syz{I_{7,7,7,3,3,3}}$ is $(-11, -10, -9)$.
    Notice that the similar ideal $I_{6,7,8,3,3,3} = (x^6, y^7, z^8, x^3 y^3 z^3)$ has the weak Lefschetz property
    in characteristic zero as $\det{N_{6,7,8,3,3,3}} = -1764$. The generic splitting type of
    $\widetilde\syz{I_{6,7,8,3,3,3}}$ is $(-10,-10, -10)$.
\end{example}

We summarise part of our results for the case where $I$ is associated to a tileable triangular region. Then the weak
Lefschetz property is equivalent to several other conditions.

\begin{theorem} \label{thm:equiv}
    Let $I = I_{a,b,c,\alpha,\beta,\gamma} \subset R = K[x,y,z]$, where $K$ is an infinite field of arbitrary
    characteristic. Assume $I$ satisfies conditions (i)--(iv) in Theorem~\ref{thm:amaci-wlp} and
    $d := \frac{1}{3}(a+b+c+\alpha+\beta+\gamma)$ is an integer.
    Then the following conditions are equivalent:
    \begin{enumerate}
        \item The algebra $R/I$ has the weak Lefschetz property.
        \item The determinant of $N(T_d(I))$ (i.e., the enumeration of signed lozenge tilings of $T_d(I)$)
            is not zero in $K$.
        \item The determinant of $Z(T_d(I))$ (i.e., the enumeration of signed perfect matchings of $G(T_d(I)$) is not
            zero in $K$.
        \item The generic splitting type of $\widetilde\syz{I}$ is $(-d,-d,-d)$.
    \end{enumerate}
\end{theorem}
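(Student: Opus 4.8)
The plan is to read Theorem~\ref{thm:equiv} off the structure of the triangular region $T = T_d(I)$, using $\ell = x+y+z$ as the universal Lefschetz candidate (Proposition~\ref{pro:mono}) and the ``twin peaks'' criterion to collapse the weak Lefschetz property to a single multiplication map. First I would record what conditions (i)--(iv) of Theorem~\ref{thm:amaci-wlp} provide. Since $d$ is an integer, the four punctures of $T$ have nonnegative side lengths $d-a,\ d-b,\ d-c,\ d-(\alpha+\beta+\gamma)$ summing to $d$, so $T$ is perfectly-punctured. Conditions (ii) and (iii) say exactly that the degree of the least common multiple of any two minimal generators of $I$ is at least $d$, hence by Lemma~\ref{lem:lcm-overlap} no two punctures of $T$ overlap; therefore $T$ is balanced (a perfectly-punctured region with non-overlapping punctures is balanced), and running the argument of Corollary~\ref{cor:socle-degree-bound} with the bound $d$ in place of $d+1$ on least common multiples shows that the socle generators of $R/I$ all have degree $\ge d-2$. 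Since $T$ is balanced we have $h_{R/I}(d-2) = h_{R/I}(d-1)$, so Corollary~\ref{cor:twin-peaks-wlp} (in degree $d-1$), together with Proposition~\ref{pro:mono}, shows that $R/I$ has the weak Lefschetz property if and only if the map $\times(x+y+z)\colon [R/I]_{d-2} \to [R/I]_{d-1}$ is bijective.

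Next I would prove (i)$\Leftrightarrow$(ii)$\Leftrightarrow$(iii). By Proposition~\ref{prop:interp-Z} (equivalently Corollary~\ref{cor:max-Z}), that map is bijective precisely when the square matrix $Z(T)$ is invertible over $K$, i.e.\ when $\det Z(T) \ne 0$ in $K$; and $\det Z(T)$ is the signed enumeration of perfect matchings of $G(T)$ by Theorem~\ref{thm:pm-matrix}. This gives (i)$\Leftrightarrow$(iii). Because $T$ is balanced, Theorem~\ref{thm:detZN} yields $|\det Z(T)| = |\det N(T)|$; the two integers thus have the same prime divisors, so $\det Z(T) \ne 0$ in $K$ if and only if $\det N(T) \ne 0$ in $K$, for every field $K$. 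Combined with Theorem~\ref{thm:nilp-matrix}, which identifies $\det N(T)$ with the signed enumeration of families of non-intersecting lattice paths in $L(T)$ (equivalently of signed lozenge tilings of $T$), this gives (ii)$\Leftrightarrow$(iii).

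Finally I would establish (i)$\Leftrightarrow$(iv). Passing modulo a general linear form, $R/(I,\ell) \cong S/J$ with $S = K[x,y]$ and $J = (x^a, y^b, (x+y)^c, x^\alpha y^\beta (x+y)^\gamma)$, and by the discussion preceding Remark~\ref{rem:splitting-type} the free $S$-module in $\syz{J} \cong S(p) \oplus S(q) \oplus S(r)$ records the generic splitting type $(p,q,r)$ of $\widetilde\syz{I}$, with $p+q+r = -3d$. Taking vector space dimensions in degree $d-1$ along the sequence~\eqref{eqn:syz-J}: each generator of $J$ has degree $\le d$ by condition (i), so $\dim_K[S(-\alpha-\beta-\gamma)\oplus S(-a)\oplus S(-b)\oplus S(-c)]_{d-1} = \sum (d - \deg) = d$, while $\dim_K[\syz{J}]_{d-1} = \max\{0,d+p\} + \max\{0,d+q\} + \max\{0,d+r\}$. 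Hence $\dim_K[J]_{d-1} = d$, i.e.\ $[S/J]_{d-1} = 0$, if and only if $d+p,\ d+q,\ d+r$ are all $\le 0$, which (as they sum to $0$) forces $p = q = r = -d$. On the other hand $[S/J]_{d-1}$ is the cokernel of $\times(x+y+z)\colon [R/I]_{d-2} \to [R/I]_{d-1}$, and this cokernel vanishes if and only if that map is bijective (its source and target have equal dimension), if and only if $R/I$ has the weak Lefschetz property by the first paragraph. Thus (iv)$\Leftrightarrow$(i), completing the proof.

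The theorem is in essence a synthesis, so no genuinely new ingredient is required beyond Theorems~\ref{thm:detZN}, \ref{thm:pm-matrix}, \ref{thm:nilp-matrix} and the twin-peaks machinery; the points that demand care are the combinatorial bookkeeping of the first paragraph --- verifying that conditions (ii)--(iv) really do force $T_d(I)$ to be balanced with non-overlapping punctures and the socle of $R/I$ in degrees $\ge d-2$, so that the weak Lefschetz property collapses to a single map --- and, in the splitting-type step, keeping straight that the numerics $p+q+r = -3d$ and $\deg(\text{generators of } J) \le d$ are precisely what make the ``all entries equal $-d$'' dichotomy coincide with bijectivity of that same map. I expect the main friction to be this consistency of indexing and hypotheses across the many cited results rather than any substantial difficulty.
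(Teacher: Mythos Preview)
Your proposal is correct and follows essentially the paper's route: reduce to the single map $\times(x+y+z)\colon [R/I]_{d-2}\to[R/I]_{d-1}$ via the balanced, non-overlapping, socle-degree-$\ge d-2$ analysis (this is Proposition~\ref{pro:amaci-balanced}), then get (i)$\Leftrightarrow$(iii) from Corollary~\ref{cor:max-Z} and (ii)$\Leftrightarrow$(iii) from Theorem~\ref{thm:detZN}, exactly as the paper does.

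For (i)$\Leftrightarrow$(iv) you diverge slightly. The paper argues via the Castelnuovo--Mumford regularity of $J$: bijectivity is equivalent to $\reg J = d-1$, and (using Remark~\ref{rem:splitting-type}(iii)) $\reg J + 1$ equals the largest of $-p,-q,-r$, which together with $p+q+r=-3d$ forces $(p,q,r)=(-d,-d,-d)$. You instead read off dimensions in degree $d-1$ directly from Sequence~\eqref{eqn:syz-J}, obtaining $\dim_K[J]_{d-1}=d-\sum\max\{0,d+p_i\}$ and concluding that $[S/J]_{d-1}=0$ iff all $d+p_i\le 0$, hence all equal $0$. Your computation is correct and is in fact a bit more self-contained: it sidesteps the question of whether the four listed generators of $J$ are genuinely minimal (needed for Remark~\ref{rem:splitting-type}(iii) to apply verbatim), since the dimension count is valid regardless.
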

\begin{proof}
    Regardless of the characteristic of $K$, the arguments for Proposition~\ref{pro:amaci-balanced} show that $T_d(I)$
    is balanced. Moreover, the degrees of the socle generators of $R/I$ are at least $d-2$ as shown in
    Theorem~\ref{thm:amaci-wlp}(b)(1). Hence, Corollary~\ref{cor:twin-peaks-wlp} gives that $R/I$ has the weak
    Lefschetz property if and only if the multiplication map
    \[
        \times (x+y+z): [R/I]_{d-2} \to [R/I]_{d-1}
    \]
    is bijective. Now, Corollary~\ref{cor:wlp-biadj} yields the equivalence of Conditions (i) and (ii). The latter is
    equivalent to condition (iii) by Theorem~\ref{thm:detZN}.

    As above, let $(p, q, r)$ be the generic splitting type of $\widetilde\syz{I}$, where $p \leq q \leq r$, and let
    $J \subset S$ be the ideal such that $R/(I, x+y+z) \cong S/J$. The above multiplication map is bijective if and only if
    $\reg J = d-1$. Since $\reg J + 1 = -r$ and $p+q+r = -3d$, it follows that $\reg J = d-1$ if and only if $(p, q, r)
    = (-d, -d, -d)$. Hence, conditions (i) and (iv) are equivalent.
\end{proof}

\section{Failure of the weak Lefschetz property} \label{sec:failure}

In this section we provide examples of Artinian monomial ideals that fail to have the weak Lefschetz property in
various ways. In particular, in Subsection~\ref{sub:Togliatti} we construct families of triangular regions (hence ideals)
where the rank of the bi-adjacency matrix $Z(T)$ can be made as far from maximal as desired. In
Subsection~\ref{sub:large-p} we give examples of triangular regions such that the determinant of $Z(T)$ has large prime
divisors, relative to the side length of $T$. That is, we offer examples of ideals that fail to have the weak Lefschetz
property in large prime characteristics relative to the degrees of the generators.

As preparation, in Subsection~\ref{sub:red-unit-punct} we prove that for checking maximal rank it is enough to consider
triangular regions with only unit punctures.

~\subsection{Reduction to unit punctures}~\par\label{sub:red-unit-punct}

\index{triangular region!reduction to unit punctures}
We show that each triangular region $T$ is contained in a triangular region $\hat{T}$ such that $\hat{T}$ only has only
unit punctures and $Z(T)$ has maximal rank if and only if $Z(\hat{T})$ has maximal rank. To see this we first replace a
large puncture (side length at least two) by two non-overlapping subpunctures, one of which is a unit puncture. We need
a partial extension of Proposition~\ref{pro:rep-enum}.

\begin{proposition}\label{prop:Z-subregion}
    Let $U$ be a balanced subregion of a triangular region $T$ such that no downward-pointing unit triangle in $U$ is
    adjacent to an upward-pointing unit triangle of $T \setminus U$. Then the following statements are true:
    \begin{itemize}
        \item[(i)] Possibly after reordering rows and columns of the bi-adjacency matrix of $T$, $Z(T)$ becomes a block matrix of the form
            \[
                \begin{pmatrix}
                    Z(T \setminus U) & X \\
                    0 & Z(U)
                \end{pmatrix}.
            \]
        \item[(ii)] If  $\det Z(U) \neq 0$ in $K$, then the bi-adjacency matrix $Z(T \setminus U)$ has maximal rank if and only if $Z(T)$ has maximal rank.
    \end{itemize}
\end{proposition}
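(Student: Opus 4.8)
The plan is to establish (i) by directly reading off the block decomposition of $Z(T)$ induced by the partition of its row and column index sets, and then to deduce (ii) by elementary linear algebra over $K$ together with a dimension count. There is no real difficulty; it is mostly a matter of careful bookkeeping.

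First I would prove (i). Order the downward-pointing unit triangles of $T$ so that those of $T \setminus U$ come first, followed by those of $U$, keeping the reverse-lexicographic order within each of the two groups; order the upward-pointing unit triangles in the same way. Reordering the rows and columns of $Z(T)$ according to these orders produces a matrix with four blocks. Since adjacency of two unit triangles in $\mathcal{T}_d$ is a local condition, insensitive to which other triangles are present, the block indexed by the triangles of $T \setminus U$ is precisely the bi-adjacency matrix $Z(T \setminus U)$, and the block indexed by the triangles of $U$ is precisely $Z(U)$ (recall from Remark~\ref{rem:Z-non-square} that $Z(\cdot)$ is defined for any subregion). The block with rows indexed by the downward-pointing triangles of $U$ and columns indexed by the upward-pointing triangles of $T \setminus U$ records adjacencies between a downward-pointing triangle of $U$ and an upward-pointing triangle of $T \setminus U$; by hypothesis there are none, so this block is zero. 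Calling the remaining block $X$ gives the asserted form
\[
    Z(T) = \begin{pmatrix} Z(T \setminus U) & X \\ 0 & Z(U) \end{pmatrix}.
\]

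To prove (ii), note that since $U$ is balanced, $Z(U)$ is square, say of size $n_U$, and the hypothesis $\det Z(U) \neq 0$ in $K$ means $Z(U)$ is invertible over $K$. Left-multiplying $Z(T)$ by the invertible matrix $\begin{pmatrix} I & -X Z(U)^{-1} \\ 0 & I \end{pmatrix}$ clears the block $X$, so $\rank_K Z(T) = \rank_K Z(T \setminus U) + n_U$. On the other hand, write $p = \#\dntri(T \setminus U)$ and $q = \#\uptri(T \setminus U)$; since $U$ is balanced, $\#\dntri(U) = \#\uptri(U) = n_U$, so $Z(T)$ has size $(p + n_U) \times (q + n_U)$ and hence $Z(T)$ has maximal rank exactly when $\rank_K Z(T) = n_U + \min\{p, q\}$. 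Combining the two observations, $Z(T)$ has maximal rank if and only if $\rank_K Z(T \setminus U) = \min\{p, q\}$, which is exactly the statement that $Z(T \setminus U)$ has maximal rank.

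The only points requiring attention — the closest thing to an obstacle — are the identification of the vanishing block, which is dictated entirely by the adjacency hypothesis on $U$, and the dimension count that converts maximality of $\rank_K Z(T)$ into maximality of $\rank_K Z(T \setminus U)$; the rank computation itself is the routine fact that left multiplication by an invertible matrix preserves rank.
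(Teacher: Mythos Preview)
Your proof is correct and follows exactly the approach the paper takes; the paper's own proof is merely the two-sentence statement that (i) is immediate from the definition of $Z(T)$ and that (ii) follows from (i), so your argument is just a fully spelled-out version of theirs.
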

\begin{proof}
    The second assertion follows from the first one. The first assertion is immediate from the definition of the
    bi-adjacency matrix $Z(T)$.
\end{proof}

\begin{remark}\label{rem:max-rank-uniques-tile}
    If $U$ is uniquely tileable, then Proposition \ref{pro:per-enum} and Theorem \ref{thm:pm-matrix} show that
    $\per Z(U) = | \det Z(U) | = 1$. Thus, $\det Z(U) \neq 0$, regardless of the characteristic of the base field $K$.
\end{remark}

We are ready to describe the inductive step of our reduction to the case of unit punctures.

\begin{lemma}\label{lem:unit-base}
    Let $T \subsetneq {\mathcal T}_d$ be a triangular region such that the sum of the side lengths of the non-unit
    punctures of $T$ is $m > 0$. Then there exists a triangular region $\hat{T} \subset {\mathcal T}_d$ containing $T$
    such that the following statements are true.
    \begin{enumerate}
        \item The sum of the side lengths of the non-unit punctures of $\hat{T}$ is at most $m-1$.
        \item $\#\uptri(\hat{T}) - \#\dntri(\hat{T}) = \#\uptri(T) - \#\dntri(T)$.
        \item The bi-adjacency matrix $Z(T)$ has maximal rank if and only if $Z(\hat{T})$ has maximal rank.
    \end{enumerate}
\end{lemma}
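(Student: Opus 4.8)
The plan is to enlarge $T$ by partially filling in one non-unit puncture: I would shrink the side length of that puncture by one and restore the removed row of unit triangles as an auxiliary region $U$ that is uniquely tileable, so that Proposition~\ref{prop:Z-subregion} transfers the maximal-rank question from $\hat T$ back to $T$. The only real work is then to guarantee that the puncture can be chosen so that the restored row is not entangled with the other punctures; everything else is bookkeeping with Proposition~\ref{prop:Z-subregion} and Remark~\ref{rem:max-rank-uniques-tile}.

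Concretely, suppose $T$ has a non-unit puncture $P$, of side length $k\ge 2$, one of whose edges is \emph{unobstructed}, meaning that the width-one strip $S$ of unit triangles of $\Delta$ along that edge is disjoint from the regions of all other punctures of $T$; here $\Delta$ is the upward triangle of side length $k$ that is the region of $P$. Let $\Delta'$ be the side-$(k-1)$ subtriangle of $\Delta$ obtained by pushing away from the unobstructed edge, so that $\Delta'$ is the puncture associated to $xg$, $yg$, or $zg$ (according as the bottom, upper-right, or upper-left edge is unobstructed), where $g$ is the minimal generator of $J(T)$ corresponding to $P$. Then $S=\Delta\setminus\Delta'$ consists of upward unit triangles $u_1,\dots,u_k$ and the $k-1$ downward unit triangles $d_1,\dots,d_{k-1}$ between consecutive $u_i$'s, and by hypothesis each triangle of $S$ was removed only because it lay in $P$. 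Define $\hat T$ by replacing the puncture $P$ with the two punctures $\Delta'$ and $\{u_1\}$; equivalently $\hat T=T\cup U$ with $U:=\{u_2,\dots,u_k,d_1,\dots,d_{k-1}\}$, so $\hat T$ is again a triangular region and $T\subseteq\hat T\subsetneq\mathcal{T}_d$. Now $U$ has $k-1$ triangles of each orientation, which gives (ii); the non-unit punctures of $\hat T$ are those of $T$ other than $P$ together with $\Delta'$ when $k\ge 3$, so the relevant sum drops by $1$ when $k\ge 3$ and by $2$ when $k=2$, which gives (i). For (iii): $U$ is balanced, and it is uniquely tileable, since $d_1$ must be matched with $u_2$ (its other upward neighbours, namely $u_1$ — now a puncture — and the triangle of $\Delta'$ directly above it, are removed), which forces $d_2$ to be matched with $u_3$, and so on; moreover no downward triangle $d_i$ of $U$ is adjacent to an upward triangle of $\hat T\setminus U=T$, because the three upward neighbours of $d_i$ are $u_i$ and $u_{i+1}$ (each in $U$ or equal to the puncture $u_1$) together with one triangle of $\Delta'$. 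Hence Proposition~\ref{prop:Z-subregion} applies, and since $\det Z(U)=\pm1\ne0$ in $K$ by Remark~\ref{rem:max-rank-uniques-tile}, we conclude that $Z(T)=Z(\hat T\setminus U)$ has maximal rank if and only if $Z(\hat T)$ does.

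The main obstacle, which I expect to be the heart of the argument, is to show that whenever $T$ has a non-unit puncture at all, some non-unit puncture has an unobstructed edge; this is a purely combinatorial statement about how the triangular regions of the minimal generators of $J(T)$ can intersect. Two observations start things off: a unit puncture can never overlap a non-unit one (its degree-$(d-1)$ generator would be divisible by the other generator), and no puncture distinct from $P$ can meet all three edges of $P$'s region $\Delta$ without coinciding with $P$ (its generator would divide $g$). The plan is then to choose among the non-unit punctures one whose region sits lowest, i.e., whose generator has the smallest exponent of $x$, break ties by the exponent of $y$, and argue that the bottom edge of that region is unobstructed: any puncture meeting that bottom strip would have $x$-exponent at most this minimal value, whereupon a short case analysis on the exponents of the two generators, using the minimality of the generating set of $J(T)$, forces a contradiction. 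Granting this, every triangular region $T\subsetneq\mathcal{T}_d$ with $m>0$ falls under the construction of the second paragraph, completing the proof.
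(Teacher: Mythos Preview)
Your construction in the second paragraph is fine and is essentially the paper's mechanism, but the existence claim in the third paragraph fails: it is \emph{not} true that some non-unit puncture must have an unobstructed edge. Take $T=T_{10}(x^4,y^4,z^4)$, where each of the three punctures has side length $6$ and any two overlap. Your rule selects the puncture for $z^4$; its bottom strip contains $y^5z^4$ and $y^4z^5$, which lie in the puncture for $y^4$, so the bottom edge is obstructed. The upper-right strip contains $x^5z^4$ and $x^4z^5$ (in the puncture for $x^4$), and the upper-left strip contains $x^5z^4,x^4yz^4$ (in the $x^4$ puncture) and $xy^4z^4,y^5z^4$ (in the $y^4$ puncture). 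By symmetry, every edge of every non-unit puncture is obstructed, so no choice of puncture or edge salvages the argument. The ``short case analysis using minimality of $J(T)$'' does not go through: $y^4$ and $z^4$ are both minimal generators and yet their regions overlap.

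The paper avoids this by not insisting on a strip coming from a single puncture. It picks the row $a$ that is the bottom row of some non-unit puncture and is closest to the bottom of $\mathcal{T}_d$, and in that row takes a \emph{maximal} horizontal strip $S$ of removed unit triangles, which may span several overlapping punctures. One upward triangle of $S$ is kept as a unit puncture and the rest is $U_1\cup U_2$. The point that replaces your unobstructedness hypothesis is that any downward triangle of $S$ automatically has all three upward neighbours removed: the two in the same row lie in $S$ by maximality, and the one in the row above lies in whichever puncture already contains the downward triangle (if $x^{a'}y^{b'}z^{c'}$ divides the degree-$(d-2)$ label $x^a y^b z^c$, it also divides $x^{a+1}y^bz^c$). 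Hence Proposition~\ref{prop:Z-subregion} applies directly, with no need to find a puncture whose edge is clear of the others.
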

\begin{proof}
    Among the bottom rows of a puncture of $T$ whose side length is at least two, consider the row that is closest to
    the bottom of ${\mathcal T}_d$. In this row, pick a maximal \emph{strip $S$ of unit punctures}, that is, a sequence
    of adjacent upward- and downward-pointing triangles that all belong to punctures of $T$ such that the
    downward-pointing triangles that are possibly adjacent to the left and right of $S$ do not belong to a puncture of
    $T$. By the choice of the row, the strip $S$ contains at least one downward-pointing triangle. Let $P$ be any of the
    upward-pointing unit triangles in $S$. Denote by $U_1, U_2 \subset S$ the regions that are formed by the triangles
    in $S$ to the left and to the right of $P$, respectively. Set $\hat{T} = T \cup U_1 \cup U_2$.

    By construction, the downward-pointing triangles in $U_1 \cup U_2$ are not adjacent to any upward-pointing triangle
    in $T$. Furthermore, $U_1$ and $U_2$ are uniquely tileable. Thus, $1 = |\det Z(U_1)| = |\det Z(U_2)| $. Hence
    applying Proposition~\ref{prop:Z-subregion} twice, first to $U_1 \subset \hat{T}$ and then to
    $U_2 \subset \hat{T} \setminus U_1$, our assertions follow.
\end{proof}

Notice that instead of using a row in the above argument, one can also use a maximal strip that is parallel and closest
to the North-East or North-West boundary of ${\mathcal T}_d$. This follows from either the above arguments or a suitable
rotation of the region $T$ (compare Remark~\ref{rem:rotations}).

Repeating the procedure described in the preceding proof until the sum of the side lengths of the non-unit punctures is
zero yields a triangular region $\hat{T}$ containing $T$ with the following properties:
\begin{enumerate}
    \item $\hat{T}$ has only unit punctures;
    \item $\#\uptri(\hat{T}) - \#\dntri(\hat{T}) = \#\uptri(T) - \#\dntri(T)$; and
    \item $Z(T)$ has maximal rank if and only if $Z(\hat{T})$ has maximal rank.
\end{enumerate}
We call $\hat{T}$ a \emph{reduction of $T$ to unit punctures}.

\begin{example}\label{exa:pictures-reduction-unit}
    The triangular region $T = T_{8}(x^7, y^7, z^6, x y^4 z^2, x^3 y z^2, x^4 y z)$ and a reduction $\hat{T}$ of $T$ to
    unit punctures are depicted below. Notice that care must be taken when punctures overlap.

    \begin{figure}[!ht]
        \begin{minipage}[b]{0.48\linewidth}
            \centering
            \includegraphics[scale=1]{figs/triregion-gcd-1}\\
            \emph{(i) $T$}
        \end{minipage}
        \begin{minipage}[b]{0.48\linewidth}
            \centering
            \includegraphics[scale=1]{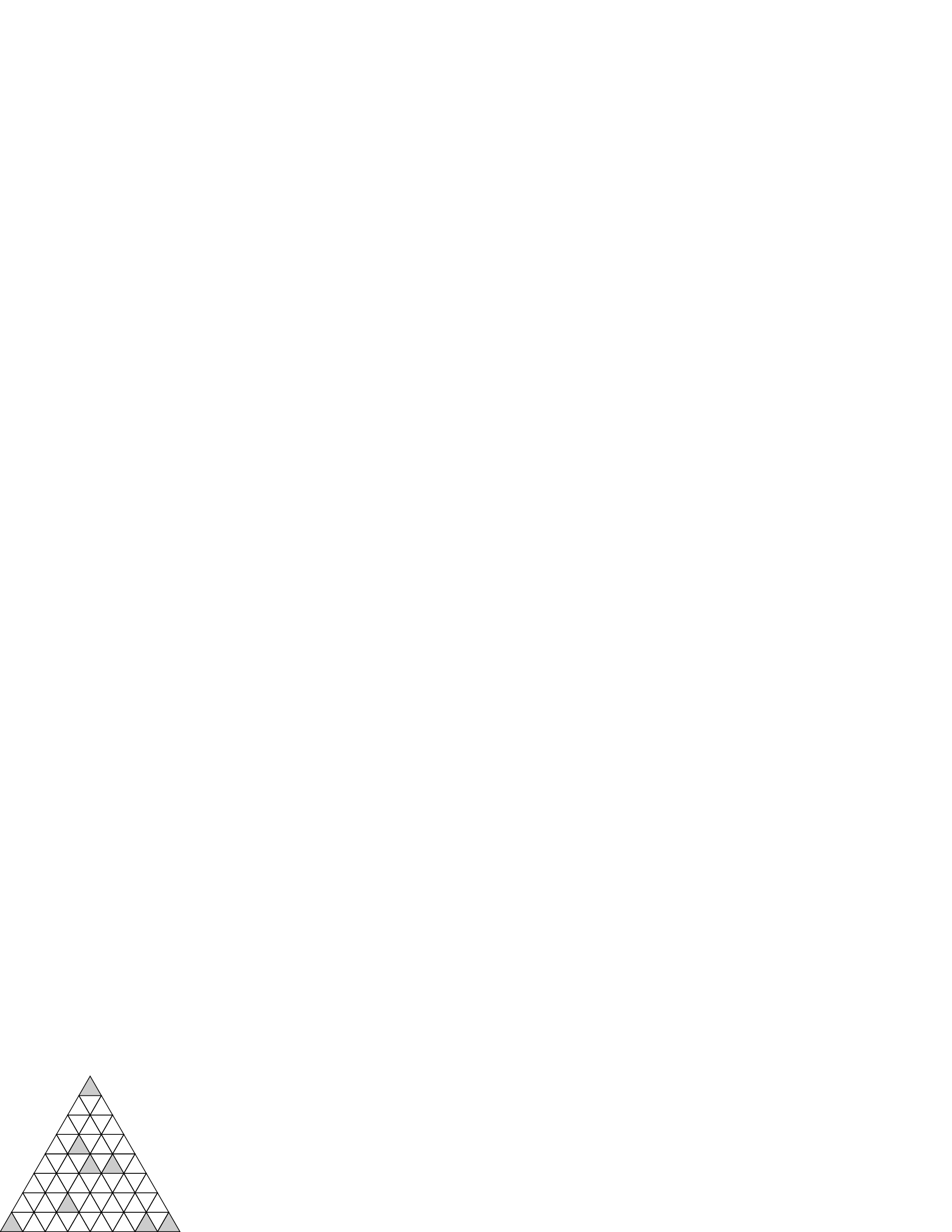}\\
            \emph{(ii) $\hat{T}$}
        \end{minipage}
        \caption{The region $T$ as in Figure~\ref{fig:triregion-gcd}(i) and a
             reduction $\hat{T}$   to unit punctures. }
          \label{fig:unit-reduc}
    \end{figure}
\end{example}

As pointed out above, Lemma~\ref{lem:unit-base} provides the following result.

\begin{proposition}\label{cor:unit-reduc}
    Let $T \subsetneq {\mathcal T}_d$ be a  triangular region.  Then $T$ has a reduction to unit punctures.

    Furthermore, $Z(T)$ has maximal rank if and only if $Z(\hat{T})$ has maximal rank for some (hence any) reduction
    $\hat{T}$ of $T$ to unit punctures.
\end{proposition}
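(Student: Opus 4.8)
The plan is to simply assemble the two ingredients already in place: the inductive step (Lemma~\ref{lem:unit-base}) and the explicit construction that follows its statement. First I would note that $T$ has only finitely many punctures, so the quantity $m(T) := \sum \{\text{side lengths of non-unit punctures of } T\}$ is a well-defined nonnegative integer. If $m(T) = 0$ then $T$ itself is already a triangular region with only unit punctures, and we may take $\hat{T} = T$; the maximal rank condition is then trivially equivalent to itself. If $m(T) > 0$, then Lemma~\ref{lem:unit-base} produces a triangular region $T_1 \subset {\mathcal T}_d$ containing $T$ with $m(T_1) \le m(T) - 1$, with $\#\uptri(T_1) - \#\dntri(T_1) = \#\uptri(T) - \#\dntri(T)$, and with $Z(T)$ having maximal rank if and only if $Z(T_1)$ does.

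Next I would iterate: applying Lemma~\ref{lem:unit-base} repeatedly produces a strictly decreasing (by at least one at each step) sequence of nonnegative integers $m(T) > m(T_1) > m(T_2) > \cdots$, which must terminate after at most $m(T)$ steps at some region $\hat{T} := T_k$ with $m(\hat{T}) = 0$. By the chain of equivalences, $Z(T)$ has maximal rank if and only if $Z(T_1)$ does, if and only if $\ldots$, if and only if $Z(\hat{T})$ does; and by telescoping the equalities of excess counts, $\#\uptri(\hat{T}) - \#\dntri(\hat{T}) = \#\uptri(T) - \#\dntri(T)$. Since $\hat{T}$ has only unit punctures and contains $T$, it is a reduction of $T$ to unit punctures in the sense defined after Lemma~\ref{lem:unit-base}. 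This proves the existence assertion.

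For the "furthermore" clause, the ``some (hence any)'' phrasing needs the equivalence to hold for \emph{every} reduction, not just the one constructed. This is immediate: by definition any reduction $\hat{T}$ of $T$ to unit punctures is required to satisfy property (iii), namely that $Z(T)$ has maximal rank if and only if $Z(\hat{T})$ has maximal rank. So whichever choices are made in the iterated application of Lemma~\ref{lem:unit-base} (e.g., which puncture, which bottom row or which strip parallel to a boundary edge, which unit triangle $P$ in the strip), the resulting region still satisfies this biconditional, and hence all reductions agree on whether $Z(\cdot)$ has maximal rank.

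I do not anticipate a genuine obstacle here: the statement is essentially a formal ``run the induction to completion'' corollary of Lemma~\ref{lem:unit-base}, whose proof is the substantive part and is already given. The only point requiring a word of care is confirming that the process terminates, which is handled by the strict decrease of the nonnegative integer $m(\cdot)$, and that the intermediate regions remain genuine triangular subregions of ${\mathcal T}_d$ (so that Lemma~\ref{lem:unit-base} may be reapplied) — but this is guaranteed since $\hat{T} = T \cup U_1 \cup U_2$ in each step is again obtained from ${\mathcal T}_d$ by removing upward-pointing triangles, i.e., it is of the form $T_d(I')$ for a suitable monomial ideal $I'$.
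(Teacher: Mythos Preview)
Your proposal is correct and follows exactly the paper's approach: the paper's proof is the single sentence ``As pointed out above, Lemma~\ref{lem:unit-base} provides the following result,'' referring to the paragraph just before the proposition where the iteration you describe is spelled out. Your write-up simply fills in the routine details of that iteration (termination via the strictly decreasing invariant $m(\cdot)$, and the observation that the ``any'' in the furthermore clause is immediate from the definition of a reduction), so there is nothing to compare or correct.
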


In order to state this result algebraically and more precisely, we refine the definition of the weak Lefschetz property.

\begin{definition}\label{def:wlp-in-degree}
    A graded $K$-algebra $A$ is said to \emph{fail the weak Lefschetz property in degree $d-1$} by $\delta$ if, for a
    general linear element $\ell \in [A]_1$, the rank of the the multiplication map
    $\times \ell: [A]_{d-1} \rightarrow [A]_d$ is $r - \delta$, where $r = \min\{ h_A (d-1), h_A (d)\}$ is the expected rank.
\end{definition}

Thus, ``failing'' the weak Lefschetz property in degree $d-1$ by $0$ means that $A$ has the weak Lefschetz property in degree $d-1$.

Suppose $I$ is a monomial ideal of $R = K[x,y,z]$. Then $R/I$ (or $I$) fails the weak Lefschetz property in degree $d-1$
if and only if $Z(T_{d+1}(I))$ fails to have maximal rank. This follows by Proposition~\ref{pro:mono} and
Corollary~\ref{cor:wlp-biadj}. Hence Proposition~\ref{cor:unit-reduc} implies:

\begin{corollary}\label{cor:reduction-unit-punct-alg}
    Let $I \subset R$ be a monomial ideal such that $[I]_d \neq 0$, where $d \geq 1$. Then there is an Artinian ideal
    $J \subset I$ such that
    \begin{enumerate}
        \item The ideal $J$ has no generators of degree less than $d$, that is, $[J]_{d-1} = 0$.
        \item The minimal generators of $I$ and $J$ whose degrees are at least $d+1$ agree.
        \item If $R/I$ is Artinian, then so is $R/J$.
        \item $h_{R/I} (d) - h_{R/I} (d-1) = h_{R/J} (d) - h_{R/J} (d-1)$.
        \item $R/I$ fails the the weak Lefschetz property in degree $d-1$ by $\delta$  if and only if $R/J$ does.
    \end{enumerate}
\end{corollary}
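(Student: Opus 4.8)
The plan is to transfer the geometric reduction to unit punctures (Proposition~\ref{cor:unit-reduc}) back to the algebraic side through the dictionary of Section~\ref{sec:dictionary}. I would work inside $\mathcal{T}_{d+1}$ and set $T:=T_{d+1}(I)$, so that downward- and upward-pointing unit triangles of $\mathcal{T}_{d+1}$ carry the degree $d-1$ and degree $d$ monomials, respectively. Since $[I]_d\neq 0$, at least one upward-pointing triangle is removed, hence $T\subsetneq\mathcal{T}_{d+1}$ and Proposition~\ref{cor:unit-reduc} applies: there is a triangular region $\hat T\subset\mathcal{T}_{d+1}$ with $T\subseteq\hat T$, having only unit punctures, such that $\#\uptri(\hat T)-\#\dntri(\hat T)=\#\uptri(T)-\#\dntri(T)$ and $Z(T)$ has maximal rank if and only if $Z(\hat T)$ does. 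Let $J(\hat T)$ be the monomial ideal of $\hat T$ (Subsection~\ref{subsec:gcd}). Because all punctures of $\hat T$ have side length one, the minimal generators of $J(\hat T)$ all have degree exactly $d$; and since $T\subseteq\hat T$, the removed triangles of $\hat T$ form a subset of those of $T_{d+1}(I)$, so their labels — the minimal generators of $J(\hat T)$ — lie in $I$, whence $J(\hat T)\subseteq I$. I would then set
\[
    J:=J(\hat T)+\bigl(\text{minimal generators of }I\text{ of degree}\ \geq d+1\bigr).
\]

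With this choice, properties (i), (ii) and (iv) reduce to bookkeeping. The ideal $J$ is generated in degrees $\geq d$, which gives $[J]_{d-1}=0$, i.e.\ (i); and $J\subseteq I$ since both $J(\hat T)$ and the extra generators lie in $I$. For (ii): the degree $d$ part of $J$ is $J(\hat T)\subseteq I$, and a minimal generator $g$ of $I$ of degree $\geq d+1$ cannot be divisible by any degree $d$ monomial of $I$ (else $g$ would not be minimal in $I$), so $g$ stays minimal in $J$; conversely any minimal generator of $J$ of degree $\geq d+1$ must come from the chosen high-degree generators. Hence the minimal generators of $I$ and of $J$ of degree $\geq d+1$ coincide. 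For (iv): $J$ agrees with $J(\hat T)$ in degrees $\leq d$, so $T_{d+1}(J)=\hat T$; translating Hilbert-function values into triangle counts gives $h_{R/I}(d)-h_{R/I}(d-1)=\#\uptri(T)-\#\dntri(T)=\#\uptri(\hat T)-\#\dntri(\hat T)=h_{R/J}(d)-h_{R/J}(d-1)$.

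For (v) I would look one level below Proposition~\ref{cor:unit-reduc}, into the inductive step of Lemma~\ref{lem:unit-base}. By Proposition~\ref{pro:mono} and Corollary~\ref{cor:wlp-biadj} (via Corollary~\ref{cor:max-Z}), the rank of $\times(x+y+z):[R/I]_{d-1}\to[R/I]_d$ equals $\rank Z(T)$, and similarly for $J$ with $\hat T$; the expected rank of Definition~\ref{def:wlp-in-degree} is $\min\{\#\dntri(\cdot),\#\uptri(\cdot)\}$. In one reduction step one adjoins uniquely tileable strips $U_1,U_2$ of unit punctures such that no downward triangle of $U_i$ is adjacent to an upward triangle of the remaining region; by Proposition~\ref{prop:Z-subregion} the bi-adjacency matrix becomes block upper triangular with the balanced, invertible (determinant $\pm1$) blocks $Z(U_1),Z(U_2)$ on the diagonal, so ranks add: $\rank Z(\hat T)=\rank Z(T)+\#\dntri(U_1)+\#\dntri(U_2)$. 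Since each $U_i$ is balanced, the same constant is added to $\#\uptri$ and to $\#\dntri$, hence to the expected rank; therefore the deficiency $\delta=(\text{expected rank})-\rank Z$ is left unchanged, and iterating over the finitely many steps of the reduction yields (v) in the refined sense of Definition~\ref{def:wlp-in-degree}.

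The main obstacle is (iii): because the reduction \emph{adds} triangles — regions grow while the associated ideals shrink — it is not automatic that $R/J$ is still Artinian, even though $R/I$ is and $J\subseteq I$. The key point I would establish is that the reduction never re-fills a corner triangle of $\mathcal{T}_{d+1}$ that was already a puncture: each step only fills triangles lying in the row closest to the base (or to a slanted edge) of a puncture of side length at least two, and the top, bottom-left and bottom-right unit triangles of $\mathcal{T}_{d+1}$ never lie in such a row. Consequently, if $x^{A_0}\in I$ is a minimal generator, then either $A_0\geq d+1$, so $x^{A_0}$ is one of the high-degree generators carried into $J$, or $A_0\leq d$, in which case $x^d\in I$, the top triangle $x^d$ remains removed in $\hat T$, and hence $x^d\in J(\hat T)\subseteq J$; the same argument applies to powers of $y$ and of $z$, so $R/J$ is Artinian. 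Combining (i)--(v) with the fact recorded just above the corollary — that failure of the weak Lefschetz property of $R/I$ in degree $d-1$ is governed by $Z(T_{d+1}(I))$ — completes the proof.
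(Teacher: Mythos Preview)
Your approach is essentially the paper's: build $\hat T$ via the reduction to unit punctures, set $J=J(\hat T)$ plus the high-degree generators of $I$, and read off (i), (ii), (iv) from the dictionary; for (v) you correctly unpack the block structure from Proposition~\ref{prop:Z-subregion} to see that the actual rank (not merely the property of maximality) increases by exactly the number of added downward triangles, so the deficiency $\delta$ is preserved.

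There is, however, a genuine gap in your argument for (iii). You assert that the corner unit triangles $x^d$, $y^d$, $z^d$ ``never lie in such a row,'' i.e.\ never sit in the bottom row of a puncture of side length at least two. That is false for the bottom corners: if $y^b\in I$ with $b\le d-1$, the associated puncture has side length $d+1-b\ge 2$ and its bottom row is the bottom row of $\mathcal{T}_{d+1}$, which contains $y^d$; likewise for $z^d$. So a reduction step can in fact re-fill a corner triangle, depending on which upward triangle $P$ in the strip is retained. The paper fixes this not by a geometric impossibility but by exploiting the \emph{choice} of $P$ in Lemma~\ref{lem:unit-base}: in each step one is free to pick any upward-pointing triangle of the strip to remain a puncture, and one simply chooses the corner triangle whenever it lies in the strip. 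With that adjustment your argument goes through.
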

\begin{proof}
    Let $\hat{T}$ be a reduction of $T = T_{d+1} (I)$ to unit punctures. Then there is a unique monomial ideal $J'
    \subset I$ such that $J'$ is generated in degree $d$ and $\hat{T} = T_{d+1} (J')$. Let $J \subset I$ be the monomial
    ideal that is generated by the monomials in $J'$ and the monomial minimal generators of $I$ whose degree is greater
    than $d$. Then $\hat{T} = T_{d+1} (J)$. Since $\hat{T}$ is a reduction of $T$ to unit punctures, possibly after
    reordering rows and columns of its bi-adjacency matrix, $Z(\hat{T})$ becomes a block matrix of the form
    \[
        \begin{pmatrix}
            Z(T) & X \\
            0 & Y
        \end{pmatrix},
    \]
    where $Y$ is a square matrix with $|\det Y| = 1$ (see Propositions~\ref{prop:Z-subregion} and \ref{cor:unit-reduc}).
    This implies all the assertions, except possibly (iv).

    In order to address the latter, we use the flexibility of the procedure for producing a reduction to unit punctures.
    In fact, assume $R/I$ is Artinian. Then $I$ has minimal generators of the form $x^a, y^b$, and $x^c$. If $a < d$,
    $b < d$, or $c < d$, then we can choose a reduction of $T$ to unit punctures such that the triangles labeled by
    $x^d$, $y^d$, and $z^d$, respectively, are punctures of $\hat{T} = T_{d+1} (J)$. Thus, $x^d \in J$, $y^d \in J$, or
    $z^d \in J$, respectively.
\end{proof}

Observe that the assumption $[I]_d \neq 0$ in the above results is harmless. If $[I]_d = 0$, then $R/I$ always has the
weak Lefschetz property in degree $d-1$.

~\subsection{Togliatti systems and Laplace equations}\label{sub:Togliatti}~\par

Mezzetti, Mir\'{o}-Roig, and Ottaviani showed in \cite{MMO} that in some cases the failure of the weak Lefschetz
property can be used to produce a variety satisfying a Laplace equation. Combined with our methods we produce toric
surfaces that satisfy as many Laplace equations as desired.

We begin by reviewing the needed concepts from differential geometry. Throughout this section we assume that $K$ is an
algebraically closed field of characteristic zero. Let $X \subset \PP^N= \PP^N_K$ be an $n$-dimensional projective
variety, and let $P \in X$ be a smooth point. Choose affine coordinates and a local parametrisation $\varphi$ around $P$,
where $\varphi (0,\ldots,0) = P$ and the $N$ components of $\varphi$ are formal power series. Then the \emph{$s$-th osculating space}%
\index{osculating space}
\index{0@\textbf{Symbol list}!TPsX@$T_P^{(s)} X$}
$T_P^{(s)} X$ to $X$ at $P$ is the projectivised span of the partial derivatives of $\varphi$ of order at most $s$.
Its expected dimension is $\binom{n+s}{s} - 1$. The variety $X$ is said to satisfy $\delta$ Laplace equations of order
$s$ if, for a general point $P$ of $X$,
\[
    \dim T_P^{(s)} X = \binom{n+s}{s} - 1 - \delta.
\]

\begin{remark}\label{rem:Laplace-trivially}
    If $N < \binom{n+s}{s} - 1$, then $X$ clearly satisfies at least one Laplace equation. However, this is not interesting.
\end{remark}

There is a rich literature on varieties satisfying a Laplace equation (see, e.g., \cite{To}, \cite{Perkinson},
\cite{MMO}, \cite{GIV} and the references therein). In \cite{MMO}, Mezzetti, Mir\'{o}-Roig, and Ottaviani found a new
approach to produce such varieties.

Let $I$ be an ideal of $S = K[x_0,\ldots,x_n]$ that is generated by forms $f_1,\ldots,f_r$ of degree $d$. Then $I$
induces a rational map $\varphi_I: \PP^n \dashrightarrow \PP^{r-1}$ whose image we denote by $X_{n, [I]_d}$. It is a
projection of the $d$-uple Veronese embedding of $\PP^n$. Assume now that $S/I$ is Artinian. Let $I^{-1} \subset S$ be
the ideal generated by the Macaulay inverse system of $I$. The forms of degree $d$ in $I^{-1}$ induce a rational map
$\varphi_{I^{-1}}: \PP^n \dashrightarrow \PP^{\binom{n+d}{n} - r - 1}$ whose image we denote by $X_{n,[I^{-1}]_d}$. Note
that in the case, where $I \subset S$ is an Artinian monomial ideal, the ideal $I^{-1}$ is generated by the monomials of
$S$ that are not in $I$. We now give a quantitative version of \cite[Theorem~3.2]{MMO}, which directly follows from the
arguments for the original statement.

\begin{theorem} \label{thm:mmo}
    Let $I$ be an Artinian ideal of $S = K[x_0, \ldots, x_n]$ that is minimally generated by $r \leq \binom{n+d-1}{n-1}$
    forms of degree $d$. Then the following conditions are equivalent:
    \begin{enumerate}
        \item The ideal $I$ fails  the weak Lefschetz property in degree $d-1$ by $\delta$.
        \item The variety $X_{n,[I^{-1}]_d}$ satisfies $\delta$ Laplace equations of order $d-1$.
    \end{enumerate}
\end{theorem}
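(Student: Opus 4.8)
The plan is to translate both conditions into a single statement about the dimension of the kernel of multiplication by a general linear form on $S_{d-1}$, by running the geometric argument of \cite{MMO} while keeping track of the exact defect $\delta$.

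First I would record the numerics forced by the hypotheses. Since every minimal generator of $I$ has degree $d$, we have $[I]_{d-1}=0$, so $\dim_K[S/I]_{d-1}=\dim_K S_{d-1}=\binom{n+d-1}{n}$ while $\dim_K[S/I]_d=\binom{n+d}{n}-r$. The bound $r\le\binom{n+d-1}{n-1}=\binom{n+d}{n}-\binom{n+d-1}{n}$ gives $\dim_K[S/I]_d\ge\dim_K[S/I]_{d-1}$, so in Definition~\ref{def:wlp-in-degree} the expected rank of $\times\ell$ equals $\dim_K[S/I]_{d-1}$. Hence $S/I$ fails the weak Lefschetz property in degree $d-1$ by $\delta$ if and only if, for a general linear form $\ell$, the kernel of $\times\ell\colon S_{d-1}=[S/I]_{d-1}\to[S/I]_d$ has dimension exactly $\delta$; since $\ell$ is a nonzerodivisor on $S$, the assignment $f\mapsto\ell f$ identifies this kernel with $\ell S_{d-1}\cap[I]_d$, so the condition becomes $\dim_K(\ell S_{d-1}\cap[I]_d)=\delta$. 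Moreover the ambient space of $X_{n,[I^{-1}]_d}$ is $\PP^N$ with $N=\binom{n+d}{n}-r-1\ge\binom{n+d-1}{n}-1=\binom{n+d-1}{d-1}-1$, so Remark~\ref{rem:Laplace-trivially} does not force a Laplace equation of order $d-1$.

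Next I would set up the geometry as in \cite{MMO}. Under apolarity, $[I^{-1}]_d=([I]_d)^{\perp}$ and $([I^{-1}]_d)^{*}\cong S_d/[I]_d$, so $\varphi_{I^{-1}}$ is the linear projection of the $d$-uple Veronese embedding $\nu_d\colon\PP^n\to\PP(S_d)$ from the center $\PP([I]_d)$; locally it reads $[\ell]\mapsto[\ell^d\bmod[I]_d]$. Since $[I]_d$ is a proper subspace of $S_d$ (because $[S/I]_d=[I^{-1}]_d\ne 0$), this projection is generically finite, $X_{n,[I^{-1}]_d}$ has dimension $n$, and a general point $P\in X_{n,[I^{-1}]_d}$ is the image under $\varphi_{I^{-1}}$ of a general point $[\ell]\in\PP^n$ at which $\varphi_{I^{-1}}$ is an immersion, so ``$P$ general'' matches ``$\ell$ general''. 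A direct differentiation of $\ell^d$ shows that the affine cone of the osculating space $T^{(d-1)}_{\nu_d([\ell])}\nu_d(\PP^n)$ equals $\ell S_{d-1}$, of the expected dimension $\binom{n+d-1}{n}$. Differentiation commutes with the linear projection $S_d\to S_d/[I]_d$, so the affine cone of $T^{(d-1)}_P X_{n,[I^{-1}]_d}$ is the image of $\ell S_{d-1}$ in $S_d/[I]_d$, and therefore
\[
    \dim T^{(d-1)}_P X_{n,[I^{-1}]_d}=\binom{n+d-1}{n}-1-\dim_K(\ell S_{d-1}\cap[I]_d).
\]
Comparing with the expected dimension $\binom{n+d-1}{d-1}-1=\binom{n+d-1}{n}-1$, the variety $X_{n,[I^{-1}]_d}$ satisfies exactly $\delta$ Laplace equations of order $d-1$ if and only if $\dim_K(\ell S_{d-1}\cap[I]_d)=\delta$. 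Combining this with the first step yields the equivalence of (i) and (ii).

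I expect the main obstacle to be the differential-geometric bookkeeping rather than the algebra: one must justify that $\varphi_{I^{-1}}$ is an immersion at a general point, that the $(d-1)$-st osculating space of the image is genuinely the projection of the $(d-1)$-st osculating space of $\nu_d(\PP^n)$ (i.e.\ no osculating directions are lost beyond those contracted by the center $\PP([I]_d)$), and that a generic point of $X_{n,[I^{-1}]_d}$ does correspond to a generic linear form $\ell$. All of these are standard facts about linear projections of Veronese varieties in characteristic zero, and the hypothesis $r\le\binom{n+d-1}{n-1}$ together with the properness of $[I]_d\subset S_d$ is exactly what keeps $\dim X_{n,[I^{-1}]_d}=n$ and the expected osculating dimension honest; once these are in place, the display above is the argument of \cite{MMO} with the defect $\delta$ tracked through.
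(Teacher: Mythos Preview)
Your proposal is correct and follows exactly the approach the paper indicates: the paper gives no independent proof but states that this quantitative version ``directly follows from the arguments for the original statement'' in \cite[Theorem~3.2]{MMO}, and what you have written is precisely that argument with the defect $\delta$ tracked through. Your identification of the kernel of $\times\ell$ with $\ell S_{d-1}\cap [I]_d$ and of the affine cone of the $(d-1)$-st osculating space of the Veronese with $\ell S_{d-1}$ are the two key points of the MMO proof, and your closing paragraph correctly flags the standard verifications needed to make the projection argument rigorous.
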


Note that the assumption $r \leq \binom{n+d-1}{n-1}$ simply ensures that the variety $X_{n,[I^{-1}]_d}$ does not trivially
satisfy a Laplace equation in the sense of Remark~\ref{rem:Laplace-trivially}. The assumption also implies that $R/I$
has the weak Lefschetz property in degree $d-1$ if and only if the multiplication map
$\times \ell: [R/I]_{d-1} \to [R/I]_d$ is injective.

Following \cite{MMO}, an ideal $I$ is said to define a \emdx{Togliatti system} if it satisfies the two equivalent
conditions in Theorem~\ref{thm:mmo}. The name is in honor of Togliatti who proved in \cite{To} that the only example for
$n = 2$ and $d = 3$ is $I = (x_0^3, x_1^3, x_2^3, x_0 x_1 x_2)$. \smallskip

Now we restrict ourselves to the case of three variables, i.e., $n = 2$. An Artinian monomial ideal $I$ of $R = K[x, y, z]$
defines a Togliatti system if it is generated by $r \leq d+1$ monomials of degree $d$ and it fails the weak
Lefschetz property in degree $d-1$. This corresponds precisely to the triangular regions $T \subset \mathcal{T}_{d+1}$
with $r \leq d+1$ unit punctures, which include the three upward-pointing unit triangles in each corner of ${\mathcal T}_{d+1}$,
and a bi-adjacency matrix $Z(T)$ that does not have maximal rank.

Hence, often a monomial ideal $I \subset R$ that fails the weak Lefschetz property in degree $d-1$ gives rise to a
Togliatti system by using Proposition~\ref{cor:unit-reduc}.

\begin{theorem}\label{thm:laplace-eq}
    Let $I \subset R$ be an Artinian monomial ideal, generated by monomials of degree at most $d$. Assume $h_{R/I} (d-1) \leq h_{R/I} (d)$
    and $R/I$ fails the weak Lefschetz property in degree $d-1$ by $\delta > 0$. Then, for every
    reduction $\hat{T} = T_{d+1} (\hat{I})$ of $T = T_{d+1} (I)$ to unit punctures such that the ideal $\hat{I}$ is
    Artinian, $\hat{I}$ defines a Togliatti system. Moreover, the variety $X_{n,[\hat{I}^{-1}]_d}$ satisfies $\delta$
    Laplace equations of order $d-1$.
\end{theorem}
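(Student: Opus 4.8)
The plan is to reduce the statement directly to results already established in the excerpt. First I would observe that the hypotheses are tailored to make Theorem~\ref{thm:mmo} (the quantitative version of Mezzetti--Mir\'o-Roig--Ottaviani) applicable: the ideal $\hat I$ is Artinian, and by construction (see Corollary~\ref{cor:reduction-unit-punct-alg} and the discussion preceding Theorem~\ref{thm:laplace-eq}) it is generated by unit punctures of $T_{d+1}(\hat I)$, that is, by $r$ monomials all of degree exactly $d$, where $r$ is the number of punctures of $\hat T$. One must check $r \le \binom{d+1}{1} = d+1$; this is exactly the requirement that $\hat T \subset {\mathcal T}_{d+1}$ has at most $d+1$ unit punctures. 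I would verify this by noting that the punctures of $\hat T$ correspond to monomials of degree $d$, i.e., to upward-pointing unit triangles of ${\mathcal T}_{d+1}$ that are removed, and that the hypothesis $h_{R/I}(d-1) \le h_{R/I}(d)$ together with property (iv) of Corollary~\ref{cor:reduction-unit-punct-alg}, namely $h_{R/I}(d) - h_{R/I}(d-1) = h_{R/\hat I}(d) - h_{R/\hat I}(d-1)$, forces $h_{R/\hat I}(d-1) \le h_{R/\hat I}(d)$, hence $T_{d+1}(\hat I)$ is $\uptri$-heavy or balanced; since $\hat I$ is generated in degree $d$, one has $h_{R/\hat I}(d-1) = \binom{d+1}{2}$ and $h_{R/\hat I}(d) = \binom{d+2}{2} - r$, and the inequality $\binom{d+1}{2} \le \binom{d+2}{2} - r$ is precisely $r \le d+1$.

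Next I would invoke Corollary~\ref{cor:reduction-unit-punct-alg}(v): $R/I$ fails the weak Lefschetz property in degree $d-1$ by $\delta$ if and only if $R/\hat I$ does. Since by assumption $R/I$ fails it by $\delta > 0$, so does $R/\hat I$. Now Theorem~\ref{thm:mmo}, applied to $\hat I$ (which is Artinian, minimally generated by $r \le \binom{d+1}{1}$ forms of degree $d$), gives the equivalence between ``$\hat I$ fails the weak Lefschetz property in degree $d-1$ by $\delta$'' and ``$X_{2,[\hat I^{-1}]_d}$ satisfies $\delta$ Laplace equations of order $d-1$.'' Since the first holds with $\delta > 0$, the ideal $\hat I$ defines a Togliatti system by definition, and the variety satisfies $\delta$ Laplace equations of order $d-1$. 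This completes the argument.

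The only genuinely delicate point — the ``main obstacle,'' though it is more bookkeeping than obstruction — is confirming that the reduction $\hat T$ really does have at most $d+1$ unit punctures and that the resulting ideal $\hat I$ is generated \emph{purely} in degree $d$ with no extraneous lower-degree generators, so that Theorem~\ref{thm:mmo}'s hypothesis ``$r$ forms of degree $d$'' is met verbatim. Here one uses that, by construction in Lemma~\ref{lem:unit-base} and the paragraph defining a reduction to unit punctures, $\hat T$ is obtained from $T$ only by \emph{enlarging} $T$ (adding upward-pointing triangles) while replacing non-unit punctures by unit punctures, so every puncture of $\hat T$ has side length one, i.e., corresponds to a degree-$d$ monomial; and $\hat I$ is the ideal generated by those degree-$d$ monomials (together with the higher-degree minimal generators of $I$, which by (ii) of Corollary~\ref{cor:reduction-unit-punct-alg} have degree $> d$ and thus play no role in $[\hat I]_d$ nor in the map $\times \ell\colon [R/\hat I]_{d-1} \to [R/\hat I]_d$). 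Thus the relevant truncation $[\hat I]_d$ is spanned exactly by the $r \le d+1$ puncture monomials, and the hypotheses of Theorem~\ref{thm:mmo} are satisfied. One should also note for completeness that, because $r \le \binom{d+1}{1}$, the weak Lefschetz failure in degree $d-1$ is a failure of injectivity of $\times\ell\colon[R/\hat I]_{d-1}\to[R/\hat I]_d$, matching the setup of Theorem~\ref{thm:mmo}; this is automatic from $h_{R/\hat I}(d-1) \le h_{R/\hat I}(d)$.

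I would therefore present the proof in four short steps: (1) record that $\hat I$ is Artinian and generated by $r$ monomials of degree $d$; (2) show $r \le d+1$ from the Hilbert-function inequality; (3) transfer the $\delta$-failure of the weak Lefschetz property in degree $d-1$ from $R/I$ to $R/\hat I$ via Corollary~\ref{cor:reduction-unit-punct-alg}(v); (4) apply Theorem~\ref{thm:mmo} to conclude that $\hat I$ defines a Togliatti system and $X_{2,[\hat I^{-1}]_d}$ satisfies $\delta$ Laplace equations of order $d-1$. No computation beyond the elementary Hilbert-function comparison is needed.
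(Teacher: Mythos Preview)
Your proposal is correct and follows exactly the same approach as the paper, which simply writes ``Combine Theorem~\ref{thm:mmo} and Corollary~\ref{cor:reduction-unit-punct-alg}.'' You have unpacked precisely what that combination entails, and in particular you make explicit the verification that $\hat I$ has at most $d+1$ minimal generators (via property~(iv) of Corollary~\ref{cor:reduction-unit-punct-alg} together with the Hilbert-function count), which the paper leaves implicit. One small simplification: since $I$ is by hypothesis generated in degree at most $d$, property~(ii) of Corollary~\ref{cor:reduction-unit-punct-alg} immediately gives that $\hat I$ has \emph{no} generators of degree greater than $d$, so your parenthetical about higher-degree generators ``playing no role'' can be replaced by the cleaner observation that there simply are none.
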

\begin{proof}
    Combine Theorem~\ref{thm:mmo} and Corollary~\ref{cor:reduction-unit-punct-alg}. Note that the latter also guarantees
    that there is always at least one reduction such that $\hat{I}$ is Artinian.
\end{proof}

Observe that the above $X_{n,[\hat{I}^{-1}]_d}$ is a toric surface since $\hat{I}^{-1}$ is a monomial ideal. All toric
surfaces are quasi-smooth by \cite[\$5.2]{GKZ}.

In order to illustrate the last result we exhibit a specific example.

\begin{example} \label{exa:unit-tog}
    Let $I = (x^d, y^d, z^d, xyz)$, for some $d \geq 3$. Then $T = T_{d+1}(I)$ is a balanced region. Moreover,
    $|\det{Z(T)}| = 0$ if $d$ is odd (see Proposition~\ref{pro:level-wlp}(i)) and $|\det{Z(T)}| = 2$ if $d$ is even.
    This is also proven in~\cite[Proposition~3.1]{CN-IJM}.

    Suppose $d$ is odd. Then the ideal $\hat{I} = (x^d, y^d, z^d, m_1, \ldots, m_{d-1})$, where $m_i = x^i y^j z^{d-i-j}$
    for some $1 \leq j \leq d-i-1$, defines a Togliatti system as $T_{d+1}(\hat{I})$ is a reduction of $T$ to
    unit punctures. It is obtained by picking an upward pointing triangle in each row of the puncture associated to
    $xyz$. Using instead a diagonal and two rows produces the region depicted in Figure~\ref{fig:togliatti}.
    \begin{figure}[!ht]
        \begin{minipage}[b]{0.48\linewidth}
            \centering
            \includegraphics[scale=1]{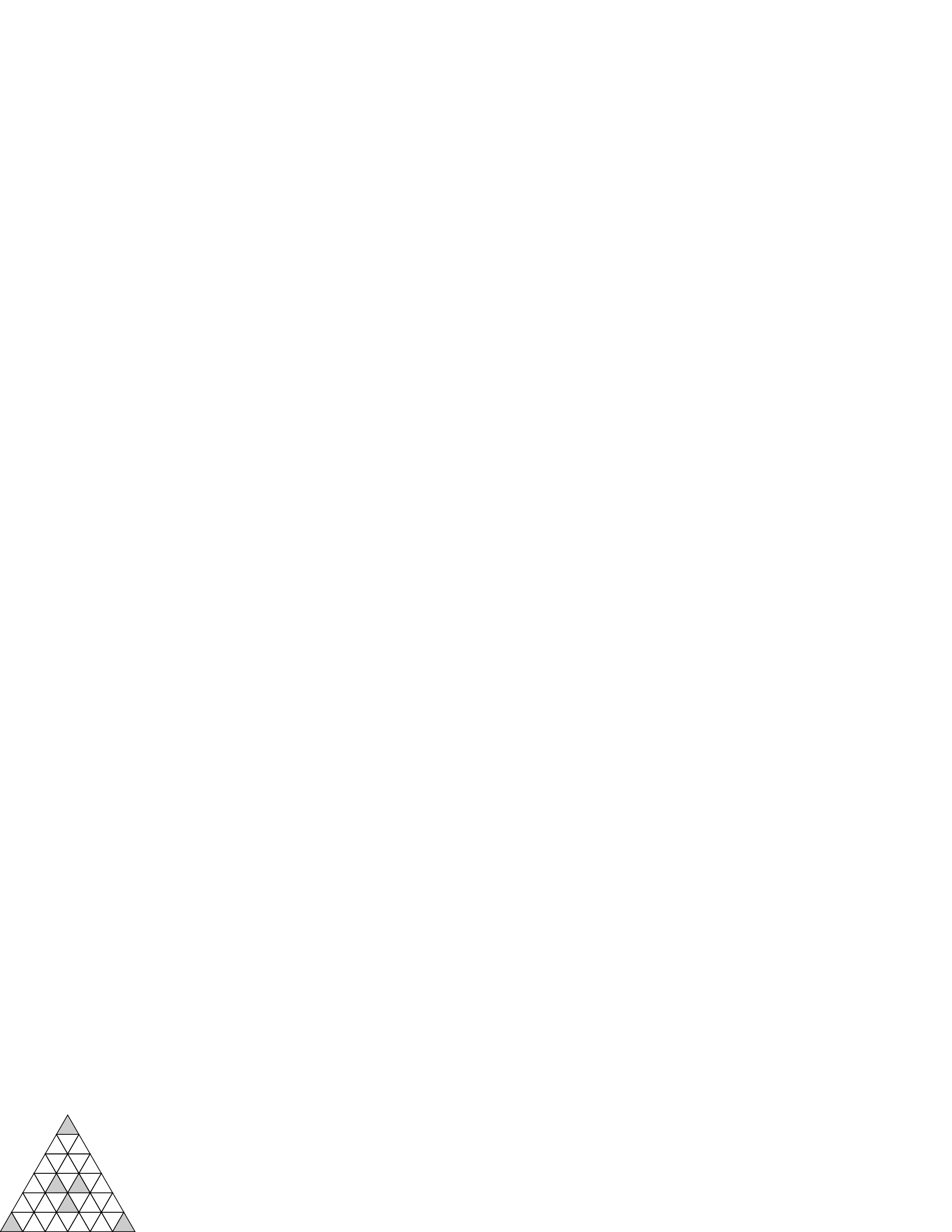}\\
            \emph{(i) $T_{6}(x^5, y^5, z^5, xy^2z^2, x^2yz^2, x^2y^2z)$}
        \end{minipage}
        \begin{minipage}[b]{0.48\linewidth}
            \centering
            \includegraphics[scale=1]{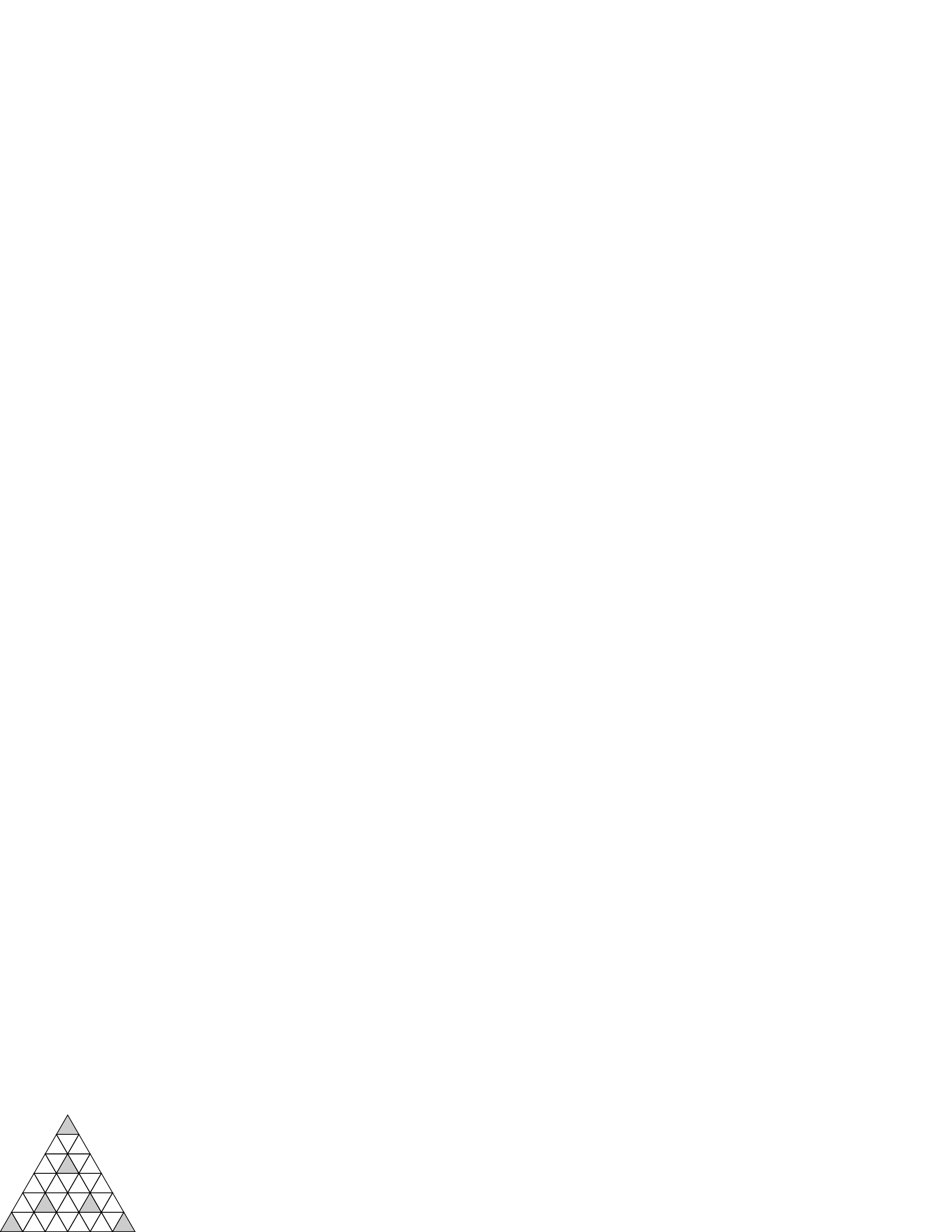}\\
            \emph{(ii) $T_{6}(x^5, y^5, z^5, xyz^3, xy^3z, x^3yz)$}
        \end{minipage}
        \caption{Triangular regions whose associated ideals define Togliatti systems.
            Both regions are formed by reducing $T_{6}(x^5, y^5, z^5, xyz)$.}
        \label{fig:togliatti}
    \end{figure}
    However, Figure~\ref{fig:togliatti}(ii) is not formed by using the procedure described above
    Proposition~\ref{cor:unit-reduc}. Instead, it is obtained by removing a tileable region from the central puncture.
\end{example}

We now describe the Artinian ideals of $R$ with few generators that define Togliatti systems. No such ideal exists with
three generators. Togliatti~\cite{To} proved that the only such ideal with four generators of degree three is
$(x^3, y^3, z^3, xyz)$. Moreover, Mezzetti, Mir\'o-Roig, and Ottaviani showed in \cite[Theorem~5.1]{MMO} that no ideal
of $R$ with four generators whose degree is at least four defines a Togliatti system. We now classify the Artinian
monomial ideal ideals with five generators that define Togliatti systems.

\begin{proposition}\label{prop:tog-5}
    Let $I \subset R$ be an Artinian ideal minimally generated by five monomials all of degree $d$. Then $I$ defines a
    Togliatti system if and only if, up to a change of variables, $I$ is either of the form $(x^4, y^4, z^4, x^2yz, y^2z^2)$
    or $(x^d, y^d, z^d, x^{d-1}y, x^{d-1}z)$ with $d \geq 4$.
\end{proposition}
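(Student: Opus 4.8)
The plan is to translate the statement entirely into the language of triangular regions and then reduce it, using the machinery developed in the preceding sections, to checking maximal rank of the bi-adjacency matrix $Z(T)$ for the associated regions $T = T_{d+1}(I)$. Recall that an Artinian monomial ideal $I \subset R$ minimally generated by five monomials of degree $d$ defines a Togliatti system if and only if $r = 5 \leq d+1$ (so $d \geq 4$) and $R/I$ fails the weak Lefschetz property in degree $d-1$, which by Corollary~\ref{cor:wlp-biadj} (together with Proposition~\ref{pro:mono}) happens if and only if $Z(T_{d+1}(I))$ does not have maximal rank. Here $T_{d+1}(I) \subset \mathcal{T}_{d+1}$ is a region with exactly five unit punctures, three of which sit in the corners (since $I$ is Artinian and all generators have degree $d$). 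So the entire problem is: classify the configurations of two further unit punctures in $\mathcal{T}_{d+1}$, on top of the three corner unit punctures, for which $Z$ fails maximal rank.

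First I would handle the easy direction: verify that the two listed ideals do define Togliatti systems. For $I = (x^d, y^d, z^d, x^{d-1}y, x^{d-1}z)$ this is a type-two ideal in disguise after noting its socle structure; in fact this is exactly the family where one applies Theorem~\ref{thm:type-two} (the five-generator case with $\alpha = d-1$, $\beta = \gamma = 1$, $a=b=c=d$), and one checks that Condition~\eqref{eqn:type-two} is satisfied precisely when $d \geq 4$, giving failure of WLP in the relevant degree. For the sporadic example $I = (x^4, y^4, z^4, x^2yz, y^2z^2)$ one computes directly that $Z(T_5(I))$ (an explicit small zero-one matrix) has non-maximal rank over any field — equivalently the region $T_5(I)$ has a $\dntri$-heavy monomial subregion or a vanishing signed enumeration; this is a finite check. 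One should also confirm both ideals are genuinely minimally generated by five elements of the stated degree.

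For the converse — the classification itself — I would argue as follows. Since the region $T = T_{d+1}(I)$ has only five punctures, three in the corners, the combinatorial possibilities for the remaining two unit punctures $P, Q$ are highly constrained. I would first dispose of the case $d \leq 3$: with $d = 3$ one needs $5 \leq 4$, impossible, so $d \geq 4$. Next, using Theorem~\ref{thm:detZN} and Proposition~\ref{prop:same-sign}, note that if the floating punctures among $P, Q$ (those not touching the boundary) that have other punctures in their shadow all have even side length — here side length one, which is odd — then $\per Z(T) = |\det Z(T)|$; so in the "generic" configuration where $P$ or $Q$ floats with something in its shadow we can try to force tileability and a non-zero permanent, giving WLP. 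The real content is to show that the ONLY ways to make $Z(T)$ drop rank are: (a) $T$ contains a $\dntri$-heavy monomial subregion (non-tileable, handled by Theorem~\ref{thm:tileable} and Proposition~\ref{pro:per-enum}), which forces $P$ and $Q$ to cluster near one corner in a way that, combined with the corner punctures, yields exactly the family $(x^d, y^d, z^d, x^{d-1}y, x^{d-1}z)$ up to symmetry; or (b) $T$ is balanced and tileable but $\det Z(T) = 0$ in every characteristic, which by the mirror-symmetric analysis of Section~\ref{sec:mirror} (Theorem~\ref{thm:mirror-odd23}) can only happen in a narrow symmetric configuration, and a case analysis on $d$ pins this down to the single region $T_5(x^4, y^4, z^4, x^2yz, y^2z^2)$. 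I expect the main obstacle to be organizing this case analysis cleanly: one must enumerate the positions of $P$ and $Q$ relative to the three corner punctures (are they in the same row/diagonal, do they overlap each other or a corner puncture, is one in the shadow of another), show in each non-exceptional configuration that $T$ is tileable with a unit-signed tiling or has an explicitly non-vanishing determinant (invoking Proposition~\ref{pro:ci-enum}, Proposition~\ref{pro:C-is-zero}, or Proposition~\ref{pro:two-mahonian} for the hexagonal pieces that appear after removing forced lozenges), and conclude maximal rank. Managing the symmetry reductions (the $S_3$ action permuting $x, y, z$) so that the final list is exactly the two stated forms will require care but no deep new idea beyond what Sections~\ref{sec:signed}--\ref{sec:failure} already provide.
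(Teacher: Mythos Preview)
Your forward direction is fine, though the paper handles the infinite family $(x^d,y^d,z^d,x^{d-1}y,x^{d-1}z)$ more directly: the class of $x^{d-1}$ is a non-zero socle element of $R/I$ in degree $d-1$, so $\times\ell:[R/I]_{d-1}\to[R/I]_d$ cannot be injective (Proposition~\ref{pro:wlp}); no appeal to Theorem~\ref{thm:type-two} is needed.

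The converse has a genuine gap. For $d\ge 5$ the region $T=T_{d+1}(I)$ is $\uptri$-heavy, not balanced, so talking about $\det Z(T)$ or about $T$ being ``tileable'' is not meaningful; you must work with maximal minors. More seriously, your plan to use Proposition~\ref{prop:same-sign} fails exactly because the two extra punctures have side length one, which is odd: whenever one of them is floating with something in its shadow the hypothesis is violated and $\per Z=|\det Z|$ is not available. Your dichotomy ``$\dntri$-heavy subregion versus mirror-symmetric vanishing'' therefore does not cover all configurations, and the claim that $\dntri$-heaviness alone forces the family $(x^d,y^d,z^d,x^{d-1}y,x^{d-1}z)$ is unjustified.

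The paper takes a different route. For $d\le 7$ it checks all choices of $\{m,n\}$ by computer. For $d\ge 8$ it uses the machinery of Subsection~\ref{sub:red-unit-punct} in the opposite direction: writing $m=x^ay^bz^c$ and $n=x^\alpha y^\beta z^\gamma$, after symmetry one may assume either $2\le a\le\alpha$ (Case~1) or $a,b\le 1$ and $\gamma\le 1$ (Case~2). In Case~1 one \emph{adds} one further unit puncture to $T$ in each row above the row of $m$, skipping the row containing $n$, producing a region $T'\subset T$ that is a reduction to unit punctures of the complete-intersection region $T''=T_{d+1}(x^a,y^d,z^d)$. Since $Z(T'')$ has maximal rank in characteristic zero (Proposition~\ref{pro:ci-wlp}), so does $Z(T')$ by Proposition~\ref{cor:unit-reduc}; as $\det Z(T')$ is a maximal minor of $Z(T)$, this gives maximal rank of $Z(T)$. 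Case~2 is handled by the same idea after first adding one puncture near the $z$-corner. The step you are missing is thus not a sign or tileability analysis of $T$ itself, but the realisation of a suitable \emph{minor} of $T$ as a unit-puncture reduction of a complete intersection.
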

\begin{proof}
    Since $I$ is Artinian, it must be of the form $I = (x^d, y^d, z^d, m, n)$, where $m$ and $n$ are monomials of degree
    $d$. The assumption forces $d \geq 2$.

    If $m = x^{d-1}y$ and $n = x^{d-1}z$, then the residue class of $x^{d-1}$ is a socle element of $R/I$. Hence, $R/I$
    fails the weak Lefschetz property in degree $d-1$ by Proposition~\ref{pro:wlp}(ii), as claimed. This covers in
    particular the case $d = 2$.

    Suppose $d = 7$. Then there are $\binom{7+2}{2}-3 = 33$ monomials of degree $d$ in $R \setminus \{x^d, y^d, z^d\}$.
    Thus, there are $\binom{33}{2} = 528$ choices for picking the two monomials $m \ne n$. Checking all these ideals
    using a computer yields that the claim is true if $d=7$. Similarly, one checks that no such ideal exists if $3 \leq d \leq 6$.

    Suppose $d \geq 8$.  Write $m = x^a y^b z^c$ and $n = x^{\alpha} y^{\beta} z^{\gamma}$.  We consider two cases.
    \smallskip

    \emph{Case 1:} Assume $2 \leq a \leq \alpha$. By the beginning of the proof we may also assume $(a, \alpha) \neq (d-1, d-1)$.
    We choose in each row of $T = T_{d+1}$ above the row with the puncture associated to $m$ an upward pointing
    triangle as a new puncture, with two exceptions. If $a \neq \alpha$, then we do not pick a puncture in the row that
    contains the puncture to the monomial $n$. If $a = \alpha$, then we do not pick a puncture in the row adjacent to
    the row containing the punctures to $m$ and $n$. In both cases, we get a triangular region $T' \subset T$ that has
    $d-2-a$ more unit punctures than $T$. In fact, $T'$ is a reduction to unit punctures of $T'' = T_{d+1}(x^a, y^d, z^d)$.
    By Proposition~\ref{pro:ci-wlp}, $Z(T'')$ has maximal rank. (Recall that we assume $\charf K = 0$.) Hence,
    $Z(T')$ has maximal rank, and so does $Z(T)$ as $\det Z(T')$ is a maximal minor of $Z(T)$. This concludes the first case.
    \smallskip

    Case 1 takes care of the situations where both numbers in the pairs $a, \alpha)$, $(b, \beta)$, or $(c, \gamma)$
    are at least two. Thus, without loss of generality it remains to consider the following case.
    \smallskip

    \emph{Case 2:} Assume $a, b \leq 1$ and $\gamma \leq 1$. Then we can pick as a new puncture a unit triangle whose
    label is a multiple of $z^{d-2}$ such that the resulting region $T'$ is a reduction of unit punctures of
    $T'' =T_{d+1}(x^d, y^d, z^{d-2}, n)$.

    We may also assume that $\alpha \geq \beta$. Thus $\alpha \geq \frac{d-1}{2}$, so $\alpha \geq 4$. Now we pick a new
    puncture in each row of $T''$ above the row containing the puncture to $n$. Call the result $\tilde{T}$. Then
    $\tilde{T}$ is a reduction to unit punctures of $T_{d+1} (x^{\alpha}, y^d, z^{d-2})$. Using again
    Proposition~\ref{pro:ci-wlp}, it follows that $Z(\tilde{T}), Z(T''), Z(T')$, and thus $Z(T)$ have maximal rank.
\end{proof}

\begin{remark}\label{rem:tog-5}
    \begin{enumerate}
        \item The above result for $d =4$ was shown independently by Di Gennaro, Ilardi, and Vall\`es \cite[Theorem 3.2]{GIV}.
        \item Notice that $T = T_5 (x^4, y^4, z^4, x^2yz, y^2z^2)$ is mirror symmetric with three odd axial punctures. Hence
            $\det Z(T) = 0$ by Theorem~\ref{thm:mirror-odd23}, giving a direct argument that the ideal $(x^4, y^4, z^4, x^2yz, y^2z^2)$
            defines a Togliatti system.
    \end{enumerate}
\end{remark}

In Example~\ref{exa:unit-tog}, we only showed that the bi-adjacency matrices do not have maximal rank. We now consider
how much maximal rank can fail. To this end we construct ideals that have the weak Lefschetz property in all degrees,
except $d-1$, and give rise to varieties satisfying many Laplace equations of order $d-1$.

We begin with a general construction that proves useful for generating such examples. We note that this construction is
based on modifications of triangular regions that are similar to the techniques used in Subsection~\ref{sub:red-unit-punct}.

\begin{proposition}\label{lem:build-I-Jdjk}
    Let $J$ be an Artinian ideal that is minimally generated by $m$ monomials of degree $e \leq d$. Let $j$ and $k$ be integers such that
    \[
        1 \leq j \leq \min \left \{ \frac{d-1}{m}, \frac{d+1}{e+1} \right \} \quad \text{and } \quad  0 \leq k \leq d-mj-1.
    \]
    Define an ideal $I = I_{J,d,j,k}$ by
    \begin{equation*}
         I_{J,d,j,k}   = J \cdot  x^{d+1 - (e+1) j} \cdot (x^{e+1}, y^{e+1})^{j-1} + (y^d) + z^{m j + k+ 1} \cdot (y, z)^{d-1 - mj - k}.
    \end{equation*}
    Then the following statements are true.
    \begin{enumerate}
        \item The ideal $I$ is generated by $d+1-k$ monomials of degree $d$, so $T_{d+1}(I)$ has only unit punctures.
        \item For $i \in \{1,2\}$, the rows and columns of $Z(T_{d+i}(I))$ can be rearranged so that it becomes a block
            matrix of the form
            \[
                \left(
                    \begin{array}{cccccc}
                        X_{0, i}& X_{1, i}      & X_{2, i}      & X_{3, i}& \cdots        & X_{j, i}  \\
                        0       & Z(T_{e+i}(J)) & 0             & 0       & \cdots        & 0 \\
                        0       & 0             & Z(T_{e+i}(J)) & 0       & \cdots        & 0 \\
                        \vdots  & \vdots        & \vdots        & \ddots  & \ddots        & \vdots \\
                        0       & 0             & 0             & \cdots  & Z(T_{e+i}(J)) & 0 \\
                        0       & 0             & 0             & \cdots  & 0             & Z(T_{e+i}(J))
                    \end{array}
                \right),
            \]
            where the matrix $X_{0, i}$ has maximal rank.
    \end{enumerate}
\end{proposition}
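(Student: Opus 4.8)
The plan is to prove (i) by directly enumerating the generators of $I$, and (ii) by exhibiting the $j$ copies of $T_{e+i}(J)$ inside $T_{d+i}(I)$ as monomial subregions and then reading off the block structure from local adjacency. For (i), I would first introduce the degree-$(d-e)$ monomials $w_\ell := x^{\,d-e-(e+1)\ell}\,y^{(e+1)\ell}$ for $0 \le \ell \le j-1$, so that the first summand of $I$ is $\sum_\ell w_\ell J$, contributing the $jm$ monomials $w_\ell g$ with $g$ a minimal generator of $J$; the second summand contributes $y^d$, and the third contributes the $d-mj-k$ monomials $y^sz^{d-s}$ with $0\le s\le d-1-mj-k$. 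All of these have degree $d$, so none divides another. The hypotheses $j\le (d+1)/(e+1)$, $j\le (d-1)/m$, and $k\le d-mj-1$ are exactly what is needed to keep the exponents nonnegative and to force the three lists to be pairwise disjoint with no internal repetition, giving $jm+1+(d-mj-k)=d+1-k$ minimal generators, all of degree $d$; hence every puncture of $T_{d+1}(I)$ has side length $(d+1)-d=1$.

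For (ii), fix $i\in\{1,2\}$ and let $U_\ell$ be the monomial subregion of $T_{d+i}(I)$ associated to $w_\ell$; it sits in the side-$(e+i)$ triangle of $\mathcal T_{d+i}$ lying $d-e-(e+1)\ell$ units from the bottom, $(e+1)\ell$ units from the upper-right edge, and on the upper-left edge. The first key claim is that each $U_\ell$ is a translate of $T_{e+i}(J)$ and that the $U_\ell$ are pairwise disjoint \emph{as subregions} of $T_{d+i}(I)$. Since $J$ is Artinian and generated in degree $e$, it contains $x^e,y^e,z^e$ among its minimal generators, so $U_\ell$ already carries punctures of side length $i$ at its three corners. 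Using this, I would run a least-common-multiple computation showing that $w_\ell$ shares with $w_{\ell\pm1}$, with $y^d$, and with each generator of the third summand a least common multiple of degree at least $d+1$ — this is precisely where $i\le 2$ enters — so that the \emph{only} unit triangle that any other generator of $I$ could remove from the defining triangle of $U_\ell$ already lies inside one of its corner punctures $w_\ell x^e$, $w_\ell y^e$, $w_\ell z^e$. Thus the punctures inside $U_\ell$ are exactly those of $J$, and neighbouring $U_\ell$'s meet only in triangles already removed in $T_{d+i}(I)$.

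Next I would set $T^{\mathrm{top}}:=T_{d+i}(I)\setminus(U_1\cup\cdots\cup U_j)$, which is the triangular region $T_{d+i}(I')$ with $I'=(w_0,\ldots,w_{j-1})+(y^d)+z^{\,mj+k+1}(y,z)^{\,d-1-mj-k}$. A downward-pointing unit triangle of $U_\ell$ has monomial label a multiple of $w_\ell$, and multiplying by a variable keeps it a multiple of $w_\ell$; hence (as in the proof of Lemma~\ref{lem:replace-tileable}) every upward-pointing triangle of $T_{d+i}(I)$ adjacent to a downward-pointing triangle of $U_\ell$ lies in $U_\ell$. Ordering the unit triangles so that those of $T^{\mathrm{top}}$ come first, then those of $U_1,\ldots,U_j$ in turn, and applying the block decomposition of Proposition~\ref{prop:Z-subregion} once for each copy (its proof needs only this adjacency fact, not balancedness) — peeling off $U_j$, then $U_{j-1}$, and so on — the matrix $Z(T_{d+i}(I))$ takes the asserted block form with $X_{0,i}=Z(T^{\mathrm{top}})$ and the remaining diagonal blocks equal to $Z(U_\ell)=Z(T_{e+i}(J))$.

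It then remains to show $X_{0,i}=Z(T^{\mathrm{top}})$ has maximal rank. Every puncture of $I'$ touches the boundary of $\mathcal T_{d+i}$ (the $w_\ell$ run down the upper-left edge, $y^d$ is the bottom-left corner, and the $y^sz^{d-s}$ form a strip along the bottom edge near the bottom-right corner), so $T^{\mathrm{top}}$ is simply connected. I would check it is tileable via Theorem~\ref{thm:tileable}, i.e.\ verify it has no $\dntri$-heavy monomial subregion; being simply connected, Proposition~\ref{pro:same-sign} then gives that all its lozenge tilings share one sign, so $|\det Z(T^{\mathrm{top}})|=\per Z(T^{\mathrm{top}})>0$ by Theorem~\ref{thm:pm-matrix} and Proposition~\ref{pro:per-enum}; in particular $X_{0,i}$ has maximal rank over any field. (If $T^{\mathrm{top}}$ turns out unbalanced, one instead passes to a tileable restricted maximal minor, which is again simply connected, and argues identically.) The step I expect to be the main obstacle is the geometric bookkeeping of part (ii): verifying carefully — for both $i=1$ and $i=2$ — that the $U_\ell$ are genuine disjoint translates of $T_{e+i}(J)$, in particular that the small overlaps occurring when $i=2$ are exactly absorbed by the corner punctures coming from $x^e,y^e,z^e\in J$, and that $T^{\mathrm{top}}$ is tileable.
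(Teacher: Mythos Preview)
Your overall strategy matches the paper's: count generators for (i); for (ii), exhibit the $j$ monomial subregions $U_\ell$ (your indexing slips between $0,\ldots,j-1$ and $1,\ldots,j$, but this is harmless), observe that downward triangles in $U_\ell$ are only adjacent to upward triangles in $U_\ell$, and peel off the blocks via Proposition~\ref{prop:Z-subregion}. The paper also identifies $X_{0,i}$ with $Z(T_{d+i}(I'))$ for exactly your ideal $I'$, and handles the $i=2$ overlap of consecutive $U_\ell$'s the same way you do, via the Artinian hypothesis forcing corner punctures.

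The one place you diverge is in proving that $X_{0,i}$ has maximal rank. The paper does not check tileability of $T_{d+i}(I')$ directly; instead it introduces the complete intersection region $T'' = T_{d+i}(x^{d+1-(e+1)j}, y^d, z^{mj+k+1})$, observes that $T''$ is obtained from $T_{d+i}(I')$ by removing uniquely tileable rhombi (the $j$ large punctures along the upper-left edge merge into one, and the row of unit punctures along the bottom merges with the corner), applies Proposition~\ref{prop:Z-subregion} again, and then invokes Proposition~\ref{pro:ci-wlp} for the complete intersection. This is cleaner than your plan for two reasons: it avoids the deferred verification ``check no $\dntri$-heavy monomial subregion,'' and it handles the balanced and unbalanced cases uniformly, since Proposition~\ref{pro:ci-wlp} gives maximal rank of $Z(T'')$ in every degree. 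Your fallback to a ``tileable restricted maximal minor'' in the unbalanced case is the weak point---removing an $A$-vertex can destroy simple-connectedness, so you would need to argue that a boundary-adjacent choice exists; the paper's reduction to $T''$ sidesteps this entirely.
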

\begin{proof}
    The definition of $I$ gives that $I$ is  generated by $ mj + 1 + d - mj - k = d + 1 - k$ monomials of degree $d$.

    By Proposition~\ref{pro:ci-wlp}, the bi-adjacency matrix of the complete intersection region
    $T'' = T_{d+1}(x^{d+1-(e+1) j}, y^d, z^{mj+k+1})$ has maximal rank. We compare this region with the region to the ideal
    \[
        I' = x^{d+1 - (e+1) j} \cdot (x^{e+1}, y^{e+1})^{j-1} + (y^d) + z^{m j + k+ 1} \cdot (y, z)^{d-1 - mj - k}.
    \]
    The region $T' = T_{d+1}(I')$ has $j$ non-overlapping punctures of side length $e+1$ along the top-left edge and
    $d-mj - k$ non-overlapping unit punctures along the bottom edge. It contains $T''$. In fact, $T''$ can be obtained
    from $T'$ by removing uniquely tileable regions, namely rhombi. Applying Proposition~\ref{prop:Z-subregion}
    repeatedly, we get that $Z_{d+1} (T')$ has maximal rank.

    Replacing each puncture of side length $e+1$ of $T'$ by $T_{e+1}(J)$ produces the region $T = T_{d+1} (I)$. Each
    such replacement amounts to removing from $T_{e+1}(J)$ all the present triangles. (See
    Figure~\ref{fig:large-kernel-13-2-0} for an illustration.) Thus, Proposition~\ref{prop:Z-subregion} applies again.
    This proves claim (ii) if $i = 1$. Observe that $X_{0, 1} = T_{d+1} (I')$.

    For $i = 2$, we argue similarly, using the fact that the overlapping subregions $T_{e+2}(J)$ of $T_{d+2} (I)$
    overlap in a unit puncture. Here, the assumption that $J$ is an Artinian ideal is important. It implies that the
    unit triangles in the corners of $T_{e+i} (J)$ are punctures.
\end{proof}

Choosing a suitable ideal $J$ in the above result we can construct the desired Togliatti systems.

\begin{corollary}\label{cor:build-large-kernel}
    Consider the ideal $J = (x^3, y^3, z^3, xyz)$. Let $d \geq 5$, $j$, and $k$ be integers satisfying
    $1 \leq j \leq \frac{d-1}{4}$, and $0 \leq k \leq d-4j-1$. Then $Z(T_{d+1}(I_{J,d,j,k}))$ is
    $\binom{d+1}{2} \times \left(\binom{d+1}{2} + k\right)$ matrix of $\rank Z \leq \binom{d+1}{2} - j$, i.e., it fails
    to have maximal rank by $j$.

    Moreover, for $k = 0$, the bi-adjacancy matrix $Z(T_{d+2}(I_{J,d,j,0}))$ has maximal rank.
\end{corollary}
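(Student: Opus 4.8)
The plan is to deduce both parts from the block decompositions of bi-adjacency matrices provided by Proposition~\ref{lem:build-I-Jdjk}, specialised to $J=(x^3,y^3,z^3,xyz)$. First I would verify its hypotheses: $J$ is Artinian and minimally generated by $m=4$ monomials of degree $e=3$, so $\min\{\frac{d-1}{m},\frac{d+1}{e+1}\}=\frac{d-1}{4}$ and $d-mj-1=d-4j-1$, and the bounds $1\le j\le\frac{d-1}{4}$, $0\le k\le d-4j-1$ in the corollary are exactly those required. Proposition~\ref{lem:build-I-Jdjk}(i) then gives that $I:=I_{J,d,j,k}$ is generated by $d+1-k$ monomials of degree $d$, so $T_{d+1}(I)$ has only unit punctures; removing $d+1-k$ upward-pointing unit triangles from $\mathcal{T}_{d+1}$ leaves $\binom{d+1}{2}$ downward- and $\binom{d+1}{2}+k$ upward-pointing triangles, which fixes the size of $Z:=Z(T_{d+1}(I))$.

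For the first assertion I would invoke Proposition~\ref{lem:build-I-Jdjk}(ii) with $i=1$: after reordering rows and columns, $Z$ is block upper triangular with a top-left block $X_{0,1}$ of maximal rank and $j$ copies of $Z(T_{e+1}(J))=Z(T_4(x^3,y^3,z^3,xyz))$ down the diagonal beneath it. The region $T_4(x^3,y^3,z^3,xyz)$ is the Togliatti region: it is balanced and mirror symmetric (satisfying Assumption~\ref{assump:mirror}), and its axial punctures are the corner puncture $x^3$ and the central puncture $xyz$, both of odd side length $1$; since their number, $2$, is $2\pmod 4$, Theorem~\ref{thm:mirror-odd23} gives $\det Z(T_4(J))=0$ (this is also the case $d=3$ of Example~\ref{exa:unit-tog}). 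Thus each diagonal block is a singular $6\times 6$ matrix, and a direct inspection shows its corank equals $1$. Writing $D$ for the block-diagonal $6j\times 6j$ matrix of these blocks, one has $\rank D=5j$, the bottom $6j$ rows of $Z$ are $(0\mid D)$, and $X_{0,1}$ has $\binom{d+1}{2}-6j$ rows and full row rank; restricting a vanishing row relation of $Z$ to the columns of $X_{0,1}$ forces the coefficients on the top rows to vanish, after which $5j$ independent rows of $D$ remain independent. Hence $\rank Z=\binom{d+1}{2}-j$, and since $Z$ has $\binom{d+1}{2}+k\ge\binom{d+1}{2}$ columns, $Z$ fails to have maximal rank by $j$.

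For the ``moreover'' part, take $k=0$ and apply Proposition~\ref{lem:build-I-Jdjk}(ii) with $i=2$: $Z(T_{d+2}(I_{J,d,j,0}))$ is block upper triangular with a block $X_{0,2}$ of maximal rank and $j$ diagonal copies of $Z(T_{e+2}(J))=Z(T_5(x^3,y^3,z^3,xyz))$. The region $T_5(J)$ has $6$ downward- and $3$ upward-pointing unit triangles, hence is $\dntri$-heavy, so $Z(T_5(J))$ is $6\times 3$. I would check it has maximal rank $3$; by Proposition~\ref{prop:interp-Z} this is equivalent to surjectivity of $\times(x+y+z)\colon[R/J]_3\to[R/J]_4$, and a short Hilbert-function computation (using that $R/(x^3,y^3)$ has the strong Lefschetz property in characteristic zero, so that $(x^3,y^3,(x+y)^3)$ contains all of $[S]_4$ with $S=K[x,y]$) shows the cokernel vanishes. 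Since $T_{d+2}(I)$ is $\dntri$-heavy, ``maximal rank'' means full column rank here; as $X_{0,2}$ and each diagonal block $Z(T_5(J))$ have full column rank, the block upper triangular matrix $Z(T_{d+2}(I_{J,d,j,0}))$ has full column rank, i.e., maximal rank.

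The main obstacle will be the bookkeeping attached to Proposition~\ref{lem:build-I-Jdjk}: pinning down the sizes of the blocks $X_{0,i}$, verifying that $T_{d+2}(I)$ is $\dntri$-heavy (so that full column rank really is maximal rank), and confirming that the corank of the Togliatti block $Z(T_4(J))$ equals $1$ rather than merely being positive --- this last point being what upgrades $\rank Z\le\binom{d+1}{2}-j$ to the equality ``fails by exactly $j$''.
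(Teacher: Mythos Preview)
Your proposal is correct and follows essentially the same approach as the paper: both arguments specialise Proposition~\ref{lem:build-I-Jdjk} to $J=(x^3,y^3,z^3,xyz)$, read off the matrix size from part~(i), use the block decomposition of part~(ii) together with the rank deficiency of $Z(T_4(J))$ for the first claim, and the full column rank of $Z(T_5(J))$ for the ``moreover'' claim. The paper simply asserts that $Z(T_4(J))$ has rank~$5$ and that $Z(T_5(J))$ is a $6\times 3$ matrix of maximal rank, whereas you supply extra justification (Theorem~\ref{thm:mirror-odd23} for singularity, and a cokernel computation via the strong Lefschetz property of $K[x,y]/(x^3,y^3)$ for surjectivity); your ``direct inspection'' for corank one is at the same level as the paper's assertion. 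The paper also pins down the shape of $X_{0,2}$ explicitly (two more rows than columns, via the complete intersection region $T_{d+2}(x^{d+1-4j},y^d,z^{4j+1})$) to confirm full column rank, which is exactly the bookkeeping you flag as the main obstacle.
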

\begin{proof}
    Set $I = I_{J,d,j,k}$, and $T = T_{d+1}(I)$.

    By Proposition~\ref{lem:build-I-Jdjk}(i), $I$ is generated by $d+1-k$ monomials of degree $d$. Hence
    $h_{R/I}(d-1) = \binom{d+1}{2}$ and $h_{R/I}(d) = \binom{d+2}{2} - (d+1-k) = \binom{d+1}{2} + k$, and so $Z(T)$ is a
    $\binom{d+1}{2} \times \left(\binom{d+1}{2} + k\right)$ matrix. Since $Z(T_{4}(J))$ is a $6 \times 6$ matrix of rank
    $5 < 6$ and there are $j$ copies of $Z(T_4 (J))$ along the block diagonal of $Z(T)$ by
    Proposition~\ref{lem:build-I-Jdjk}(ii), it follows that $\rank Z(T) = \binom{d+1}{2} - j$.

    Suppose now $k = 0$, and consider $T' = T_{d+2}(I)$. Using the notation of Proposition~\ref{lem:build-I-Jdjk}, note
    that $X_{0, 2} = Z( T_{d+2} (I'))$, where
    \[
        I' = x^{d+1 - 4 j} \cdot (x^{4}, y^{4})^{j-1} + (y^d) + z^{4 j + 1} \cdot (y, z)^{d-1 - 4 j}.
    \]

    The bi-adjacency matrix of the complete intersection region $T'' = T_{d+2} ( x^{d+1 - 4 j} ,y^d, z^{4 j + 1})$ has
    two more rows than columns. Since $T''$ is obtained from $T_{d+2} (I')$ by removing balanced regions, also
    $X_{0, 2} = Z( T_{d+2} (I'))$ has two more rows than columns. The matrix $T_5 (J)$ is a $6 \times 3$ matrix of maximal rank.
    Hence Proposition~\ref{lem:build-I-Jdjk}(ii) proves that $Z(T_{d+2} (I))$ has maximal rank as well.
\end{proof}

We illustrate the regions involved in the last statement in a specific case.

\begin{example} \label{ex:laplace-eq}
    Let $I = I_{J,d,j,k}$ be an ideal as in the preceding corollary, where $d = 13,\; j = 2$, and $k = 0$. The related
    triangular regions are depicted below.

    \begin{figure}[!ht]
        \begin{minipage}[b]{0.48\linewidth}
            \centering
            \includegraphics[scale=1]{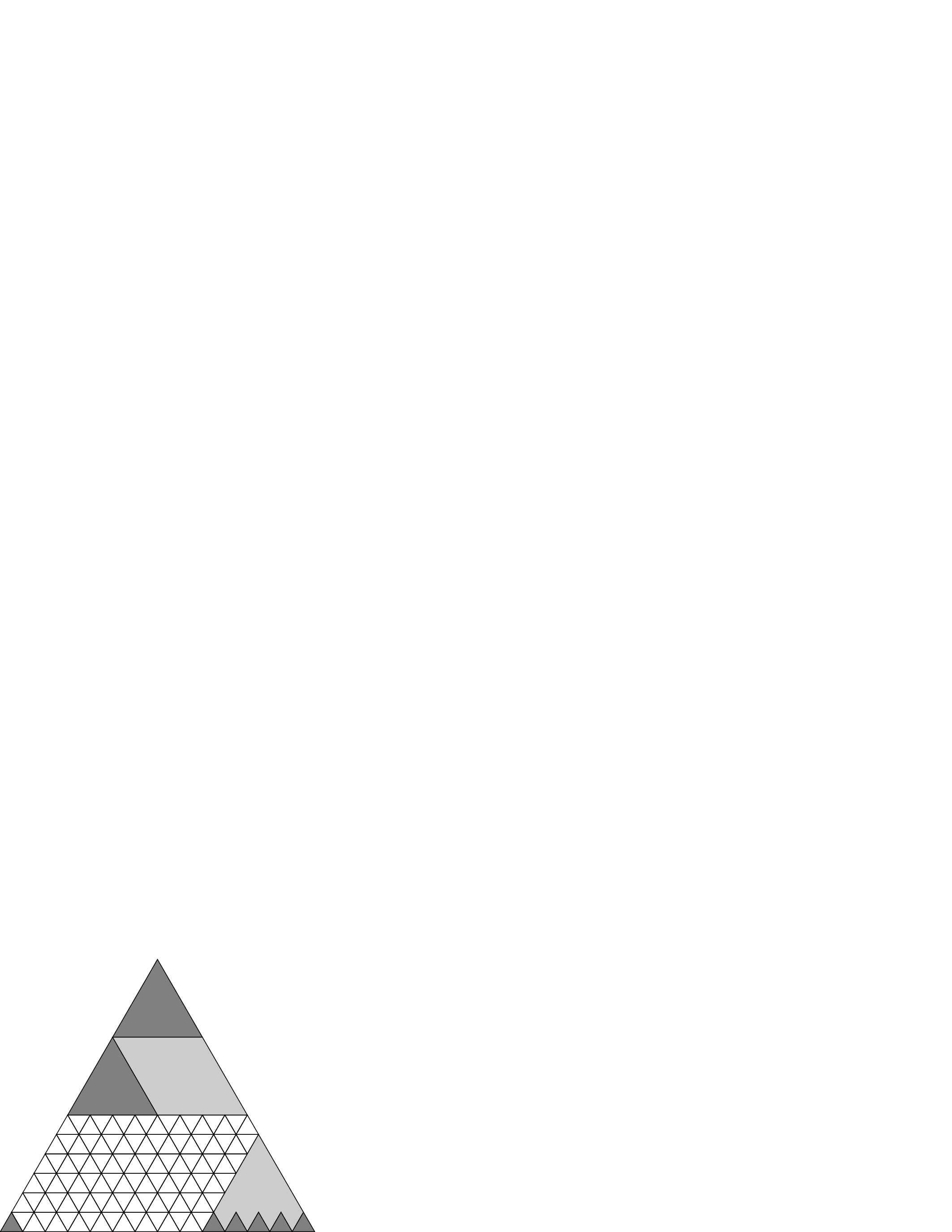}\\
            \emph{(i) $T_{14}(x^6(x^4,y^4) + y^{13} + z^9(y,z)^4)$ with $T_{14}(x^6, y^{13}, z^9)$ highlighted}
        \end{minipage}
        \begin{minipage}[b]{0.48\linewidth}
            \centering
            \includegraphics[scale=1]{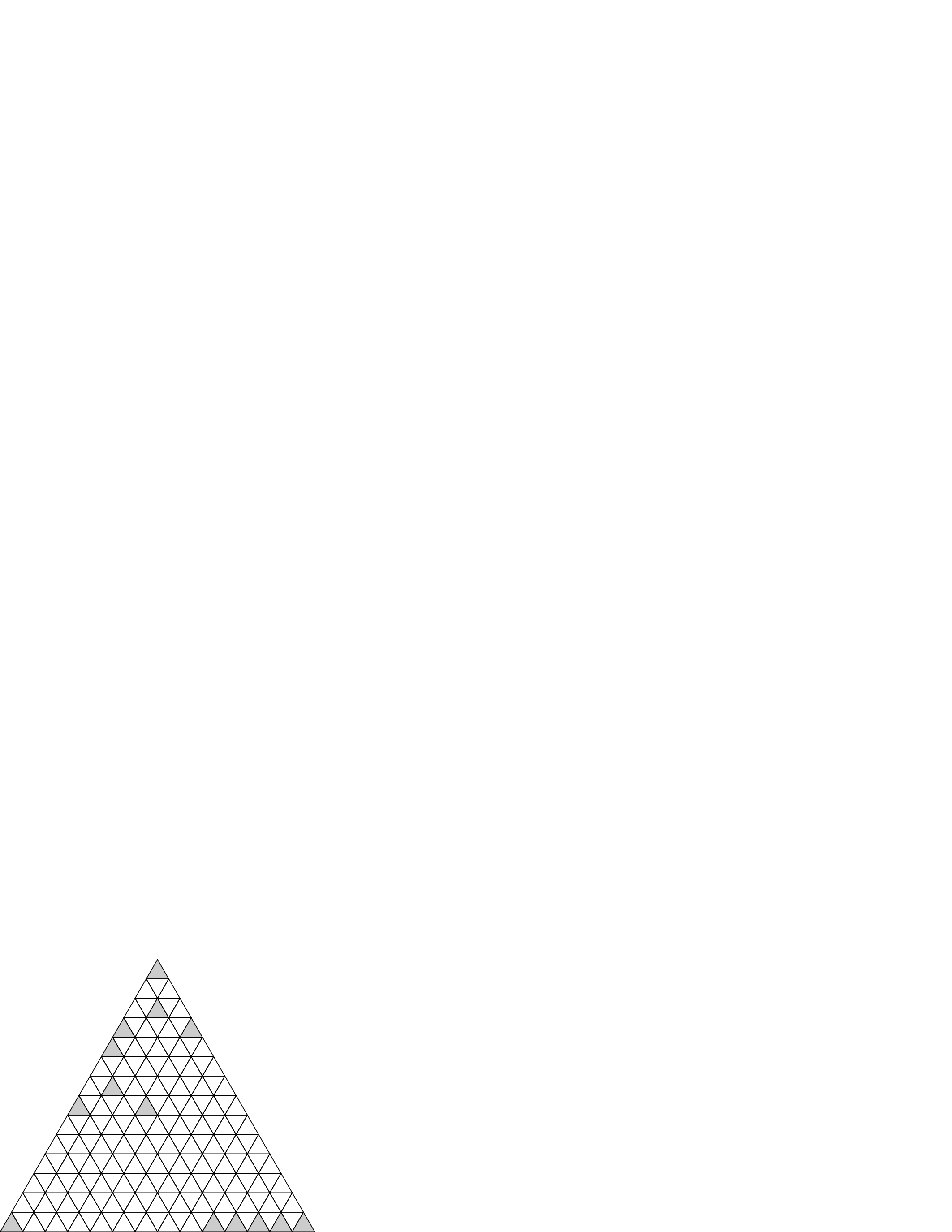}\\
            \emph{(ii) $T = T_{14}(I_{J,13,2,0})$}
        \end{minipage}
        \caption{Construction of the region $T = T_{14}(I_{J,13,2,0})$, where $J = (x^3, y^3, z^3, xyz)$.
            The matrix $Z(T)$ fails to have maximal rank by $2$.}
        \label{fig:large-kernel-13-2-0}
    \end{figure}
\end{example}

Collecting the results about the ideals $I = I_{J,d,j,k}$ in the case $k = 0$, we get the following consequence.

\begin{corollary}\label{cor:many-laplance-eq}
    Let $d$ and $j$ be  integers such that $1 \leq j \leq \frac{d-1}{4}$, and consider the ideal
    \[
        I_j = (x^3, y^3, z^3, xyz) \cdot x^{d+1 - 4j} \cdot (x^4, y^4)^{j-1} + (y^d) + z^{4j+1} (y, z)^{d-1 - 4j} .
    \]
    Then the following statements hold.
    \begin{enumerate}
        \item The algebra $R/I_j$ has the weak Lefschetz property in every degree $i \neq d-1$.

            More precisely, for a general linear form $\ell$, the map $\times \ell: [R/I_j]_{i-1} \to [R/I_j]_i$ is
            injective if $i < d$, and it is surjective if $j > d$.

        \item  In degree $d-1$, $R/I_j$ fails the weak Lefschetz property by $j$.

        \item The variety $X_{n,[(I_j)^{-1}]_d}$ satisfies exactly $j$ Laplace equations of order $d-1$.
    \end{enumerate}
\end{corollary}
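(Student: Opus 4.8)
The plan is to observe that $I_j$ is precisely the ideal $I_{J,d,j,0}$ of Corollary~\ref{cor:build-large-kernel} for $J=(x^3,y^3,z^3,xyz)$, and to extract all three assertions from the rank information on $Z(T_{d+1}(I_j))$ and $Z(T_{d+2}(I_j))$ gathered there, together with the general tools of Subsection~\ref{subsec:tools} and the Togliatti dictionary Theorem~\ref{thm:mmo}. First I would record the elementary bookkeeping: every minimal generator of $I_j$ has degree $d$, so $[R/I_j]_i=[R]_i$ for $i\le d-1$; moreover $I_j$ contains $x^d=x^3\cdot x^{d+1-4j}\cdot x^{4(j-1)}$, $y^d$, and $z^d=z^{4j+1}\cdot z^{d-1-4j}$, hence $R/I_j$ is Artinian. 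By Proposition~\ref{lem:build-I-Jdjk}(i) the ideal $I_j$ is generated by $d+1$ forms of degree $d$, so $h_{R/I_j}(d)=\binom{d+2}{2}-(d+1)=\binom{d+1}{2}=h_{R/I_j}(d-1)$; thus the Hilbert function of $R/I_j$ strictly increases up to degree $d-1$ and has ``twin peaks'' in degrees $d-1$ and $d$. Finally, by Proposition~\ref{pro:mono} it suffices throughout to test the multiplication maps on $\ell=x+y+z$, and by Proposition~\ref{prop:interp-Z} the map $\times(x+y+z)\colon[R/I_j]_{i-2}\to[R/I_j]_{i-1}$ has the same rank as the matrix $Z(T_i(I_j))$.

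Statement (ii) is then immediate: Corollary~\ref{cor:build-large-kernel} gives $\rank Z(T_{d+1}(I_j))=\binom{d+1}{2}-j$, so $\times\ell\colon[R/I_j]_{d-1}\to[R/I_j]_d$ has rank $\binom{d+1}{2}-j=\min\{h_{R/I_j}(d-1),h_{R/I_j}(d)\}-j$, which by Definition~\ref{def:wlp-in-degree} is exactly the statement that $R/I_j$ fails the weak Lefschetz property in degree $d-1$ by $j$. For (i), when $i\le d-1$ the map $\times\ell\colon[R/I_j]_{i-1}\to[R/I_j]_i$ is the injective map $\times\ell\colon[R]_{i-1}\to[R]_i$, which has maximal rank since $\dim_K[R]_{i-1}\le\dim_K[R]_i$; in particular it is injective in every degree $<d$. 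When $i>d$ it is enough, by Proposition~\ref{pro:surj}, to prove surjectivity of $\times\ell\colon[R/I_j]_d\to[R/I_j]_{d+1}$. Here I would use that $Z(T_{d+2}(I_j))$ has maximal rank (Corollary~\ref{cor:build-large-kernel}) together with the block decomposition of Proposition~\ref{lem:build-I-Jdjk}(ii): its diagonal blocks are $X_{0,2}$, which (as shown in the proof of Corollary~\ref{cor:build-large-kernel}, via the symmetry of the complete-intersection Hilbert function in Lemma~\ref{lem:h-ci}) has two more rows than columns, and $j$ copies of the $6\times3$ matrix $Z(T_5(J))$. Consequently $Z(T_{d+2}(I_j))$ has strictly more rows than columns, so its ``maximal rank'' means full column rank, i.e., the transposed map $\times(x+y+z)\colon[R/I_j]_d\to[R/I_j]_{d+1}$ is surjective; Proposition~\ref{pro:surj} then propagates surjectivity to all degrees $i>d$, which gives the weak Lefschetz property in every degree $i\neq d-1$.

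For (iii), by the above $I_j$ is an Artinian monomial ideal whose minimal generators all have degree $d$, with $r\le d+1=\binom{n+d-1}{n-1}$ of them (here $n=2$), and by (ii) it fails the weak Lefschetz property in degree $d-1$ by $j$; moreover, since $I_j$ is generated in degree $d$, the region $T_{d+1}(I_j)$ already has only unit punctures. Theorem~\ref{thm:mmo} (or, equivalently, Theorem~\ref{thm:laplace-eq} applied to $I_j$ viewed as its own reduction to unit punctures) then yields that $X_{n,[(I_j)^{-1}]_d}$ satisfies $j$ Laplace equations of order $d-1$; since the equivalence in Theorem~\ref{thm:mmo} is an ``if and only if'' identifying the defect $\delta$ with the number of such Laplace equations, this number is exactly $j$. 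I would also note that $X_{n,[(I_j)^{-1}]_d}$ is a toric surface because $(I_j)^{-1}$ is a monomial ideal.

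The step I expect to be the genuine obstacle is the surjectivity in (i): turning ``$Z(T_{d+2}(I_j))$ has maximal rank'' into ``$\times\ell\colon[R/I_j]_d\to[R/I_j]_{d+1}$ is surjective'' requires knowing that this matrix is not ``wider than tall'', equivalently that $h_{R/I_j}(d+1)<h_{R/I_j}(d)$. Establishing this cleanly is where the explicit block structure of Proposition~\ref{lem:build-I-Jdjk}, the shape of $Z(T_5(J))$, and the symmetry of the complete-intersection Hilbert function (Lemma~\ref{lem:h-ci}) have to be combined carefully; everything else reduces to quoting Corollary~\ref{cor:build-large-kernel}, Propositions~\ref{prop:interp-Z}, \ref{pro:mono}, and~\ref{pro:surj}, and Theorem~\ref{thm:mmo}.
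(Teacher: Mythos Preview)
Your proposal is correct and follows essentially the same route as the paper: both derive (i) and (ii) from Corollary~\ref{cor:build-large-kernel} (with $k=0$) together with Proposition~\ref{pro:mono}, and (iii) from the Togliatti dictionary (the paper cites Theorem~\ref{thm:laplace-eq}, you cite Theorem~\ref{thm:mmo}; since $I_j$ already has only unit punctures these are interchangeable here). Your treatment is simply more explicit---you spell out the injectivity for $i<d$ via $[R/I_j]_i=[R]_i$ and the surjectivity for $i>d$ via Proposition~\ref{pro:surj}, whereas the paper compresses this into a single sentence and the remark that $\dim_K[R/I_j]_d>\dim_K[R/I_j]_{d+1}$. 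Your identification of the surjectivity step as the only nontrivial point, and your resolution of it through the block shape in Proposition~\ref{lem:build-I-Jdjk}(ii), is exactly right; in fact your count that $Z(T_{d+2}(I_j))$ has $2+3j$ more rows than columns is sharper than the paper's stated difference of $2$.
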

\begin{proof}
    Combining Corollary \ref{cor:build-large-kernel} (with $k = 0$) and Proposition~\ref{pro:mono} shows assertions (i)
    and (ii). Notice that $\dim_K [R/I_j]_{d} = \dim_K [R/I_j]_{d+1} + 2$.

    Now Theorem~\ref{thm:laplace-eq} gives (iii) because $\dim_K [R/I_j]_{d-1} = \dim_K [R/I_j]_{d}$.
\end{proof}

It should be noted that one can produce more families with unexpected properties by varying the ideal $J$ used in
Proposition~\ref{lem:build-I-Jdjk}. We illustrate this by constructing a family of algebras that fail the weak Lefschetz
property in two consecutive degrees, where in one degree injectivity was expected and in the other degree surjectivity
was expected.

\begin{corollary}\label{cor:build-small-image}
    Let $d, j$ and $k$ be integers such that $1 \leq j \leq \frac{d-1}{6}$ and $0 \leq k \leq \min\{2j+2, d-6j-1\}$.
    Consider the ideal
    \begin{equation*}
        I_{j,k} = J \cdot x^{d+1 - 4 j} \cdot (x^{4}, y^{4})^{j-1} + (y^d) + z^{6 j + k+ 1} \cdot (y, z)^{d-1 - 6j - k},
    \end{equation*}
    where
    \[
        J = (x^3, y^3, z^3, x^2y, xz^2, y^2z).
    \]
    Then the algebra $R/I_{j, k}$ has the following properties.
    \begin{enumerate}
        \item $\dim_K [R/I_{j, k}]_d = \binom{d+1}{2} + k \geq \binom{d+1}{2} = \dim_K [R/I_{j, k}]_{d-1}$, and
            $R/I_{j,k}$ fails the weak Lefschetz property by at least $2j$.
        \item $\dim_K [R/I_{j, k}]_{d+1} = \dim_K [R/I_{j, k}]_{d} - (2j - k +2) \leq \dim_K [R/I_{j, k}]_{d}$, and
            $R/I_{j, k}$ fails the weak Lefschetz property by $2j + k -2$.
    \end{enumerate}
\end{corollary}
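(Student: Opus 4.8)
The plan is to apply the general machinery from Proposition~\ref{lem:build-I-Jdjk} with the specific ideal $J = (x^3, y^3, z^3, x^2y, xz^2, y^2z)$, which is the ``cyclic'' monomial almost complete intersection whose triangular region $T_4(J)$ appears in Figure~\ref{fig:nss-examples}(iii). The key numerical input is that $T_4(J)$ is balanced with $Z(T_4(J))$ a $6\times 6$ matrix of rank $5$ (so it fails maximal rank by exactly $1$), and that $T_5(J)$ is $\uptri$-heavy with $3$ more upward- than downward-pointing triangles, so $Z(T_5(J))$ is a $6\times 3$ matrix whose rank I will need to pin down.

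First I would verify the hypotheses of Proposition~\ref{lem:build-I-Jdjk}: here $J$ is Artinian, minimally generated by $m = 6$ monomials of degree $e = 3$, and the constraints $1 \le j \le \min\{\frac{d-1}{6}, \frac{d+1}{4}\}$ and $0 \le k \le d - 6j - 1$ reduce to exactly the stated ranges $1 \le j \le \frac{d-1}{6}$ and $0 \le k \le \min\{2j+2, d-6j-1\}$ once I also impose the extra upper bound $k \le 2j+2$ needed for part~(ii) below. The proposition then gives that $I = I_{j,k}$ is generated by $d+1-k$ monomials of degree $d$, hence $h_{R/I_{j,k}}(d-1) = \binom{d+1}{2}$ and $h_{R/I_{j,k}}(d) = \binom{d+2}{2} - (d+1-k) = \binom{d+1}{2} + k$; this is the first displayed equality in~(i). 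For the rank statement in~(i), the block form of $Z(T_{d+1}(I_{j,k}))$ from Proposition~\ref{lem:build-I-Jdjk}(ii) has $j$ copies of $Z(T_4(J))$ along the block diagonal together with a block $X_{0,1}$ of maximal rank; since each copy of $Z(T_4(J))$ contributes a corank of $1$ and the blocks are arranged in block-triangular form with the full-rank $X_{0,1}$, the total corank is exactly $j$ in characteristic zero, but in positive characteristic $Z(T_4(J))$ may have even smaller rank, so I only claim $\rank Z \le \binom{d+1}{2} - j$, i.e.\ failure of the weak Lefschetz property by \emph{at least} $j$. Wait --- the statement says ``at least $2j$,'' so I must be more careful: the point is that $Z(T_4(J))$ has a $4$-dimensional kernel structure coming from the fact that $T_4(J)$ is over-punctured and the relevant minors vanish in a controlled way; more precisely, the correct count is that the corank contributed \emph{per copy} is governed by the mismatch between $\#\uptri$ and $\#\dntri$ in the relevant over-punctured subregions, and one checks directly (or by the computation $\det Z(T_4(J)) = 0$ together with an explicit rank-$4$ submatrix witness) that each block of $Z(T_4(J))$ forces corank $2$ once it sits inside the larger region with the surrounding unit punctures. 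I would make this precise by exhibiting, in each of the $j$ diagonal positions, two independent kernel vectors that survive the block-triangular assembly.

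For part~(ii), I would apply Proposition~\ref{lem:build-I-Jdjk}(ii) with $i = 2$: the bi-adjacency matrix $Z(T_{d+2}(I_{j,k}))$ has a block form with $j$ copies of $Z(T_5(J))$ along the diagonal (these overlap in unit punctures, which is why $J$ being Artinian matters) and a block $X_{0,2}$. I compute $h_{R/I_{j,k}}(d+1) = \binom{d+3}{2} - (\text{number of monomials of degree } d+1 \text{ in } I_{j,k})$; the new generators in degree $d+1$ arising from the product structure and from the power $z^{6j+k+1}(y,z)^{d-1-6j-k}$ contribute $2j - k + 2$ to the excess, giving the stated formula $\dim_K[R/I_{j,k}]_{d+1} = \dim_K[R/I_{j,k}]_d - (2j-k+2)$. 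Here I need the constraint $k \le 2j+2$ precisely to ensure $2j - k + 2 \ge 0$, so that the region is $\dntri$-heavy (or balanced) in the passage from degree $d$ to $d+1$. The failure by $2j + k - 2$ then follows because each copy of $Z(T_5(J))$ --- which is $6 \times 3$ --- has rank at most... and here is where I have to be careful again: $Z(T_5(J))$ in characteristic zero should have rank $3$ (full column rank), but when it sits inside a $\dntri$-heavy ambient region a maximal minor must include the ``defect'' rows, and the relevant maximal minors of the assembled matrix all share the prime-independent vanishing forced by $\det Z(T_4(J)) = 0$ propagating upward.

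The main obstacle will be the precise bookkeeping of coranks in the two block-triangular assemblies --- specifically, showing that the corank is \emph{exactly} $2j$ in~(i) and \emph{exactly} $2j+k-2$ in~(ii), rather than merely bounded. This requires understanding not just that $\det Z(T_4(J)) = 0$ but the full rank profile of $Z(T_4(J))$ and $Z(T_5(J))$ and how their kernels/cokernels interact with the off-diagonal blocks $X_{\ell,i}$. I would handle this by: (a) computing $\rank Z(T_4(J)) = 4$ and $\rank Z(T_5(J)) = 3$ explicitly (small matrices, characteristic-free); (b) observing that in the block-lower-triangular form the diagonal blocks' coranks add because $X_{0,i}$ has maximal rank and the region decomposes via Proposition~\ref{prop:Z-subregion} into pieces that attach only ``downward''; and (c) translating the resulting rank of $Z(T_{d+1}(I_{j,k}))$, resp.\ $Z(T_{d+2}(I_{j,k}))$, into the weak Lefschetz deficiency via Proposition~\ref{pro:mono} and Corollary~\ref{cor:max-Z}. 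The Hilbert-function computations in~(i) and~(ii) are then routine counting of monomials, using that the punctures of $T_{d+1}(I_{j,k})$ are all unit punctures (Proposition~\ref{lem:build-I-Jdjk}(i)) and that the overlapping copies of $T_{e+2}(J)$ in $T_{d+2}$ meet in single unit triangles.
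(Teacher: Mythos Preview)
Your proposal has a genuine computational error at its core: you have miscounted the dimensions of the two matrices that drive the argument. The ideal $J = (x^3, y^3, z^3, x^2y, xz^2, y^2z)$ has six generators of degree~$3$, so $T_4(J)$ has $\binom{4}{2}=6$ downward-pointing triangles but only $\binom{5}{2}-6=4$ upward-pointing ones; thus $Z(T_4(J))$ is a $6\times 4$ matrix (of rank~$4$), not a balanced $6\times 6$ matrix of rank~$5$. Likewise, a direct check shows that every degree-$4$ monomial lies in $J$, so $h_{R/J}(4)=0$ and $Z(T_5(J))$ is a $4\times 0$ matrix, not $6\times 3$. (The region in Figure~\ref{fig:nss-examples}(iii) corresponds to a different six-generator ideal.)

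Once these dimensions are corrected, the bookkeeping you labelled as ``the main obstacle'' dissolves. For part~(i), each $6\times 4$ diagonal block $Z(T_4(J))$ has full column rank~$4$, so in the block-triangular form of Proposition~\ref{lem:build-I-Jdjk}(ii) the last $6j$ rows span a space of dimension at most $4j$; together with the maximal-rank block $X_{0,1}$ (which has $2j+k$ more columns than rows) this gives $\rank Z(T_{d+1}(I_{j,k}))\le\binom{d+1}{2}-2j$, yielding the ``at least $2j$'' failure without any delicate kernel analysis. For part~(ii), since $Z(T_5(J))$ has no columns at all, the blocks $X_{1,2},\ldots,X_{j,2}$ are empty and $Z(T_{d+2}(I_{j,k}))$ is literally $X_{0,2}$ with $4j$ zero rows appended; the failure is then \emph{exactly} the column-excess $2j+k-2$ of $X_{0,2}$, with no interaction between blocks to track. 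Your attempt to extract corank~$2$ per block from a nonexistent ``$4$-dimensional kernel structure'' of a square matrix, and to reason about rank profiles of a $6\times 3$ matrix $Z(T_5(J))$, is chasing phantoms created by the initial miscount.
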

\begin{proof}
    The arguments are similar to the ones used in the proof of Corollary~\ref{cor:build-large-kernel}. Thus, we mainly
    restrict ourselves to mentioning some of the differences.

    We apply Proposition~\ref{lem:build-I-Jdjk} with $e = 3$ and $m = 6$. Note that, for $i \in \{1, 2\}$, the matrix
    $X_{0, i}$ is $X_{0, i} = T_{d+i} (I')$, where
    \[
        I' = x^{d+1 - 4 j} \cdot (x^{4}, y^{4})^{j-1} + (y^d) + z^{6 j + 1 + k} \cdot (y, z)^{d-1 - 6 j - k}.
    \]
    The matrix $X_{0, 1}$ has maximal rank and $2j + k$ more columns than rows. The $6 \times 4$ matrix $Z (T_4(J))$ has
    rank four. Hence, Proposition~\ref{lem:build-I-Jdjk}(ii) shows that $Z(T_{d+1} (I_{j, k}))$ fails to have maximal
    rank by at least $2j$. This proves claim (i).

    Consider now the matrix $X_{0, 2}$. It has maximal rank and $2j + k - 2$ more columns than rows. Since $Z (T_5(J))$
    is a $4 \times 0$ matrix, it follows that the matrix $Z(T_{d+2} (I_{j, k}))$ is obtained from the matrix $X_{0, 2}$
    by appending $4j$ zero rows. Therefore, $Z(T_{d+1} (I_{j, k}))$ fails to have maximal rank by $2j + k -2$.
\end{proof}

\begin{remark}
    The algebras in Corollaries~\ref{cor:build-large-kernel} and \ref{cor:build-small-image} do not have low degree
    socle elements in the sense of Proposition~\ref{pro:wlp}(i). The degrees of their socle elements are at least $d$.
    Thus, the weak Lefschetz property fails for reasons \emph{other} than having socle elements of small degree.
\end{remark}~

\subsection{Large prime divisors}\label{sub:large-p}~\par

Let $T = T_d(I)$ be a balanced triangular region, and consider $\det Z(T)$ as an integer. If it is not zero, then, by
Corollary~\ref{cor:wlp-biadj}, the algebra $R/I$ fails the weak Lefschetz property in degree $d-1$ if and only if the
characteristic $p > 0$ of the base field divides $\det Z(T)$. Throughout Sections~\ref{sec:det}--\ref{sec:amaci}, every
time we are able to provide an upper bound on the prime divisors of $\det{Z(T)}$, it has been at most $d-1$. However,
this is not always true. We have numerous examples where, for small $d$, the prime factors of $\det{Z(T)}$ can be quite
large relative to $d$.

\begin{example}\label{exa:huge-primes}
    We provide Artinian monomial ideals $I$ and integers $d$ such that $0 \neq \det{Z(T_d(I))}$ has a prime divisor that
    is not less than $d$.
    \begin{enumerate}
        \item The smallest $d$ such that $\det{Z(T_d(I))}$ is nonzero and divisible by some prime $p \geq d$ is $d=5$.
            Indeed, if  $I = (x^3, y^4, z^5, xz^3, y^2z^2)$, then $|\det{Z(T_5(I))}| = 5$.
        \item The smallest $d$ such that $\det{Z(T_d(I))}$ is nonzero and divisible by some prime $p > d$ is $d=6$.
            If $I = (x^5, y^5, z^5, x^3y^2, x^2z^3, y^3z^2)$, then $|\det{Z(T_6(I))}| = 35 = 5 \cdot 7$.
        \item A possibly surprising example is $I = (x^{20}, y^{20}, z^{20}, x^3 y^8 z^{13})$ with $d = 28$.  In this case,
            \[
                |\det{Z(T_{28}(I))}| = 2\cdot 3^{2}\cdot 5^{3}\cdot 7\cdot 11\cdot 17^{2}\cdot 19^{6}\cdot 23^{5}\cdot 20554657.
            \]
            Here $\det{Z(T_d(I))}$ is divisible by a prime, $20554657$, which is over $700,000$ times larger than $d$.
        \item In the previous examples, the determinants have only a single prime factor that is larger than $d$. In
            general, there can be more such prime divisors. Indeed, let $I = (x^7, y^{12}, z^{15}, x y^7 z^2)$ and $d = 14$.
            Then $|\det{Z(T_{14}(I))}| = 13 \cdot 17 \cdot 23$.
    \end{enumerate}
    Note that, for a fixed integer $d$, there are finitely many Artinian monomial ideals whose generators have degrees
    at most $d$. We used \emph{Macaulay2}~\cite{M2} to search this finite space to find results such as (i) and (ii) above.
\end{example}

It would be desirable if, at least, there is a uniform upper bound on the prime divisors of $\det{Z(T)}$ that is linear
in $d$. However, this does not appear to be the case in general. The following example suggests that some prime divisors
of $\det{Z(T)}$ can be of the order $d^2$.

\begin{example} \label{exa:non-linear-bound}
    For $t \geq 4$, consider the level and type $3$ algebra  $R/I_t$, where
    \[
        I_t = (x^{1+t}, y^{4+t}, z^{7+t}, x y^4 z^7).
    \]
    In~\cite[Section~6]{CEKZ}, Ciucu, Eisenk\"olbl, Krattenthaler, and Zare argue that the determinant of a punctured
    hexagonal subregion of ${\mathcal T}_d$ is polynomial in $d$, if only the side length of the central puncture
    increases with $d$, whereas the size of the other punctures remains constant. Thus, one can use interpolation to
    determine this polynomial. Applying this procedure to $T_{t+8}(I_t)$, we get that $\det{Z(T_{t+8}(I_t))}$ is
    \[
        \frac{4}{\HF(7)} \cdot \left\{
            \begin{array}{ll}
                (t-3) (t-2) (t-1)^3 t^3 (t+1)^2 (t+2) (t+4) (t+6) (t^2 + 6t - 1) & \mbox{ if $t$ is odd;} \\[0.3em]
                (t-2)^2 (t-1)^2 t^4 (t+1)^2 (t+2) (t+5) (t+7) (t^2 + 2t - 9) & \mbox{ if $t$ is even.}
            \end{array}
        \right.
    \]

    We now recall a number-theoretic conjecture. Let $f \in \ZZ[t]$ be an irreducible polynomial whose degree is
    at least 2, and set $D = \gcd\{f(i) \st i \in \ZZ\}$. In this case, Bouniakowsky conjectured in 1857~\cite{Bou} that
    there are infinitely many integers $t$ such that $\frac{1}{D}f(t)$ is a prime number. We note that the weaker
    Fifth Hardy-Littlewood conjecture is a special case of the Bouniakowsky conjecture. It posits that $t^2 + 1$ is
    prime for infinitely many positive integers $t$.

    Observe that the quadratic factors of the above determinant, $t^2 + 6t - 1$ and $t^2 + 2t -9$, respectively, are
    irreducible polynomials in $\ZZ[t]$. Thus, for infinitely many positive integers $t$, the above determinant has a
    prime divisor of order $t^2$ if Bouniakowsky's conjecture is true.

    It follows that, assuming the Bouniakowsky conjecture, the above ideals provide regions $T_d \subset {\mathcal T}_d$
    such that $\det Z(T_d) \neq 0$ has a prime divisor of order $d^2$ for infinitely many integers $d$.
\end{example}

Given the above examples, it seems unlikely that the prime divisors of $\det{Z(T_d(I))}$ are bounded linearly in $d$.

\section{Further open problems} \label{sec:open-problems}

In this closing section we wish to point out some additional questions and problems that are suggested by this work.
\smallskip

In Section~\ref{sec:signed} we introduced for \emph{every} non-empty subregion $T \subset {\mathcal T}_d$ its bi-adjacency
matrix $Z(T)$ and its perfect matching matrix $N(T)$. These matrices are square matrices if and only if the region $T$ is
balanced. According to Theorem~\ref{thm:detZN}, the determinants of $N(T)$ and $Z(T)$ have the same absolute value if $T$
is a balanced triangular subregion, i.e., its punctures are upward-pointing triangles. However, we are not aware of any
example, where the mentioned equality is not true. This raises the following question:

\begin{question}\label{qe:det}
    Let $T \subset {\mathcal T}_d$ be any non-empty balanced subregion. Is then the equality
    \[
        |\det Z(T)| = |\det N(T)|
    \]
    always true?
\end{question}

An affirmative answer would extend the bijection between signed perfect matchings determined by $T$ and signed
families of non-intersecting lattice path in $T$ from triangular subregions to arbitrary balanced subregions.
\smallskip

In this work we have focussed on studying the weak Lefschetz property of an Artinian monomial ideal
$I \subset R = K[x, y, z]$ by establishing combinatorial interpretations of the multiplications maps
$[A]_{d-1} \to [A]_d$ on $A = R/I$ by $\ell = x+ y + z$.  In order to study the strong Lefschetz property of
$A$ it would be desirable to extend our approach by finding combinatorial interpretations of the multiplication
by powers of $\ell$.  This could also be an approach to determining the Jordan canonical form of the multiplication
map by $\ell$ on $A$.

Quite generally, let $A$ be a graded Artinian $K$-algebra with Hilbert function $h_A (j) = \dim_K [A]_j$. The multiplication
on $A$ by any linear form $0 \neq L \in A$ is a nilpotent map. Hence, the Jordan canonical form $J_L$ of this multiplication map
is given by a partition $\lambda$ of $\dim_K A$. The parts of this partition are determined by the ranks of the multiplication
by powers of $L$.  More precisely, define $m = \max \{h_A (i) \st i \in \ZZ\}$ and, for all $j = 1,\ldots,m$,
\[
    \lambda_j = |\{i \st h_A (i) \ge j\}|.
\]
Then $\lambda = (\lambda_1,\ldots,\lambda_m)$ is called the \emph{expected partition}, determining the expected Jordan canonical
form $J_{\ell}$ of the multiplication by a general linear form $\ell \in A$. This is closely related to the strong Lefschetz property.
In fact, the algebra $A$ has the strong Lefschetz property if and only if the Jordan canonical form of the multiplication on $A$ by
a general linear form is determined by the expected partition.

Since monomial complete intersections have the strong Lefschetz property in characteristic zero this observation allows us to
determine the corresponding Jordan canonical form.

\begin{example}
    Consider the algebra $A = R/I$, where $I = (x^3, y^4, z^5)$ and $K$ is a field of characteristic zero. Its Hilbert function is
    positive from degree zero to degree nine. The corresponding values are
    \[
        1, 3, 6, 9, 11, 11, 9, 6, 3, 1.
    \]
    Hence, the Jordan canonical form of the multiplication on $A$ by $\ell = x+y+z$ (or every general linear form) is given by the
    expected partition of $60$,
    \[
        \lambda = (10, 8, 8, 6, 6, 4, 4, 4, 2, 2).
    \]
\end{example}
\smallskip

The richness of the results on lozenge tilings, perfect matchings, and families of non-intersection lattice paths invites one
to find higher-dimensional generalizations. MacMahon already considered the three-dimensional analogue of plane partitions
in~\cite{MacMahon-60}. However, numerical evidence suggests that there is no simple formula for enumerating such space partitions.
Nevertheless, parts of the theory we developed here do extend to higher dimension. We hope that the connection to the weak Lefschetz
property of quotients of polynomial rings in more than three variables can provide some guidance towards extending some of the
beautiful classical combinatorial results.

\begin{acknowledgement}
    The authors acknowledge the invaluable help of the computer algebra system \emph{Macaulay2}~\cite{M2}. It was used
    extensively throughout both the original research and the writing of this manuscript.
%

    This work supersedes~\cite{CN-2011}. The authors thank a referee of~\cite{CN-2011} for very valuable comments.
\end{acknowledgement}~

\appendix
\section{Completion of the proof of Theorem \ref{thm:ciucu-corrected}.}

Adopt the notation introduced in Chapter \ref{sec:mirror}. In order to prove Theorem~\ref{thm:ciucu-corrected} it
remains to show the following result.

\begin{proposition} \label{prop:ciucu-corrected}
    Let $T = T_d(I)$ be a triangular region as in Assumption~\ref{assump:mirror} with parameters $b$ and
    $(h_1, d_1), \ldots, (h_s, d_s)$, where $s \geq 1$ and $d_2, \ldots, d_{s-1}$ are all even. Define $a$, $k$,
    $\mathbf{p}$, and $\mathbf{q}$ as introduced above Theorem~\ref{thm:ciucu-11}.

    If $d_1$ is even, $d_s$ is odd, and $h_s = 0$, then
    \[
       \per{Z(T)} =
        2^{m+n} \overline{P}_{\mathbf{\emptyset,q}} \left(\frac{a+k-1}{2}\right) \overline{P}_{\mathbf{q, \emptyset}} \left(\frac{a}{2} \right).
    \]
\end{proposition}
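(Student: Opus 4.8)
The plan is to follow the proof of \cite[Part~B, Theorem~1.1]{Ci-2005} line by line, deviating from it only at the single point where the hypothesis $h_s = 0$ changes the local geometry of the region. First I would record the reductions. Under the stated hypotheses every floating puncture of $T$ has even side length: by Assumption~\ref{assump:mirror} the top axial puncture lies in the top corner of $\mathcal T_d$ and the lowest one, having $h_s = 0$, is flush with the bottom edge, so both are non-floating, and hence the floating punctures are exactly the axial punctures indexed by $2,\ldots,s-1$, whose side lengths $d_2,\ldots,d_{s-1}$ are even by assumption. Since $T$ is tileable (Remark~\ref{rem:mirror-reg-tileable}), Proposition~\ref{pro:same-sign} gives $|\det Z(T)| = \per Z(T)$, and by Proposition~\ref{pro:per-enum} it suffices to count the lozenge tilings of $T$. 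I would also note that $h_s = 0$ forces $\mathbf p = \emptyset$, hence $m = 0$, so the factor $2^{m+n}$ in the claimed formula is really $2^{n}$; writing $2^{m+n}$ merely keeps the notation uniform with Theorem~\ref{thm:ciucu-11}(iv).

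Next I would apply the matchings factorization theorem of \cite{Ci-1997} to the mirror symmetric region $T$, exactly as in the first step of Ciucu's argument. Cutting $G(T)$ along the vertical axis of symmetry and closing up the two halves produces two honeycomb subregions $T^+$ and $T^-$, and the factorization theorem writes the number of perfect matchings of $G(T)$ as $2^{c}\,M(T^+)\,M(T^-)$ for an explicit exponent $c$ depending only on the vertices of $G(T)$ on the axis. The evenness of $d_2,\ldots,d_{s-1}$ is precisely what guarantees that the two halves glue together into regions of the shape handled in \cite[Part~B]{Ci-2005}.

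The heart of the matter is the identification of $T^+$ and $T^-$. In the generic situation $h_s \geq 2$ treated in Theorem~\ref{thm:ciucu-11}(iv), one half is a region whose matchings are counted by $2^{n}\overline P_{\mathbf p,\mathbf q}\!\bigl(\tfrac{a+k-1}{2}\bigr)$ and the other by $2^{m}P_{\mathbf q,\mathbf p^{(m)}}\!\bigl(\tfrac a2\bigr)$, these polynomials being the Lindstr\"om--Gessel--Viennot determinant evaluations built from $B_{m,n}$, $\overline B_{m,n}$ and the products in \eqref{eqn:Ppq}--\eqref{eqn:P-pq}. When $h_s = 0$, the lowest axial puncture becomes flush with the bottom edge of $\mathcal T_d$; in the halved picture this deletes the boundary ``notch'' that is responsible for producing the polynomial $P$ (rather than $\overline P$) in the second factor, and it simultaneously empties the list $\mathbf p$, so $\mathbf p^{(m)}$ disappears. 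Thus the second half is now counted by $\overline P_{\mathbf q,\emptyset}\!\bigl(\tfrac a2\bigr)$, while the first half, unaffected by the change at the bottom, is still counted by $2^{n}\overline P_{\emptyset,\mathbf q}\!\bigl(\tfrac{a+k-1}{2}\bigr)$. Multiplying the two halves together with the factorization prefactor $2^{c}$ and collecting powers of $2$ yields $\per Z(T) = 2^{m+n}\overline P_{\emptyset,\mathbf q}\!\bigl(\tfrac{a+k-1}{2}\bigr)\overline P_{\mathbf q,\emptyset}\!\bigl(\tfrac a2\bigr)$, as claimed.

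I expect the main obstacle to be precisely this identification step. One has to unwind Ciucu's factorization carefully enough to be certain that the only effect of the degeneration $h_s = 0$ is to replace the single $P$-factor by the corresponding $\overline P$-factor (and to empty $\mathbf p$), with no spurious extra power of $2$ and no shift in the arguments of the polynomials. Getting the $2$-power bookkeeping exactly right, and confirming that the degenerate half genuinely belongs to the family enumerated by $\overline P_{\mathbf q,\emptyset}$ rather than to some shifted relative of it, is the delicate part; everything else is a faithful transcription of \cite[Part~B, Section~3]{Ci-2005}.
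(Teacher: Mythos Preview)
Your proposal is correct and follows essentially the same route as the paper's proof: both transcribe \cite[Part~B, Section~3]{Ci-2005}, pinpoint that the only deviation occurs in the identification of the second half-region when $h_s=0$ (equivalently $\mathbf{p}=\emptyset$), and argue that this half is enumerated by $\overline{P}_{\mathbf{q},\emptyset}(a/2)$ rather than $P_{\mathbf{q},\mathbf{p}^{(m)}}(a/2)$. The paper phrases the correction at the level of Ciucu's regions $R_{\mathbf{q},\emptyset}$ versus $\overline{R}_{\mathbf{q},\emptyset}$ and traces the discrepancy to the failure of the assumption ``the last vertebra below $\mathcal{R}$ is a triangular vertebra'' on \cite[p.~123]{Ci-2005}, but this is exactly your ``boundary notch'' observation in different language.
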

\begin{proof}
    This follows as in~\cite[Part~B, Section~3, Proof of Theorem~1.1]{Ci-2005} if we make a single adjustment.

    In order to refrain from making copious new definitions, we assume the reader is familiar with the notation used
    in~\cite[Part~B]{Ci-2005}.

    The penultimate sentence on page~123 of~\cite{Ci-2005} ends with the phrase ``while the one obtained from $R^{-}$
    after the same procedure is congruent to $R_{\mathbf{q,l}^{(m)}}(a/2)$'' (recall that we use $\mathbf{p}$ in place
    of $\mathbf{l}$, as discussed above). This congruence hinges on the assumption given in the second sentence of the
    same paragraph: ``Then the last vertebra below $\mathcal{R}$ is in this case a triangular vertebra$\ldots$''
    However, when $\mathbf{p} = \emptyset$, this assumption fails, and the stated congruence breaks down (see, e.g.,
    Figure~\ref{fig:ciucu-12-cn-8}). Fortunately, we need simply replace
    $R_{\mathbf{q,l}^{(m)}}(a/2) = R_{\mathbf{q},\emptyset}(a/2)$ by $\overline{R}_{\mathbf{q},\emptyset}(a/2)$ for the
    congruence to hold.

    Set $R = R_{\mathbf{q},\emptyset}(a/2)$ and $\overline{R} = \overline{R}_{\mathbf{q},\emptyset}(a/2)$.  Then we will
    show that $R$ is $\overline{R}$ with an extra column of triangles along the Northwestern boundary and an extra row
    of triangles along the Northern boundary, thus expanding the region considered. See Figure~\ref{fig:R-vs-overline-R}
    for an example of $R$ and $\overline{R}$.
    \begin{figure}[!ht]
        \begin{minipage}[b]{0.48\linewidth}
            \centering
            \includegraphics[scale=1]{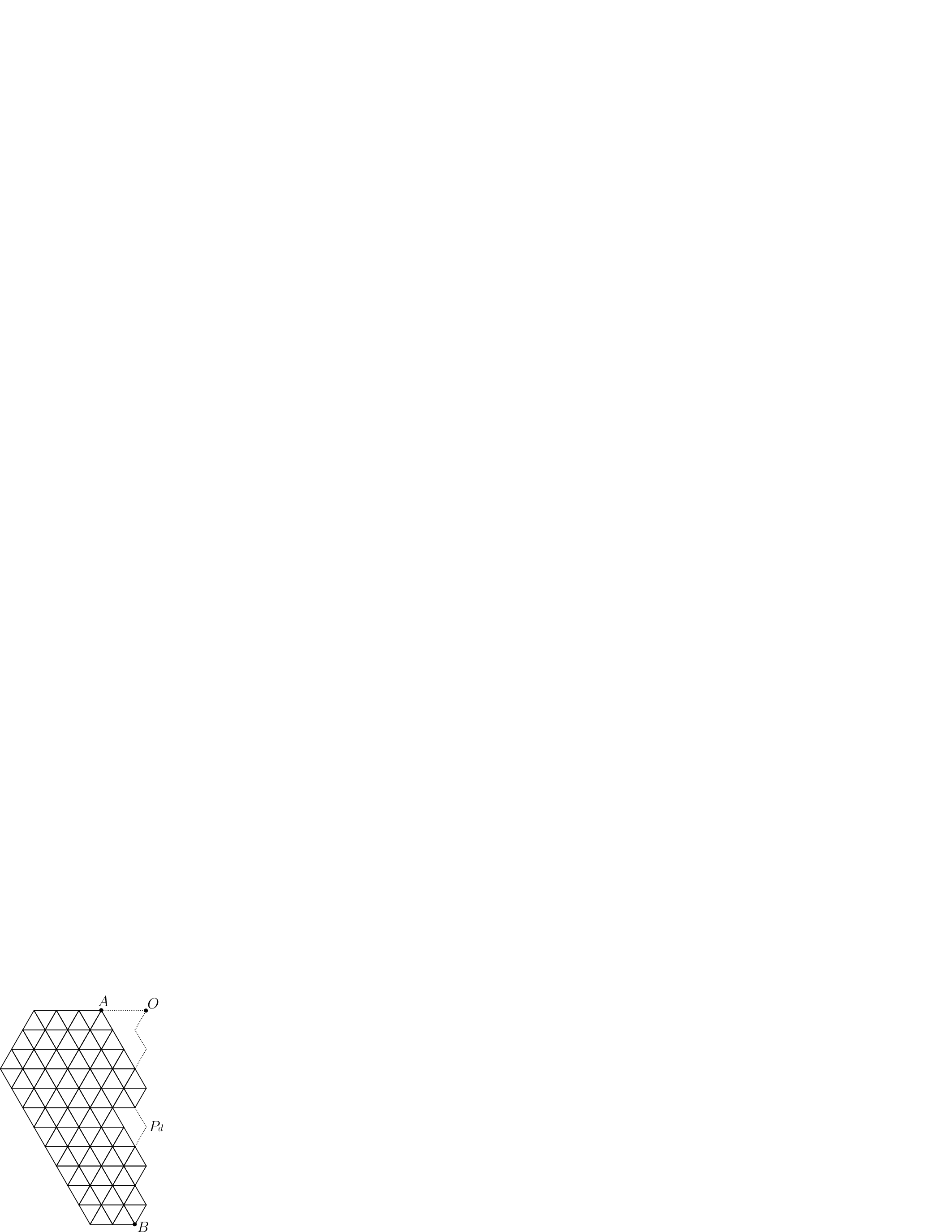}\\
            \emph{(i) $R_{\mathbf{q},\emptyset}(a/2)$}
        \end{minipage}
        \begin{minipage}[b]{0.48\linewidth}
            \centering
            \includegraphics[scale=1]{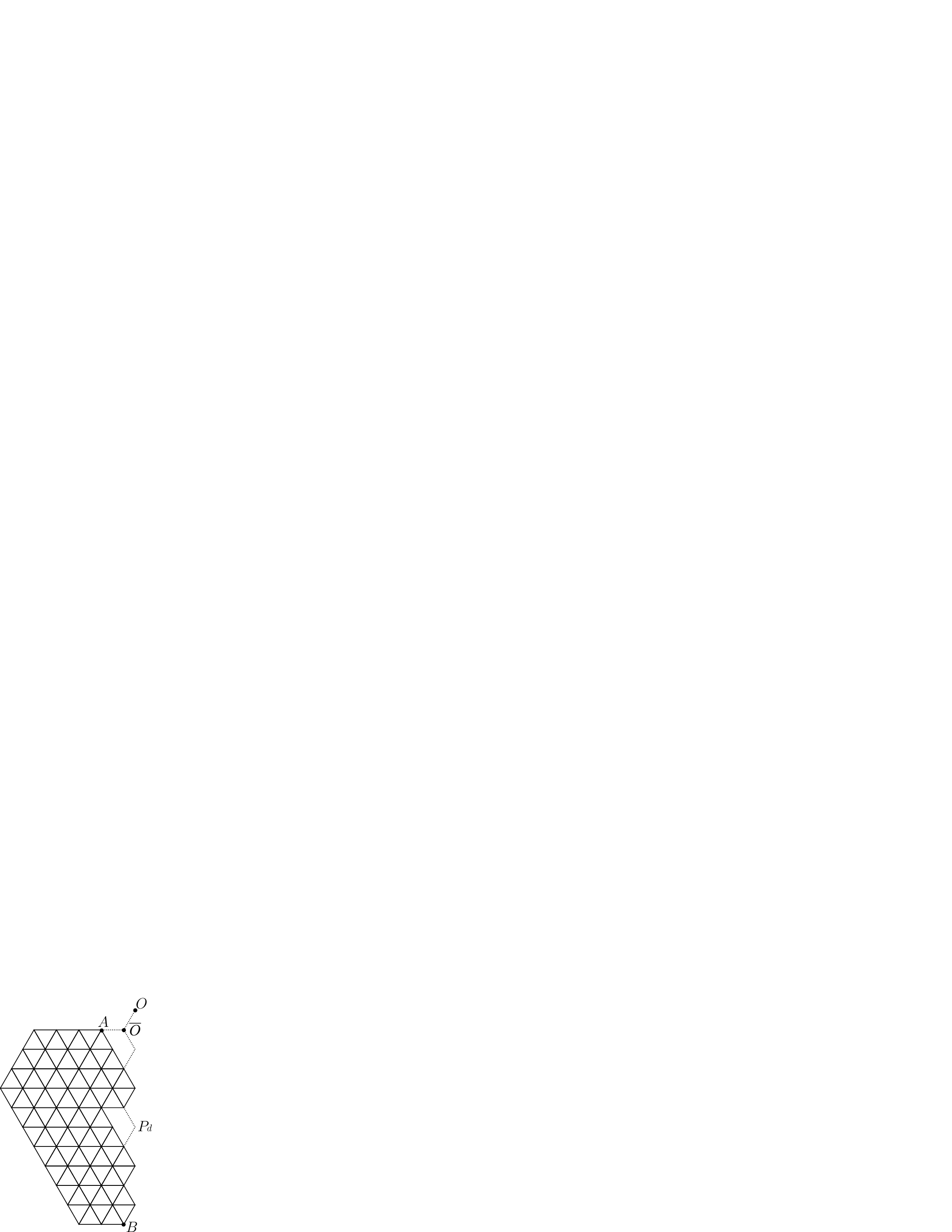}\\
            \emph{(ii) $\overline{R}_{\mathbf{q},\emptyset}(a/2)$}
        \end{minipage}
        \caption{An example of the distinction between $R = R_{\mathbf{q},\emptyset}(a/2)$ and
            $\overline{R} = \overline{R}_{\mathbf{q},\emptyset}(a/2)$, where $\mathbf{q} = \{2,4,5\}$ and $a = 4$.
            The region $R$ is the same as~\cite[Part~B, Figure~2.1(c)]{Ci-2005}, and $\overline{R}$
            is very similar to~\cite[Part~B, Figure~2.2(c)]{Ci-2005}.}
        \label{fig:R-vs-overline-R}
    \end{figure}

    When expanding $R$ and $\overline{R}$ to the symmetric regions of which they are part, we need only consider those
    triangles that are below the horizontal line that goes through the origin $O$ since $\mathbf{p} = \emptyset$.
    Further, the first selected bump is $q_1$, and as $d_s$ is odd, we have $q_1 = 1 + \flfr{d_s}{2} = \frac{1}{2}(d_s+1)$.

    When expanding $R$ to the symmetric region $T$ which it is part of, we notice that the top of the $q_1$ bump is
    $d_s$ triangles below $O$. Thus the Northwest path coming from the top of the bump joins the Northwest edge of the
    bump, and this ray extends to the horizontal ray to the West of $O$. This creates a Northwest edge that is $d_s+1$
    units long. Hence the lowest axial puncture of $T$ has side-length $d_s+1$. In particular, $T = T_{d+1}(\hat{I})$,
    where $\hat{I}$ is generated by the same generators as $I$, but with the following modifications: all generators
    divisible by $x$ are multiplied by $x$ and the pure powers of $y$ and $z$ are multiplied by $y$ and $z$,
    respectively. See Figure~\ref{fig:R-in-T-T-hat}(i).

    \begin{figure}[!ht]
        \begin{minipage}[b]{0.48\linewidth}
            \centering
            \includegraphics[scale=1]{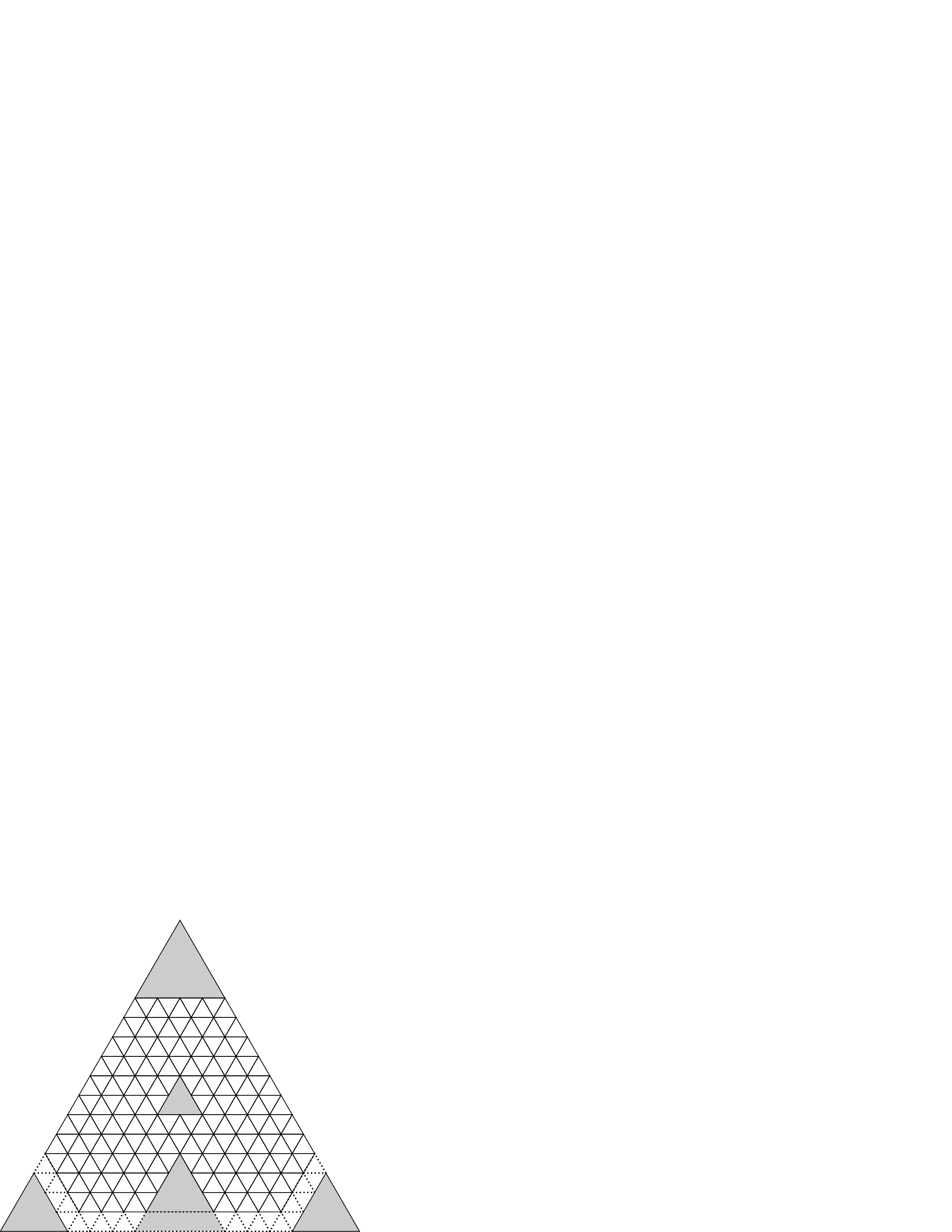}\\
            \emph{(i) $T$ coming from $R$ with the added triangles highlighted by dashed lines}
        \end{minipage}
        \begin{minipage}[b]{0.48\linewidth}
            \centering
            \includegraphics[scale=1]{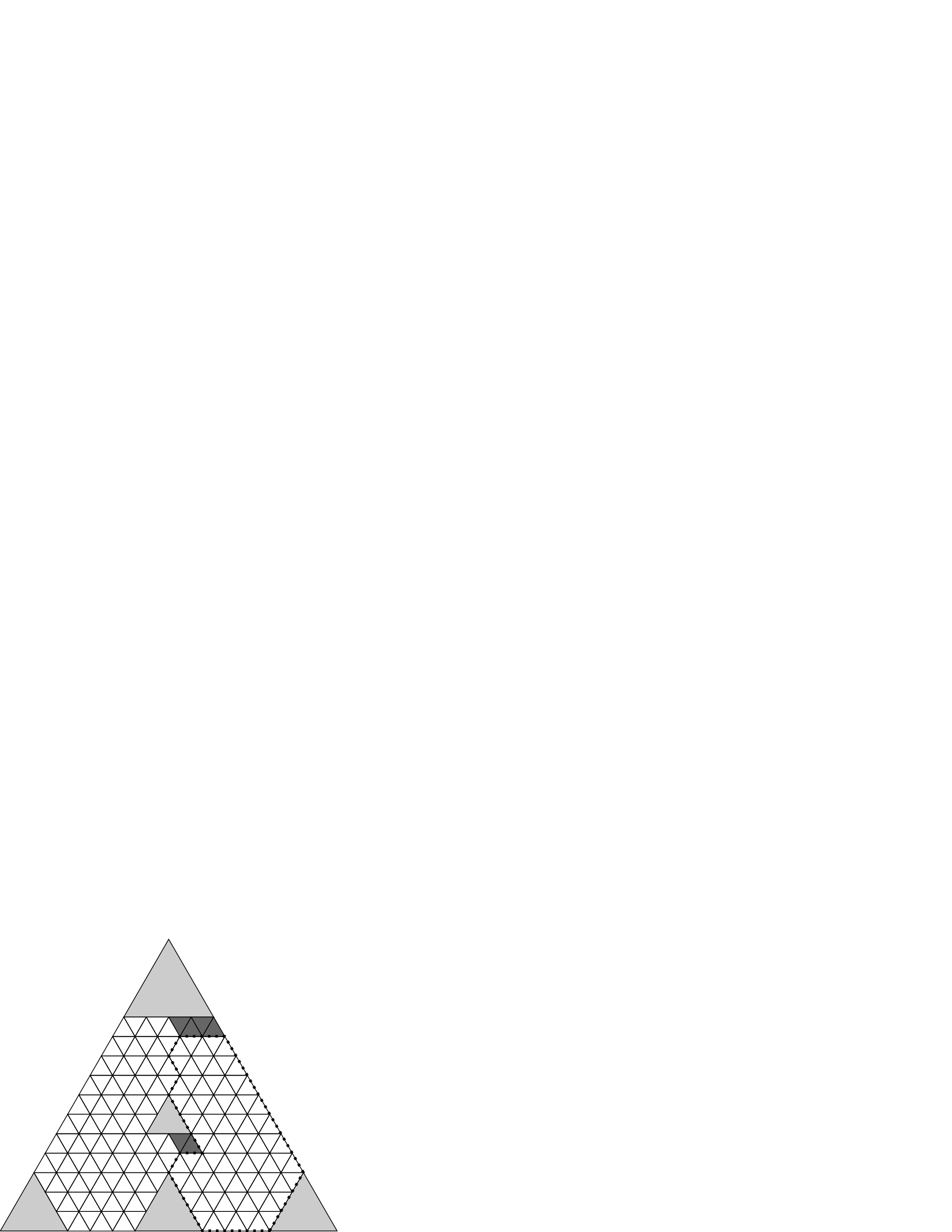}\\
            \emph{(ii) $\overline{T}$ coming from $\overline{R}$; fixed lozenges are darkened}
        \end{minipage}
        \caption{Let $\mathbf{q} = \{2,4,5\}$ and $a = 4$, as in Figure~\ref{fig:R-vs-overline-R}.
            Then $T = T_{16}(x^{12}, y^{13}, z^{13}, x^6 y^4 z^4, y^6 z^6)$ and
                $\overline{T} = T_{15}(x^{11}, y^{12}, z^{12}, x^5 y^4 z^4, y^6 z^6)$.}
        \label{fig:R-in-T-T-hat}
    \end{figure}

    On the other hand, when expanding $\overline{R}$ to the symmetric region $\overline{T}$ which it is part of, we
    notice that the top of the $q_1$ bump is $d_s-1$ triangles below $\overline{O}$. Thus the lowest axial puncture of
    $T$ has side-length $d_s$. Hence $\overline{T} = T_d(I)$, as desired. See Figure~\ref{fig:R-in-T-T-hat}(ii).

    The claim now follows by using the arguments in \cite[Part~B]{Ci-2005} and replacing the region $R$ by the region
    $\overline{R}$.
\end{proof}

\newpage

\clearpage
\printindex

\end{document}